\DeclareMathOperator{\id}{id}
\DeclareMathOperator{\op}{op}
\DeclareMathOperator{\Mod}{\mathbf{Mod}}
\DeclareMathOperator{\Spec}{Spec}
\DeclareMathOperator{\Sym}{Sym}
\DeclareMathOperator{\SymMonCat}{\mathbf{SymMonCat}}
\DeclareMathOperator{\VB}{\mathbf{VB}}
\DeclareMathOperator{\fpqc}{\mathit{fpqc}}
\DeclareMathOperator{\Cat}{\mathbf{Cat}}
\DeclareMathOperator{\Gpd}{\mathbf{Gpd}}
\DeclareMathOperator{\Set}{\mathbf{Set}}
\DeclareMathOperator{\Ab}{\mathbf{Ab}}
\DeclareMathOperator{\CAlg}{\mathbf{CAlg}}
\DeclareMathOperator{\CRing}{\mathbf{CRing}}
\DeclareMathOperator{\Grp}{\mathbf{Grp}}
\newcommand{\ca}[1]{\mathscr{#1}}
\newcommand{\Proj}{\mathbf{P}}
\newcommand{\Alt}{\mathbf{A}}
\newcommand{\ten}[1]{\mathop{{\otimes}_{#1}}}
\newcommand{\pb}[1]{\mathop{{\times}_{#1}}}
\newcommand{\defl}{\mathrel{\mathop:}=}
\theoremstyle{plain}
\newtheorem{thm}{Theorem}[section]
\newtheorem*{thm*}{Theorem}
\newtheorem{prop}[thm]{Proposition}
\newtheorem{lemma}[thm]{Lemma}
\newtheorem{cor}[thm]{Corollary}
\theoremstyle{definition}
\newtheorem{example}[thm]{Example}
\newtheorem{rmk}[thm]{Remark}
\newtheorem{dfn}[thm]{Definition}
\newtheorem{notation}[thm]{Notation}
\newtheoremstyle{citing}{}{}{\itshape}{}{\bfseries}{.}{ }{\thmnote{#3}}
\theoremstyle{citing}
\newtheorem{cit}{}
\newtheoremstyle{citingdfn}{}{}{}{}{\bfseries}{.}{ }{\thmnote{#3}}
\theoremstyle{citingdfn}
\newtheorem*{question*}{}
\numberwithin{equation}{section}
\keywords{Analytic patching diagrams, stably free modules}
\subjclass[2020]{13C10, 13B40, 19A13, 19B99}
\author{Daniel Sch\"appi}
\thanks{This research was supported by the DFG grant: SFB 1085 ``Higher invariants.''}
\address{Fakult{\"a}t f{\"u}r Mathematik,
Universit{\"a}t Regensburg,
93040 Regensburg,
Germany}
\email{daniel.schaeppi@ur.de}
\title{Symplectic K-theory and a problem of Murthy}
\begin{document}

\begin{abstract}

 We compute low-dimensional $K$-groups of certain rings associated with the study of the Hermite ring conjecture. This includes a monoid ring whose low-dimensional $K$-groups were recently computed by Krishna and Sarwar in the case where the base ring is a regular ring containing the rationals. We are able to extend their result to an arbitrary regular base ring, thereby completing an answer to a question of Gubeladze. Our computation only relies on certain conveniently chosen analytic patching diagrams.
 
 These patching diagrams also allow us to investigate stably free modules appearing in a problem posed by Murthy. They make it possible to relate Murthy's problem to a stable question (about the relationship between symplectic and ordinary $K$-theory). More precisely, we show that Murthy's problem has a solution if the stable question has an affirmative answer, while a negative answer to the stable question implies that there are counterexamples to the Hermite ring conjecture.
 
 Since the module appearing in Murthy's problem has rank 2, the above mentioned argument relies on some theorems about patching with 2-by-2 matrices. These use so-called pseudoelementary 2-by-2 matrices, a notion we introduce which makes it possible to extend certain results valid for larger matrices to the case of 2-by-2 matrices (if we use pseudoelementary instead of elementary matrices). For example, we prove that the analogue of Vorst's theorem about matrices over polynomial extensions over a regular ring containing a field holds for 2-by-2 matrices.
 
\end{abstract}

\maketitle

\tableofcontents

\section{Introduction}\label{section:introduction}

Given a functor $F \colon \CRing \rightarrow \Set$ from commutative rings to sets, it is often an interesting (and difficult) problem to understand the value $F(R[t])$, where $R[t]$ denotes the polynomial ring in one variable over $R$. For example, if $F=P$ is the functor which sends $R$ to the set of isomorphism classes of finitely generated projective $R$-modules, then the Bass--Quillen conjecture asserts that the function
\[
P(R) \rightarrow P(R[t])
\]
induced by the inclusion of the constant polynomials is a bijection for all \emph{regular} rings $R$.

 Recall that a finitely generated $R$-module $Q$ is called \emph{stably free} if $Q \oplus R^n$ is free for $n$ sufficiently large. Following Lam \cite{LAM}, we call $R$ a \emph{Hermite ring} if all stably free $R$-modules are free. The Hermite ring conjecture is the conjecture that the following question has an affirmative answer.
 
 \begin{question*}[Question~1]
 If $R$ is Hermite, does it follow that $R[t]$ is Hermite?
 \end{question*}

 If we write $F \colon \CRing \rightarrow \Set$ for the functor which sends $R$ to the set of isomorphism classes of stably free modules, then the Hermite ring conjecture turns out to be equivalent to the assertion that
 \[
 F(R) \rightarrow F(R[t])
 \]
 is a bijection for \emph{all} commutative rings $R$ (because of Quillen's local-global principle). We refer the reader to \cite[\S V.3]{LAM} for several other equivalent statements.
 
 In \cite{SCHAEPPI_HERMITE}, we introduced a so called \emph{test set} $\ca{H}$ for the Hermite ring conjecture. This is a countable set with the property that the Hermite ring conjecture holds if
 \[
 F(R) \rightarrow F(R[t])
 \]
 is a bijection for all $R \in \ca{H}$. The rings in $\ca{H}$ are also rather ``nice,'' for example, they are all unique factorization domains and they are overrings of polynomial rings in the sense of \cite{BHATWADEKAR_ROY_OVERRINGS}. The simplest kind of ring in $\ca{H}$ is
 \[
 R=\mathbb{Z}[x_i,y_i] \slash (\textstyle \sum_{i=1}^n x_i y_i)
 \]
 for $n \geq 3$.
 
 The Hermite ring conjecture is a rather strong statement---it implies the Bass--Quillen conjecture, for example---and it does not have much supporting evidence. As Roitman pointed out in \cite{ROITMAN}, it would imply that all unipotent unimodular rows (that is, unimodular rows whose first entry is unipotent modulo the remaining entries) are completable to an invertible matrix. One of the simplest cases of this is Murthy's $(a,b,c)$-problem (see \cite[Problem~VIII.5.7]{LAM}):
 
 \begin{question*}[Question~2]
 Let $R=\mathbb{F}_2[a,x_1,y_1,x_2,y_2] \slash (x_1 y_1+x_2 y_2 - a^{\ell})$ for some $\ell \geq 2$. Is the kernel of the epimorphism
 \[
 (1+a,x_1,x_2) \colon R^3 \rightarrow R
 \]
 a free module?
 \end{question*}

 Both questions naturally lead to the study of overrings of polynomial rings of the form $A[x,y] \slash (xy-\alpha)$ for some $\alpha \in A$. A natural first step is the study of the $K$-groups of such rings. Swan has shown that, in the case $A=k[t_1, \ldots, t_n]$, we have $NK_j\bigl(A[x,y]\slash (xy-\alpha)\bigr) \neq 0$ if $NK_j(A \slash \alpha) \neq 0$ (see \cite[Theorem~3.6]{WEIBEL_ANALYTIC}). The following theorem gives a partial converse to this (for $j=1$).

\begin{thm}\label{thm:converse_to_Swan's_thm}
 Let $k$ be a field and let $\alpha \in k[t_1, \ldots, t_n]$ be a homogeneous polynomial of degree $\geq 2$. Assume that for all field extensions $k \subseteq k^{\prime}$, the algebra $k^{\prime}[t_1, \ldots, t_n] \slash \alpha$ is a $K_1$-regular domain. Then for all field extensions $k \subseteq k^{\prime}$, the algebra
 \[
 A \defl k^{\prime}[t_1, \ldots, t_n, x,y] \slash (xy-\alpha)
 \]
 is a $K_1$-regular unique factorization domain.
 \end{thm}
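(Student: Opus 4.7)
Let $B = k'[t_1,\ldots,t_n]$ and $\bar B = B/(\alpha)$. The argument splits naturally into the UFD claim and the $K_1$-regularity claim.

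For the UFD claim, I would apply Nagata's criterion. First, $A$ is an integral domain: viewed as a polynomial in $y$ over $B[x]$, the element $xy - \alpha$ is linear with leading coefficient $x$ and constant term $-\alpha$, and these are coprime in $B[x]$ (since $x \nmid \alpha$), so $xy - \alpha$ is primitive and hence irreducible. Second, $x$ is prime in $A$, since $A/(x) \cong \bar B[y]$ is a domain by the hypothesis on $\bar B$. Third, $A[x^{-1}] = B[x, x^{-1}]$ (solving $y = \alpha/x$) is a localization of a polynomial ring over a field, hence a UFD. Nagata's theorem then yields that $A$ itself is a UFD.

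For $K_1$-regularity, the plan is to exploit the analytic patching square
\[
\begin{array}{ccc} A & \to & A[x^{-1}] = B[x^{\pm 1}] \\ \downarrow & & \downarrow \\ \widehat{A}_{(x)} & \to & \widehat{A}_{(x)}[x^{-1}] \end{array}
\]
which is a Milnor square and, by Weibel's analytic patching theorems, induces a Mayer--Vietoris long exact sequence in $K$-theory. The ring $B[x^{\pm 1}]$ is regular, so contributes no $NK_1$. For the two completion corners, the identification $\widehat{A}_{(x)}/(x) = \bar B[y]$, combined with the hypothesis that $\bar B$ (and therefore $\bar B[y]$) is $K_1$-regular over every field extension, should force the relevant $NK_1$-groups to vanish through a nilinvariance or formal-smoothness style argument. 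Chasing the Mayer--Vietoris sequence then gives $NK_1(A) = 0$. Since every polynomial extension $A[T_1,\ldots,T_m]$ is of the same form (with $B,\bar B$ replaced by $B[T_1,\ldots,T_m], \bar B[T_1,\ldots,T_m]$, still satisfying the hypothesis because $K_1$-regularity is preserved under adjoining polynomial variables), the same argument applies uniformly, establishing $K_1$-regularity.

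The principal obstacle is the $K$-theoretic analysis of the two completion corners. Algebraic $K$-theory does not generally satisfy excision along $(x)$-adic completions, so transferring $NK_1$-vanishing from $\bar B[y]$ to $\widehat{A}_{(x)}$ is delicate. Moreover, $\widehat{A}_{(x)}$ is not literally $\bar B[y][[x]]$, since any lift of $\bar B = B/\alpha$ into the completion is obstructed by the identity $\alpha = xy$. The hypothesis that $\bar B$ is $K_1$-regular over \emph{all} field extensions of $k$ is presumably what makes the completion analysis tractable: it allows base-change to the residue fields that arise inside the patching while preserving the hypothesis, so that one can piecewise reduce to the regular case. Implementing this reduction cleanly, and extracting vanishing of the Mayer--Vietoris input from it, is where the technical heart of the argument should lie.
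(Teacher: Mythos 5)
Your UFD argument via Nagata's criterion is correct and is exactly what the paper does. However, the $K_1$-regularity half of your proposal has two genuine gaps, and the route you sketch does not survive close inspection.

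First, your patching square is along $x$, using the $x$-adic completion $\widehat{A}_{(x)}$. You then want to deduce $NK_1(\widehat{A}_{(x)}) = 0$ from the identification $\widehat{A}_{(x)}/(x) \cong \bar B[y]$ ``through a nilinvariance or formal-smoothness style argument.'' This does not work: to control $NK_1$ you must look at $K_1(\widehat{A}_{(x)}[T])$, and after adjoining the polynomial variable $T$ the ideal $(x)$ is no longer in the Jacobson radical of $\widehat{A}_{(x)}[T]$, so $j$-invariance (or henselianness of the pair) no longer applies. You cannot simply push the $K_1$-regularity of $\bar B[y]$ up the tower. The paper avoids this entirely by patching along $\alpha$ via the \emph{henselization} $A^h_{(\alpha)}$, and uses the Horrocks principle together with a generalized Hensel lemma (Theorem~\ref{thm:generalized_hensel_lemma}) to reduce the relevant statement about $A^h_{(\alpha)}[x,y]/(xy-\alpha)$ to statements about $A/\alpha[x]$; that reduction (Corollary~\ref{cor:henselian_implies_trivial_kernel_for_overring}) is exactly the nontrivial technology your proposal leaves as a black box.

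Second, ``chasing the Mayer--Vietoris sequence'' is not enough even if you could show $NK_1$ of the two completion corners vanishes. In the MV sequence
\[
NK_2\bigl(\widehat{A}_{(x)}[x^{-1}]\bigr) \xrightarrow{\partial} NK_1(A) \rightarrow NK_1\bigl(B[x^{\pm 1}]\bigr) \oplus NK_1\bigl(\widehat{A}_{(x)}\bigr),
\]
vanishing of the right-hand terms only identifies $NK_1(A)$ with the image of the connecting homomorphism $\partial$; you still must prove $\partial = 0$. In the paper this is the content of Proposition~\ref{prop:connecting_homomorphism_K_theory_vanishes}, whose proof requires an explicit Steinberg-symbol analysis of the boundary map (via Lemma~\ref{lemma:formula_for_connecting_homomorphism}, $\partial\{x,u\}=0$ when $u \equiv 1 \bmod x$) and a list of five hypotheses to verify. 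Your proposal does not engage with this step at all.

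Finally, the ``for all field extensions $k \subseteq k'$'' hypothesis is used by the paper in a sharply specific way that your sketch misses. The paper reduces $K_1$-regularity of $A$ to the \emph{strong $SK_1$-extension property} over $k'$ (Definition~\ref{dfn:strong_extension_property} plus Corollary~\ref{cor:strong_F_extension_implies_F-regular}), which demands vanishing of $SK_1$ over the tower of function fields $k'\langle z_1,\ldots,z_\ell\rangle$. The induction on $\ell$ in Theorem~\ref{thm:SK1-contractible} is where the ``all field extensions'' hypothesis enters: at each stage one needs to know that $k'\langle z_1,\ldots,z_\ell\rangle[t_i]/\alpha$ is still a $K_1$-regular domain, and in Step~(B) of that proof the vanishing of $SK_1(A/\alpha[x])$ is extracted from the $\ell+1$ case by the monic inversion principle. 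This is not ``allowing base-change to residue fields'' in some soft sense; it is a carefully orchestrated descent along monic localizations. Filling in these three missing ingredients would essentially reconstruct the paper's Sections~\ref{section:monic} through~\ref{section:overrings}.
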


 Here a ring is called $F$-regular for some functor $F$ if $F(R) \rightarrow F(R[t_1, \ldots, t_n])$ (induced by the inclusion of the constant polynomials) is an isomorphism for all $n \in \mathbb{N}$. For functors taking values in abelian groups (or even just pointed sets), the functor $NF$ is given by the kernel $NF(R)$ of $F(R[t]) \rightarrow F(R)$ induced by evaluation at $0$. If $NF(R) \neq 0$, then it is clear that $R$ is not $F$-regular.
 
 Similar arguments used in the proof of the above theorem can be used to establish the following result.

 \begin{thm}\label{thm:K_theory_of_simple_Hermite_conjecture_rings}
 If $R$ is a regular ring and $n \geq 2$, then the ring $R[x_i,y_i] \slash ( \sum_{i=1}^n x_i y_i )$ is $K_1$-regular. In particular, the induced morphism
 \[
 K_1(R) \rightarrow K_1\bigl( R[x_i,y_i] \slash ( \textstyle\sum_{i=1}^n x_i y_i ) \bigr)
 \]
 is an isomorphism.
 \end{thm}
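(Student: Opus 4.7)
The plan is to induct on $n \geq 2$, exploiting the closed immersion $\Spec B_{n-1} \hookrightarrow \Spec B_n$ cut out by $(x_1, y_1)$. Write $B_n := R[x_i, y_i]/\bigl(\sum_{i=1}^n x_i y_i\bigr)$.

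I would first verify that $(x_1, y_1)$ is a regular sequence in $B_n$: since $x_1 \nmid \sum x_i y_i$ in the polynomial ring $R[x_i, y_i]$, the element $x_1$ is a nonzerodivisor on $B_n$; and the quotient $B_n/(x_1) \cong B_{n-1}[y_1]$ is a polynomial extension of $B_{n-1}$ in which $y_1$ is a free variable, hence a nonzerodivisor. Since $B_n/(x_1, y_1) \cong B_{n-1}$, the Thomason--Trobaugh localization theorem applied to this regular closed immersion delivers a fiber sequence $K(B_{n-1}) \to K(B_n) \to K(U)$, where $U := \Spec B_n \setminus V(x_1, y_1)$, and hence a long exact sequence
\[
\cdots \to NK_1(B_{n-1}) \to NK_1(B_n) \to NK_1(U) \to \cdots.
\]

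Next, I would show $NK_i(U) = 0$ for all $i$. The open $U$ is Zariski-covered by the two principal opens $D(x_1)$ and $D(y_1)$; solving the quadric relation for $y_1$ (respectively $x_1$) identifies each of these with a Laurent polynomial ring over $R$, and their intersection $D(x_1 y_1)$ is a further localization of the same. All three are regular, hence $K$-regular, so Zariski Mayer--Vietoris forces $NK_*(U) = 0$. For the base case $n = 2$, one needs $K_1$-regularity of $B_1 := R[x_2, y_2]/(x_2 y_2)$; although $B_1$ is singular, the Milnor-square decomposition $B_1 \cong R[x_2] \times_R R[y_2]$ exhibits it as a pullback of regular $R$-algebras, and the associated Mayer--Vietoris sequence combined with the $K$-regularity of $R$, $R[x_2]$, $R[y_2]$ yields $NK_i(B_1) = 0$ for every $i \geq 0$. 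The induction then closes: for $n \geq 3$, the inductive hypothesis $NK_1(B_{n-1}) = 0$ and the vanishing $NK_1(U) = 0$ force $NK_1(B_n) = 0$ by exactness. The concluding isomorphism $K_1(R) \cong K_1(B_n)$ follows from the standard contracting homotopy for $\mathbb{N}$-graded rings with $B_n^0 = R$: the scalar rescaling $B_n \to B_n[t]$, $a \in B_n^k \mapsto t^k a$, combined with $K_1$-regularity of $B_n$, identifies $K_1(B_n)$ with $K_1(B_n^0) = K_1(R)$.

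The chief technical step (and the main potential obstacle) is the Thomason--Trobaugh identification of the relative $K$-theory $K(B_n \text{ on } V(x_1, y_1))$ with $K(B_{n-1})$ via pushforward, which rests precisely on the regular-sequence condition for $(x_1, y_1)$. Once this identification is established, the remainder of the argument is a routine combination of Zariski Mayer--Vietoris (for $U$) and Milnor-square Mayer--Vietoris (for $B_1$).
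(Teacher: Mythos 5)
Your proof founders at the step you yourself flag as the crux: the identification of $K(B_n \text{ on } V(x_1,y_1))$ with $K(B_{n-1})$. Thomason--Trobaugh's localization theorem gives the fiber sequence
\[
K\bigl(B_n \text{ on } V(x_1,y_1)\bigr) \longrightarrow K(B_n) \longrightarrow K(U),
\]
where the left-hand term is the $K$-theory of \emph{perfect complexes on $\Spec B_n$ supported on $V(x_1,y_1)$}. Identifying this with $K(B_{n-1})$ is a d\'evissage statement, and d\'evissage is one of the well-known failures of $K$-theory (as opposed to $G$-theory) for singular schemes. The regular-sequence hypothesis only guarantees that the pushforward $i_* \colon K(B_{n-1}) \to K(B_n \text{ on } V(x_1,y_1))$ is well-defined (the Koszul complex gives a finite free resolution of $B_{n-1}$ over $B_n$); it does not make $i_*$ an equivalence. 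The obstruction is concrete: a perfect complex on $X=\Spec B_n$ supported set-theoretically on $Z$ need not be obtained by pushforward from $Z$ --- for instance, $B_n/(x_1^2, y_1)$ is supported on $Z$ but is not a $B_{n-1}$-module. When $X$ is regular, perfect complexes coincide with $D^b(\mathrm{coh})$ and Quillen d\'evissage for $G$-theory rescues the identification, but here both $B_n$ and $B_{n-1}$ are singular at the origin, which lies on $Z$. A small model shows the failure: for $A = k[x,y]/(y^2)$ and the nonzerodivisor $x$, the group $K_0\bigl(A\text{ on }(x)\bigr)$ is infinitely generated while $K_0\bigl(k[y]/(y^2)\bigr) = \mathbb{Z}$. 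Without the d\'evissage equivalence, your induction has no long exact sequence relating $NK_1(B_{n-1})$, $NK_1(B_n)$, and $NK_1(U)$.

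There is a secondary weakness in the base case: the Milnor square $B_1 \cong R[x_2]\times_R R[y_2]$ yields an exact Mayer--Vietoris sequence only in the range $K_1 \to K_0 \to K_{-1} \to \cdots$; it does not begin with a term $K_2(R[t])$ mapping to $K_1(B_1[t])$, so it gives no control over $\ker\bigl(K_1(B_1[t]) \to K_1(R[x_2,t]) \oplus K_1(R[y_2,t])\bigr)$, which is exactly where an element of $NK_1(B_1)$ could hide. This is the failure of excision for higher $K$-theory. The conclusion $NK_1(B_1)=0$ is true, but establishing it requires the kind of functorial machinery the paper develops (weak Milnor excision for $SK_{1,r}$ together with the Horrocks and Quillen principles; see Lemma~\ref{lemma:excision_for_coordinate_cross} and the proof of the theorem), not a bare Milnor Mayer--Vietoris chase.

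For comparison, the paper avoids both issues by never leaving the world of affine patching. It writes $B_n = A[x,y]/(xy-\alpha)$ with $\alpha = -\sum_{i<n} x_iy_i$, compares $B_n$ with $A^h_{(\alpha)}[x,y]/(xy-\alpha)$ along the henselization, and uses the generalized Hensel lemma (Theorem~\ref{thm:generalized_hensel_lemma}) to reduce the ``piece near $\alpha$'' to the coordinate cross $A/\alpha[x,y]/(xy)$, together with an explicit Steinberg-symbol computation of the connecting map in $K_2 \to K_1$ to show it vanishes (Proposition~\ref{prop:connecting_homomorphism_K_theory_vanishes}). Your $G$-theory intuition is sound --- the fiber sequence $G(B_{n-1}) \to G(B_n) \to G(U)$ is genuine --- but $G$-theory is already $\mathbb{A}^1$-invariant for Noetherian rings, so it cannot detect the content of the theorem, which lives precisely in the difference between $K$ and $G$ for singular rings.
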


 Since $K_1$-regularity implies $K_0$-regularity, it follows in particular that we also have $K_0\bigl(\mathbb{Z}[x_i,y_i] \slash (\sum_{i=1}^n x_i y_i)\bigr) \cong \mathbb{Z}$. Thus the Hermite ring conjecture implies that \emph{all} finitely generated projective modules over $\mathbb{Z}[x_i,y_i] \slash (\sum_{i=1}^n x_i y_i)$ are free.
 
 The above result was first shown by Krishna and Sarwar \cite{KRISHNA_SARWAR} in the special case $n=2$ and when $R$ is a $\mathbb{Q}$-algebra. We note that, in this special case, the higher $K$-groups are also computed in \cite{KRISHNA_SARWAR} for $R=k$ a field that is algebraic over $\mathbb{Q}$; this is beyond the reach of our methods.
 
 Since $R[x_1,y_1,x_2,y_2] \slash (x_1 y_1+x_2 y_2)$ is isomorphic to the monoid algebra
 \[
 R[ux,uy,vx,vy] \subseteq R[u,v,x,y] \smash{\rlap{,}}
 \]
 we can now give a complete answer to a question of Gubeladze \cite[p.~170]{GUBELADZE_NONTRIVIALITY}: there exists a non-free, finitely generated, cancellative, and torsion free monoid $M$ such that $K_1(R) \rightarrow K_1(R[M])$ is an isomorphism for all regular rings $R$.

 The proof of these two results relies on a careful analysis of (analytic and Milnor) patching diagrams, so it in particular never leaves the realm of affine schemes (in contrast to the methods employed in \cite{KRISHNA_SARWAR}).

 Since there are many functors besides $K$-groups which are compatible with patching diagrams---for example, the functor $P$ sending $R$ to the set of isomorphism classes of finitely generated projective modules---these patching arguments can also be applied in other contexts. To illustrate this, we use the same patching diagrams appearing in the proofs of Theorems~\ref{thm:converse_to_Swan's_thm} and \ref{thm:K_theory_of_simple_Hermite_conjecture_rings} in order to study stably free modules over the rings
 \[
 k[a,x_1,y_1,x_2,y_2] \slash (x_1 y_1 + x_2 y_2 - a^{\ell})
 \]
 where $k$ is any field and $\ell \geq 2$. This leads to a question that lies between Questions~1 and 2. Recall that the first symplectic $K$-group of a commutative ring $R$ is given by
 \[
 K_1 \mathrm{Sp}(R)=\mathrm{Sp}(R) \slash \mathrm{Ep}(R) =\mathrm{Sp}(R) \slash [\mathrm{Sp}(R),\mathrm{Sp}(R)] \smash{\rlap{,}}
 \]
 where $\mathrm{Sp}(R)=\mathrm{colim}_r \mathrm{Sp}_{2r}(R)$ is the stabilization of the symplectic groups and $[\mathrm{Sp}(R),\mathrm{Sp}(R)]$ denotes the commutator subgroup (which coincides with the elementary symplectic group $\mathrm{Ep}(R)$). This group comes with a natural forgetful homomorphism $K_1 \mathrm{Sp}(R) \rightarrow K_1(R)$.
 
\begin{question*}[Question~3]
Is the homomorphism
\[
K_1 \mathrm{Sp} \big(k[a,x,y] \slash (xy-a^{\ell})\bigr) \rightarrow K_1 \big(k[a,x,y] \slash (xy-a^{\ell})\bigr)
\]
injective for all fields $k$ and all natural numbers $\ell \geq 2$?
\end{question*}

 The following theorem shows how these three questions are related.
 
  \begin{thm}\label{thm:intermediate_stable_question}
 If the Hermite ring conjecture holds, then the kernel of
 \[
 K_1 \mathrm{Sp}\bigl(k[a,x,y] \slash (xy-a^{\ell}) \bigr) \rightarrow  K_1 \bigl(k[a,x,y] \slash (xy-a^{\ell}) \bigr)
 \]
 vanishes for all fields $k$ and all natural numbers $\ell \geq 2$.
 
 On the other hand, if this kernel vanishes for $k$ and $k \langle t \rangle$ as ground field, then all stably free modules over the ring
 \[
 k[a,x_1, y_1, x_2, y_2] \slash (x_1 y_1 + x_2 y_2 - a^{\ell})
 \]
 are free for all $\ell \geq 2$.
 \end{thm}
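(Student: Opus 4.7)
Both implications go through analytic patching diagrams in the spirit of the proofs of Theorems~\ref{thm:converse_to_Swan's_thm} and~\ref{thm:K_theory_of_simple_Hermite_conjecture_rings}. The bridge between $R = k[a,x,y]/(xy-a^\ell)$ and $R' = k[a,x_1,y_1,x_2,y_2]/(x_1y_1+x_2y_2-a^\ell)$ is the observation that after inverting $a$, $R'$ is identified with $k[a^{\pm 1}]\otimes_k\mathcal{O}(\mathrm{SL}_2)$, so rank-$2$ patching data on $R'$ naturally carries a symplectic form whose equivalence classes are controlled by the symplectic $K$-theory of $R$.

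For the forward direction, assume the Hermite ring conjecture, and let $[\sigma]\in\ker(K_1\mathrm{Sp}(R)\to K_1(R))$ be represented by $\sigma\in\mathrm{Sp}_{2r}(R)$ which is elementary in $\mathrm{GL}(R)$. The plan is to use $\sigma$ as gluing datum in the analytic patching square for $R'$ (or for a polynomial extension of a suitable ring in the test set $\ca{H}$); the elementary factorization of $\sigma$ on the ``other'' side of the square exhibits the resulting patched module $P$ as stably free. Under the Hermite ring conjecture, $P$ is free, and unpacking a trivialization of $P$ through the patching square exhibits $\sigma$ as a product of elementary symplectic matrices, so $[\sigma]=0$ in $K_1\mathrm{Sp}(R)$.

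For the converse, let $P$ be stably free over $R'$. Rank-$1$ stably free modules are always free, and ranks exceeding $\dim R'=4$ are free by Bass's cancellation theorem, so the task reduces to ranks $2$--$4$; the rank-$2$ case is the heart, with higher ranks handled by stabilizing to symplectic (even) rank after adding trivial summands. Present a rank-$2$ $P$ by a patching matrix $\sigma\in\mathrm{GL}_2$ on the analytic patching diagram of $R'$. The crucial step is to bring $\sigma$ into symplectic form modulo the equivalences induced by the corners of the square; this uses the $2$-by-$2$ pseudoelementary technology and the $2$-by-$2$ Vorst-type theorem announced in the abstract, which is why the hypothesis is needed over both $k$ and $k\langle t\rangle$: the Vorst-style reduction proceeds via $R'[t]$ with a monic polynomial inverted. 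Once $\sigma$ is symplectic, the assumed injectivity forces $\sigma\in\mathrm{Ep}$, and so $P$ is free.

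The main obstacle I anticipate is the rank-$2$ symplectic reduction in Part~2: it requires the bespoke $2$-by-$2$ technology of the paper and is precisely what accounts for the appearance of $k\langle t\rangle$ in the hypothesis. A secondary difficulty in Part~1 is setting up the analytic patching square for $R'$ so that a given symplectic $\sigma$ over $R$ produces a stably free $R'$-module of the intended type, compatibly with the $\mathrm{SL}_2$-structure of $R'_a$.
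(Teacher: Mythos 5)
Your plan gestures in the right direction — analytic patching, the $2\times 2$ pseudoelementary technology, the role of $k\langle t\rangle$ — but it misses the central mechanism of both implications, and a key step is vacuous.

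The heart of the paper's argument is an auxiliary ring $C = k[a,x]_S[y,t]/\bigl(t^2 - t(xy-a^\ell)\bigr)$ with $S = 1+xk[a,x]$ (together with its unlocalised companion $C' = k[a,x,y,t]/\bigl(t^2-t(xy-a^\ell)\bigr)$), which fits into a Milnor square over $B \defl k[a,x,y]/(xy-a^\ell)$ and encodes $1$-stably free rank-$2$ $C$-modules as $1$-stably elementary $2\times 2$ matrices over (a localization of) $B$. The symplectic $K$-theory enters not through any $\mathcal{O}(\mathrm{SL}_2)$-decomposition of $R'_a$, but through Vaserstein's theorem (Th\'eor\`eme~4 of \cite{BASS_LIBERATION}), which identifies $W(C) = \ker\bigl(K_0\mathrm{Sp}(C) \to K_0(C)\bigr)$ with $\mathrm{Um}_3(C)/\mathrm{SL}_3(C)$ once one knows $C$ has stable range $4$. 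The paper then uses two Bass-type exact sequences (Proposition~\ref{prop:Bass_exact_sequences_K*Sp} applied to the Milnor square for $C$ and to an analytic square over $k[a,x]$) plus a four-lemma chase (Lemmas~\ref{lemma:W(C)_isomorphic_to_N1} and \ref{lemma:N1_isomorphic_to_N2}) to identify $W(C)$ with the kernel of $K_1\mathrm{Sp}(B) \to K_1(B)$. This chain $C \leftrightarrow \widetilde{SK}^{1\text{-st}}_{1,2}(B) \leftrightarrow W(C) \leftrightarrow \ker(K_1\mathrm{Sp}(B)\to K_1(B))$ does not appear anywhere in your plan, and it is what makes the theorem tick; without it neither direction has a mechanism.

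Concretely, in your converse direction, the step ``bring $\sigma$ into symplectic form'' is vacuous: for $2\times 2$ matrices $\mathrm{Sp}_2 = \mathrm{SL}_2$, so a rank-$2$ patching matrix in $\mathrm{SL}_2$ is already symplectic. The actual content is that $[\sigma]\in\widetilde{SK}^{1\text{-st}}_{1,2}(B_\alpha)$ being trivial reduces (via Proposition~\ref{prop:elementary_completable_implies_stably_elementary_patching}, Proposition~\ref{prop:stably_elementary_patching_trivial_if_trivial_modulo_alpha}, the good-lift machinery, and the Horrocks/$k\langle s\rangle$-argument encoded in Proposition~\ref{prop:universal_unipotent_unimodular_row}) to the $\widetilde{SK}^{1\text{-st}}_{1,2}$-statements for $B$ over $k$ and $k\langle s\rangle$, which are then tied to $K_1\mathrm{Sp}$ only through the $W(C)$-chain above. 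Your plan does correctly anticipate that $k\langle t\rangle$ enters via a monic-inversion step, which is one accurate piece.

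In your forward direction, the assertion that ``unpacking a trivialization of $P$ through the patching square exhibits $\sigma$ as a product of elementary symplectic matrices'' cannot be carried out as stated: the free patching lemma gives a factorization through $\mathrm{Sp}$ (or $\mathrm{SL}$), not through $\mathrm{Ep}$. The paper circumvents this by converting the claim into $\widetilde{SK}^{1\text{-st}}_{1,2}(B)\cong\ast$ (via the $W(C)$-chain), and then proving \emph{that} by observing that the Milnor square for $C'$ translates $1$-stably elementary $\sigma\in\mathrm{SL}_2(B)$ into $1$-stably free $C'$-modules of rank~$2$, that $C'$ is naively $\mathbb{A}^1$-contractible (graded with suitable weights), and that the Hermite ring conjecture plus the Swan--Weibel homotopy trick forces all stably free $C'$-modules to be free. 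Your plan does not identify $C'$, does not invoke the homotopy trick, and does not explain how the Hermite ring conjecture would ever yield an $\mathrm{Ep}$-factorization directly — which I believe it cannot without the detour through $\widetilde{SK}^{1\text{-st}}_{1,2}$.

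Finally, your appeal to ``$R'_a \cong k[a^\pm]\otimes_k\mathcal{O}(\mathrm{SL}_2)$'' as the source of the symplectic structure is a plausible intuition but plays no role in the paper's proof, and your appeal to Bass cancellation for ranks $>\dim R'=4$ is replaced in the paper by the stronger statement (Corollary~\ref{cor:unimodular_rows_length_geq4_completable}) that all unimodular rows of length $\geq 4$ are already completable, which is what makes the reduction to length $3$ clean.
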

 
 Here $k \langle t \rangle=k(t)$ denotes the field of fractions of $k[t]$. The theorem shows in particular that Question~2 has an affirmative answer if Question~3 does. On the other hand, if Question~3 has a negative answer for some field $k$, then there are counterexamples to the Hermite ring conjecture (Question~1). In the companion article \cite{SCHAEPPI_COUNTEREXAMPLE} we will exploit this connection to construct a counterexample to the Hermite ring conjecture by showing that Question~3 has a negative answer.
 
 We will see in the course of the proof of Theorem~\ref{thm:intermediate_stable_question} that all stably free modules over the ring $ k[a,x_1,y_1,x_2,y_2] \slash (x_1 y_1 + x_2 y_2 - a^{\ell})$ are free if $\mathrm{char}(k) \neq 2$. Thus Question~3 is most interesting in the case where the field has characteristic $2$. We conclude with the following theorem, which shows that Question~3 has an affirmative answer for \emph{perfect} fields of characteristic $2$.
 
 \begin{thm}\label{thm:injectivity_for_perfect_field}
 If $k$ is a perfect field of characteristic $2$ and $\ell \geq 2$, then
 \[
 K_1 \mathrm{Sp}\bigl(k[a,x,y] \slash (xy-a^{\ell}) \bigr) \rightarrow  K_1 \bigl(k[a,x,y] \slash (xy-a^{\ell}) \bigr)
 \]
 is injective
\end{thm}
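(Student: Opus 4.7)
The plan is to apply the analytic patching technique that underpins Theorems~\ref{thm:converse_to_Swan's_thm}--\ref{thm:intermediate_stable_question} to the ring $R=k[a,x,y] \slash (xy-a^{\ell})$, and to reduce the injectivity of the forgetful map $K_1 \mathrm{Sp}(R)\to K_1(R)$ to tractable statements on the pieces of the resulting square, with the perfectness hypothesis intervening decisively on the analytic corner.

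First I would set up an analytic Milnor square for $R$, analogous to those employed earlier in the paper, expressing $R$ as the pullback of the localization $R[a^{-1}]\cong k[a^{\pm 1},x^{\pm 1}]$ and an analytic/Henselian completion $R^{\mathrm{an}}$ along $a=0$. Writing the Mayer--Vietoris sequences for $K_1$ and $K_1 \mathrm{Sp}$ and using naturality of the forgetful map yields a commutative ladder; a short diagram chase then reduces the problem to two items: (i) injectivity of the forgetful map on the Laurent-polynomial corner, and (ii) a corresponding controlled statement on the analytic corner, measured against the common piece $R^{\mathrm{an}}[a^{-1}]$.

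For (i), homotopy invariance and the fundamental theorem let one reduce to the base field: since $K_1 \mathrm{Sp}(k)=0$ (every symplectic matrix over a field is a product of symplectic transvections) and the Bass-type summands of $K_1 \mathrm{Sp}(k[a^{\pm 1},x^{\pm 1}])$ are already detected by $K_1$, the forgetful map is injective on this corner. For (ii), which is the heart of the argument, one analyses $R^{\mathrm{an}}$ via the reduction $R^{\mathrm{an}}\to R^{\mathrm{an}}/a \cong k[x,y] \slash (xy)$ and an additional internal patching decomposing the node into two affine lines meeting at a point. Tracing a $\mathrm{GL}$-elementary factorization of a symplectic matrix through these reductions produces an obstruction to lifting the factorization symplectically, living in a quotient of $k$ (this is where characteristic~$2$ causes the usual methods to fail, because the alternating form is then automatically quadratic and a Dickson/Arf-type invariant appears). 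The perfectness of $k$ in characteristic~$2$ is used to kill this obstruction by extracting Frobenius roots, thereby upgrading a $\mathrm{GL}$-elementary factorization to a symplectically elementary one.

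The main obstacle is step~(ii). The usual Karoubi-type comparison between symplectic and general-linear $K$-theory is insensitive to characteristic~$2$ only after inverting~$2$, so in characteristic~$2$ one has to work harder; perfectness substitutes for the unavailable $\tfrac{1}{2}$ via the surjectivity of Frobenius on $k$, and this is precisely what permits the lifting of the factorization from the nodal reduction $k[x,y] \slash (xy)$ back to the analytic corner $R^{\mathrm{an}}$. Once (ii) is established, the patching reduction of the first paragraph assembles the pieces into the injectivity of $K_1 \mathrm{Sp}(R)\to K_1(R)$.
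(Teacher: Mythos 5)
Your overall blueprint — patch along $a$, compare the long exact sequences of $K_1 \mathrm{Sp}$ and $K_1$, and let the perfectness hypothesis operate on one corner — is the right shape, but the argument you sketch has two genuine gaps that would need to be filled in a way that essentially reconstitutes the paper's own proof.

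First, a Mayer--Vietoris comparison at the level of $K_1$ alone cannot give injectivity of the middle map. You need the four lemma applied to the exact sequences starting at $K_2$: injectivity of $K_1 \mathrm{Sp}(R) \to K_1(R)$ requires \emph{injectivity} of $K_2 \mathrm{Sp} \to K_2$ on the glue, \emph{injectivity} of $K_1 \mathrm{Sp} \to K_1$ on the two pieces, and \emph{surjectivity} of $K_2 \mathrm{Sp} \to K_2$ on the two pieces. The last of these is a substantial hurdle you don't address: it is where the paper expends Lemmas~\ref{lemma:surjectivity_of_second_half_of_varphi_1} and \ref{lemma:surjectivity_of_first_half_of_varphi_1}, showing that $K_2$ of the pieces is generated by Steinberg and Dennis--Stein symbols (this is where the nodal reduction $k[x,y]/(xy)$ and Dennis--Krusemeyer actually intervene, not in producing an Arf-type obstruction at the $K_1$ level as you suggest). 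Moreover, even getting the $K_2 \mathrm{Sp}$-level exact sequence for an analytic square in characteristic $2$ is nontrivial: one cannot invert $2$, so the paper builds it from the Grothendieck--Witt machinery of Calmès et al.\ together with Hebestreit--Steimle's comparison (Theorem~\ref{thm:long_exact_sequences_K2Sp}); you would need some substitute for this.

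Second, you locate the obstruction on the analytic corner and describe it vaguely as "a quotient of $k$" killed by extracting Frobenius roots. In fact the analytic corner $k\llbracket a \rrbracket[x,y]/(xy - a^\ell)$ is harmless: $K_1 \mathrm{Sp}$ of it vanishes for \emph{any} field $k$ (Lemma~\ref{lemma:K1Sp_of_power_series_overring_vanishes}). The obstruction that perfectness must kill sits on the glue $k(\!(a)\!)[x,x^{-1}]$, at the $K_2$ level: by a theorem of Morita--Rehmann, $\ker\bigl(K_2 \mathrm{Sp}(F[x,x^{-1}]) \to K_2(F[x,x^{-1}])\bigr) \cong I^3(F) \oplus I^2(F)$, the powers of the fundamental ideal of the Witt ring of symmetric bilinear forms. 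Perfectness of $k$ in characteristic $2$ implies $[k(\!(a)\!) : k(\!(a)\!)^2] = 2$, and Milnor's theorem then forces $I^2(k(\!(a)\!)) = 0$. That is the precise mechanism — it is not a Frobenius-root extraction on a coefficient, and "a quotient of $k$" is not where the obstruction lives. Without identifying the Morita--Rehmann kernel and the Milnor vanishing, the proposal has no path to using perfectness effectively.
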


 While the proofs of Theorems~\ref{thm:converse_to_Swan's_thm} and \ref{thm:K_theory_of_simple_Hermite_conjecture_rings} mostly rely on classical patching techniques, the proofs of Theorem~\ref{thm:intermediate_stable_question} and \ref{thm:injectivity_for_perfect_field} require some additional theory. Namely, patching data for rank $2$ projective modules is given by invertible $2 \times 2$-matrices. It is well-known that many results that hold for elementary $r \times r$-matrices, $r \geq 3$, do not extend to the case $r=2$.
 
 To circumvent this problem, we introduce the notion of \emph{pseudoelementary} $2 \times 2$-matrices: a matrix $\sigma \in \mathrm{SL}_2(R)$ is called pseudoelementary if there exists a ring homomorphism
 \[
 \varphi \colon \mathbb{Z}[t_1, \ldots, t_n] \rightarrow R
 \]
 and a matrix $\sigma^{\prime} \in \mathrm{SL}_2(\mathbb{Z}[t_i])$ such that $\sigma=\varphi(\sigma^{\prime})$. We denote the group of pseudoelementary matrices by $\widetilde{\mathrm{E}}_2(R)$. With this notion, we can extend many of the results for $r \times r$-matrices, $r \geq 3$, to the case $r=2$. The following theorem, for example, extends Vorst's theorem (see \cite{VORST_GLN}) to the case $r=2$.

  \begin{thm}\label{thm:Vorsts_Theorem_for_2x2_matrices}
 Let $R$ be a commutative ring whose local rings are unramified regular local rings. For each $\sigma(t_1, \ldots, t_n) \in \mathrm{SL}_2(R[t_1, \ldots, t_n])$, there exists a pseudoelementary matrix $\varepsilon(t_1, \ldots, t_n) \in \widetilde{\mathrm{E}}_2(R[t_1, \ldots, t_n])$ such that the equation
 \[
 \sigma(t_1, \ldots, t_n)=\sigma(0, \ldots, 0) \cdot \varepsilon(t_1, \ldots, t_n)
 \]
 holds.
 \end{thm}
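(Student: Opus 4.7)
The plan is to follow Vorst's classical strategy: reduce to the case of an unramified regular local ring via Quillen-style patching, then settle that local case directly, with pseudoelementary matrices playing the role that elementary matrices play for $r \geq 3$.

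Setting $\varepsilon(t_1,\ldots,t_n) \defl \sigma(0,\ldots,0)^{-1} \sigma(t_1,\ldots,t_n)$ yields a matrix in $\mathrm{SL}_2(R[t_1,\ldots,t_n])$ with $\varepsilon(0,\ldots,0) = I$, so the theorem reduces to showing every such $\varepsilon$ lies in $\widetilde{\mathrm{E}}_2(R[t_1,\ldots,t_n])$. Next I would establish a Quillen-style local-global principle for pseudoelementary matrices: if $\varepsilon_{\mathfrak{m}}$ is pseudoelementary over $R_{\mathfrak{m}}[t_1,\ldots,t_n]$ for every maximal ideal $\mathfrak{m} \subset R$, then $\varepsilon$ is already pseudoelementary over $R[t_1,\ldots,t_n]$. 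Quillen's standard rescaling argument should run nearly verbatim: the condition $\varepsilon(0) = I$ is preserved under the relevant substitutions $t_i \mapsto (1-s) t_i$, and the elementary matrices that enter the patching argument are themselves pseudoelementary, so the necessary manipulations stay within $\widetilde{\mathrm{E}}_2$.

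For the local case (where $R$ is an unramified regular local ring), I would appeal to Popescu desingularization to write $R$ as a filtered colimit of smooth $\mathbb{Z}$-algebras $A_{\alpha}$, reducing further (via the fact that pseudoelementarity is preserved under ring maps and by passage to filtered colimits) to the case where $R$ is a smooth $\mathbb{Z}$-algebra of finite type. The defining flexibility of $\widetilde{\mathrm{E}}_2$---one may choose both the auxiliary polynomial ring $\mathbb{Z}[s_1,\ldots,s_p]$ and the ring homomorphism $\varphi \colon \mathbb{Z}[s_j] \to R[t_1,\ldots,t_n]$ freely---should then provide the room needed to exhibit a lift of $\varepsilon$ to some matrix $\varepsilon^{\prime} \in \mathrm{SL}_2(\mathbb{Z}[s_j])$.

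The main obstacle is producing such a lift $\varepsilon^{\prime}$ with determinant exactly $1$ in $\mathbb{Z}[s_j]$, and not merely congruent to $1$ modulo the kernel of $\varphi$. For $\mathrm{SL}_r$ with $r \geq 3$ any determinant discrepancy can be repaired by multiplication with suitable elementary matrices (this is the mechanism behind Vorst's original theorem), and the failure of that strategy for $r = 2$ is precisely what motivates the introduction of pseudoelementary matrices. The technical heart of the proof should consist in exploiting the freedom in the definition of $\widetilde{\mathrm{E}}_2$---multiple auxiliary variables and arbitrary ring maps $\varphi$---to work around this obstruction, building on structural results about pseudoelementary matrices that are developed earlier in the paper.
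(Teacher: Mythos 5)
Your high-level skeleton—normalize to $\varepsilon(0)=I$, establish a local-global principle for pseudoelementarity, then settle the local case via Popescu and Lindel—matches the paper's architecture. However, two of the three steps contain genuine gaps, and the piece you flag as ``the technical heart'' is exactly where the paper's real work lies, but your sketch stops short of identifying what that work is.

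First, your claim that Quillen's rescaling argument ``should run nearly verbatim'' is not safe. The rescaling/Vorst-ideal approach (\S\ref{section:excision}) requires weak analytic excision for the functor in question along the specific flat analytic isomorphisms $\iota' \colon A[x] \to A[x] \pb{A_f[x]} A_f[x,y]$. For $\widetilde{SK}_{1,2}$ this is \emph{not} established; only weak Zariski excision (Theorem~\ref{thm:pseudoelementary_weak_Zariski_excision}) and excision along diagrams admitting ``good lifts'' are available. The paper instead proves the local-global principle (Proposition~\ref{prop:local_global_for_pseudoelementary}) by a different route: the free-patching lemma applied to the Milnor square defining $\Omega A = \mathbb{Z}[A] \times_A \mathbb{Z}[A]$, transferring the problem to $P_2(\Omega A[t])$ and invoking the Quillen and Roitman principles for $P_2$. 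Your statement of the target is correct, but your proposed mechanism for reaching it would stall.

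Second, for the local case you correctly invoke Popescu to reduce to smooth $\mathbb{Z}$-algebras, but you then describe the obstacle (lifting the determinant condition to $\mathbb{Z}[s_j]$) without giving any route past it. The paper does not attempt such a direct lift. It instead runs the Lindel machinery (Proposition~\ref{prop:Lindel_etale_neighbourhood}, Proposition~\ref{prop:Lindel_result_for_general_F}, Theorem~\ref{thm:F_regularity_for_unramified_regular_rings}), which requires two inputs you do not mention: (i) $\widetilde{SK}_{1,2}$-regularity of fields and discrete valuation rings (Corollary~\ref{cor:pseudoelementary_over_several_Laurent_variables}, built on the Horrocks principle and the identity $\mathrm{SL}_2(\mathbb{Z}[T]) = \widetilde{\mathrm{E}}_2(\mathbb{Z}[T])$), and (ii) weak excision of $\widetilde{SK}_{1,2}$ for patching diagrams associated to \'etale neighbourhoods. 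The latter is the genuinely new ingredient—the ``good lifts'' formalism (Definition~\ref{dfn:good_lifts}, Proposition~\ref{prop:good_lifts_implies_weak_excision}, Corollary~\ref{cor:good_lifts_for_etale_nbhd})—and it is precisely what allows Vorst's argument to go through at $r=2$. Without identifying something playing that role, the ``exploit the freedom in $\widetilde{\mathrm{E}}_2$'' plan has no concrete content, and the proof does not close.
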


 We now give an overview of the contents of the various sections and we indicate briefly how the above results are proved. Of central importance is the notion of an analytic isomorphism $\varphi \colon A \rightarrow B$ along a multiplicatively closed set $S \subseteq A$. This is a ring homomorphism such that the induced homomorphism $A \slash s \rightarrow B \slash \varphi(s)$ is an isomorphism for all $s \in S$ and the diagram
 \[
 \xymatrix{A \ar[r] \ar[d]_{\varphi} & A_S \ar[d]^{\varphi_S} \\ B \ar[r] & B_{\varphi(S)}}
 \]
 is a pullback diagram. We note that sometimes more stringent assumptions are made (for example, that $\varphi$ is injective or that $S$ and $\varphi(S)$ both consist of nonzerodivisors).
 
 In \S \ref{section:analytic}, we show that many functors $F \colon \CRing \rightarrow \Set$ that we care about satisfy \emph{weak analytic excision}: they send diagrams as above to \emph{weak} pullback diagrams of sets. This section is largely a collection of well-known results in the literature, though these are often stated in the more restrictive cases mentioned above. We give detailed proofs (or in some cases pointers to the literature) which show that these restrictions are usually unnecessary.
 
 In \S \ref{section:excision} we turn to the study of the value $F(A[t_1, \ldots, t_n])$ of $F$ on a polynomial extension and in particular the class of $F$-regular rings. There are three important principles a functor $F \colon \CRing \rightarrow \Set$ can satisfy. Quillen's local-global principle $(\mathrm{Q})$ for $F$ asserts that $\sigma \in F(A[t])$ is extended from $F(A)$ if the localization $\sigma_{\mathfrak{m}} \in F(A_{\mathfrak{m}}[t])$ is extended for all maximal ideals $\mathfrak{m} \subseteq A$. For $F=P$, this first appeared in \cite{QUILLEN}. Roitman's localization principle $(\mathrm{R})$ states that $F$-regular rings are closed under localization (this is proved for $F=P$ in \cite{ROITMAN_POLYNOMIAL}). Finally, Lindel \cite{LINDEL} and Popescu \cite{POPESCU_DESINGULARIZATION} have shown that $P$-regularity of fields (respectively of discrete valuation rings) implies $P$-regularity for all regular rings containing a field (respectively all unramified regular rings).
 
 In \S \ref{section:excision}, we show that these three properties are largely consequences of weak analytic excision. We write $\Set_{\ast}$ for the category of pointed sets and we call $A$ \emph{$F$-contractible} if $F(A) \cong \ast$. For the definition of a natural transitive group action we refer to Definition~\ref{dfn:F_contractible_and_transitive_group_action} and simply note that any group-valued functor has such an action. Finally, we recall that a functor is called \emph{finitary} if it commutes with filtered colimits.
 
 \begin{thm}\label{thm:excision_implies_three_principles}
Let $F \colon \CRing \rightarrow \Set_{\ast}$ be a finitary functor which satisfies weak analytic excision. Then $F$ satisfies Quillen's principle $(\mathrm{Q})$. Assume that one of the following conditions holds:
 \begin{enumerate}
 \item[(A)] all local rings are $F$-contractible;
 \item[(B)] the functor $F$ admits a natural transitive group action.
 \end{enumerate}
 Then $F$ also satisfies Roitman's principle $(\mathrm{R})$. Moreover, the following hold:
 \begin{enumerate}
 \item[(i)] if all fields are $F$ regular, then all regular rings containing a field are $F$-regular;
 \item[(ii)] if, in addition, all discrete valuation rings are $F$-regular, then all unramified regular local rings are $F$-regular.
\end{enumerate} 
\end{thm}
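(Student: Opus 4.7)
The plan is to follow the classical arguments of Quillen, Roitman, Lindel, and Popescu while abstracting each step so that only finitariness, weak analytic excision, and hypothesis (A) or (B) are used. For Quillen's principle $(\mathrm{Q})$ my approach is to associate to each $\sigma \in F(A[t])$ the subset
\[
I(\sigma) \defl \{f \in A : \sigma(t+fu)=\sigma(t) \text{ in } F(A[t,u])\},
\]
where $\sigma(t+fu)$ denotes the pullback of $\sigma$ along the substitution $A[t] \to A[t,u]$, $t \mapsto t+fu$. One first checks that $I(\sigma)$ is an ideal: closure under the $A$-action is immediate from the substitution $u \mapsto au$, while closure under addition comes from applying the defining identities for $f$ and $g$ in two independent auxiliary variables $u$ and $v$ and then specializing $v=u$. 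Next, $1 \in I(\sigma)$ iff $\sigma$ is extended, the nontrivial direction being the specialization $t \mapsto 0$. The crux is the equivalence $f \in \sqrt{I(\sigma)}$ iff $\sigma_f$ is extended: the direction $\Rightarrow$ is immediate after substituting $u \mapsto u/f^n$ in the localized identity, while $\Leftarrow$ is a descent statement that I would derive from weak analytic excision (applied to an analytic isomorphism extracted from the commuting square of $A[t,u] \to A_f[t,u]$ with the substitution $u \mapsto f^n u$) together with finitariness. Principle $(\mathrm{Q})$ then follows at once: the hypothesis forces $\sqrt{I(\sigma)}$ to meet every complement of a maximal ideal, so $\sqrt{I(\sigma)} = A$, hence $1 \in I(\sigma)$, hence $\sigma$ is extended.

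For Roitman's principle $(\mathrm{R})$, given $F$-regular $A$ and multiplicatively closed $S \subseteq A$, I would take $\sigma \in F((S^{-1}A)[t])$, lift via finitariness to $\sigma_0 \in F(A_s[t])$ for some $s \in S$, and then apply $(\mathrm{Q})$ to reduce to local extendibility at each maximal $\mathfrak{m} \subseteq S^{-1}A$. The local ring $(S^{-1}A)_{\mathfrak{m}}$ is $A_\mathfrak{p}$ for a prime $\mathfrak{p}$ of $A$ disjoint from $S$. Under (A) we have $F(A_\mathfrak{p}) \cong \ast$, so the extendibility question reduces to showing that the restriction of $\sigma$ coincides with the basepoint; this in turn follows from global $F$-regularity of $A$ together with finitariness. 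Under (B) the transitive group action compensates for the absence of $F$-contractibility: it lets us modify potential lifts along the localization maps by group elements so that restrictions align, enabling weak analytic excision to assemble them into a global extension.

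For (i) and (ii) the plan is to invoke Popescu's desingularization theorem to write a regular ring $R$ containing a field (respectively an unramified regular local ring) as a filtered colimit of smooth algebras over its prime field (respectively over a discrete valuation ring or $\mathbb{Z}$). Finitariness of $F$ reduces the question to $F$-regularity of smooth algebras over a field or a DVR. For these I would apply Lindel's technique: Noether normalization together with an \'etale neighborhood reduces the smooth case to the polynomial case via an analytic patching argument using weak analytic excision along the \'etale morphism. Polynomial rings over a field (respectively a DVR) are $F$-regular by the hypothesis on fields (respectively DVRs), finishing the argument.

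The main obstacles I anticipate are the descent step in the $I(\sigma)$ characterization above and the Lindel-type reduction in (i)--(ii). Both require constructing just the right analytic isomorphism---one that is not itself a localization but captures the desired local-to-global comparison---and then invoking weak analytic excision with finitariness to transfer identities from the source to the localized ring. Once these analytic patching inputs are secured, the remaining steps (the ideal argument for $(\mathrm{Q})$, and the colimit-plus-Quillen-patching arguments for $(\mathrm{R})$ and (i), (ii)) follow the classical templates essentially verbatim.
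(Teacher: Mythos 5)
Your argument for the Quillen principle $(\mathrm{Q})$ matches the paper's: the ideal $I(\sigma)$ you define is exactly the paper's $I_\sigma$ (Definition~\ref{dfn:Vorst_ideal}), and the structure of the proof (ideal property, membership of $1$ implies extendedness, the localization-to-membership implication via weak excision) is the same. The descent step you flag as the crux is correct to flag: the paper's precise implementation (Lemmas~\ref{lemma:I_sigma_and_localization} and Proposition~\ref{prop:excision_implies_Q}) uses the flat analytic isomorphism $\iota' \colon A[x] \to A[x] \pb{A_f[x]} A_f[x,y]$ along $f$ from Proposition~\ref{prop:Vorst_patching_diagrams}, together with the filtered colimit description of $A[x] \pb{A_f[x]} A_f[x,y]$ along the maps $\mu_{f^n}$ (Lemma~\ref{lemma:Vorst_diagram_filtered_colimit}). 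Your ``commuting square with the substitution $u \mapsto f^n u$'' gestures at the $\mu_{f^n}$'s but does not identify the pullback ring or recognize $\iota'$ as the relevant analytic isomorphism, so this step is not secured as written. Your sketch of (i) and (ii) likewise follows the paper's route (Popescu plus an axiomatic Lindel argument with \'etale neighbourhoods), and is fine in outline.

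The Roitman part, however, has a genuine gap. Your plan in case (A) is to apply $(\mathrm{Q})$ over $S^{-1}A$ and reduce to the local rings $(S^{-1}A)_{\mathfrak{m}} = A_{\mathfrak{p}}$ with $\mathfrak{p} \cap S = \varnothing$; since $F(A_{\mathfrak{p}}) = \ast$, extendedness there is the same as $\sigma_{\mathfrak{p}}$ being the basepoint. But you then assert this ``follows from global $F$-regularity of $A$ together with finitariness,'' and that step does not close: knowing $A \in \ca{E}_1^F$ tells you that elements of $F(A[t])$ are extended, whereas $\sigma$ lives over $(S^{-1}A)[t]$ (or some $A_s[t]$) and does not descend to $F(A[t])$. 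Worse, showing $\sigma_{\mathfrak{p}}$ trivial for all $\mathfrak{p}$ disjoint from $S$ is precisely the statement $NF(A_{\mathfrak{p}}) = \ast$ for all such $\mathfrak{p}$, which is an instance of the very Roitman principle you are trying to prove (applied to the localization $A \to A_{\mathfrak{p}}$); so the argument is circular. The paper avoids this by proving directly (Proposition~\ref{prop:NF_trivial_on_localization}) that $NF(A) = \ast$ forces $NF(A_S) = \ast$: given $\sigma \in NF(A_f)$, the weak pullback property of the Vorst patching square for $A \pb{A_f} A_f[x]$ produces a $\sigma'$ over the pullback ring killed by $\pi'$; the filtered colimit description (Lemma~\ref{lemma:Vorst_diagram_filtered_colimit}) then pulls $\sigma'$ back to some $\tau \in F(A[x])$ with $\pi_\ast \tau = \ast$, i.e.\ $\tau \in NF(A) = \ast$, whence $\sigma = \ast$. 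Only afterwards does one pass from $NF(A_S) = \ast$ to $A_S \in \ca{E}_1^F$, via Lemma~\ref{lemma:F_contractible_implies_extended_if_trivial_NF} (case (A)) or Lemma~\ref{lemma:transitive_action_implies_extended_if_trivial_NF} (case (B)). You should replace the local--global reduction over $S^{-1}A$ with this direct $NF$-propagation along localizations, using the same patching square you need for $(\mathrm{Q})$.
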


 Here the first two implications are built on a proof of Vorst in the case where $F=K_i$ is given by the higher $K$-group functors (see \cite{VORST_POLYNOMIAL}). The novelty here is that we extend Vorst's result to functors which are not necessarily group-valued, thereby recovering Quillen's original result as a special case. The final part of the above theorem is simply an axiomatic treatment of Lindel's and Popescu's results based on the formalism of {\'e}tale neighbourhoods.
 
 There is a fourth important principle that $F$ can satisfy, the \emph{monic inversion principle} $(\mathrm{H})$ due to Horrocks in the case $F=P$ (see \cite{HORROCKS}). In \S \ref{section:monic}, we provide pointers to the literature which show that many of the functors discussed in \S \ref{section:analytic} satisfy this principle. We also discuss the strong $F$-extension property for a ring $A$ (see Definition~\ref{dfn:strong_extension_property}) which---in the presence of the monic inversion principle---not only implies that $A$ is $F$-regular, but also a corresponding extension property for Laurent polynomial rings (see Theorem~\ref{thm:strong_F_extension_for_Laurent}).
 
 In \S \ref{section:henselian_pairs}, we discuss some general facts about henselian pairs. The purpose of this section is twofold: first, we prove a generalized form of Hensel's lemma which only requires one of the polynomials to be monic (this is well-known for henselian local rings, see \cite[\href{https://stacks.math.columbia.edu/tag/04GG}{Lemma 04GG}]{stacks-project}, but does not appear to be in the literature for general henselian pairs). This result is later used to analyze various patching diagrams associated to overrings of polynomial rings.
 
 The second purpose of \S \ref{section:henselian_pairs} is a detailed description of {\'e}tale neighbourhoods and henselizations in terms of some simple classes of homomorphisms (so called \emph{standard} and \emph{basic} Nisnevich homomorphisms). These are important for the study of weak excision properties of pseudoelementary matrices (more precisely, of the functor sending $R$ to $\mathrm{SL}_2(R) \slash \widetilde{\mathrm{E}}_2(R)$) in \S \ref{section:pseudoelementary}.
 
 In \S \ref{section:overrings}, we study patching diagrams associated to rings of the form
 \[
 B=A[x,y] \slash (xy-\alpha)
 \]
 for some $\alpha \in A$. We provide some methods to check that a functor takes trivial values on the top right and bottom left corner of the analytic patching diagram
 \[
 \xymatrix{B \ar[r] \ar[d] & A[\frac{1}{\alpha}][x,x^{-1}] \ar[d] \\ A^h_{(\alpha)}[x,y]\slash (xy-\alpha) \ar[r] & A^h_{(\alpha)}[\frac{1}{\alpha}][x,x^{-1}] }
 \]
 arising from the henselization $A \rightarrow A^h_{(\alpha)}$ at $(\alpha)$. We then use these general results to prove Theorems~\ref{thm:converse_to_Swan's_thm} and \ref{thm:K_theory_of_simple_Hermite_conjecture_rings}.
 
  In \S \ref{section:pseudoelementary}, we study the group $\widetilde{\mathrm{E}}_2(R) \subseteq \mathrm{SL}_2(R)$ of pseudoelementary matrices and the associated functor $\widetilde{SK}_{1,2}(R) \defl \mathrm{SL}_2(R) \slash \widetilde{\mathrm{E}}_2(R)$ (taking values in pointed sets). We deduce the three principles $(\mathrm{Q})$, $(\mathrm{R})$, and $(\mathrm{H})$ for $\widetilde{SK}_{1,2}$ from the corresponding facts for the functor
  \[
  P_2 \colon \CRing \rightarrow \Set_{\ast}
  \]
 which sends $R$ to the set of isomorphism classes of rank $2$ projective modules, by using $\sigma \in \mathrm{SL}_2(R)$ as patching data for such modules.
 
 We show that $\widetilde{SK}_{1,2}$ satisfies weak Zariski excision (see Theorem~\ref{thm:pseudoelementary_weak_Zariski_excision}), but it is not clear if it satisfies weak analytic excision. We show that weak excision does hold in a number of cases, namely for patching diagrams which admit ``good lifts.'' This includes the patching diagrams appearing in the axiomatic form of Lindel's argument (see Theorem~\ref{thm:excision_implies_three_principles}), which makes it possible to extend Vorst's theorem to the case of $2 \times 2$-matrices, that is, to prove Theorem~\ref{thm:Vorsts_Theorem_for_2x2_matrices}.
 
 The theory of pseudoelementary matrices can also be used to prove the following fact about projective modules. Namely, if $R$ is a principal ideal domain such that $\mathrm{SL}_2(R)=\mathrm{E}_2(R)$ (for example, a euclidean domain), then all projective modules over the ring
 \[
 R[\mathrm{SL}_2]=R[x,y,u,v] \slash (xv-yu-1)
 \]
 are free (see Corollary~\ref{cor:projectives_free_over_coordinate_ring_of_SL2}). If $R$ is a field, this fact is due to Murthy (see \cite[Theorem~6.2]{SWAN_BUNDLES}).
 
 In \S \ref{section:symplectic}, we study the rings
 \[
 R=k[a,x_1,y_1,x_2,y_2] \slash (x_1 y_1 + x_2 y_2 - a^{\ell})
 \]
 for $k$ a field and $\ell \geq 2$ a natural number. Murthy's $(a,b,c)$-problem asks if the kernel of
 \[
 (1+a,x_1,x_2) \colon R^3 \rightarrow R
 \]
 is a free module if $k=\mathbb{F}_2$ (see Question~2). It is well-known that this kernel is free if $\mathrm{char}(k) \neq 2$ (see for example \cite[Proposition~6]{ROITMAN}). We show that, more generally, \emph{all} stably free modules over $R$ are free if $\mathrm{char}(k) \neq 2$ (see Proposition~\ref{prop:stably_free_trivial_if_char_neq_2}).
 
 In the remaining case $\mathrm{char}(k)=2$, we establish a link between the above strengthening of Question~2 and the question whether
 \[
 K_1 \mathrm{Sp}\bigl(k[a,x,y] \slash (xy-a^{\ell})\bigr) \rightarrow K_1\bigl(k[a,x,y] \slash (xy-a^{\ell})\bigr)
 \]
 is injective (see Question~3 and Theorem~\ref{thm:intermediate_stable_question}). This connection is obtained by studying if certain $1$-stably elementary $2 \times 2$-matrices (that is, matrices $\sigma \in \mathrm{SL}_2(R)$ such that $\bigl(\begin{smallmatrix} \sigma & 0 \\ 0 & 1 \end{smallmatrix} \bigr)$ is elementary) are pseudoelementary. The link with symplectic $K$-theory is established through a theorem of Vaserstein (see \cite[Th{\'e}or{\`e}me~4]{BASS_LIBERATION}).
 
 The second half of \S \ref{section:symplectic} is taken up by the proof that the above homomorphism is injective if $k$ is a perfect field of characteristic $2$ (see Theorem~\ref{thm:injectivity_for_perfect_field}). To get the relevant long exact sequences that make this work, we use the theory of higher Grothendieck--Witt groups developed in \cite{CALMES_ET_AL_I, CALMES_ET_AL_II}, together with the relationship between these and classical symplectic $K$-groups established in \cite{HEBESTREIT_STEIMLE}. Besides this, the proof of Theorem~\ref{thm:injectivity_for_perfect_field} relies on a computation in \cite{MORITA_REHMANN}.
 
 We conclude with some notation that is used throughout the article. For an element $a \in A$ of a commutative ring, we write $A \slash a$ for the quotient $A \slash Aa$ of $A$ modulo the ideal generated by $a$. For $S \subseteq A$ multiplicative, we write $A_S$ for the localization of $A$ at $S$ and we denote the localization homomorphism $A \rightarrow A_S$, $a \mapsto \frac{a}{1}$ by $\lambda_S$. If $S=\{1,f,f^2, \ldots\}$ is generated by a single element $f$, we write $A_f$ for $A_S$. We sometimes write $A[t_1^{\pm}, \ldots, t_n^{\pm}]$ for the Laurent polynomial ring
 \[
A[t_1,t_1^{-1}, \ldots, t_n,t_n^{-1}] 
 \]
 in $n$ variables.
 
 Given a commutative ring $R$, we write $\CAlg_R$ for the category of commutative $R$-algebras (and $\CRing$ for the category $\CAlg_{\mathbb{Z}}$ of commutative rings). We write $\Set$, $\Set_{\ast}$, $\Grp$, $\Ab$ for the categories of sets, pointed sets, groups, and abelian groups. Finally, we write $\Proj(R)$ for the category of finitely generated projective $R$-modules and $\Mod_R$ for the category of all $R$-modules.
 
  \subsection*{Acknowledgments}
 I am very grateful to Niko Naumann for carefully reading certain technical aspects of this article and for his suggestions for improvement.
\section{Analytic isomorphisms}\label{section:analytic}

 We say that a functor $F \colon \CAlg_R \rightarrow \Set$ satisfies \emph{weak excision} for a class of commutative squares $\ca{D}$ in $\CAlg_R$ if for each commutative diagram on the left
 \[
\vcenter{\xymatrix{A \ar[r] \ar[d] & B \ar[d] \\ C \ar[r] & D }} \quad \quad \quad
\vcenter{\xymatrix{FA \ar[r] \ar[d] & FB \ar[d] \\ FC \ar[r] & FD }}
 \]
 in $\ca{D}$, the diagram on the right above is a weak pullback diagram. Throughout the later sections we are mostly interested in the case where $\ca{D}$ is the class of analytic patching diagrams (defined below). In this section, we show that many functors of interest satisfy weak excision for this class of diagrams. In most cases, this is well-known. However, the notion of analytic patching diagram we consider here is slightly more general than the classical one (where certain elements are assumed to be nonzerodivisors). The aim of this section is to show that weak excision also holds for this weaker notion of analytic patching diagrams in the examples we care about.
 
 \begin{dfn}\label{dfn:analytic_patching_diagram}
Let $\varphi \colon A \rightarrow B$ be a homomorphism of $R$-algebras and let $S \subseteq A$ be a multiplicative subset. We say that $\varphi$ is an \emph{analytic isomorphism along $S$} if the induced homomorphism $\bar{\varphi} \colon A \slash s \rightarrow B \slash \varphi(s)$ is an isomorphism for all $s \in S$ and the diagram
\[
\vcenter{\xymatrix{A \ar[r]^-{\lambda_S} \ar[d]_{\varphi} & A_S \ar[d]^{\varphi_S} \\ B \ar[r]_-{\lambda_{\varphi(S)}} & B_{\varphi(S)} }}
\]
 is a pullback diagram of $R$-algebras. In this case, we call the above diagram an \emph{analytic patching diagram}. We say that $F \colon \CAlg_R \rightarrow \Set$ satisfies \emph{weak analytic excision} if it sends all analytic patching diagrams to weak pullback diagrams.
 \end{dfn}
 
 The following lemma allows us to check the condition on $\bar{\varphi} \colon A \slash s \rightarrow B \slash \varphi(s)$ merely for the generators of $S$. This is of course particularly useful is $S$ is generated by a single element, that is, $S =\{1,f,f^2, \ldots, f^n, \ldots\}$ for some $f \in A$.
 
 \begin{lemma}\label{lemma:analytic_iso_generators}
 Let $\varphi \colon A \rightarrow B$ be a homomorphism of $R$-algebras, $S \subseteq A$  a multiplicative set, and let $f, g \in S$. Then the following hold:
\begin{enumerate}
 \item[(1)] If $\bar{\varphi} \colon A \slash f \rightarrow B \slash \varphi(f)$ and $\bar{\varphi} \colon A \slash g \rightarrow B \slash \varphi(g)$ are surjective, then the induced homomorphism $\bar{\varphi} \colon A \slash fg \rightarrow B \slash \varphi(fg)$ is also surjective;
 
 \item[(2)] Let $I=\ker(\lambda_S \colon A \rightarrow A_S )$ and let $J=\ker(\lambda_{\varphi(S)} \colon B \rightarrow B_{\varphi(S)})$ and assume that
 \[
 \varphi \vert_{I} \colon I \rightarrow J
 \]
 is a bijection. If $\bar{\varphi} \colon A \slash f \rightarrow B \slash \varphi(f)$ and $\bar{\varphi} \colon A \slash g \rightarrow B \slash \varphi(g)$ are injective, then $\bar{\varphi} \colon A \slash fg \rightarrow B \slash \varphi(fg)$ is injective.
\end{enumerate}

In particular, if $G \subseteq S$ is a generating set, $\bar{\varphi} \colon A \slash g \rightarrow B \slash \varphi(g)$ is an isomorphism for all $g \in G$, and the diagram
\[
\vcenter{\xymatrix{A \ar[r]^-{\lambda_S} \ar[d]_{\varphi} & A_S \ar[d]^{\varphi_S} \\ B \ar[r]_-{\lambda_{\varphi(S)}} & B_{\varphi(S)} }}
\]
 is a pullback diagram of $R$-algebras, then $\varphi$ is an analytic isomorphism along $S$.
 \end{lemma}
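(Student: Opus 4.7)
The plan is to prove (1) and (2) by direct chase arguments, and then to deduce the concluding assertion by induction on factorizations in $S$, having first observed that the pullback hypothesis automatically supplies the bijection $\varphi|_I \colon I \to J$ required in part (2). Part (1) is a routine double lift: given $b \in B$, surjectivity of $\bar{\varphi}$ at $f$ yields $a_1 \in A$ and $b_1 \in B$ with $b = \varphi(a_1) + \varphi(f) b_1$, and then surjectivity at $g$ applied to $b_1$ yields $a_2, b_2$ with $b_1 = \varphi(a_2) + \varphi(g) b_2$; substituting gives $b = \varphi(a_1 + fa_2) + \varphi(fg) b_2$.

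Part (2) is the main obstacle, since the bijection hypothesis on $I \to J$ must be used in a slightly subtle way. Suppose $\varphi(a) = \varphi(fg) b$. Injectivity of $\bar{\varphi}$ at $f$ yields $a = f a_1$ for some $a_1 \in A$, and then $\varphi(f)(\varphi(a_1) - \varphi(g) b) = 0$ shows that $\varphi(a_1) - \varphi(g) b$ is annihilated by $\varphi(f) \in \varphi(S)$, hence lies in $J$. Using surjectivity of $\varphi|_I \to J$, I pull it back to $i \in I$ with $\varphi(i) = \varphi(a_1) - \varphi(g) b$; then $\varphi(a_1 - i) \in (\varphi(g))$, so injectivity of $\bar{\varphi}$ at $g$ gives $a_1 - i = g a_2$, whence $a = fi + fg a_2$. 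The delicate final step is that $fi = 0$: since $fi \in I$ (as $i \in I$ and $I$ is an ideal) and $\varphi(fi) = \varphi(f)(\varphi(a_1) - \varphi(g) b) = 0$, injectivity of $\varphi|_I$ forces $fi = 0$. This is the one place where the full bijection, and not merely surjectivity, of $I \to J$ is essential.

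For the concluding assertion, the pullback condition itself implies that $\varphi|_I \colon I \to J$ is a bijection: injectivity follows from the formal injectivity of $A \to A_S \times B$ for any pullback in sets, while for surjectivity, given $j \in J$ the pair $(0, j) \in A_S \times B$ has matching images $0$ and $\lambda_{\varphi(S)}(j) = 0$ in $B_{\varphi(S)}$ and therefore lifts uniquely to an element of $A$, which automatically lies in $I$ and maps to $j$. With the hypothesis of (2) now verified, a straightforward induction on the length $n$ of a factorization $s = g_1 \cdots g_n$ with $g_i \in G$ finishes the proof: the case $n = 0$ gives $s = 1$, for which both sides are zero, the case $n = 1$ is given by assumption, and for $n \geq 2$ one applies (1) and (2) with $f = g_1$ and $g = g_2 \cdots g_n$, the latter being an isomorphism by the inductive hypothesis.
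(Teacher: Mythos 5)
Your proof is correct and follows essentially the same route as the paper's. The one stylistic difference is in Part~(1), where you use a direct double-lift ($b = \varphi(a_1) + \varphi(f) b_1$, then lift $b_1$ modulo $\varphi(g)$) rather than the paper's snake-lemma argument applied to the right-exact sequences obtained from multiplication by $f$; both are the same computation, yours just unwound by hand. Your Part~(2) is, up to a sign convention, the argument attributed to Bhatwadekar in the paper, including the key point that $fi \in I$ together with $\varphi(fi)=0$ and injectivity of $\varphi|_I$ force $fi = 0$. Your explicit verification that the pullback square yields bijectivity of $\varphi|_I \colon I \to J$, and the induction on factorization length for the concluding assertion, make explicit what the paper's proof leaves as a one-line remark.
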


\begin{proof}
 First note that the final claim follows from (1) and (2): the fact that the diagram is a pullback implies that $\varphi$ induces a bijection between the kernels of the vertical morphisms, so the premise of Claim~(2) holds.
 
 Claim~(1) follows directly from the snake lemma. In more detail, multiplication by $f$ (respectively $\varphi(f)$) yields right exact sequences
 \[
 \xymatrix{ A \slash g \ar[r] & A \slash fg \ar[r] & A \slash f \ar[r] & 0}
 \]
 and
 \[
 \xymatrix{B  \slash \varphi(g) \ar[r] & B \slash \varphi(fg) \ar[r] & B \slash \varphi(f) \ar[r] & 0}
 \]
 of $R$-modules. Let $K$ denote the image of the morphism $B\slash \varphi(g) \rightarrow B \slash \varphi(fg)$ given by multiplication with $\varphi(f)$ and write 
  \[
  \pi \colon B \slash \varphi(g) \rightarrow K
  \]
   for the resulting surjection. The snake lemma applied to the diagram
  \[
  \xymatrix{& A \slash g \ar[r] \ar[d]_{\pi \bar{\varphi}} & A \slash fg \ar[d]_{\bar{\varphi}} \ar[r] & A \slash f \ar[d]^{\bar{\varphi}} \ar[r] & 0 \\
  0 \ar[r] & K \ar[r] & B \slash \varphi(fg) \ar[r] & B \slash \varphi(f) \ar[r] & 0}
  \]
 of $R$-modules shows that Claim~(1) holds.
 
 To see Claim~(2), we can adapt the argument given by Bhatwadekar \cite[Proposition~2.1]{BHATWADEKAR_ANALYTIC} in the case of a single generator. Namely, assume that $[a] \in \ker(\bar{\varphi})$, that is, there exists a $b \in B$ such that $\varphi(a)=\varphi(fg) b$. Since $A \slash f \rightarrow B \slash \varphi(f)$ is injective, it follows that there exists an element $a_0 \in A$ such that $a=f \cdot a_0$. From the fact that $\varphi(f) \bigl( \varphi(g)b - \varphi(a_0) \bigr)=0$ it follows that $\varphi(g)b - \varphi(a_0) \in J$, so there exists an element $c \in I$ with $\varphi(c)=\varphi(g)b - \varphi(a_0)$. Moreover, $fc \in I$ and $\varphi(fc)=0$, hence the assumption that $\varphi \vert_I$ is bijective implies that $fc=0$.
 
 Since $\varphi(c+a_0)=\varphi(g)b$ and $A \slash g \rightarrow B \slash \varphi(g)$ is injective, there exists an element $a_1 \in A$ such that $c + a_0=ga_1$. Thus
 \[
 a=fa_0=f(c +a_0)=(fg)a_1 \smash{\rlap{,}}
 \]
 so $[a]=0$ in $A \slash fg$, as claimed.
\end{proof}

 As noted in \cite[\S 2]{ROY}, the pullback condition is automatically satisfied if the multiplicative sets $S$ and $\varphi(S)$ consist of nonzerodivisors.

 \begin{lemma}\label{lemma:analytic_iso_nonzerodivisors}
  Let $\varphi \colon A \rightarrow B$ be a homomorphism of $R$-algebras and let $S \subseteq A$ be a multiplicative set. Suppose that all elements of $S$ are nonzerodivisors in $A$ and all elements of $\varphi(S)$ are nonzerodivisors in $B$. If
 \[
 \bar{\varphi} \colon A \slash g \rightarrow B \slash \varphi(g)
 \] 
 is an isomorphism for all elements $g$ in a generating set of $S$, then $\varphi \colon A \rightarrow B$ is an analytic isomorphism along $S$.
 \end{lemma}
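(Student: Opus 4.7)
The plan is to deduce this from Lemma~\ref{lemma:analytic_iso_generators}, using the nonzerodivisor hypothesis to supply both the pullback condition in Definition~\ref{dfn:analytic_patching_diagram} and the bijection $\varphi \vert_I \colon I \rightarrow J$ demanded by part~(2) of that lemma.

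First I would observe that, since every $s \in S$ is a nonzerodivisor in $A$ and every element of $\varphi(S)$ is a nonzerodivisor in $B$, the localization homomorphisms $\lambda_S$ and $\lambda_{\varphi(S)}$ are injective. Hence $I \defl \ker(\lambda_S) = 0$ and $J \defl \ker(\lambda_{\varphi(S)}) = 0$, so the premise of Lemma~\ref{lemma:analytic_iso_generators}(2) holds trivially. Writing an arbitrary $s \in S$ as a finite product of elements of the given generating set and applying parts~(1) and (2) of that lemma inductively, one upgrades the hypothesis ``$\bar{\varphi} \colon A \slash g \rightarrow B \slash \varphi(g)$ is an isomorphism'' from generators to every $s \in S$.

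Second, I would verify directly that the square in Definition~\ref{dfn:analytic_patching_diagram} is a pullback. Let $P \defl A_S \times_{B_{\varphi(S)}} B$ and consider the comparison map $\psi \colon A \rightarrow P$, $a \mapsto (\lambda_S(a), \varphi(a))$. Injectivity of $\psi$ is immediate from injectivity of $\lambda_S$. For surjectivity, take $(x,b) \in P$ and write $x = a_0 \slash s$ with $a_0 \in A$ and $s \in S$. The identity $\varphi_S(x) = \lambda_{\varphi(S)}(b)$ in $B_{\varphi(S)}$ means that some $\varphi(t)$, $t \in S$, annihilates $\varphi(a_0) - \varphi(s) b$ in $B$; since $\varphi(t)$ is a nonzerodivisor, we obtain $\varphi(a_0) = \varphi(s) b$ in $B$. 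In particular $[\varphi(a_0)] = 0$ in $B \slash \varphi(s)$, so by the isomorphism $A \slash s \cong B \slash \varphi(s)$ established in the previous step there is an $a \in A$ with $sa = a_0$. Then $\varphi(s) \varphi(a) = \varphi(a_0) = \varphi(s) b$, and cancelling the nonzerodivisor $\varphi(s)$ gives $\varphi(a) = b$; by construction $\lambda_S(a) = a_0 \slash s = x$, so $\psi(a) = (x,b)$.

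No step is particularly delicate, which is why the lemma is simply a convenient consequence of Lemma~\ref{lemma:analytic_iso_generators}. The point worth underlining is that the nonzerodivisor hypothesis is being used three separate times---to kill $I$ and $J$ so that Lemma~\ref{lemma:analytic_iso_generators}(2) becomes applicable, to clear the auxiliary denominator ``$\varphi(t)$'' when interpreting the fraction equation in $B_{\varphi(S)}$, and finally to cancel $\varphi(s)$ in order to recover $\varphi(a) = b$.
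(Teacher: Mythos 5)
Your proof is correct and matches the paper's argument in essentially every respect: you reduce to Lemma~\ref{lemma:analytic_iso_generators} by noting $I=J=0$ under the nonzerodivisor hypothesis, and then verify the pullback condition directly via the same cancellation argument. The only difference is that you spell out the auxiliary $\varphi(t)$-cancellation explicitly where the paper suppresses it, which is a harmless expansion of the same step.
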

 
 \begin{proof}
  In the notation of Lemma~\ref{lemma:analytic_iso_generators}, we have $I=0$ and $J=0$, so the assumption, combined with Claims~(1) and (2) of Lemma~\ref{lemma:analytic_iso_generators}, implies that  $\bar{\varphi} \colon A \slash s \rightarrow B \slash \varphi(s)$ is an isomorphism for all $s \in S$.
  
  It only remains to check that the diagram
  \[
  \vcenter{\xymatrix{A \ar[r]^-{\lambda_S} \ar[d]_{\varphi} & A_S \ar[d]^{\varphi_S} \\ B \ar[r]_-{\lambda_{\varphi(S)}} & B_{\varphi(S)} }}
  \]
  is a pullback diagram. This follows readily from the injectivity of $\lambda_S$ and $\lambda_{\varphi(S)}$. Indeed, if $b \slash 1=\varphi(a) \slash \varphi(s)$ in $B_{\varphi(S)}$, then $\varphi(s) b=\varphi(a)$ holds in $B$. The fact that $\bar{\varphi}$ is an isomorphism implies that $a=sa_0$ for some $a_0 \in A$. It follows that $b=\varphi(a_0)$ and $a \slash s=a_0 \slash 1=\lambda_S(a_0)$, which establishes the existence part of a pullback diagram. Uniqueness is immediate from the fact that $\lambda_S$ is injective.
 \end{proof}
 
  The following lemma shows that if $\varphi \colon A \rightarrow B$ is an analytic isomorphism along $S$, then $\varphi$ is also an analytic isomorphism along the saturation $\bar{S}$ of $S$. Here $\bar{S}$ denotes the set of all $u \in A$ such that there exists a $v \in A$ with $uv \in S$. This coincides with $\lambda_S^{-1}(A_S^{\times})$, the set of all $u \in A$ such that $u \slash 1$ is a unit in $A_S$.
  
  \begin{lemma}\label{lemma:analytic_iso_saturation}
  Let $\varphi \colon A \rightarrow B$ be a homomorphism of $R$-algebras and let $u,v \in A$. Let $s=uv$ and assume that $\bar{\varphi} \colon A \slash s \rightarrow B \slash \varphi(s)$ is an isomorphism. Then the homomorphisms $\bar{\varphi} \colon A \slash u \rightarrow B \slash \varphi(u)$ and $\bar{\varphi} \colon A \slash v \rightarrow B \slash \varphi(v)$ are isomorphisms as well. 
  \end{lemma}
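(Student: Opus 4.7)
By the symmetric roles of $u$ and $v$, it suffices to prove that $\bar\varphi \colon A/v \to B/\varphi(v)$ is an isomorphism; the statement for $A/u \to B/\varphi(u)$ then follows by swapping $u$ and $v$. The plan is to apply the snake lemma to a diagram of short exact sequences whose middle column is the given isomorphism $\beta \defl \bar\varphi \colon A/uv \to B/\varphi(uv)$.

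The rows will come from multiplication by $v$ and $\varphi(v)$ respectively. Using the inclusions $uvA \subseteq vA \subseteq A$ and $\varphi(uv)B \subseteq \varphi(v)B \subseteq B$, I obtain the commuting diagram
\[
\xymatrix{0 \ar[r] & vA/uvA \ar[r] \ar[d]^{\alpha} & A/uv \ar[r] \ar[d]^{\beta} & A/v \ar[r] \ar[d]^{\gamma} & 0 \\
0 \ar[r] & \varphi(v)B/\varphi(uv)B \ar[r] & B/\varphi(uv) \ar[r] & B/\varphi(v) \ar[r] & 0}
\]
with exact rows, where the vertical maps are induced by $\varphi$, the middle map $\beta$ is the hypothesis, and $\gamma \defl \bar\varphi \colon A/v \to B/\varphi(v)$ is the map I want to show is an isomorphism.

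The key step is to check that $\alpha$ is surjective. This uses only the surjectivity component of the hypothesis, namely $B = \varphi(A) + \varphi(uv)B$: multiplying by $\varphi(v)$ gives
\[
\varphi(v)B \subseteq \varphi(v)\varphi(A) + \varphi(uv)\varphi(v)B \subseteq \varphi(v)\varphi(A) + \varphi(uv)B,
\]
which is exactly the surjectivity of $\alpha$. With $\beta$ an isomorphism and $\alpha$ surjective, the snake lemma forces $\ker\gamma = \operatorname{coker}\alpha = 0$ and $\operatorname{coker}\gamma = \operatorname{coker}\beta = 0$, so $\gamma$ is an isomorphism.

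There is no significant obstacle here; the only real design choice is the pair of short exact sequences to feed into the snake lemma, and the multiplication-by-$v$ sequences are the natural ones that put the hypothesis in the middle column and split off the map $\gamma$ on the right. A direct element chase (lifting $\varphi(a)=\varphi(u)b$ step by step, using injectivity at $s$, then chasing a vanishing class back through surjectivity at $s$) is also possible, but it is less transparent because it has to iterate between the surjectivity and injectivity parts of the hypothesis.
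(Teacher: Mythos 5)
Your proof is correct. The overall strategy — apply the snake lemma to a diagram whose rows come from multiplication by $v$ and whose middle column is the hypothesis — matches the paper's, but you handle the left-hand column more directly. The paper's top row is $A/u \xrightarrow{\cdot v} A/s \to A/v \to 0$ (only right exact) with left vertical $\pi\bar\varphi\colon A/u \to K$, and it establishes surjectivity of this map indirectly: first it reads off surjectivity of $\bar\varphi_v$ from the commutative right-hand square, then invokes symmetry in $u,v$ to get surjectivity of $\bar\varphi_u$, and only then concludes the left vertical is onto so the snake lemma yields injectivity of $\bar\varphi_v$. You instead replace $A/u$ by its image $vA/uvA$, making the top row a genuine short exact sequence, and observe that surjectivity of $\alpha\colon vA/uvA \to \varphi(v)B/\varphi(uv)B$ follows by simply multiplying the surjectivity relation $B = \varphi(A)+\varphi(uv)B$ by $\varphi(v)$. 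This removes the symmetry detour inside the argument (symmetry is only needed at the very end to swap $u$ and $v$), and one application of the snake lemma then gives both injectivity and surjectivity of $\gamma$ at once. The paper's version is perhaps marginally closer to the calculation already set up in Lemma~\ref{lemma:analytic_iso_generators}; your version is a bit more self-contained and economical.
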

  
  \begin{proof}
  As in the proof of Lemma~\ref{lemma:analytic_iso_generators}, let $K=\ker\bigl(B \slash \varphi(s) \rightarrow B \slash \varphi(v) \bigr)$ and let $\pi \colon B \slash \varphi(u) \rightarrow K$ be the surjection induced by $\bar{\varphi}$. The commutative diagram
  \[
    \xymatrix{& A \slash u \ar[r] \ar[d]_{\pi \bar{\varphi}} & A \slash s \ar[d]^{\cong}_{\bar{\varphi}} \ar[r] & A \slash v \ar[d]^{\bar{\varphi}} \ar[r] & 0 \\
  0 \ar[r] & K \ar[r] & B \slash \varphi(s) \ar[r] & B \slash \varphi(v) \ar[r] & 0}
  \]
  implies that $\bar{\varphi} \colon A \slash v \rightarrow B \slash \varphi(v)$ is surjective. Reversing the roles of $u$ and $v$ we find that $\bar{\varphi} \colon A \slash u \rightarrow B \slash \varphi(u)$ is surjective as well. Thus the left vertical morphism in the above diagram of $R$-modules is surjective, so the snake lemma implies that $\bar{\varphi} \colon A \slash v \rightarrow B \slash \varphi(v)$ is injective, hence bijective. Reversing the roles of $u$ and $v$ again we get the conclusion. 
  \end{proof}

 We next discuss some examples of analytic patching diagrams which will be used in later sections. In \S \ref{section:excision}, we will consider three classes of analytic patching diagrams. The first two are closely related and relevant for the local-global principle of Quillen (respectively its converse due to Roitman). The third type of patching diagram are affine Nisnevich squares, which we will discuss in \S \ref{section:henselian_pairs}.
  
  \begin{notation}\label{notation:Vorst_patching_diagrams}
 For any commutative $R$-algebra $A$, we write $A[x]$ for the $A$-algebra of polynomials in the variable $x$ and we write $\pi \colon A[x] \rightarrow A$ for the homomorphism of $A$-algebras sending $x$ to $0$, that is, $\pi$ is given by evaluation in $0$. We write $\iota \colon A \rightarrow A[x]$ for the inclusion of $A$ as the constant polynomials in $A[x]$.
 
 For any element $f \in A$, we write $A \pb{A_f} A_f[x]$ for the $R$-algebra defined by the pullback diagram
 \[
 \vcenter{\xymatrix{
  A \pb{A_f} A_f[x] \ar[r]^-{\lambda^{\prime}} \ar[d]_{\pi^{\prime}} & A_f[x] \ar[d]^{\pi} \\
  A \ar[r]_-{\lambda_f} & A_f
 }} \quad \quad \quad
\vcenter{\xymatrix{
 A \ar[d]_{\iota^{\prime}}  \ar[r]^-{\lambda_f} & A_f \ar[d]^{\iota} \\ 
 A \pb{A_f} A_f[x] \ar[r]_-{\lambda^{\prime}} & A_f[x]
}} 
 \] 
 on the left. We write $\iota^{\prime} \colon A \rightarrow A \pb{A_f} A_f[x]$ for the unique $R$-algebra homomorphism such that $\pi^{\prime} \iota^{\prime}=\id_A$ and $\lambda^{\prime} \iota^{\prime}=\iota \lambda_f$. In particular, the diagram on the right above is commutative by construction.
  \end{notation}
  
 \begin{prop}\label{prop:Vorst_patching_diagrams}
 For any commutative $R$-algebra $A$ and any $f \in A$, the homomorphisms $\pi^{\prime} \colon A \pb{A_f} A_f[x] \rightarrow A$ and $\iota^{\prime} \colon A \rightarrow A \pb{A_f} A_f[x]$ are analytic isomorphisms along $(f,f \slash 1)$ respectively along $f$. The morphism $\lambda^{\prime} \colon A \pb{A_f} A_f[x] \rightarrow A_f[x]$ is (up to isomorphism) the localization at $(f,f \slash 1)$, so both diagrams appearing in Notation~\ref{notation:Vorst_patching_diagrams} are analytic patching diagrams. Moreover, the homomorphism $\iota^{\prime} \colon A \rightarrow A \pb{A_f} A_f[x]$ is always flat.
 \end{prop}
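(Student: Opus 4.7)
The proposal is to work explicitly with the pullback ring $P \defl A \pb{A_f} A_f[x] = \{(a,p) \in A \times A_f[x] : \lambda_f(a)=p(0)\}$ and the element $s \defl \iota^{\prime}(f)=(f,f/1) \in P$, and to verify the four assertions essentially by unwinding the pullback definition together with the fact that $f/1$ is already a unit in $A_f[x]$.

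First I would identify $\lambda^{\prime}$ with the localization at $s$. Since $\lambda^{\prime}(s)=f/1$ is already invertible in $A_f[x]$, $\lambda^{\prime}$ factors as $P \rightarrow P_s \rightarrow A_f[x]$. For surjectivity of $P_s \to A_f[x]$, given $q \in A_f[x]$ pick $N$ large enough so that $(f/1)^N q(0)=a/1$ for some $a \in A$; then $(a, (f/1)^N q) \in P$ and its image under $\lambda^{\prime}$ divided by $s^N$ gives $q$. For injectivity, if $(a,p)/s^N$ maps to $0$, then $(f/1)^M p=0$ in $A_f[x]$ for some $M$, which forces $s^{M+K}(a,p)=0$ in $P$ for $K$ large enough (using $\lambda_f(a) = p(0)$ and $(f/1)^M p(0)=0$). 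Once this identification is established, the pullback square in the definition of an analytic isomorphism for $\pi^{\prime}$ is literally the defining pullback square of $P$, and the pullback square for $\iota^{\prime}$ is obtained by pullback pasting: stack the defining square below the $\iota'$-square, observe that the outer rectangle is the identity square (using $\pi^{\prime}\iota^{\prime}=\id_A$ and $\pi \iota=\id_{A_f}$), and invoke the pasting lemma.

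Next I would verify the quotient conditions. For $\pi^{\prime}$: I claim $P/sP \rightarrow A/fA$ is an isomorphism. Surjectivity is clear from $\pi^{\prime} \iota^{\prime}=\id$. For injectivity, take $(a,p)$ with $a=fa_0$; since $p(0)=a/1 = (f/1)(a_0/1)$ and $f/1$ is a unit in $A_f[x]$, the element $q \defl (f/1)^{-1}p$ lies in $A_f[x]$ with $q(0)=a_0/1$, so $(a_0,q) \in P$ and $s\cdot(a_0,q)=(a,p)$. For $\iota^{\prime}$: I would show directly that every $(a,p) \in P$ differs from $\iota^{\prime}(a)=(a,a/1)$ by an element $(0,xr(x))$ with $r \in A_f[x]$, and that $(0,xr(x))=s\cdot(0,x(f/1)^{-1}r(x))$ lies in $sP$; combined with the fact that $\iota^{\prime}$ is a section (hence injective), this gives $A/fA \cong P/sP$.

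Finally, for flatness of $\iota^{\prime}$, I would use the splitting $\pi^{\prime}\iota^{\prime}=\id$ to decompose $P \cong A \oplus \ker(\pi^{\prime})$ as $A$-modules, then identify $\ker(\pi^{\prime}) = \{(0,p) : p(0)=0\}$ with $xA_f[x]$, on which $A$ acts through $\lambda_f$. Since $xA_f[x] \cong \bigoplus_{i \geq 1} A_f$ is a direct sum of copies of $A_f$, which is $A$-flat as a localization, $\ker(\pi^{\prime})$ is $A$-flat and thus so is $P$. The main obstacle I anticipate is the bookkeeping in step one (identifying $P_s$ with $A_f[x]$), where one has to carefully use the constraint $\lambda_f(a)=p(0)$ to lift elements and kill elements modulo $s^N$; everything else is either formal pullback pasting or direct computation exploiting that $f/1$ becomes a unit after localization.
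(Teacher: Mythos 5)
Your proof is correct and follows the same overall strategy as the paper's: first identify $\lambda'$ as the localization at $s=(f,f/1)$, then use pullback pasting to establish both analytic patching diagrams, then verify the quotient conditions, then observe flatness from the splitting $P\cong A\oplus\ker(\pi')$. The main stylistic difference is in the first and third steps: the paper identifies $P_s\cong A_f[x]$ abstractly by localizing the whole defining pullback square at $\iota'(f)$ (so the bottom map $A_f\to(A_f)_f$ becomes an isomorphism and the top must as well), and it derives the quotient isomorphism for $\iota'$ from the $A$-module decomposition $P\cong A\oplus\bigoplus_{i\geq 1}A_f$ together with the identity $A_f=f\cdot A_f$, whereas you verify both claims by direct element-level computation in the pullback ring. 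Both routes are sound; the paper's abstract localization argument is a bit slicker and also doubles as the source of the flatness claim, while your version is more concrete and self-contained. One small imprecision worth tightening: when you argue injectivity of $A/fA\to P/sP$, the needed fact is not that $\iota'$ is injective, but that $\pi'$ is a retraction with $\pi'(s)=f$, so $\iota'(a)=s\cdot p$ forces $a=\pi'(s)\pi'(p)=f\pi'(p)\in fA$; injectivity of $\iota'$ alone would not suffice.
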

 
 \begin{proof}
  Note that $A \pb{A_f} A_f[x]$ is an $A$-algebra via $\iota^{\prime}$ and that $\iota^{\prime}(f)=(f,f \slash 1)$. If we localize the pullback diagram
  \[
 \vcenter{\xymatrix{
  A \pb{A_f} A_f[x] \ar[r]^-{\lambda^{\prime}} \ar[d]_{\pi^{\prime}} & A_f[x] \ar[d]^{\pi} \\
  A \ar[r]_-{\lambda_f} & A_f
 }}  \]
  at $\iota^{\prime}(f)$, we obtain a pullback diagram whose bottom horizontal morphism is an isomorphism. This shows that, up to isomorphism, $\lambda^{\prime}$ is the localization at $\iota^{\prime}(f)$, as claimed.
  
  If we paste the right diagram of Notation~\ref{notation:Vorst_patching_diagrams} on top of the left diagram, we obtain a pullback square whose vertical morphisms are identities. From the cancellation law for pullbacks it follows that both diagrams of Notation~\ref{notation:Vorst_patching_diagrams} are pullback diagrams.
  
  It only remains to show that $\iota^{\prime}$ and $\pi^{\prime}$ induce isomorphisms modulo $f$ (respectively modulo $\iota^{\prime}(f)$). Since $\pi^{\prime} \iota^{\prime}=\id$, we have in particular $\pi^{\prime} \bigl(\iota^{\prime}(f) \bigr)=f$, so it suffices to check the claim for $\iota^{\prime}$.
  
  To see this, note that $\pi$ and $\pi^{\prime}$ have the same kernel by construction. Moreover, the short exact sequence
  \[
  \xymatrix{0 \ar[r] & \ker(\pi) \ar[r] & A \pb{A_f} A_f[x] \ar[r]^-{\pi} & A \ar[r] & 0 }
  \]
  is split exact, with splitting given by $\iota^{\prime}$. Thus $\iota^{\prime}$ is, up to isomorphism, given by the inclusion of $A$ in $A \oplus \bigoplus_{i  \geq 1} A_f$. Since $A_f=f \cdot A_f$, it follows that
  \[
  \bar{\iota^{\prime}} \colon A \slash f \rightarrow A \pb{A_f} A_f[x] \slash \iota^{\prime}(f)
  \]
  is indeed an isomorphism.
  
  From the above description of the $A$-module $A \pb{A_f} A_f[x]$ it is also clear that $\iota^{\prime}$ is flat.
 \end{proof}
 
 We now turn to some examples of functors which satisfy weak analytic excision. Let $M \colon \CAlg_R \rightarrow \Set$ be the functor which sends $A$ to the set of isomorphism classes of finitely presentable $A$-modules. The action of $M$ on morphisms $\varphi \colon A \rightarrow B$ is given by $M(\varphi) \bigl([N] \bigr)=[B \ten{A} N]$.
 
 We write $P \colon \CAlg_R \rightarrow \Set$ for the subfunctor of $M$ which sends $A$ to the set of isomorphism classes of finitely generated projective $A$-modules. Finally, we write $P_r \colon \CAlg_R \rightarrow \Set$ for the subfunctor of $P$ which sends $A$ to the set of isomorphism classes of finitely generated projective modules of constant rank $r \geq 1$. Note that $P_r$ has a natural lift to a functor $P_r \colon \CAlg_R \rightarrow \Set_{\ast}$, with base point $\ast$ of $P_r(A)$ given by the class $[A^r]$ of the free $A$-module of rank $r$.
 
 \begin{rmk}\label{rmk:M_P_Pr_finitary}
 Since a finitely presentable module can be described by a finite matrix, the functors $M$, $P$, and $P_r$ preserve filtered colimits, that is, they are finitary.
 \end{rmk}
 
 \begin{prop}\label{prop:M_weak_analytic_excision}
 The functor $M \colon \CAlg_R \rightarrow \Set$ sends the analytic patching diagram
 \[
 \vcenter{\xymatrix{A \ar[r]^-{\lambda_S} \ar[d]_{\varphi} & A_S \ar[d]^{\varphi_S} \\ B \ar[r]_-{\lambda_{\varphi(S)}} & B_{\varphi(S)} }}
 \]
 to a weak pullback diagram if $\varphi \colon A \rightarrow B$ is flat.
 \end{prop}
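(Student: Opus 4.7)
The plan is to verify the weak pullback property by constructing, for each compatible pair of modules, an $A$-module whose base changes recover the data. Fix $[N_1] \in M(A_S)$ and $[N_2] \in M(B)$ together with an isomorphism of $B_{\varphi(S)}$-modules
\[
\psi \colon B_{\varphi(S)} \otimes_{A_S} N_1 \xrightarrow{\cong} B_{\varphi(S)} \otimes_B N_2
\]
witnessing that they agree in $M(B_{\varphi(S)})$; the goal is to produce a finitely presentable $A$-module $N$ with $A_S \otimes_A N \cong N_1$ and $B \otimes_A N \cong N_2$.

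First, I would define $N$ as the pullback of $A$-modules
\[
N := N_2 \times_{B_{\varphi(S)} \otimes_B N_2} N_1,
\]
where the map from $N_2$ is the canonical $n_2 \mapsto 1 \otimes n_2$ and the map from $N_1$ is the composite $N_1 \to B_{\varphi(S)} \otimes_{A_S} N_1 \xrightarrow{\psi} B_{\varphi(S)} \otimes_B N_2$. Since the ring square is Cartesian, $N$ inherits a well-defined $A$-module structure.

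Next, I would compute the two base changes. Because $A \to A_S$ is flat, the functor $A_S \otimes_A -$ preserves the pullback. The identification $A_S \otimes_A B \cong B_{\varphi(S)}$ (a consequence of $B_{\varphi(S)} = B \otimes_A A_S$, implicit in the analytic isomorphism condition) makes the base change of $N_2 \to B_{\varphi(S)} \otimes_B N_2$ an identity, so the tensored pullback collapses to $N_1$. By flatness of $\varphi$, the same argument with $B \otimes_A -$, now using $B \otimes_A A_S \cong B_{\varphi(S)}$ together with $\psi$, makes the base change of the map out of $N_1$ an isomorphism, collapsing the tensored pullback to $B \otimes_A N_2$, which in turn identifies with $N_2$ via the Cartesian structure of the ring square transported along $\varphi$.

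Finally, I would verify that $N$ is finitely presentable over $A$ by lifting compatible finite presentations of $N_1$ over $A_S$ and of $N_2$ over $B$: the pullback structure, combined with the analytic isomorphism condition giving $A \cong B \times_{B_{\varphi(S)}} A_S$, allows finite generating sets and relations to be chosen coherently across the square, and the base-change identifications of the previous step ensure these presentations recover $N_1$ and $N_2$ upon localization.

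The main obstacle is the careful identification in the $B$-base change. Since $B \otimes_A N_2$ and $B \otimes_A (B_{\varphi(S)} \otimes_B N_2)$ involve the nontrivial object $B \otimes_A B$, neither naively reduces to $N_2$ or to $B_{\varphi(S)} \otimes_B N_2$, so the tensored pullback is not a priori the diagram one wants. Resolving this relies essentially on flatness of $\varphi$ together with the Cartesian property of the original ring square: these ensure that after base change along $\varphi$, the comparison maps degenerate so that the apparent complications cancel, leaving an isomorphism $B \otimes_A N \cong N_2$ compatible with the gluing $\psi$.
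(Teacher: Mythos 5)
Your strategy — form the pullback module and check its two base changes — is essentially the same formal glueing argument that the paper delegates to the Stacks project [\href{https://stacks.math.columbia.edu/tag/05ES}{Theorem 05ES}, \href{https://stacks.math.columbia.edu/tag/05EU}{Remark 05EU}] (after first reducing, using that $M$ is finitary, to $S$ generated by a single element). You also correctly flag the only real difficulty: the base change along $\varphi$ involves $B \otimes_A B$ and does not collapse by a formal manipulation.

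However, the final paragraph does not close that gap; it only asserts that it closes. Moreover, the ingredients you list — flatness of $\varphi$ and the Cartesian property of the ring square — are \emph{not} sufficient to carry out the collapse. The essential input you never invoke is the other half of the definition of an analytic isomorphism: that $A/s \rightarrow B/\varphi(s)$ is an isomorphism for all $s \in S$. To see that this cannot be omitted, take $A = k[t]$, $B = k[t,u]$, $\varphi$ the inclusion, and $S = \{1, t, t^2, \ldots\}$: here $\varphi$ is flat and the square $A = A_S \times_{B_{\varphi(S)}} B$ is Cartesian, yet $A/t = k \rightarrow B/t = k[u]$ is far from an isomorphism, so your stated hypotheses are met while the analytic isomorphism condition fails. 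The actual proof of $B \otimes_A N \cong N_2$ (and, separately, the verification that $N$ is finitely presentable, which you also leave to a sentence) relies on auxiliary lemmas — for instance, that an $A$-module killed by both $A_S \otimes_A -$ and $B \otimes_A -$ must vanish — whose proofs use $A/s \cong B/\varphi(s)$ in an essential way. As written, the key step of your argument is a genuine gap, not a routine verification.
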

 
 \begin{proof}
 Since $M$ commutes with filtered colimits, we can reduce to the case $S=\{1,f,f^2,\ldots\}$ for some $f \in A$. In this case, the claim follows from the fact that the functor
 \[
 \Mod_A \rightarrow \Mod_B \pb{\Mod_{B_{\varphi(f)}}} \Mod_{A_f}
\]
 is an equivalence, where the target is the iso-comma category induced by the span given by base change along $\lambda_{\varphi(f)}$ and $\varphi_f$ (see \cite[\href{https://stacks.math.columbia.edu/tag/05ES}{Theorem 05ES}]{stacks-project}), combined with the observation that this equivalence restricts to an equivalence of the categories of finitely presentable modules (see \cite[\href{https://stacks.math.columbia.edu/tag/05EU}{Remark 05EU}]{stacks-project}).
 \end{proof}
 
 \begin{rmk}
  Bhatwadekar has shown that the assumption of flatness can be dropped in the case where both $A$ and $B$ are noetherian, see \cite[Theorem~2.4]{BHATWADEKAR_ANALYTIC}. This is not true for general $R$-algebras, see \cite[\href{https://stacks.math.columbia.edu/tag/0BNY}{Example 0BNY}]{stacks-project}.
 \end{rmk}
 
 To see that $P$ and $P_r$ satisfy weak analytic excision, we can follow the arguments given in \cite[\S 2]{ROY} and show that the natural functor
 \[
 \Proj(A) \rightarrow \Proj(B)  \pb{\Proj(B_{\varphi(S)}) } \Proj(A_S)  
 \]
 is an equivalence whenever $\varphi \colon A \rightarrow B$ is an analytic isomorphism along $S$.
 
  This, in turn, is based on two ingredients: an observation of Milnor that every object in the target category is a direct summand of an object of the form $(B^n, A_S^n, \alpha)$ for some $n \in \mathbb{N}$ (see \cite[\S 2]{MILNOR}) and on a lemma of Vorst which shows that elementary $n \times n$-matrices over $B_{\varphi(S)}$ can be factored into a product of elementary matrices over $B$ and $A_S$ as long as $n \geq 3$ \cite[Lemma~2.4]{VORST_GLN}. One can check that these arguments also work for the slightly more general notion of analytic isomorphism given in Definition~\ref{dfn:analytic_patching_diagram}. We will nevertheless give detailed proofs of these two facts in Lemma~\ref{lemma:Vorsts_lemma} and Proposition~\ref{prop:patching_projective_modules} below.
  
  Vorst's lemma is also needed to show that the unstable $K_1$-functors satisfy weak analytic excision. Recall from \cite[Lemma~1.4]{SUSLIN_SPECIAL} that the group $\mathrm{E}_r(A)$ of elementary $r \times r$-matrices is normal in $\mathrm{GL}_r(A)$ if $r \geq 3$. The functor
  \[
  K_{1,r} \colon \CAlg_R \rightarrow \Grp
  \]
  is defined by $K_{1,r}(A)=\mathrm{GL}_r(A) \slash \mathrm{E}_r(A)$.
  
  \begin{lemma}[Vorst]\label{lemma:Vorsts_lemma}
  Let $\varphi \colon A \rightarrow B$ be an analytic isomorphism along $S$. Then the following hold:
  \begin{enumerate}
  \item[(1)] If $r \geq 3$ and $\varepsilon \in \mathrm{E}_r(B_{\varphi(S)})$, then there exists an $\varepsilon_1 \in \mathrm{E}_r(B)$ and an $\varepsilon_2 \in \mathrm{E}_r(A_S)$ such that 
  \[
  \varepsilon=\lambda_{\varphi(S)}(\varepsilon_1) \varphi_S(\varepsilon_2)
  \]
   holds;
  \item[(2)] For all $r \geq 3$, the functors $K_{1,r} \colon \CAlg_R \rightarrow \Set$ satisfy weak analytic excision.
  \end{enumerate}
  \end{lemma}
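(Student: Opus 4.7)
The proof splits into a technical statement (Part~(1)) and a formal consequence (Part~(2)).

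\textbf{Plan for Part (1).} I would prove the factorization by induction on the number of elementary generators of $\varepsilon$, resting on two ingredients. First, the analytic isomorphism condition yields an additive decomposition $B_{\varphi(S)} = \lambda_{\varphi(S)}(B) + \varphi_S(A_S)$: given $b_0 / \varphi(s)$, surjectivity of $\bar{\varphi} \colon A \slash s \rightarrow B \slash \varphi(s)$ produces $a \in A$ and $b' \in B$ with $b_0 = \varphi(a) + \varphi(s) b'$, hence $b_0/\varphi(s) = \varphi_S(a/s) + \lambda_{\varphi(S)}(b')$. Combined with $e_{ij}(u+v) = e_{ij}(u)e_{ij}(v)$, this expresses each elementary generator as a two-factor product $\lambda_{\varphi(S)}(e_{ij}(b)) \cdot \varphi_S(e_{ij}(a/s))$, handling the base case. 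Second, a \emph{swap lemma} is needed: for $r \geq 3$, $\varphi_S(\mathrm{E}_r(A_S)) \cdot \lambda_{\varphi(S)}(\mathrm{E}_r(B)) \subseteq \lambda_{\varphi(S)}(\mathrm{E}_r(B)) \cdot \varphi_S(\mathrm{E}_r(A_S))$, so that the alternating products built up in the induction collapse to the required $\lambda_{\varphi(S)}(\varepsilon_1) \varphi_S(\varepsilon_2)$ form. For the swap I would reduce to single elementary matrices $\alpha = e_{ij}(b)$ and $\beta = e_{kl}(a/s)$: disjoint index sets commute trivially, and in the overlapping cases one uses the Steinberg commutator identities $[e_{pq}(u), e_{qr}(v)] = e_{pr}(uv)$ for distinct $p,q,r$ (available because $r \geq 3$), re-expanding the resulting entries $b \cdot a/s \in B_{\varphi(S)}$ through the additive decomposition above, with normality of $\mathrm{E}_r$ in $\mathrm{GL}_r$ (see \cite[Lemma~1.4]{SUSLIN_SPECIAL}) ensuring that the intermediate matrices stay elementary.

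\textbf{Plan for Part (2).} Given Part~(1), weak analytic excision for $K_{1,r}$ is a formal deduction from weak analytic excision for $\mathrm{GL}_r$, which will follow from the matrix patching equivalence established below in Proposition~\ref{prop:patching_projective_modules}. Given classes $[\alpha] \in K_{1,r}(B)$ and $[\beta] \in K_{1,r}(A_S)$ with equal image in $K_{1,r}(B_{\varphi(S)})$, lift them to $\alpha \in \mathrm{GL}_r(B)$ and $\beta \in \mathrm{GL}_r(A_S)$ and write $\lambda_{\varphi(S)}(\alpha) \varphi_S(\beta)^{-1} = \varepsilon$ for some $\varepsilon \in \mathrm{E}_r(B_{\varphi(S)})$. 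Part~(1) factors $\varepsilon = \lambda_{\varphi(S)}(\varepsilon_1) \varphi_S(\varepsilon_2)$; then $\alpha' \defl \varepsilon_1^{-1} \alpha$ and $\beta' \defl \varepsilon_2 \beta$ represent the same $K_{1,r}$-classes as $\alpha,\beta$ and satisfy $\lambda_{\varphi(S)}(\alpha') = \varphi_S(\beta')$. The matrix patching equivalence then produces a $\gamma \in \mathrm{GL}_r(A)$ with $\varphi(\gamma) = \alpha'$ and $\lambda_S(\gamma) = \beta'$, whose class in $K_{1,r}(A)$ maps to $([\alpha],[\beta])$, witnessing the weak pullback property.

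\textbf{Main obstacle.} The swap lemma in Part~(1) carries the genuinely nontrivial content and is the source of the $r \geq 3$ hypothesis: precisely the unavailability of a third index in the commutator trick when $r = 2$ is what motivates the later introduction of pseudoelementary $2 \times 2$-matrices in this paper.
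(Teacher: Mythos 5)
Your Part~(2) is conceptually the same as the paper's, but the appeal to "the matrix patching equivalence established below in Proposition~\ref{prop:patching_projective_modules}" is circular: that proposition is \emph{proved using} Vorst's lemma, so it cannot be invoked here. What you actually need (and what the paper uses) is much cheaper: $\mathrm{GL}_r(-)$ is a representable functor on $\CAlg_R$, hence preserves pullback diagrams, so the patching diagram for $\mathrm{GL}_r$ is an honest pullback. This produces your $\gamma$ directly.

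The real gap is in Part~(1). Your plan is to prove a "swap lemma"
\[
\varphi_S\bigl(\mathrm{E}_r(A_S)\bigr) \cdot \lambda_{\varphi(S)}\bigl(\mathrm{E}_r(B)\bigr) \subseteq \lambda_{\varphi(S)}\bigl(\mathrm{E}_r(B)\bigr) \cdot \varphi_S\bigl(\mathrm{E}_r(A_S)\bigr)
\]
by reducing to single generators and applying the Steinberg relation $[e_{pq}(u),e_{qr}(v)] = e_{pr}(uv)$. Two problems. First, the "bad" overlapping case $e_{ij}(u)$, $e_{ji}(v)$ (i.e.\ $j=k$ and $i=l$) has no such Steinberg identity; the commutator there is not an elementary generator and a third index does not obviously rescue the situation without more work. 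Second, and more fundamentally, even in the benign overlapping cases the commutator $e_{il}(b \cdot a/s)$ has a "mixed" entry that you must re-expand via the additive decomposition, which \emph{increases} the number of generators in play. There is no obvious induction parameter that decreases, so the proposed induction has no visible termination argument. This is exactly the difficulty that the Vorst/Suslin route is engineered to avoid: instead of swapping pairwise, one writes $\varepsilon=\prod_k e_{i_kj_k}(c_k)$, invokes Suslin's lemma to produce polynomial matrices $\tau_p(t) \in \mathrm{E}_r\bigl(B[t],(t)\bigr)$ with $\lambda(\tau_p(t)) = \sigma_p e_{i_p j_p}(\varphi(f)^{s_p}t)\sigma_p^{-1}$ for large enough $s_p$ (with $\sigma_p$ the partial products), decomposes $c_k = \varphi(f)^{s_k}b_k + \varphi(a_k/f^{m_k})$ \emph{using those specific exponents $s_k$}, and then applies the global group-theoretic rearrangement identity
\[
\varepsilon = \prod_{k=m}^1 \sigma_k\, e_{i_kj_k}\bigl(\varphi(f)^{s_k}b_k\bigr)\,\sigma_k^{-1} \cdot \prod_{k=1}^m e_{i_kj_k}\bigl(\varphi(a_k/f^{m_k})\bigr)
\]
in one step, with no iterated swapping. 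The work of absorbing the powers of $f$ so that the conjugates land in $\mathrm{E}_r(B)$ (rather than $\mathrm{E}_r(B_{\varphi(f)})$) is precisely what Suslin's Lemma~3.3 does, and that is the step your proposal is missing a substitute for.
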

  
  \begin{proof}
  We follow the proof given by Vorst, see \cite[Lemma~2.4]{VORST_GLN}. Vorst makes the additional assumption that $\varphi$ is injective and that $S$ is generated by a single non-nilpotent element. Since any $\varepsilon \in \mathrm{E}_r(B_{\varphi(S)})$ is already defined over $B_{\varphi(f)}$ for some $f \in A$, we can assume without loss of generality that $S=\{1,f,f^2, \ldots\}$. The assumption that $f$ is not nilpotent is superfluous (since the zero-ring is a commutative $R$-algebra).
  
  The assumption that $\varphi$ is injective is also unnecessary: the assumption on $\varphi$ implies that for all $s \in \mathbb{N}$ and all $c \in B_{\varphi(f)}$, there exists an element $b \in B$, an element $a \in A$, and a natural number $m \in \mathbb{N}$ such that the equation
  \[
  c= \frac{\varphi(f)^s b}{1}+\frac{\varphi(a)}{\varphi(f)^m}
  \]
  holds (write $c=c^{\prime} \slash \varphi(f)^m$ with $c^{\prime} \in B$ and use the fact that $c^{\prime} + \varphi(f)^{s+m} B$ lies in the image of $A \slash f^{s+m} \rightarrow B \slash \varphi(f)^{s+m}$). The desired factorization of $\varepsilon$ can now be constructed as in the proof of \cite[Lemma~2.4.(i)]{VORST_GLN}.
  
 Indeed, let $\varepsilon \in \mathrm{E}_r(B_{\varphi(f)})$ be given by $\varepsilon=\prod_{k=1}^m e_{i_k j_k}(c_k)$ for suitable $c_k \in B_{\varphi(f)}$ and let $\sigma_p = \prod_{k=1}^{p-1} e_{i_k j_k}(c_k)$ for $1 \leq p \leq m$. From \cite[Lemma~3.3]{SUSLIN_SPECIAL} (see also \cite[Lemma~2.3]{VORST_GLN} and \cite[Lemma~VI.1.7]{LAM}) it follows that there exist natural numbers $s_p \in \mathbb{N}$ and matrices $\tau_p(t) \in \mathrm{E}_r\bigl(B[t],(t) \bigr)$ such that
 \[
\lambda_{\varphi(f)} \bigl( \tau_p(t) \bigr)=\sigma_p \cdot e_{i_p j_p}\bigl(\varphi(f)^{s_p} t\bigr) \cdot \sigma_p^{-1}
 \]
 holds for all $1 \leq p \leq m$.
 
 Choose elements $a_k \in A$, $b_k \in B$, and $m_k \in \mathbb{N}$ such that
 \[
 c_k = \varphi(f)^{s_k} b_k + \varphi(a_k \slash f^{m_k})
 \]
 holds for all $1 \leq k \leq m$ (this is possible by the above observation). It follows that the equation
 \[
 \varepsilon= \prod_{k=m}^{1} \sigma_k \cdot e_{i_k j_k} \bigl(\varphi(f)^{s_k} b_k\bigr) \sigma_k^{-1} \cdot \prod_{k=1}^m e_{i_k j_k}(\varphi(a_k \slash f^{m_k}))
 \]
 holds.
 
  If we let $\varepsilon_1= \prod_{k=m}^1 \tau_k(b_k) \in \mathrm{E}_r(B)$ and $\varepsilon_2=\prod_{k=1}^m e_{i_k j_k}(a_k \slash f^{m_k}) \in \mathrm{E}_r(A_f)$, then
  \[
  \varepsilon=\lambda_f(\varepsilon_1) \cdot \varphi_f(\varepsilon_2)
  \]
  holds, as claimed.
  
  To deduce Claim~(2) from this, let $\sigma_1 \in \mathrm{GL}_r(B)$ and let $\sigma_2 \in \mathrm{GL}_r(A_S)$. To say that the classes they represent in $K_{1,r}$ are sent to the same element in $K_{1,r}(B_{\varphi(S)})$ means that there exists an $\varepsilon \in \mathrm{E}_r(B_{\varphi(S)})$ such that $\lambda_{\varphi(S)}(\sigma_1) \varepsilon=\varphi_S(\sigma_2)$ holds. Choosing $\varepsilon_1$ and $\varepsilon_2$ as in the conclusion of Claim~(1), we find that the equality
  \[
  \lambda_{\varphi(S)}(\sigma_1  \varepsilon_1)=\varphi_S(\sigma_2 \varepsilon_2^{-1})
  \]
  holds in $\mathrm{GL}_r(B_{\varphi(S)})$. Since $\mathrm{GL_r}(-) \colon \CAlg_R \rightarrow \Set$ is a representable functor, it preserves pullback diagrams. It follows that there exists a $\sigma \in \mathrm{GL}_r(A)$ such that $\varphi(\sigma)=\sigma_1 \varepsilon_1$ and $\lambda_S(\sigma)=\sigma_2 \varepsilon_2^{-1}$ hold. This proves that $K_{1,r} \colon \CAlg_R \rightarrow \Set$ satisfies weak analytic excision.
 \end{proof}
 
 Using this lemma, we get the above mentioned patching result for projective modules.
 
 \begin{prop}[Milnor--Roy]\label{prop:patching_projective_modules}
 Let $\varphi \colon A \rightarrow B$ be an analytic isomorphism along $S$. Then the canonical functor
 \[
 \rho \colon \Proj(A)  \rightarrow \Proj(B)  \pb{\Proj(B_{\varphi(S)}) } \Proj(A_S) 
 \]
 sending $P$ to $(B \ten{A} P,P_S, \kappa_P)$, where $\kappa_P$ denotes the canonical isomorphism, is an equivalence of categories.
 \end{prop}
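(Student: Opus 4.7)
The plan is to verify fully faithfulness and essential surjectivity separately, combining Milnor's patching trick with Vorst's lemma.

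\emph{Fully faithfulness.} Given $P, Q \in \Proj(A)$, write $P$ as a direct summand of $A^n$ and $Q$ as a summand of $A^m$. Since taking $\Hom$, tensor products, and retracts all commute with pullbacks of abelian groups, it suffices to verify that $\Hom_A(A,A)=A$ is the pullback of the corresponding sets for $B$, $A_S$ and $B_{\varphi(S)}$, which is precisely the pullback condition appearing in Definition~\ref{dfn:analytic_patching_diagram}.

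\emph{Essential surjectivity.} Let $(M, N, \alpha)$ be an object of the target. First, choose complements $M \oplus M^{\perp} \cong B^n$ and $N \oplus N^{\perp} \cong A_S^n$ with a common $n$ (enlarging by free summands so that the two ranks coincide). Both $M^{\perp} \ten{B} B_{\varphi(S)}$ and $N^{\perp} \ten{A_S} B_{\varphi(S)}$ sit inside $B_{\varphi(S)}^n$ as complements to $M \ten{B} B_{\varphi(S)} \cong N \ten{A_S} B_{\varphi(S)}$ (identified via $\alpha$), so both are isomorphic to the common quotient and hence to each other; fix such an isomorphism $\alpha^{\prime}$. Then $(M, N, \alpha)$ is a direct summand of
\[
(B^n, A_S^n, \beta) \defl (M, N, \alpha) \oplus (M^{\perp}, N^{\perp}, \alpha^{\prime})
\]
for some $\beta \in \mathrm{GL}_n(B_{\varphi(S)})$, and it remains to realize any such free patching datum as $\rho(P)$.

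To that end, form the further direct sum $(B^{2n}, A_S^{2n}, \beta \oplus \beta^{-1})$. By Whitehead's lemma, $\beta \oplus \beta^{-1}$ belongs to $\mathrm{E}_{2n}(B_{\varphi(S)})$ (choose $n$ large enough that $2n \geq 3$), so Lemma~\ref{lemma:Vorsts_lemma}(1) yields a factorization
\[
\beta \oplus \beta^{-1} = \lambda_{\varphi(S)}(\varepsilon_1) \, \varphi_S(\varepsilon_2)
\]
with $\varepsilon_1 \in \mathrm{E}_{2n}(B)$ and $\varepsilon_2 \in \mathrm{E}_{2n}(A_S)$. The invertible matrices $\varepsilon_1$ and $\varepsilon_2^{-1}$ then provide a pair of module automorphisms which intertwine $\beta \oplus \beta^{-1}$ with the identity, giving an isomorphism in the iso-comma category from $(B^{2n}, A_S^{2n}, \beta \oplus \beta^{-1})$ to $\rho(A^{2n})$. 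Hence $(B^n, A_S^n, \beta)$ is a direct summand of an object in the essential image of the fully faithful and additive functor $\rho$; since $\Proj(A)$ is idempotent-complete, this summand is itself of the form $\rho(P)$ for a direct summand $P$ of $A^{2n}$, yielding the required finitely generated projective $A$-module.

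The delicate step is the construction of the complementary isomorphism $\alpha^{\prime}$ above; the key point is the elementary observation that two complements to a fixed direct summand in a module are necessarily isomorphic via projection onto their common quotient. Beyond this, the argument is a standard combination of Milnor's ``summand-of-free'' trick with Whitehead's lemma and Vorst's factorization of elementary matrices.
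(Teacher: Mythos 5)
Your fully faithfulness argument is fine and matches the paper's in substance (closing under retracts and direct sums reduces to the pullback condition for $\Hom_A(A,A)=A$). Your essential surjectivity argument, however, has a genuine gap at the ``delicate step'' you flag yourself: the construction of $\alpha^{\prime}$.

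You claim that $M^{\perp}_T$ and $N^{\perp}_{B_T}$ (writing $T=\varphi(S)$) both ``sit inside $B_T^n$ as complements to $M_T \cong N_{B_T}$,'' so the elementary fact about complements of a fixed summand applies. But that fact applies when both complements are complements of the \emph{same submodule}. Here $M^{\perp}_T$ is a complement to the image of $M_T$ under the split inclusion coming from $M \oplus M^{\perp} \cong B^n$, while $N^{\perp}_{B_T}$ is a complement to the image of $N_{B_T}$ coming from the (entirely unrelated) splitting $N \oplus N^{\perp} \cong A_S^n$. These are two different submodules of $B_T^n$; $\alpha$ is an abstract isomorphism between $M_T$ and $N_{B_T}$, not a compatibility between the two embeddings. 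What you really have is $M_T \oplus M^{\perp}_T \cong B_T^n \cong M_T \oplus N^{\perp}_{B_T}$ (substituting via $\alpha$), and passing from this to $M^{\perp}_T \cong N^{\perp}_{B_T}$ is exactly a cancellation statement, which fails for projective modules over general rings. So the triple $(M^{\perp}, N^{\perp}, \alpha^{\prime})$ cannot in general be formed, and the reduction to a free patching datum of rank $n$ does not go through.

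The paper sidesteps this by \emph{not} attempting to match $P^{\prime}_T$ with $Q^{\prime}_{B_T}$. Instead it embeds $(P,Q,\beta)$ as a summand of a free patching datum of the larger rank $n+m$, with complementary summand $(P^{\prime} \oplus B^m,\; Q^{\prime} \oplus A_S^n,\; \beta^{\prime})$, where $\beta^{\prime}$ is built as an explicit chain
\[
P^{\prime}_T \oplus B_T^m \cong P^{\prime}_T \oplus Q_{B_T} \oplus Q^{\prime}_{B_T} \cong (P \oplus P^{\prime})_T \oplus Q^{\prime}_{B_T} \cong (Q^{\prime} \oplus A_S^n)_{B_T}
\]
using $\beta^{-1}$ in the middle. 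Adding free padding to \emph{both} entries of the complement is what allows $\beta^{\prime}$ to be constructed from the given data without any cancellation. Once you replace your $(M^{\perp}, N^{\perp}, \alpha^{\prime})$ by such an enlarged complement, the remainder of your proof (the Whitehead trick, invoking Lemma~\ref{lemma:Vorsts_lemma}(1), and idempotent completeness of $\Proj(A)$) is correct and is the same as the paper's.
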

 
 \begin{proof}
 The argument can be summarized as follows: from the fact that the patching diagram associated to $\varphi$ is cartesian it follows that $\rho$ is full and faithful. Since idempotents split in the domain, it suffices to show that every object in the codomain is a direct summand of $\rho(A^{r})$ for a suitable $r \in R$. The argument of Milnor reduces this to the case of objects of the form $(B^n, A_S^n, \alpha)$. For such objects, the claim follows from Whiteheads lemma, applied to the matrix
 \[
 \begin{pmatrix}
 \alpha & 0 \\
 0 & \alpha^{-1}
 \end{pmatrix}  \in \mathrm{GL_{2n}(B_{\varphi(S)})},
 \]
 combined with Vorst's lemma.
 
 Here are the details. The condition that
 \[
 \vcenter{\xymatrix{A \ar[r]^-{\lambda_S} \ar[d]_{\varphi} & A_S \ar[d]^{\varphi_S} \\ B \ar[r]_-{\lambda_{\varphi(S)}} & B_{\varphi(S)} }}
 \]
 is cartesian implies that $\rho_{A,A}$ is an isomorphism. Since the full subcategory of objects where a natural transformation between additive functors is an isomorphism is closed under finite direct sums and under direct summands, it follows that $\rho$ is indeed full and faithful.
 
 As Roy observed in \cite[\S 2]{ROY}, we can now apply the same reasoning that Milnor gave in \cite[Lemma~2.6]{MILNOR} under slightly different assumptions on the patching diagram. Specifically, given an object $(P,Q,\beta)$ of the target category of $\rho$, we can choose a projective $B$-module $P^{\prime}$ and a projective $A_S$-module $Q^{\prime}$ such that $P \oplus P^{\prime} \cong B^{n}$ and $Q \oplus Q^{\prime} \cong A_S^{m}$. To aid legibility, we let $T \defl \varphi(S)$ and for an $A_S$-module $M$ we write $M_{B_T}$ for $B_T \ten{A_S} M$. Then we get an isomorphism
 \[
 \beta^{\prime} \colon P^{\prime}_{T} \oplus B_{T}^m \cong P^{\prime}_{T} \oplus Q_{B_T} \oplus Q^{\prime}_{B_T} \cong (P^{\prime} \oplus P)_T \oplus Q^{\prime}_{B_T} \cong (Q^{\prime} \oplus A_S^{n})_{B_T},
 \]
 so $(P,Q,\beta)$ is a direct summand of $(B^{n+m},A_{S}^{n+m},\beta \oplus \beta^{\prime})$. Let $r=n+m$ and let $\alpha \in \mathrm{GL}_r(B_T)$ be the unique element such that $\beta \oplus \beta^{\prime}=\kappa_{A^{r}} \circ \alpha$. Since the image of $\rho$ is closed under direct summands (a consequence of the fact that idempotents split in the domain of $\rho$), it only remains to check that the object
 \[
 \Biggl(B^{2r}, A_S^{2r}, \kappa_{A^{2r}} \begin{pmatrix}
 \alpha & 0 \\ 0 & \alpha^{-1} 
\end{pmatrix}  \Biggr)
 \]
 lies in the image of $\rho$. From Whitehead's lemma we know that
 \[
  \begin{pmatrix}
 \alpha & 0 \\
 0 & \alpha^{-1}
 \end{pmatrix}  \in \mathrm{E}_{2r}(B_T) \smash{\rlap{,}}
 \]
 and we can assume that $2r \geq 3$ by increasing the rank of $P^{\prime}$ and $Q^{\prime}$ if necessary. Applying Vorst's Lemma (Claim~(1) of Lemma~\ref{lemma:Vorsts_lemma}) to the inverse of this elementary matrix, we obtain matrices $\varepsilon_1 \in \mathrm{E}_{2r}(B)$ and $\varepsilon_2 \in \mathrm{E}_{2r}(A_S)$ which give an isomorphism
 \[
 (\varepsilon_1, \varepsilon_2^{-1}) \colon \rho(A^{2r}) \rightarrow  \Biggl(B^{2r}, A_S^{2r}, \kappa_{A^{2r}} \begin{pmatrix}
 \alpha & 0 \\ 0 & \alpha^{-1} 
\end{pmatrix}  \Biggr) \smash{\rlap{,}}
 \]
 so $\rho$ is indeed essentially surjective on objects.
   \end{proof}
   
\begin{cor}\label{cor:weak_analytic_excision_for_P_and_P_r}
 The functors $P \colon \CAlg_R \rightarrow \Set$ and $P_r \colon \CAlg_R \rightarrow \Set_{\ast}$ satisfy weak analytic excision for all $r \geq 1$.
\end{cor}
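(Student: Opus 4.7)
The plan is to derive the corollary directly from the equivalence of categories $\rho$ established in Proposition~\ref{prop:patching_projective_modules}: essential surjectivity of $\rho$ translates immediately into the weak pullback property for the functor of isomorphism classes. Specifically, given classes $[Q_1] \in P(B)$ and $[Q_2] \in P(A_S)$ whose images in $P(B_{\varphi(S)})$ agree, I would pick any isomorphism $\beta \colon B_{\varphi(S)} \ten{B} Q_1 \to B_{\varphi(S)} \ten{A_S} Q_2$ witnessing this equality, assemble $(Q_1,Q_2,\beta)$ as an object of the pullback category $\Proj(B) \pb{\Proj(B_{\varphi(S)})} \Proj(A_S)$, and apply essential surjectivity of $\rho$ to obtain a finitely generated projective $A$-module $P$ with $B \ten{A} P \cong Q_1$ and $A_S \ten{A} P \cong Q_2$. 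The pair $[P] \in P(A)$ then maps to $([Q_1],[Q_2])$, settling weak analytic excision for $P$.

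For $P_r$, I would verify two additional points. First, base-point preservation: $\rho(A^r) \cong (B^r, A_S^r, \kappa_{A^r})$, so the base point $[A^r] \in P_r(A)$ is sent to the base point of the pullback. Second---the only point worth any care---if $Q_1$ and $Q_2$ both have constant rank $r$, then the $P$ produced above also has constant rank $r$. I would argue this via the surjectivity of the map $\Spec(B) \sqcup \Spec(A_S) \to \Spec(A)$: any prime $\mathfrak{p} \subseteq A$ either avoids $S$, so that it lies in the image of $\Spec(A_S)$, or contains some $s \in S$, in which case the hypothesis that $\bar{\varphi} \colon A\slash s \to B\slash \varphi(s)$ is an isomorphism places $\mathfrak{p}$ in the image of $\Spec(B)$. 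Since the rank of a finitely generated projective module commutes with base change, constant rank $r$ on the disjoint union forces constant rank $r$ on $\Spec(A)$.

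I do not anticipate any genuine obstacle, since all the substantive content is already packaged in Proposition~\ref{prop:patching_projective_modules}. The only conceptual subtlety worth flagging is that weak excision demands only essential surjectivity onto the \emph{strict} pullback of sets, which is exactly what essential surjectivity of the categorical pullback equivalence $\rho$ delivers; fullness and faithfulness of $\rho$ play no role here, so any failure of injectivity on isomorphism classes does not affect the argument.
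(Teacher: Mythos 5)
Your proposal is correct and follows essentially the same route as the paper: derive weak excision for $P$ from essential surjectivity of the equivalence $\rho$ in Proposition~\ref{prop:patching_projective_modules}, then establish constant rank for $P_r$ by the same case analysis on whether a prime $\mathfrak{p} \subseteq A$ meets $S$ or not. Your reframing in terms of surjectivity of $\Spec(B) \sqcup \Spec(A_S) \to \Spec(A)$ is just a repackaging of the paper's factorization of $A \to A_{\mathfrak{p}}$ through $A_S$ (when $\mathfrak{p} \cap S = \varnothing$) and of $A \to A/\mathfrak{p}$ through $\varphi$ (when some $s \in \mathfrak{p} \cap S$), so the content is identical.
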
 

\begin{proof}
 Weak analytic excision for $P$ is a direct consequence of Proposition~\ref{prop:patching_projective_modules}. Indeed, if $(\lambda_{\varphi(S)})_{\ast}\bigl([P]\bigr)=(\varphi_S)_{\ast} \bigl([Q]\bigr)$, then any choice of isomorphism $\beta \colon P_{\varphi(S)} \cong B_{\varphi(S)} \ten{A_S} Q$ and choice of projective $A$-module $L$ such that $\rho(L) \cong (P,Q,\beta)$ gives an element $[L] \in P(A)$ with $\varphi_{\ast} [L]=[P]$ and $(\lambda_S)_{\ast} [L]=[Q]$.
 
 It only remains to show that $L$ has constant rank $r$ if both $P$ and $Q$ have rank $r$ to get the conclusion for $P_r$. If $\mathfrak{p} \in \mathrm{Spec}(A)$ and $\mathfrak{p} \cap S= \varnothing$, the localization at $\mathfrak{p}$ factors through $A_S$, so $\mathrm{rk} (L_{\mathfrak{p}})=r$. If not, then there exists an element $s \in \mathfrak{p} \cap S $. In this case, the projection $A \rightarrow A \slash \mathfrak{p}$ factors through $\varphi \colon A \rightarrow B$ since $\varphi$ is an analytic isomorphism along $s$. This again implies that $\mathrm{rk}(L_{\mathfrak{p}})=r$.
\end{proof}  

It follows from an observation of Gubeladze that Vorst's Lemma can also be used to show that for each $r \geq 3$, the functor
\[
W_r \colon \CAlg_R \rightarrow \Set_{\ast}
\]
which sends $A$ to $W_r(A) \defl \mathrm{Um}_r(A) \slash \mathrm{E}_{r}(A)$, the set of elementary orbits of unimodular rows pointed by $e_1=(1,0,\ldots,0)$, satisfies weak analytic excision.

\begin{prop}\label{prop:W_r_weak_analytic_excision}
 For all $r \geq 3$, the functor $W_r \colon \CAlg_R \rightarrow \Set_{\ast}$ satisfies weak analytic excision.
\end{prop}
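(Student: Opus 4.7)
The plan is to reduce the statement to Vorst's Lemma (Lemma~\ref{lemma:Vorsts_lemma}(1)) combined with the fact that the representable functor $A \mapsto A^r$ preserves pullbacks. Fix representatives $v_1 \in \mathrm{Um}_r(B)$ and $v_2 \in \mathrm{Um}_r(A_S)$ of classes whose images in $W_r(B_{\varphi(S)})$ agree. By definition of $W_r$, there exists an elementary matrix $\varepsilon \in \mathrm{E}_r(B_{\varphi(S)})$ with $\lambda_{\varphi(S)}(v_1) \cdot \varepsilon = \varphi_S(v_2)$.

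The second step is to apply Vorst's Lemma (the invocation that uses $r \geq 3$) to factor $\varepsilon = \lambda_{\varphi(S)}(\varepsilon_1) \cdot \varphi_S(\varepsilon_2)$ with $\varepsilon_1 \in \mathrm{E}_r(B)$ and $\varepsilon_2 \in \mathrm{E}_r(A_S)$. Replacing $v_1$ by $v_1 \varepsilon_1$ and $v_2$ by $v_2 \varepsilon_2^{-1}$ does not change their classes in $W_r$, but arranges that $\lambda_{\varphi(S)}(v_1 \varepsilon_1) = \varphi_S(v_2 \varepsilon_2^{-1})$ holds in $B_{\varphi(S)}^r$. Since $A \mapsto A^r$ is a representable (hence pullback-preserving) functor, the analytic patching pullback gives a unique $v = (a_1, \ldots, a_r) \in A^r$ with $\varphi(v) = v_1 \varepsilon_1$ and $\lambda_S(v) = v_2 \varepsilon_2^{-1}$. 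The basepoint $e_1 = (1, 0, \ldots, 0)$ is preserved by all ring maps, so compatibility with the pointed structure is automatic.

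The remaining step, which I expect to be the main technical point, is to check that this $v$ is actually unimodular, i.e.\ that $(a_1, \ldots, a_r) = A$. I would verify this locally at every prime $\mathfrak{p} \subseteq A$ via a case analysis. If $\mathfrak{p} \cap S = \varnothing$, then $A_{\mathfrak{p}}$ factors through $A_S$, and unimodularity of $v_2 \varepsilon_2^{-1}$ over $A_S$ forces $(a_i) \not\subseteq \mathfrak{p}$. Otherwise, there exists $s \in \mathfrak{p} \cap S$, and the analytic isomorphism property gives $\bar{\varphi} \colon A / s \cong B / \varphi(s)$; using this isomorphism, every prime of $A$ containing $s$ is of the form $\varphi^{-1}(\mathfrak{q})$ for a unique prime $\mathfrak{q} \subseteq B$ containing $\varphi(s)$. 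Applied to $\mathfrak{p} = \varphi^{-1}(\mathfrak{q})$, unimodularity of $v_1 \varepsilon_1$ over $B$ shows $(\varphi(a_i)) \not\subseteq \mathfrak{q}$, hence $(a_i) \not\subseteq \mathfrak{p}$. Once this local check is in place, $v \in \mathrm{Um}_r(A)$ provides the required lift and the weak pullback property of $W_r$ follows.
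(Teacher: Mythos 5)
Your proposal is correct and follows essentially the same approach as the paper: factor the witnessing elementary matrix using Part~(1) of Vorst's Lemma, modify the representatives so they agree on the nose in $B_{\varphi(S)}^r$, use the pullback property to produce a row over $A$, and then verify unimodularity. The only difference is cosmetic: where you verify unimodularity of $v$ locally at each prime (splitting into the cases $\mathfrak{p} \cap S = \varnothing$ and $\mathfrak{p} \cap S \neq \varnothing$), the paper argues globally by noting that the ideal $I \subseteq A$ generated by the entries of $v$ contains some $s \in S$ (from unimodularity over $A_S$) and also contains an element of the form $1+sa$ (from unimodularity modulo $s$ via the isomorphism $A/s \cong B/\varphi(s)$), whence $1 \in I$; the paper attributes this step to the proof of a result of Gubeladze. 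Both versions of the final step are manifestly equivalent.
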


\begin{proof}
 Let $\varphi \colon A \rightarrow B$ be an analytic isomorphism along $S$. Given unimodular rows $u=(u_1, \ldots, u_r)$ over $B$ and $v=(v_1, \ldots, v_r)$ over $A_S$ which become equal in $W_r(B_{\varphi(S)})$, we can apply Vorst's Lemma (Claim~(1) of Lemma~\ref{lemma:Vorsts_lemma}) to any matrix $\varepsilon \in \mathrm{E}_{r}(B_{\varphi(S)})$ witnessing this equality (that is, any $\varepsilon$ such that $(\lambda_{\varphi(S)})_{\ast} u \varepsilon=(\varphi_S)_{\ast} v$) to get $\varepsilon_1 \in \mathrm{E}_r(B)$ and $\varepsilon_2 \in \mathrm{E}_r(A_S)$ such that
 \[
 (\lambda_{\varphi(S)})_{\ast} (u \varepsilon_1) = (\varphi_S)_{\ast} (v \varepsilon_2^{-1})
 \]
 holds.
 
 Since the patching diagram associated to $\varphi$ is cartesian, there exists a row $(w_1, \ldots, w_r)$, $w_i  \in A$, such that $\varphi_{\ast} w=u \varepsilon_1$ and $(\lambda_S)_{\ast}=v \varepsilon_{2}^{-1}$. It only remains to show that $w$ is unimodular, which follows from the proof of \cite[Proposition~9.1]{GUBELADZE_ELEMENTARY}.
 
 Indeed, since the localization of $w$ at $S$ is unimodular, the ideal $I \subseteq A$ generated by the $w_i$ contains some $s \in S$. The image $\bar{w}$ of $w$ in $A \slash s$ coincides with the image of $u\varepsilon_1$ under the composite $B \rightarrow B \slash \varphi(s) \cong A \slash s$, so it is also unimodular. Thus $1 +sa \in I$ for some $a \in A$, which implies that $I=A$, as claimed.
\end{proof}

 The examples $P_{r} \colon \CAlg_R \rightarrow \Set_{\ast}$ and $K_{1,r} \colon \CAlg_R \rightarrow \Grp$ can both be generalized considerably.
 
 Recall that an \emph{Adams stack} over $R$ is a stack for the $\fpqc$-topology on $\CAlg_R$ which is quasi-compact, has affine diagonal, and which has the resolution property. Any Adams stack $X \colon \CAlg_R \rightarrow \Gpd$ yields a functor $\pi_0 X \colon \CAlg_R \rightarrow \Set$ which sends $A$ to the set $\pi_0 X(A)$ of isomorphism classes of objects of the groupoid $X(A)$. The following proposition is a consequence of generalized Tannaka duality.
   
\begin{prop}\label{prop:Adams_stack_weak_analytic_excision}
Let $X \colon \CAlg_R \rightarrow \Gpd$ be an Adams stack and let $\varphi \colon A \rightarrow B$ be an analytic isomorphism along $S$. Then the induced functor
\[
X(A) \rightarrow X(B) \pb{X(B_{\varphi(S)})} X(A_S)
\]
is an equivalence of groupoids. In particular, $\pi_0 X \colon \CAlg_R \rightarrow \Set$ satisfies weak analytic excision.
\end{prop}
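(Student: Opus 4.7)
The plan is to reduce the claim to generalized Tannaka duality for Adams stacks combined with the Milnor--Roy patching result (Proposition~\ref{prop:patching_projective_modules}).

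By generalized Tannaka duality, for any commutative $R$-algebra $T$ the groupoid $X(T)$ is naturally equivalent to the groupoid $\Rex^{\otimes}_R\bigl(\ca{A}, \Proj(T)\bigr)$ of $R$-linear right-exact symmetric monoidal functors (with morphisms the monoidal natural isomorphisms) out of a fixed small symmetric monoidal $R$-linear category $\ca{A}$; concretely, $\ca{A}$ may be taken to be the category of vector bundles on $X$, with quasi-compactness, affine diagonal, and the resolution property all entering to guarantee that this description is available and that the target category of functors may be taken to be $\Proj(T)$ rather than all of $\Mod_T$. Under this identification, the functor induced by a ring homomorphism $T \rightarrow T'$ is postcomposition with the base change functor $\Proj(T) \rightarrow \Proj(T')$.

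Using this, the functor in the statement is identified with the canonical comparison
\[
\Rex^{\otimes}_R\bigl(\ca{A}, \Proj(A)\bigr) \rightarrow \Rex^{\otimes}_R\bigl(\ca{A}, \Proj(B)\bigr) \pb{\Rex^{\otimes}_R(\ca{A}, \Proj(B_{\varphi(S)}))} \Rex^{\otimes}_R\bigl(\ca{A}, \Proj(A_S)\bigr).
\]
Since $\Rex^{\otimes}_R(\ca{A}, -)$ is a representable 2-functor on the natural 2-category of symmetric monoidal $R$-linear right-exact categories, it preserves iso-comma objects; hence the right-hand side is canonically equivalent to $\Rex^{\otimes}_R\bigl(\ca{A}, \Proj(B) \pb{\Proj(B_{\varphi(S)})} \Proj(A_S)\bigr)$. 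By Proposition~\ref{prop:patching_projective_modules}, the canonical functor
\[
\Proj(A) \rightarrow \Proj(B) \pb{\Proj(B_{\varphi(S)})} \Proj(A_S)
\]
is an equivalence, and as it is assembled from strong symmetric monoidal right-exact base change functors, it is in fact an equivalence inside our 2-category. Applying $\Rex^{\otimes}_R(\ca{A}, -)$ then yields the desired equivalence of groupoids $X(A) \simeq X(B) \pb{X(B_{\varphi(S)})} X(A_S)$.

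For the ``in particular'' assertion, applying $\pi_0$ to this equivalence produces a bijection of sets, and $\pi_0$ sends an iso-comma of groupoids to a weak pullback of sets: given classes $[y] \in \pi_0 X(B)$ and $[z] \in \pi_0 X(A_S)$ with equal image in $\pi_0 X(B_{\varphi(S)})$, any witnessing isomorphism produces an object of the iso-comma groupoid whose preimage under the above equivalence is an element of $\pi_0 X(A)$ mapping to $[y]$ and $[z]$. The hard part will be invoking the Tannakian reduction in the first paragraph; once that is granted the rest is a 2-categorical formality reducing to the already-established Milnor--Roy theorem.
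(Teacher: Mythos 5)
Your argument has the right shape at a high level --- both you and the paper reduce to the Milnor--Roy patching result of Proposition~\ref{prop:patching_projective_modules} --- but the Tannakian step you flag as ``the hard part'' is where the gap is, and it is a genuine gap rather than a formality.

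The identification $X(T) \simeq \Rex^{\otimes}_R\bigl(\ca{A}, \Proj(T)\bigr)$ with $\ca{A}$ the category of vector bundles on $X$ and $\Proj(T)$ as target is not the standard form of Tannaka duality for Adams stacks, and it is not clear it is even well posed: ``right exact'' is a condition about finite colimits, but neither $\VB(X)$ nor $\Proj(T)$ is abelian (cokernels need not exist), so the class of functors you quantify over is not defined. The recognition theorem in the reference the paper actually uses (SCHAEPPI\_DESCENT, Theorem~1.1.1) works with the abelian category of finitely presentable quasi-coherent sheaves and finitely presentable modules, not $\VB$ and $\Proj$. Working that out, a naive reduction would require
\[
\Mod_{A,\mathrm{fp}} \;\longrightarrow\; \Mod_{B,\mathrm{fp}} \pb{\Mod_{B_{\varphi(S)},\mathrm{fp}}} \Mod_{A_S,\mathrm{fp}}
\]
to be an equivalence, and this \emph{fails} for analytic isomorphisms that are not flat (compare Proposition~\ref{prop:M_weak_analytic_excision} and the remark following it). That is exactly the problem SCHAEPPI\_DESCENT, Theorem~1.1.1 is designed to sidestep: it characterizes 2-pushouts of Adams stacks by \emph{two} conditions, descent for vector bundles of constant rank \emph{and} a detection condition (if $M$ is finitely presentable over $A$ and both $B \ten{A} M$ and $M_S$ vanish, then $M = 0$). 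Your argument only ever uses the first; the detection condition, which prevents a symmetric monoidal functor built from compatible data over $B$ and $A_S$ from silently destroying information, is not captured by the ``2-representability preserves iso-commas'' formality. The paper verifies detection in a short separate argument (it is easy for analytic isomorphisms: $M_S \cong 0$ with $M$ finitely generated gives $s \in S$ with $sM = 0$, hence $M \cong A \slash s \ten{A} M$, and $A \rightarrow A \slash s$ factors through $B$, so $B \ten{A} M \cong 0$ forces $M \cong 0$), but it must be verified. Your proof would need to either establish a $\Proj$-valued Tannakian statement precisely (spelling out what ``right exact'' means there and why it holds) or fall back on the cited criterion and check both of its hypotheses.
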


\begin{proof}
 Since affine schemes are examples of Adams stacks, the first claim amounts to showing that the diagram
 \[
 \xymatrix{\Spec(B_{\varphi(S)}) \ar[r] \ar[d] & \Spec(B) \ar[d] \\ \Spec(A_S) \ar[r] & \Spec(A)}
 \]
 is a pushout in the 2-category of Adams stacks. By \cite[Theorem~1.1.1]{SCHAEPPI_DESCENT}, we need to check two claims: that
 \[
 \VB^c\bigl(\Spec(A) \bigr)  \rightarrow \VB^c\bigl(\Spec(B) \bigr) \pb{\VB^c\bigl(\Spec(B_{\varphi(S)}) \bigr)} \VB^c\bigl(\Spec(A_S) \bigr)
 \]
 is an equivalence, where $\VB^c$ denotes the category of vector bundles of constant finite rank, and that $M \in \Mod_{A,\mathrm{fp}}$ is isomorphic to the zero module if both $B \ten{A} M \cong 0$ and $M_S \cong 0$.
 
 The first claim follows directly from Proposition~\ref{prop:patching_projective_modules} and the proof of Corollary~\ref{cor:weak_analytic_excision_for_P_and_P_r}.
 
 Let $M$ be a finitely presentable $A$-module such that $B \ten{A} M \cong 0$ and $M_S \cong 0$. Since $M$ is finitely generated and $M_S \cong 0$, there exists an $s \in S$ such that $sM=0$. This implies that $ M \cong A \slash s \ten{A}M$. Since the projection $A \rightarrow A \slash s$ factors through $\varphi \colon A \rightarrow B$, it follows from $B \ten{A} M \cong 0$ that $M \cong 0$, as claimed.
\end{proof}

 A flat affine group scheme $G$ over $R$ gives rise to an Adams stack $BG$ if it has the (equivariant) resolution property (here $BG(A)$ denotes the groupoid of $G$-torsors over $\Spec(A)$). If $R$ is noetherian and $G$ is reductive, this is the case if and only if $G$ is linear, which in turn is the case if and only if the radical torus $\mathrm{rad}(G)$ is isotrivial, see \cite[Theorem~6.2]{GILLE_LINEAR}. This condition is for example satisfied for all reductive group schemes $G$ over $R$ if $R$ is normal (see \cite[Th{\'e}or{\`e}me~X.5.16]{SGA3_NEW}). Note that $\pi_0 BG(A)$ is precisely the set $H^1(G,A)$ of isomorphism classes of $G$-torsors over $\Spec(A)$.
 
 \begin{cor}\label{cor:torsors_weak_analytic_excision}
 If $G$ is a flat affine group scheme over $R$ with the resolution property, then the functor
 \[
 H^1(G,-) \colon \CAlg_R \rightarrow \Set_{\ast}
 \]
 which sends $A$ to the set of isomorphism classes of $G$-torsors over $\Spec(A)$, pointed by the trivial torsor, satisfies weak analytic excision.
 \end{cor}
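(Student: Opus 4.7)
The plan is to reduce the corollary directly to Proposition~\ref{prop:Adams_stack_weak_analytic_excision} by exhibiting $BG$ as an Adams stack and identifying $\pi_0 BG$ with $H^1(G,-)$.

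First I would verify the four conditions that make $BG \colon \CAlg_R \rightarrow \Gpd$ an Adams stack. Being a stack for the $\fpqc$-topology is standard (since $G$ is in particular $\fpqc$). Quasi-compactness follows from the fact that the trivial torsor provides a faithfully flat cover $\Spec(R) \rightarrow BG$ by an affine (hence quasi-compact) scheme. The diagonal $BG \rightarrow BG \times BG$ is represented by $G$ itself, which is affine by assumption. Finally, the resolution property is the remaining hypothesis on $G$. Hence $BG$ is an Adams stack in the sense used in Proposition~\ref{prop:Adams_stack_weak_analytic_excision}.

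Next I would observe that by construction $\pi_0 BG(A)$ is the set of isomorphism classes of objects in the groupoid of $G$-torsors over $\Spec(A)$, i.e., $\pi_0 BG(A) = H^1(G,A)$, and that the base point given by the trivial torsor corresponds to the isomorphism class of $A$ viewed as the trivial $G$-torsor. The naturality of this identification is immediate.

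Applying Proposition~\ref{prop:Adams_stack_weak_analytic_excision} to $X = BG$ then yields that $\pi_0 BG \colon \CAlg_R \rightarrow \Set$ sends every analytic patching diagram to a weak pullback diagram of sets; since the identification $\pi_0 BG = H^1(G,-)$ preserves the base point, the same holds as a diagram in $\Set_{\ast}$, which is what we wanted. There is no real obstacle in this argument beyond checking that $BG$ meets the Adams stack hypotheses; the bulk of the work has already been done in Proposition~\ref{prop:Adams_stack_weak_analytic_excision}.
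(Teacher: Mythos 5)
Your argument is correct and takes essentially the same route as the paper: the proof of Corollary~\ref{cor:torsors_weak_analytic_excision} consists exactly of observing that $BG$ is an Adams stack (as discussed just before the corollary) and that $H^1(G,A)=\pi_0 BG(A)$, so that Proposition~\ref{prop:Adams_stack_weak_analytic_excision} applies directly. Your extra paragraph spelling out why $BG$ satisfies the Adams stack conditions is a harmless elaboration of what the paper treats as a known fact.
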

 
 \begin{proof}
 As noted above, $X=BG$ is an Adams stack in this case and $H^1(G,A)=\pi_{0} X$, so the claim follows directly from Proposition~\ref{prop:Adams_stack_weak_analytic_excision}. 
 \end{proof}

 Similarly, the definition of $K_{1,r}$ can be extended to obtain functors 
 \[
 K_{1}^G \colon \CAlg_R \rightarrow \Grp
 \]
 for more general affine group schemes $G$ over $R$ in such a way that $K_{1,r}$ corresponds to the functor $K_1^{\mathrm{GL}_r}$. This relies on generalization the elementary subgroup and a corresponding form of Vorst's Lemma. Such an version of Vorst's Lemma appears in \cite[Lemma~3.7]{ABE}. The most general form known to the author appears in \cite[Lemma~3.4]{STAVROVA_HOMOTOPY}. We note that, in its most general form, the functor $K_1^{G}$ also depends on a choice of parabolic subgroup $P$ of $G$ and is denoted by $K_{1}^{G,P} \colon \CAlg_R \rightarrow \Set_{\ast}$. The elementary subgroup $\mathrm{E}_P(A)$ is the subgroup of $G(A)$ generated by the unipotent radical $U_P(A)=\mathrm{rad}^{u} P(A)$ and $U_{P^{-}}$, where $P^{-}$ is a parabolic subgroup opposite to $P$ (see \cite[Definition~2.1]{STAVROVA_HOMOTOPY}). The pointed set $G(A) \slash \mathrm{E}_{P}(A)$ gives the value of $K_1^{G,P}$ on $A$.

\begin{lemma}\label{lemma:K_1_GP_finitary}
 If $G$ is a reductive group over $R$ and $P$ is a parabolic subgroup, then the functor
 \[
 K_1^{G,P} \colon \CAlg_R \rightarrow \Set_{\ast}
 \]
 is finitary.
\end{lemma}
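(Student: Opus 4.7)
My plan is to exhibit $K_1^{G,P}(A) = G(A) \slash E_P(A)$ as a pointwise quotient in which both the numerator and the denominator preserve filtered colimits, and then invoke the fact that filtered colimits commute with coequalizers in $\Set_{\ast}$.

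First I would note that since $G$ is reductive over $R$ it is, by definition, an affine $R$-scheme of finite presentation, so the functor $G \colon \CAlg_R \rightarrow \Grp$, being corepresented by a finitely presented $R$-algebra, is finitary. The same applies to the parabolic $P$, to an opposite parabolic $P^{-}$, and to their unipotent radicals $U_P$ and $U_{P^{-}}$: all of these are affine $R$-schemes of finite presentation.

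The key step will be to show that the subgroup-valued functor $E_P(-) \subseteq G(-)$ is finitary. For each $n \geq 0$, I would let $W_n \colon \CAlg_R \rightarrow \Set$ be the subfunctor of $G$ whose value at a ring $A$ is the image of the multiplication map $(U_P \sqcup U_{P^{-}})^n(A) \rightarrow G(A)$. Since filtered colimits in $\Set$ commute with finite coproducts, finite products, and images, and since $U_P$, $U_{P^{-}}$, and $G$ are finitary, each $W_n$ is finitary. By the definition of $E_P$ recalled just before the lemma, $E_P(A) = \bigcup_{n \geq 0} W_n(A)$, which exhibits $E_P$ as the (sequential, hence filtered) colimit of the $W_n$ along the inclusions $W_n \hookrightarrow W_{n+1}$. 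A filtered colimit of finitary functors is finitary, so $E_P$ is finitary.

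To conclude, I would observe that $K_1^{G,P}(-) = G(-) \slash E_P(-)$ is the pointwise coequalizer in $\Set_{\ast}$ of the two maps $E_P(-) \times G(-) \rightrightarrows G(-)$ given by the inclusion and by right translation. Since filtered colimits in $\Set$ commute with finite products and with coequalizers, and both $G(-)$ and $E_P(-)$ have been shown to be finitary, the quotient is finitary as well. The only slightly delicate point is the passage from finite presentation of the constituent group schemes to finitariness of the generated subgroup functor $E_P$, which is handled by the filtered-colimit description of $E_P$ above.
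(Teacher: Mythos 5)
Your proof is correct, and it takes a genuinely different route from the paper's. The paper exhibits $E_P(A)$ as the image of the group homomorphism $\psi_A \colon U_P(A) \ast U_{P^{-}}(A) \to G(A)$, observes that the free product functor (in two variables) preserves filtered colimits in each variable so its diagonal is finitary, and then argues that the image equals the quotient of the (finitary) domain by the kernel of $\psi$, which is finitary because finite limits commute with filtered colimits in $\Grp$. You avoid the free product entirely and instead filter $E_P$ by word length, $E_P = \bigcup_{n} W_n$, exhibiting each $W_n$ as the image of a finitary set-valued functor $\bigl(U_P \sqcup U_{P^{-}}\bigr)^n \to G$ and observing that filtered colimits in $\Set$ commute with finite coproducts, finite products, and image factorizations, then writing $E_P$ as a sequential colimit of the $W_n$. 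Both arguments hinge on the same essential input, namely that $U_P$, $U_{P^{-}}$, and $G$ are finitary; note that the paper justifies this for the unipotent radicals by citing \cite[Corollaire~XXVI.2.5]{SGA3_NEW} (they are vector bundles), whereas you assert finite presentation directly, which is true but slightly less explicit. Your approach is a little more elementary in that it works purely with $\Set$-valued functors and avoids the free product and kernel-pair argument; the paper's is more compact. One minor point: in describing the coequalizer presenting $G/E_P$, the first leg of the pair $E_P(-) \times G(-) \rightrightarrows G(-)$ should be the projection onto the second factor rather than an ``inclusion,'' but this does not affect the argument.
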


\begin{proof}
 Since $G$ is (locally) of finite presentation over $R$, the functor $G(-)$ is finitary, so we only need to check that $E_{P}(-)$ is also finitary. This follows from the fact that $U_P$ and $U_{P^{-}}$ are finitary.
 
 Here are the details. The unipotent radicals $U_P$ and $U_{P^{-}}$ are isomorphic (as schemes) to certain vector bundles
 \[
 U_P \cong \Spec\bigl(\Sym(V)\bigr) \quad \text{and} \quad U_{P^{-}} \cong \Spec \bigl( \Sym(W) \bigr)
 \]
 for certain finitely generated projective $R$-modules $V$ and $W$ (see \cite[Corollaire~XXVI.2.5]{SGA3_NEW}). It follows that the functor
 \[
 U_{P}(-) \ast U_{P^{-}}(-) \colon \CAlg_R \times \CAlg_R \rightarrow \Grp
 \]
(where $\ast$ denotes the free product) preserves filtered colimits in each variable. Thus the functor given by the diagonal
 \[
 A \mapsto  U_{P}(A) \ast U_{P^{-}}(A)
 \]
 is finitary. The group $\mathrm{E}_P(A)$ is the image of the natural transformation 
 \[
 \psi_A \colon U_{P}(A) \ast U_{P^{-}}(A) \rightarrow G(A)
 \]
 induced by the two natural inclusions. Since the image coincides with the quotient of the domain modulo the kernel, it only remains to check that the kernel of $\psi$ commutes with filtered colimits. This follows from the fact that finite limits commute with filtered colimits in the category $\Grp$.
\end{proof}

\begin{prop}[Stavrova]\label{prop:unstable_K1_excision_reductive_group}
 Let $G$ be be an isotropic reductive group scheme over $R$ and let $P$ be a strictly proper parabolic subgroup of $G$ (that is, $P$ intersects every normal semi-simple subgroup of $G$ properly). Assume that the rank of $\Phi_P$ (see \cite[Definition~2.5]{STAVROVA_HOMOTOPY}) is $\geq 2$ everywhere on $\Spec(R)$. Then
 \[
 K_1^{G,P} \colon \CAlg_R \rightarrow \Set_{\ast}
 \]
 satisfies weak analytic excision.
\end{prop}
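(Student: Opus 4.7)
The plan is to mirror the argument for the classical case given in Lemma~\ref{lemma:Vorsts_lemma}(2), replacing the classical Vorst Lemma (Lemma~\ref{lemma:Vorsts_lemma}(1)) with Stavrova's generalization \cite[Lemma~3.4]{STAVROVA_HOMOTOPY}. Since $K_1^{G,P}$ is finitary by Lemma~\ref{lemma:K_1_GP_finitary}, we may first reduce to the case $S = \{1,f,f^2,\ldots\}$ generated by a single element: any two elements of $K_1^{G,P}$ witnessing the failure of weak excision are detected at a finite stage, and every analytic isomorphism along $S$ is the filtered colimit of the analytic isomorphisms along the multiplicative sets generated by finitely many elements of $S$.

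Having done the reduction, suppose $\sigma_1 \in G(B)$ and $\sigma_2 \in G(A_S)$ represent classes whose images in $K_1^{G,P}(B_{\varphi(S)})$ agree, so there exists $\varepsilon \in \mathrm{E}_P(B_{\varphi(S)})$ with $\lambda_{\varphi(S)}(\sigma_1) \cdot \varepsilon = \varphi_S(\sigma_2)$. I would then invoke Stavrova's version of Vorst's Lemma \cite[Lemma~3.4]{STAVROVA_HOMOTOPY} to produce a factorization
\[
\varepsilon = \lambda_{\varphi(S)}(\varepsilon_1) \cdot \varphi_S(\varepsilon_2)
\]
with $\varepsilon_1 \in \mathrm{E}_P(B)$ and $\varepsilon_2 \in \mathrm{E}_P(A_S)$. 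Substituting this in gives
\[
\lambda_{\varphi(S)}(\sigma_1 \varepsilon_1) = \varphi_S(\sigma_2 \varepsilon_2^{-1})
\]
in $G(B_{\varphi(S)})$. Because $G$ is affine over $R$, the functor $G(-) \colon \CAlg_R \rightarrow \Set$ is representable and therefore preserves pullbacks, so by the analytic patching square there exists $\sigma \in G(A)$ whose image in $G(B)$ is $\sigma_1 \varepsilon_1$ and whose image in $G(A_S)$ is $\sigma_2 \varepsilon_2^{-1}$. The class of $\sigma$ in $K_1^{G,P}(A)$ is then the desired common lift, establishing weak analytic excision.

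The main obstacle is verifying that Stavrova's factorization lemma genuinely applies to our slightly weaker notion of analytic isomorphism (Definition~\ref{dfn:analytic_patching_diagram}), in which $\varphi$ is not assumed injective and elements of $S$ are not assumed to be nonzerodivisors. Just as in the proof of Lemma~\ref{lemma:Vorsts_lemma}, the only input from the analytic patching hypothesis that the factorization argument actually uses is the surjectivity of $A/f^n \to B/\varphi(f)^n$ for all $n$, together with the existence of a description of elements of $B_{\varphi(f)}$ of the form $\varphi(f)^s b + \varphi(a/f^m)$. Both of these are available in our generality, so the hypotheses of \cite[Lemma~3.4]{STAVROVA_HOMOTOPY} (notably the rank condition on $\Phi_P$, which plays the role of $r \geq 3$ and guarantees that $\mathrm{E}_P$ is normal in $G$) transport verbatim to our setting. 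Beyond this verification, the step most likely to require care is ensuring that the generalized commutator/conjugation identities used in Stavrova's argument remain valid without the injectivity of $\varphi$; this can be handled by working with the zero ring as a legitimate $R$-algebra, exactly as in the classical argument.
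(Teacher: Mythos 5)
Your proposal is correct and follows essentially the same route as the paper: reduce to a singly generated $S$ via the finitary property, invoke Stavrova's generalization of Vorst's Lemma \cite[Lemma~3.4]{STAVROVA_HOMOTOPY} to factor $\varepsilon \in \mathrm{E}_P(B_{\varphi(S)})$, and conclude using the fact that $G(-)$ preserves pullbacks. The one assumption you did not explicitly address is that Stavrova's Lemma~3.4 is stated for the case $A = R$; the paper disposes of this by observing that the bulk of Stavrova's argument takes place over the $A$-algebra $B_{\varphi(S)}$ and that the hypotheses on $G$ and $P$ are stable under base change, so one may as well assume $G$ is defined over $A$.
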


\begin{proof}
 Let $\varphi \colon A \rightarrow B$ be an analytic isomorphism along $S$. Since $K_1^{G,P}$ preserves filtered colimits, it suffices to treat the case where $S$ is generated by a single element $f \in A$. In this case, the claim follows from \cite[Lemma~3.4(i)]{STAVROVA_HOMOTOPY}. Strictly speaking, there are three additional assumptions in this lemma: that $A=R$, that $\varphi$ is injective, and that $f$ is not nilpotent. As in the case of Vorst's Lemma, the latter two assumptions are unnecessary (see also \cite[Corollary~3.5]{STAVROVA_HOMOTOPY} regarding the injectivity of $\varphi$). Moreover, the bulk of the argument takes place over the $A$-algebra $B_{\varphi}(S)$ anyway, so we can safely assume that $G$ is already defined over $R$ and satisfies the relevant conditions over $\Spec(R)$. This gives the analogue of Vorst's Lemma for the subgroup $\mathrm{E}_P(B_{\varphi(S)})$ of $G(B_{\varphi(S)})$. Since the anlytic patching diagram induced by $\varphi$ is cartesian and $G$ preserves cartesian diagrams, we can conclude as in the case of Vorst's Lemma (Claim~(2) of Lemma~\ref{lemma:Vorsts_lemma}).
\end{proof}

\begin{rmk}\label{rmk:stavrova_conditions_satisfied_isotropic_rank}
 The conditions on $G$ and $P$ are for example satisfied if $R$ is noetherian and $G$ is of isotropic rank $\geq 2$, that is, every non-trivial normal semi-simple $R$-subgroup of $G$ contains $(\mathbb{G}_{m,R})^{2}$ and $P$ is any strictly proper parabolic subgroup of $G$ (see the proof of \cite[Lemma~2.7]{STAVROVA_EQUICHARACTERISTIC}). In this case, $\mathrm{E}(A)=\mathrm{E}_P(A)$ is independent of $P$ and normal by \cite[Theorem~1]{PETROV_STAVROVA}. Note that a strictly proper parabolic subgroup always exists for such $G$ (see the discussion after \cite[Definition~2.2]{STAVROVA_EQUICHARACTERISTIC}).
\end{rmk}

The final class of examples is quite different in flavour from the ones discussed so far. It concerns invariants of $R$-algebras $A$ which can be defined using the derived category of $A$. The higher $K$-groups of Quillen are the most prominent example of such an invariant. The following lemma is useful for dealing with such functors.

\begin{lemma}\label{lemma:analytic_iso_induces_iso_on_kernels}
 Let $\varphi \colon A \rightarrow B$ be an analytic isomorphism along $S \subseteq A$. Then the restriction of $\varphi$ induces an isomorphism
 \[
 \varphi^{s} \colon \ker(s \cdot \colon A \rightarrow A) \rightarrow \ker(\varphi(s) \cdot \colon B \rightarrow B )
 \]
 of $A$-modules for all $s \in S$.
\end{lemma}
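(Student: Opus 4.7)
The plan is to first extract from the pullback property an isomorphism on the kernels of the two localization homomorphisms, and then to observe that this isomorphism carries the $s$-torsion of $A$ onto the $\varphi(s)$-torsion of $B$.

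Set $I \defl \ker(\lambda_S \colon A \rightarrow A_S)$ and $J \defl \ker(\lambda_{\varphi(S)} \colon B \rightarrow B_{\varphi(S)})$. From the universal property of the pullback square in Definition~\ref{dfn:analytic_patching_diagram}, applied to pairs $(b,0) \in B \times A_S$ with $b \in J$, one obtains immediately that the restriction of $\varphi$ yields a bijection $\varphi \vert_I \colon I \rightarrow J$ of $A$-modules (where $J$ carries its $A$-module structure via $\varphi$). This same observation already appears implicitly in the hypothesis of Claim~(2) of Lemma~\ref{lemma:analytic_iso_generators}.

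Next I would note that for any $s \in S$ we have the inclusions $\ker(s \cdot) \subseteq I$ and $\ker(\varphi(s) \cdot) \subseteq J$. Indeed, if $sa = 0$ then $\lambda_S(a) = \frac{sa}{s} = 0$ in $A_S$, and likewise for $B$. Therefore the restriction $\varphi^s$ is well-defined and is injective because $\varphi \vert_I$ is.

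For surjectivity, let $b \in \ker(\varphi(s) \cdot)$, so in particular $b \in J$. By the bijection $\varphi \vert_I$ there exists a unique $a \in I$ with $\varphi(a) = b$. Then $sa \in I$ as well (since $I$ is an ideal) and $\varphi(sa) = \varphi(s) b = 0$; the injectivity of $\varphi \vert_I$ forces $sa = 0$, so $a \in \ker(s \cdot)$ with $\varphi^s(a) = b$. I don't anticipate any real obstacle here---the only nontrivial input is the pullback property, which is used exactly once to extract the isomorphism on $I$ and $J$.
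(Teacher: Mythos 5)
Your proof is correct and takes essentially the same approach as the paper: identify $I=\ker(\lambda_S)$ and $J=\ker(\lambda_{\varphi(S)})$, observe that the pullback property makes $\varphi\vert_I \colon I \to J$ bijective, note that the relevant kernels sit inside $I$ and $J$ respectively, and chase the surjectivity argument through $\varphi\vert_I$. No gaps.
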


\begin{proof}
 Let $I = \ker(\lambda_S) \subseteq A$ and let $J=\ker(\lambda_{\varphi(S)}) \subseteq B$. Since the patching diagram associated to $\varphi$ is cartesian, it follows that $\varphi \vert_{I} \colon I \rightarrow J$ is bijective. This shows in particular that $\varphi^{s}$ is injective since the kernels are contained in $I$ respectively $J$.
 
 To see surjectivity, let $b \in B$ be an element such that $\varphi(s)b=0$. Then there exists an $a \in I$ with $b=\varphi(a)$. Since $\varphi(sa)=\varphi(s)b=0$ and $\varphi \vert_I$ is injective, we conclude that $a \in \ker(s\cdot \colon A \rightarrow A)$.
\end{proof}

\begin{prop}\label{prop:K_theory_weak_analytic_excision}
 For $i \geq 0$, the higher $K$-groups
 \[
 K_i \colon \CAlg_R \rightarrow \Ab
 \]
 of Quillen satisfy weak analytic excision.
\end{prop}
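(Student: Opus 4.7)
The plan is to deduce from the patching hypothesis a Mayer--Vietoris long exact sequence
\[
\cdots \to K_{i+1}(B_{\varphi(S)}) \to K_i(A) \to K_i(B) \oplus K_i(A_S) \to K_i(B_{\varphi(S)}) \to \cdots,
\]
since exactness at the middle term immediately implies the desired weak pullback property for the square of $K_i$-groups.

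Quillen's higher $K$-theory commutes with filtered colimits, and every multiplicative set $S$ is the filtered union of its finitely generated (hence principal) sub-multiplicative sets, so I would first reduce to the case $S = \{1, f, f^2, \ldots\}$. In this setting, I would apply the Thomason--Trobaugh localization fiber sequences
\[
K(A \text{ on } f) \to K(A) \to K(A_f) \quad \text{and} \quad K(B \text{ on } \varphi(f)) \to K(B) \to K(B_{\varphi(f)})
\]
to both columns of the patching diagram, where $K(A \text{ on } f)$ denotes the $K$-theory of the full subcategory of perfect complexes that become acyclic after inverting $f$. It then suffices to produce a $K$-theory equivalence between the two fibers induced by $\varphi$; combined with the isomorphism $\varphi_S \colon A_S \to B_{\varphi(S)}$ on the right-hand terms, this yields a homotopy cartesian square of $K$-theory spectra and hence the sought long exact sequence.

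The main obstacle will be constructing this equivalence of fibers, which I would establish by upgrading it to an equivalence of the underlying categories of $f$-torsion perfect complexes. Intuitively, such a complex on $A$ is built from pieces annihilated by some power $f^n$ and therefore lives over $A/f^n$; the ring isomorphism $\bar\varphi \colon A/f^n \cong B/\varphi(f)^n$ coming from the analytic isomorphism hypothesis then transports it to a $\varphi(f)^n$-torsion perfect complex over $B$. The delicate point is that $f$ need not be a nonzerodivisor, so one must also match the kernels $\ker(f^n \cdot \colon A \to A)$; here Lemma~\ref{lemma:analytic_iso_induces_iso_on_kernels} supplies exactly the required isomorphism onto $\ker(\varphi(f)^n \cdot \colon B \to B)$. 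Combining this with the cartesianness of the patching square lets one reconstruct a perfect $B$-complex from its $\varphi(f)$-torsion part (encoded by $A$) and its localization at $\varphi(f)$ (encoded by $A_f$), producing the desired quasi-inverse and completing the argument.
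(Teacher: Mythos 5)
Your overall plan (reduce to $S = \{1,f,f^2,\ldots\}$, use the localization fiber sequences, and show $\varphi$ induces a $K$-theory equivalence on the fiber terms) is exactly the paper's strategy, and you correctly identify Lemma~\ref{lemma:analytic_iso_induces_iso_on_kernels} as the key ingredient needed to handle zerodivisors. However, the step you call the main obstacle—establishing the equivalence of fiber categories—is where your sketch has real gaps. The heuristic ``such a complex on $A$ is built from pieces annihilated by some power $f^n$ and therefore lives over $A/f^n$'' is not correct as stated: the objects in question are bounded complexes of finitely generated \emph{projective} $A$-modules whose localization is acyclic, and these do not live over $A/f^n$ (indeed $A/f$ itself need not be perfect over $A$ when $f$ is a zerodivisor). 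What is true, and what the paper proves carefully as Lemma~\ref{lemma:cofibers_of_s_generate}, is that the smallest thick subcategory containing the two-term complexes $C_s = (s\cdot \colon A \to A)$ already accounts for everything; this is a genuine argument (involving reduction to two-term complexes, comparison with the shift, and the adjugate-matrix trick) that your proposal does not supply.

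Your final sentence compounds this by conflating the fiber-category comparison with a Milnor-patching/descent argument: a $\varphi(f)$-torsion perfect complex over $B$ localizes to zero at $\varphi(f)$, so it makes no sense to ``reconstruct'' it from its localization, and $A_f$ is a localization of $A$, not of $B$. Nothing needs to be reconstructed here. Once the generation result is in place, the paper simply verifies the Waldhausen Approximation Theorem conditions for the functor $B \ten{A} -$ between the torsion Waldhausen categories (fully faithfulness at the derived level follows from the quasi-isomorphism $C_s \to B \ten{A} C_s$, which uses Lemma~\ref{lemma:analytic_iso_induces_iso_on_kernels}; essential surjectivity up to quasi-isomorphism uses idempotent completeness and the generation statement on the $B$-side). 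Constructing an honest quasi-inverse, as you propose, is stronger than needed and would be harder.
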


\begin{proof}
 Let $\varphi \colon A \rightarrow B$ be an analytic isomorphism along $S$. If $S$ and $\varphi(S)$ consist of nonzerodivisors, then this is well-known, see for example \cite[Proposition~1.5]{VORST_POLYNOMIAL}. The case of an arbitrary multiplicative set $S$ is covered in \cite[Example~2.9.(ii)]{LAND_TAMME} (which is applicable by Lemma~\ref{lemma:analytic_iso_induces_iso_on_kernels}), where it is deduced from a more general result about localizing invariants of $E_1$-ring spectra.
 
 We present here a different proof which follows \cite[Example~1.4.2]{CALMES_ET_AL_II}, which in turn is based on \cite[Lemma~7.2.3.13]{LURIE_HA}. This proof only relies on the Waldhausen Approximation and Localization Theorems and on the fact that the triangulated category of perfect complexes over a commutative ring is idempotent complete, see \cite[Proposition~2.2.13]{THOMASON_TROBAUGH} for the corresponding statement about perfect complexes over arbitrary schemes. A more elementary proof in the case of affine schemes can be found in \cite[Proposition~3.4]{BOECKSTEDT_NEEMAN}.  We denote the $K$-theory spectrum of $A$ by $K(A)$, so that $K_i A=\pi_i K(A)$ for all $i \geq 0$.
  
 Let $\mathrm{Ch}^b_S\bigl(\Proj(A)\bigr)$ denote the Waldhausen category of bounded complexes which become exact after localization at $S$ and define $\mathrm{Ch}^b_{\varphi(S)}\bigl(\Proj(B)\bigr)$ similarly.  From a combination of Waldhausen Approximation and Waldhausen Localization it follows that there exists a long exact sequence
 \[
\ldots \rightarrow  K_{i+1}(A_S) \rightarrow \pi_i K\Bigl(\mathrm{Ch}^b_S\bigl(\Proj(A)\bigr)\Bigr) \rightarrow  K_i(A) \rightarrow K_i(A_S) \rightarrow \ldots
 \]
 for $i>0$ (see \cite[Theorem~V.2.6.3]{WEIBEL_KBOOK}). We can use Corollary~\ref{cor:weak_analytic_excision_for_P_and_P_r} to check the claim for $i=0$. Since we have an analogous sequence involving the localization $B \rightarrow B_{\varphi(S)}$, it suffices to show that the functor
 \[
 B \ten{A} - \colon  \mathrm{Ch}^b_S\bigl(\Proj(A)\bigr) \rightarrow \mathrm{Ch}^b_{\varphi(S)}\bigl(\Proj(B)\bigr)
 \]
 satisfies the conditions of the Waldhausen Approximation Theorem.
 
 This follows from the fact that the category $\mathrm{Ch}^b_S\bigl(\Proj(A)\bigr)$ is generated in a suitable sense by the objects $C_s \defl (s\cdot \colon A \rightarrow A)$, where the copies of $A$ are in degree $0$ and $1$ (and the category $\mathrm{Ch}^b_{\varphi(S)}\bigl(\Proj(B)\bigr)$ is similarly generated by the objects $C_{\varphi(s)}\defl (\varphi(s) \cdot \colon B \rightarrow B)$). More precisely, if $\ca{C}$ is the smallest full subcategory of $\mathrm{Ch}_S\bigl(\Proj(A)\bigr)$ containing all the objects $C_s$, $s \in S$, which is closed under shifts, finite direct sums, retracts, mapping cones, and quasi-isomorphisms, then $\ca{C}= \mathrm{Ch}^b_S\bigl(\Proj(A)\bigr)$. We defer the proof of this claim to Lemma~\ref{lemma:cofibers_of_s_generate} below.
 
 Using this claim, we find that $B \ten{A} -$ induces quasi-isomorphisms
 \[
 \mathrm{Hom}_A(X,Y) \rightarrow \mathrm{Hom}_B(B \ten{A} X, B \ten{A} Y)
 \]
 on mapping complexes for any two $X, Y \in  \mathrm{Ch}^b_S\bigl(\Proj(A)\bigr)$. Indeed, the closure properties of the category $\ca{C}$ reduce this to the case $Y=C_s$. Lemma~\ref{lemma:analytic_iso_induces_iso_on_kernels} and the definition of analytic isomorphisms imply that the canonical morphism $C_s \rightarrow B \ten{A} C_s$ is a quasi-isomorphism in $\mathrm{Ch}(\Mod_A)$. Since $X$ is cofibrant in the projective model structure on chain complexes, the first morphism in the composite
 \[
 \mathrm{Hom}_A(X,C_s) \rightarrow \mathrm{Hom}_A(X, B \ten{A} C_s) \cong \mathrm{Hom}_{B}(B \ten{A} X, B \ten{A} C_s)
 \]
 is a quasi-isomorphism. The second morphism above is the adjunction isomorphism of the left adjoint differential graded functor $B \ten{A} -$. This establishes the fact that $B \ten{A} -$ induces quasi-isomorphisms on mapping complexes.
 
 From the fact that the homotopy category of $\mathrm{Ch}^b(A)$ is idempotent complete (see for example \cite[Proposition~3.4]{BOECKSTEDT_NEEMAN}) and the fact that $\mathrm{Ch}^b_{\varphi(S)}(B)$ is similarly generated by the objects $C_{\varphi(s)}\cong B \ten{A} C_s$, it follows that
 \[
 B \ten{A} - \colon  \mathrm{Ch}^b_S\bigl(\Proj(A)\bigr) \rightarrow \mathrm{Ch}^b_{\varphi(S)}\bigl(\Proj(B)\bigr)
 \]
 is essentially surjective up to quasi-isomorphism. Since $B \ten{A} -$ preserves mapping cones, it follows that the conditions of the Waldhausen Approximation Theorem \cite[Theorem~1.9.1]{THOMASON_TROBAUGH} are indeed satisfied. The proof is completed by Lemma~\ref{lemma:cofibers_of_s_generate} below.
\end{proof}

\begin{lemma}\label{lemma:cofibers_of_s_generate}
Let $A$ be an $R$-algebra, let $S \subseteq A$ be a multiplicative set, and let $\mathrm{Ch}_S^b\bigl(\Proj(A)\bigr)$ be the category of bounded complexes whose entries are finitely generated projective and which become exact after localization at $S$. Let $\ca{C}$ be the smallest full subcategory of $\mathrm{Ch}^b_S\bigl(\Proj(A)\bigr)$ containing the objects 
\[
C_s\defl (s\cdot \colon A \rightarrow A)
\] 
(concentrated in degrees $0$ and $1$) for all $s \in S$ which is closed under shifts, finite direct sums, retracts, mapping cones, and quasi-isomorphisms. Then $\ca{C}=\mathrm{Ch}^b_S\bigl(\Proj(A)\bigr)$.
\end{lemma}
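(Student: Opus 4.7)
The plan is to show, given $X \in \mathrm{Ch}^b_S\bigl(\Proj(A)\bigr)$, that $X$ lies in $\ca{C}$ by first producing a single $s \in S$ that acts null-homotopically on $X$, then using this to realize $X$ as a (quasi-isomorphic) retract of $C_s \otimes X$, and finally building $C_s \otimes X$ from shifted copies of $C_s$ via a brutal-truncation filtration.

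First, I would extract $s$ as follows. Since $X$ is a bounded complex of finitely generated projective $A$-modules and $X \otimes_A A_S$ is acyclic, the localization $X \otimes_A A_S$ is contractible. The components of a contracting homotopy lie in $\mathrm{Hom}_{A_S}\bigl((X^n)_S,(X^{n-1})_S\bigr) \cong \mathrm{Hom}_A(X^n,X^{n-1})_S$ (by fgp-ness of $X^n$), and since $X$ has only finitely many nonzero degrees, a common-denominator argument produces a single $s \in S$ and an $A$-linear $h \colon X \to X[-1]$ satisfying $dh + hd = s \cdot \id_X$.

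Next, I would use this null-homotopy together with the degreewise split short exact sequence $0 \to A[-1] \to C_s \to A \to 0$: tensoring with $X$ gives
\[
 0 \to X[-1] \to C_s \otimes X \to X \to 0,
\]
whose connecting chain map is $s \cdot \id_X$. Since this map is null-homotopic by the previous step, the sequence splits up to chain homotopy, yielding a chain homotopy equivalence (in particular a quasi-isomorphism) $C_s \otimes X \simeq X \oplus X[-1]$. As $X$ is a chain-level direct summand of $X \oplus X[-1]$, closure of $\ca{C}$ under quasi-isomorphisms and retracts reduces the problem to proving $C_s \otimes X \in \ca{C}$.

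Finally, I would filter $C_s \otimes X$ by the subcomplexes $F_i \defl C_s \otimes \sigma_{\leq i}(X)$ obtained by brutal truncation in the $X$-direction. Each inclusion $F_{i-1} \hookrightarrow F_i$ is a degreewise split monomorphism whose cokernel $F_i/F_{i-1}$ is the two-term complex $(X^i \xrightarrow{s} X^i)$ placed in cohomological degrees $i$ and $i+1$; since $X^i$ is fgp and hence a retract of some $A^{m_i}$, the cokernel is a retract of $C_s^{\oplus m_i}[-i]$ and lies in $\ca{C}$. Each degreewise split short exact sequence $0 \to F_{i-1} \to F_i \to F_i/F_{i-1} \to 0$ realizes $F_i$ literally as the mapping cone of the connecting chain map $(F_i/F_{i-1})[-1] \to F_{i-1}$, so induction on $i$ combined with closure under mapping cones yields $C_s \otimes X \in \ca{C}$. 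The most delicate step is the denominator-clearing in the first paragraph: extracting a single $s \in S$ that witnesses null-homotopy in all degrees simultaneously requires both the boundedness of $X$ and the fgp-ness of each $X^n$; everything else is formal manipulation in a triangulated setting.
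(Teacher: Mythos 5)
Your proof is correct, and it takes a genuinely different route from the paper's. The paper proceeds by a sequence of reductions: it first uses mapping cones with sums of $C_s$'s to shorten the amplitude of $X$, then massages the entries into free modules, and finally, for a two-term complex $\alpha \colon A^m \to A^k$ that is invertible after localization, uses the adjugate matrix to produce $\beta$ and $s \in S$ with $\alpha\beta = \beta\alpha = s \cdot \id$, from which it builds an explicit three-term complex $Y$ with $Y \cong X \oplus \Sigma X$. Your argument instead works directly with the full complex $X$: you extract a single global null-homotopy $dh + hd = s\cdot\id_X$ from the contractibility of $X_S$ (which is where you use boundedness and fgp-ness to clear denominators, and this step is correct — note you also need one extra factor of $S$ to make the homotopy identity hold on the nose over $A$, not just in the localized Hom, which your ``common-denominator argument'' silently absorbs). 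The identification $C_s \otimes X = \mathrm{cone}(s\cdot\id_X)$ together with the standard isomorphism between cones of homotopic maps then gives an honest isomorphism of complexes $C_s \otimes X \cong X \oplus X[\pm 1]$, so you do not even need the closure under quasi-isomorphisms for this step — $X$ is a literal retract. The brutal-truncation filtration of $C_s \otimes X$ is also correct: each graded piece is a retract of a shifted $C_s^{\oplus m_i}$, each successive quotient map is degreewise split because $X^i$ is a direct summand of $F_i$ in each degree, and crucially all the $F_i = C_s \otimes \sigma_{\le i}X$ do lie in $\mathrm{Ch}^b_S\bigl(\Proj(A)\bigr)$ because $(C_s)_S$ is contractible, so tensoring kills the failure of $\sigma_{\le i}X$ itself to be $S$-locally acyclic. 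What the paper's approach buys is that it works entirely by elementary matrix manipulations; what yours buys is that it avoids the multi-step reduction to two-term free complexes and isolates the key input — $S$-local contractibility — in a single cleanly stated null-homotopy lemma.
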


\begin{proof}
 Let $X \in \mathrm{Ch}^b_S\bigl(\Proj(A)\bigr)$ be an arbitrary object. Note that, if two out of three objects in a distinguished triangle lie in $\ca{C}$, then so does the third. Indeed, since $\ca{C}$ is closed under shifts, we can always rotate the triangle so that the ``missing'' object is the mapping cone of a morphism in $\ca{C}$. 
 
 First, we can shift $X$ so that $X_r=0$ for $r < 0$ and $r > n$, and $X_0 \neq 0$, $X_n \neq 0$, where $n \in \mathbb{N}$ is uniquely determined by $X$. Assume that $n > 1$. Since $H_0(X)=\mathrm{coker} X_1 \rightarrow X_0$ is finitely generated and the localization $\bigl(H_0(X)\bigr)_S\cong 0$, we can find an $s \in S$ and a homomorphism
 \[
 \alpha \colon \bigoplus_{i=1}^k C_s \rightarrow X
 \]
 which induces an epimorphism on $H_0$ (and an isomorphism on $H_r$ for $r<0$). Since $n \geq 0$, the cone $Y$ of $\alpha$ is again concentrated in degrees $0$ up to $n$. Moreover, from the long exact homology sequence it follows that $d_1 \colon Y_1 \rightarrow Y_0$ is surjective. Since $Y_0$ is projective, this epimorphism is split, so the chain complex $Y$ is quasi-isomorphic to a chain complex concentrated in degrees $1$ through $n$. Since it suffices to check that $Y$ lies in $\ca{C}$, we can iterate this argument and reduce to the case $n=1$, that is, $X$ is given by a single differential $d\defl d_1 \colon X_1 \rightarrow X_0$.
 
 Next we show that we can further reduce to the case where $X_1$ and $X_2$ are free $A$-modules. By adding a suitable complex $\id \colon P \rightarrow P$ to $X$ we can assume that $X_1 \cong A^m$ for some $m \in \mathbb{N}$. Pick an epimorphism $A^k \rightarrow X_0$ and consider the pullback
 \[
 \xymatrix{Y_1 \ar[d]_e \ar[r] & A^m \ar[d]^d \\ A^k \ar[r]_-{p} & X_0}
 \]
 of $d$ along $p$. The snake lemma implies that $A^{k} \cong X_0 \oplus Q$ and $Y_1 \cong Q \oplus A^m \oplus Q$ where $Q=\ker(p)$. In particular, $X_0 \oplus Y_1$ is a free $A$-module, so the chain complex
 \[
 Z \defl d \oplus \Sigma^{-1} e
 \]
 is concentrated in degree $-1$ through $1$ and all its entries are free modules. Since $\ca{C}$ is closed under retracts, it suffices to check that $Z$ lies in $\ca{C}$. If we construct a homomorphism
 \[
 \alpha \colon \bigoplus_{i=1}^\ell C_s \rightarrow \Sigma Z
 \]
 as above and let $Z^{\prime} \defl C_{\alpha}$ be its cone, we find that it suffices to check that $Z^{\prime}$ lies in $\ca{C}$, where $Z^{\prime}$ is concentrated in degrees $0$ through $2$, all its entries are free modules, and $d_1 \colon Z_1^{\prime} \rightarrow Z_0^{\prime}$ is a (split) epimorphism. The kernel $F$ of $d_1$ is thus a stably free $A$-module and $Z^{\prime}$ is quasi-isomorphic to a chain complex $Z_2^{\prime} \rightarrow F$. Adding the identity morphism of a finitely generated free module of sufficiently large rank (and shifting by $-1$) we have indeed reduced the problem to the following claim: if $\alpha \colon A^m \rightarrow A^k$ is a homomorphism of $A$-modules such that the localization $\alpha_S$ is an isomorphism, then $X \defl (\alpha \colon A^m \rightarrow A^k)$ lies in $\ca{C}$.
 
 There are now two cases. If $A_S=0$, then $0 \in S$, which implies that $A \in \ca{C}$. In this case, $X$ is the cone of $\alpha \colon A^m \rightarrow A^k$, thought of as a morphism between two chain complexes concentrated in degree $0$.
 
 If $A_S \neq 0$, then the fact that $\alpha_S$ is an isomorphism implies that $m=k$, so $\alpha$ is given by an $m$-by-$m$ matrix. Let $\alpha^{\prime}$ be its adjugate, so that $\alpha \alpha^{\prime}=\alpha^{\prime} \alpha=\mathrm{det}{\alpha} \cdot \id_{A^{m}}$. Since $\mathrm{det}(\alpha)$ is a unit in $A_S$, there exists an element $u \in A$ such that $s \defl u \cdot \mathrm{det}(\alpha)$ lies in $S$. Letting $\beta \defl u \alpha^{\prime}$ we thus have $\alpha \beta=\beta \alpha =s \cdot \id$.
 
 The cone of the homomorphism $\oplus_{i=1}^m C_s \rightarrow \oplus_{i=1}^m C_s$ induced by $\alpha$ and given by the diagram
 \[
 \xymatrix{A^{m} \ar[r]^-{s \cdot \id} \ar[d]_{\alpha} & A^m \ar[d]^{\alpha} \\ A^m \ar[r]_-{s \cdot \id} & A^m}
 \]
 (here morphisms of chain complexes are written vertically) is the chain complex
 \[
Y \defl  \xymatrix@C=30pt{A^m \ar[r]^-{ \bigl( \begin{smallmatrix}
 \alpha \\ -s \cdot \id
 \end{smallmatrix} \bigr) }& A^{2m} \ar[r]^{( \begin{smallmatrix}
 s \cdot \id & \alpha
\end{smallmatrix} )  } & A^m} 
 \]
 concentrated in degrees $0$ through $2$.
 
 The diagram
 \[
 \xymatrix@C=30pt{ & A^m \ar[d]^(0.4){ \bigl( \begin{smallmatrix}
 0 \\ 1
\end{smallmatrix} \bigr )  } \ar[r]^-{\alpha} & A^m \ar@{=}[d] \\
 A^m \ar@{=}[d] \ar[r]^-{\bigl(\begin{smallmatrix}
 \alpha \\ -s \cdot \id
 \end{smallmatrix} \bigr) }& A^{2m} \ar[d]_{ (\begin{smallmatrix}
 1 & 0
\end{smallmatrix} )  } \ar[r]^{(\begin{smallmatrix}
 s \cdot \id & \alpha
\end{smallmatrix} ) } & A^m \\ A^m \ar[r]_-{\alpha} & A^m}
 \]
 gives a short exact sequence $X \rightarrow Y \rightarrow \Sigma X$, where $Y \in \ca{C}$ by construction. Since the chain map $\Sigma X \rightarrow Y$ given by the identity in degree $2$ and by $\bigl(\begin{smallmatrix}
 1 \\ -\beta
 \end{smallmatrix}  \bigr) \colon A^m \rightarrow A^{2m}$ in degree $1$ gives a splitting of this short exact sequence, we have $Y \cong X \oplus \Sigma X$. Since $\ca{C}$ is closed under retracts, we have $X \in \ca{C}$, as claimed.
\end{proof}

Our final example concerns Grothendieck--Witt spectra arising from projective modules equipped with certain kinds of non-singular bilinear forms. Specifically, we are interested in the case of alternating forms: bilinear forms $\beta \colon P \times P \rightarrow A$ such that $\beta(p,p)=0$ for all $p \in P$. In the case where $2$ is a unit in $A$, much is known about the $K$-theory of such objects. In later sections, we will however be interested in the case where $R=\mathbb{F}_2$. Recent result of Calmes, Dotto, Harpaz, Hebestreit, Land, Moi, Nardin, Nikolaus and Steimle \cite{CALMES_ET_AL_I, CALMES_ET_AL_II} show that the relevant results also hold in this context.

The above mentioned authors construct a spectrum, called the \emph{Grothendieck--Witt spectrum} starting from any Poincar\'{e} $\infty$-category. Here a Poincar\'{e} $\infty$-category is a small stable $\infty$-category $\ca{C}$ equipped with a special kind of $\infty$-functor 
\[
q \colon \ca{C}^{\op} \rightarrow \mathbf{Sp}
\] 
 to the $\infty$-category of spectra, see \cite[Definition~1.2.8]{CALMES_ET_AL_I}\footnote{We write $q$ for the $\infty$-functor denoted by a capital qoppa in \cite{CALMES_ET_AL_I}. }. Very roughly, one can think of this functor as a derived version of the functor $\Proj(A)^{\op} \rightarrow \Ab $ which sends a finitely generated projective $P$ module to the abelian group given by a certain type of bilinear or quadratic form on $P$.
 
 We are only interested in certain special examples of Poincar\'{e} $\infty$-categories where $\ca{C}$ is a subcategory of the $\infty$-category $\ca{D}^{p}(A)$ of perfect complexes of $A$-modules, where $A$ is a (discrete) commutative $R$-algebra. Specifically, we write $-A$ for the $A$-module $A$, equipped with the involution $\sigma(a)=-a$. Then we can equip $-A$ with the structure of an $A \otimes A$-module given by $(a \otimes a^{\prime})a^{\prime\prime}=a a^{\prime} a^{\prime \prime}$. This is an \emph{invertible $\mathbb{Z}$-module with involution over $A$} in the sense of \cite[Definition~4.2.4]{CALMES_ET_AL_I} (by \cite[Remark~4.2.5]{CALMES_ET_AL_I}). To any such module, we can associate the family
 \[
 (\ca{D}^{p}(A),q_{-A}^{\geq m}), \quad m \in \mathbb{Z}
 \]
 of Poincar\'{e} $\infty$-categories (see \cite[Example~4.2.13]{CALMES_ET_AL_I}), where $\ca{D}^{p}(A)$ denotes the category of perfect complexes over $A$. Given a subgroup $c \subseteq K_0(A)$, we write $\ca{D}^{c}(A)$ for the full subcategory of $\ca{D}^p(A)$ consisting of complexes $X$ whose class $[X]$ in $K_0(A)$ lies in $c$. The duality functor induced by $q_{-A}^{\geq m}$ on $\ca{D}^{\mathrm{perf}}(A)$ is given by $\mathrm{Hom}_A(-,A)$ since $q_{-A}^{\geq m}$ is compatible with $-A$ (again by \cite[Example~4.2.13]{CALMES_ET_AL_I}). Thus $q_{-A}^{\geq m}$ restricts to a Poincar\'{e} structure on $\ca{D}^{c}(A)$ whenever $[A]$ lies in $c$ and $c$ is stable under the involution on $K_0 A$ induced by $\mathrm{Hom}_A(-,A)$. This is for example the case if $\varphi \colon A \rightarrow B$ is a ring homomorphism and $c=\mathrm{im}K_0 A$ is the image of $K_0 A \rightarrow K_0 B$.
  
 For each $m \in \mathbb{Z}$, the construction of the Poincar\'{e} $\infty$-category $(\ca{D}^{p}(A),q_{-A}^{\geq m})$ is functorial in $A$. Indeed, given a homomorphism $\varphi \colon A \rightarrow B$ of commutative $R$-algebras, we can take $\delta \defl \varphi \colon -A \rightarrow -B$ and use the functoriality of the construction of the $\mathrm{H} \mathbb{Z}$-module with genuine involution in \cite[Example~4.2.13]{CALMES_ET_AL_I} which underlies $q_{-A}^{\geq m}$ to define the remaining data necessary to obtain a Poincar\'{e} $\infty$-functor
 \[
 (\ca{D}^{p}(A),q_{-A}^{\geq m}) \rightarrow (\ca{D}^{p}(B),q_{-B}^{\geq m})
 \]
 via \cite[Corollary~3.4.2]{CALMES_ET_AL_I} and \cite[Lemma~3.4.3]{CALMES_ET_AL_I}. On the underlying $\infty$-categories, the functor is by construction given by the scalar extension $B \ten{A} -$. There is an $\infty$-functor $\mathrm{GW} \colon \mathbf{Cat}_{\infty}^p \rightarrow \mathbf{Sp}$ from the $\infty$-category of Poincar\'{e} $\infty$-categories to spectra (see \cite[Definition~4.2.1]{CALMES_ET_AL_II}).
 
 \begin{prop}\label{prop:Grothendieck_Witt_weak_excision}
 Let $A$, $B$ be commutative $R$-algebras and let $\varphi \colon A \rightarrow B$ be an analytic isomorphism along $S$. If $\varphi$ induces an injection $\bigl( \begin{smallmatrix} \bar{\varphi} \\ \bar{\lambda_S} \end{smallmatrix} \bigr) \colon A \slash 2 \rightarrow B \slash 2 \oplus A_S \slash 2$, then the diagram
 \[
 \xymatrix{ 
 \mathrm{GW}  (\ca{D}^{p}(A),q_{-A}^{\geq m}) \ar[r] \ar[d] & \mathrm{GW}  (\ca{D}^{\mathrm{im}K_0 A}(A_S),q_{-A_S}^{\geq m}) \ar[d] \\
 \mathrm{GW}  (\ca{D}^{p}(B),q_{-B}^{\geq m}) \ar[r] & \mathrm{GW}  (\ca{D}^{\mathrm{im}K_0 B}(B_{\varphi(S)}),q_{-B_{\varphi(S)}}^{\geq m})
 }
 \]
 of spectra is cartesian for all $m \in \mathbb{Z}$.
 \end{prop}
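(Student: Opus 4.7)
The plan is to mirror the proof of Proposition~\ref{prop:K_theory_weak_analytic_excision} in the Poincar\'e $\infty$-categorical setting developed in \cite{CALMES_ET_AL_II}. First I would observe that the full subcategory $\ca{D}^p_S(A) \subseteq \ca{D}^p(A)$ of perfect complexes that become acyclic after $S$-localization is stable under the duality $X \mapsto \mathrm{R}\mathrm{Hom}_A(X,A)$, so that $q_{-A}^{\geq m}$ restricts to a Poincar\'e structure on it. The Poincar\'e-Verdier localization theorem of \cite{CALMES_ET_AL_II} applied to the Karoubi fibre sequence
\[
(\ca{D}^p_S(A), q_{-A}^{\geq m}\vert) \rightarrow (\ca{D}^p(A), q_{-A}^{\geq m}) \rightarrow (\ca{D}^{\mathrm{im}K_0 A}(A_S), q_{-A_S}^{\geq m})
\]
and the analogous sequence for $\varphi(S) \subseteq B$ then reduces the claim to showing that the Poincar\'e $\infty$-functor
\[
B \ten{A} - \colon (\ca{D}^p_S(A), q_{-A}^{\geq m}\vert) \rightarrow (\ca{D}^p_{\varphi(S)}(B), q_{-B}^{\geq m}\vert)
\]
induces an equivalence of Grothendieck-Witt spectra.

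Next I would apply the hermitian analogue of Waldhausen approximation available in this setting: for a Poincar\'e functor between idempotent-complete Poincar\'e $\infty$-categories it suffices to verify (a) that the underlying functor of stable $\infty$-categories is a Morita equivalence and (b) that the induced morphisms $q_{-A}^{\geq m}(X,Y) \rightarrow q_{-B}^{\geq m}(B \ten{A} X, B \ten{A} Y)$ of hermitian mapping spectra are equivalences. Condition (a) is already handled by the argument of Proposition~\ref{prop:K_theory_weak_analytic_excision}: Lemma~\ref{lemma:cofibers_of_s_generate} shows that $\ca{D}^p_S(A)$ is generated (under shifts, cones, sums and retracts) by the objects $C_s$, while Lemma~\ref{lemma:analytic_iso_induces_iso_on_kernels} ensures that the canonical map $B \ten{A} C_s \rightarrow C_{\varphi(s)}$ is a quasi-isomorphism. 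By the standard d\'evissage using these closure properties, condition (b) reduces to checking the equivalence on hermitian mapping spectra for $X=Y=C_s$ with $s \in S$.

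On such generators the bilinear part of $q_{-A}^{\geq m}$ is controlled by ordinary mapping complexes already analyzed in the proof of Proposition~\ref{prop:K_theory_weak_analytic_excision}, while the quadratic refinement contributes an additional $C_2$-Tate correction whose homotopy groups are computed by tensoring with $A/2$. This is precisely where the hypothesis that $A/2 \rightarrow B/2 \oplus A_S/2$ is injective enters: it translates into cartesianness modulo $2$ of the patching diagram, which in turn forces the Tate-cohomological contributions to assemble into a cartesian square of spectra. I expect the main obstacle to be this last bookkeeping step, namely carefully tracking the interaction between the $C_2$-homotopy fixed point, orbit and Tate constructions on the mapping spectra and the analytic patching square, rather than any of the formal Poincar\'e-Verdier machinery. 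Indeed, the injection hypothesis should be exactly the condition needed to replace the flatness of $\iota^{\prime}$ from Proposition~\ref{prop:Vorst_patching_diagrams} that was available in the non-hermitian setting.
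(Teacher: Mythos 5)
Your proposal reconstructs, essentially from scratch, the internal structure of \cite[Proposition~4.4.21]{CALMES_ET_AL_II} (Poincar\'e--Verdier localization applied to the $S$-torsion subcategory, followed by a comparison of the resulting fibre Poincar\'e $\infty$-categories), whereas the paper's proof simply cites that proposition and verifies its five hypotheses. The two routes therefore end up proving the same thing; yours just trades a citation for a longer bare-hands argument, which is perfectly legitimate but does require making precise several things the citation packages away.

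Where your write-up needs tightening: there is no such thing as a ``hermitian mapping spectrum'' $q(X,Y)$ --- a Poincar\'e structure $q$ is a single-variable quadratic functor, and the object you want to compare is the natural transformation $q_{-A}^{\geq m}(X) \to q_{-B}^{\geq m}(B \ten{A} X)$. Because $q$ is quadratic rather than exact, ``standard d\'evissage'' does not apply to it directly; you must first split $q^{\geq m}$ into its bilinear cross-effect and its linear part, and run the reduction to the generators $C_s$ separately in each piece. The bilinear piece reduces (via the self-duality of $\ca{D}^p_S(A)$) to exactly the mapping-complex computation already carried out in Proposition~\ref{prop:K_theory_weak_analytic_excision}, so that part is fine. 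The linear part is where all the genuine content of the hypothesis on $A/2$ sits: it is governed by the truncated $C_2$-Tate construction on the underlying ring, and the precise thing one must check --- which is condition~(v) of the cited proposition --- is that the connecting map $\hat{\mathrm{H}}^{-m}(C_2,B_{\varphi(S)}) \rightarrow \hat{\mathrm{H}}^{-m+1}(C_2,A)$ in Tate cohomology associated to the short exact sequence $A \to B \oplus A_S \to B_{\varphi(S)}$ vanishes. Your intuition that the mod-$2$ injectivity hypothesis is exactly what makes this work is correct, but the proposal does not actually derive the vanishing; it only gestures at ``bookkeeping.'' Since $C_2$ is cyclic the Tate groups of $(-A)$ alternate between $\ker$ and $\mathrm{coker}$ of the norm, and for the involution $x \mapsto -x$ the norm is the zero map $A/2 \to A^{C_2}$; this is how the injectivity of $A/2 \to B/2 \oplus A_S/2$ (together with the cartesianness of the patching square on underlying rings) feeds in, and this step should be spelled out rather than asserted.

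Finally, your closing sentence misremembers where flatness is used: Proposition~\ref{prop:K_theory_weak_analytic_excision} has no flatness hypothesis (flatness of $\iota'$ is invoked in Corollary~\ref{cor:analytic_excision_implies_Q} and in Proposition~\ref{prop:M_weak_analytic_excision}, not here), so the injectivity hypothesis is not ``replacing flatness'' --- it is a genuinely new constraint that has no non-hermitian analogue and arises solely from the linear part of the Poincar\'e structure.
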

 
 \begin{proof}
 The special case where $\varphi \colon A \rightarrow B$ is given by the localization $\lambda_g$ at some element $g \in A$ and $S$  is generated by a single element $f$ is treated in \cite[Corollary~4.4.22]{CALMES_ET_AL_II}. The case of general $\varphi$ works analogously, in other words, we need to check that Conditions~(i)-(v) of \cite[Proposition~4.4.21]{CALMES_ET_AL_II} hold.
 
 To see Condition~(i), note that $\eta=\varphi$ in our case, so this induces the desired isomorphism $B \ten{A} A \rightarrow B$.
 
 Condition~(ii) asserts that $M=A$ is compatible with $S$, which follows from \cite[Example~1.4.4.iii)]{CALMES_ET_AL_II} since the two $A$-actions on $M=A$ agree by definition.
 
 Condition~(iii) holds since $\varphi$ is an analytic isomorphism along $S$. Indeed, this condition is precisely the conclusion of Lemma~\ref{lemma:analytic_iso_induces_iso_on_kernels}, combined with the definition of analytic isomorphisms.
 
 Both $(A,S)$ and $\bigl(B,\varphi(S)\bigr)$ satisfy the left Ore condition since the $R$-algebras $A$ and $B$ are commutative, so Condition~(iv) holds as well.
 
 It remains to check the final Condition~(v), which states that the boundary map $\hat{\mathrm{H}}^{-m} (C_2,B_{\varphi(S)}) \rightarrow \hat{\mathrm{H}}^{-m+1}(C_2,A)$ in Tate cohomology induced by the short exact sequence
 \[
 \xymatrix@C=30pt{ A \ar[r]^-{\bigl( \begin{smallmatrix} -\varphi \\ \lambda_S \end{smallmatrix} \bigr) } & B \oplus A_S \ar[r]^-{ (\begin{smallmatrix} \lambda_{\varphi(S)} & \varphi \end{smallmatrix}) } & B_{\varphi(S)} }
 \]
 vanishes, where all three abelian groups are equipped with the $C_2$-action given by $\sigma(x)=-x$. Equivalently, this means that $\bigl(\begin{smallmatrix} \varphi \\ \lambda_S \end{smallmatrix} \bigr) \colon A \rightarrow B \oplus A_S$ induces a monomorphism
 \[
 \hat{\mathrm{H}}^{-m+1}(C_2,A) \rightarrow \hat{\mathrm{H}}^{-m+1}(C_2,B \oplus A_S)
 \]
 of abelian groups for all $m \in \mathbb{Z}$.
 
 Since $C_2$ is cyclic, the Tate-cohomology groups of any $\mathbb{Z}[C_2]$-module $M$ alternate (up to natural isomorphism) between the kernel and the cokernel of the norm map $\id + \sigma \colon M_{C_2} \rightarrow M^{C_2}$. From the exact sequence
 \[
 \xymatrix{0 \ar[r] & M^{C_2} \ar[r] & M \ar[r]^-{1-\sigma} & M \ar[r] & M_{C_2} \ar[r] & 0}
 \]
 it follows that the norm map on the $C_2$-module $(M, x \mapsto -x)$ is given by the zero map $0 \colon M \slash 2 \rightarrow M^{C_2}$. Since $M^{C_2} \rightarrow M$ is monic, it suffices to check that the two morphisms
 \[
 \bigl( \begin{smallmatrix} \bar{\varphi} \\ \bar{\lambda_S} \end{smallmatrix} \bigr) \colon A \slash 2 \rightarrow B \slash 2 \oplus A_S \slash 2 
 \quad \text{and} \quad
 \bigl( \begin{smallmatrix} \varphi \\ \lambda_S \end{smallmatrix} \bigr) \colon A \rightarrow B \oplus A_S
\]
 are monic. The left map is monic by assumption and the right map is monic since the patching diagram associated to $\varphi$ is cartesian (by definition of analytic isomorphisms).
 \end{proof}
 
 \begin{rmk}
 We have used the special form of our module with involution $M=-A$ in two places: first, we used the fact that the duality induced by $M$ on $K_0(A)$ is given by $\mathrm{Hom}_A(-,A)$. For this, we need the underlying module of $M$ to be $A$. Secondly, we used the special form of the involution $\sigma$ to check Condition~(v) above. If we had used a different involution, we would have to adapt the condition on $\varphi$ resulting from Condition~(v) of \cite[Proposition~4.4.21]{CALMES_ET_AL_II} accordingly.
 \end{rmk}
 
 The above result does not directly lead to new examples of functors which satisfy weak analytic excision since not all the categories appearing in the square consist of \emph{all} perfect complexes. However, it turns out that in certain cases, the groups $\pi_i\bigl( \mathrm{GW}  (\ca{D}^{p}(A),q_{-A}^{\geq m}) \bigr)$ are insensitive to this distinction when $i \geq 1$.
 
 For $m=0$, $1$, and $2$, the Poincar\'{e} structures $q_{-A}^{\geq m}$ are closely linked to certain kinds of quadratic and bilinear forms on projective $A$-modules. Specifically, applying \cite[Definition~4.2.20]{CALMES_ET_AL_I} to the invertible $\mathbb{Z}$-module $-A$ with involution over $A$, we obtain the three functors
 \[
q_{\mathrm{proj}}^{-s},\; q_{\mathrm{proj}}^{-\mathrm{ev}},\; q_{\mathrm{proj}}^{-q} \colon \Proj(A)^{\op} \rightarrow \Ab
 \]
 which send a projective $A$-module $P$ to the abelian group of \emph{skew-symmetric}, \emph{alternating} (=negative even), respectively \emph{negative quadratic} forms, see \cite[Remark~4.2.21]{CALMES_ET_AL_I}). By \cite[Proposition~4.2.18]{CALMES_ET_AL_I}, these three functors have derived quadratic functors, which by \cite[Proposition~4.2.22]{CALMES_ET_AL_I} are given by $q^{\geq 0}_{-A}$, $q^{\geq 1}_{-A}$, and $q^{\geq 2}_{-A}$ respectively.
 
 The main result of \cite{HEBESTREIT_STEIMLE} therefore allows us to relate the abstract definition of $\mathrm{GW}\bigl(\ca{D}^p(A),q_{-A}^{\geq 1}\bigr)$ to more classical definitions of higher $K$-theory of projective modules equipped with alternating bilinear forms. We will discuss this equivalence in detail in \S \ref{section:symplectic}.
\section{Weak analytic excision and \texorpdfstring{$\mathbb{A}^1$}{A1}-invariance}\label{section:excision}

 Throughout this section, we fix a commutative ring $R$ and we consider various functors from the category $\CAlg_R$ of commutative $R$-algebras to sets, pointed sets, or (abelian) groups. For each $n \in \mathbb{N}$, we write $\iota \colon A \rightarrow A[x_1, \ldots, x_n]$ for the inclusion of constant polynomials and $\pi \colon A[x_1, \ldots, x_n] \rightarrow A$ for the unique $A$-homomorphism which sends each $x_1$ to $0$ (in accordance with Notation~\ref{notation:Vorst_patching_diagrams}). 
 
 Given a functor $F \colon \CAlg_R \rightarrow \Set$ and an element $\sigma \in F(A[x_1, \ldots, x_n])$, we say that \emph{$\sigma$ is extended from $A$} if $\sigma$ lies in the image of $\iota_{\ast} \colon FA \rightarrow F(A[x_1, \ldots, x_n])$. We write $\ca{E}_n^F \subseteq \CAlg_R$ for the full subcategory of commutative $R$-algebras for which $\iota_{\ast} \colon FA \rightarrow F(A[x_1, \ldots, x_n])$ (equivalently $\pi_{\ast}$) is a bijection. In other words, $A \in \ca{E}_n^F$ if all $\sigma \in F(A[x_1, \ldots, x_n])$ are extended from $A$.
 
 Note that $\ca{E}_{n+1}^F$ is contained in $\ca{E}_n^F$: if the extension of $\sigma \in F(A[x_1, \ldots, x_n])$ to $F(A[x_1, \ldots, x_{n+1}])$ is extended from $A$, then so is $\sigma$. The commutative $R$-algebras in the intersection $\ca{E}_{\infty}^F \defl \bigcap_{n \geq 1} \ca{E}_n^F$ are called \emph{$F$-regular}.
 
 Note that the question whether or not all fields are $P$-regular is precisely Serre's problem on projective modules over polynomial rings, while the Bass--Quillen conjecture is the statement that all regular rings lie in $\ca{E}_1^P$.
 
 In this section, we will discuss three key principles which imply that the categories $\ca{E}_n^F$ have some good closure properties. In \S \ref{section:monic}, we will discuss a fourth principle which makes it possible to identify certain basic examples of rings in $\ca{E}_{\infty}^F$.
 
 Given a multiplicative set $S \subseteq A$ and $\sigma \in FA$, we use the notation $\sigma_S \in F(A_S)$ for $(\lambda_S)_{\ast} \sigma$. We also use this notation for localization at prime ideals, possibly of a polynomial extension, if no confusion is possible. For example, given $\sigma \in F(A[x])$, we write $\sigma_{\mathfrak{p}} \in F(A_{\mathfrak{p}}[x])$ for $(\lambda_{A \setminus \mathfrak{p}})_{\ast} \sigma$.
 
 \begin{dfn}\label{dfn:principles_QR}
 Let $F \colon \CAlg_R \rightarrow \Set$ be a functor. We say that \emph{$F$ satisfies the Quillen principle $(\mathrm{Q})$} if
 \begin{itemize}
  \item[$(\mathrm{Q})$] For all $A \in \CAlg_R$ and all $\sigma \in F(A[x])$, if $\sigma_{\mathfrak{m}} \in F(A_{\mathfrak{m}}[x])$ is extended from $A_{\mathfrak{m}}$ for all maximal ideals $\mathfrak{m} \subseteq A$, then $\sigma$ is exteded from $A$
 \end{itemize}
 holds. We say that \emph{$F$ satisfies the Roitman principle $(\mathrm{R})$} if
 \begin{itemize}
 \item[$(\mathrm{R})$] For all $A \in \ca{E}_1^F$, if $S \subseteq A$ is multiplicative, then $A_S \in \ca{E}_1^F$
 \end{itemize}
 holds.
 \end{dfn}
 
 The functors $M, P \colon \CAlg_R \rightarrow \Set$ sending $A$ to the set of isomorphism classes of finitely presentable (respectively finitely generated projective) $A$-modules satisfy $(\mathrm{Q})$ by \cite[Theorem~1']{QUILLEN}. The functor $P$ satisfies $(\mathrm{R})$ by \cite[Proposition~2]{ROITMAN_POLYNOMIAL}. These principles imply corresponding statements for polynomial extensions in several variables.
 
 \begin{prop}\label{prop:R_several_variables}
 If $F \colon \CAlg_R \rightarrow \Set$ satisfies $(\mathrm{R})$, then it satisfies
 \begin{itemize}
 \item[$(\mathrm{R}_n)$] For all $A \in \ca{E}_n^F$, if $S \subseteq A$ is multiplicative, then $A_S \in \ca{E}_n^F$
 \end{itemize}
 \end{prop}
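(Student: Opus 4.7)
The plan is to prove $(\mathrm{R}_n)$ by induction on $n$, with the base case $n=1$ being the hypothesis $(\mathrm{R})$ itself. The one preliminary observation I need is that if $A \in \ca{E}_n^F$ then $A[x_1,\ldots,x_k] \in \ca{E}_{n-k}^F$ for every $0 \leq k \leq n$. This is immediate from two-out-of-three applied to the factorisation
\[
F(A) \longrightarrow F(A[x_1,\ldots,x_k]) \longrightarrow F(A[x_1,\ldots,x_n])
\]
of the extension map: the composite is bijective by hypothesis, and the first arrow is bijective because $A \in \ca{E}_n^F$ forces $A \in \ca{E}_k^F$ for every $k \leq n$ (the projection $A[x_1,\ldots,x_n] \to A[x_1,\ldots,x_k]$ sending the remaining variables to $0$ provides a retraction of $\iota_{\ast}$, so surjectivity of $F(A) \to F(A[x_1,\ldots,x_n])$ descends to surjectivity of $F(A) \to F(A[x_1,\ldots,x_k])$, while injectivity of $\iota_\ast$ is automatic from any evaluation retraction).

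For the inductive step, fix $A \in \ca{E}_n^F$, a multiplicative subset $S \subseteq A$, and an element $\sigma \in F(A_S[x_1,\ldots,x_n])$. By the preliminary observation, $A[x_1,\ldots,x_{n-1}] \in \ca{E}_1^F$, so applying the hypothesis $(\mathrm{R})$ to this ring and the image of $S$ under $\iota \colon A \to A[x_1,\ldots,x_{n-1}]$ yields
\[
A_S[x_1,\ldots,x_{n-1}] \;=\; A[x_1,\ldots,x_{n-1}]_S \;\in\; \ca{E}_1^F.
\]
Consequently $\sigma$ is the image under $\iota_\ast$ of some $\tau \in F(A_S[x_1,\ldots,x_{n-1}])$. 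Since $A \in \ca{E}_n^F \subseteq \ca{E}_{n-1}^F$, the inductive hypothesis $(\mathrm{R}_{n-1})$ yields $A_S \in \ca{E}_{n-1}^F$, so $\tau$ in turn is extended from some $\rho \in F(A_S)$. Functoriality then identifies $\sigma$ with the image of $\rho$ under $F(A_S) \to F(A_S[x_1,\ldots,x_n])$, which establishes $A_S \in \ca{E}_n^F$.

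The whole argument is a formal diagram chase on top of the one-variable statement $(\mathrm{R})$, so I do not anticipate any genuine obstacle; the only subtlety worth noting is the bookkeeping in the preliminary observation, which ensures that the polynomial extension $A_S \to A_S[x_1,\ldots,x_n]$ can be split into two steps that fall under $(\mathrm{R})$ and under the inductive hypothesis $(\mathrm{R}_{n-1})$, respectively.
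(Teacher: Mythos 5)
Your proof is correct and follows essentially the same approach as the paper's: induction on $n$ using the observation that $A \in \ca{E}_n^F$ implies $A[x_1,\ldots,x_{n-1}] \in \ca{E}_1^F$, applying $(\mathrm{R})$ to $A[x_1,\ldots,x_{n-1}]$, and then the inductive hypothesis to conclude $A_S \in \ca{E}_{n-1}^F$. The only difference is that you spell out the two-out-of-three argument behind the preliminary observation, which the paper leaves implicit.
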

 
 \begin{proof}
 This can be proved by a straightforward induction on $n$. Since $(\mathrm{R}_1)=(\mathrm{R})$, we can assume that $n>1$ and that $(\mathrm{R}_{n-1})$ holds. Note that $A \in \ca{E}_n^F$ implies that $A[x_1, \ldots x_{n-1}] \in \ca{E}_1^F$, so from $(\mathrm{R})$ it follows that $A_S[x_1, \ldots, x_{n-1}] \in \ca{E}_1^F$. This reduces the problem to showing that $A_S \in \ca{E}_{n-1}^F$. Since $A \in \ca{E}_n^F \subseteq \ca{E}_{n-1}^F$, this follows from the inductive assumption.
\end{proof}
 
 The situation for the Quillen principle is a bit more complicated since the inductive assumption does not directly imply that $A[x_1, \ldots, x_{n-1}]$ satisfies the relevant conditions (since we do not have any information concerning the local rings of $A[x_1,\ldots, x_{n-1}]$). The implication does hold under the additional assumption that $F$ is finitary. The proof given in \cite[Theorem~V.1.6]{LAM} for $F=M$ works for any $F$ satisfying $(\mathrm{Q})$.
 
\begin{prop}\label{prop:Q_several_variables}
 Let $F \colon \CAlg_R \rightarrow \Set$ be finitary. If $F$ satisfies $(\mathrm{Q})$, then the following hold for all $n \geq 1$:
 \begin{itemize}
 \item[$(\mathrm{A}_n)$] For each $A \in \CAlg_R$ and each $\sigma \in F(A[x_1, \ldots, x_n])$, the set $Q(\sigma) \subseteq A$ consisting of all $f \in A$ such that $\sigma_f \in F(A_f[x_1, \ldots, x_n])$ is extended from $A_f$ is an ideal of $A$;
 \item[$(\mathrm{Q}_n)$] If $\sigma \in F(A[x_1, \ldots, x_n])$ and $\sigma_{\mathfrak{m}} \in F(A_{\mathfrak{m}}[x_1, \ldots x_n])$ is extended from $A_{\mathfrak{m}}$ for all maximal ideals $\mathfrak{m} \subseteq A$, then $\sigma$ is extended from $A$.
 \end{itemize}
\end{prop}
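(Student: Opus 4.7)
The plan is to prove $(\mathrm{A}_n)$ and $(\mathrm{Q}_n)$ simultaneously by induction on $n$. For the base case $n=1$, the statement $(\mathrm{Q}_1)$ coincides with the hypothesis $(\mathrm{Q})$, so only $(\mathrm{A}_1)$ requires argument. Given $\sigma \in F(A[x])$, the set $Q(\sigma)$ is trivially closed under multiplication by $A$, so only additive closure is at issue. Given $f, g \in Q(\sigma)$, I would apply $(\mathrm{Q})$ to the localization $A_{f+g}$: every maximal ideal $\mathfrak{n} \subseteq A_{f+g}$ corresponds to a prime $\mathfrak{p} \subseteq A$ not containing $f+g$, forcing $f \notin \mathfrak{p}$ or $g \notin \mathfrak{p}$; in either case $(\sigma_{f+g})_{\mathfrak{n}} = \sigma_{\mathfrak{p}}$ is a further localization of an already extended $\sigma_f$ or $\sigma_g$, hence extended from $A_{\mathfrak{p}} = (A_{f+g})_{\mathfrak{n}}$. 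Then $(\mathrm{Q})$ yields $\sigma_{f+g}$ extended from $A_{f+g}$, so $f+g \in Q(\sigma)$.

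For the derivation of $(\mathrm{Q}_n)$ from $(\mathrm{A}_n)$, the finitary hypothesis is essential. If $\sigma_{\mathfrak{m}}$ is extended from $A_{\mathfrak{m}}$ for each maximal $\mathfrak{m}$, then since $F$ commutes with the filtered colimit presentation $A_{\mathfrak{m}} = \colim_{f \notin \mathfrak{m}} A_{f}$, a standard argument produces some $f \notin \mathfrak{m}$ such that the equation witnessing extendedness already holds over $F(A_f[x_1, \ldots, x_n])$. Thus $Q(\sigma)$ meets the complement of every maximal ideal; since by $(\mathrm{A}_n)$ the set $Q(\sigma)$ is an ideal, it cannot be contained in any maximal ideal and must equal $A$, so $\sigma$ is extended from $A$.

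The heart of the argument is the inductive step $(\mathrm{A}_n) \Rightarrow (\mathrm{A}_{n+1})$. Given $\sigma \in F(A[x_1, \ldots, x_{n+1}])$ and $f, g \in Q(\sigma)$, I would proceed in two reductions. First, viewing $\sigma$ as an element of $F(B[x_1, \ldots, x_n])$ with $B = A[x_{n+1}]$, the inductive hypothesis $(\mathrm{A}_n)$ applied to $B$ shows that the analogous ideal $Q_B(\sigma) \subseteq B$ contains $f$ and $g$, hence $f+g$; this yields $\mu \in F(A_{f+g}[x_{n+1}])$ with $\sigma_{f+g} = \iota_{*}(\mu)$, where $\iota \colon A_{f+g}[x_{n+1}] \rightarrow A_{f+g}[x_1, \ldots, x_{n+1}]$ is the inclusion. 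Second, let $\nu \defl \pi'_{*}(\sigma) \in F(A[x_{n+1}])$, where $\pi' \colon A[x_1, \ldots, x_{n+1}] \rightarrow A[x_{n+1}]$ evaluates $x_1, \ldots, x_n$ at zero. A routine computation ($\pi'$ composed with the full constant inclusion equals the constant inclusion $A \rightarrow A[x_{n+1}]$) shows that $f, g \in Q(\sigma)$ imply $f, g \in Q(\nu)$; then $(\mathrm{A}_1)$ applied to $\nu$ shows $f+g \in Q(\nu)$, so $\nu_{f+g}$ is extended from $A_{f+g}$. Since evaluation and localization commute, $\nu_{f+g} = \mu$, and composing the two extensions yields $\sigma_{f+g}$ extended from $A_{f+g}$, as required.

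The main obstacle is this two-stage descent. The inductive hypothesis $(\mathrm{A}_n)$ alone only provides descent of the extension to the intermediate ring $A_{f+g}[x_{n+1}]$; the base case $(\mathrm{A}_1)$ applied to the single-variable evaluation $\nu$ must be invoked separately to descend further to $A_{f+g}$, and carefully tracking the identification $\mu = \nu_{f+g}$ via naturality of $F$ is what glues these two pieces of information together.
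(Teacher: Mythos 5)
Your proof is correct, and while it shares the key technical device with the paper (using the retraction $\pi_* \iota_* = \mathrm{id}$ to pin down the object witnessing the intermediate extension), the inductive step is organised genuinely differently.

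The paper proves $(\mathrm{A}_1) \Rightarrow (\mathrm{A}_n)$ by induction, assuming the equivalence $(\mathrm{A}_{n-1}) \Leftrightarrow (\mathrm{Q}_{n-1})$: it first peels off the \emph{last} variable $x_n$ by applying $(\mathrm{A}_1)$ over the base $A_f[x_1, \ldots, x_{n-1}]$ with $f = f_0 + f_1$, obtaining $\tau = \pi_* \sigma_f \in F(A_f[x_1, \ldots, x_{n-1}])$, and then runs a second local-global argument (using the comaximality of $f_0, f_1$ in $A_f$ plus $(\mathrm{Q}_{n-1})$) to descend $\tau$ to $A_f$. Your version peels off the \emph{first} $n$ variables instead, applying the inductive hypothesis $(\mathrm{A}_n)$ over the base $A[x_{n+1}]$, and then handles the remaining variable $x_{n+1}$ by a direct application of $(\mathrm{A}_1)$ to the auxiliary object $\nu = \pi'_* \sigma$. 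The net effect is that you never need to invoke $(\mathrm{Q}_{n-1})$ inside the inductive step, so the Quillen-style local-global reasoning is confined entirely to the base case $(\mathrm{Q}) \Rightarrow (\mathrm{A}_1)$ and to the final implication $(\mathrm{A}_n) \Rightarrow (\mathrm{Q}_n)$; the paper reruns it once more per inductive step. That is a clean structural simplification. One small presentational remark: the sentence ``since evaluation and localization commute, $\nu_{f+g} = \mu$'' implicitly also uses that $\iota$ has the retraction $\pi$, so that $\mu$ is forced to equal $\pi_*\sigma_{f+g}$ (this is the same uniqueness observation the paper makes explicitly as $\tau = \pi_* \iota_* \tau$); spelling that out would make the gluing step airtight.
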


\begin{proof}
Note that $(\mathrm{Q}_1)$ is precisely $(\mathrm{Q})$. We first show that these two statements are equivalent. To see that $(\mathrm{Q}_n) \Rightarrow (\mathrm{A}_n)$, note that $A \cdot Q(\sigma) \subseteq Q(\sigma)$ since the localization of an extended object is extended. For $f_0, f_1 \in Q(\sigma)$, we can apply $(\mathrm{Q}_n)$ to the $R$-algebra $A_{f_0 + f_1}$ and $\sigma_{f_0 + f_1}$ to conclude that $f_0 + f_1 \in Q(\sigma)$.

 To see that $(\mathrm{A}_n) \Rightarrow (\mathrm{Q}_n)$, we fix a maximal ideal $\mathfrak{m} \subseteq A$ and an object $\sigma$ satisfying the premise of $(\mathrm{Q}_n)$. Since $\sigma_{\mathfrak{m}}$ is extended and $F$ is finitary, it follows that there exists an $f \notin \mathfrak{m}$ such that $\sigma_f \in F(A_f[x_1,\ldots,x_n])$ is extended from $A_f$, that is, $f \in Q(\sigma)$. Since $Q(\sigma)$ is an ideal by $(\mathrm{A}_n)$, it follows that $Q(\sigma)=A$ since the maximal ideal was arbitray. Thus $\sigma=(\lambda_1)_{\ast} \sigma$ is extended, as claimed.
 
 Finally, we show the implication $(\mathrm{A}_1) \Rightarrow (\mathrm{A}_n)$ by induction. Since $(\mathrm{Q})=(\mathrm{Q}_1) \Leftrightarrow (\mathrm{A}_1)$, we can assume that $n >1$ and that $(\mathrm{A}_{n-1}) \Leftrightarrow (\mathrm{Q}_{n-1})$ holds. Since $A \cdot Q(\sigma) \subseteq Q(\sigma)$, it remains to check that $f_0 + f_1 \in Q(\sigma)$ if $f_0, f_1 \in Q(\sigma)$.
 
 Let $f=f_0+f_1$. By applying $(\mathrm{A}_1)$ to the $R$-algebra $B=A_f[x_1, \ldots, x_{n-1}]$, we find that $\sigma_f \in F(B[x_n])$ is extended from some $\tau \in F(B)$ (since $f_0$ and $f_1$ lie in the corresponding Quillen ideal $Q(\sigma_f)$). Writing $\pi \colon B[x_n] \rightarrow B$ for the $B$-homomorphism sending $x_n$ to $0$, we find $\tau=\pi_{\ast} \iota_{\ast} \tau=\pi_{\ast} \sigma_f$, hence $\tau_{f_i}$ is extended from $(A_f)_{f_i}$ for $i=0,1$. Since $f_0$ and $f_1$ are comaximal in $A_f$, this implies that for all maximal ideals $\mathfrak{m} \subseteq A_f$, the object $\tau_{\mathfrak{m}} \in F\bigl( (A_f)_{\mathfrak{m}}[x_1, \ldots, x_{n-1}] \bigr)$ is extended from $(A_f)_{\mathfrak{m}}$. Applying the induction assumption $(\mathrm{Q}_{n-1}) \Leftrightarrow (\mathrm{A}_{n-1})$, we find that $\tau$ is extended from $A_f$. Thus $\sigma_f=\iota_{\ast} \tau$ is also extended from $A_f$, so $f \in Q(\sigma)$, as claimed.
\end{proof}

 The main result of this section establishes a strong connection between weak analytic excision discussed in \S \ref{section:analytic} and the two principles $(\mathrm{Q})$ and $(\mathrm{R})$. It turns out that weak analytic excision always implies $(\mathrm{Q})$, and that it implies $(\mathrm{R})$ under some mild additional assumptions.
 
 \begin{dfn}\label{dfn:F_contractible_and_transitive_group_action}
 Let $F \colon \CAlg_R \rightarrow \Set_{\ast}$ be a functor. A commutative $R$-algebra $A$ is called \emph{$F$-contractible} if $FA \cong \ast$ is a singleton set.
  
 We say that \emph{$F$ admits a natural transitive group action} if there exists a functor $G \colon \CAlg_R \rightarrow \Grp$ and an action $G(A) \times F(A) \rightarrow F(A)$, natural in $A$, turning $F(A)$ into a transitive $G(A)$-set. The action is not required to preserve the base-point of the pointed set $F(A)$. 
 \end{dfn}
 
\begin{rmk}\label{rmk:group_valued_implies_natural_action}
 If $F \colon \CAlg_R \rightarrow \Grp$ is itself a group valued functor, then the underlying functor to pointed sets admits a natural transitive group action. Indeed, in this case we can take $G=F$ and the action given by left multiplication.
\end{rmk}

 \begin{thm}\label{thm:weak_excision_implies_QR}
 Let $F \colon \CAlg_R \rightarrow \Set_{\ast}$ be a finitary functor which satisfies weak analytic excision. Then $F$ satisfies the Quillen principle $(\mathrm{Q}_n)$ for all $n \geq 1$. If in addition one of the following conditions holds:
 \begin{enumerate}
 \item[(A)] all commutative $R$-algebras which are local rings are $F$-contractible, or
 \item[(B)] the functor $F$ admits a natural transitive group action,
 \end{enumerate}
 then $F$ also satisfies $(\mathrm{R}_n)$ for all $n \geq 1$. 
 \end{thm}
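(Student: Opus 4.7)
The plan is to handle $(\mathrm{Q}_n)$ and $(\mathrm{R}_n)$ in two separate stages, each reducing to the case $n=1$ via Propositions~\ref{prop:Q_several_variables} and \ref{prop:R_several_variables}. The reduction for $(\mathrm{R}_n)$ is formal, while the reduction for $(\mathrm{Q}_n)$ relies on the finitariness of $F$ together with the base case $(\mathrm{Q}_1)$ itself, so $(\mathrm{Q}_1)$ must be proved first.

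For $(\mathrm{Q}_1)$ I would use the equivalence $(\mathrm{A}_1) \Leftrightarrow (\mathrm{Q}_1)$ from Proposition~\ref{prop:Q_several_variables} and show that for each $\sigma \in F(A[x])$ the Quillen set $Q(\sigma)=\{f \in A : \sigma_f \text{ is extended from } A_f\}$ is an ideal. Closure under $A$-multiplication is immediate from the functoriality of localization. For additive closure, given $f_0, f_1 \in Q(\sigma)$, localization at $f_0 + f_1$ reduces us to the case $f_0 + f_1 = 1$ in $A$. Here I would apply weak analytic excision to the Vorst patching diagrams of Proposition~\ref{prop:Vorst_patching_diagrams} at $f_0$ (and symmetrically at $f_1$), using both patching squares of Notation~\ref{notation:Vorst_patching_diagrams}: the first (with $\pi'$ as the analytic isomorphism) lifts the compatible pair $(\pi_*\sigma,\sigma_{f_0})$ to an element of $F(A \pb{A_{f_0}} A_{f_0}[x])$, and the second (with $\iota'$ as the analytic isomorphism along $f_0$) then extracts an element $\tilde\tau \in F(A)$. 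Using the identities $\pi' \iota' = \id_A$ and $\lambda' \iota' = \iota \lambda_{f_0}$, together with the symmetric construction at $f_1$, one should obtain $\iota_* \tilde\tau = \sigma$.

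For $(\mathrm{R}_1)$ under either (A) or (B), fix $A \in \ca{E}_1^F$, a multiplicative $S \subseteq A$, and $\sigma \in F(A_S[x])$. Finitariness reduces to $S = \{1,s,s^2,\ldots\}$ with $\sigma$ coming from some $\sigma_0 \in F(A_s[x])$, and the goal becomes to show $\sigma_0$ is extended over some $A_{s'}$ with $s \mid s' \in S$. The key step is again the Vorst patching at $s$: weak analytic excision lets us lift a compatible pair in $F(A) \times_{F(A_s)} F(A_s[x])$ into $F(A \pb{A_s} A_s[x])$, provided we first place $\pi_*\sigma_0 \in F(A_s)$ into the image of $F(A)$. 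Hypothesis (A) ensures that $\pi_*\sigma_0$ dies at every local ring of $A_s$, and hence, by finitariness, dies in $F(A_{s'})$ for some $s' \in S$; after replacing $s$ by $s'$, the pair $(\ast_{F(A)}, \sigma_0)$ becomes compatible and patches. Hypothesis (B) instead supplies, after further localization, a $g \in G(A_{s'})$ which translates $\pi_*\sigma_0$ to the basepoint, reducing to the previous case after the corresponding action on $\sigma_0$. Once the patching is achieved, the hypothesis $A \in \ca{E}_1^F$ is invoked via the section $\iota'$ of $\pi'$ to identify the lifted element of $F(A \pb{A_s} A_s[x])$ with the image of an actual element of $F(A)$, producing the desired extension of $\sigma$ over $A_{s'}$.

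The main obstacle throughout is that weak analytic excision yields only surjectivity onto the weak pullback, not bijectivity, so the required equalities in $F$ must arise from the interplay between the two complementary forms of the Vorst patching (using both $\pi'$ and $\iota'$ as analytic isomorphisms), the splitting $\pi'\iota' = \id_A$, and, in the Roitman case, the hypothesis $A \in \ca{E}_1^F$. The most delicate aspect will be the treatment of (B), where the transitive $G$-action need not preserve the basepoint of $F$, so the bookkeeping of how group-theoretic modifications interact with the patching data has to be carried out carefully.
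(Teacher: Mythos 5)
Your reduction to $n=1$ via Propositions~\ref{prop:Q_several_variables} and \ref{prop:R_several_variables} is correct and matches the paper. However, both of your base-case arguments contain genuine gaps.

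For $(\mathrm{Q}_1)$: the Vorst patching squares for the base ring $A$ involve the four objects $F(A)$, $F(A_{f_0})$, $F(A_{f_0}[x])$, and $F(A\pb{A_{f_0}} A_{f_0}[x])$, but \emph{not} $F(A[x])$, which is where $\sigma$ lives. If you trace through your proposed manipulations, lifting $(\pi_*\sigma, \sigma_{f_0})$ to $\tilde\sigma_0$ via the $(\pi',\pi)$-square and then extracting $\tilde\tau \in F(A)$ via the $(\iota',\iota)$-square, the identity $\pi'\iota'=\id_A$ forces $\tilde\tau = \pi'_*\iota'_*\tilde\tau = \pi'_*\tilde\sigma_0 = \pi_*\sigma$. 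So you have merely recovered $\pi_*\sigma$, and the desired conclusion $\iota_*\tilde\tau = \sigma$, i.e.\ $\iota_*\pi_*\sigma = \sigma$, is exactly what was to be proved; the diagram chase cannot produce it. The paper's proof (Proposition~\ref{prop:excision_implies_Q}, via Corollary~\ref{cor:analytic_excision_implies_Q}) takes the patching diagram with base ring $A[x]$ (with a \emph{new} indeterminate $y$ adjoined) and attacks the problem through the Vorst ideal $I_\sigma = \{a \in A : (\varphi_a^A)_*\sigma = (\varphi_0^A)_*\sigma\}$ of Definition~\ref{dfn:Vorst_ideal}, where $\varphi_a \colon A[x]\to A[x,y]$, $x\mapsto x+ay$. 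Showing this set is an ideal (Lemma~\ref{lemma:I_sigma_ideal}) is elementary, but the non-trivial input from weak excision is Lemma~\ref{lemma:I_sigma_and_localization}, which uses a delicate argument with kernel pairs (not kernels, since $F$ is only set-valued) and the filtered-colimit description of $A[x]\pb{A_f[x]} A_f[x,y]$ from Lemma~\ref{lemma:Vorst_diagram_filtered_colimit}. This is a genuinely different mechanism, and I do not see a way to close the gap in your version.

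For $(\mathrm{R}_1)$ under hypothesis~(A): the claim that ``$\pi_*\sigma_0$ dies at every local ring of $A_s$, and hence, by finitariness, dies in $F(A_{s'})$ for some $s'\in S$'' is false. Finitariness only gives that for each prime $\mathfrak{p}$ there is some $f_\mathfrak{p}\notin\mathfrak{p}$ with $\pi_*\sigma_0$ dying in $F((A_s)_{f_\mathfrak{p}})$; it does not allow this Zariski cover to be refined to a single principal open. A concrete counterexample is $F = P_1 = \Pic$, which satisfies all the hypotheses: any nontrivial class in $\Pic(A_s)$ dies at every local ring but need not die after inverting a single element. Under hypothesis~(B) you correctly move $\pi_*\sigma_0$ to the basepoint, reducing to $\sigma_0' \in NF(A_s)$, but the step where you invoke $A\in\ca{E}_1^F$ ``via the section $\iota'$'' also does not work: an element of $F(A\pb{A_s}A_s[x])$ that lifts $(\ast,\sigma_0')$ need not be in the image of $\iota'_*$, and there is no hypothesis forcing that. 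The paper's Proposition~\ref{prop:NF_trivial_on_localization} circumvents this by expressing $A\pb{A_s}A_s[x]$ as a filtered colimit of copies of $A[x]$ along $\mu_s$ (Lemma~\ref{lemma:Vorst_diagram_filtered_colimit}) and using finitariness to descend the lifted element to some $\tau\in F(A[x])$ with $\pi_*\tau=\ast$, i.e.\ $\tau\in NF(A)=\ast$. It is the structure maps $\kappa_i \colon A[x]\to A\pb{A_s}A_s[x]$ of this colimit, not the section $\iota'\colon A\to A\pb{A_s}A_s[x]$, that carry the hypothesis $A\in\ca{E}_1^F$ into the argument.
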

 
 For example, for each $r \geq 1$, the functor $P_r \colon \CAlg_R \rightarrow \Set_{\ast}$ satisfies weak analytic excision by Corollary~\ref{cor:weak_analytic_excision_for_P_and_P_r} and all local rings are $P_r$-contractible since there is a unique free module of rank $r$. It follows from the above theorem that $P_r$ satisfies $(\mathrm{Q}_n)$ and $(\mathrm{R}_n)$ for all $n$, so we recover the motivating examples due to Quillen and Roitman mentioned above.
 
 From now until the end of the proof of Theorem~\ref{thm:weak_excision_implies_QR} we fix a functor $F \colon \CAlg_R \rightarrow \Set$. We will in fact prove a sequence of propositions which slightly refine the above theorem. For example, we will indicate precisely what kind of weak excision suffices for each of the statements of Theorem~\ref{thm:weak_excision_implies_QR}.
 
 Let $B$ be a commutative $R$-algebra and let $f \in B$. Recall from Notation~\ref{notation:Vorst_patching_diagrams} that we write $\iota \colon B \rightarrow B[y]$ for the inclusion of the constant polynomials and $\pi \colon B[y] \rightarrow B$ for $B$-algebra homomorphism sending $y$ to $0$, and that
 \[
 \xymatrix{ B \pb{B_f} B_f[y] \ar[r]^-{\lambda^{\prime}}  \ar[d]_{\pi^{\prime}} & B_f[y] \ar[d]^{\pi} \\ B \ar[r]_-{\lambda_f} & B_f }
 \]
 denotes the resulting pullback diagram. Moreover, if $\iota^{\prime} \colon B \rightarrow B \pb{B_f} B_f[y]$ is the unique homomorphism with $\pi^{\prime} \iota^{\prime}=\id$ and $\lambda^{\prime} \iota^{\prime}=\iota \lambda_f$, then
 \[
 \xymatrix{B \ar[d]_{\iota^{\prime}} \ar[r]^{\lambda_f} & B_f \ar[d]^{\iota} \\ 
 B \pb{B_f} B_f[y] \ar[r]^-{\lambda^{\prime}} & B_f[y] }
 \]
 is also a pullback diagram (by the cancellation law for pullback squares). From Proposition~\ref{prop:Vorst_patching_diagrams} we know that $\iota^{\prime}$ is a flat homomorphism and that it is an analytic isomorphism along $f$.
 
 For any $i \geq 0$, we let $\hat{\kappa_i} \colon B[y] \rightarrow B_f[y]$ be the unique homomorphism given by $\lambda_f$ on $B$ and sending $y$ to $f^{-i} y$. We get a unique induced homomorphism $\kappa_i \colon B[y] \rightarrow B\pb{B_f}B_f[y]$ such that $\pi^{\prime} \kappa_i=\pi$ and $\lambda^{\prime} \kappa_i=\hat{\kappa_i}$ hold. For $i=0$ we have $\hat{\kappa_0}=\lambda_f$ and we write $\kappa$ for $\kappa_0$.
 
 \begin{lemma}\label{lemma:Vorst_diagram_filtered_colimit}
 The homomorphisms $\kappa_i \colon B[y] \rightarrow B \pb{B_f} B_f[y]$ exhibit $B \pb{B_f} B_f[y]$ as colimit of the chain
 \[
 \xymatrix{B[y] \ar[r]^{\mu_f} & B[y] \ar[r]^{\mu_f} & B[y] \ar[r]^{\mu_f} & \ldots } \smash{\rlap{,}}
 \]
 where $\mu_f$ denotes the unique $B$-algebra homomorphism with $\mu_f(y)=f \cdot y$.
 \end{lemma}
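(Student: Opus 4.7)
The plan is to verify directly that $(\kappa_i)_{i \geq 0}$ is a colimiting cocone. First I would check the cocone condition $\kappa_{i+1} \circ \mu_f = \kappa_i$: by the universal property of the pullback $B \pb{B_f} B_f[y]$, this reduces to the two identities $\pi \circ \mu_f = \pi$ (immediate from $\mu_f(y) = fy$ and the fact that $\mu_f$ is the identity on $B$) and $\hat{\kappa_{i+1}} \circ \mu_f = \hat{\kappa_i}$, the latter following from the computation $\hat{\kappa_{i+1}}(fy) = f \cdot (y/f^{i+1}) = y/f^{i} = \hat{\kappa_i}(y)$ together with the same identity on $B$.

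Since filtered colimits of commutative $R$-algebras are created by the forgetful functor to sets, it suffices to show that the induced comparison map $\Phi \colon \colim_i B[y] \rightarrow B \pb{B_f} B_f[y]$ is bijective. For surjectivity, consider an arbitrary element $(b,p) \in B \pb{B_f} B_f[y]$ with $p(y) = p_0 + p_1 y + \ldots + p_n y^n$; the pullback condition forces $p_0 = \lambda_f(b)$. Since $B_f$ is the localization of $B$ at the powers of $f$, I can choose an index $i$ large enough that each coefficient $p_j$ for $1 \leq j \leq n$ can be written as $c_j/f^{ij}$ for some $c_j \in B$. The computation
\[
\kappa_i \bigl(b + c_1 y + \ldots + c_n y^n \bigr) = \Bigl(b,\; \lambda_f(b) + \sum_{j=1}^{n} (c_j/f^{ij}) y^j \Bigr) = (b, p)
\]
then exhibits $(b,p)$ as lying in the image of $\kappa_i$, establishing surjectivity of $\Phi$.

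For injectivity, suppose $\kappa_i(g) = 0$ for some $g = \sum_{j \geq 0} g_j y^j \in B[y]$. The condition $\pi^{\prime} \kappa_i(g) = \pi(g) = 0$ gives $g_0 = 0$, while $\lambda^{\prime} \kappa_i(g) = \hat{\kappa_i}(g) = \sum_{j \geq 1} \bigl(\lambda_f(g_j)/f^{ij} \bigr) y^j = 0$ in $B_f[y]$ forces $\lambda_f(g_j) = 0$ in $B_f$ for every $j \geq 1$. Since only finitely many $g_j$ are nonzero and each is therefore annihilated by some power of $f$ in $B$, there exists a single $k \geq 1$ with $f^k g_j = 0$ for all $j$. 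Then $\mu_f^k(g) = \sum_{j \geq 1} f^{kj} g_j y^j = 0$ in $B[y]$ (using $f^{kj} = f^{k(j-1)} \cdot f^k$ for $j \geq 1$), so $g$ becomes zero in the colimit. The only mildly delicate point in the whole argument is the bookkeeping of denominators to produce a uniform index $i$ in the surjectivity step; no serious obstacle is anticipated.
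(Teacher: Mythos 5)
Your proof is correct and follows essentially the same route as the paper's: you verify the cocone identity $\kappa_{i+1} \mu_f = \kappa_i$, reduce to underlying sets (since filtered colimits of $R$-algebras are computed there), show joint surjectivity of the $\kappa_i$ by clearing denominators via a large enough exponent, and show that any element killed by some $\kappa_i$ is annihilated by a sufficiently high power of $\mu_f$. The only cosmetic difference is that the paper composes with the bijection $B \pb{B_f} B_f[y] \cong B \times \{\,p(y) \in B_f[y] \mid p(0)=0\,\}$ before carrying out these two checks, whereas you argue directly on the pullback; the computations are identical in substance.
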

 
 \begin{proof}
 From the construction of $\kappa_i$ above we find that $\kappa_{i+1} \circ \mu_f=\kappa_i$ holds for all $i \geq 0$. We can check the colimit property on the underlying diagram of sets, where we can compose the diagram with the bijection
 \[
 B \pb{B_f} B_f[x] \rightarrow B \times \{\, p(y) \in B_f[y] \mid p(0)=0 \, \}
 \]
 sending $\bigl(b,p(y) \bigr)$ to $\bigl(b,p(y)-p(0) \bigr)$. Under this bijection, $\kappa_i$ sends $p(y)$ to the pair $\bigl(p(0),p(f^{-i}y)-p(0) \bigr)$. From this description it follows that the $\kappa_i$ are jointly surjective, so it only remains to check that whenever $\kappa_i \bigl(p(y)\bigr)=0$, there exists some $n \in \mathbb{N}$ such that $\mu_{f^n}\bigl(p(y)\bigr)=0$. But $\kappa_i\bigl(p(y) \bigr)=0$ implies $p(0)=0$ and $p(f^{-i} y)=0$ in $B_f[y]$, so the coefficients of $p(y)$ vanish after multiplication with a sufficiently high power of $f$. Thus $p(f^n y)=0$ in $B[y]$ for $n$ sufficiently large, which establishes the claim. 
\end{proof}

 The proof that $F$ satisfies $(\mathrm{Q})$ whenever it satisfies weak analytic excision closely follows a similar argument due to Vorst, see \cite[Theorem~1.2]{VORST_POLYNOMIAL}. The difference is that the functors we consider are valued in sets rather than abelian groups, so we have to adapt the constructions accordingly, working with kernel pairs and equalizers instead of kernels, for example. The basic structure of the proof is however very similar: to an object $\sigma \in F(A[x])$, we associate an ideal $I_{\sigma} \subseteq A$ with the following two properties:
 \begin{enumerate}
 \item[(1)] If $1 \in I_{\sigma}$, then $\sigma$ is extended;
 \item[(2)] If $f \in A$ is an element such that $\sigma_f \in F(A_f[x])$ is extended, then there exists some $n \in \mathbb{N}$ such that $f^n \in I_\sigma$.
 \end{enumerate}
 
 Using the fact that $F$ is finitary, it is then straightforward to show that $F$ satisfies the Quillen principle $(\mathrm{Q})$. Establishing Property~(2) involves a slightly intricate argument and uses the filtered colimit describtion of Lemma~\ref{lemma:Vorst_diagram_filtered_colimit} above; Property~(1) on the other hand is a straightforward consequence of the definition of $I_{\sigma}$.
 
 For any commutative $R$-algebra $A$, and any element $a \in A$, we write
 \[
 \varphi_a^{A} \colon A[x] \rightarrow A[x,y]
 \]
 for the unique $A$-algebra homomorphism sending $x$ to $x+ay$, that is, $\varphi_{a}^{A} \bigl(p(x) \bigr)=p(x+ay)$ for all $p(x) \in A[x]$.
 
 \begin{dfn}\label{dfn:Vorst_ideal}
 Let $A$ be a commutative $R$-algebra and let $\sigma \in F(A[x])$. Then we write $I_{\sigma}$ for the subset
 \[
 I_{\sigma} \defl \{\, a \in A \mid (\varphi_a^{A})_{\ast} \sigma=(\varphi_0^{A})_{\ast} \sigma \,\}
 \]
 of $A$.
 \end{dfn}
 
 \begin{lemma}\label{lemma:I_sigma_ideal}
 For any $\sigma \in F(A[x])$, the set $I_{\sigma}$ is an ideal of $A$. Moreover, if $1 \in I_{\sigma}$, then $\sigma$ is extended from $A$.
 \end{lemma}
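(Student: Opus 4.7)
The plan is to prove everything by substitution tricks: each claim will follow from observing that a certain composite of $A$-algebra homomorphisms equals either $\varphi_0^A$ or some $\varphi_b^A$, and then applying $F$ to the equation defining membership in $I_\sigma$.

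First I would verify the closure properties of $I_\sigma$. That $0 \in I_\sigma$ is immediate. For closure under multiplication by $r \in A$, consider the $A$-algebra homomorphism
\[
\psi_r \colon A[x,y] \to A[x,y], \quad x \mapsto x, \; y \mapsto r y.
\]
A direct computation gives $\psi_r \circ \varphi_a^A = \varphi_{ar}^A$ and $\psi_r \circ \varphi_0^A = \varphi_0^A$. Applying $F(\psi_r)$ to the equation $(\varphi_a^A)_{\ast} \sigma = (\varphi_0^A)_{\ast} \sigma$ (which holds whenever $a \in I_\sigma$) then yields $(\varphi_{ar}^A)_{\ast} \sigma = (\varphi_0^A)_{\ast} \sigma$, so $ar \in I_\sigma$. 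For closure under addition, let $a, a' \in I_\sigma$ and consider
\[
\tau_{a'} \colon A[x,y] \to A[x,y], \quad x \mapsto x + a' y, \; y \mapsto y.
\]
One checks that $\tau_{a'} \circ \varphi_a^A = \varphi_{a + a'}^A$ and $\tau_{a'} \circ \varphi_0^A = \varphi_{a'}^A$. Applying $F(\tau_{a'})$ to $(\varphi_a^A)_{\ast} \sigma = (\varphi_0^A)_{\ast} \sigma$ gives $(\varphi_{a + a'}^A)_{\ast} \sigma = (\varphi_{a'}^A)_{\ast} \sigma$, and since $a' \in I_\sigma$ the right-hand side equals $(\varphi_0^A)_{\ast} \sigma$. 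Hence $a + a' \in I_\sigma$, and $I_\sigma$ is an ideal.

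For the second statement, suppose $1 \in I_\sigma$, so that $(\varphi_1^A)_{\ast} \sigma = (\varphi_0^A)_{\ast} \sigma$ in $F(A[x,y])$. Consider the $A$-algebra homomorphism
\[
e \colon A[x,y] \to A[x], \quad x \mapsto 0, \; y \mapsto x.
\]
Then $e \circ \varphi_1^A$ sends the generator $x \in A[x]$ to $e(x + y) = x$, so $e \circ \varphi_1^A = \id_{A[x]}$; while $e \circ \varphi_0^A$ sends $x$ to $e(x) = 0$, so $e \circ \varphi_0^A = \iota \circ \pi$. Applying $F(e)$ to the equation $(\varphi_1^A)_{\ast} \sigma = (\varphi_0^A)_{\ast} \sigma$ yields $\sigma = \iota_{\ast} \pi_{\ast} \sigma$, so $\sigma$ is extended from $\pi_{\ast} \sigma \in FA$, as required.

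No step here is a serious obstacle; the only thing to be careful about is to get the substitution rules the right way around so that the identities $\psi_r \circ \varphi_0^A = \varphi_0^A$ and $\tau_{a'} \circ \varphi_0^A = \varphi_{a'}^A$ come out as stated. The more substantial work of this section (proving that $I_\sigma$ detects localizations via Lemma~\ref{lemma:Vorst_diagram_filtered_colimit} and weak analytic excision) is deferred, as indicated in the text, to the subsequent lemmas.
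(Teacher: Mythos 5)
Your proof is correct and takes essentially the same approach as the paper: the closure properties of $I_\sigma$ are verified via the same two auxiliary $A$-algebra homomorphisms of $A[x,y]$ (the paper calls them $\mu_{a'}$ and $\alpha_{a'}$), and the extension claim follows from a substitution. The only cosmetic difference is the evaluation map used in the final step — you take $x \mapsto 0$, $y \mapsto x$, while the paper uses $x \mapsto x$, $y \mapsto -x$; both correctly identify $(\iota\pi)_\ast\sigma$ with $\sigma$.
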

 
 \begin{proof}
 Since the $R$-algebra $A$ is fixed, we suppress the superscripts of $\varphi_{a}^{A}$ and write $\varphi_{a}$ instead until the end of this proof. We first show the second claim. If $1 \in I_{\sigma}$, then $(\varphi_1)_{\ast} \sigma = (\varphi_0)_{\ast} \sigma$. If we write $\psi \colon A[x,y] \rightarrow A[x]$ for the $A$-homomorphism sending $x$ to $x$ and $y$ to $-x$, then we have
 \[
 \psi \varphi_1\bigl(p(x)\bigr)=\psi \bigl(p(x+y) \bigr)=p(0)=\iota \pi\bigl(p(x)\bigr)
 \]
 and
 \[
 \psi \varphi_0\bigl(p(x)\bigr)=\psi \bigl(p(x) \bigr)=p(x)=\id\bigl(p(x)\bigr) \smash{\rlap{,}}
 \]
 so $(\iota \pi)_{\ast} \sigma=\sigma$, as claimed.
 
 To show that $I_{\sigma}$ is an ideal, we need two auxiliary $A$-homomorphisms. For any $a^{\prime} \in A$, we let
 \[
 \mu_{a^{\prime}} \colon A[x,y] \rightarrow A[x,y] \quad \text{and} \quad \alpha_{a^{\prime}} \colon A[x,y] \rightarrow A[x,y]
 \]
 be the $A$-homomorphisms given by $\mu_{a^{\prime}}(x)=x$, $\mu_{a^{\prime}}(y)=a^{\prime} y$ and $\alpha_{a^{\prime}}(x)=x+a^{\prime} y$, $\alpha_{a^{\prime}}(y)=y$ respectively.
 
 Since $\mu_{a^{\prime}} \varphi_{a^{\prime}}=\varphi_{a^{\prime} a}$ and $\mu_{a^{\prime}} \varphi_0=\varphi_0$, it follows that $a^{\prime} a \in I_\sigma$ whenever $a \in I_{\sigma}$. It remains to show that $I_{\sigma}$ is closed under addition, so let $a, a^{\prime} \in I_{\sigma}$. Since
 \[
 \alpha_{a^{\prime}} \varphi_a\bigl(p(x)\bigr)=\alpha_{a^{\prime}}\bigl(p(x+ay) \bigr)=p(x+a^{\prime}y+ay)=\varphi_{a^{\prime} + a}\bigl(p(x)\bigr) \smash{\rlap{,}}
 \]
 we have in particular $\alpha_{a^{\prime}} \varphi_0^{A}=\varphi_{a^{\prime}}^{A}$. Thus
 \[
 (\varphi_{a+a^{\prime}})_{\ast} \sigma=(\alpha_{a^{\prime}} \varphi_a)_{\ast} \sigma=(\alpha_{a^{\prime}} \varphi_0)_{\ast} \sigma=(\varphi_{a^{\prime}})_{\ast} \sigma=(\varphi_0)_{\ast} \sigma \smash{\rlap{,}}
 \]
 where the second and last equality follow from the definition of $I_{\sigma}$. Thus $a+a^{\prime} \in I_{\sigma}$, which concludes the proof that $I_{\sigma}$ is an ideal.
 \end{proof}
 
 We come to the heart of the argument, where the link between the excision property and the Quillen principle $(\mathrm{Q})$ finally appears. We need to introduce a bit more terminology. Given a function of sets $\alpha \colon X \rightarrow Y$, the set
 \[
 \mathrm{Kep}(\alpha) \defl \{\, (x,x^{\prime}) \in X\times X \mid \alpha(x)=\alpha(x^{\prime})  \,\}
 \]
 is called the \emph{kernel pair} of $\alpha$. For $i=1,2$, we write $p_i \colon \mathrm{Kep}(\alpha) \rightarrow X$ for the projection on the first respectively second factor.
 
 In the particular case where $\alpha=(\lambda^{\prime})_{\ast} \colon F(B \pb{B_f} B_f[y]) \rightarrow F(B_f[y])$, we write $\mathrm{Kep}^e(\lambda)$ for the subset of all pairs $(\tau,\tau^{\prime}) \in \mathrm{Kep}(\lambda)$ such that $\lambda^{\prime}_{\ast} \tau=\lambda^{\prime}_{\ast} \tau^{\prime} \in F(B_f[y])$ is extended from $B_f$. Here and in the sequel we often omit the symbol $\ast$ in diagrams and in the names of objects in order to simplify the notation.
 
 The diagrams defining $B \pb{B_f} B_f[y]$ and $\iota^{\prime}$ (see Notation~\ref{notation:Vorst_patching_diagrams}) induce functions $\mathrm{Kep}(\lambda_f) \rightarrow \mathrm{Kep}(\lambda^{\prime})$ and $\mathrm{Kep}(\lambda^{\prime}) \rightarrow \mathrm{Kep}(\lambda_f)$ which compose to the identity by functoriality of kernel pairs. Moreover, the first of these functions factors through $\mathrm{Kep}^e(\lambda^{\prime})$. Thus we get induced functions denoted by dashed arrows
 \[
 \xymatrix{ \mathrm{Kep}(\lambda_f) \ar@{-->}[d]_{\iota^{\prime \prime}} \ar[r]^-{p_i} & F(B) \ar[d]^{\iota^{\prime}} \ar[r]^-{\lambda_f} & F(B_f) \ar[d]^{\iota} \\
\mathrm{Kep}^e(\lambda^{\prime}) \ar@{-->}[d]_{\pi^{\prime \prime}} \ar[r]^-{p_i} & F(B \pb{B_f} B_f[y]) \ar[d]^{\pi^{\prime}} \ar[r]^-{\lambda^{\prime}} & F(B_f[y]) \ar[d]^{\pi} \\
  \mathrm{Kep}(\lambda_f) \ar[r]^-{p_i} & F(B) \ar[r]^-{\lambda_f} & F(B_f) }
 \]
 such that the above diagram commutes for $i=1,2$ and $\pi^{\prime \prime} \iota^{\prime \prime}=\id$.
 
 \begin{lemma}\label{lemma:I_sigma_and_localization}
 Let $A$ be a commutative $R$-algebra, let $f \in A$ and let $\sigma \in F(A[x])$ be an object such that $\sigma_f \in F(A_f[x])$ is extended from $A_f$. Assume that $F$ is finitary. If the function $\pi^{\prime \prime}$ above is injective in the case where $B=A[x]$, then there exists an $n \in \mathbb{N}$ such that $f^n \in I_{\sigma}$.
 \end{lemma}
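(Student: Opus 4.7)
The plan is to take $B = A[x]$ so that $B[y] = A[x,y]$, and to translate the desired identity $(\varphi_{f^n}^A)_* \sigma = (\varphi_0^A)_* \sigma$ in $F(A[x,y])$ into an equality in the pullback ring $B \pb{B_f} B_f[y]$ that can be forced by the injectivity of $\pi''$. Concretely, for each $a \in A$ I would set
\[
\tau_a \defl \kappa_*(\varphi_a^A)_* \sigma \in F(B \pb{B_f} B_f[y])
\]
and study the pair $(\tau_1, \tau_0)$, aiming to show $\tau_1 = \tau_0$.

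First I would verify that $(\tau_1, \tau_0) \in \mathrm{Kep}^e(\lambda')$. Since $\lambda' \kappa = \hat{\kappa}_0$ is the localization $B[y] \to B_f[y]$, one has $\lambda'_* \tau_a = (\varphi_a^{A_f})_* \sigma_f$. Writing $\sigma_f = \iota_* \tau$ for some $\tau \in F(A_f)$ using the hypothesis that $\sigma_f$ is extended from $A_f$, the composite $\varphi_a^{A_f} \circ \iota \colon A_f \rightarrow A_f[x,y]$ is the inclusion of constants, \emph{independently of $a$}. Therefore $\lambda'_* \tau_1 = \lambda'_* \tau_0$, and this common value, being the image of $\tau$ under constants, is extended from $B_f = A_f[x]$ along $\iota \colon B_f \rightarrow B_f[y]$. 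Meanwhile, $\pi' \kappa = \pi$ and $\pi \varphi_a^A = \id_{A[x]}$ together yield $\pi'_* \tau_a = \sigma$ for every $a$, so $\pi''(\tau_1, \tau_0) = (\sigma, \sigma) = \pi''(\tau_0, \tau_0)$. The injectivity hypothesis on $\pi''$ then forces $\tau_1 = \tau_0$.

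To finish, I would invoke Lemma~\ref{lemma:Vorst_diagram_filtered_colimit} together with the assumption that $F$ is finitary, which identifies $F(B \pb{B_f} B_f[y])$ with the filtered colimit of $F(B[y])$ along $(\mu_f)_*$. The equality $\tau_1 = \tau_0$ thus descends to a finite stage: there exists $n \in \mathbb{N}$ with
\[
(\mu_{f^n})_*(\varphi_1^A)_* \sigma = (\mu_{f^n})_*(\varphi_0^A)_* \sigma
\]
in $F(A[x,y])$. A direct computation gives $\mu_{f^n} \circ \varphi_a^A = \varphi_{a f^n}^A$ for all $a \in A$; specializing to $a = 0$ and $a = 1$ rewrites this as $(\varphi_{f^n}^A)_* \sigma = (\varphi_0^A)_* \sigma$, that is, $f^n \in I_\sigma$. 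I expect the only real subtlety to be the bookkeeping between the pullback ring and the polynomial ring, and timing the use of the finitary hypothesis correctly when descending the equality in the colimit to a finite stage.
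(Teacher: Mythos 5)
Your proof is correct and follows essentially the same approach as the paper: you show that the pair $(\tau_1,\tau_0)=\bigl(\kappa_*(\varphi_1^A)_*\sigma,\ \kappa_*(\varphi_0^A)_*\sigma\bigr)$ lies in $\mathrm{Kep}^e(\lambda')$, has the same image under $\pi''$ as the diagonal pair, and hence collapses to a diagonal pair by injectivity, then descend through the filtered colimit of Lemma~\ref{lemma:Vorst_diagram_filtered_colimit} using that $F$ is finitary. The only cosmetic difference is that the paper packages the construction into a function $\psi\colon K\to\mathrm{Kep}^e(\lambda')$ and compares $\psi(\sigma)$ with $\iota''((\sigma,\sigma))$, while you work directly with the two components and compare $(\tau_1,\tau_0)$ with $(\tau_0,\tau_0)$; the identity $\mu_{f^n}\circ\varphi_a^A=\varphi_{af^n}^A$ you verify for all $a$ is exactly what the paper uses for $a=0,1$.
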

 
 \begin{proof}
 Let $K \subseteq F(A[x])$ denote the subset of all $\sigma$ such that $\sigma_f \in F(A_f[x])$ is extended from $A_f$ and fix an element $a \in A$. Since any extended object $\tau$ is necessarily extended from $\pi_{\ast} \tau$, we find that the top square of the diagram
 \[
 \xymatrix{ F(A[x]) \ar[rd]^{\lambda_f} \ar[d]_{\varphi_{a}^{A}} &  K \ar[l]_-{\mathrm{incl}} \ar[rd]^{\pi \lambda_f} \\
 F(A[x,y]) \ar[d]_{\kappa} \ar[rd]^{\lambda_f} & F(A_f[x]) \ar[d]^{\varphi_{a}^{A_f}} & F(A_f) \ar[l]_-{\iota} \ar[d]^{\iota} \\
 F(A[x] \pb{A_f[x]} A_f[x,y]) \ar[r]_-{\lambda^{\prime}} & F(A_f[x,y]) & F(A_f[x]) \ar[l]_-{\iota}
 }
 \]
 is commutative. The two remaining squares commute for all $a \in A$ since the corresponding diagrams in $\CAlg_R$ are already commutative. Similarly, the lower triangle commutes by definition of $\kappa=\kappa_0$ above Lemma~\ref{lemma:Vorst_diagram_filtered_colimit}. Composing the arrows from $K$ to $F(A_f[x,y])$ clockwise, we see that the counterclockwise composite is independent of $a \in A$. Applying this to the elements $a=1$ and $a=0$, we find that there exists a unique function $\psi \colon K \rightarrow \mathrm{Kep}^e(\lambda^{\prime})$ such that the two equations $p_1 \psi(\sigma)=(\kappa \varphi_1^{A})_{\ast} \sigma$ and $p_2 \psi(\sigma)=(\kappa \varphi_{0}^{A})_{\ast} \sigma$ hold for all $\sigma \in K$. That this function takes values in $\mathrm{Kep}^e \subseteq \mathrm{Kep}$ also follows from the commutativity of the above diagram.
 
 We claim that $\pi^{\prime \prime} \psi(\sigma)=(\sigma, \sigma) \in \mathrm{Kep}(\lambda_f)$, or equivalently that the equation $\pi^{\prime} p_i \psi(\sigma)=p_i \pi^{\prime \prime} \psi(\sigma)=\sigma$ holds for $i=1,2$. From the construction of $\kappa$ we know that $\pi^{\prime} \kappa=\pi$ is the $A$-homomorphism $A[x,y] \rightarrow A[x]$ sending $y$ to $0$ (and $x$ to $x$). Thus $\pi^{\prime} \kappa \varphi_a^{A}=\id_{A[x]}$ for all $a \in A$ and the claim follows from the defining formulas for $\psi$ above. Since we also have $(\sigma,\sigma)=\pi^{\prime \prime} \iota^{\prime \prime} \bigl((\sigma, \sigma) \bigr)$, the assumption that $\pi^{\prime \prime}$ is injective implies that the equation
 \[
 \psi(\sigma) = \iota^{\prime \prime}\bigl((\sigma,\sigma) \bigr)=(\iota^{\prime}_{\ast} \sigma, \iota^{\prime}_{\ast} \sigma )
 \]
 holds for all $\sigma \in K$. Applying $p_i$ for $i=1,2$ we find that the equality
 \[
 (\kappa \varphi_1^{A})_{\ast} \sigma=(\kappa \varphi_0^{A})_{\ast} \sigma
 \]
 holds in $F(A[x] \pb{A_f[x]} A_f[x,y])$.
 
 Since $F$ is finitary, it preserves the filtered colimit
 \[
 \xymatrix{A[x,y] \ar[r]^{\mu_f} & A[x,y] \ar[r]^{\mu_f} & A[x,y] \ar[r]^{\mu_f} & \ldots}
 \]
 of Lemma~\ref{lemma:Vorst_diagram_filtered_colimit}. Since $(\kappa)_{\ast}=(\kappa_0)_{\ast}$ is the first morphism of a colimit cocone, it follows that there exists some $n \in \mathbb{N}$ such that $(\mu_{f^n} \varphi_1^{A})_{\ast} \sigma = (\mu_{f^n} \varphi_0^{A})_{\ast} \sigma$ holds in $F(A[x,y])$. But $\mu_{f^n} \varphi_1^{A}=\varphi_{f^n}^{A}$ and $\mu_{f^n} \varphi_0^{A}=\varphi_0^{A}$. From the definition of $I_{\sigma}$ it follows that $f^n \in I_{\sigma}$, as claimed (see Definition~\ref{dfn:Vorst_ideal}).
 \end{proof}
 
 With this final lemma in hand, we can prove that $F$ satisfies the Quillen property $(\mathrm{Q})$ if it has suitable weak excision properties.
 
 \begin{prop}\label{prop:excision_implies_Q}
 Let $F \colon \CAlg_R \rightarrow \Set$ be a finitary functor and let $A$ be a commutative $R$-algebra. Let $\sigma \in F(A[x])$ be an object such that for each maximal ideal $\mathfrak{m} \subseteq A$, the localization $\sigma_{\mathfrak{m}} \in F(A_{\mathfrak{m}}[x])$ is extended. If the diagram
 \[
 \xymatrix{ F(A[x]) \ar[d]_{\iota^{\prime}} \ar[r]^-{\lambda_f} & F(A_f[x]) \ar[d]^{\iota} \\ 
 F(A[x] \pb{A_f[x]} A_f[x,y]) \ar[r]_-{\lambda^{\prime}} & F(A_f[x,y])}
 \]
 is a weak pullback diagram for all $f \in A$, then $\sigma$ is extended. In particular, if the above diagram is a weak pullback for all commutative $R$-algebras $A$ and all $f \in A$, then $F$ satisfies the Quillen principle $(\mathrm{Q})$.
 \end{prop}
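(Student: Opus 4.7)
The plan is to show that the Vorst ideal $I_{\sigma} \subseteq A$ of Definition~\ref{dfn:Vorst_ideal} equals $A$; by Lemma~\ref{lemma:I_sigma_ideal}, this forces $\sigma$ to be extended from $A$. To prove $I_{\sigma} = A$, it suffices to show that $I_{\sigma} \not\subseteq \mathfrak{m}$ for every maximal ideal $\mathfrak{m} \subseteq A$, and for this in turn it suffices to exhibit, for each such $\mathfrak{m}$, some $f \notin \mathfrak{m}$ and some $n \geq 1$ with $f^n \in I_{\sigma}$.

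Fix a maximal ideal $\mathfrak{m}$. Since $\sigma_{\mathfrak{m}}$ is extended from $A_{\mathfrak{m}}$ and $F$ is finitary, so that
\[
 F(A_{\mathfrak{m}}[x]) \cong \colim_{f \notin \mathfrak{m}} F(A_f[x]) \quad \text{and} \quad F(A_{\mathfrak{m}}) \cong \colim_{f \notin \mathfrak{m}} F(A_f),
\]
a routine finitary argument (pick an extended representative at some finite stage and then further localize so that the equality $\sigma_{\mathfrak{m}} = \iota_{\ast} \tau$ holds at that stage) produces an element $f \in A \setminus \mathfrak{m}$ such that $\sigma_f \in F(A_f[x])$ is already extended from $A_f$. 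The remaining task is then to conclude that $f^n \in I_{\sigma}$ for some $n \in \mathbb{N}$.

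To that end, we apply Lemma~\ref{lemma:I_sigma_and_localization} with $B = A[x]$ and $f$ as above. Its hypothesis is the injectivity of the function $\pi^{\prime \prime} \colon \mathrm{Kep}^e(\lambda^{\prime}) \rightarrow \mathrm{Kep}(\lambda_f)$, and since $\pi^{\prime \prime} \iota^{\prime \prime} = \id$, this injectivity is equivalent to the surjectivity of $\iota^{\prime \prime}$. To verify the latter, take $(\tau_1, \tau_2) \in \mathrm{Kep}^e(\lambda^{\prime})$ and write the common value $\lambda^{\prime}_{\ast}\tau_1 = \lambda^{\prime}_{\ast}\tau_2$ as $\iota_{\ast} \mu$ for a single element $\mu \in F(A_f[x])$, which is possible by the very definition of $\mathrm{Kep}^e$. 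Applying the assumed weak pullback property of
\[
\xymatrix{ F(A[x]) \ar[d]_{\iota^{\prime}} \ar[r]^-{\lambda_f} & F(A_f[x]) \ar[d]^{\iota} \\
 F(A[x] \pb{A_f[x]} A_f[x,y]) \ar[r]_-{\lambda^{\prime}} & F(A_f[x,y])}
\]
to each of the compatible pairs $(\tau_i, \mu)$ yields $\sigma_i \in F(A[x])$ with $\iota^{\prime}_{\ast}\sigma_i = \tau_i$ and $\lambda_{f, \ast} \sigma_i = \mu$, so $(\sigma_1, \sigma_2) \in \mathrm{Kep}(\lambda_f)$ and $\iota^{\prime \prime}(\sigma_1, \sigma_2) = (\tau_1, \tau_2)$. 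Lemma~\ref{lemma:I_sigma_and_localization} then delivers the required $n$ with $f^n \in I_{\sigma}$. The final assertion that $F$ satisfies $(\mathrm{Q})$ is immediate from the main claim.

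The main obstacle is precisely the translation between the weak pullback property and the injectivity of $\pi^{\prime \prime}$: one has to take care that the common image $\lambda^{\prime}_{\ast}\tau_1 = \lambda^{\prime}_{\ast}\tau_2$ is presented by a \emph{single} element $\mu$, so that the weak pullback can be invoked coherently for both $\tau_1$ and $\tau_2$. This is exactly the reason for introducing $\mathrm{Kep}^e$ in place of $\mathrm{Kep}$; once this bookkeeping is correct, the rest of the argument is a direct chaining of Lemmas~\ref{lemma:I_sigma_ideal} and \ref{lemma:I_sigma_and_localization} with the finitary hypothesis on $F$.
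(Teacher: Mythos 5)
Your proof is correct and follows essentially the same route as the paper's: you verify injectivity of $\pi^{\prime\prime}$ by establishing surjectivity of its section $\iota^{\prime\prime}$ via the weak pullback hypothesis, then invoke Lemma~\ref{lemma:I_sigma_and_localization} to get $f^n \in I_{\sigma}$ with $f \notin \mathfrak{m}$, and finally conclude $I_{\sigma}=A$ by Lemma~\ref{lemma:I_sigma_ideal}. The only cosmetic difference is that you fix $\mathfrak{m}$ and the corresponding $f$ before checking the injectivity of $\pi^{\prime\prime}$, whereas the paper establishes the injectivity for all $f$ first; the substance and the bookkeeping with $\mathrm{Kep}^e$ are the same.
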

 
 \begin{proof}
 We claim that the assumptions imply that $\pi^{\prime \prime} \colon \mathrm{Kep}^e(\lambda^{\prime}) \rightarrow \mathrm{Kep}(\lambda_f)$ of Lemma~\ref{lemma:I_sigma_and_localization} is injective for all $f \in A$. Since $\pi^{\prime \prime}$ has a section
 \[
 \iota^{\prime \prime} \colon \mathrm{Kep}(\lambda_f) \rightarrow \mathrm{Kep}^e(\lambda^{\prime}) \smash{\rlap{,}}
 \]
 it suffices to check that $\iota^{\prime \prime}$ is surjective (for then $\pi^{\prime \prime}$ and $\iota^{\prime \prime}$ are mutually inverse bijections). 
 
 To see this, let $(\beta_1, \beta_2)$ be a pair of objects of $F(A[x] \pb{A_f[x]} A_f[x,y])$ such that $(\lambda^{\prime})_{\ast} \beta_1=(\lambda^{\prime})_{\ast} \beta_2$ and such that this object of $F(A_f[x,y])$ is extended from $A_f[x]$. In other words, we assume that there exists some $\tau \in F(A_f[x])$ such that $(\lambda^{\prime})_{\ast} \beta_i = \iota_{\ast} \tau$ holds; such a pair $(\beta_1, \beta_2)$ is precisely a generic element of $\mathrm{Kep}^e(\lambda^{\prime})$.
 
 Since
 \[
 \xymatrix{ F(A[x]) \ar[d]_{\iota^{\prime}} \ar[r]^-{\lambda_f} & F(A_f[x]) \ar[d]^{\iota} \\ 
 F(A[x] \pb{A_f[x]} A_f[x,y]) \ar[r]_-{\lambda^{\prime}} & F(A_f[x,y])}
 \]
 is a weak pullback diagram, there exist objects $\alpha_1, \alpha_2 \in F(A[x])$ such that $(\iota^{\prime})_{\ast} \alpha_i  = \beta_i$ and $(\lambda_f)_{\ast} \alpha_i=\tau$ holds for $i=1,2$. It follows that $(\alpha_1,\alpha_2) \in \mathrm{Kep}(\lambda_f)$ and that $\iota^{\prime \prime}(\alpha_1,\alpha_2)=(\beta_1,\beta_2)$ holds. This shows that $\iota^{\prime \prime}$ is indeed surjective, hence that $\pi^{\prime \prime}$ is injective.
 
 Thus Lemma~\ref{lemma:I_sigma_and_localization} is applicable and we have finally established the second property of the ideal $I_{\sigma}$: if $\sigma_f$ is extended, then there exists some $n \in \mathbb{N}$ such that $f^n \in I_{\sigma}$. To show that $\sigma$ is extended, it only remains to check that $I_{\sigma}=A$ (see Lemma~\ref{lemma:I_sigma_ideal}). If $\mathfrak{m} \subseteq A$ is a maximal ideal, then $\sigma_{\mathfrak{m}}$ is extended by assumption. Since $F$ is finitary, there exists some $f \in A \setminus \mathfrak{m}$ such that $\sigma_f$ is extended. Thus some power of $f$ lies in $I_{\sigma}$, so $I_{\sigma}$ is not  contained in $\mathfrak{m}$. Since $\mathfrak{m}$ was arbitrary, we conclude that $I_{\sigma}=A$ and thus that $\sigma$ is extended.
 \end{proof}
 
 \begin{cor}\label{cor:analytic_excision_implies_Q}
 If $F \colon \CAlg_R \rightarrow \Set$ is finitary and satisfies weak analytic excision, then $F$ satisfies the Quillen principle $(\mathrm{Q})$. In fact, it suffices that $F$ sends the patching diagrams associated to each \emph{flat} analytic isomorphism $\varphi \colon A \rightarrow B$ along $S$ to a weak pullback diagram.
 \end{cor}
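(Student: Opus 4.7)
The plan is to reduce the statement immediately to Proposition~\ref{prop:excision_implies_Q} by identifying the specific square that appears there as the analytic patching diagram of a flat analytic isomorphism provided by Proposition~\ref{prop:Vorst_patching_diagrams}.

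More precisely, I would apply Proposition~\ref{prop:Vorst_patching_diagrams} with the base $R$-algebra taken to be $A[x]$, with the element being $f \in A \subseteq A[x]$, and with the fresh polynomial variable named $y$. This yields that the homomorphism
\[
\iota^{\prime} \colon A[x] \rightarrow A[x] \pb{A_f[x]} A_f[x,y]
\]
is flat and is an analytic isomorphism along $f$, and that the associated analytic patching diagram (the right-hand square of Notation~\ref{notation:Vorst_patching_diagrams}) is precisely
\[
\xymatrix{A[x] \ar[d]_{\iota^{\prime}} \ar[r]^-{\lambda_f} & A_f[x] \ar[d]^{\iota} \\ A[x] \pb{A_f[x]} A_f[x,y] \ar[r]_-{\lambda^{\prime}} & A_f[x,y].}
\]
This is exactly the diagram appearing in the hypothesis of Proposition~\ref{prop:excision_implies_Q}.

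Under the stronger hypothesis (that $F$ sends patching diagrams of flat analytic isomorphisms to weak pullbacks), $F$ therefore sends this square to a weak pullback for every commutative $R$-algebra $A$ and every $f \in A$. The second part of Proposition~\ref{prop:excision_implies_Q} then immediately yields that $F$ satisfies $(\mathrm{Q})$. Since every weak analytic excision functor is in particular weak excision for flat analytic isomorphisms, the first assertion of the corollary is a special case. I do not anticipate any obstacle here: the only substantive work has already been done in Proposition~\ref{prop:Vorst_patching_diagrams} (which exhibits flatness and the analytic isomorphism property) and in Proposition~\ref{prop:excision_implies_Q} (which supplies the ideal-theoretic argument). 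The proof of the corollary is a direct combination of these two results.
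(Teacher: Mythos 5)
Your proposal is correct and it follows exactly the same route as the paper's proof: invoke Proposition~\ref{prop:Vorst_patching_diagrams} (with base ring $A[x]$ and the element $f$) to see that $\iota^{\prime}$ is a flat analytic isomorphism along $f$, identify the right-hand square of Notation~\ref{notation:Vorst_patching_diagrams} with the square hypothesized in Proposition~\ref{prop:excision_implies_Q}, and conclude. The extra care you take in spelling out the identification (using $A[x]_f \cong A_f[x]$ and $A[x]_f[y] \cong A_f[x,y]$) is a harmless elaboration of what the paper leaves implicit.
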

 
 \begin{proof}
 From Proposition~\ref{prop:Vorst_patching_diagrams} we know that $\iota^{\prime}$ is flat and a weak analytic isomorphism along $f$. Thus Proposition~\ref{prop:excision_implies_Q} is applicable and proves the claim.
 \end{proof}
 
 If we apply this corollary to the functor $M \colon \CAlg_R \rightarrow \Set$, we recover Quillen's original result \cite[Theorem~1']{QUILLEN} in a rather roundabout way: instead of analysing all possible ways of patching, we have merely used the fact that finitely presented modules \emph{can} be patched (albeit in a more general context than the Zariski patching considered by Quillen). The above corollary is of course applicable in many other cases, for example, to all the functors for which weak analytic excision was established in \S \ref{section:analytic}. We also note that the argument above, while inspired by \cite[Theorem~1.2]{VORST_POLYNOMIAL}, also differs in some key aspects from Vorst's proof. For example, Vorst used the fact that $\pi^{\prime} \colon A[x] \pb{A_f[x]} A_f[x,y] \rightarrow A[x]$ is an analytic isomorphism along $(f,f\slash 1)$ (instead of its section $\iota^{\prime}$, which is an analytic isomorphism along $f$), see the proof \cite[Lemma~1.4]{VORST_POLYNOMIAL}.
 
 Showing that weak analytic excision implies that the Roitman priniciple $(\mathrm{R})$ holds is much more straightforward, but the argument does not work at the same level of generality. We need to assume that the functor $F\colon \CAlg_R \rightarrow \Set_{\ast}$ takes values in \emph{pointed} sets. Then we get a new functor $NF \colon \CAlg_R \rightarrow \Set_{\ast}$ given by $NF(A)=\ker\bigl( \pi_{\ast} \colon F(A[x]) \rightarrow F(A) \bigr)$.
 
 \begin{prop}\label{prop:NF_trivial_on_localization}
 Let $F \colon \CAlg_R \rightarrow \Set_{\ast}$ be a finitary functor and let $A$ be a commutative $R$-algebra such that $NF(A)=\ast$. Suppose that the diagram
 \[
 \xymatrix{F(A \pb{A_f} A_f[x] ) \ar[r]^-{\lambda^{\prime}} \ar[d]_{\pi^{\prime}} & F(A_f[x]) \ar[d]^{\pi} \\
 F(A) \ar[r]_-{\lambda_f} & F(A_f) }
 \]
 is a weak pullback diagram for all $f \in A$. Then for any multiplicative subset $S \subseteq A$ we have $NF(A_{S})=\ast$.
 \end{prop}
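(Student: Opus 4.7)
The plan is to use a lift-and-descend argument along the patching diagram, combined with the finitary nature of $F$ and the filtered colimit description of $A \pb{A_f} A_f[x]$ from Lemma~\ref{lemma:Vorst_diagram_filtered_colimit}.

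First I would reduce to the case where $S = \{1, f, f^2, \ldots\}$ is generated by a single element $f \in A$. Since $F$ is finitary, any $\tau \in F(A_S[x])$ is the image of some $\tau_f \in F(A_f[x])$ for a suitable $f \in S$. The condition $\pi_\ast(\tau) = \ast$ in $F(A_S)$ only says that $\pi_\ast(\tau_f)$ becomes trivial after further localization; since $F(A_S) = \colim_{f' \in S} F(A_{f'})$ is a filtered colimit, we can replace $f$ by a suitable multiple and assume $\tau_f \in NF(A_f)$.

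Next, given such $\tau \in NF(A_f) \subseteq F(A_f[x])$, the compatibility $\pi_\ast(\tau) = \ast = \lambda_{f,\ast}(\ast)$ in $F(A_f)$ allows us to apply the weak pullback assumption to obtain a lift $\sigma \in F(A \pb{A_f} A_f[x])$ satisfying $\lambda'_\ast(\sigma) = \tau$ and $\pi'_\ast(\sigma) = \ast$. By Lemma~\ref{lemma:Vorst_diagram_filtered_colimit}, the ring $A \pb{A_f} A_f[x]$ is the filtered colimit of copies of $A[x]$ along the transition maps $\mu_f$, so since $F$ is finitary we can write $\sigma = \kappa_{n,\ast}(\sigma_n)$ for some $n \in \mathbb{N}$ and some $\sigma_n \in F(A[x])$.

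The remaining step is to observe that $\sigma_n$ in fact lies in $NF(A)$. This follows from $\pi' \kappa_n = \pi$, which gives $\pi_\ast(\sigma_n) = \pi'_\ast \kappa_{n,\ast}(\sigma_n) = \pi'_\ast(\sigma) = \ast$. The hypothesis $NF(A) = \ast$ then forces $\sigma_n = \iota_\ast(\ast_A)$, i.e.\ $\sigma_n$ is the basepoint of $F(A[x])$. Applying $\lambda'_\ast \kappa_{n,\ast}$ and using $\lambda' \kappa_n \iota = \iota \lambda_f$ we conclude $\tau = \lambda'_\ast(\sigma) = \ast$ in $F(A_f[x])$, as desired. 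I do not expect any serious obstacle here; the only subtlety is the initial reduction to a single generator, which requires keeping track of the basepoint condition across the filtered colimit defining $A_S$.
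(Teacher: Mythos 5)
Your proof is correct and takes essentially the same route as the paper: reduce to $S$ generated by a single $f$ (you do this up front, the paper does it at the end), use the weak pullback to lift an element of $NF(A_f)$ to $F(A \pb{A_f} A_f[x])$, invoke the filtered-colimit description of that ring together with finitariness of $F$ to pull the lift back further to some $F(A[x])$, and then use $\pi' \kappa_n = \pi$ and $NF(A)=\ast$ to conclude the lifted element is the basepoint.
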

 
 \begin{proof}
 Fix an element $f \in A$. The fact that $F$ is finitary and Lemma~\ref{lemma:Vorst_diagram_filtered_colimit} imply that the morphisms $(\kappa_i)_{\ast} \colon F(A[x]) \rightarrow F(A \pb{A_f} A_f[x])$ exhibit the target as colimit of the diagram
  \[
  \xymatrix{F(A[x]) \ar[r]^{\mu_f} & F(A[x]) \ar[r]^{\mu_f} & F(A[x]) \ar[r]^{\mu_f} & \ldots}
  \]
  of pointed sets. In particular, the $(\kappa_i)_{\ast}$ are jointly surjective.
  
  Now let $\sigma \in NF(A_f)$ be an arbitrary object. The weak pullback assumption implies that there exists a $\sigma^{\prime} \in F(A \pb{A_f} A_f[x])$ with $(\lambda^{\prime})_{\ast} \sigma^{\prime}=\sigma$ and $(\pi^{\prime})_{\ast} \sigma^{\prime}=\ast$. As noted above, there exists some $i \in \mathbb{N}$ and a $\tau \in F(A[x]) $ such that $\sigma^{\prime}=(\kappa_i)_{\ast} \tau $. Since $\pi^{\prime} \kappa_i=\pi$ by definition, it follows that $(\pi)_{\ast} \tau=\ast$. In other words, we have $\tau \in NF(A)=\ast$, so $\sigma=(\lambda^{\prime} \kappa_i)_{\ast} \tau=\ast$. 
  
  Since $\sigma \in NF(A_f)$ was arbitrary, we have shown that $NF(A_S)=\ast$ if $S$ is generated by a single element $f \in A$. The general case follows from this and the fact that $F$, hence also $NF$, is finitary.
 \end{proof}
 
 To deduce the Roitman principle $(\mathrm{R})$ from this, we need conditions which ensure that $A \in \ca{E}_1^F$ if $NF(A)=\ast$. We will use the following two lemmas for this.
 
 \begin{lemma}\label{lemma:F_contractible_implies_extended_if_trivial_NF}
 Let $F \colon \CAlg_R \rightarrow \Set_{\ast}$ be a functor which satisfies the Quillen principle $(\mathrm{Q})$. Assume that the conclusion of Proposition~\ref{prop:NF_trivial_on_localization} holds: if $NF(A)=\ast$ and $S \subseteq A$ is multiplicative, then $NF(A_S)=\ast$. If all local $R$-algebras are $F$-contractible (see Definition~\ref{dfn:F_contractible_and_transitive_group_action}), then $NF(A)=\ast$ implies that $A \in \ca{E}_1^{F}$, that is, all $\sigma \in F(A[x])$ are extended.
 \end{lemma}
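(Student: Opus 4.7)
The plan is to reduce to the case of local rings via Quillen's principle $(\mathrm{Q})$, where the assumption of $F$-contractibility gives an immediate conclusion. Concretely, given $\sigma \in F(A[x])$, it suffices by $(\mathrm{Q})$ to show that $\sigma_{\mathfrak{m}} \in F(A_{\mathfrak{m}}[x])$ is extended from $A_{\mathfrak{m}}$ for every maximal ideal $\mathfrak{m} \subseteq A$. So I would fix such an $\mathfrak{m}$ and aim to show this.

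The key observation is that the localization $A_{\mathfrak{m}}$ is a local $R$-algebra, and the assumption that all local $R$-algebras are $F$-contractible yields $F(A_{\mathfrak{m}}) \cong \ast$. Since $NF(A_{\mathfrak{m}})$ is by definition the kernel of $\pi_{\ast} \colon F(A_{\mathfrak{m}}[x]) \rightarrow F(A_{\mathfrak{m}})$, this kernel equals all of $F(A_{\mathfrak{m}}[x])$. At the same time, applying the assumed conclusion of Proposition~\ref{prop:NF_trivial_on_localization} to the multiplicative set $S = A \setminus \mathfrak{m}$, we get $NF(A_{\mathfrak{m}}) \cong \ast$. Combining these two facts gives $F(A_{\mathfrak{m}}[x]) \cong \ast$, so $\sigma_{\mathfrak{m}}$ coincides with the basepoint, which is of course extended from the basepoint of $F(A_{\mathfrak{m}})$.

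Since $\mathfrak{m}$ was arbitrary, Quillen's principle $(\mathrm{Q})$ then yields that $\sigma$ itself is extended from $A$, proving $A \in \ca{E}_1^F$. There is no real obstacle here; the only delicate point is bookkeeping with pointed sets to ensure that ``$\sigma_{\mathfrak{m}}$ is extended'' is the same statement as ``$\sigma_{\mathfrak{m}}$ lies in the image of $\iota_{\ast}$,'' which is immediate because the basepoint of $F(A_{\mathfrak{m}}[x])$ is by construction $\iota_{\ast}$ of the basepoint of $F(A_{\mathfrak{m}})$.
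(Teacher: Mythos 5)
Your proof is correct and follows essentially the same route as the paper: reduce to local rings via Quillen's principle $(\mathrm{Q})$, invoke the hypothesized conclusion of the localization lemma to get $NF(A_{\mathfrak{m}}) \cong \ast$, and combine with $F$-contractibility of $A_{\mathfrak{m}}$ to conclude that $F(A_{\mathfrak{m}}[x])$ is a singleton. The only superficial difference is that you directly observe $F(A_{\mathfrak{m}}[x]) \cong \ast$ while the paper phrases this as $\pi_{\ast}$ being a bijection between singletons so that its section $\iota_{\ast}$ is surjective; these are the same observation.
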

 
 \begin{proof}
 Since $F$ satisfies $(\mathrm{Q})$, it suffices to check that $A_{\mathfrak{m}} \in \ca{E}_1^F$ for all maximal ideals $\mathfrak{m} \subseteq A$. The assumption that $F$ satisfies the conclusion of Proposition~\ref{prop:NF_trivial_on_localization} implies that $NF(A_{\mathfrak{m}})=\ast$. But $F(A_{\mathfrak{m}})=\ast$ by $F$-contractibility of $A_{\mathfrak{m}}$, so we have $NF(A_{\mathfrak{m}})=F(A_{\mathfrak{m}}[x])$. Thus $\pi_{\ast} \colon F(A_{\mathfrak{m}}[x]) \rightarrow F(A_{\mathfrak{m}})$ is the unique bijection between these singleton sets. If follows that the section $\iota_{\ast}$ of $\pi_{\ast}$ is surjective, so $A_{\mathfrak{m}} \in \ca{E}_1^F$, as claimed.
 \end{proof}
 
 \begin{lemma}\label{lemma:transitive_action_implies_extended_if_trivial_NF}
  Let $F \colon \CAlg_R \rightarrow \Set_{\ast}$ be a functor which admits a natural transitive group action (see Definition~\ref{dfn:F_contractible_and_transitive_group_action}). If $A$ is a commutative $R$-algebra such that $NF(A)=\ast$, then $A \in \ca{E}_1^{F}$, that is, all $\sigma \in F(A[x])$ are extended.
 \end{lemma}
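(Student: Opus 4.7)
The plan is to directly exploit the transitive action to reduce an arbitrary $\sigma \in F(A[x])$ to an element in the kernel of $\pi_{\ast}$, and then use the assumption $NF(A) = \ast$ to conclude.

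More concretely, let $G \colon \CAlg_R \rightarrow \Grp$ be the group-valued functor equipped with a natural transitive action on $F$, and fix $\sigma \in F(A[x])$. The image $\pi_{\ast} \sigma \in F(A)$ is related to the basepoint $\ast_{F(A)}$ by transitivity: choose $g \in G(A)$ with $g \cdot \pi_{\ast} \sigma = \ast_{F(A)}$. First I would transport this $g$ to $G(A[x])$ via $\iota_{\ast}$ and consider the element $\iota_{\ast}(g) \cdot \sigma \in F(A[x])$. Using the identity $\pi \iota = \id_A$ and the naturality of the action, one computes
\[
\pi_{\ast}\bigl(\iota_{\ast}(g) \cdot \sigma\bigr) = \pi_{\ast} \iota_{\ast}(g) \cdot \pi_{\ast} \sigma = g \cdot \pi_{\ast} \sigma = \ast_{F(A)},
\]
so $\iota_{\ast}(g) \cdot \sigma$ lies in $NF(A)$.

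Since $NF(A) = \ast$ by hypothesis, this forces $\iota_{\ast}(g) \cdot \sigma = \ast_{F(A[x])}$. The basepoint of $F(A[x])$ is $\iota_{\ast}(\ast_{F(A)})$ because morphisms of pointed sets preserve basepoints, so acting on both sides by $\iota_{\ast}(g)^{-1} = \iota_{\ast}(g^{-1})$ and invoking naturality of the action one more time yields
\[
\sigma = \iota_{\ast}(g^{-1}) \cdot \iota_{\ast}(\ast_{F(A)}) = \iota_{\ast}\bigl(g^{-1} \cdot \ast_{F(A)}\bigr),
\]
which exhibits $\sigma$ as extended from the element $g^{-1} \cdot \ast_{F(A)} \in F(A)$. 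Since $\sigma$ was arbitrary, $A \in \ca{E}_1^F$.

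The argument is essentially a diagram chase, so there is no real obstacle; the only subtlety is to remember that the action need not preserve the basepoint, which is precisely why we need to write the basepoint of $F(A[x])$ as $\iota_{\ast}(\ast_{F(A)})$ in the final step rather than claiming directly that $\sigma$ equals $\iota_{\ast}(g)^{-1} \cdot \ast$.
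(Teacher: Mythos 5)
Your proof is correct and follows the same route as the paper's: choose $g \in G(A)$ with $g \cdot \pi_{\ast}\sigma = \ast$, observe that $\iota_{\ast}(g)\cdot\sigma \in NF(A) = \ast$, then act by $\iota_{\ast}(g^{-1})$ and use naturality to write $\sigma = \iota_{\ast}(g^{-1}\cdot\ast)$. The remark about the action not preserving basepoints is the same subtlety the paper's proof handles by writing $\ast = \iota_{\ast}\ast$ in the final step.
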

 
 \begin{proof}
 Let $G \colon \CAlg_R \rightarrow \Grp$ be a functor and let $G \times F \rightarrow F$ be a natural action which is transitive for each commutative $R$-algebra $A$. Let $\sigma \in F(A[x])$ and choose some $g \in G(A)$ such that $g \cdot \pi_{\ast} \sigma=\ast$. Then $(\iota_{\ast} g) \cdot \sigma$ lies in $NF(A)$ by naturality of the group action. Indeed, we have
 \[
 \pi_{\ast}(\iota_{\ast} g \cdot \sigma)=(\pi_\ast \iota_{\ast} g) \cdot \pi_{\ast} \sigma=g \cdot \pi_{\ast} \sigma= \ast \smash{ \rlap{,}}
 \]
 as claimed.
 
 It follows that $(\iota_{\ast} g) \cdot \sigma=\ast$ by assumption. Thus
 \[
 \sigma=(\iota_{\ast} g^{-1} \cdot \iota_{\ast} g ) \cdot \sigma=(\iota_{\ast} g^{-1}) \cdot \ast=(\iota_{\ast} g^{-1}) \cdot \iota_{\ast} \ast=\iota_{\ast}(g^{-1} \cdot \ast)
 \]
 is extended, as claimed.
 \end{proof}
 
 By Remark~\ref{rmk:group_valued_implies_natural_action}, the above lemma is in particual applicable if $F$ is group-valued.  With this in hand, we can now prove Theorem~\ref{thm:weak_excision_implies_QR}.
  
 \begin{proof}[Proof of Theorem~\ref{thm:weak_excision_implies_QR}]
 Since $F$ satisfies weak analytic excsision, it satisfies the Quillen principle $(\mathrm{Q})$ by Corollary~\ref{cor:analytic_excision_implies_Q}. Since $F$ is finitary, it follows from Proposition~\ref{prop:Q_several_variables} that $F$ also satisfies $(\mathrm{Q}_n)$ for all $n \geq 1$.
 
  It remains to check that $F$ satisfies $(\mathrm{R})$, so let $A \in \ca{E}_1^F$ and let $S \subseteq A$ be a multiplicative set. It follows that $NF(A)=\ast$, so Proposition~\ref{prop:NF_trivial_on_localization} implies that $NF(A_S)=\ast$. From Lemmas~\ref{lemma:F_contractible_implies_extended_if_trivial_NF} and \ref{lemma:transitive_action_implies_extended_if_trivial_NF} it follows that $A_S \in \ca{E}_1^F$. This shows that $(\mathrm{R})$ holds, which implies that $(\mathrm{R}_n)$ holds for all $n \geq 1$ (see Proposition~\ref{prop:R_several_variables}).
 \end{proof}
  
 Results of Lindel \cite[Theorem on p.~319]{LINDEL} and Popescu \cite[Theorem~2.5]{POPESCU_DESINGULARIZATION} imply that all regular rings containing a field are $P$-regular. Poperscu's Theorem makes it possible to reduce this to the case of smooth algebras over a perfect field, in which case Lindel's observation can be applied, which shows that the local rings of such algebras are {\'e}tale neighbourhoods of localizations of polynomial rings. The argument then proceeds by using a patching diagram and induction on the dimension.
 
 It is thus natural to expect that this works for more general functors which send the relevant patching diagrams to weak pullback diagrams. This is not a new observation, such axiomatic treatments appear for example in \cite{COLLIOT-THELENE_OJANGUREN}, later generalized in \cite{ASOK_HOYOIS_WENDT}. A different axiomatic approach appears in \cite{LAVRENOV_SINCHUCK_VORONETSKY}. These results are even applicable in the case where the functor $F$ takes values in pointed sets but not all local rings are $F$-contractible (if one restricts attention to objects which are Zariski-locally trivial). However, the expense for this is that one has to consider patching diagrams which are a bit more complicated than the ones considered in \cite{LINDEL}. In \S \ref{section:pseudoelementary}, we will consider a functor which satisfies weak excision for patching diagrams arising from {\'e}tale neighbourhoods, but weak excision for general Nisnevich squares is not known to hold. For this reason, we present here an axiomatic approach based solely on Lindel's ideas.
 
 The following definition captures the formal aspects of the argument. Recall that a local ring $B$ is an \emph{{\'e}tale neighbourhood} of a local ring subring $(A,\mathfrak{m})$ if the inclusion is essentially {\'e}tale and induces an isomorphism of residue fields (note that this implies that $A \rightarrow B$ is a local homomorphism). From the local structure of {\'e}tale ring maps (see \cite[\href{https://stacks.math.columbia.edu/tag/00UE}{Proposition 00UE}]{stacks-project}) it follows that there exists a monic polynomial $f(t) \in A[t]$ with $f(0) \in \mathfrak{m}$ and $f^{\prime}(0) \notin \mathfrak{m}$ such that $(A[t] \slash f)_{(\mathfrak{m},t)} \cong B$.

\begin{dfn}\label{dfn:local_rings_are_etale_neighbourhoods}
 Let $R^{\prime}$ be a commutative $R$-algebra and let $R^{\prime} \rightarrow C$ be a smooth $R$-homomorphism. We say that \emph{the local rings of $C$ are {\'e}tale neighbourhoods over $R^{\prime}$} if for all $\mathfrak{q} \in \Spec(C)$ with $\mathrm{ht}(\mathfrak{q}) > \mathrm{dim}(R^{\prime})$, there exists an {\'e}tale neighbourhood
 \[
 A \rightarrow C_{\mathfrak{q}}
 \]
 in the category $\CAlg_R$ such that $A$ is the localization of some polynomial algebra over $R^{\prime}$ at some prime ideal.
\end{dfn}

 This definition only applies under rather restrictive assumptions on $R^{\prime}$: to apply Lindel's result, one needs to assume that the extension on residue fields is separable, so ideally one wants all the residue fields of $R^{\prime}$ to be separable. For this reason we will only consider the two cases where $R^{\prime}$ is regular of dimension $0$ and $1$, hence either a field or a discrete valuation ring.
 
 \begin{prop}[Lindel]\label{prop:Lindel_etale_neighbourhood}
 If $R^{\prime}$ is a perfect field or a discrete valuation ring such that both the residue field and the field of fractions are perfect, then for \emph{any} smooth $R$-homomorphism $R^{\prime} \rightarrow C$, the local rings of $C$ are {\'e}tale neighbourhoods over $R^{\prime}$. 
 \end{prop}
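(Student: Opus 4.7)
The plan is to reduce the statement to the classical lemma of Lindel by localizing at $\mathfrak{q}$ and picking good generators. Fix $\mathfrak{q} \in \Spec(C)$ of height $> \dim(R^{\prime})$, set $\mathfrak{p}=\mathfrak{q} \cap R^{\prime}$, and let $B=C_{\mathfrak{q}}$, which is essentially smooth over $R^{\prime}_{\mathfrak{p}}$. By the local structure theorem for smooth morphisms, after localizing $C$ away from $\mathfrak{q}$ I can assume that $C=R^{\prime}[t_1,\ldots,t_n]/(f_1,\ldots,f_r)$ with some $r \times r$ minor of the Jacobian $\partial(f_1,\ldots,f_r)/\partial(t_{n-r+1},\ldots,t_n)$ a unit in $B$. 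The inclusion $R^{\prime}[t_1,\ldots,t_{n-r}] \hookrightarrow C$ is then essentially \'etale at $\mathfrak{q}$.

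The main remaining task is to arrange the residue-field extension at $\mathfrak{q}$ over the contracted prime $\mathfrak{q}_0 \subseteq R^{\prime}[t_1,\ldots,t_{n-r}]$ to be trivial. The hypothesis on $R^{\prime}$ ensures that every residue field of $R^{\prime}$ is perfect, so the finitely generated field extension $\kappa(\mathfrak{p}) \hookrightarrow \kappa(\mathfrak{q})$ is separable. Writing it (via a transcendence basis and the primitive element theorem) as $\kappa(\mathfrak{p})(\bar t_1,\ldots,\bar t_{n-r-1})[\bar t_{n-r}]/(\bar g)$ with $\bar g$ separable, I can lift these coordinates to a linear change of variables among the $t_i$. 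After this change, $R^{\prime}[t_1,\ldots,t_{n-r}] \to C$ is an essentially \'etale ring map inducing an isomorphism on residue fields at $\mathfrak{q}$, i.e.\ an \'etale neighbourhood. A standard monic polynomial presentation for this \'etale neighbourhood then produces the $A[t]/f$ description appearing in the definition.

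When $R^{\prime}$ is a perfect field, $\dim(R^{\prime})=0$ and the condition $\mathrm{ht}(\mathfrak{q})>0$ guarantees $n-r \geq 1$, so Lindel's original argument goes through verbatim. When $R^{\prime}$ is a DVR, $\mathfrak{p}$ is either the zero ideal or the maximal ideal; in the first case $B$ is essentially smooth over the (perfect) fraction field and the field case applies, while in the second case $\mathfrak{q}$ dominates the closed point of $R^{\prime}$ and the hypothesis $\mathrm{ht}(\mathfrak{q})>1$ again leaves at least one polynomial variable. Here perfectness of the residue field of $R^{\prime}$ is what lets the primitive element/change-of-variable step work, giving the DVR variant of Lindel's lemma (essentially the one in Colliot-Th\'el\`ene--Ojanguren).

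The hard step is the residue-field-matching: the linear change of variables must simultaneously preserve separability of the chosen minor at $\mathfrak{q}$ (so that the smaller polynomial subring remains \'etale under $B$) and realize the primitive element of $\kappa(\mathfrak{q})/\kappa(\mathfrak{p})$ as the reduction of the last variable. Arranging both conditions requires careful genericity of the lift, which is exactly the place where the perfect-residue-field hypothesis is used; everything else is a routine unwinding of essentially \'etale presentations.
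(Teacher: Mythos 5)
Your outline follows the same route as the references the paper cites (Lindel's change-of-variables argument via the local structure of smooth morphisms, then arranging the residue-field extension to be trivial using separable generation over a perfect base). The paper itself doesn't re-prove Lindel; it cites \cite[Proposition~2]{LINDEL} for the field case and \cite[Proposition~2.1]{POPESCU_POLYNOMIAL} for the DVR case, and only supplies one missing verification. So your proposal is, in effect, a sketch of the proof of the results the paper is citing.

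However, your treatment of the DVR case is where a concrete gap appears. When $\mathfrak{p} = \mathfrak{q}\cap R'$ is the maximal ideal $(p)$, the key supplementary input is not the genericity of the linear change of coordinates but rather the verification that $p \notin (\mathfrak{q}C_{\mathfrak{q}})^2$. This is Popescu's Condition~(ii), and it is what guarantees that $p$ is part of a regular system of parameters for $C_{\mathfrak{q}}$ and hence that a localized polynomial subring over $R'$ with the correct dimension exists at all; without it, the dimension count that makes $A \to C_{\mathfrak{q}}$ essentially \'etale cannot be carried out. The paper's entire DVR argument is devoted to this check (using that $R'/p \rightarrow C_{\mathfrak{q}}/p$ is essentially smooth, hence $C_{\mathfrak{q}}/p$ is regular, hence a dimension drop occurs). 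Your proposal lumps this into ``careful genericity of the lift,'' but that genericity concerns matching residue fields, which is a separate (and in the field case, the only) difficulty. In its present form the proposal silently passes over the one step that the paper actually has to supply.
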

 
 \begin{proof}
 The case where $R^{\prime}$ is a perfect field is treated in \cite{LINDEL}. The discussion above \cite[Proposition~2]{LINDEL} shows that $C_{\mathbb{q}}$ contains the field of fractions $L$ of some polynomial algebra over $R^{\prime}$ in such a way that the premise of \cite[Proposition~2]{LINDEL} is satisfied, so the conclusion follows from Part~(ii) of \cite[Proposition~2]{LINDEL}. Since the homomorphism constructed in this manner is $R^{\prime}$-linear, it is in particular $R$-linear.
 
 The case of discrete valuation rings is treated in \cite[Proposition~2.1]{POPESCU_POLYNOMIAL}. Only checking Condition~(ii) of that proposition needs a small argument. Namely, if $p$ is a uniformizer of $R^{\prime}$ and $\mathfrak{q} \in \Spec(C)$, we need to show that $p$ does not lie in the square of the maximal ideal $\mathfrak{q}C_{\mathfrak{q}}$ of $C_\mathfrak{q}$. If $p$ lies in $\mathfrak{q} C_{\mathfrak{q}}$, then this follows from the essential smoothness of $R^{\prime} \rightarrow C_{\mathfrak{q}}$. Indeed, since smooth homomorphisms are stable under base change, it follows that $R^{\prime} \slash p \rightarrow C_{\mathfrak{q}} \slash p$ is essentially smooth, hence that the codomain is regular. Since $p \in \mathfrak{q}C_{\mathfrak{q}}$, the preimage of $\mathfrak{q} C_{\mathfrak{q}}$ in $R^{\prime}$ is $(p)$, so $R^{\prime} \rightarrow C_{\mathfrak{q}}$ is faithfully flat and therefore injective. This implies that $\mathrm{dim} C_{\mathfrak{q}} \slash p < \mathrm{dim} C_{\mathfrak{q}}$. By lifting a minimal set of generators of the maximal ideal in $C_{\mathfrak{q}} \slash p$ we find from this dimension consideration that $p$ cannot lie in $(\mathfrak{q}C_{\mathfrak{q}})^2$. This concludes the case where $p$ lies in the maximal ideal of $C_{\mathfrak{q}}$. If this is not the case, then $C_{\mathfrak{q}}$ is an {\'e}tale neighbourhood over $R^{\prime}_p$ (since this is a perfect field by assumption), so the conclusion follows as well.
 \end{proof}
 
 From \cite[Lemma on p.~321]{LINDEL} we know that for each {\'e}tale neighbourhood $\varphi \colon (A,\mathfrak{m}) \rightarrow (B,\mathfrak{n})$, there exists an element $h \in \mathfrak{m}$ such that $\varphi$ is an analytic isomorphism along $h$. We have $\varphi(h) \in \mathfrak{n}$ since $A \rightarrow B$ is local. We call both the resulting patching diagrams
 \[
 \vcenter{\xymatrix{ 
 A \ar[r]^-{\lambda_h} \ar[d]_{\varphi} & A_h \ar[d]^{\varphi_h} \\
 B \ar[r]_-{\lambda_{\varphi(h)}} & B_{\varphi(h)}
 }} 
 \quad \text{and} \quad
 \vcenter{\xymatrix{ 
 A[t] \ar[r]^-{\lambda_h} \ar[d]_{\varphi} & A_h[t] \ar[d]^{\varphi_h} \\
 B[t] \ar[r]_-{\lambda_{\varphi(h)}} & B_{\varphi(h)}[t]
 }} 
 \]
 a \emph{patching diagram associated to an {\'e}tale neighbourhood}. We will discuss these kinds of patching diagram in more detail in \S \ref{section:henselian_pairs}, see in particular Proposition~\ref{prop:etale_neighbourhood_standard_Nisnevich}.

\begin{prop}\label{prop:Lindel_result_for_general_F} 
 Let $F \colon \CAlg_R \rightarrow \Set_{\ast}$ be a functor which satisfies $(\mathrm{Q})$ and $(\mathrm{R})$ and such that the implication
 \[
 NF(A) = \ast \quad \Rightarrow \quad A \in \ca{E}_1^F
 \]
 holds. Assume that $F$ satisfies weak excision for patching diagrams associated to {\'e}tale neighbourhoods. Let $R^{\prime}$ be a regular commutative $R$-algebra of Krull dimension $d$ and let $R^{\prime} \rightarrow C$ be a smooth $R$-homomorphism.
 
 If $R^{\prime}$ is $F$-regular, $\ca{E}_1^F$ contains all regular local rings of Krull dimension $\leq d$, and all the local rings of $C$ are {\'e}tale neighbourhoods over $R^{\prime}$, then $C$ lies in $\ca{E}_1^F$.
\end{prop}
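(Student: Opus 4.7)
The plan is to reduce, via Quillen's principle $(\mathrm{Q})$, to showing $C_{\mathfrak{q}} \in \ca{E}_1^F$ for every maximal ideal $\mathfrak{q} \subseteq C$; I would in fact prove the stronger statement that this holds for every prime $\mathfrak{q}$ by induction on $\mathrm{ht}(\mathfrak{q})$. The induction is fed by the assumption on regular local rings of dimension $\leq d$ for the base case, and by the étale neighbourhood structure together with weak excision for the inductive step.

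For the base case, since $R^{\prime} \rightarrow C$ is smooth and $R^{\prime}$ is regular, $C$ is regular. Thus for any $\mathfrak{q}$ with $\mathrm{ht}(\mathfrak{q}) \leq d$ the local ring $C_{\mathfrak{q}}$ is regular of dimension $\leq d$, and so lies in $\ca{E}_1^F$ by hypothesis. For the inductive step, suppose $\mathrm{ht}(\mathfrak{q}) > d$ and pick an étale neighbourhood $\varphi \colon (A,\mathfrak{m}) \rightarrow (C_{\mathfrak{q}}, \mathfrak{q}C_{\mathfrak{q}})$ with $A$ the localization of some $R^{\prime}[t_1, \ldots, t_n]$ at a prime. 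A direct check using the definition of $F$-regularity shows $R^{\prime}[t_1, \ldots, t_n]$ inherits $F$-regularity from $R^{\prime}$, so by $(\mathrm{R})$ the rings $A$ and each further localization $A_h$ lie in $\ca{E}_1^F$. By Lindel's lemma one chooses $h \in \mathfrak{m}$ such that $\varphi$ is an analytic isomorphism along $h$, yielding the patching diagram
\[
\xymatrix{A[x] \ar[r] \ar[d]_{\varphi} & A_h[x] \ar[d] \\ C_{\mathfrak{q}}[x] \ar[r] & C_{\mathfrak{q},\varphi(h)}[x]}
\]
whose image under $F$ is a weak pullback.

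The key observation is that primes of $C_{\mathfrak{q},\varphi(h)}$ correspond to primes $\mathfrak{p} \subseteq \mathfrak{q}$ of $C$ not containing $\varphi(h)$; since $\varphi(h) \in \mathfrak{q}C_{\mathfrak{q}}$, any such $\mathfrak{p}$ is strictly contained in $\mathfrak{q}$, so has strictly smaller height, and $(C_{\mathfrak{q},\varphi(h)})_{\mathfrak{p}} = C_{\mathfrak{p}}$. The induction hypothesis gives $C_{\mathfrak{p}} \in \ca{E}_1^F$ for every such $\mathfrak{p}$, whence $(\mathrm{Q})$ applied to $C_{\mathfrak{q},\varphi(h)}$ gives $C_{\mathfrak{q},\varphi(h)} \in \ca{E}_1^F$; in particular $NF(C_{\mathfrak{q},\varphi(h)}) = \ast$.

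To conclude, let $\sigma \in NF(C_{\mathfrak{q}})$; then $\sigma_{\varphi(h)} \in NF(C_{\mathfrak{q},\varphi(h)}) = \ast$, so $\sigma_{\varphi(h)} = \ast$. The pair $(\sigma,\ast)$ is thus compatible in the weak pullback, and lifts to some $\tau \in F(A[x])$ with $\varphi_\ast \tau = \sigma$. Since $A \in \ca{E}_1^F$ we may write $\tau = \iota_\ast y$, whence $\sigma = \varphi_\ast \iota_\ast y = \iota_\ast(\varphi_\ast y)$ is extended, and $\pi_\ast \sigma = \ast$ forces $\varphi_\ast y = \ast$, so $\sigma = \iota_\ast \ast = \ast$. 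Thus $NF(C_{\mathfrak{q}}) = \ast$, and the standing hypothesis delivers $C_{\mathfrak{q}} \in \ca{E}_1^F$, closing the induction. The main obstacle is the bookkeeping: verifying that inverting $\varphi(h) \in \mathfrak{q}C_{\mathfrak{q}}$ genuinely reduces the heights of all remaining primes (so that the inductive hypothesis is actually applicable), and correctly threading the weak pullback together with $A \in \ca{E}_1^F$ to deduce $\sigma = \ast$ rather than the weaker fact that $\sigma$ is merely extended.
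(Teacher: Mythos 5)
Your proof is correct and follows essentially the same route as the paper's: reduce via $(\mathrm{Q})$ to local rings, induct on height using regularity of $C$ for the base case, and for the inductive step invoke an étale neighbourhood $(A,\mathfrak{m}) \to (C_{\mathfrak{q}},\mathfrak{q}C_{\mathfrak{q}})$ with $A$ a localization of a polynomial ring over $R'$ (hence in $\ca{E}_1^F$ by $(\mathrm{R})$), use the weak pullback to lift $\sigma \in NF(C_{\mathfrak{q}})$ to an extended element of $F(A[x])$, and conclude $\sigma = \ast$. The bookkeeping you flag as the potential obstacle is handled exactly as the paper does.
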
 

\begin{proof}
 Since $F$ satisfies $(\mathrm{Q})$, it suffices to check that the local rings of $C_{\mathfrak{m}}$ lie in $\ca{E}_1^F$, where $\mathfrak{m} \subseteq C$ is maximal. We will in fact show by induction on the height that the local rings $C_{\mathfrak{q}}$ lie in $\ca{E}_1^F$ for all prime ideals $\mathfrak{q} \in \Spec(C)$.
 
 Since $R^{\prime}$ is regular and $R^{\prime} \rightarrow C$ is regular it follows that $C$ is regular as well (smooth ring maps send regular sequences to regular sequences, so the claim follows from the fact that the fibers of a smooth ring map are regular). The assumption therefore implies that $C_{\mathfrak{q}}$ lies in $\ca{E}_1^F$ for all $\mathfrak{q}$ with $\mathrm{ht}(\mathfrak{q}) \leq d$.
 
 Thus we can assume that $\mathrm{ht}(\mathfrak{q}) >d$ and that $C_{\mathfrak{p}} \in \ca{E}_1^F$ for all $\mathfrak{p} \in \Spec(C)$ with $\mathrm{ht}(\mathfrak{p}) < \mathrm{ht}(\mathfrak{q})$. Since the local rings of $C$ are {\'e}tale neighbourhoods over $R^{\prime}$, there exists an {\'e}tale neighbourhood
 \[
 \varphi \colon (A,\mathfrak{m}) \rightarrow (C_{\mathfrak{q}}, \mathfrak{q} C_{\mathfrak{q}})
 \]
 such that $A$ is a localization of some polynomial algebra $R^{\prime}[x_1, \ldots, x_n]$ (see Definition~\ref{dfn:local_rings_are_etale_neighbourhoods}). Since $R^{\prime}$ is $F$-regular and $F$ satisfies $(\mathrm{R})$, it follows that $A$ is also $F$-regular, hence it lies in particular in $\ca{E}_1^F$.
 
 Now we can pick an element $h \in \mathfrak{m}$ as in \cite[Lemma on p.~321]{LINDEL} such that $\varphi$ is an analytic isomorphism along $h$. Then $\varphi(h)$ lies in the maximal ideal of $C_\mathfrak{q}$, so all the local rings of $C_{\mathfrak{q}, \varphi(h)}$ are of the form $C_{\mathfrak{p}}$ for some $\mathfrak{p} \in \Spec(C)$ with $\mathrm{ht}(\mathfrak{p}) < \mathrm{ht}(\mathfrak{q})$. The inductive assumption, combined with the Quillen principle $(\mathrm{Q})$, therefore implies that $C_{\mathfrak{q}, \varphi(h)}$ lies in $\ca{E}_1^F$.
 
 Now pick an object $\sigma \in NF(C_{\mathfrak{q}})$ and consider the weak pullback diagram
 \[
 \xymatrix{ 
 F(A[t]) \ar[r]^-{\lambda_h} \ar[d]_{\varphi} & F( A_h[t] )\ar[d]^{\varphi_h} \\
 F(C_{\mathfrak{q}}[t]) \ar[r]_-{\lambda_{\varphi(h)}} &F( C_{\mathfrak{q},\varphi(h)}[t] )
 }
 \]
 associated to the {\'e}tale neighbourhood $\varphi \colon A \rightarrow C_{\mathfrak{q}}$.
 
 Since $\sigma$ lies in $NF(C_{\mathfrak{q}})$, it follows that $\sigma_{\varphi(h)}$ lies in $NF(C_{\mathfrak{q},\varphi(h)})$. The fact that $C_{\mathfrak{q},\varphi(h)}$ lies in $\ca{E}_1^F$ therefore implies that $\sigma_{\varphi(h)}=\ast$. The weak pullback property implies that we can find some $\tau \in F(A[t])$ such that $\varphi_{\ast} \tau=\sigma$ and $\tau_h=\ast$. Since all objects of $F(A[t])$ are extended from $A$ it follows that $\tau$, hence $\sigma=\varphi_{\ast} \tau$, are both extended. But $\sigma$ lies by assumption in $NF(C_{\mathfrak{q}})$, so it is extended from $\ast$ and therefore equal to $\ast$. Since $\sigma$ was arbitrary, it follows that $NF(C_{\mathfrak{q}})=\ast$.
 
 From our assumption it follows that all $\sigma \in F(C_{\mathfrak{q}}[t])$ are extended, that is, $C_{\mathfrak{q}}$ lies in $\ca{E}_1^F$. A final application of $(\mathrm{Q})$ implies that $C$ lies in $\ca{E}_1^F$, as claimed.
\end{proof}

 The above proposition can be combined with Popescu's Theorem to show that $F$-regularity for fields or discrete valuation rings implies $F$-regularity for a much larger class of regular rings. Recall that a ring homomorphism $R \rightarrow A$ is called \emph{regular} (sometimes \emph{geometrically regular} for emphasis) if it is flat and the fiber rings $\kappa(\mathfrak{p}) \ten{R} A$, $\mathfrak{p} \in \Spec(R)$ are geometrically regular, that is, they are regular and remain so after tensoring with any finite purely inseparable field extension of $\kappa(\mathfrak{p})$. Popescu showed that any regular homomorphism $R \rightarrow A$ between noetherian rings is a filtered colimit of smooth $R$-algebras (see \cite[Theorem~2.5]{POPESCU_DESINGULARIZATION}).
 
 We will apply this result in two cases. If $R$ is a regular local ring containing a field $k$, then let $k_0$ be the prime field of $k$. Since $k_0$ is perfect, the composite $k_0 \rightarrow R$ is geometrically regular. If $(R,\mathfrak{m})$ is a regular local ring of mixed characteristic $(0,p)$, then $R$ is called \emph{unramified} if $p \notin \mathfrak{m}^{2}$. In this case, the homomorphism $\mathbb{Z}_{(p)} \rightarrow R$ is geometrically regular (since $R \slash p R$ is regular and $\mathbb{Z} \slash p$ is perfect). In general, a regular local ring is called unramified if it is either unramified of mixed characteristic or if it contains a field.
 
\begin{thm}\label{thm:F_regularity_for_unramified_regular_rings}
 Let $F \colon \CRing \rightarrow \Set_{\ast}$ be a finitary functor which satisfies $(\mathrm{Q})$ and $(\mathrm{R})$. Assume that the implication
 \[
 NF(A)=\ast \quad \Rightarrow \quad A \in \ca{E}_1^F
 \]
 holds and that $F$ satisfies weak excision for patching diagrams associated to {\'e}tale neighbourhoods. Then the following hold:
 \begin{enumerate}
 \item[(i)] If all fields are $F$-regular, then all regular local rings containing a field are $F$-regular;
 \item[(ii)] If, in addition, all discrete valuation rings are $F$-regular, then all unramified regular local rings are $F$-regular.
 \end{enumerate}
\end{thm}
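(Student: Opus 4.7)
The plan is to combine Proposition~\ref{prop:Lindel_result_for_general_F} with Popescu's desingularization theorem in order to reduce $F$-regularity of the relevant regular local rings to $F$-regularity of smooth algebras over a suitable perfect base, and then to reduce smoothness to the inductive setup of Proposition~\ref{prop:Lindel_result_for_general_F} via Lindel's Proposition~\ref{prop:Lindel_etale_neighbourhood}.

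For Claim~(i), let $R$ be a regular local ring containing a field and let $k_0$ be the prime field of that field. As noted in the discussion preceding the theorem, $k_0$ is perfect and $k_0 \rightarrow R$ is geometrically regular, so Popescu's theorem gives a filtered colimit presentation $R \cong \colim_i C_i$ in which each $C_i$ is a smooth $k_0$-algebra. I would first show that each such $C_i$ is $F$-regular. Taking $R^{\prime}=k_0$ (regular of Krull dimension $0$ and $F$-regular by hypothesis), Proposition~\ref{prop:Lindel_etale_neighbourhood} ensures that the local rings of $C_i$ are {\'e}tale neighbourhoods over $k_0$, and $\ca{E}_1^F$ trivially contains all regular local rings of dimension $\leq 0$ (they are fields, hence $F$-regular). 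Thus Proposition~\ref{prop:Lindel_result_for_general_F} yields $C_i \in \ca{E}_1^F$. Since $C_i[x_1, \ldots, x_n]$ is again a smooth $k_0$-algebra, the same argument applied to it gives $C_i \in \ca{E}_n^F$ for every $n$, hence $C_i$ is $F$-regular. Finally, because $F$ is finitary and $R[x_1, \ldots, x_n] \cong \colim_i C_i[x_1, \ldots, x_n]$, the natural map $F(R) \rightarrow F(R[x_1, \ldots, x_n])$ is the filtered colimit of the bijections $F(C_i) \rightarrow F(C_i[x_1, \ldots, x_n])$, so it is a bijection as well.

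For Claim~(ii), let $R$ be an unramified regular local ring. If $R$ contains a field, Claim~(i) applies directly. Otherwise $R$ is of mixed characteristic $(0,p)$ with $p \notin \mathfrak{m}^{2}$, so as indicated before the theorem statement, the homomorphism $\mathbb{Z}_{(p)} \rightarrow R$ is geometrically regular; Popescu again produces $R \cong \colim_i C_i$ with $C_i$ smooth $\mathbb{Z}_{(p)}$-algebras. Now $R^{\prime}=\mathbb{Z}_{(p)}$ is a discrete valuation ring whose residue field $\mathbb{F}_p$ and fraction field $\mathbb{Q}$ are both perfect, so Proposition~\ref{prop:Lindel_etale_neighbourhood} is applicable and shows that the local rings of each $C_i$ are {\'e}tale neighbourhoods over $\mathbb{Z}_{(p)}$. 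By hypothesis $\mathbb{Z}_{(p)}$ is $F$-regular, and regular local rings of Krull dimension $\leq 1$ (fields and discrete valuation rings) are $F$-regular by assumption and therefore lie in $\ca{E}_1^F$. Proposition~\ref{prop:Lindel_result_for_general_F} then gives $C_i \in \ca{E}_1^F$; the same argument applied to the smooth $\mathbb{Z}_{(p)}$-algebras $C_i[x_1, \ldots, x_n]$ upgrades this to $F$-regularity of each $C_i$, and the finitary property of $F$ propagates $F$-regularity to the colimit $R$.

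The only delicate step is the bookkeeping between "$C_i \in \ca{E}_1^F$" and "$C_i$ is $F$-regular"; this is handled simply by observing that polynomial extensions of smooth $k_0$- or $\mathbb{Z}_{(p)}$-algebras are again smooth and fall within the hypotheses of Proposition~\ref{prop:Lindel_result_for_general_F}, so one can iterate. No further obstacle is expected, since the mixed-characteristic case is already accommodated by the case $d=1$ of Proposition~\ref{prop:Lindel_etale_neighbourhood} established via \cite[Proposition~2.1]{POPESCU_POLYNOMIAL}.
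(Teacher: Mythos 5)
Your proof is correct and follows essentially the same route as the paper: reduce via Popescu's theorem and the finitary hypothesis to smooth algebras over the prime field $k_0$ (resp.\ over $\mathbb{Z}_{(p)}$), invoke Proposition~\ref{prop:Lindel_etale_neighbourhood} to get the {\'e}tale-neighbourhood structure of the local rings, apply Proposition~\ref{prop:Lindel_result_for_general_F} with $d=0$ (resp.\ $d=1$), and iterate over polynomial extensions (themselves smooth) to pass from $\ca{E}_1^F$ to full $F$-regularity. The only stylistic differences are that you write out the filtered colimit argument explicitly and spell out the equal-characteristic vs.\ mixed-characteristic case split in Claim~(ii), both of which the paper leaves implicit.
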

 
\begin{proof}
 Since $F$ is finitary, Popescu's Theorem \cite[Theorem~2.5]{POPESCU_DESINGULARIZATION} and the above discussion reduces the problem to showing that all smooth algebras $A$ over a prime field $k_0$ (respectively over $\mathbb{Z}_{(p)}$, $p$ a prime) are $F$-regular. Since $A \rightarrow A[t]$ is smooth, it suffices to show that these rings lie in $\ca{E}_1^F$.
 
 Since prime fields are perfect (and both $\mathbb{Q}=\mathrm{Frac}(\mathbb{Z}_{(p)})$ and $\mathbb{Z} \slash p$ are perfect), we know from Proposition~\ref{prop:Lindel_etale_neighbourhood} that the local rings of $A$ are {\'e}tale neighbourhoods over $k_0$ (respectively over $\mathbb{Z}_{(p)}$). In order to apply Proposition~\ref{prop:Lindel_result_for_general_F}, it only remains to check that $\ca{E}_1^F$ contains all the regular local rings of dimension $0$ (respectively $\leq 1$). Since these are precisely the fields and the discrete valuation rings, this follows from the stronger assumption that such rings are $F$-regular in the respective cases.
\end{proof} 
 
 With this in hand, we can prove the following theorem of the introduction. 
 
\begin{cit}[Theorem~\ref{thm:excision_implies_three_principles}]
Let $F \colon \CRing \rightarrow \Set_{\ast}$ be a finitary functor which satisfies weak analytic excision. Then $F$ satisfies Quillen's principle $(\mathrm{Q})$. Assume that one of the following conditions holds:
 \begin{enumerate}
 \item[(A)] all local rings are $F$-contractible;
 \item[(B)] the functor $F$ admits a natural transitive group action.
 \end{enumerate}
 Then $F$ also satisfies Roitman's principle $(\mathrm{R})$. Moreover, the following hold:
 \begin{enumerate}
 \item[(i)] if all fields are $F$ regular, then all regular rings containing a field are $F$-regular;
 \item[(ii)] if, in addition, all discrete valuation rings are $F$-regular, then all unramified regular local rings are $F$-regular.
\end{enumerate} 
\end{cit}
 
\begin{proof}[Proof of Theorem~\ref{thm:excision_implies_three_principles}]
 From Theorem~\ref{thm:weak_excision_implies_QR} it follows that $F$ satisfies $(\mathrm{Q})$ and $(\mathrm{R})$. In order to apply Theorem~\ref{thm:F_regularity_for_unramified_regular_rings}, it only remains to check that the implication
 \[
 NF(A)=\ast \quad \Rightarrow \quad A \in \ca{E}_1^F
 \]
 holds. If Condition~(B) holds, this follows from Lemma~\ref{lemma:transitive_action_implies_extended_if_trivial_NF}. If Condition~(A) holds, this follows from Proposition~\ref{prop:NF_trivial_on_localization} and Lemma~\ref{lemma:F_contractible_implies_extended_if_trivial_NF}.
\end{proof} 
 
 We conclude this section with a discussion of some additional closure properties of the class of $F$-regular algebras. For example, for certain functors $F$, it follows that $A$ is $F$-regular whenever $A_{\mathrm{red}}$ is. In other cases, we can even conclude that local rings are $F$-contractible if all fields are $F$-contractible.

 \begin{dfn}\label{dfn:ideal_injective_and_ideal_invariant}
 Let $F \colon \CAlg_R \rightarrow \Set$ be a functor. We say that
 \begin{enumerate}
 \item[(i)] The functor $F$ is \emph{$j$-injective} if $F(A) \rightarrow F(A \slash I)$ is injective whenever $I$ is an ideal contained in the Jacobson radical of $A$;
 \item[(ii)] The functor $F$ is \emph{$h$-injective} if $F(A) \rightarrow F(A \slash I)$ is injective whenever $(A,I)$ is a henselian pair;
 \item[(iii)] The functor $F$ is \emph{$\ell n$-injective} if $F(A) \rightarrow F(A \slash I)$ is injective whenever $I \subseteq A$ is a locally nilpotent ideal (that is, $I$ consists of nilpotent elements);
 \item[(iv)] The functor $F$ is \emph{$n$-injective} if $F(A) \rightarrow F(A \slash I)$ is injective whenever $I \subseteq A$ is a nilpotent ideal, that is, $I^k=0$ for some $k \in \mathbb{N}$.
 \end{enumerate}
 The properties \emph{$j$-}, \emph{$h$-}, \emph{$\ell n$-}, and \emph{$n$-invariant} are defined analogously by requiring that $F(A) \rightarrow F(A \slash I)$ is bijective.
 \end{dfn}
 
 \begin{example}\label{example:P_r_is_j_injective_and_h_invariant}
 The functors $P_r \colon \CAlg_R \rightarrow \Set_{\ast}$ and $P \colon \CAlg_R \rightarrow \Set$ are $j$-injective (by the Nakayama lemma, see for example \cite[Corollary~I.1.6]{LAM}) and $h$-invariant (by \cite[\href{https://stacks.math.columbia.edu/tag/0D4A}{Lemma 0D4A}]{stacks-project}).
 \end{example}
 
 \begin{lemma}
 For any functor $F \colon \CAlg_R \rightarrow \Set$, the implications
 \[
 \begin{matrix}
 j\text{-injective} & \Rightarrow & h\text{-injective} & \Rightarrow & \ell n\text{-injective} & \Rightarrow & n\text{-injective} \\
\Uparrow && \Uparrow && \Uparrow && \Uparrow \\
j\text{-invariant} & \Rightarrow & h\text{-invariant} & \Rightarrow &  \ell n\text{-invariant} & \Rightarrow &  n\text{-invariant} \\ 
 \end{matrix}
 \]
 hold.
 \end{lemma}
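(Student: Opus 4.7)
The proof amounts to a purely set-theoretic comparison of the four classes of ideals appearing in Definition~\ref{dfn:ideal_injective_and_ideal_invariant}, together with the trivial observation that a bijection is, in particular, an injection.

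The plan is as follows. First I would dispose of the four vertical implications at once: if $F(A) \to F(A \slash I)$ is bijective whenever $I$ belongs to one of the four classes of ideals, then a fortiori it is injective for such $I$, so each invariance condition implies the corresponding injectivity condition.

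For the three horizontal implications (both for injectivity and for invariance), the strategy is identical: I would show the chain of inclusions
\[
\{\text{nilpotent ideals}\} \subseteq \{\text{locally nilpotent ideals}\} \subseteq \{I \mid (A,I) \text{ henselian}\} \subseteq \{I \subseteq \mathrm{rad}(A)\}
\]
among classes of ideals in commutative rings, and then note that if $F(A) \to F(A\slash I)$ has the property for every $I$ in the larger class, then in particular it has the property for every $I$ in the smaller class. The first inclusion is immediate: if $I^k = 0$, then every $x \in I$ satisfies $x^k = 0$. The third inclusion is part of the definition of a henselian pair (or is shown in the first lines of \S\ref{section:henselian_pairs}, which is where henselian pairs will be reviewed in detail). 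The middle inclusion says that a locally nilpotent ideal gives rise to a henselian pair; for this I would cite \cite[\href{https://stacks.math.columbia.edu/tag/0ALJ}{Lemma 0ALJ}]{stacks-project} (or verify it directly, since $I \subseteq \mathrm{rad}(A)$ is obvious---any $1+x$ with $x$ nilpotent is a unit---and any coprime factorization of a monic polynomial modulo a locally nilpotent ideal lifts by a straightforward successive-approximation argument).

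I do not foresee a genuine obstacle here: all four properties are formulated as ``$F$ behaves well for every ideal in class $\mathcal{C}$,'' and the lemma is simply the monotonicity in $\mathcal{C}$ together with the tautology that invariance implies injectivity. The only nontrivial input is the fact that locally nilpotent ideals are henselian, which is a standard (and short) consequence of Hensel's lemma.
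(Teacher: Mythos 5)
Your proposal is correct and follows essentially the same route as the paper: the vertical implications are the tautology that bijective implies injective, and the horizontal ones reduce to the chain of inclusions among the four classes of ideals, with the nontrivial inclusion (locally nilpotent implies henselian) cited from the Stacks project. One small inaccuracy: the containment $I \subseteq \mathrm{rad}(A)$ for henselian pairs is \emph{not} part of the definition adopted in \S\ref{section:henselian_pairs} (which follows the Stacks project's lifting-against-\'etale-maps definition); it is a consequence, which the paper handles by citing \cite[\href{https://stacks.math.columbia.edu/tag/09XI}{Lemma 09XI}]{stacks-project}, and the locally-nilpotent-implies-henselian step is tag 0ALI rather than 0ALJ.
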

 
 \begin{proof}
 The vertical implications follow directly from the definition. Clearly a nilpotent ideal is locally nilpotent. If $I$ is locally nilpotent, then $(A,I)$ is henselian by \cite[\href{https://stacks.math.columbia.edu/tag/0ALI}{Lemma 0ALI}]{stacks-project}. Finally, if $(A,I)$ is henselian, then $I$ is contained in the Jacobson radical (see for example \cite[\href{https://stacks.math.columbia.edu/tag/09XI}{Lemma 09XI}]{stacks-project}).
 \end{proof}
 
 \begin{lemma}\label{lemma:SL_r_j_invariant}
  Let $I \subseteq A$ be an ideal contained in the Jacobson radical and let $\sigma \in \mathrm{SL}_r(A)$ for some $n \geq 2$. If $\bar{\sigma} \in \mathrm{SL}_r(A \slash I)$ is elementary, then $\sigma$ is elementary. Moreover, for $n \geq 3$ the functors
  \[
  SK_{1,r} \colon \CAlg_R \rightarrow \Grp
 \]
 and the functor $SK_1 \colon \CAlg_R \rightarrow \Ab$ are $j$-invariant.
 \end{lemma}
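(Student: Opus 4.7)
The plan is to reduce the first claim to showing that every $\rho\in\mathrm{SL}_r(A)$ congruent to $I_r$ modulo $I$ already lies in $\mathrm{E}_r(A)$. To see this reduction, write $\bar\sigma=\prod_k e_{i_kj_k}(\bar c_k)$, lift the coefficients to $c_k\in A$, and set $\tau\defl\prod_k e_{i_kj_k}(c_k)\in\mathrm{E}_r(A)$; then $\sigma\tau^{-1}$ reduces to the identity modulo $I$.

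To treat such a $\rho$, I would induct on $r$. All diagonal entries of $\rho$ lie in $1+I$ and are therefore units in $A$ by the characterization of the Jacobson radical. Left multiplication by the elementary matrices $E_{i1}(-\rho_{i1}/\rho_{11})$, $i\geq 2$, and right multiplication by $E_{1j}(-\rho_{1j}/\rho_{11})$, $j\geq 2$, reduce $\rho$ to a block-diagonal matrix $\mathrm{diag}(\rho_{11},M)$ with $\det(M)=\rho_{11}^{-1}$, still congruent to $I_r$ modulo $I$. I would then split off the unit $\rho_{11}$ by writing
\[
\mathrm{diag}(\rho_{11},M)=\mathrm{diag}(\rho_{11},\rho_{11}^{-1},1,\ldots,1)\cdot\mathrm{diag}\bigl(1,\mathrm{diag}(\rho_{11},1,\ldots,1)\cdot M\bigr),
\]
so that the second factor has the form $\mathrm{diag}(1,M')$ with $M'\in\mathrm{SL}_{r-1}(A)$ congruent to $I_{r-1}$ modulo $I$. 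The first factor lies in $\mathrm{E}_r(A)$ by the Cohn identity, which expresses $\mathrm{diag}(u,u^{-1})$ as a product of six elementary matrices in $\mathrm{E}_2(A)$ for every unit $u\in A^\times$ (and extends to $\mathrm{E}_r(A)$ via the standard block embedding); the second factor is elementary by the inductive hypothesis applied to $M'$, and the base case $r=2$ is just the Cohn identity.

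For the second assertion, surjectivity of $SK_{1,r}(A)\to SK_{1,r}(A/I)$ will follow from surjectivity of $\mathrm{SL}_r(A)\to\mathrm{SL}_r(A/I)$: any entrywise lift $\sigma'\in M_r(A)$ of $\bar\sigma\in\mathrm{SL}_r(A/I)$ satisfies $\det(\sigma')\in 1+I\subseteq A^\times$, so right multiplication by $\mathrm{diag}(\det(\sigma')^{-1},1,\ldots,1)$ provides a lift in $\mathrm{SL}_r(A)$. For injectivity, if $\bar\sigma_2^{-1}\bar\sigma_1$ is elementary in $\mathrm{SL}_r(A/I)$, then lifting a witnessing factorization to $\varepsilon\in\mathrm{E}_r(A)$ makes $\sigma_2^{-1}\sigma_1\varepsilon^{-1}$ congruent to $I_r$ modulo $I$, hence elementary by the first claim; combined with the normality of $\mathrm{E}_r(A)$ in $\mathrm{SL}_r(A)$ for $r\geq 3$ (\cite[Lemma~1.4]{SUSLIN_SPECIAL}), this yields $[\sigma_1]=[\sigma_2]$ in $SK_{1,r}(A)$. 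The $j$-invariance of $SK_1$ then follows by passing to the filtered colimit over $r$. The main subtle point throughout is the base case $r=2$ of the inductive reduction, where the higher-rank Whitehead trick is unavailable and one has to appeal to the explicit Cohn factorization of $\mathrm{diag}(u,u^{-1})$ in $\mathrm{E}_2(A)$.
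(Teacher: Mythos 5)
Your proposal is correct and takes essentially the same approach as the paper: reduce to showing that a matrix congruent to the identity modulo $I$ is elementary, clear the off-diagonal entries using that the diagonal entries are units (since $I$ lies in the Jacobson radical), and conclude via the Whitehead/Cohn identity for diagonal matrices of determinant one; the passage to $j$-invariance of $SK_{1,r}$ and to $SK_1$ via the filtered colimit is also identical. The only difference is presentational: you carry out the clearing recursively, extracting one block $\mathrm{diag}(u,u^{-1})$ and invoking the $\mathrm{E}_2$ factorization at each step, whereas the paper clears all off-diagonal entries in a single pass and appeals to Whitehead's lemma once at the end for the resulting diagonal matrix.
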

 
 \begin{proof}
 If $\bar{\sigma}$ is elementary, then there exists an $\varepsilon \in \mathrm{E}_r(A)$ such that $\bar{\varepsilon}=\bar{\sigma}^{-1}$, so $\overline{\sigma \varepsilon}=\id$. It thus suffices to check that a matrix whose diagonal entries lie in $1+I$ and whose off-diagonal entries lie in $I$ is elementary. Since $I$ is contained in the Jacobson radical, the diagonal entries are in particular units. Using only column transformations, we can thus successively ``clear'' the off-diagonal entries while retaining a matrix of the same form. At the end we obtain a diagonal matrix with determinant $1$, which is elementary by Whitehead's Lemma.
 
 This shows the first part, which implies that $SK_{1,r}(A) \rightarrow SK_{1,r}(A \slash I)$ is injective. To see that it is surjective, note that any lift $\sigma$ of $\bar{\sigma} \in SK_{1,r}(A \slash I)$ lies in $\mathrm{GL}_{r}(A)$ since $d \defl \mathrm{det} (\sigma)$ lies in $1+I$. Multiplying $\sigma$ with $d^{-1}$ we obtain the desired preimage in $SK_{1,r}(A)$. The claim about $SK_1$ follows from the fact that it can be written as filtered colimit of the $SK_{1,r}$, with connecting morphisms given by the inclusion sending $\sigma$ to $\bigl( \begin{smallmatrix} \sigma & 0\\ 0 &1 \end{smallmatrix} \bigr)$.
 \end{proof}
 
 \begin{lemma}\label{lemma:W_r_ln_invariant}
 For each $r \geq 3$, the functor
 \[
 W_r \colon \CAlg_R \rightarrow \mathrm{Set}_{\ast} 
 \]
 is $\ell n$-invariant.
 \end{lemma}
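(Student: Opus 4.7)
The plan is to reduce in two stages to the case of a square-zero ideal, and then to exhibit an explicit elementary matrix via Lemma~\ref{lemma:SL_r_j_invariant}. Since $W_r$ is finitary (both $\mathrm{Um}_r$ and $\mathrm{E}_r$ commute with filtered colimits) and a locally nilpotent ideal is the filtered union of its finitely generated, hence nilpotent, subideals, one first reduces to the case $I^n = 0$ for some $n$. A standard induction on $n$ using the intermediate ideal $J \defl I^{n-1}$, which is square-zero for $n \geq 2$ and for which $(I/J)^{n-1} = 0$ in $A/J$, further reduces the problem to $I^2 = 0$.

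For surjectivity in the square-zero case, any lift $u \in A^r$ of $\bar u \in \mathrm{Um}_r(A/I)$ is itself unimodular, since $\sum a_i u_i = 1 + c$ with $c \in I$ nilpotent, and $1 + c$ is a unit. For injectivity, given $u, v \in \mathrm{Um}_r(A)$ with equal image, I would lift a witnessing elementary matrix $\bar\varepsilon \in \mathrm{E}_r(A/I)$ factor-by-factor to $\varepsilon \in \mathrm{E}_r(A)$ and replace $u$ by $u\varepsilon$ to reduce to $v \equiv u \pmod{I}$. Writing $w \defl v - u$ (with entries in $I$) and choosing $a \in A^r$ with $\sum a_i u_i = 1$, the rank-one perturbation $N \defl I_r + a^T w \in \mathrm{GL}_r(A)$ satisfies $uN = v$ and $\det N = 1 + c$ where $c \defl \sum a_i w_i \in I$; since $I^2 = 0$, this determinant is a unit $d = 1 + c$ with inverse $1 - c$.

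The key step is to correct $N$ to a matrix in $\mathrm{SL}_r(A)$ without disturbing $uN = v$. Choosing any $b \in A^r$ with $\sum b_i v_i = 1$, the projector $P \defl I_r - b^T v$ satisfies $vP = 0$, so for any $Z \in M_r(I)$ the matrix $T' \defl PZ$ automatically has columns in $\ker v$ and entries in $I$. A direct trace calculation gives $\mathrm{tr}(PZ) = \sum_{i,j} Z_{ij} P_{ji}$, and thus it suffices to show that the ideal $\mathfrak{a} \subseteq A$ generated by the entries $P_{ij} = \delta_{ij} - b_i v_j$ equals $A$; for then $-c \in I$ can be written as $\mathrm{tr}(PZ)$ for a suitable $Z \in M_r(I)$. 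I would verify $\mathfrak{a} = A$ by a prime-avoidance argument: for any prime $\mathfrak{p} \supseteq \mathfrak{a}$, reducing modulo $\mathfrak{p}$ yields $b_i v_i \equiv 1$ and $b_i v_j \equiv 0$ for $i \neq j$ in the integral domain $A/\mathfrak{p}$, but the diagonal relations force $\bar b_i, \bar v_i \neq 0$ for all $i$, so the off-diagonal relations (nonempty because $r \geq 2$) yield a contradiction.

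With such a $Z$ in hand, set $T \defl I_r + PZ$. The expansion $\det(I_r + T') = 1 + \mathrm{tr}(T')$ (the higher-order terms vanishing by $I^2 = 0$) gives $\det T = 1 - c = d^{-1}$, and one checks $vT = v$ and $T \equiv I_r \pmod{I}$. The product $M \defl NT$ then lies in $\mathrm{SL}_r(A)$, satisfies $uM = v$, and reduces to the identity modulo $I$. Since $I$ is contained in the Jacobson radical (locally nilpotent ideals always being so), applying the first statement of Lemma~\ref{lemma:SL_r_j_invariant} to $M$---whose reduction $\bar M = I_r$ is trivially elementary---yields $M \in \mathrm{E}_r(A)$, proving $[u] = [v]$ in $W_r(A)$. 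The main obstacle is the verification $\mathfrak{a} = A$, handled cleanly by the prime-avoidance argument.
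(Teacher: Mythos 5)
Your proof is correct, but it takes a genuinely different route from the paper's. The paper reproduces an argument of Mohan Kumar (from Rao's thesis): it handles the locally nilpotent case directly by reducing injectivity to showing that $(a_1,\ldots,a_i+n,\ldots,a_r)$ and $(a_1,\ldots,a_r)$ are in the same elementary orbit for a single nilpotent $n$, introduces the homotopy $(a_1,\ldots,a_i+nx,\ldots,a_r) \in W_r(A[x])$, and invokes Quillen's local-global principle $(\mathrm{Q})$ for $W_r$ (established earlier in the paper via Corollary~\ref{cor:analytic_excision_implies_Q} and Proposition~\ref{prop:W_r_weak_analytic_excision}); after localization one entry is a unit and the orbit is trivially constant in $x$. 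Your argument instead reduces to a square-zero ideal and builds an explicit witness: the rank-one perturbation $N=I_r+a^T w$ with $uN=v$, a determinant-correcting factor $T=I_r+PZ$ stabilizing $v$, and then Lemma~\ref{lemma:SL_r_j_invariant} applied to $M=NT\in\mathrm{SL}_r(A)$ with $M\equiv I_r\bmod I$. The ideal computation via prime avoidance checks out (and the construction already works for $r\geq 2$). The trade-off is that the paper's proof is shorter and leans on the $(\mathrm{Q})$ machinery it has just developed, whereas yours is self-contained, depends only on Whitehead's lemma (via Lemma~\ref{lemma:SL_r_j_invariant}), and gives an explicit elementary matrix --- at the cost of the two reduction steps and the rank-one/trace bookkeeping. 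Both are valid; in the context of this paper, which is precisely trying to demonstrate the power of $(\mathrm{Q})$, the author's choice is the more natural illustration, but yours would be preferable in a setting where $(\mathrm{Q})$ for $W_r$ is not already in hand.
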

 
 \begin{proof}
 This is for example proved in \cite[Lemma~1.4.2]{RAO_DIMENSION_THREE}, using an argument of Mohan Kumar. Since we have established all the prerequisites, we can reproduce the argument here.
 
  Let $I \subseteq A$ be an ideal consisting of nilpotent elements. Note that the function $W_r(A) \rightarrow W_r(A \slash I)$ is surjective since any lift of a unimodular row over $A \slash I$ is unimodular over $A$. To see injectivity, it suffices to check that for any unimodular row $a=(a_1, \ldots, a_r)$ over $A$ and any nilpotent element $n \in A$, the unimodular row
 \[
 (a_1, \ldots, a_i+n, \ldots, a_r)
 \]
 lies in the same elementary orbit as $a$. Since $W_r$ satisfies $(\mathrm{Q})$ (see Corollary~\ref{cor:analytic_excision_implies_Q} and Proposition~\ref{prop:W_r_weak_analytic_excision}), it suffices to check that the class of
 \[
 (a_1, \ldots, a_i+n \cdot x, \ldots, a_r)
 \]
 in $W_r(A[x])$ is extended after passage to any local ring of $A$, which follows from the fact that one of the entries of this row must be a unit if $A$ is local.
 \end{proof}
 
\section{The monic inversion principle}\label{section:monic}

 In order to apply the results of \S \ref{section:excision} (specifically Theorem~\ref{thm:F_regularity_for_unramified_regular_rings}), we need a methood which allows us to check if fields and discrete valuation rings are $F$-regular. The principle we will use is often called the \emph{monic inversion principle}. In the case of the functor $P_{r}$, it is due to Horrocks (see \cite[Theorem~1]{HORROCKS} and the preamble of \cite[Theorem~2]{HORROCKS}), so we also call it the \emph{Horrocks principle} (in accordance with the naming convention of \S \ref{section:excision}).
 
 As usual, we fix a commutative ground ring $R$ and work in the category $\CAlg_R$ of commutative $R$-algebras. Recall that $A \langle t \rangle$ denotes the localization of $A[t]$ at the set of monic polynomials with coefficients in $A$.
 
 \begin{dfn}\label{dfn:Horrocks_principle}
 We say that $F \colon \CAlg_R \rightarrow \Set_{\ast}$ satisfies the \emph{local Horrocks principle $(\mathrm{H}_{\mathrm{loc}})$} if
 \[
 F(A[t]) \rightarrow F(A \langle t \rangle)
 \]
 has trivial kernel.
 
 We say that $F \colon \CAlg_R \rightarrow \Set$ satisfies the \emph{Horrocks principle $(\mathrm{H})$} if the following holds: if $A$ is any $R$-algebra and $\sigma \in F(A[t])$ is any object whose image in $F(A \langle t\rangle)$ is extended from $A$, then $\sigma$ is extended from $A$.
 \end{dfn}
 
 Here an object $\sigma \in F(A \langle t \rangle)$ is said to be extended from $A$ if it there exits a $\tau \in F(A)$ such that the composite
 \[
 \xymatrix{F(A) \ar[r]^-{\iota} & F(A[t]) \ar[r] & F(A \langle t \rangle)}
 \]
 sends $\tau$ to $\sigma$. In order for these definitions to behave as one might expect, we need to make some additional assumptions on the functor $F$.
 
 \begin{prop}\label{prop:uniquely_etxtended_after_monic_inversion}
 Let $F \colon \CAlg_R \rightarrow \Set$ be a finitary functor. Let $A$ be a commutative $R$-algebra and let $S$ denote the set of monic polynomials in $A[t]$. If $F$ satisfies the Horrocks principle $(\mathrm{H})$ and weak Zariski excision, then the function $F(A) \rightarrow F(A\langle t \rangle)$ induced by the composite
 \[
 \xymatrix{ A \ar[r]^-{\iota} & A[t] \ar[r]^-{\lambda_S} & A\langle t \rangle}
 \]
 is injective.
 \end{prop}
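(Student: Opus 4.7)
The plan exploits the retraction $\pi_* \colon F(A[t]) \to F(A)$ of $\iota_*$ (induced by $t \mapsto 0$), which makes $\iota_*$ a split injection; hence it suffices to show that $\tau_1, \tau_2 \in F(A)$ with equal image in $F(A\langle t\rangle)$ satisfy $\iota_*\tau_1 = \iota_*\tau_2$ in $F(A[t])$, from which applying $\pi_*$ yields $\tau_1 = \tau_2$.

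First I would apply finitariness: since $A\langle t\rangle = \colim_{h \in S} A[t]_h$ is a filtered colimit over the monic polynomials (ordered by divisibility), the assumed equality in $F(A\langle t\rangle)$ descends to $\iota_*\tau_1 = \iota_*\tau_2$ in $F(A[t]_f)$ for some specific monic $f \in A[t]$. Next I would observe that $f$ and $f - 1$ are comaximal in $A[t]$ (their difference is $1$) and both monic of the same degree, so $\{D(f), D(f-1)\}$ is a Zariski cover of $\Spec(A[t])$. Applying weak Zariski excision, the pair $(\iota_{f*}\tau_1, \iota_{(f-1)*}\tau_2)$ is compatible over $F(A[t]_{f(f-1)})$ -- both coordinates become the common element $\iota_*\tau_1 = \iota_*\tau_2$ there, inherited from $F(A[t]_f)$ via the localization $A[t]_f \to A[t]_{f(f-1)}$. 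The weak pullback property then produces a hybrid element $\sigma \in F(A[t])$ restricting to $\iota_{f*}\tau_1$ on $D(f)$ and to $\iota_{(f-1)*}\tau_2$ on $D(f-1)$.

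Then I would invoke the Horrocks principle: the image $\lambda_*\sigma$ in $F(A\langle t\rangle)$ coincides with the common image of $\tau_1$ and $\tau_2$, hence is extended from $A$, so $(\mathrm{H})$ gives $\sigma = \iota_*\tau'$ for a unique $\tau' = \pi_*\sigma \in F(A)$. Restricting this equation via $\lambda_{f*}$ and $\lambda_{(f-1)*}$ yields $\iota_{f*}\tau' = \iota_{f*}\tau_1$ in $F(A[t]_f)$ and $\iota_{(f-1)*}\tau' = \iota_{(f-1)*}\tau_2$ in $F(A[t]_{f-1})$.

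Finally, to extract $\tau_1 = \tau' = \tau_2$ in $F(A)$, I would use sections of the localizations $A \to A[t]_f$ and $A \to A[t]_{f-1}$: for any $c \in A$, the substitution $t \mapsto c$ extends $\pi$ to $A[t]_f \to A$ whenever $f(c) \in A^{\times}$ (giving $\tau' = \tau_1$), and to $A[t]_{f-1} \to A$ whenever $f(c) - 1 \in A^{\times}$ (giving $\tau' = \tau_2$). Since $f(c)$ and $f(c) - 1$ are always comaximal in $A$, $\{D(f(c)), D(f(c)-1)\}$ Zariski-covers $\Spec(A)$ for each $c$, yielding $\tau' = \tau_1$ on $A_{f(c)}$ and $\tau' = \tau_2$ on $A_{f(c) - 1}$, hence $\tau_1 = \tau_2$ on the overlap $A_{f(c)(f(c)-1)}$. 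The hard part will be this final Zariski descent on $\Spec(A)$: weak Zariski excision gives only surjectivity onto the weak pullback, and for base rings with small residue fields the naive cover $\{D(f(c))\}_{c \in A}$ can miss points (e.g.\ when the image of $f$ in some $\kappa(\mathfrak{p})[t]$ is valued in $\{0,1\}$ on all residue classes), so completing the argument requires a more delicate globalization, perhaps combining the cover of $\Spec(A[t])$ with a finer analysis on $\Spec(A)$, or bootstrapping via varying $c$ together with the compatibility $\tau_1 = \tau_2$ on overlaps.
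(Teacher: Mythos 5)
Your setup is right and you correctly identify the three ingredients (finitariness to reduce to a single monic $f$, weak Zariski excision to construct a hybrid element, and the Horrocks principle to recognize it as extended), but the final extraction step has a genuine gap that you yourself flag, and it cannot be repaired within your framework. The difficulty is exactly the one you worry about: for the retraction argument to close, you need an $A$-algebra map $A[t]_f \to A$ (or at least to some localization of $A$ jointly covering $\Spec A$), and your pair of covers $\{D(f(c)),D(f(c)-1)\}$ only ever produces $\tau_1 = \tau_2$ after localizing at the product $f(c)(f(c)-1)$. Over a residue field $\kappa(\mathfrak{p})$ where $f$ has image contained in $\{0,1\}$ (e.g.\ $A = \mathbb{F}_2$ with $f = t^2+t+1$, where $f(c)=1$ for every $c$, so $f(c)(f(c)-1)=0$ identically), the opens $D\bigl(f(c)(f(c)-1)\bigr)$ are all empty, so no amount of varying $c$ produces a cover of $\Spec A$. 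Moreover, even if it did, weak Zariski excision gives only the weak-pullback (surjectivity) property, not the injectivity needed to pass from a local agreement $\tau_1=\tau_2$ to a global one.

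The paper's resolution is the substitution $s=t^{-1}$: set $n=\deg f$ and $g(s)=s^n f(s^{-1}) \in A[s]$, so that $g(0)=1$ automatically, and $g$ is comaximal with $s$. One patches $(\lambda_s\iota^s)_{\ast}\sigma_1$ on $D(s)$ against $(\lambda_g\iota^s)_{\ast}\sigma_0$ on $D(g)$ to get $\omega\in F(A[s])$; since $s$ itself is monic, $\omega$ becomes extended in $F(A\langle s\rangle)$, so by $(\mathrm H)$, $\omega=(\iota^s)_\ast\omega_0$. Now the two evaluations $\mathrm{ev}_0\colon A[s]_g\to A$ (well-defined because $g(0)=1$ is a unit) and $\mathrm{ev}_1\colon A[s,s^{-1}]\to A$ give honest retractions of $A\to A[s]_g$ and $A\to A[s,s^{-1}]$, and applying them yields $\omega_0=\sigma_0$ and $\omega_0=\sigma_1$ directly, with no further descent needed. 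So the missing idea in your argument is precisely this variable swap, which guarantees that the two evaluation points ($0$ and $1$) are unconditionally available, sidestepping the residue-field obstruction entirely.
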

 
 \begin{proof}
 The proof given in \cite[Proposition~V.2.4]{LAM} for projective modules works for any functor $F$ as above. Indeed, let $\sigma_0, \sigma_1 \in FA$ be objects such that $(\iota_{\ast} \sigma_0)_S=(\iota_\ast \sigma_1)_S$. We need to check that $\sigma_0=\sigma_1$. Since $F$ is finitary, there exits a monic polynomial $f \in S$ such that $(\lambda_f \iota)_{\ast} \sigma_0=(\lambda_f \iota)_{\ast} \sigma_1$ holds in $F(A[t]_f)$.
 
 Let $n=\mathrm{deg}(f)$  and let $g(s)=s^n f(s^{-1}) \in A[s]$. Since $g(0)=1$ by construction, the elements $g(s)$ and $s$ are comaximal in $A[s]$. If we let $\psi \colon A[s,s^{-1}] \rightarrow A[t,t^{-1}]$ be the unique $A$-algebra isomorphism with $\psi(s)=t^{-1}$. Since $\psi(g)$ and $f$ are associates in $A[t,t^{-1}]$ by construction, there exists a unique $R$-algebra isomorphism $\varphi$ making the diagram
 \[
 \xymatrix{A[s,s^{-1}] \ar[d]_{\psi} \ar[r]^-{\lambda_g} & A[s,s^{-1}]_g \ar[d]^{\varphi} \\ A[t,t^{-1}] \ar[r]^-{\lambda_f} & A[t,t^{-1}]_f }
 \]
 commutative.
 
 Since we now have two polynomial rings over $A$, we write $\iota^s$ (respectively $\iota^{t}$) for the canonical inclusion of $A$ in $A[s]$ (respectively in $A[t]$). From the fact that $\psi$ is an $A$-algebra homomorphism it follows that $\psi \lambda_s \iota^s=\lambda_t \iota^{t}$ holds. This implies that the equalities
 \[
 \varphi_{\ast} (\lambda_g \lambda_s \iota^s)_{\ast}\sigma_1=(\lambda_f \psi \lambda_s \iota^s)_{\ast} \sigma_1=(\lambda_f \lambda_t \iota^t)_{\ast} \sigma_1
 \]
 and
 \[
 \varphi_{\ast}(\lambda_s \lambda_g \iota^s)_{\ast} \sigma_0=\varphi_\ast (\lambda_g \lambda_s \iota^s)_{\ast} \sigma_0=(\lambda_f \lambda_t \iota^{t})_{\ast} \sigma_0
 \]
 hold. The right hand side of the above two equations is equal by choice of $f$ and $\varphi$ is an isomorphism, so it follows that
 \[
( \lambda_g)_{\ast} (\lambda_s \iota^s)_{\ast} \sigma_1=(\lambda_s)_{\ast} (\lambda_g \iota^s)_{\ast} \sigma_0
 \]
 holds.
 Now we use the fact that $g$ and $s$ are comaximal in $A[s]$. Since $F$ satisfies weak Zariski excision, there exists an object $\omega$ in $F(A[s])$ such that $\omega_s=(\lambda_s \iota^s)_{\ast} \sigma_1$ and $\omega_g=(\lambda_g \iota^{s}) \sigma_0$. Since $s$ is a monic polynomial in $s$, it follows that the image of $\omega$ in $F(A \langle s \rangle)$ is extended from $A$. From $(\mathrm{H})$ it follows that there exists some $\omega_{0} \in F(A)$ such that $\omega=(\iota^{s})_{\ast} \omega_0$.
 
 The two commutative triangles
 \[
\vcenter{ \xymatrix{
A[s] \ar[rr]^-{\lambda_g} \ar[rd]_{\pi=\mathrm{ev}_0} && A[s]_g \ar[ld]^{\mathrm{ev}_0} \\
& A 
}} \quad \text{and} \quad
\vcenter{ \xymatrix{
A[s] \ar[rr]^-{\lambda_s} \ar[rd]_{\mathrm{ev}_1} && A[s,s^{-1}] \ar[ld]^{\mathrm{ev}_1} \\
& A 
}}
 \]
 show that $\omega_0=\sigma_0$ and that $\omega_0=\sigma_1$. Thus $\sigma_0=\sigma_1$, as claimed.
 \end{proof}
 
 \begin{cor}
 If $F \colon \CAlg_R \rightarrow \Set_{\ast}$ is finitary and satisfies weak Zariski excision, then the implication $(\mathrm{H}) \Rightarrow (\mathrm{H}_{\mathrm{loc}})$ holds.
 \end{cor}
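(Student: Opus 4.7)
The plan is to directly combine Proposition~\ref{prop:uniquely_etxtended_after_monic_inversion} with the definition of the Horrocks principle, using the base-point of $F(A)$ as the witness that makes $(\mathrm{H})$ applicable.

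First I would take an arbitrary element $\sigma \in F(A[t])$ lying in the kernel of $F(A[t]) \to F(A\langle t \rangle)$; that is, an element whose image in $F(A\langle t \rangle)$ equals the base-point $\ast_{A\langle t \rangle}$. Since $F$ takes values in $\Set_{\ast}$, the base-point $\ast_{A\langle t \rangle}$ is by naturality the image of the base-point $\ast_A \in F(A)$ under the composite $F(A) \to F(A[t]) \to F(A\langle t \rangle)$. Hence the image of $\sigma$ in $F(A\langle t \rangle)$ is \emph{extended from $A$} in the sense of Definition~\ref{dfn:Horrocks_principle} (extended from $\ast_A$).

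Now I would invoke $(\mathrm{H})$: it follows that $\sigma$ itself is extended from $A$, so there exists some $\tau \in F(A)$ with $\sigma = \iota_{\ast} \tau$. Applying $\lambda_{S,\ast}$ (where $S \subseteq A[t]$ denotes the set of monic polynomials) to both sides gives that the image of $\tau$ in $F(A\langle t \rangle)$ coincides with the image of $\sigma$, which is $\ast_{A\langle t \rangle}$, and this in turn is the image of $\ast_A$ under the same map $F(A) \to F(A\langle t \rangle)$.

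Finally I would apply Proposition~\ref{prop:uniquely_etxtended_after_monic_inversion}, whose hypotheses ($F$ finitary, weak Zariski excision, and $(\mathrm{H})$) are all in force, to deduce that $F(A) \to F(A\langle t\rangle)$ is injective. Thus $\tau = \ast_A$, and therefore $\sigma = \iota_{\ast}\tau = \iota_{\ast}\ast_A = \ast_{A[t]}$, proving that the kernel is trivial. There is no real obstacle here; the only subtle point is the observation in the first step that the pointed-set structure lets one re-express the kernel condition as an extension condition, so that $(\mathrm{H})$ becomes applicable.
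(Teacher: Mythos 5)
Your proof is correct and takes essentially the same approach as the paper: observe that the basepoint of $F(A\langle t\rangle)$ is extended from $\ast_A\in F(A)$ (so $(\mathrm{H})$ applies), then use the injectivity of $F(A)\to F(A\langle t\rangle)$ from Proposition~\ref{prop:uniquely_etxtended_after_monic_inversion} to conclude that the extension must come from the basepoint, forcing $\sigma$ to be trivial.
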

 
 \begin{proof}
 If $A$ is local and $\sigma \in F(A[t])$ is sent to the basepoint $\ast \in F(A \langle t \rangle)$, then $(\mathrm{H})$ implies that $\sigma$ is extended from some $\sigma_0 \in F(A)$. But the basepoint in $F(A \langle t \rangle)$ is extended from $\ast \in F(A)$, so from Proposition~\ref{prop:uniquely_etxtended_after_monic_inversion} it follows that $\sigma_0=\ast$. Thus $\sigma=\iota_{\ast} \ast$ is trivial, which establishes $(\mathrm{H}_{\mathrm{loc}})$.
 \end{proof}
 
 Quillen showed that Serre's problem about projective modules over polynomial rings has an affirmative answer by showing that the local Horrocks principle and the local-global principle established in \cite{QUILLEN} together imply the global Horrocks principle. This suggests that the principles $(\mathrm{Q})$ and $(\mathrm{H}_{\mathrm{loc}})$ together should imply the global Horrocks principle $(\mathrm{H})$. It turns out that we need to assume a bit more for this to hold. Each one of the two possible assumptions of Theorem~\ref{thm:weak_excision_implies_QR} is sufficient.
 
 \begin{prop}\label{prop:QHloc_implies_H}
 Let $F \colon \CAlg_R \rightarrow \Set_{\ast}$ be a functor which satisfies $(\mathrm{Q})$ and assume that one of the following conditions holds:
  \begin{enumerate}
 \item[(A)] All commutative $R$-algebras which are local rings are $F$-contractible;
 \item[(B)] The functor $F$ admits a natural transitive group action.
 \end{enumerate}
 Then $(\mathrm{H}_{\mathrm{loc}})$ implies $(\mathrm{H})$.
 \end{prop}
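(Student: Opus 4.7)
My plan is to verify the global Horrocks principle $(\mathrm{H})$ separately under each of the two alternative hypotheses. In both cases I start with an object $\sigma \in F(A[t])$ whose image $(\lambda_S)_{\ast}\sigma$ in $F(A\langle t \rangle)$ is extended from some $\tau \in F(A)$, where $S \subseteq A[t]$ denotes the multiplicative set of monic polynomials, and I aim to exhibit $\sigma$ as $\iota_{\ast}\rho$ for some $\rho \in F(A)$.

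Under Condition~(A), I would first invoke the Quillen principle $(\mathrm{Q})$ to reduce the problem to showing that $\sigma_{\mathfrak{m}}$ is extended from $A_{\mathfrak{m}}$ for every maximal ideal $\mathfrak{m} \subseteq A$. Since $A_{\mathfrak{m}}$ is local, the hypothesis gives $F(A_{\mathfrak{m}}) = \ast$, which forces $\tau_{\mathfrak{m}} = \ast$; by basepoint preservation, the image of $\sigma_{\mathfrak{m}}$ in $F(A_{\mathfrak{m}} \langle t \rangle)$ is then also $\ast$. Applying $(\mathrm{H}_{\mathrm{loc}})$ to the $R$-algebra $A_{\mathfrak{m}}$ would then yield $\sigma_{\mathfrak{m}} = \ast = \iota_{\ast}(\ast)$, which is manifestly extended.

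Under Condition~(B), I would bypass $(\mathrm{Q})$ entirely. Let $G \colon \CAlg_R \to \Grp$ supply the natural transitive action. By transitivity of the $G(A)$-action on $F(A)$, I pick $g_0 \in G(A)$ with $g_0 \cdot \tau = \ast$. Using naturality of the action together with the identity $(\lambda_S)_{\ast}\sigma = (\lambda_S\iota)_{\ast}\tau$, I compute
\[
(\lambda_S)_{\ast}\bigl(\iota_{\ast}(g_0) \cdot \sigma\bigr) = (\lambda_S\iota)_{\ast}(g_0) \cdot (\lambda_S\iota)_{\ast}(\tau) = (\lambda_S\iota)_{\ast}(g_0 \cdot \tau) = \ast,
\]
so $(\mathrm{H}_{\mathrm{loc}})$ applied to $A$ forces $\iota_{\ast}(g_0) \cdot \sigma = \ast$ in $F(A[t])$. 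Acting by $\iota_{\ast}(g_0^{-1}) = \bigl(\iota_{\ast}(g_0)\bigr)^{-1}$ and invoking naturality once more, I obtain $\sigma = \iota_{\ast}(g_0^{-1} \cdot \ast)$, exhibiting $\sigma$ as extended from $g_0^{-1} \cdot \ast \in F(A)$.

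I do not anticipate a genuine obstacle here: $(\mathrm{H}_{\mathrm{loc}})$ already delivers the essential triviality statement, and conditions (A) and (B) are precisely what is needed to modify $\sigma$ so that this triviality becomes applicable---in (A) by collapsing $\tau$ to the basepoint pointwise after localisation, and in (B) by moving $\tau$ to the basepoint globally via the transitive action. The only delicate aspect is careful bookkeeping of basepoints and of the compatibility between $\iota_{\ast}$, $(\lambda_S)_{\ast}$, and the $G$-action, which follows purely formally from the assumptions that $F$ is $\Set_{\ast}$-valued and that the action is natural.
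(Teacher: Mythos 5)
Your argument under Condition~(A) is correct and matches the paper's proof exactly: invoke $(\mathrm{Q})$ to pass to localizations at maximal ideals, use $F$-contractibility of $A_{\mathfrak{m}}$ to identify the image of $\sigma_{\mathfrak{m}}$ in $F(A_{\mathfrak{m}}\langle t\rangle)$ with the basepoint, and then apply $(\mathrm{H}_{\mathrm{loc}})$.

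Under Condition~(B) there is a gap. You apply $(\mathrm{H}_{\mathrm{loc}})$ directly to the (arbitrary) $R$-algebra $A$ to conclude $\iota_{\ast}(g_0)\cdot\sigma = \ast$, but the local Horrocks principle $(\mathrm{H}_{\mathrm{loc}})$ is only assumed for \emph{local} rings. The definition in the paper is admittedly terse, but the subscript ``$\mathrm{loc}$'', the way $(\mathrm{H}_{\mathrm{loc}})$ is established in all the examples (via results valid over local rings only, such as \cite[Theorem~1.1]{RAO_TWO_EXAMPLES} or \cite[Theorem~3.10]{KOPEIKO}), and the fact that the proof of the converse implication $(\mathrm{H})\Rightarrow(\mathrm{H}_{\mathrm{loc}})$ begins with ``If $A$ is local'', all confirm that only local algebras are covered. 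Indeed, if $(\mathrm{H}_{\mathrm{loc}})$ held for all $A$, your argument would show that $(\mathrm{Q})$ is superfluous under Condition~(B), contradicting the fact that the proposition lists $(\mathrm{Q})$ as a standing hypothesis. The fix is to proceed as in case~(A): having constructed $\iota_{\ast}(g_0)\cdot\sigma$ with trivial image in $F(A\langle t\rangle)$, localize at each maximal ideal $\mathfrak{m}\subseteq A$, apply $(\mathrm{H}_{\mathrm{loc}})$ to the local ring $A_{\mathfrak{m}}$ to see that $(\iota_{\ast}(g_0)\cdot\sigma)_{\mathfrak{m}}=\ast$ is extended, and then invoke $(\mathrm{Q})$ to conclude that $\iota_{\ast}(g_0)\cdot\sigma$ is extended from some $\sigma_0\in F(A)$ (not necessarily from $\ast$). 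Acting by $\iota_{\ast}(g_0^{-1})$ then gives $\sigma = \iota_{\ast}(g_0^{-1}\cdot\sigma_0)$, as claimed.
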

 
\begin{proof}
 We first show the implication under Assumption~(A). Let $\sigma \in F(A[t])$ be an object whose image in $F(A \langle t \rangle)$ is extended. It follows that for any maximal ideal $\mathfrak{m} \subseteq A$, the image of $\sigma_{\mathfrak{m}}$ in $F(A_{\mathfrak{m}} \langle t \rangle)$ is extended from $A_{\mathfrak{m}}$, hence trivial. From $(\mathrm{H}_{\mathrm{loc}})$ it follows that $\sigma_{\mathfrak{m}}$ is trivial, so in particular extended. Thus $(\mathrm{Q})$ implies that $\sigma$ is extended, which establishes $(\mathrm{H})$.
 
 Assume now that (B) holds and let $G(A) \times F(A) \rightarrow F(A)$ be a natural transitive group action. If $\sigma \in F(A[t])$ is an object whose image in $F(A \langle t \rangle)$ is extended, there exists some $\tau \in F(A)$ such that $\sigma$ and $\iota_\ast \tau \in F(A[t])$ have the same image in $F(A \langle t \rangle)$. Write $S \subseteq A[t]$ for the set of monic polynomials and let $\tilde{\iota} \defl \lambda_S \iota \colon A \rightarrow A \langle t \rangle$ be the canonical inclusion.
 
 Pick an element $g \in G(A)$ such that $g \cdot \tau=\ast$. Then
 \[
 \bigl((\iota_\ast g) \cdot \sigma \bigr)_S=\tilde{\iota}_{\ast} g \cdot \sigma_S=\tilde{\iota}_{\ast}(g \cdot \tau)=\ast
 \]
 holds, so the image of $(\iota_{\ast} g \cdot \sigma)_{\mathfrak{m}}$ in $F(A_{\mathfrak{m}}\langle t \rangle )$ is trivial for all maximal ideals $\mathfrak{m} \subseteq A$. From $(\mathrm{H}_{\mathrm{loc}})$ it follows that $(\iota_{\ast} g \cdot \sigma)_{\mathfrak{m}}$ is trivial, hence extended. Thus $(\mathrm{Q})$ implies that $(\iota_{\ast} g) \sigma$ is extended from some object $\sigma_0 \in F(A)$. Thus $\sigma=\iota_{\ast}(g^{-1} \cdot \sigma_0)$ is extended, as claimed.
\end{proof}
 
 There is one more way the Horrocks principle is sometimes stated, which is equivalent to $(\mathrm{H})$ under favourable conditions.
 
\begin{lemma}\label{lemma:pullback_version_of_H}
 Let $F \colon \CAlg_R \rightarrow \Set$ be a functor which satisfies weak Zariski excsision. If for every commutative $R$-algebra $A$, the diagram
 \[
 \xymatrix{F(A) \ar[r] \ar[d] & F(A[t^{-1}]) \ar[d] \\ F(A[t]) \ar[r] & F(A[t,t^{-1}])}
 \]
 is a pullback diagram, then $F$ satisfies $(\mathrm{H})$.
 
 The converse holds if $F$ takes values in groups and $F$ is in addition finitary and $j$-injective.
\end{lemma}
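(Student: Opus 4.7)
The forward direction begins with $\sigma \in F(A[t])$ whose image in $F(A\langle t \rangle)$ equals $\iota_*\tau$ for some $\tau \in F(A)$. The plan is to apply the pullback hypothesis to the pair $(\sigma, \iota_*\tau) \in F(A[t]) \times F(A[t^{-1}])$, where the second coordinate is the extension of $\tau$ to $F(A[t^{-1}])$. Once this pair is verified to be compatible in $F(A[t,t^{-1}])$, the pullback immediately supplies $\tau^* \in F(A)$ with $\iota_*\tau^* = \sigma$ in $F(A[t])$, which is exactly the content of $(\mathrm{H})$.

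The heart of the argument is thus the compatibility check: one knows that $\sigma$ and $\iota_*\tau$ coincide in $F(A\langle t \rangle)$, and one must descend this equality to $F(A[t,t^{-1}])$. My plan is to use weak Zariski excision on a suitable cover of $\Spec A[t,t^{-1}]$ arising from a monic polynomial $f \in A[t]$ that witnesses the agreement in the further localization. Weak Zariski excision produces lifts of compatible pairs but not injectivity of restriction maps, so the argument has to be arranged so that the desired equality arises as the restriction of a unique lift rather than from a direct injectivity comparison; this is the main obstacle and the delicate step.

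For the converse, assume $F$ is group-valued, finitary, and $j$-injective. Injectivity of $F(A)$ into the pullback is immediate from the retraction $\pi_* \colon F(A[t]) \to F(A)$ given by evaluation at $t = 0$. For surjectivity, given a compatible pair $(\alpha, \beta)$, I would set $\tau := \pi_*\alpha \in F(A)$ and use the group structure to replace the pair by $(\alpha \cdot \iota_*\tau^{-1}, \beta \cdot \iota_*\tau^{-1})$, which preserves compatibility and normalizes $\pi_*\alpha = 1$. It then suffices to show $\alpha = 1$ in $F(A[t])$, after which a symmetric argument in the variable $s = t^{-1}$ gives $\beta = 1$, so the original pair extends uniquely from $\tau$. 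To obtain $\alpha = 1$ I plan to invoke $(\mathrm{H})$: finitariness allows one to extract a monic polynomial $f$ witnessing the compatibility already in $F(A[t,t^{-1}]_f)$, and the composite $A[t^{-1}] \to A\langle t \rangle$ factors through the localization of $A[t^{-1}]$ at polynomials with unit constant term (since such a $g(t^{-1})$ is, up to multiplication by a power of $t^{-1}$, a unit multiple of a monic polynomial in $t$). Consequently the image of $\beta$ in $F(A\langle t \rangle)$ is extended from $\pi_*\beta \in F(A)$, and the compatibility forces the image of $\alpha$ in $F(A\langle t \rangle)$ to be extended as well. Since $\pi_*\alpha = 1$, combining this with $(\mathrm{H})$ and the injectivity of $F(A) \to F(A\langle t \rangle)$ from Proposition~\ref{prop:uniquely_etxtended_after_monic_inversion} yields $\alpha = \iota_*\pi_*\alpha = 1$. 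The $j$-injectivity hypothesis is invoked to resolve any residual ambiguity in identifying the extension.
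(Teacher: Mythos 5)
The forward direction has a genuine gap. You propose to apply the pullback hypothesis to the pair $(\sigma, \iota_*\tau) \in F(A[t]) \times F(A[t^{-1}])$, and you correctly flag that verifying compatibility of this pair in $F(A[t,t^{-1}])$ is the sticking point: the two elements are only known to agree after passing to the further localization $F(A\langle t\rangle)$, and for a set-valued functor $F$ one cannot descend such an equality along $F(A[t,t^{-1}]) \to F(A\langle t\rangle)$. Worse, the compatibility simply need not hold: even granting the conclusion that $\sigma = \iota_*\rho$ for some $\rho \in F(A)$, nothing in the hypotheses of the forward direction (which does not include finitariness, so Proposition~\ref{prop:uniquely_etxtended_after_monic_inversion} is not available) forces $\rho = \tau$, and then $\iota_*\tau$ and $\iota_*\rho$ may well be distinct in $F(A[t^{-1}])$. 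The remedy is to abandon $\iota_*\tau$ and produce a different element. Since $A\langle t\rangle \cong A[t^{-1}]_{1+t^{-1}A[t^{-1}],\,t^{-1}}$ and the elements $t^{-1}$ and $1+t^{-1}A[t^{-1}]$ are comaximal in $A[t^{-1}]$, weak Zariski excision applies to the square with corners $A[t^{-1}]$, $A[t,t^{-1}]$, $A[t^{-1}]_{1+t^{-1}A[t^{-1}]}$, $A\langle t\rangle$. Feeding in the pair consisting of the image of $\sigma$ and the image of $\tau$, which agree in $F(A\langle t\rangle)$ by hypothesis, produces some $\omega \in F(A[t^{-1}])$ that is genuinely compatible with $\sigma$ in $F(A[t,t^{-1}])$; the honest pullback square then finishes. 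This is exactly the paper's stacking argument; the element $\omega$ replaces your $\iota_*\tau$.

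Your converse argument is essentially correct but takes a slightly different route from the paper's. The paper shows that $(\mathrm{H})$ and the group structure make $F(A[t]) \to F(A[t,t^{-1}])$ injective, hence the lower square of the stacked diagram is an honest pullback, and then concludes by the cancellation law for pullbacks, using $j$-injectivity only to identify $F(A) \to F(A[t]_{1+tA[t]})$ as a bijection. You instead normalize $(\alpha,\beta)$ by the group structure, apply $(\mathrm{H})$ together with Proposition~\ref{prop:uniquely_etxtended_after_monic_inversion} to get $\alpha = 1$, and finish by a symmetric injectivity argument for $F(A[t^{-1}]) \to F(A[t,t^{-1}])$. Both routes work; but your final sentence about $j$-injectivity "resolving residual ambiguity" should be made precise: $j$-injectivity is used exactly to show that $F(A) \to F\bigl(A[t^{-1}]_{1+t^{-1}A[t^{-1}]}\bigr)$ is a bijection, which is what identifies the image of $\beta$ in $F(A\langle t\rangle)$ as extended from an element of $F(A)$, and the injectivity of $F(A[t^{-1}]) \to F(A[t,t^{-1}])$ should be spelled out as the variable-swapped version of the same kernel argument you use for $\alpha$.
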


\begin{proof}
 Recall that there is a canonical isomorphism $A[t]_{1+tA[t],t} \cong A \langle s \rangle$ obtained by sending $s$ to $t^{-1}$. We simply write this as $A[t]_{1+tA[t],t}=A \langle t^{-1} \rangle$. The assumptions imply that both squares in the diagram
 \[
 \xymatrix{F(A) \ar[d] \ar[r] & F(A[t^{-1}]) \ar[d] \\ F(A[t]) \ar[d] \ar[r] & F(A[t,t^{-1}]) \ar[d] \\ F(A[t]_{1+tA[t]}) \ar[r] & F(A \langle t^{-1} \rangle)}
 \]
 are weak pullback diagrams. Since any $\sigma \in F(A[t^{-1}])$ whose image in the bottom right corner is extended from $A$ lies in particular in the image of
 \[
 F(A[t]_{1+tA[t]}) \rightarrow F(A \langle t^{-1} \rangle)
 \]
 it follows that $(\mathrm{H})$ holds.
 
 To see the converse, assume that $F$ takes values in groups, that it is finitary, and that $F$ is $j$-invariant. We claim that under these assumptions, the principle $(\mathrm{H})$ implies that the homomorphism $F(A[t]) \rightarrow F(A[t,t^{-1}])$ is injective. To see this, it suffices to check that it has trivial kernel. Since any element $\sigma$ in the kernel is clearly extended in $F(A \langle t \rangle)$, we concude that $\sigma$ is extended from some $\sigma_0$ in $A$. But then composing with the evaluation $\mathrm{ev}_1 \colon  A[t,t^{-1}] \rightarrow A$ in $t=1$ shows that $\sigma_0=e$ is the neutral element. Thus the lower square above is a pullback square (the injectivity of the middle vertical homomorphism in the diagram gives the required uniqueness). By the cancellation law for pullbacks, it only remains to check that the total diagram is a pullback.
 
 From the $j$-injectivity of $F$ it follows that $F(A[t]_{1+t[t]}) \rightarrow F(A)$ is an isomorphism, so the left vertical morphism $F(A) \rightarrow F(A[t]_{1+tA[t]})$ is also an isomorphism. Since $F$ is finitary and satisfies weak Zariski excision, we know that the diagonal homomorphism $FA \rightarrow F(A[t^{-1}]) \rightarrow F(A \langle t^{-1} \rangle)$ is injective (see Proposition~\ref{prop:uniquely_etxtended_after_monic_inversion}). Taken together, this implies in particular that the bottom horizontal homomorphism is injective. Now the claim follows from principle $(\mathrm{H})$: if $\sigma \in F(A[t^{-1}])$ has the same image as $\tau \in F(A[t]_{1+tA[t]})$, then this image is in particular extended from $A$, so from $(\mathrm{H})$ we conclude that $\sigma$ is extended from some $\sigma_0 \in FA$. The injectivity properties that we have established above imply that the image of $\sigma_0$ under the left vertical homomorphism must be $\tau$, which shows that the total square is indeed  a pullback diagram.
\end{proof}

 The following lemma shows that we can restrict attention to subsets of monic polynomials to check whether or not all objects are extended.
 
\begin{lemma}\label{lemma:monic_inversion_detects_extended_objects}
 Let $F \colon \CAlg_R \rightarrow \Set$ be a functor which satisfies the Horrocks principle $(\mathrm{H})$. If $S \subseteq A[t]$ is a set of monic polynomials and
 \[
 \xymatrix{ F(A) \ar[r]^-{\iota} & F(A[t]) \ar[r]^-{\lambda_S} & F(A[t]_S)}
 \]
 is surjective, then $\iota_{\ast} \colon F(A) \rightarrow F(A[t])$ is bijective, in other words, $A$ lies in $\ca{E}_1^F$.
\end{lemma}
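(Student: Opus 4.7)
The plan is straightforward: reduce to an application of the Horrocks principle $(\mathrm{H})$ by factoring the localization $\lambda_S$ through the localization $\lambda_M$ at the set $M$ of all monic polynomials.

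First I would observe that $\iota_{\ast} \colon F(A) \to F(A[t])$ is automatically injective, since the evaluation homomorphism $\pi \colon A[t] \to A$ at $t=0$ satisfies $\pi \iota = \id_A$ and hence $\pi_{\ast} \iota_{\ast} = \id_{F(A)}$. So the content of the lemma is surjectivity of $\iota_{\ast}$.

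Next, let $\sigma \in F(A[t])$ be arbitrary. The hypothesis that $F(A) \to F(A[t]_S)$ is surjective provides a $\tau \in F(A)$ such that $(\lambda_S \iota)_{\ast} \tau = (\lambda_S)_{\ast} \sigma$ in $F(A[t]_S)$. Now because every element of $S$ is monic, we have $S \subseteq M$, where $M$ is the set of all monic polynomials in $A[t]$. By the universal property of localization, the map $\lambda_M \colon A[t] \to A \langle t \rangle$ factors as $A[t] \xrightarrow{\lambda_S} A[t]_S \to A \langle t \rangle$. Applying $F$ and the equality above, the images of $\sigma$ and $\iota_{\ast} \tau$ in $F(A \langle t \rangle)$ coincide. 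In other words, the image of $\sigma$ in $F(A\langle t\rangle)$ is extended from $A$.

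Finally, by the Horrocks principle $(\mathrm{H})$, $\sigma$ itself is extended from $A$, i.e.\ $\sigma$ lies in the image of $\iota_{\ast}$. Since $\sigma \in F(A[t])$ was arbitrary, $\iota_{\ast}$ is surjective, hence bijective, which is precisely the statement $A \in \ca{E}_1^F$. There is no real obstacle here beyond correctly identifying the factorization $A[t]_S \to A \langle t \rangle$ and invoking $(\mathrm{H})$; the argument uses no further hypothesis on $F$ and does not require $F$ to satisfy weak Zariski excision or to be finitary.
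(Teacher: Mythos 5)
Your proof is correct and takes essentially the same route as the paper: you observe that the surjectivity hypothesis implies the image of any $\sigma \in F(A[t])$ in $F(A\langle t\rangle)$ is extended from $A$ (via the factorization $A[t]_S \to A\langle t\rangle$), and then invoke the Horrocks principle $(\mathrm{H})$. The only additions are the explicit remark that $\iota_{\ast}$ is injective because $\pi_{\ast}\iota_{\ast}=\id$, and the explicit factorization of $\lambda_M$ through $\lambda_S$, both of which the paper leaves implicit.
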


\begin{proof}
 The assumption implies that the image of each $\sigma \in F(A[t])$ in $F(A \langle t \rangle)$ is extended from $A$, hence $\sigma$ is extended by $(\mathrm{H})$.
\end{proof}

 Given a commutative $R$-algebra $R^{\prime}$, we inductively define
 \[
 R^{\prime} \langle t_1 \ldots, t_n \rangle \defl R^{\prime} \langle t_1 \ldots, t_{n-1} \rangle \langle t_n \rangle
 \]
 for $n>1$ as an iterated localization.
 
 \begin{dfn}\label{dfn:strong_extension_property}
 Let $F \colon \CAlg_R \rightarrow \Set$ be a functor, let $R^{\prime}$ be a commutative $R$-algebra and let $A$ be a commutative $R^{\prime}$-algebra. We say that \emph{$A$ satisfies $(\mathrm{SE}_n^{R^{\prime},F})$} if the function 
 \[
 F(A) \rightarrow F(A \ten{R^{\prime}} R^{\prime} \langle t_1, \ldots, t_k \rangle)
 \]
 induced by the natural inclusion $R^{\prime} \rightarrow R^{\prime} \langle t_1,\ldots,t_k \rangle$ is surjective for all $1 \leq k \leq n$.
 
 We say that \emph{$A$ has the strong $F$-extension property over $R^{\prime}$} if $(\mathrm{SE}_n^{R^{\prime},F})$ holds for all $n \geq 1$, that is, the above function is surjective for all $k \geq 1$. In the case where $A=R^{\prime}$ is the initial $R^{\prime}$-algebra, we simply say that \emph{$R^{\prime}$ has the strong $F$-extension property} (or the strong $F$-extension property \emph{over itself}).
 \end{dfn}
 
 We will often apply this in the case where $R^{\prime}$ is an arbitrary field or a principal ideal domain, in which case the algebras $R^{\prime} \langle t_1,\ldots t_k \rangle$ are also fields respectively principal ideal domains. The following proposition shows that the strong extension property implies that all objects over $A[t_1, \ldots, t_n]$ are extended if the Horrocks principle holds, which justifies the name of the strong extension property.
 
 \begin{prop}\label{prop:SE_implies_extended}
 Let $F \colon \CAlg_R \rightarrow \Set$ be a functor which satisfies the Horrocks principle $(\mathrm{H})$. Let $R^{\prime}$ be a commutative $R$-algebra and let $A$ be a commutative $R^{\prime}$-algebra which satisfies $(\mathrm{SE}_n^{R^{\prime},F})$ for some $n \geq 1$. Then $A$ lies in $\ca{E}_n^F$, that is, the function
 \[
 F(A) \rightarrow F(A[t_1,\ldots, t_n])
 \]
 is bijective. Moreover, if $n \geq 2$, then $A \ten{R^{\prime}} R^{\prime} \langle t \rangle$ satisfies $(\mathrm{SE}_{n-1}^{R^{\prime} \langle t \rangle,F})$.
 \end{prop}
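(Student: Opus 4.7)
The plan is to prove both assertions simultaneously by induction on $n$, using the second assertion at level $n$ to feed the first assertion back into itself at level $n$. For the base case $n=1$, observe that $A \ten{R^{\prime}} R^{\prime} \langle t \rangle$ is precisely the localization of $A[t]$ at the set $S$ of monic polynomials coming from $R^{\prime}[t]$, so $(\mathrm{SE}_1^{R^{\prime}, F})$ asserts that the composite $F(A) \to F(A[t]) \to F(A[t]_S)$ is surjective; by Lemma~\ref{lemma:monic_inversion_detects_extended_objects} (which uses the Horrocks principle $(\mathrm{H})$), this already forces $A \in \ca{E}_1^F$. The second assertion is vacuous at this level.

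For the inductive step $n \geq 2$, I would first dispose of the second assertion by noting that transitivity of base change yields
\[
(A \ten{R^{\prime}} R^{\prime}\langle t\rangle) \ten{R^{\prime}\langle t\rangle} R^{\prime}\langle t\rangle\langle t_1,\ldots,t_k\rangle \cong A \ten{R^{\prime}} R^{\prime}\langle t, t_1,\ldots,t_k\rangle,
\]
so, setting $B \defl A \ten{R^{\prime}} R^{\prime}\langle t\rangle$, the required surjection for $B$ is the factorization through $F(B)$ of the surjection $F(A) \to F(A \ten{R^{\prime}} R^{\prime}\langle t, t_1,\ldots,t_k\rangle)$ provided by $(\mathrm{SE}_{k+1}^{R^{\prime}, F})$ (valid since $k+1 \leq n$).

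For the first assertion at level $n$, I would apply the inductive hypothesis twice: to $A$ itself, which satisfies $(\mathrm{SE}_{n-1}^{R^{\prime}, F})$, obtaining $A \in \ca{E}_{n-1}^F$; and to $B$ viewed over $R^{\prime}\langle t_1\rangle$, using the second assertion just established, obtaining $B \in \ca{E}_{n-1}^F$. The key step is then to promote $B \in \ca{E}_{n-1}^F$ to $A[t_2,\ldots,t_n] \in \ca{E}_1^F$. For this one uses the identification $B[t_2,\ldots,t_n] \cong A[t_2,\ldots,t_n] \ten{R^{\prime}} R^{\prime}\langle t_1\rangle$, which exhibits the right-hand side as the localization of $A[t_2,\ldots,t_n][t_1]$ at the monic polynomials in $t_1$ coming from $R^{\prime}[t_1]$; since every element of $F(B[t_2,\ldots,t_n])$ is extended from $F(B)$ (by $B \in \ca{E}_{n-1}^F$), and $F(A) \to F(B)$ is surjective by $(\mathrm{SE}_1^{R^{\prime}, F})$, the composite $F(A[t_2,\ldots,t_n]) \to F(B[t_2,\ldots,t_n])$ is surjective and Lemma~\ref{lemma:monic_inversion_detects_extended_objects} applies. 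Combining $A[t_2,\ldots,t_n] \in \ca{E}_1^F$ with $A \in \ca{E}_{n-1}^F$ chains the extension from $A$ through $A[t_2,\ldots,t_n]$ to all of $A[t_1,\ldots,t_n]$, giving $A \in \ca{E}_n^F$.

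The principal subtlety is organizing the two-claim induction so that the inductive use of the first claim on $B$ is available: this requires the second claim at level $n$ to be proved before the first claim at level $n$, and one must then be careful to verify the identification of $B[t_2,\ldots,t_n]$ as a monic-polynomial localization of $A[t_2,\ldots,t_n][t_1]$, which is what ties the construction back to the hypothesis of Lemma~\ref{lemma:monic_inversion_detects_extended_objects}.
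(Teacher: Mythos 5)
Your proposal is correct and follows essentially the same route as the paper's proof: in both cases one first establishes that $B = A \ten{R'} R'\langle t_1\rangle$ satisfies $(\mathrm{SE}_{n-1}^{R'\langle t_1\rangle, F})$ via transitivity of base change, then applies the inductive hypothesis to both $A$ and $B$ together with Lemma~\ref{lemma:monic_inversion_detects_extended_objects}, and finally chains the two extensions $A \in \ca{E}_{n-1}^F$ and $A[t_2,\ldots,t_n] \in \ca{E}_1^F$ to conclude $A \in \ca{E}_n^F$. The only difference is presentational---the paper packages the surjectivity of $F(A[t_2,\ldots,t_n]) \to F(B[t_2,\ldots,t_n])$ into an explicit commutative diagram rather than spelling out the factorization as you do---but the substance is identical.
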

 
 \begin{proof}
 We prove this by induction on $n$. If we let $S \subseteq A[t_1]$ be the image of the set of monic polynomials in $R^{\prime}[t_1]$, the assumption $(\mathrm{SE}_1^{R^{\prime},F})$ yields the premise of Lemma~\ref{lemma:monic_inversion_detects_extended_objects} and thus the base case holds.
 
 We can therefore assume that $n > 1$ and that the conclusion holds for $n-1$. Let $R^{\prime \prime} \defl R^{\prime} \langle t_1 \rangle$ and note that $R^{\prime \prime} \langle t_2, \ldots, t_k \rangle=R^{\prime} \langle t_1, \ldots, t_k \rangle$ for all $k \geq 2$ by construction. It follows that there is a canonical isomorphism
 \[
 (A \ten{R^{\prime}} R^{\prime \prime}) \ten{R^{\prime \prime}} R^{\prime \prime} \langle t_2, \ldots, t_k \rangle \cong A \ten{R^{\prime}} R^{\prime} \langle t_1 ,\ldots, t_k \rangle 
 \] 
 such that the diagram
 \[
 \xymatrix{F(A) \ar[r] \ar[d] & F(A \ten{R^{\prime}} R^{\prime} \langle t_1 ,\ldots, t_k \rangle) \ar[d]^{\cong} \\ F(A \ten{R^{\prime}} R^{\prime \prime} ) \ar[r] & F\bigl((A \ten{R^{\prime}} R^{\prime \prime}) \ten{R^{\prime \prime}} R^{\prime \prime} \langle t_2, \ldots, t_k \rangle \bigr)}
 \]
 is commutative.
 
 Since the top horizontal function is surjective for all $2 \leq k \leq n$ by assumption, it follows that the bottom horizontal function is also surjective for these $k$. This means that $A \ten{R^{\prime}} R^{\prime \prime}$ satisfies $(\mathrm{SE}_{n-1}^{R^{\prime \prime},F})$, as claimed. The inductive assumption (applied to the commutative $R$-algebra $R^{\prime\prime}=R^{\prime} \langle t_1 \rangle$) implies that the bottom horizontal function in the diagram
 \[
 \xymatrix{F(A) \ar[r] \ar[d] & F(A[t_2, \ldots, t_n]) \ar[r] & F(A[t_1, \ldots, t_n]) \ar[d]^{\lambda_S} \\
 F(A \ten{R^{\prime}} R^{\prime}\langle t_1 \rangle) \ar[rr] && F(A \ten{R^{\prime}} R^{\prime} \langle t_1 \rangle[t_2,\ldots, t_n] )}
 \]
 is surjective. Since the left vertical function is surjective by $(\mathrm{SE}_n^{R^{\prime},F})$, it follows that
 \[
 F(A[t_2, \ldots, t_n]) \rightarrow F(A[t_1,\ldots, t_n]_S)
 \]
 is surjective. Thus Lemma~\ref{lemma:monic_inversion_detects_extended_objects} implies that $A[t_2,\ldots,t_n]$ lies in $\ca{E}_1^{F}$. We also know that $A$ satisfies $(\mathrm{SE}_{n-1}^{R^{\prime},F})$, so the inductive assumption implies that $A$ lies in $\ca{E}_{n-1}^{F}$, so the composite
 \[
 F(A) \rightarrow F(A[t_2,\ldots, t_k]) \rightarrow F(A[t_1,\ldots,t_n])
 \]
 is bijective, as claimed.
 \end{proof}
 
 \begin{cor}\label{cor:strong_F_extension_implies_F-regular}
 Let $F \colon \CAlg_R \rightarrow \Set$ be a functor which satisfies the Horrocks principle $(\mathrm{H})$ and let $R^{\prime}$ be a commutative $R$-algebra. If $A$ has the strong $F$-extension property over $R^{\prime}$, then $A$ is $F$-regular.
 \end{cor}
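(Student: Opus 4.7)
The plan is to invoke Proposition~\ref{prop:SE_implies_extended} directly, since the strong $F$-extension property is precisely the conjunction of its hypotheses for all $n$. Specifically, by Definition~\ref{dfn:strong_extension_property}, saying that $A$ has the strong $F$-extension property over $R^{\prime}$ means that $(\mathrm{SE}_n^{R^{\prime},F})$ holds for every $n \geq 1$. Under the standing assumption that $F$ satisfies $(\mathrm{H})$, Proposition~\ref{prop:SE_implies_extended} then yields $A \in \ca{E}_n^F$ for every $n \geq 1$. Since $\ca{E}_\infty^F = \bigcap_{n \geq 1} \ca{E}_n^F$ is by definition the class of $F$-regular rings (as recalled at the start of \S \ref{section:excision}), this gives the conclusion.

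There is no real obstacle here; the corollary is a packaging of Proposition~\ref{prop:SE_implies_extended} once the definitions are unfolded. The only thing one might spell out is that the inductive argument in Proposition~\ref{prop:SE_implies_extended} is what propagates surjectivity along the chain $R^{\prime} \to R^{\prime}\langle t_1 \rangle \to R^{\prime}\langle t_1, t_2 \rangle \to \cdots$, but this is already built into the statement of that proposition and need not be redone.
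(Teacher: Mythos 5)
Your proof is correct and is precisely the paper's own argument: unwind Definition~\ref{dfn:strong_extension_property} to see that the strong $F$-extension property gives $(\mathrm{SE}_n^{R^{\prime},F})$ for all $n$, apply Proposition~\ref{prop:SE_implies_extended} to place $A$ in each $\ca{E}_n^F$, and conclude $A \in \ca{E}_\infty^F$. Nothing more is needed.
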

 
 \begin{proof}
 Recall that $A$ is $F$-regular if it lies in $\ca{E}_n^{F}$ for all $n \geq 1$, so the conclusion follows directly from Proposition~\ref{prop:SE_implies_extended}.
 \end{proof}
 
 \begin{thm}\label{thm:strong_F_extension_for_Laurent}
 Let $F \colon \CAlg_R \rightarrow \Set$ be a functor which satisfies the Horrocks principle $(\mathrm{H})$ and let $R^{\prime}$ be a commutative $R$-algebra. If $A$ has the strong $F$-extension property over $R^{\prime}$, then so does $A[t_1, \ldots, t_n]$ for all $n \geq 1$. Moreover, the $R^{\prime} \langle t_1, \ldots, t_n \rangle$-algebra $A \ten{R^{\prime}} R^{\prime} \langle t_1, \ldots, t_n \rangle$ has the strong $F$-extension property over $R^{\prime} \langle t_1, \ldots, t_n \rangle$.
 
 If $F$ additionally satisfies weak Zariski excision, then the $R^{\prime}$-algebra $A[t_1^{\pm}, \ldots, t_n^{\pm}]$ has the strong $F$-extension over $R^{\prime}$ and the function
 \[
 F(A) \rightarrow F(A[t_1^{\pm}, \ldots, t_n^{\pm}])
 \]
 is bijective. In fact, weak excision for patching diagrams of the form
 \[
 \xymatrix{B \ar[r]^-{\lambda_s} \ar[d]_{\lambda_{1+sB}} & B_s \ar[d]^{\lambda_{1+sB}} \\ B_{1+sB} \ar[r]_-{\lambda_s} & B_{1+sB,s} }
 \]
 (where $B$ is any $R$-algebra and $s \in B$ is an arbitrary element) suffices for this. 
 \end{thm}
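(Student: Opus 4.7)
The plan is to establish the three assertions in order, proving the Laurent case by induction on $n$ and handling the bijectivity claim and the strong extension claim together in the base case.

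I first dispose of the assertions that do not need weak excision. By Corollary~\ref{cor:strong_F_extension_implies_F-regular}, $A$ is $F$-regular, and iterating the ``moreover'' clause of Proposition~\ref{prop:SE_implies_extended} gives the second assertion and shows that $A_k \defl A \ten{R'} R'\langle s_1, \ldots, s_k\rangle$ has the strong $F$-extension property over $R'\langle s_1, \ldots, s_k\rangle$, hence is also $F$-regular. For the first assertion, the commutative square with rows $F(A) \to F(A[t_1, \ldots, t_n])$ and $F(A_k) \to F(A_k[t_1, \ldots, t_n])$ has both horizontal maps bijective by $F$-regularity and its left vertical surjective by the strong extension of $A$, so the right vertical is surjective.

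For the Laurent assertion I induct on $n$: the step $n-1 \Rightarrow n$ reduces immediately to the base case applied to $A[t_1^{\pm}, \ldots, t_{n-1}^{\pm}]$, which has strong $F$-extension over $R'$ by the inductive hypothesis. Injectivity of $F(A) \to F(A[t^{\pm}])$ is automatic from the splitting $\mathrm{ev}_{t=1} \colon A[t^{\pm}] \to A$. For surjectivity I apply weak excision (in the specific form hypothesized) to the patching square with $B = A[t]$, $s = t$, using the identification $A[t]_{1+tA[t],t} \cong A\langle t^{-1}\rangle$ from the proof of Lemma~\ref{lemma:pullback_version_of_H}:
\[
\xymatrix{A[t] \ar[r] \ar[d] & A[t^{\pm}] \ar[d] \\ A[t]_{1+tA[t]} \ar[r] & A\langle t^{-1}\rangle.}
\]
Given $\sigma \in F(A[t^{\pm}])$, I must exhibit $\tau \in F(A[t]_{1+tA[t]})$ whose image in $F(A\langle t^{-1}\rangle)$ coincides with that of $\sigma$; then $F$-regularity of $A[t]$ forces any weak-pullback lift $\rho \in F(A[t])$ to be extended from $F(A)$, proving surjectivity.

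The key conceptual step---and the subtlest point to set up---is the factorization of $A[t^{\pm}] \to A\langle t^{-1}\rangle$ through the intermediate ring $A \ten{R'} R'\langle t^{-1}\rangle$: treating $t^{-1}$ as the fresh variable $s$ of $R'\langle s\rangle$, so that $t = s^{-1}$ is already a unit, one obtains the localization chain $A[t^{\pm}] \to A \ten{R'} R'\langle t^{-1}\rangle \to A\langle t^{-1}\rangle$, where the first map inverts the $R'$-monic polynomials in $t^{-1}$ and the second inverts the remaining $A$-monic polynomials. By strong $F$-extension, $F(A) \to F(A \ten{R'} R'\langle t^{-1}\rangle)$ is surjective, so the image of $\sigma$ there is the image of some $\sigma_0 \in F(A)$. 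Taking $\tau$ to be the image of $\sigma_0$ under the constant inclusion $F(A) \to F(A[t]_{1+tA[t]})$, both $\sigma$ and $\tau$ push forward to the image of $\sigma_0$ in $F(A\langle t^{-1}\rangle)$, since each route through the square factors through $F(A \ten{R'} R'\langle t^{-1}\rangle)$. The strong $F$-extension of $A[t^{\pm}]$ over $R'$ then follows from the resulting bijection $F(A) \cong F(A[t^{\pm}])$ by the same commutative-square argument as in the first assertion, applied to $A_k$ in place of $A$. Finally, to reduce weak Zariski excision to the specific patching form, write $B_{1+sB} = \colim_{f \in 1+sB} B_f$; each such $f$ is comaximal with $s$ since $f - 1 \in sB$, so the intermediate squares $B \to B_s$, $B_f \to B_{fs}$ are standard Zariski squares, and finitariness of $F$ propagates weak pullback through the filtered colimit.
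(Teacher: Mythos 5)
Your proof is correct and follows the paper's approach: the same specific patching square (with $B = A[t]$, $s = t$) together with the factorization of $A[t^{\pm}] \to A\langle t^{-1}\rangle$ through $A \ten{R^{\prime}} R^{\prime}\langle t^{-1}\rangle$ drive the surjectivity argument, and the non-excision assertions are handled with the commutative-square chase built on Corollary~\ref{cor:strong_F_extension_implies_F-regular}. Your induction adds one Laurent variable at a time and applies the base case to $A[t_1^{\pm},\ldots,t_{n-1}^{\pm}]$, whereas the paper recurses through $A[x_1,x_2^{\pm},\ldots,x_k^{\pm}]$ with a polynomial first variable; this is a cosmetic restructuring of the same argument. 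Two small imprecisions worth flagging: the phrase ``$F$-regularity of $A[t]$'' in the base case should read ``$F$-regularity of $A$'' (the lift $\rho \in F(A[t])$ is extended from $F(A)$ because $A \in \ca{E}_1^F$, not because of a property of $A[t]$), and the final paragraph runs in the wrong direction --- it deduces the specific patching form from general weak Zariski excision plus finitariness, whereas the theorem's last sentence asserts that the specific form alone suffices, which follows automatically from the fact that every patching square in your argument is already of that form.
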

 
 \begin{proof}
 We first show that $A \ten{R^{\prime}} R^{\prime} \langle t_1, \ldots, t_n \rangle$ has the strong $F$-extension property over $R^{\prime} \langle t_1, \ldots, t_n \rangle$ for all $n \geq 1$. Since
 \[
A \ten{R^{\prime}} R^{\prime} \langle t_1, \ldots, t_n \rangle  \ten{R^{\prime} \langle t_1, \ldots, t_n \rangle} R^{\prime} \langle t_1, \ldots, t_n \rangle \langle t_{n+1} , \ldots, t_{n+k} \rangle 
 \]
 is canonically isomorphic to $ A \ten{R^{\prime}} R^{\prime} \langle t_1, \ldots, t_{n+k} \rangle$, the strong $F$-extension property of $A$ over $R^{\prime}$ implies the strong $F$-extension property of $A \ten{R^{\prime}} R^{\prime} \langle t_1, \ldots, t_n \rangle$ over $R^{\prime} \langle t_1, \ldots, t_n \rangle$ for all $n \geq 1$. 
 
 From Corollary~\ref{cor:strong_F_extension_implies_F-regular} it follows that both $A$ and $A \ten{R^{\prime}} R^{\prime} \langle t_1, \ldots, t_n \rangle$ are $F$-regular. The commutative diagram
 \[
 \xymatrix{F(A) \ar[r]^-{\mathrm{surj}} \ar[d]_{\cong} & F(A \ten{R^{\prime}} R^{\prime} \langle t_1, \ldots, t_n \rangle) \ar[d]^{\cong} \\ 
 F(A[x_1, \ldots, x_k]) \ar[r] & F(A \ten{R^{\prime}} R^{\prime} \langle t_1, \ldots, t_n \rangle[x_1, \ldots, x_k])}
 \]
 and the isomorphism
 \[
 A \ten{R^{\prime}} R^{\prime} \langle t_1, \ldots, t_n \rangle [x_1, \ldots, x_k] \cong A[x_1, \ldots, x_k] \ten{R^{\prime}} R^{\prime} \langle t_1, \ldots, t_n \rangle
 \]
 imply that $A[x_1, \ldots, x_k]$ has the strong $F$-extension property over $R^{\prime}$. It follows in particular that $A[x_1]$ has the strong $F$-extension property over $R^{\prime}$.
 
 From now on, we assume that $F$ has the relevant weak excision property and we first show by induction on $k$ that the function
 \[
 F(A) \rightarrow F(A[x_1^{\pm}, \ldots, x_k^{\pm}])
 \]
 is surjective, with base case $k=0$ tautologically true. Since $A$ is $F$-regular and $A[x_1]$ has the strong $F$-extension property over $R^{\prime}$, the inductive assumption implies that
 \[
 F(A) \rightarrow F(A[x_1]) \rightarrow F(A[x_1,x_2^{\pm}, \ldots, x_k^{\pm}])
 \]
 is surjective. It only remains to check that the top horizontal function in the diagram
 \[
 \xymatrix{ F(A[x_1, x_2^{\pm}, \ldots, x_k^{\pm}]) \ar[r] \ar[d] & F(A[x_1^{\pm}, \ldots, x_k^{\pm}]) \ar[d] \\
 F\bigl(A[x_1, x_2^{\pm}, \ldots, x_k^{\pm}]_{1+x_1 A[x_1, x_2^{\pm}, \ldots, x_k^{\pm}]}\bigr) \ar[r] & F(A[x_2^{\pm}, \ldots, x_k^{\pm}] \langle x_1^{-1} \rangle) }
 \]
 is surjective, so let $\sigma \in F(A[x_1^{\pm}, \ldots, x_k^{\pm}])$. By assumption, this diagram is a weak pullback diagram, so we only need to show that the image of $\sigma$ in the lower right corner lies in the image of the bottom horizontal function. For this, it suffices to show that the image of $\sigma$ is extended from $A$.
 
 Since the right vertical map factors through the ``partial'' localization 
 \[
 F(A \ten{R^{\prime}} R^{\prime} \langle x_1^{-1} \rangle [x_2^{\pm}, \ldots, x_k^{\pm}] ) \smash{\rlap{,}}
 \]
 we can show this by checking that the composite
 \[
 F(A) \rightarrow F(A \ten{R^{\prime}} R^{\prime} \langle x^{-1} \rangle ) \rightarrow F(A \ten{R^{\prime}} R^{\prime} \langle x_1^{-1} \rangle [x_2^{\pm}, \ldots, x_k^{\pm}] )
 \]
 is surjective. The left function is surjective since $A$ has the strong $F$-extension property over $R$. To see that the right function is surjective, note that $A \ten{R^{\prime}} R^{\prime} \langle x_1^{-1} \rangle$ has the strong $F$-extension property over $R^{\prime} \langle x_1^{-1} \rangle$ by the already established first part of the theorem. We can thus apply the inductive assumption to the $R$-algebra $R^{\prime} \langle x_1^{-1} \rangle$ to conclude that the right function is indeed surjective. This shows that the function
 \[
 F(A) \rightarrow F(A[x_1^{\pm}, \ldots, x_k^{\pm}])
 \]
 is bijective for all $k \in \mathbb{N}$ if $A$ is $F$-regular over some commutative $R$-algebra $R^{\prime}$.
 
 Finally, to see that $A[x_1^{\pm}, \ldots, x_k^{\pm}]$ has the strong $F$-extension property over $R^{\prime}$, we can argue as in the case of polynomial algebras over $A$. Indeed, the commutative diagram
 \[
 \xymatrix{F(A) \ar[r]^-{\mathrm{surj}} \ar[d]_{\cong} & F(A \ten{R^{\prime}} R^{\prime} \langle t_1, \ldots, t_n \rangle) \ar[d]^{\cong} \\ 
 F(A[x_1^{\pm}, \ldots, x_k^{\pm}]) \ar[r] & F(A \ten{R^{\prime}} R^{\prime} \langle t_1, \ldots, t_n \rangle[x_1^{\pm}, \ldots, x_k^{\pm}])}
 \]
 and the isomorphism
 \[
 A \ten{R^{\prime}} R^{\prime} \langle t_1, \ldots, t_n \rangle [x_1^{\pm}, \ldots, x_k^{\pm}] \cong A[x_1^{\pm}, \ldots, x_k^{\pm}] \ten{R^{\prime}} R^{\prime} \langle t_1, \ldots, t_n \rangle
 \]
 show that $A[x_1^{\pm}, \ldots, x_k^{\pm}]$ has the strong $F$-extension property over $R^{\prime}$.
 \end{proof}
 
 The strong $F$-extension property is easiest to check if both the domain and the codomain of the function in question consist of a single object. If $F \colon \CAlg_R \rightarrow \Set_{\ast}$ is a functor and $\ca{C}$ is a class of $F$-contractible objects with the property that $R^{\prime} \in \ca{C}$ implies that $R^{\prime} \langle t \rangle \in \ca{C}$, then all algebras in $\ca{C}$ have the strong $F$-extension property. The following lemma gives three examples of such classes.
 
 \begin{lemma}\label{lemma:strong_F_extension_from_F_contractible}
 If $\ca{C}$ is the class of all fields (respectively all principal ideal domains, respectively all noetherian rings of Krull dimension $\leq d$) and $R^{\prime} \in \ca{C}$, then $R^{\prime} \langle t \rangle \in \ca{C}$. If $F \colon \CAlg_R \rightarrow \Set_{\ast}$ is a functor such that all algebras of $\ca{C}$ are $F$-contractible, then they have the strong $F$-extension property.
 \end{lemma}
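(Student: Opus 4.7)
The plan is to handle the two halves of the statement in sequence: first the closure of each class $\ca{C}$ under $R^{\prime} \mapsto R^{\prime}\langle t \rangle$, and then the deduction of the strong $F$-extension property from $F$-contractibility. The second half is essentially formal: once the first is established, iterating it shows that $R^{\prime}\langle t_1, \ldots, t_k \rangle \in \ca{C}$ for all $k \geq 0$, so under the $F$-contractibility hypothesis both sides of the map
\[
F(R^{\prime}) \rightarrow F\bigl(R^{\prime} \ten{R^{\prime}} R^{\prime}\langle t_1, \ldots, t_k \rangle\bigr) = F(R^{\prime}\langle t_1, \ldots, t_k \rangle)
\]
are singletons and the map is tautologically surjective; taking $A = R^{\prime}$ in Definition~\ref{dfn:strong_extension_property} gives the strong $F$-extension property.

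For the closure statement the field case is immediate: the leading coefficient of any nonzero polynomial in $R^{\prime}[t]$ is a unit, so every nonzero element is associate to a monic polynomial and $R^{\prime}\langle t \rangle$ coincides with $\mathrm{Frac}(R^{\prime}[t]) = R^{\prime}(t)$. The remaining two cases both reduce to a single dimension computation. For $\mathfrak{q} \in \Spec(R^{\prime})$, the fiber of $\Spec(R^{\prime}\langle t \rangle) \to \Spec(R^{\prime})$ over $\mathfrak{q}$ equals $\Spec\bigl(\kappa(\mathfrak{q})[t]_{\bar{S}}\bigr)$, where $\bar{S}$ is the image of the set of monic polynomials. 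Since any monic polynomial in $\kappa(\mathfrak{q})[t]$ lifts to a monic polynomial in $R^{\prime}[t]$ (the leading coefficient $1$ lifts to $1$), and since every nonzero polynomial over the field $\kappa(\mathfrak{q})$ is a unit times a monic, this localization is $\kappa(\mathfrak{q})(t)$ and the fiber is a single point. Thus $\Spec(R^{\prime}\langle t \rangle) \to \Spec(R^{\prime})$ is injective on points, and any chain of primes in $R^{\prime}\langle t \rangle$ contracts to a chain of the same length in $R^{\prime}$, which gives the bound $\dim R^{\prime}\langle t \rangle \leq \dim R^{\prime}$.

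With this in hand, the case of noetherian rings of Krull dimension $\leq d$ is immediate since localization preserves noetherianity. The PID case then follows by combining the dimension bound with Gauss's lemma: $R^{\prime}[t]$ is a UFD whenever $R^{\prime}$ is, so $R^{\prime}\langle t \rangle$ is a noetherian UFD of Krull dimension $\leq 1$, and such a ring is automatically a PID (every nonzero prime has height one and is therefore principal). I do not anticipate any real obstacle here; the only mildly subtle point is the fiber computation, but it rests entirely on the elementary preservation of monicity under coefficient reduction and lifting along $R^{\prime} \to \kappa(\mathfrak{q})$.
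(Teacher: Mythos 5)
Your proof is correct and follows the same overall skeleton as the paper's — field case via fractions, PID case via Gauss's lemma plus a dimension bound, noetherian case via the dimension bound, and the final formal deduction of the strong extension property from $F$-contractibility — but it supplies the dimension bound by a genuinely different mechanism. The paper cites Lam's \cite[Proposition~IV.1.2]{LAM}, which asserts that every maximal ideal of $R^{\prime}[t]$ of height $d+1$ contains a monic polynomial, so such primes die in $R^{\prime}\langle t \rangle$. You instead compute the fibers of $\Spec(R^{\prime}\langle t \rangle) \to \Spec(R^{\prime})$ directly: since reduction and lifting along $R^{\prime} \to \kappa(\mathfrak{q})$ both preserve monicity, the image of the set $S$ of monics in $\kappa(\mathfrak{q})[t]$ is exactly the set of monic polynomials there, so the fiber ring is $\kappa(\mathfrak{q})(t)$ and every fiber is a single point. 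This makes the map on $\Spec$ injective, whence any chain in $R^{\prime}\langle t \rangle$ contracts to a strict chain of the same length in $R^{\prime}$. Your route is self-contained and avoids invoking the slightly sharper statement about monic polynomials in high-height maximal ideals; the paper's citation is quicker but less transparent. One tiny point: your argument gives $\dim R^{\prime}\langle t \rangle \leq \dim R^{\prime}$ rather than the equality asserted by Lam's proposition, but the inequality is all that is needed for both the Krull-dimension class and the PID case (a noetherian UFD of dimension $\leq 1$ is a PID), so nothing is lost.
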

 
 \begin{proof}
 As noted above, the second claim follows from the closure property of $\ca{C}$. If $\ca{C}$ is the class of fields, then $R^{\prime} \langle t \rangle$ is the field of fractions of $R^{\prime}[t]$, hence lies in $\ca{C}$. If $R^{\prime}$ is a principal ideal domain, then $R^{\prime} \langle t \rangle$ is a regular unique factorization domain as localization of $R^{\prime}[t]$. To establish claim about the second and third class $\ca{C}$, it thus suffices to show that the Krull dimension of $R^{\prime} \langle t \rangle$ coincides with the Krull dimension of $R^{\prime}$ if $R^{\prime}$ is noetherian. This follows from the fact that all maximal ideals of $R[t]$ of height $d+1$ contain a monic polynomial, see \cite[Proposition~IV.1.2]{LAM}.
 \end{proof}
 
 \begin{example}\label{example:strong_F_extension_property_from_F_contractible}
 For each $r \geq 1$, all fields and all principal ideal domains are $P_r$-contractible (since all projective modules over such rings are free). Thus fields and principal ideal domains have the strong $P_r$-extension property. Since the Bass stable range of a noetherian ring of Krull dimension $d$ is at most $d+1$, all noetherian rings of Krull dimension $d$ are $W_{r}$-contractible for $r > d+1$. The above Lemma thus shows that such rings have the strong $W_{r}$-extension property.
 \end{example}
 
 The following result provides a useful summary of our discussion of the Horrocks principle.
 
 \begin{cor}\label{cor:Horrocks_and_analytic_excision}
 Let $F \colon \CAlg_R \rightarrow \Set_{\ast}$ be a finitary functor which satisfies weak analytic excision. Assume that one of the following conditions holds:
  \begin{enumerate}
 \item[(A)] All commutative $R$-algebras which are local rings are $F$-contractible;
 \item[(B)] The functor $F$ admits a natural transitive group action.
 \end{enumerate}
 Finally, we assume that $F$ satisfies the local Horrocks principle $(\mathrm{H}_{\mathrm{loc}})$.
 
 Let $R^{\prime}$ be a commutative $R$-algebra and assume that $A$ satisfies $(\mathrm{SE}_n^{R^{\prime},F})$ for some $n \geq 1$. Then $A$ lies in $\ca{E}_n^F$. If $A$ has the strong $F$-extension property over $R^{\prime}$, then $A[t_1, \ldots, t_k, x_1^{\pm}, \ldots, x_n^{\pm}]$ is $F$-regular for all $k,n \in \mathbb{N}$ and the function
 \[
 F(A) \rightarrow F(A[t_1, \ldots, t_k, x_1^{\pm}, \ldots, x_n^{\pm}])
 \]
 is bijective.
 \end{cor}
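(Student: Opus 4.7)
The plan is to chain together the results already established in Sections~\ref{section:excision} and \ref{section:monic}. First, since $F$ is finitary and satisfies weak analytic excision, Corollary~\ref{cor:analytic_excision_implies_Q} yields that $F$ satisfies Quillen's principle $(\mathrm{Q})$. Combined with the assumption of either (A) or (B), the hypothesis $(\mathrm{H}_{\mathrm{loc}})$ then upgrades to the global Horrocks principle $(\mathrm{H})$ via Proposition~\ref{prop:QHloc_implies_H}. At this point, both of our main technical tools---Proposition~\ref{prop:SE_implies_extended} and Theorem~\ref{thm:strong_F_extension_for_Laurent}---become applicable.

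With $(\mathrm{H})$ established, the first assertion (that $A \in \ca{E}_n^F$ whenever $A$ satisfies $(\mathrm{SE}_n^{R^{\prime},F})$) is immediate from Proposition~\ref{prop:SE_implies_extended}. For the second assertion, suppose $A$ has the strong $F$-extension property over $R^{\prime}$. Theorem~\ref{thm:strong_F_extension_for_Laurent} produces the strong $F$-extension property for $A[t_1,\ldots,t_k]$ over $R^{\prime}$ (hence, via Corollary~\ref{cor:strong_F_extension_implies_F-regular}, its $F$-regularity) and, provided we can invoke the weak excision hypothesis in the second half of Theorem~\ref{thm:strong_F_extension_for_Laurent}, extends this to $A[t_1,\ldots,t_k,x_1^{\pm},\ldots,x_n^{\pm}]$ together with bijectivity of the inclusion-induced map on $F$.

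The one point requiring a brief verification is that weak analytic excision implies the weak excision hypothesis invoked by Theorem~\ref{thm:strong_F_extension_for_Laurent}, namely excision for squares of the form
\[
\xymatrix{B \ar[r]^-{\lambda_s} \ar[d]_{\lambda_{1+sB}} & B_s \ar[d]^{\lambda_{1+sB}} \\ B_{1+sB} \ar[r]_-{\lambda_s} & B_{1+sB,s} }
\]
for $s \in B$. I would check this by verifying that $\lambda_{1+sB} \colon B \to B_{1+sB}$ is an analytic isomorphism along the multiplicative set $\{1,s,s^2,\ldots\}$: the pullback condition is the standard one for two commuting localizations, while the induced map $B/s^n \to B_{1+sB}/s^n$ is an isomorphism because every element of $1+sB$ becomes a unit modulo $s^n$ (its image is congruent to $1$ modulo the nilpotent ideal $(s)/(s^n)$), so the universal property of $B_{1+sB}$ makes $B/s^n \to B_{1+sB}/s^n$ invertible. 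Lemma~\ref{lemma:analytic_iso_generators} then reduces the claim to the generator $s$, confirming that the square is an analytic patching diagram.

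I do not expect any real obstacle beyond this routine verification, since the heavy lifting has already been done in the earlier sections; the corollary is really a convenient packaging of Corollary~\ref{cor:analytic_excision_implies_Q}, Proposition~\ref{prop:QHloc_implies_H}, Proposition~\ref{prop:SE_implies_extended}, and Theorem~\ref{thm:strong_F_extension_for_Laurent}. The one potential subtlety is making sure that the weak excision hypothesis of Theorem~\ref{thm:strong_F_extension_for_Laurent} is indeed subsumed by weak analytic excision; once the analytic-isomorphism check above is in place, the corollary follows by an entirely formal sequence of implications.
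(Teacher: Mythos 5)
Your proof is correct and follows the same chain of citations as the paper (Corollary~\ref{cor:analytic_excision_implies_Q} $\Rightarrow$ $(\mathrm{Q})$, Proposition~\ref{prop:QHloc_implies_H} $\Rightarrow$ $(\mathrm{H})$, then Proposition~\ref{prop:SE_implies_extended} and Theorem~\ref{thm:strong_F_extension_for_Laurent}); the only difference is that you explicitly verify that the Zariski-type squares demanded by Theorem~\ref{thm:strong_F_extension_for_Laurent} are analytic patching diagrams, a step the paper leaves implicit. For that verification, a more direct route is to cite Lemma~\ref{lemma:zariski_analytic_patching_diagram}: since $s$ and $1+sb$ are comaximal for every $b \in B$ (as $1 = (1+sb) - b\cdot s$), that lemma immediately shows $\lambda_{1+sB}$ is an analytic isomorphism along $\{1,s,s^2,\ldots\}$.
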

 
 \begin{proof}
 From Corollary~\ref{cor:analytic_excision_implies_Q} we know that $F$ satisfies $(\mathrm{Q})$. Thus Proposition~\ref{prop:QHloc_implies_H} is applicable and implies that the (global) Horrocks principle $(\mathrm{H})$ holds (under either of the assumptions (A) or (B)). The first claim was established in Proposition~\ref{prop:SE_implies_extended} and the second claim is direct consequence of Theorem~\ref{thm:strong_F_extension_for_Laurent}.
 \end{proof}
 
 This corollary makes it possible to check $F$-regularity for certain basic $R$-algebras such as fields. Combined with Theorem~\ref{thm:F_regularity_for_unramified_regular_rings}, we can often establish $F$-regularity for unramified regular local rings from this basic case.
 
\begin{example}
 As originally observed by Quillen in \cite{QUILLEN}, the principle $(\mathrm{Q})$, combined with Horrocks' Theorem \cite[Theorem~1]{HORROCKS} implies that all fields and all principal ideal domains are $F$-regular (see Proposition~\ref{prop:QHloc_implies_H} for the implication and Example~\ref{example:strong_F_extension_property_from_F_contractible} for the strong $P_r$-extension property for fields and principal ideal domains). Applying Theorem~\ref{thm:excision_implies_three_principles} to $P_r$, we recover the Lindel--Popescu Theorem that all unramified regular local rings are $P$-regular (for local rings, more generally connected rings, we can deduce $P$-regularity from $P_r$-regularity for all $r \geq 1$).
\end{example}

\begin{example}
 From the Fundamental Theorem of algebraic K-theory \cite{GRAYSON_FUNDAMENTAL} we know that there are short exact sequences
 \[
 \xymatrix@C=15pt{0 \ar[r] & K_i(A) \ar[r] & K_i(A[t]) \oplus K_i(A[t^{-1}]) \ar[r] & K_i(A[t,t^{-1}]) \ar[r] & K_{i-1}(A) \ar[r] & 0 }
 \]
 for all commutative rings $A$ and all $i \in \mathbb{N}$ (in fact, by definition of negative $K$-groups, this holds for all $i \in \mathbb{Z}$). Thus
 \[
 \xymatrix{K_i(A) \ar[r] \ar[d] & K_i(A[t^{-1}]) \ar[d] \\ K_i(A[t]) \ar[r] & K_i(A[t,t^{-1}])}
 \]
 is a pullback square, so Lemma~\ref{lemma:pullback_version_of_H} implies that each $K_i \colon \CRing \rightarrow \Ab$ satisfies the Horrocks principle $(\mathrm{H})$.
\end{example}

\begin{example}\label{example:W_r_strong_extension_property}
 For each $r \geq 3$, the functor $W_r \colon \CAlg_R \rightarrow \Set_{\ast}$ which sends $A$ to $\mathrm{Um}_r(A) \slash \mathrm{E}_r(A)$ satisfies $(\mathrm{H}_{\mathrm{loc}})$ by \cite[Theorem~1.1]{RAO_TWO_EXAMPLES} (which is based on Suslin's factorization theorem for $\mathrm{E}_r(R[x,x^{-1}], \mathfrak{m}[x,x^{-1}])$ for a local ring $(R,\mathfrak{m})$). Since these functors satisfy weak analytic excision (see Proposition~\ref{prop:W_r_weak_analytic_excision}) and local rings are $W_r$-contractible for all $R$, Corollary~\ref{cor:Horrocks_and_analytic_excision} implies that all rings with the strong $W_r$-extension property are $W_r$-regular. From Example~\ref{example:strong_F_extension_property_from_F_contractible} it follows that all noetherian rings of Krull dimension $d$ are $W_r$-regular for all $r \geq d+2$. For a proof of this based on Suslin's monic polynomial theorem, see \cite[Theorem~III.3.1]{LAM}.
\end{example}

\begin{example}
If $G$ is a reductive group scheme over $R$, then the functor
\[
H^1(-,G) \colon \CAlg_R \rightarrow \Set_{\ast}
\]
satisfies the local Horrocks priniciple $(\mathrm{H}_{\mathrm{loc}})$ under suitable conditions (see for example \cite[Proposition~8.4]{CESNAVICIUS} for a very general version). However, it is rarely the case that all $G$-torsors over all local $R$-algebras are trivial, so neither Theorem~\ref{thm:weak_excision_implies_QR} nor Corollary~\ref{cor:Horrocks_and_analytic_excision} are applicable in this case. For this reason, one often restricts attention to Zariski-locally trivial torsors. An axiomatic treatment of one such situation (where the base $R$ is a field) can be found in \cite[Th{\'e}or{\`e}me~1.1]{COLLIOT-THELENE_OJANGUREN}.
\end{example}

\begin{example}
For $r \geq 3$, the functors $K_{1,r}$ and $SK_{1,r} \colon \CAlg_R \rightarrow \Grp$ satisfy the (local) Horrocks principle $(\mathrm{H}_{\mathrm{loc}})$ by \cite[Corollary~5.7]{SUSLIN_SPECIAL}. Since $K_{1,r}(A) \cong SK_{1,r} \rtimes A^{\times} $ and every reduced ring is $(-)^{\times}$-regular, we can restrict attention to $SK_{1,r}$.

 All fields are $SK_{1,r}$-contractible, so they have the strong $SK_{1,r}$-extension property by Lemma~\ref{lemma:strong_F_extension_from_F_contractible} for all $r \geq 3$. Thus Part~(i) of Theorem~\ref{thm:F_regularity_for_unramified_regular_rings} is applicable and recovers Vorst's Theorem that all regular rings containing a field are $SK_{1,r}$-regular (see \cite[Theorem~3.3]{VORST_GLN}). The following result extends this to all unramified regular local rings.
\end{example}

\begin{prop}\label{prop:SK_1r_regularity_for_dvr_and_unramified}
 Let $r \geq 3$. Then all discrete valuation rings have the strong $SK_{1,r}$-extension property. Moreover, all unramified regular local rings are $SK_{1,r}$-regular.
\end{prop}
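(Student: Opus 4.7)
The plan is to deduce the first assertion from Lemma~\ref{lemma:strong_F_extension_from_F_contractible} applied to the class $\ca{C}$ of principal ideal domains, and the second from Theorem~\ref{thm:F_regularity_for_unramified_regular_rings}(ii) with $F = SK_{1,r}$.

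For the first assertion, every DVR lies in $\ca{C}$, and the cited lemma already shows that $\ca{C}$ is closed under $R' \mapsto R'\langle t \rangle$. It therefore suffices to verify that every PID $S$ is $SK_{1,r}$-contractible for $r \geq 3$, i.e., that $SL_r(S) = E_r(S)$. This will be a consequence of Bass's $K_1$-stability theorem: a PID has stable range at most $2$, so $K_{1,r}(S) \to K_1(S)$ is an isomorphism for $r \geq 3$; combined with the classical identification $K_1(S) = S^{\times}$ for a PID (which via Smith normal form and Whitehead's lemma amounts to the vanishing $SK_1(S) = 0$), this gives $SK_{1,r}(S) = 0$. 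Once this is in hand, Lemma~\ref{lemma:strong_F_extension_from_F_contractible} supplies the strong $SK_{1,r}$-extension property for every PID, and in particular for every DVR.

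For the second assertion I will check the hypotheses of Theorem~\ref{thm:F_regularity_for_unramified_regular_rings}(ii) for $F = SK_{1,r}$. The functor is finitary and group-valued, so it admits a natural transitive group action by Remark~\ref{rmk:group_valued_implies_natural_action}. It satisfies weak analytic excision: given an analytic isomorphism $\varphi \colon A \to B$ along $S$, the proof of Lemma~\ref{lemma:Vorsts_lemma}(2) produces a common lift $\sigma \in \mathrm{GL}_r(A)$ of any two classes in $K_{1,r}(B)$ and $K_{1,r}(A_S)$ that agree in $K_{1,r}(B_{\varphi(S)})$; if the original classes both lie in $SK_{1,r}$, then $\det(\sigma)$ has trivial image in $A_S^{\times}$ and in $B^{\times}$, and the representability of the units functor (hence its compatibility with the pullback square) forces $\det(\sigma) = 1$ in $A^{\times}$. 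From weak analytic excision, Corollary~\ref{cor:analytic_excision_implies_Q} yields $(\mathrm{Q})$, Theorem~\ref{thm:weak_excision_implies_QR} (via condition~(B)) yields $(\mathrm{R})$, and Lemma~\ref{lemma:transitive_action_implies_extended_if_trivial_NF} supplies the implication $N SK_{1,r}(A) = \ast \Rightarrow A \in \ca{E}_1^{SK_{1,r}}$. Weak excision for patching diagrams associated to {\'e}tale neighbourhoods is a special case. Fields are $SK_{1,r}$-regular by Vorst's theorem recalled in the example preceding the proposition, and DVRs are $SK_{1,r}$-regular by the first part of the present proposition together with Corollary~\ref{cor:strong_F_extension_implies_F-regular} (which applies because $SK_{1,r}$ satisfies the global Horrocks principle $(\mathrm{H})$, deduced from $(\mathrm{H}_{\mathrm{loc}})$ and Proposition~\ref{prop:QHloc_implies_H}).

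The main obstacle I foresee is the unstable vanishing $SK_{1,r}(S) = 0$ for an arbitrary PID $S$: the naive local-ring argument ``$SL_r = E_r$'' is unavailable because the key intermediate rings $R\langle t_1, \dots, t_k \rangle$ are globally PIDs but are far from local (they acquire many new maximal ideals from monic irreducible residues). Once Bass stability and the PID vanishing are invoked, the remainder is a direct assembly of results from \S \ref{section:analytic}--\S \ref{section:monic}.
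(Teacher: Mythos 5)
Your outline for the second assertion (checking the hypotheses of Theorem~\ref{thm:F_regularity_for_unramified_regular_rings}(ii), including the observation that the weak analytic excision argument for $K_{1,r}$ in Lemma~\ref{lemma:Vorsts_lemma} restricts to $\mathrm{SL}_r$ because the units functor preserves pullbacks) is essentially what the paper does, modulo the fact that the paper invokes the packaged Theorem~\ref{thm:excision_implies_three_principles} rather than assembling the pieces by hand. The problem is entirely in the first assertion.

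You claim that \emph{every} PID $S$ satisfies $SK_{1,r}(S)=0$ for $r\geq 3$, via Bass stability plus ``the classical identification $K_1(S)=S^\times$ for a PID.'' That identification is false for general PIDs. Grayson's paper \emph{$SK_1$ of an interesting principal ideal domain} (J.\ Pure Appl.\ Algebra 20 (1981), 157--163) constructs a PID $R$ with $SK_1(R)\neq 0$; by Bass stability (which you correctly invoke, since $R$ has Krull dimension 1), the map $SK_{1,r}(R)\to SK_1(R)$ is an isomorphism for $r\geq 3$, so $SK_{1,r}(R)\neq 0$ as well. The informal justification you give -- ``Smith normal form and Whitehead's lemma'' -- is precisely where the gap lies: over a non-Euclidean PID the $\mathrm{GL}_2$-matrices used in the Smith normal form algorithm need not be elementary, nor even stably elementary, and $SK_1$ is exactly the obstruction to making them so. So your Lemma~\ref{lemma:strong_F_extension_from_F_contractible} invocation with $\ca{C}$ the class of \emph{all} PIDs does not get off the ground.

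The paper's proof instead takes $\ca{C}$ to be the class of PIDs that are $SK_{1,r}$-contractible and proves the closure property ``$R'\in\ca{C}\Rightarrow R'\langle x\rangle\in\ca{C}$'' directly, using the $j$-invariance of $SK_{1,r}$ (Lemma~\ref{lemma:SL_r_j_invariant}) to pass from $R'$ to $B=R'[t]_{1+tR'[t]}$ and then \cite[Proposition~IV.6.1]{LAM} to pass from $B$ and $B/t\cong R'$ to $B_t\cong R'\langle t^{-1}\rangle$. This is exactly the iterated-localization structure needed for the strong extension property, and it starts the induction at DVRs, which are $SK_{1,r}$-contractible simply because they are local. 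No claim about arbitrary PIDs is required, and indeed no such claim could be made. If you want to salvage your proposal, you should replace ``$\ca{C}=$ all PIDs'' with ``$\ca{C}=$ PIDs that are $SK_{1,r}$-contractible'' and supply the closure argument under $R'\mapsto R'\langle x\rangle$, as the paper does.
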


\begin{proof}
 We will prove the second statement by applying Theorem~\ref{thm:excision_implies_three_principles}, so we only need to check that all discrete valuation rings are $SK_{1,r}$-regular. To do this, it suffices to check that all discrete valuation rings have the strong $SK_{1,r}$-extension property. We prove this by establishing the following claim: the category $\ca{C}$ of principal ideal domains which are $SK_{1,r}$-contractible is closed under $R^{\prime} \mapsto R^{\prime} \langle x \rangle$. Granting this claim, we can apply Lemma~\ref{lemma:strong_F_extension_from_F_contractible} to conclude that all rings in $\ca{C}$ have the strong $SK_{1,r}$-extension property. Since discrete valuation rings are $SK_{1,r}$-contractible, they lie in $\ca{C}$, hence they have the strong $SK_{1,r}$-extension property, as claimed.
 
 To finish the proof, it remains to show that $R^{\prime} \langle x \rangle$ lies in $\ca{C}$ for all $R^{\prime} \in \ca{C}$. Since $R^{\prime}$ lies in $\ca{C}$ and all the functors $SK_{1,r}$ are $j$-invariant, it follows that $R^{\prime}[t]_{1+tR^{\prime}[t]}$ is $SK_{1,r}$-contractible for all $r \geq 3$. From \cite[Proposition~IV.6.1]{LAM}, applied to $B=R^{\prime}[t]_{1+tR^{\prime}[t]}$ and $s=t$ it follows that $B_s \cong R^{\prime} \langle t^{-1} \rangle$ is $SK_{1,r}$-contractible. Thus $R^{\prime} \langle x \rangle$ lies in $\ca{C}$, as claimed (recall that $R^{\prime} \langle x \rangle$ is a principal ideal domain by \cite[Corollary~IV.1.3]{LAM}).
\end{proof}

 This example can be extended from $\mathrm{GL}_r$ and $\mathrm{SL}_r$ to more general reductive groups. Recall that a reductive group $G$ over $R$ has isotropic rank $\geq n$ if every non-trivial normal semisimple $R$-subgroup contains $(\mathbb{G}_{m,R})^{n}$. This condition is stable under scalar extension (see the discussion before \cite[Definition~2.1]{STAVROVA_EQUICHARACTERISTIC}). Moreover, for such $G$, the group $E_{P}(A) \subseteq G(A)$ is independent of the choice of parabolic subgroup $P$ and $E(A) \defl E_P(A)$ is normal in $G(A)$ (see \cite[Theorem~2.3]{STAVROVA_EQUICHARACTERISTIC}), so we can write $K_1^G$ for $K_1^{G,P} \colon \CAlg_R \rightarrow \Grp$ in this case. From Remark~\ref{rmk:stavrova_conditions_satisfied_isotropic_rank} we know that $K_1^G$ satisfies weak analytic excision if $G$ has isotropic rank $\geq 2$. The functor $K_1^G$ is finitary by Lemma~\ref{lemma:K_1_GP_finitary}.
 
\begin{example}
 If $G$ is a reductive group over $R$ of isotropic rank $\geq 2$, then $K_1^G$ satisfies the local Horrocks principle $(\mathrm{H}_{\mathrm{loc}})$ by \cite[Theorem~1.1]{STAVROVA_HOMOTOPY}. In fact, as in the case for $G=\mathrm{SL}_r$, this follows from the analogue of Suslin's factorization theorem over local rings (see \cite[Corollary~5.2]{STAVROVA_HOMOTOPY}).
 
 If $k$ is a field (and an $R$-algebra), and if $G$ is moreover simply connected and absolutely almost simple, then
 \[
 K_1^G(k) \rightarrow K_1^G(k \langle t \rangle)
 \]
 is surjective by \cite[Th{\'e}or{\`e}me~5.8]{GILLE_KNESER-TITS}. Since this condition is stable under scalar extension (see \cite[Expos{\'e}~XXII, D{\'e}finition~2.7]{SGA3_NEW}), this theorem shows in fact that
 \[
 K_1^G(k) \rightarrow K_1^G(k \langle t_1, \ldots, t_n \rangle)
 \]
 is surjective for all $n \geq 1$. Thus $k$ has the strong $K_1^G$-extension property, so it is $K_1^G$-regular by Corollary~\ref{cor:strong_F_extension_implies_F-regular}. If the field is perfect, we can apply Popescu's Theorem and Propositions~\ref{prop:Lindel_etale_neighbourhood} and \ref{prop:Lindel_result_for_general_F} to show that all regular rings containing $k$ are $K_1^G$-regular. In \cite[\S 6]{STAVROVA_HOMOTOPY}, one can find results which generalize this to larger classes of reductive groups $G$.
\end{example}
\section{Henselian pairs}\label{section:henselian_pairs}

 Let $R$ be a commutative ring and let $I \subseteq R$ be an ideal. Recall that the pair $(R,I)$ is called a \emph{henselian pair} if the following condition holds: given any {\'e}tale ring homomorphism $\varphi \colon R \rightarrow R^{\prime}$, if there exists a homomorphism $\psi \colon R^{\prime} \rightarrow R \slash I$ such that the triangle
 \[
 \xymatrix{R \ar[rr]^{\varphi} \ar[rd]_{\pi} & & R^{\prime} \ar@{-->}[ld]^{\psi} \\ & R \slash I }
 \]
 commutes (where $\pi$ denotes the canonical projection), then there exists a ring homomorphism $\lambda \colon R^{\prime} \rightarrow R$ such that $\lambda \varphi=\id_R$ and $\pi \lambda=\psi$. Such an $R^{\prime}$ is always finitely presentable. If we choose a system of equations giving such a presentation, then the condition states that solutions in the quotient $R \slash I$ can be lifted to $R$ (as long as the system of equations defines an {\'e}tale ring homomorphism).
 
 There are many equivalent characterizations of henselian pairs, see for example \cite[\href{https://stacks.math.columbia.edu/tag/09XI}{Lemma 09XI}]{stacks-project}. We will only need the following characterizations of henselian pairs. Recall from \cite[Definition~1.2]{GRECO} that a monic polynomial $f(t)=\sum_{i=0}^n a_i t^i \in R[t]$ is called an \emph{$N$-polynomial over $(R,I)$} if $a_0 \in I$ and $a_1$ is a unit modulo $I$.
 
 \begin{prop}\label{prop:characterizations_of_henselian_pairs}
 Let $R$ be a commutative ring and let $I \subseteq R$ be an ideal. Then the following are equivalent:
 \begin{enumerate}
 \item[(i)] The pair $(R,I)$ is henselian;
 \item[(ii)] The ideal $I$ is contained in the Jacobson radical of $R$ and for every monic polynomial $f(t) \in R[t]$ of the form
 \[
 f(t)=t^n(t-1)+a_n t^n + \ldots + a_1 t + a_0
 \]
 with $a_0, \ldots, a_n \in I$, there exists an $\alpha \in 1+I \subseteq R$ with $f(\alpha)=0$;
 \item[(iii)] The ideal $I$ is contained in the Jacobson radical of $I$ and every $N$-polynomial has a root $\alpha \in I$.
 \end{enumerate}
 \end{prop}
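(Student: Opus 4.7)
The plan is to establish the cycle of implications $(\mathrm{i}) \Rightarrow (\mathrm{iii}) \Rightarrow (\mathrm{ii}) \Rightarrow (\mathrm{i})$, where the first two directions are straightforward and the third contains the substantive Hensel-type content.

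For $(\mathrm{i}) \Rightarrow (\mathrm{iii})$: to see $I \subseteq \mathrm{Jac}(R)$, take any $u \in 1+I$ and apply the henselian property to the étale localization $R \to R_u$, using the tautological section modulo $I$ available because $\bar{u} = 1$ is a unit in $R/I$; the lift shows $u \in R^{\times}$. Given an $N$-polynomial $f$, apply the henselian property to the standard étale extension $R \to (R[t]/f(t))_{f'(t)}$ with section modulo $I$ defined by $t \mapsto 0$ (well-defined because $f(0) = a_0 \in I$ and $f'(0) = a_1$ is a unit modulo $I$), producing a root $\alpha \in I$. For $(\mathrm{iii}) \Rightarrow (\mathrm{ii})$: the substitution $t = s+1$ turns the polynomial $f(t) = t^n(t-1) + \sum_{i=0}^n a_i t^i$ into $g(s) := f(s+1)$, which is monic of degree $n+1$ with $g(0) = f(1) = \sum_i a_i \in I$ and $g'(0) = f'(1) = 1 + \sum_{i \geq 1} i\, a_i$ a unit modulo $I$; hence $g$ is an $N$-polynomial, so a root $\beta \in I$ supplied by $(\mathrm{iii})$ yields $\alpha := 1+\beta \in 1+I$ with $f(\alpha) = g(\beta) = 0$.

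The main work goes into $(\mathrm{ii}) \Rightarrow (\mathrm{i})$. Given an étale $\varphi : R \to R^{\prime}$ with a section $\psi : R^{\prime} \to R/I$, the plan is a two-step reduction. First, invoke the local structure theorem for étale morphisms (Stacks tag 00UE), together with the Jacobson-radical condition built into $(\mathrm{ii})$, to write $R^{\prime} = (R[t]/g(t))_{g^{\prime}(t)}$ for a monic polynomial $g \in R[t]$ in a standard étale form compatible with $\psi$; the section $\psi$ then selects a residue class $\bar{\alpha} \in R/I$ satisfying $g(\bar{\alpha}) = 0$ and $g^{\prime}(\bar{\alpha}) \in (R/I)^{\times}$. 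Lift $\bar{\alpha}$ to some $\tilde{\alpha} \in R$ and substitute $t = \tilde{\alpha} + s$: the polynomial $\tilde{g}(s) := g(\tilde{\alpha}+s)$ is monic, has constant term $g(\tilde{\alpha}) \in I$ and linear coefficient $g^{\prime}(\tilde{\alpha})$ a unit modulo $I$, so it is an $N$-polynomial. A root $\beta \in I$ of $\tilde{g}$ then produces $\alpha := \tilde{\alpha}+\beta$ with $g(\alpha) = 0$, yielding the desired section $R^{\prime} \to R$ sending $t \mapsto \alpha$. To obtain this root from hypothesis $(\mathrm{ii})$ itself, reverse the earlier substitution: after multiplying $\tilde{g}$ by a suitable unit to normalize its linear coefficient to be $\equiv 1$ modulo $I$, substitute $s = 1-r$ and multiply through by $(-1)^{\deg \tilde{g}}$ to recast the problem as one of finding a root in $1+I$ of a polynomial of the exact shape $r^m(r-1)+(\text{coefficients in }I)$, to which $(\mathrm{ii})$ directly applies.

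The hard part will be the first step of this reduction. The local structure theorem identifies each localization $R^{\prime}_{\mathfrak{q}}$ with a standard étale neighborhood, but producing a single global standard-étale presentation of $R^{\prime}$ compatible with $\psi$ requires an idempotent argument: the composite $R^{\prime} \otimes_R R^{\prime} \xrightarrow{1 \otimes \psi} R^{\prime} \otimes_R R/I$ combined with the multiplication map splits off a direct factor isomorphic to $R/I$ (because the diagonal of an étale morphism is an open immersion), and this splitting must be lifted from $R^{\prime} \otimes_R R/I$ back to an element of $R^{\prime} \otimes_R R^{\prime}$ that becomes an idempotent defining the needed Zariski localization of $R^{\prime}$. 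The Jacobson-radical hypothesis, together with the fact already established that elements of $1+I$ are units, is precisely what makes this lift possible; once it is carried out, the polynomial manipulation in the second step is routine bookkeeping.
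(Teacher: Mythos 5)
Your arguments for $(\mathrm{i}) \Rightarrow (\mathrm{iii})$ and $(\mathrm{iii}) \Rightarrow (\mathrm{ii})$ are correct. The substitution argument for the latter is exactly what the paper does. Your argument for the former, using the standard \'{e}tale algebra $(R[t]/f)_{f'}$ with the section $t \mapsto 0$, is in fact cleaner and more self-contained than the paper's, which deduces the claim by reduction to the factorization characterization of henselian pairs in the Stacks Project.

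The problem is the final ``routine bookkeeping'' step in $(\mathrm{ii}) \Rightarrow (\mathrm{i})$, and it is not a matter of omitted detail: the claimed reduction does not work. Condition $(\mathrm{ii})$ applies only to polynomials whose reduction modulo $I$ is \emph{exactly} $t^n(t-1)$, whereas your $\tilde{g}(s) = g(\tilde{\alpha}+s)$ is a general $N$-polynomial: its reduction $\overline{\tilde{g}}(s)$ is $s \cdot \bar{q}(s)$ for some monic $\bar{q}$ with $\bar{q}(0) \in (R/I)^{\times}$ but otherwise arbitrary. After the substitution $s = 1-r$ and sign normalization, the reduction becomes $(r-1)$ times some polynomial which equals $\pm r^{\,m-1}$ only when the coefficients of $\bar q$ happen to match binomial coefficients modulo $I$. (The substitution $t=s+1$ sends a $(\mathrm{ii})$-polynomial to an $N$-polynomial whose ``large'' factor modulo $I$ is specifically $(s+1)^n$; it does \emph{not} sweep out all $N$-polynomials, so the inverse substitution cannot take an arbitrary $N$-polynomial back into $(\mathrm{ii})$-form.) The ``normalize the linear coefficient by a unit'' maneuver also does not obviously preserve monicity. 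So $(\mathrm{ii})$ does not directly apply to $\tilde{g}$, and the root you need is not delivered. This is precisely why the paper makes no attempt to prove $(\mathrm{ii}) \Rightarrow (\mathrm{i})$ directly and instead cites it as the implication $(5) \Rightarrow (2)$ of \cite[\href{https://stacks.math.columbia.edu/tag/09XI}{Lemma 09XI}]{stacks-project}, which goes through several intermediate characterizations (including lifting idempotents in finite algebras) rather than a direct substitution.

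The globalization step that you flag as the hard part is indeed nontrivial, but the more serious issue is the one above: even granting a single compatible standard \'{e}tale presentation $R' \cong (R[t]/g)_h$, condition $(\mathrm{ii})$ as stated is too narrow to produce the required root of $\tilde{g}$ by an elementary change of variable. A correct direct proof would need to establish the logically stronger-looking $N$-polynomial condition $(\mathrm{iii})$ from $(\mathrm{ii})$ first, and this implication is not a substitution; it is where the substance of the theorem lives.
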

 
  \begin{proof}
  The implication $(ii)\Rightarrow (i)$ is the implication $(5) \Rightarrow (2)$ of \cite[\href{https://stacks.math.columbia.edu/tag/09XI}{Lemma 09XI}]{stacks-project}. 
  
  To see that $(i) \Rightarrow (iii)$, note that the image $\bar{f}$ of an $N$-polynomial in $R \slash I$ factors as product $\bar{f} = x \cdot \bar{g}$ for some monic polynomial $\bar{g}$ with $\bar{g}(0)$ a unit. Thus $x$ and $\bar{g}$ are comaximal in $R \slash I [t]$, so the implication $(2) \Rightarrow (1)$ of \cite[\href{https://stacks.math.columbia.edu/tag/09XI}{Lemma 09XI}]{stacks-project} (combined with the alternative definition of henselian pairs used there, see \cite[\href{https://stacks.math.columbia.edu/tag/09XE}{Definition 09XE}]{stacks-project}) shows that $f$ has a root in $I$.
  
  Finally, to show $(iii) \Rightarrow (ii)$, it suffices to observe that the change of variables $x=t-1$ sends a polynomial as in $(ii)$ to an $N$-polynomial.
  \end{proof}
  
  Henselian pairs can also be defined by a factorization property for monic polynomial (see \cite[\href{https://stacks.math.columbia.edu/tag/09XE}{Definition 09XE}]{stacks-project}). We will frequently use the fact that the polynomial does not need to be monic, as long as \emph{one} of the factors in the quotient is monic. This is proved in Part~(6) \cite[\href{https://stacks.math.columbia.edu/tag/04GG}{Lemma 04GG}]{stacks-project} but only in the case where $R$ is a local ring with maximal ideal $I=\mathfrak{m}$ (taking into account that polynomials in the quotient are monic up to a unit since the quotient is a field in this case).
  
 In order to prove this for general henselian pairs, we will use the following construction. Given two natural numbers $n, m$ with $n+m >0$ and two polynomials $f(t)=\sum_{i=0}^n a_i t^i$ and $g(t)=\sum_{j=0}^m b_j t^j$ in $R[t]$, the element
  \[
\mathrm{Res}_{n,m}(f,g) \defl  \mathrm{det} \begin{pmatrix}
  a_n & a_{n-1} &   \ldots & a_1      & a_0 \\
        & a_n       & a_{n-1} &  \ldots & a_1       & a_0 \\
        &             & \ddots  & \ddots &             & \ddots  &   \ddots \\
        &             &             & a_n       & a_{n-1} & \ldots & a_1 & a_0 \\
   b_m & b_{m-1} &   \ldots & b_1      & b_0 \\
        & b_m       & b_{m-1} &  \ldots & b_1       & b_0 \\
        &             & \ddots  & \ddots &             & \ddots  &   \ddots \\
        &             &             & b_m       & b_{m-1} & \ldots & b_1 & b_0 \\       
  \end{pmatrix}
  \]  
  of $R$ is called the \emph{resultant} of $f$ and $g$. The above matrix is called the \emph{Sylvester matrix} of $f$ and $g$. Here the first $m$ rows contain copies of the coefficients of $f$ and the final $n$ rows contain copies of the coefficients of $g$, so that the matrix is a square matrix of size $m+n$. We only need the following basic fact about resultants: if $R$ is a field, then the resultant is $0$ if and only if either both $a_n$ and $b_m$ are $0$, or the two polynomials have a common non-constant factor (see for example the summary in \cite[p.~89]{VAN_DER_WAERDEN}). The following lemma shows that one of these implications holds over arbitrary commutative rings if in addition one of the polynomials is monic.
  
 \begin{lemma}\label{lemma:resultant_unit_if_comaximal}
 Let $R$ be a commutative ring. Let $n > 0$, let
 \[
 f(t)=t^n+a_{n-1} t^{n-1} + \ldots + a_0
 \]
 be a monic polynomial in $R[t]$ and let $g(t)=b_m t^m + \ldots + b_0$ be an arbitrary polynomial. If $f(t)$ and $g(t)$ are comaximal in $R[t]$, then the resultant $\mathrm{Res}_{n,m}(f,g) \in R$ is a unit.
 \end{lemma}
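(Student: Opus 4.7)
The plan is to reduce to the case where $R$ is a field by checking the unit condition modulo each maximal ideal, and then to invoke the classical fact about the resultant over a field that is quoted immediately before the lemma.

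First I would observe that the resultant $\mathrm{Res}_{n,m}(f,g)$ is given by a universal polynomial expression in the coefficients of $f$ and $g$ (namely the determinant of the Sylvester matrix), so it is compatible with any ring homomorphism. In particular, for each maximal ideal $\mathfrak{m} \subseteq R$ with residue field $k=R/\mathfrak{m}$, the image of $\mathrm{Res}_{n,m}(f,g)$ in $k$ equals $\mathrm{Res}_{n,m}(\bar{f},\bar{g})$, where $\bar{f},\bar{g} \in k[t]$ denote the reductions of $f$ and $g$. Recall that an element of $R$ is a unit if and only if its image is nonzero in every residue field $R/\mathfrak{m}$, so it suffices to show $\mathrm{Res}_{n,m}(\bar{f},\bar{g}) \neq 0$ for every maximal ideal $\mathfrak{m}$.

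Next I would check that the hypotheses of the classical resultant statement are met over $k$. Since $f$ is monic of degree $n$, its image $\bar{f}$ is also monic of degree $n$; in particular its leading coefficient $1$ is nonzero in $k$, so the case ``both $a_n$ and $b_m$ vanish'' of the classical criterion is excluded. Moreover, the comaximality of $f$ and $g$ in $R[t]$ passes to comaximality of $\bar{f}$ and $\bar{g}$ in $k[t]$ (apply the quotient map to a B\'ezout relation $uf+vg=1$), and over the principal ideal domain $k[t]$ this means $\bar{f}$ and $\bar{g}$ share no non-constant common factor.

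By the classical fact recalled in the paragraph preceding the lemma, the resultant $\mathrm{Res}_{n,m}(\bar{f},\bar{g})$ therefore does not vanish in $k$. As this holds for every maximal ideal $\mathfrak{m}$ of $R$, the element $\mathrm{Res}_{n,m}(f,g)$ avoids every maximal ideal and is consequently a unit in $R$. The argument has no essential obstacle; the only point one must not overlook is that reduction modulo $\mathfrak{m}$ preserves the degree of $f$, which is exactly what the monicity hypothesis guarantees and what makes the classical field-level criterion applicable.
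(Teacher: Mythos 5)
Your argument is correct and follows essentially the same route as the paper: reduce modulo each maximal ideal $\mathfrak{m}$, note that the resultant commutes with the quotient map, check that monicity and comaximality descend to $R/\mathfrak{m}$, and invoke the classical field-level criterion to conclude the reduction is nonzero, hence $\mathrm{Res}_{n,m}(f,g)$ is a unit. The only difference is that you spell out the routine verifications (compatibility of the Sylvester determinant with ring maps, transfer of the B\'{e}zout relation, preservation of degree) a bit more explicitly than the paper does.
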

  
 \begin{proof}
 Let $\mathfrak{m} \subseteq R$ be a maximal ideal and consider the resultant $\mathrm{Res}_{n,m}(\bar{f},\bar{g})$ of the reductions $\bar{f}$ and $\bar{g}$ in $R \slash \mathfrak{m}$ of $f$ and $g$. Since $\bar{f}$ is monic of degree $\geq 1$ and the two polynomials $\bar{f}$ and $\bar{g}$ are comaximal, it follows from the above recollection that $\mathrm{Res}_{n,m}(\bar{f},\bar{g})$ is not equal to $0$, that is, $\mathrm{Res}_{n,m}(\bar{f},\bar{g})$ is a unit in $R \slash \mathfrak{m}$. Since $\mathfrak{m}$ was arbitrary and $\overline{\mathrm{Res}_{n,m}(f,g)}=\mathrm{Res}_{n,m}(\bar{f},\bar{g})$, it follows that $\mathrm{Res}_{n,m}(f,g) \in R$ is a unit. 
 \end{proof}

 \begin{thm}\label{thm:generalized_hensel_lemma}
 Let $(R,I)$ be a henselian pair and let $f(t)=\sum_{i=0}^{n+m} c_i t^i$ be a polynomial in $R[t]$, not necessarily monic, with $n,m \geq 1$. Suppose that there exists a factorization $\bar{f}=\bar{g} \bar{h}$ in $R \slash I[t]$ such that $\bar{g}$ is monic of degree $n$, and the two polynomials $\bar{g}$ and $\bar{h}$ are comaximal in $R \slash I [t]$. Then there exists a factorization $f=gh$ in $R[t]$ with $g$ monic of degree $n$ lifting the above factorization.
 \end{thm}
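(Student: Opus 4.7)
The plan is to reduce the claim to the lifting property in the definition of a henselian pair, by constructing an \'{e}tale $R$-algebra parametrizing factorizations $f = GH$ with $G$ monic of degree $n$.

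Concretely, I would set
\[
R' \defl R[G_0, \ldots, G_{n-1}, H_0, \ldots, H_m]/J,
\]
where $J$ is generated by the $n+m+1$ coefficients of $f - GH$, with $G = t^n + \sum_{i<n} G_i t^i$ and $H = \sum_{0 \leq j \leq m} H_j t^j$. The key claim is that $R \to R'[\Delta^{-1}]$ is \'etale, where $\Delta \defl \mathrm{Res}_{n,m}(G,H) \in R'$ denotes the universal resultant. By the Jacobian criterion, it suffices to verify that the Jacobian determinant of the $n+m+1$ defining equations with respect to the variables $G_i, H_j$ equals $\pm \Delta$. This reduces to a standard calculation: the coefficient of $t^{n+m}$ in $GH$ is $H_m$ (because $G_n = 1$), so the corresponding row of the Jacobian has only a single nonzero entry, a $1$ in the $H_m$ column. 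Expanding along this row, the Jacobian becomes $\pm$ the determinant of an $(n+m) \times (n+m)$ matrix whose columns are the coefficient vectors of $H, tH, \ldots, t^{n-1}H, G, tG, \ldots, t^{m-1}G$, which, up to sign and transposition, is the Sylvester determinant $\mathrm{Res}_{n,m}(G,H)$.

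The given factorization $\bar{f} = \bar{g}\bar{h}$ in $(R/I)[t]$ now defines an $R$-algebra homomorphism $\psi \colon R' \to R/I$ sending $G_i, H_j$ to the corresponding coefficients of $\bar{g}$ and $\bar{h}$. Since $\bar{g}$ is monic of positive degree and comaximal with $\bar{h}$, Lemma~\ref{lemma:resultant_unit_if_comaximal} gives that $\psi(\Delta) = \mathrm{Res}_{n,m}(\bar{g}, \bar{h})$ is a unit in $R/I$, so $\psi$ factors uniquely through $R'[\Delta^{-1}]$. Invoking the henselian lifting property for the \'etale map $R \to R'[\Delta^{-1}]$ produces a section $\lambda \colon R'[\Delta^{-1}] \to R$ with $\pi \lambda = \psi$. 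Setting $g_i \defl \lambda(G_i)$ and $h_j \defl \lambda(H_j)$ yields polynomials $g = t^n + \sum_{i<n} g_i t^i$ and $h = \sum_j h_j t^j$ in $R[t]$ with $g$ monic of degree $n$, with $f = gh$ (since $\lambda$ is an $R$-algebra homomorphism respecting the relation $f = GH$ in $R'$), and with reductions $\bar{g}, \bar{h}$ modulo $I$ as prescribed.

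The only nontrivial step is the identification of the Jacobian determinant with the resultant; after that, the result is a direct application of the defining property of henselian pairs via an \'etale extension.
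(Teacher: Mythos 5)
Your proof is correct and follows essentially the same route as the paper: construct the universal monic factorization algebra, identify its Jacobian with the Sylvester matrix so that inverting the resultant yields an \'etale $R$-algebra, apply Lemma~\ref{lemma:resultant_unit_if_comaximal} to factor the given $R/I$-point through that localization, and invoke the henselian lifting property. The only cosmetic difference is that the paper first eliminates the variable $B_m$ via the degree-$(n+m)$ relation and then orders the remaining variables so the Jacobian literally equals the Sylvester matrix, whereas you keep all $n+m+1$ variables and expand the $(n+m+1)\times(n+m+1)$ determinant along the row with a single nonzero entry; both computations give the same resultant up to sign.
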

 
 \begin{proof}
 Let $R^{\prime \prime}$ be the $R$-algebra obtained by freely factoring $f$ as a product of a monic polynomial $G(t)=t^n + \sum_{i=0}^{n-1} A_i t^i$ and a polynomial $H(t)=\sum_{j=0}^m B_j t^j$, defined as follows. The algebra $R^{\prime \prime}$ is obtained in two steps. First, we freely add variables $A_0, \ldots, A_{n-1}$ and $B_0, \ldots, B_m$ to $R$. We set $A_n=1$. For $0 \leq k \leq n+m$, let $p_k=\sum_{i+j=m+n-k} A_i B_j-c_{m+n-k}$, where $0 \leq i \leq n$ and $0 \leq j \leq m$. The algebra $R^{\prime \prime}$ is given by the quotient
 \[
 R^{\prime \prime} \defl R[A_0, \ldots, A_{n-1},B_0, \ldots, B_m] \slash (p_0, \ldots, p_{n+m})
 \]
 modulo the ideal generated by the $p_i$.
 
 This description can be simplified a bit. The relation $p_0$ is given by $A_n B_m-c_{n+m}$, which is equal to $B_m-c_{n+m}$ by our convention that $A_n=1$. Thus we can remove the variable $B_m$ from the above presentation and replace each occurrence of $B_m$ by $c_{n+m}$ in the remaining relations $p_1, \ldots, p_{n+m}$. In this way, the algebra $R^{\prime \prime}$ is described by adding $n+m$ variables and imposing $n+m$ relations.
 
 Now we order the variables $A_i$ and $B_j$ as follows. We set
 \[
  x_1=B_{m-1},\; x_2=B_{m-2},\; \ldots,\; x_m=B_0,\; x_{m+1}=A_{n-1},\; \ldots,\; x_{n+m}=A_0 \smash{\rlap{.}}
 \]
 The Jacobian matrix of this presentations is thus given by $(\partial p_j \slash \partial x_i)_{i,j=1}^{n+m}$. By construction we have for $0 \leq j <m$:
 \[
 \partial p_k \slash \partial B_j = A_i \quad \text{where} \quad i=n+m-k-j
 \]
 if $0 \leq i \leq n$ and $ \partial p_k \slash \partial B_j=0$ if $i<0$ or $i>n$ (with the convention that $A_n=1$). Similarly, for $0 \leq i < n$ we have
\[
\partial p_k \slash \partial A_i = B_j \quad \text{where} \quad j=n+m-k-i
\]
if $0 \leq j \leq m$ and $\partial p_k \slash \partial A_i=0$ if $j<0$ or $j>m$ (here we again use our convention $B_m=c_{n+m}$). From these equalities it follows that the above Jacobian matrix is precisely the Sylvester matrix of the polynomials $G(t)$ and $H(t)$.
 
 Now let $R^{\prime}$ be obtained from $R^{\prime \prime}$ by inverting the resultant of $G(t)=t^n + \sum_{i=0}^{n-1} A_i t^i$ and $H(t)=c_{n+m}t^m +\sum_{j=0}^{m-1} B_j t^j$. Since we have adjoined $n+m$ variables and imposed $n+m$ equations, and the determinant of the Jacobian matrix of the above presentation is invertible in $R^{\prime}$ (it coincides with resultant by the above discussion), the algebra $R^{\prime}$ is {\'e}tale over $R$.
  
 By assumption, there exists a homomorphism $R^{\prime \prime} \rightarrow R \slash I$ of $R$-algebras sending the generic polynomials $G$ and $H$ to $\bar{g}$ and $\bar{h}$ respectively. From Lemma~\ref{lemma:resultant_unit_if_comaximal} it follows that this homomorphism factors through the localization $R^{\prime \prime} \rightarrow R^{\prime}$. Since $R \rightarrow R^{\prime}$ is {\'e}tale, the conclusion follows from our definition of henselian pairs.
 \end{proof}
 
 We will frequently use the following special case of the above result. Recall that for any ideal $I \subseteq R$, we write $I[t]$ for the ideal in $R[t]$ generated by $I$.
 
 \begin{cor}\label{cor:splitting_into_monic_times_1+I_polynomial}
 Let $(R,I)$ be a henselian pair and let $f(t) \in R[t]$ be a polynomial. If the leading coefficient of $\bar{f} \in  R \slash I[t]$ is a unit, then there exists a factorization $f=u \cdot g \cdot h$ where $u \in R^{\times}$, $g$ is monic of the same degree as $\bar{f}$, and $h \in 1+tI[t]$. If the leading coefficient of $\bar{f}$ is $1$, then one can arrange additionally that $u \equiv 1 \mod I$.
 \end{cor}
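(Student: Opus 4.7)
The plan is to reduce the statement to a direct application of Theorem~\ref{thm:generalized_hensel_lemma} after a little bookkeeping. Write $f(t)=\sum_{i=0}^{N} c_i t^i$ where $N$ is the degree of $f$ in $R[t]$, let $n=\deg \bar{f}$, so that $c_{n+1},\ldots,c_N \in I$ and $\bar{c_n}$ is a unit in $R \slash I$. Since $I$ lies in the Jacobson radical of $R$ (by Proposition~\ref{prop:characterizations_of_henselian_pairs}), the coefficient $c_n$ itself is a unit in $R$. The cases $N=n$ and $n=0$ are then trivial: in the first case take $u=c_n$, $g=c_n^{-1} f$ (monic of degree $n$) and $h=1$; in the second case take $u=c_0$, $g=1$, $h=c_0^{-1} f$, which lies in $1+tI[t]$ because $c_0^{-1} c_i \in I$ for $i \geq 1$.

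The genuine content is the remaining case $n \geq 1$ and $m \defl N-n \geq 1$. I would apply Theorem~\ref{thm:generalized_hensel_lemma} to the factorization
\[
\bar{f}=\bar{g_0} \cdot \bar{c_n} \quad \text{in } R\slash I[t],
\]
where $\bar{g_0}\defl c_n^{-1}\bar{f}$ is monic of degree $n$ and the constant factor $\bar{c_n}$, regarded formally as a polynomial of degree $\leq m$ with vanishing coefficients in degrees $1,\ldots,m$, is a unit in $R\slash I[t]$ and hence comaximal with $\bar{g_0}$. The theorem then yields a factorization $f=g \cdot h'$ in $R[t]$ with $g$ monic of degree $n$ lifting $\bar{g_0}$ and $h'=a_0+a_1 t+\ldots+a_m t^m$ lifting $\bar{c_n}$. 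The lifting condition forces $a_0 \equiv c_n \pmod{I}$ and $a_1,\ldots,a_m \in I$. In particular $a_0$ is a unit (again because $I$ is in the Jacobson radical), so setting $u\defl a_0$ and $h\defl a_0^{-1} h'=1+\sum_{i=1}^m (a_i\slash a_0) t^i$ gives the desired decomposition $f=u \cdot g \cdot h$ with $u \in R^\times$, $g$ monic of degree $n=\deg \bar{f}$, and $h \in 1+tI[t]$.

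The final refinement is immediate: in each of the three cases above, $u$ reduces modulo $I$ to $\bar{c_n}$, so if the hypothesis $\bar{c_n}=1$ holds we automatically have $u \equiv 1 \pmod I$. I do not anticipate any real obstacle here: all of the analytic difficulty sits inside Theorem~\ref{thm:generalized_hensel_lemma}, and the remaining work is simply factoring out the leading unit to convert its two-factor output into the required three-factor form.
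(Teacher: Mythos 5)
Your proof is correct and matches the paper's own argument: the same three-way case split on $\deg \bar{f}$ versus $\deg f$, the same invocation of Theorem~\ref{thm:generalized_hensel_lemma} applied to $\bar{f} = (\overline{c_n^{-1}f}) \cdot \bar{c_n}$ in the nontrivial case, and the same normalization by $a_0 = p(0)$ at the end. The only cosmetic difference is that you spell out the coefficient bookkeeping a bit more explicitly.
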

 
 \begin{proof}
 If $f$ and $\bar{f}$ have the same degree, we can take $h=1$ and $u$ equal to the leading coefficent of $f$ . In this case, we have $u \in R^{\times}$ since $I$ is contained in the Jacobson radical. If not and if $\mathrm{deg}(\bar{f})=0$, then we can take $u=f(0)$ and $g=1$. In the remaining case, we thus have $0<\mathrm{deg}(\bar{f})<\mathrm{deg}(f)$ and therefore $\mathrm{deg}(f) \geq 2$.
 
 In this case, we can take the factorization $\bar{f}=\bar{g} \cdot v$ where $\bar{g}$ is monic of the same degree as $\bar{f}$ and $v$ is the leading coefficient of $\bar{f}$. Since this leading coefficient is a unit in $R \slash I$ by assumption, the pair $(\bar{g},v)$ is comaximal. The above inequality involving the degrees of $f$ and $\bar{f}$ implies that Theorem~\ref{thm:generalized_hensel_lemma} is applicable. It follows that there exists a monic polynomial $g(t) \in R[t]$ and a polynomial $p(t) \in R[t]$ such that $f=gp$, $g$ is a lift of $\bar{g}$ (so $\mathrm{deg}(g)=\mathrm{deg}(\bar{g})=\mathrm{deg(\bar{f})}$), and $p$ is a lift of the constant polynomial $v$. Since $v$ is a unit, the image of $u=p(0)$ modulo $I$ is invertible. Since $I$ is contained in the Jacobson radical, it follows that $u$ is a unit. Setting $h(t)=u^{-1} p(t)$ we obtain the desired factorization (note that $h(t) \in 1+tI[t]$ by construction).
 
 The assertion about $u$ in the case where the leading coefficient of $\bar{f}$ is $1$ follows from the fact that this leading coefficient is given by the equivalence class $\bar{u}$ of $u$ modulo $I$ (since $h \equiv 1 \mod I$).
 \end{proof}
 
 Recall that $R \langle t \rangle$ denotes the localization of $R[t]$ at the multiplicative set of monic polynomials.
 
 \begin{cor}\label{cor:localization_of_monic_polynomials_jacobson_radical}
 Let $(R,I)$ be a henselian pair. Then the kernel of the canoncial projection
 \[
 \pi \colon R \langle t \rangle_{1+I[t]} \rightarrow R \slash I \langle t \rangle
 \]
 is contained in the Jacobson radical of $R \langle t \rangle_{1+I[t]}$.
 \end{cor}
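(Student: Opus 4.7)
The plan is as follows. Write $A \defl R \langle t \rangle_{1+I[t]}$. Since $\ker \pi$ is an ideal of $A$, to prove the claim it suffices to show that $1 + y$ is a unit in $A$ for every $y \in \ker \pi$; I will reduce this to a direct application of Corollary~\ref{cor:splitting_into_monic_times_1+I_polynomial}.

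First I would observe that the multiplicative set generated by the monic polynomials in $R[t]$ together with $1 + I[t]$ is exactly $\{mh \mid m \in R[t] \text{ monic},\, h \in 1 + I[t]\}$, since products of monic polynomials are monic and $1 + I[t]$ is closed under multiplication. Hence any $y \in \ker \pi$ can be written as $y = f/(mh)$ with $f \in R[t]$, $m$ monic, and $h \in 1 + I[t]$. Under $\pi$ this element maps to $\bar f / \bar m$ in $R/I \langle t \rangle$ (because $\bar h = 1$). Now $\bar m$ is a unit in $R/I\langle t \rangle$, and monic polynomials are non-zero-divisors in any polynomial ring, so the localisation $R/I[t] \to R/I\langle t \rangle$ is injective. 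The assumption $y \in \ker \pi$ therefore forces $f \in I[t]$.

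The main step is now straightforward. Writing $h = 1 + i$ with $i \in I[t]$, we have $1 + y = (mh + f)/(mh)$, and $mh$ is already a unit in $A$ by construction; it remains to show that $mh + f = m + (mi + f)$ is a unit in $A$. Since $mi + f \in I[t]$, reducing modulo $I$ gives $\overline{mh + f} = \bar m$, a monic polynomial of the same degree as $m$. In particular, the leading coefficient of $\overline{mh + f}$ is the unit $1 \in R/I$, so Corollary~\ref{cor:splitting_into_monic_times_1+I_polynomial} yields a factorisation $mh + f = u \cdot g' \cdot h'$ with $u \in R^\times$, $g' \in R[t]$ monic, and $h' \in 1 + tI[t] \subseteq 1 + I[t]$. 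Each of the three factors is a unit in $A$: $u$ is a unit already in $R$, $g'$ is inverted because $R\langle t \rangle$ inverts monic polynomials, and $h'$ is inverted by the further localisation at $1 + I[t]$. Hence $mh + f$, and therefore $1 + y$, is a unit in $A$.

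I do not foresee a serious obstacle: the single subtle point is the passage from $y \in \ker \pi$ to $f \in I[t]$ for the chosen representative, which is handled cleanly by the fact that monic polynomials are non-zero-divisors; everything else is a direct appeal to the generalised Hensel lemma packaged in Corollary~\ref{cor:splitting_into_monic_times_1+I_polynomial}.
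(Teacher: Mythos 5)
Your proof is correct and uses essentially the same approach as the paper: reduce to a polynomial whose image in $R/I[t]$ has unit leading coefficient, then invoke Corollary~\ref{cor:splitting_into_monic_times_1+I_polynomial} to factor it as a unit times a monic polynomial times an element of $1+tI[t]$, all of which are units in $R\langle t\rangle_{1+I[t]}$. The only cosmetic difference is that the paper phrases the Jacobson-radical condition as ``every element of $A$ mapping to a unit in $R/I\langle t\rangle$ is itself a unit'' (using surjectivity of $\pi$), whereas you unpack it directly as ``$1+y$ is a unit for $y\in\ker\pi$''; the mathematical content is the same.
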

 
 \begin{proof}
 It suffices to check that each element $p$ of $R \langle t \rangle_{1+I[t]}$ which is  sent to a unit by $\pi$ is itself a unit. Without loss of generality we can assume that $p \in R[t]$. Note that a polynomial in $R \slash I [t]$ is sent to a unit in $R \slash I \langle t \rangle$ if and only if its leading coefficient is a unit. By the above corollary we can thus find a factorization $p=ugh$ where $u \in R^{\times}$, $g$ is monic, and $h \in 1+I[t]$, so $p$ is indeed a unit in $R \langle t \rangle_{1+I[t]}$.
 \end{proof}
 
 If we define the category of \emph{pairs} to have objects $(R,I)$ where $R$ is a commutative ring and $I \subseteq R$ is an ideal and morphisms $(R,I) \rightarrow (R^{\prime},I^{\prime})$ the homomorphisms $\varphi \colon R \rightarrow R^{\prime}$ with $\varphi(I) \subseteq I^{\prime}$, then the inclusion of all henselian pairs has a left adjoint, called the \emph{henselization} and denoted by $(R, I) \mapsto (R^h_{I},I^h)$. In \S \ref{section:pseudoelementary}, we want to establish weak excision results for certain patching diagrams arising from the henselization functor. In order to do this, it is convenient to use an explicit construction of $(R^{h}_I, I^h)$ which only relies on certain simple building blocks. In fact, we will provide two such constructions, relying on either \emph{standard Nisnevich} respectively \emph{basic Nisnevich} homomorphisms. The former have the advantage that they can also be used to describe {\'e}tale neighbourhoods of local rings (which is a crucial ingredient in the proof of the main result \S \ref{section:pseudoelementary}), but for the application of the henselization in \S \ref{section:pseudoelementary} we will need to use basic Nisnevich homomorphisms. 
 
 Recall that a ring homomorphism $R \rightarrow R^{\prime}$ is called \emph{standard {\'e}tale} if it is (up to isomorphism) of the form $R \rightarrow (R[t] \slash f)_g$ where $f(t)$ is monic and the derivative $f^{\prime}(t)$ is invertible in the localization of $R[t] \slash f$ at $g(t)$ (see \cite[\href{https://stacks.math.columbia.edu/tag/00UB}{Definition 00UB}]{stacks-project}).
 
 \begin{dfn}\label{dfn:standard_Nisnevich_L_homomorphism}
 Let $I \subseteq R$ be an ideal. A ring homomorphism $R \rightarrow R^{\prime}$ is called a \emph{$L$-homomorphism along $I$} if there exists a polynomial $f(t)=a_0 + a_1 t + \ldots + a_n t^n \in R[t]$ with $a_0 \in I$, $a_1 \in R^{\times}$ and a polynomial $g(t) \in R[t]$ with $g(0)=1$ such that the $R$-algebra $R^{\prime}$ is isomorphic to $(R[t] \slash f)_g$ and the following two conditions hold:
 \begin{enumerate}
 \item[(1)] The derivative $f^{\prime}(t)$ of $f(t)$ is a unit in $(R[t] \slash f)_g$;
 \item[(2)] The unique polynomial $\tilde{f}(t)$ such that $f(t)-a_0=t \cdot \tilde{f}(t)$ holds is a unit in $(R[t] \slash f)_g$.
  \end{enumerate}
 If $f$ can moreover be chosen to be monic, then $R \rightarrow R^{\prime}$ is called \emph{standard Nisnevich along $I$}.
 \end{dfn}
 
 Both $L$-homomorphisms and standard Nisnevich homomorphisms are by definition {\'e}tale, but only the latter are in general standard {\'e}tale. For example, for any $f$ as above, the homomorphism $R \rightarrow (R[t] \slash f)_{a_1^{-2} f^{\prime} \tilde{f}}$ is an $L$-homomorphism along any ideal $I$ containing $a_0$. It is standard Nisnevich along $I$ if $f$ is monic. This definition is closely related to the notion of $N$-polynomials (see \cite[Definition~1.2]{GRECO}), but note that not every $N$-polynomial can be used in place of $f$ since we assume that $a_1 \in R^{\times}$ (instead of $\bar{a}_1 \in (R\slash I)^{\times}$). This distinction disappears if $I$ is contained in the Jacobson radical but is relevant otherwise.
 
 \begin{lemma}\label{lemma:standard_Nisnevich_and_Jacobson_radical}
 Let $I \subseteq R$ be an ideal and let $a \in I$. Then the localization $R \rightarrow R_{1+a}$ is standard Nisnevich along $I$. If $x \in R$ is an element such that $\bar{x} \in R \slash I$ is a unit, then there exists a homomorphism
 \[
 \varphi \colon R \rightarrow R^{\prime}
 \]
 which is standard Nisnevich along $I$ such that $\varphi(x)$ is a unit in $R^{\prime}$.
 \end{lemma}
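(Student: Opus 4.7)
The plan is to handle both statements by exhibiting explicit polynomials $f$ and $g$ of the simplest possible form, namely taking $f$ of degree one. For the first statement, I would set $f(t) = t - a$ and $g(t) = 1 + t$. Then $f$ is monic with constant coefficient $a_0 = -a \in I$ and linear coefficient $a_1 = 1 \in R^{\times}$, and $g(0) = 1$, so the numerical conditions of Definition~\ref{dfn:standard_Nisnevich_L_homomorphism} are in place. Verifying conditions~(1) and (2) should be immediate: $f^{\prime}(t) = 1$, and the polynomial $\tilde{f}(t)$ defined by $f(t) - a_0 = t\tilde{f}(t)$ is simply $\tilde{f}(t) = 1$, so both are already units in $R[t] \slash f$, hence in $(R[t] \slash f)_g$. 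To conclude, I would observe that the evaluation $t \mapsto a$ yields an isomorphism $R[t] \slash (t-a) \cong R$ carrying $g(t) = 1+t$ to $1+a$, so $(R[t] \slash (t-a))_{1+t} \cong R_{1+a}$ and the structure map is identified with the localization $R \to R_{1+a}$.

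For the second statement, the plan is to reduce to the first. Since $\bar{x} \in R \slash I$ is a unit, I can choose $y \in R$ with $xy \equiv 1 \pmod{I}$, and set $a \defl xy - 1 \in I$. Then $\varphi \colon R \to R_{1+a}$ is standard Nisnevich along $I$ by the first part. In $R_{1+a}$ we have $\varphi(x) \varphi(y) = \varphi(1+a) = 1+a$, which is a unit of $R_{1+a}$ by construction of the localization; consequently $\varphi(x)$ is a unit, with inverse $\varphi(y) \cdot (1+a)^{-1}$.

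There is no real obstacle here. The only point requiring a small amount of thought is recognizing that a degree-one $f$ is sufficient: one is initially tempted to look for a monic polynomial of higher degree that somehow reflects the nontrivial structure of the localization $R_{1+a}$, but in fact $R[t] \slash (t-a) \cong R$ collapses back to the base ring and all the interesting work is carried by the factor $g$. Once this is noticed, both parts amount to routine verification.
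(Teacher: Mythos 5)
Your proof is correct and takes essentially the same approach as the paper: the same choice $f(t)=t-a$, $g(t)=1+t$ for the first claim, and the same reduction $xy=1+a$ with $a\in I$ for the second. If anything, you are slightly more explicit than the paper in verifying conditions~(1) and (2) of Definition~\ref{dfn:standard_Nisnevich_L_homomorphism}.
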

 
\begin{proof}
 The first claim can be proved by taking $f(t)=-a+t$, $g(t)=1+t$ and noting that $R_{1+a} \cong (R[t]\slash f)_g$ as $R$-algebras. To see the second claim, let $y \in R$ be an element such that $\bar{x} \bar{y}=1$ in $R \slash I$. Thus $xy=1+a$ for some element $a \in I$. The localization $R \rightarrow R_{1+a}$ yields the desired ring homomorphism (which is standard Nisnevich along $I$ by the first claim).
\end{proof}

\begin{lemma}\label{lemma:base_change_for_L_homomorphism}
If $R \rightarrow R^{\prime}$ is an $L$-homomorphism along $I$ (respectively standard Nisnevich along $I$), then for any ring homomorphism $\varphi \colon R \rightarrow A$, the base change $A \rightarrow A \ten{R} R^{\prime}$ of $\varphi$ is an $L$-homomorphism along $IA$ (respectively standard Nisnevich along $IA$).
\end{lemma}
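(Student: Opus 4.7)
The plan is straightforward: write $R'$ in the standard form guaranteed by Definition~\ref{dfn:standard_Nisnevich_L_homomorphism}, apply $\varphi$ coefficient-wise to the data presenting $R'$, and check that each of the defining conditions is preserved under base change.

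More concretely, I would begin by fixing a presentation $R' \cong (R[t]/f)_g$ with $f(t)=a_0+a_1 t+\cdots+a_n t^n$, $a_0\in I$, $a_1\in R^{\times}$, $g(0)=1$, satisfying conditions~(1) and (2) of Definition~\ref{dfn:standard_Nisnevich_L_homomorphism}. Let $f_A(t)\defl \varphi(a_0)+\varphi(a_1)t+\cdots+\varphi(a_n)t^n\in A[t]$ and $g_A(t)\defl \varphi(g)(t)\in A[t]$. Since tensoring with $A$ commutes with polynomial extensions, quotients, and localizations, there is a canonical isomorphism $A\ten{R} R'\cong (A[t]/f_A)_{g_A}$ of $A$-algebras, which is the candidate presentation exhibiting $A\rightarrow A\ten{R} R'$ as an $L$-homomorphism along $IA$.

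Next I would verify the four conditions one by one. We have $\varphi(a_0)\in\varphi(I)\subseteq IA$, and $\varphi(a_1)\in A^{\times}$ because units are preserved by ring homomorphisms. Also $g_A(0)=\varphi(g(0))=\varphi(1)=1$. Condition~(1) is preserved because $f_A'(t)=\varphi(f'(t))$ is the image of the unit $f'(t)\in (R[t]/f)_g$ under the ring homomorphism $(R[t]/f)_g\rightarrow (A[t]/f_A)_{g_A}$, hence is a unit. Condition~(2) follows similarly: applying $\varphi$ to the equation $f(t)-a_0=t\cdot \tilde{f}(t)$ yields $f_A(t)-\varphi(a_0)=t\cdot \varphi(\tilde{f})(t)$, and $\varphi(\tilde{f})(t)$ is the image of the unit $\tilde{f}(t)$, hence a unit in $(A[t]/f_A)_{g_A}$. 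Uniqueness of $\tilde{f}$ identifies $\varphi(\tilde{f})$ with the corresponding polynomial associated to $f_A$ and $\varphi(a_0)$. Finally, if $f$ is monic (so $a_n=1$), then $\varphi(a_n)=1$, so $f_A$ is monic, giving the standard Nisnevich conclusion.

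There is really no obstacle here: the lemma is essentially the observation that all the data defining an $L$-homomorphism (polynomial coefficients, the two unit conditions, monicity) is natural in the base ring, and the ideal condition $a_0\in I$ maps to $\varphi(a_0)\in IA$ by definition of a morphism of pairs. The only tiny care needed is to note that $\varphi(a_1)\in A^{\times}$ is genuinely the condition for an $L$-homomorphism (rather than the weaker $\varphi(a_1)$ being a unit modulo $IA$); this holds automatically because $a_1\in R^{\times}$.
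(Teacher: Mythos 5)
Your proposal is correct and follows the same approach as the paper's proof: fix the presentation $(R[t]/f)_g$, apply $\varphi$ to the coefficients, use the base-change isomorphism $A\ten{R}R'\cong(A[t]/\varphi(f))_{\varphi(g)}$, and observe that each defining condition is preserved. The paper states this tersely; your write-up just spells out the verification of conditions (1) and (2) explicitly, which is a reasonable level of added detail.
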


\begin{proof}
 Pick polynomials $f$ and $g$ in $R[t]$ as in the definition of $L$-homomorphisms along $I$ (and choose $f$ monic in case $R \rightarrow R^{\prime}$ is standard Nisnevich along $I$). Then $\varphi(f)$ and $\varphi(g)$ are polynomials in $A[t]$ satisfying the same conditions, so the claim follows from the isomorphism
 \[
 A \ten{R} R^{\prime} \cong \bigl( A[t] \slash \varphi(f) \bigr)_{\varphi(g)}
 \]
 of $A$-algebras.
\end{proof}

 The following proposition shows that $L$-homomorphisms along $I$ give rise to affine Nisnevich squares. The argument follows closely the proof of \cite[Lemma on p.~321]{LINDEL} in the case of {\'e}tale neighbourhoods.
 
 \begin{prop}\label{prop:L_homomorphism_implies_affine_Nisnevich_square}
 Let $I \subseteq R$ be an ideal and let $R \rightarrow R^{\prime}$ be an $L$-homomorphism along $I$. Then the induced homomorphism $R \slash I^k \rightarrow R^{\prime} \slash I^k R^{\prime}$ is an isomorphism for all $k \geq 1$.
 \end{prop}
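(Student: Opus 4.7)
The plan is to prove the statement by induction on $k$, with the base case $k=1$ being the crux of the argument and the inductive step a routine application of flatness. First I would establish that $R \slash I \to R^{\prime} \slash I R^{\prime}$ is an isomorphism by directly analysing the structure of $R^{\prime} = (R[t] \slash f)_g$ modulo $I$, and then use the fact that $L$-homomorphisms are \'{e}tale (hence flat) to pass to higher $k$ by a five-lemma argument.

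For the base case, I would write $f(t) = a_0 + t \tilde{f}(t)$ with $a_0 \in I$ (as in condition~(2) of Definition~\ref{dfn:standard_Nisnevich_L_homomorphism}). Reducing modulo $I$, the image $\bar{f}$ becomes $t \cdot \bar{\tilde{f}}(t)$, and since $\bar{\tilde{f}}$ remains a unit in $R^{\prime} \slash I R^{\prime} \cong (R \slash I [t] \slash \bar{f})_{\bar{g}}$, the relation $\bar{f}(t) = 0$ forces the image of $t$ to vanish. Since $g(0) = 1$, evaluation at $t = 0$ defines a homomorphism $R^{\prime} \slash I R^{\prime} \to R \slash I$ providing a retraction to the canonical map $R \slash I \to R^{\prime} \slash I R^{\prime}$. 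As $R^{\prime} \slash I R^{\prime}$ is generated as an $R \slash I$-algebra by the image of $t$ (which is zero) and by $\bar{g}^{-1}$ (which equals $1$ once $t=0$), this retraction is in fact an inverse.

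For the inductive step, assuming the statement for $k$, the short exact sequence $0 \to I^k \slash I^{k+1} \to R \slash I^{k+1} \to R \slash I^k \to 0$ of $R$-modules, tensored with the flat $R$-algebra $R^{\prime}$, yields a short exact sequence
\[
0 \to (I^k \slash I^{k+1}) \otimes_R R^{\prime} \to R^{\prime} \slash I^{k+1} R^{\prime} \to R^{\prime} \slash I^k R^{\prime} \to 0 \smash{\rlap{.}}
\]
Since $I^k \slash I^{k+1}$ is naturally an $R \slash I$-module, the base change identifies $(I^k \slash I^{k+1}) \otimes_R R^{\prime}$ with $(I^k \slash I^{k+1}) \otimes_{R \slash I} (R^{\prime} \slash I R^{\prime})$, which by the base case equals $I^k \slash I^{k+1}$. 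Applying the five lemma to the evident morphism of short exact sequences (whose right-hand vertical arrow is the induction hypothesis) gives the desired isomorphism at level $k+1$.

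The main obstacle will be extracting from the two technical conditions defining an $L$-homomorphism the geometric content that modulo $I$, the polynomial $f$ has a unique root $t = 0$ in the localization at $g$ and that this root corresponds to the canonical $R$-algebra structure on $R^{\prime}$. The slightly unusual formulation of conditions~(1) and (2) (in particular the requirement that $a_1 \in R^{\times}$ rather than merely $\bar{a}_1 \in (R \slash I)^{\times}$) is precisely what makes condition~(2) survive intact to the quotient $R^{\prime} \slash I R^{\prime}$ and thereby force the image of $t$ to vanish; all other parts of the argument are formal.
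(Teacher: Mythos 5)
Your proof is correct, but it takes a genuinely different route from the paper. You handle the base case $k=1$ directly by observing that condition~(2) forces $t=0$ in $R'/IR'$ (since $\bar{f}(t)=t\cdot\overline{\tilde{f}}(t)$ with $\overline{\tilde{f}}$ a unit), and then you bootstrap to higher $k$ via the standard flatness/five-lemma induction, which is available because $L$-homomorphisms are \'{e}tale. The paper instead proves injectivity and surjectivity separately and for all $k$ simultaneously: injectivity is extracted from the faithful flatness of $R_{1+I}\to (R_{1+I}[t]/f)_g$ (the faithfulness step being to show every maximal ideal of $R_{1+I}$ lies over a maximal ideal of the form $(\mathfrak{m},t)$ upstairs), while surjectivity is reduced via Nakayama's lemma to $k=1$ and then established by an explicit computation that every element $\frac{a+ta'}{1+tb'}$ of $R'$ is congruent to $a \in R$ modulo $IR'$. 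Your route is more streamlined and arguably more conceptual; the paper's route has the side benefit of recording the faithful flatness of the homomorphism over $R_{1+I}$, which is a useful auxiliary fact, but either argument is perfectly sound.

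One small correction to the reflective remarks in your last paragraph: you suggest that the requirement $a_1 \in R^{\times}$ (as opposed to merely $\bar{a}_1$ being a unit modulo $I$) is what ensures condition~(2) survives to $R'/IR'$. In fact condition~(2) asserts that $\tilde{f}$ is a unit in $R'$ itself, and units pass to any quotient of $R'$ automatically, so the survival has nothing to do with $a_1 \in R^{\times}$. The stronger hypothesis on $a_1$ is a normalization used elsewhere in the paper (for instance to divide $f$ by $a_1$ in the construction of good lifts), not for the present proposition. This does not affect the validity of your proof, which only uses condition~(2) as stated.
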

  
 \begin{proof}
 We first show that $R \slash I^k \rightarrow R^{\prime} \slash I^k R^{\prime}$ is injective. Pick polynomials $f$ and $g$ as in Definition~\ref{dfn:standard_Nisnevich_L_homomorphism} so that $R \rightarrow R^{\prime}$ is isomorphic to $R \rightarrow (R[t] \slash f)_g$. Since all the elements of $1+I$ are sent to units in $R \slash I^k$, the homomorphism $R \slash I^k \rightarrow R^{\prime} \slash I^k R^{\prime}$ can be obtained as base change of
 \[
 \varphi \colon R_{1+I} \rightarrow (R_{1+I}[t] \slash f)_g
 \]
 along the projection $R_{1+I} \rightarrow R_{1+I} \slash I^k \cong R \slash I^k$. We claim that $\varphi$ is faithfully flat. This implies the desired injectivity since faithfully flat morphisms are universally injective.
 
 To see that $\varphi$ is faithfully flat, we need to check that every maximal ideal $\mathfrak{m}$ of $R_{1+I}$ is of the form $\varphi^{-1}(\mathfrak{p})$ for some prime ideal $\mathfrak{p} \subseteq  (R_{1+I}[t] \slash f)_g$. By construction we have $I \subseteq \mathfrak{m}$, so $f(t)$ lies in $(\mathfrak{m},t)$ since $f(0) \in I$. On the other hand, we have $g(0)=1$, so $g \notin (\mathfrak{m},t)$ (since this is a proper ideal of $R[t]$ with quotient isomorphic to $R \slash \mathfrak{m}$). Thus $(\mathfrak{m},t)$ is a maximal ideal of $(R[t] \slash f)_g$ and clearly $\varphi^{-1}\bigl((\mathfrak{m},t)\bigr)=\mathfrak{m}$. It follows that $\varphi$ is indeed faithfully flat, so $R \slash I^k \rightarrow R^{\prime} \slash I^k R^{\prime}$ is injective.
 
 It remains to check surjectivity. Since the ideal generated by $I$ in $R \slash I^k$ is nilpotent, a suitable form of Nakayama's Lemma implies that it suffices to check surjectivity of $R \slash I \rightarrow R^{\prime} \slash I R^{\prime}$. First note that $t \in I R^{\prime}$. Indeed, recall from Definition~\ref{dfn:standard_Nisnevich_L_homomorphism} that $f(t)=a_0+t \cdot \tilde{f}(t)$, so the invertibility of $\tilde{f}$ in $R^{\prime}$ implies that $t$ and $a_0$ are associates in $R^{\prime}$. A general element of $R^{\prime}$ can be written as $(a + t a^{\prime})(1+tb^{\prime})^{-1}$ where $a \in R$, $a^{\prime}, b^{\prime} \in R[t]$ and $(1+tb^{\prime})=g^n$ for some $n \geq 0$.
 
 Since $(1+tb^{\prime})^{-1}=1-tb^{\prime}(1+tb^{\prime})^{-1}$ in $R^{\prime}$ we find that
 \[
 \Bigl(\frac{a+ta^{\prime}}{1+tb^{\prime}} -a \Bigr)=(a+ta^{\prime})\bigl(1-tb^{\prime}(1+tb^{\prime})^{-1}\bigr) -a \in (t) \subseteq IR^{\prime}
 \]
 holds, so $R \slash I \rightarrow R^{\prime} \slash IR^{\prime}$ is indeed surjective.
 \end{proof}
 
 If $I$ is a principal ideal $I=(a)$, we will see that any $L$-homomorphism $\varphi \colon R \rightarrow R^{\prime}$ along $(a)$ is also an analytic isomorphism along $(a)$. We have thus obtained new examples of analytic patching diagrams. From the above proposition, we already know that the homomorphism $R \slash a \rightarrow R^{\prime} \slash \varphi(a)$ is an isomorphism, so to establish the claim, it only remains to show that the diagram
 \[
 \xymatrix{R \ar[d]_{\varphi} \ar[r]^{\lambda_a} & R_a  \ar[d]^{\varphi_a} \\ R^{\prime} \ar[r]_-{\lambda_{\varphi(a)}} & R^{\prime}_{\varphi(a)} }
 \]
 is a pullback, which follows from the proposition below.
 
 \begin{prop}\label{prop:etale_homomorphism_analytic_isomorphism}
 Let $\varphi \colon R \rightarrow R^{\prime}$ be an {\'e}tale homomorphism and let $a \in R$ be an element such that the induced homomorphism $R \slash a \rightarrow R^{\prime} \slash \varphi(a)$ is an isomorphism. Then for any $R$-module $M$, the diagram
 \[
 \xymatrix{ M \ar[d] \ar[r] & M_a \ar[d] \\ R^{\prime} \ten{R} M \ar[r] & (R^{\prime} \ten{R} M)_{\varphi(a)}}
 \]
 is a pullback. In particular, $\varphi$ is an analytic isomorphism along $a \in R$.
 \end{prop}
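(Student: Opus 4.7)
My plan is to deduce the pullback property for an arbitrary $R$-module $M$ from a short exact sequence of $R$-modules
\[
0 \to R \xrightarrow{(\lambda_a, \varphi)} R_a \oplus R' \xrightarrow{\varphi_a - \lambda_{\varphi(a)}} R'_{\varphi(a)} \to 0.
\]
Writing $b \defl \varphi(a)$, the module $R'_b$ is flat over $R$ (as the composition $R \to R' \to R'_b$ of flats), so $\Tor_1^R(R'_b, M) = 0$ and tensoring this SES with $M$ yields the SES
\[
0 \to M \to M_a \oplus (R' \ten{R} M) \to (R' \ten{R} M)_b \to 0 \smash{\rlap{,}}
\]
whose left-exactness is exactly the pullback statement; the ``in particular'' clause is the specialization $M = R$.

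The SES for $R$ would rest on three ingredients. First, $R \to R_a \times R'$ is faithfully flat: both factors are flat, and every prime $\mathfrak{p} \subseteq R$ lies in the image on spectra, since if $a \notin \mathfrak{p}$ it comes from $\Spec(R_a)$, while if $a \in \mathfrak{p}$ it lifts via the hypothesis isomorphism $R/a \cong R'/b$ to a prime of $R'$ containing $b$; this supplies injectivity of the leftmost arrow. Second, a five-lemma induction on $n$, using flatness of $\varphi$, the short exact sequence $0 \to a^{n-1}/a^n \to R/a^n \to R/a^{n-1} \to 0$, and the fact that $a^{n-1}/a^n$ is an $R/a$-module (so that $a^{n-1}/a^n \ten{R} R' \cong a^{n-1}/a^n$ via the hypothesis iso), upgrades the hypothesis to isomorphisms $R/a^n \xrightarrow{\cong} R'/b^n$ for every $n \geq 1$. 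Third, tensoring the exact sequence $0 \to \mathrm{Ann}_R(a^n) \to R \xrightarrow{a^n} R$ with $R'$ and using the $R/a^n$-module structure on $\mathrm{Ann}_R(a^n)$ yields isomorphisms $\mathrm{Ann}_R(a^n) \xrightarrow{\cong} \mathrm{Ann}_{R'}(b^n)$ under $\varphi$; taking the colimit in $n$ gives $\varphi|_{T_a} \colon T_a \xrightarrow{\cong} T_b$, where $T_a \defl \ker(\lambda_a)$ and $T_b \defl \ker(\lambda_b)$. Surjectivity at the right end of the SES is a routine application of the mod-$b^{n+1}$ iso: given $y/b^n \in R'_b$, pick $r \in R$ with $\varphi(r) \equiv y$ modulo $b^{n+1}$ and split off the resulting remainder into the $R'$-component.

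The main technical obstacle is middle exactness, where the failure of $a$ to be a non-zerodivisor prevents a single naive lift from succeeding. Given $(x, y) \in R_a \oplus R'$ with $\varphi_a(x) = \lambda_b(y)$, I would write $x = r_0/a^n$ and set $z \defl \varphi(r_0) - b^n y$; then $z \in T_b$ and is annihilated by some $b^m$. The idea is to choose a large $N$ and apply the iso $R/a^{m+n+N} \cong R'/b^{m+n+N}$ twice: first to lift $y$ modulo $b^{m+n+N}$ to an element $r_1 \in R$ with $\varphi(r_1) = y + b^{m+n+N} y_1$; and then to $a^m(a^n r_1 - r_0) \in R$, whose image $b^{m+n+N} y_1 \in R'$ lies in $b^{m+n+N} R'$, forcing $a^m(a^n r_1 - r_0) = a^{m+n+N} s$ for some $s \in R$. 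Setting $r \defl r_1 - a^N s$ produces $a^m(a^n r - r_0) = 0$, so $\lambda_a(r) = x$ holds in $R_a$, while a parallel calculation shows $b^{m+n}(\varphi(r) - y) = 0$, placing the error $\varphi(r) - y$ in $T_b$. The isomorphism $\varphi|_{T_a} \colon T_a \xrightarrow{\cong} T_b$ then yields a unique $r^{\ast} \in T_a$ with $\varphi(r^{\ast}) = \varphi(r) - y$, and $r - r^{\ast}$ is the desired preimage of $(x, y)$. The delicate point is this double use of the mod-$a^{m+n+N}$ iso, which needs to be balanced so that the residual defect falls in a range governed by $T_a \xrightarrow{\cong} T_b$; without that last isomorphism, the naive lift only controls $(x, y)$ up to an element of $T_b$ not necessarily coming from $R$.
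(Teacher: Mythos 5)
Your proposal is correct (up to a minor arithmetic slip: the image of $a^m(a^nr_1-r_0)$ under $\varphi$ is $b^{2m+2n+N}y_1$, not $b^{m+n+N}y_1$, but since it still lies in $b^{m+n+N}R'$, which is all that is used, the argument stands). It does, however, take a genuinely different route from the paper's.

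The paper begins exactly as you do, with the observation that $R\to R_a\times R'$ is faithfully flat, but then uses it for \emph{descent of the pullback property}: it suffices to check the square after base change along $R_a$ (where both verticals become isomorphisms) and along $R'$. The $R'$ case is then reduced, via the cancellation law for pullbacks, to a square built from the multiplication $\mu\colon R'\otimes_R R'\to R'$; the key structural input is that $\mu$ is a \emph{surjective étale} homomorphism, hence a localization at an idempotent $e$, and the hypothesis forces $e$ and $1\otimes\varphi(a)$ to be comaximal, so the square is a Zariski patching diagram and therefore a pullback. Your proof instead constructs the short exact sequence $0\to R\to R_a\oplus R'\to R'_{\varphi(a)}\to 0$ directly and verifies its exactness by hand (injectivity via faithful flatness, the mod-$a^n$ isomorphisms by a flatness induction, the identification of $a$-torsion, and the explicit lift for middle exactness), then tensors with $M$ using flatness of $R'_{\varphi(a)}$. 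Your argument uses only that $\varphi$ is flat, not that it is étale; it therefore re-derives the content of the Stacks Project result on flat descent of modules that the paper cites as the ``easy'' reference before giving its étale shortcut. The paper's proof buys brevity and a conceptual reduction to Zariski patching by exploiting that surjective étale maps are clopen immersions; yours buys greater generality at the cost of a more intricate, element-level verification of middle exactness (the interplay between the mod-$a^n$ isomorphisms and the torsion identification $T_a\cong T_b$ is genuinely the hardest point, and you handle it correctly).
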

 
 \begin{proof}
 Since $\varphi$ is flat, this follows from \cite[\href{https://stacks.math.columbia.edu/tag/05ES}{Theorem 05ES}]{stacks-project}. Since $\varphi$ is {\'e}tale, it is possible to give a more elementary proof of this fact in our particular case.
 
 Since $R \rightarrow R \slash a$ factors through $\varphi \colon R \rightarrow R^{\prime}$, the ring homomorphism
 \[
 R \rightarrow R_a \times R^{\prime}
 \]
 is faithfully flat. Thus it suffices to check that the above diagram is a pullback after tensoring with $R_a$ respectively with $R^{\prime}$. This is immediate for $R_a$ since both vertical homomorphism become isomorphisms after tensoring.
 
 By the cancellation law for pullbacks, it suffices to check that the diagram
 \[
 \xymatrix{ R^{\prime} \otimes R^{\prime} \otimes M \ar[r] \ar[d]_{\mu \otimes M} \ar[r] & (R^{\prime} \otimes R^{\prime} \otimes M)_{1 \otimes a^{\prime} } \ar[d]^{(\mu \otimes M)_{1 \otimes a^{\prime}}} \\ R^{\prime} \otimes M \ar[r] & (R^{\prime} \otimes M)_{a^{\prime}} }
 \]
 is a pullback to conclude the proof (here $a^{\prime}=\varphi(a)$, $\mu$ denotes the multiplication, and all tensor products are over $R$).
 
 Since $R^{\prime} \otimes R^{\prime}$ and $R^{\prime}$ are {\'e}tale $R$-algebras, it follows that $\mu$ is {\'e}tale as well. But it is also surjective, so, up to isomorphism, it is given by the localization $(R^{\prime} \otimes R^{\prime})_e$ at some idempotent $e$ (see \cite[\href{https://stacks.math.columbia.edu/tag/00U8}{Lemma 00U8}]{stacks-project}).
 
 The assumption implies that $\mu$ induces an isomorphism modulo $1 \otimes a^{\prime}$ (since the section $R^{\prime} \otimes \varphi$ of $\mu$ has this property), so $1 \otimes a^{\prime}$ and $e$ are comaximal. Thus the above diagram is isomorphic to a Zariski patching diagram, hence a pullback.
 \end{proof}
 
 Having established the basic properties of $L$-homomorphism and standard Nisnevich morphisms, we now turn to examples. We want to show that both {\'e}tale neighbourhoods and henselizations can be written as filtered colimits of standard Nisnevich homomorphisms. We first consider the case of {\'e}tale neighbourhoods $(B, \mathfrak{n})$ of a local ring $(A, \mathfrak{m})$. Recall from \cite{LINDEL} that there exists a monic polynomial $f(t) \in A[t]$ such that $a_0=f(0) \in \mathfrak{m}$, $f^{\prime}(0) \notin \mathfrak{m} $ and $B$ is isomorphic to the quotient $(A[t]_{(\mathfrak{m},t)}) \slash f$. In the patching diagrams associated to {\'e}tale neighbourhoods, $A \rightarrow B$ is an analytic isomorphism along some element $h \in A$, specifically $h=a_0$.
 
\begin{prop}\label{prop:etale_neighbourhood_standard_Nisnevich}
 Let $\varphi \colon (A, \mathfrak{m}) \rightarrow (B,\mathfrak{n})$ be an {\'e}tale neighbourhood and let $h \in \mathfrak{m}$ be an element such that $\varphi$ is an analytic isomorphism along $h$, chosen as above. Then $B$ is a directed colimit of standard Nisnevich homomorphisms along $a_0$. In particular, all patching diagrams associated to {\'e}tale neighbourhoods are directed colimits of patching diagrams associated to standard Nisnevich homomorphisms (along $a_0$).
\end{prop}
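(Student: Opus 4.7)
The plan is to exhibit $B$ as a filtered colimit of rings of the form $(A[t]/f)_g$, using the Lindel presentation $B = (A[t]/f)_{(\mathfrak{m},t)}$ with $f(t) = a_0 + a_1 t + \cdots + a_{n-1} t^{n-1} + t^n$ monic, $a_0 \in \mathfrak{m}$, $a_1 \notin \mathfrak{m}$; here $h = a_0$. Since $A$ is local and $a_1 \notin \mathfrak{m}$, we have $a_1 \in A^{\times}$. Set $\tilde f(t) \defl (f(t) - a_0)/t = a_1 + a_2 t + \cdots + t^{n-1}$, so that $\tilde f(0) = f'(0) = a_1$.

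Define the multiplicative subset
\[
\Sigma \defl \bigl\{\, g \in A[t] \mid g(0) = 1,\ \text{and}\ f'(t), \tilde f(t)\ \text{are units in}\ (A[t]/f)_g \,\bigr\},
\]
which is filtered under divisibility. For each $g \in \Sigma$, the $A$-algebra $(A[t]/f)_g$ satisfies Definition~\ref{dfn:standard_Nisnevich_L_homomorphism} along the ideal $(a_0)$ directly: $f$ is monic with $f(0) = a_0 \in (a_0)$, the linear coefficient $a_1$ lies in $A^{\times}$, $g(0) = 1$, and the conditions (1) and (2) are built into the definition of $\Sigma$. Hence $A \to (A[t]/f)_g$ is standard Nisnevich along $(a_0)$ for every $g \in \Sigma$.

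The heart of the argument is to show $B = (A[t]/f)_{\Sigma}$. Since $B = (A[t]/f)_{(\mathfrak{m},t)}$, it suffices to check that every $h(t) \in A[t]$ with $h(0) \notin \mathfrak{m}$, equivalently $h(0) \in A^{\times}$, has invertible image in $(A[t]/f)_{\Sigma}$. After rescaling by $h(0)^{-1}$ we may assume $h(0) = 1$. Consider
\[
g(t) \defl a_1^{-2} \cdot h(t) \cdot f'(t) \cdot \tilde f(t) \in A[t].
\]
Then $g(0) = a_1^{-2} \cdot 1 \cdot a_1 \cdot a_1 = 1$, and since $g$ is a multiple of $f' \tilde f$ in $A[t]$, both $f'$ and $\tilde f$ become units in $(A[t]/f)_g$, so $g \in \Sigma$. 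The equality $g = a_1^{-2} h f' \tilde f$ exhibits $h$ as a factor of the unit $g$ in $(A[t]/f)_g$, so $h$ is a unit in $(A[t]/f)_g \subseteq (A[t]/f)_{\Sigma}$. This proves $B \cong \colim_{g \in \Sigma} (A[t]/f)_g$.

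The final assertion about patching diagrams follows because localization along $a_0$ and formation of polynomial rings both commute with filtered colimits of commutative $R$-algebras: each of the four corners of either analytic patching diagram associated to $\varphi$ along $a_0$ is the filtered colimit, over $g \in \Sigma$, of the corresponding corner of the patching diagram associated to $A \to (A[t]/f)_g$ along $a_0$ (which is itself an analytic patching diagram by Proposition~\ref{prop:etale_homomorphism_analytic_isomorphism} combined with Proposition~\ref{prop:L_homomorphism_implies_affine_Nisnevich_square}). I expect the main obstacle to be arranging that the modified polynomial used to invert a given $h$ simultaneously satisfies $g(0) = 1$ \emph{and} forces both $f'$ and $\tilde f$ to become units; this is precisely what the factor $a_1^{-2}$ in the formula for $g$ accomplishes, and it uses crucially that $a_1 \in A^{\times}$ rather than merely $a_1 \notin \mathfrak{m}$, which is available because $A$ is local.
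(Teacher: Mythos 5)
Your proof is correct and follows essentially the same strategy as the paper: start from the Lindel presentation $B\cong(A[t]/f)_{(\mathfrak{m},t)}$, note that $a_1\in A^{\times}$ because $A$ is local and $f'(0)=a_1\notin\mathfrak{m}$, and then exhibit $B$ as the filtered colimit of the standard Nisnevich algebras $(A[t]/f)_g$ over $g$ with $g(0)=1$ by using the fixed unit-producing factor $a_1^{-2}f'\tilde f$. The only cosmetic difference is that you package these $g$ into an abstractly defined multiplicative set $\Sigma$ whereas the paper always tacks the factor $a_1^{-2}f'\tilde f$ onto a free $g$ with $g(0)=1$; the resulting colimits and the reduction of the second assertion (via base-change stability of standard Nisnevich homomorphisms and filtered colimits commuting with localization) are the same.
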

 
\begin{proof}
 The second claim follows from the first since standard Nisnevich homomorphisms are stable under base change (see Lemma~\ref{lemma:base_change_for_L_homomorphism}).
 
To see the first claim, recall that by definition $f^{\prime}(0) \notin \mathfrak{m}$ Writing $f(t)=\sum_{i=0}^n a_i t^i$, we have $f^{\prime}(0)=a_1$, so $a_1$ is a unit in $A$. If we write $\tilde{f}$ for the unique polynomial such that $f(t)-a_0=t \cdot \tilde{f}(t)$ holds, then the constant term of $\tilde{f}$ is $a_1 \in A^{\times}$. Thus both $f^{\prime}$ and $\tilde{f}$ are units in the local ring $A[t]_{(\mathfrak{m},t)}$. It follows that $B$ can be written as the localization of
 \[
 (A[t] \slash f)_{a_1^{-2} f^{\prime} \tilde{f}}
 \]
 at the multiplicative set $A[t] \setminus (\mathfrak{m},t)$. These are precisely the polynomials whose constant term is a unit. To invert these, it suffices to invert all the polynomial $g(t)$ with $g(0)=1$. Since the homomorphisms
 \[
 A \rightarrow (A[t] \slash f)_{a_1^{-2} f^{\prime} \tilde{f} g}
 \]
 are standard Nisnevich along $a_0$ (see Definition~\ref{dfn:standard_Nisnevich_L_homomorphism}), the claim follows by taking the filtered colimit over all such $g$, ordered by divisibility.
\end{proof} 

 Next we show how the henselization of a pair $(R,I)$ can be built using ring homomorphisms which are standard Nisnevich along $I$. The argument is inspired by the construction of the henselization given in \cite{GRECO}, but it differs slightly: the diagram we construct consists of {\'e}tale homomorphisms, while the diagram in \cite{GRECO} uses morphisms which are only essentially {\'e}tale. On the other hand, our diagram is only a filtered diagram, not directed.
 
 \begin{dfn}\label{dfn:diagram_category_for_henselization}
 Let $I \subseteq R$ be an ideal. Let $\ca{H}_{(R,I)}$ denote the category whose objects are the ring homomorphisms $R \rightarrow A$ which can be written as composites
 \[
 R=R_0 \rightarrow R_1 \rightarrow \ldots \rightarrow R_k=A
 \]
 such that $R_i \rightarrow R_{i+1}$ is standard Nisnevich along $IR_i$ for all $i=0, \ldots, k-1$. The morphisms of $\ca{H}_{(R,I)}$ are the morphisms of $R$-algebras, that is, the morphisms of rings making the evident triangle commutative.
 \end{dfn}
 
 We will use the following lemma to show that $\ca{H}_{(R,I)}$ is filtered. Its proof closely follows the proof of \cite[Lemma~1.5]{GRECO}, though the statement and the assumptions are a bit different.
 
\begin{lemma}\label{lemma:weak_coequalizer_in_H}
Let $R \rightarrow R^{\prime}$ be standard Nisnevich along the ideal $I \subseteq R$ and let $\varphi, \psi \colon R^{\prime} \rightarrow A$ be two homomorphisms of $R$-algebras. Then there exists an element $a \in IA$ such that the two composites in the diagram
\[
\xymatrix{R^{\prime} \ar@<-0.5 ex>[r]_{\psi} \ar@<0.5ex>[r]^{\varphi} & A \ar[r] & A_{1+a} }
\]
are equal.
\end{lemma}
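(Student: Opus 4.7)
The plan is to exploit the concrete description of standard Nisnevich homomorphisms: writing $R'\cong (R[t]/f)_g$ as in Definition~\ref{dfn:standard_Nisnevich_L_homomorphism}, the entire content of the lemma reduces to comparing the two images of $t$ in $A$. So let $x=\varphi(t)$ and $y=\psi(t)$ and let $f(s)=s^n+\sum_{i=0}^{n-1}a_is^i\in R[s]$ be the monic polynomial witnessing the standard Nisnevich property. By construction we have $f(x)=f(y)=0$ in $A$ and $g(x),g(y)\in A^\times$.

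The first observation is that $x$ and $y$ have the same image in $A/IA$. Indeed, by Proposition~\ref{prop:L_homomorphism_implies_affine_Nisnevich_square} the reduction homomorphism $R/I\rightarrow R'/IR'$ is an isomorphism, so the two composites $R'\rightrightarrows A\rightarrow A/IA$ factor through the common map $R'/IR'\cong R/I\rightarrow A/IA$ and hence are equal. Thus $x-y\in IA$.

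The second step is to use the universal ``difference quotient'' identity in $R[x,y]$, namely
\[
f(x)-f(y)=(x-y)\cdot D_f(x,y),\qquad D_f(x,y)\defl\sum_{i=1}^{n}a_i\sum_{j=0}^{i-1}x^jy^{i-1-j},
\]
with the convention $a_n=1$. A direct computation gives $D_f(y,y)=f'(y)$. Since $x=y+\delta$ with $\delta\in IA$, expanding $D_f(y+\delta,y)$ shows that $D_f(x,y)\equiv f'(y)\pmod{IA}$. Now $f'$ is by assumption a unit in $R'$, hence $f'(y)=\psi(f'(t))$ is a unit in $A$; let $v\in A$ be its inverse. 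Then $v\cdot D_f(x,y)=1+a$ for some $a\in IA$, so $D_f(x,y)$ becomes a unit in the localization $A_{1+a}$.

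The final step puts these pieces together. Since $f(x)=f(y)=0$, the identity above yields $(x-y)\cdot D_f(x,y)=0$ in $A$. In $A_{1+a}$ the right factor is a unit, so $x=y$ there, i.e.\ $\varphi(t)=\psi(t)$ in $A_{1+a}$. Because $R'$ is generated as an $R$-algebra by $t$ together with $g(t)^{-1}$ (and $g(x),g(y)$ are already units in $A$, hence in $A_{1+a}$), the two $R$-algebra homomorphisms $\varphi$ and $\psi$ agree on a generating set after localization, so they become equal in $A_{1+a}$ as required. There is no real obstacle here beyond keeping track of the elementary algebra; the key conceptual input is that $L$-homomorphisms along $I$ are isomorphisms modulo $I$, which forces $x-y\in IA$ and thereby lets $f'(y)$ control the behaviour of $D_f(x,y)$ up to an element of $IA$.
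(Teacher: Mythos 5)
Your proof is correct and takes essentially the same approach as the paper: both arguments reduce to showing that a difference-quotient factor of $x-y$ (your $D_f(x,y)$, the paper's $q(x)$ coming from the factorization $f(t)=(t-y)q(t)$) is a unit modulo $IA$, hence invertible after localizing at a suitable $1+a$ with $a\in IA$. The one minor departure is that you obtain $x-y\in IA$ directly from the isomorphism $R/I\cong R'/IR'$ of Proposition~\ref{prop:L_homomorphism_implies_affine_Nisnevich_square}, whereas the paper deduces $x,y\in IA$ individually from the invertibility of $\tilde f$ --- a pleasant small simplification.
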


\begin{proof}
 Let $f(t)$ and $g(t)$ be as in Definition~\ref{dfn:standard_Nisnevich_L_homomorphism}, so that $R \rightarrow R^{\prime}$ is isomorphic to $R \rightarrow (R[t] \slash f)_g$. In particular, $\varphi$ and $\psi$ are uniquely determined by the choice of a root $x$ respectively $y$ of $f(t)$ in $A$. Thus $f(t)=(t-x)p(t)=(t-y)q(t)$ for some monic polynomials $p, q \in A[t]$. We claim that $q(x) \in A$ is a unit modulo $IA$. Note that the conclusion follows from this claim, for we can then find elements $z \in A$ and $a \in IA$ such that $q(x) \cdot z=1+a$, which implies that $(x-y)=0$ in the localization $A_{1+a}$ (since $(x-y)q(x)=f(x)=0$ in $A$).
 
 To see the claim, we first take the derivative of the equation $f(t)=(t-y)q(t)$ to find that $f^{\prime}(t)=(t-y)q^{\prime}(t)+q(t)$ holds. Evaluating in $x$ yields $f^{\prime}(x)=(x-y)q^{\prime}(x)+q(x)$. By assumption, $\varphi$ sends $f^{\prime}(t)$ to a unit in $A$, so $f^{\prime}(x)=\varphi(f^{\prime})$ is a unit. Similarly $\tilde{f}(x)=\varphi(\tilde{f})$ is a unit in $A$, so $x=-\tilde{f}(x)^{-1} a_0$ lies in $IA$. The same argument applied to $\psi$ shows that $y \in IA$. Taken together, this shows that $q(x)=f^{\prime}(x)+(y-x)q^{\prime}(x)$ is a unit modulo $IA$, as claimed.
\end{proof}

\begin{lemma}\label{lemma:category_H_filtered}
Let $I \subseteq R$ be an ideal. Then the category $\ca{H}_{(R,I)}$ is filtered.
\end{lemma}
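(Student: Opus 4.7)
The plan is to verify the three standard conditions characterizing a filtered category: (1) non-emptiness, (2) every pair of objects admits a common upper bound, and (3) every parallel pair of morphisms admits a coequalizing morphism. Non-emptiness is immediate, since $R \to R$ is an object of $\ca{H}_{(R,I)}$, obtained as the empty composite ($k=0$).

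For condition (2), given two objects $R \to A$ and $R \to B$ of $\ca{H}_{(R,I)}$, I would take $R \to A \ten{R} B$ as the common upper bound. Writing $B$ as a composite $R=S_0 \to S_1 \to \ldots \to S_m=B$ of standard Nisnevich homomorphisms along the ideals $IS_i$, I base-change each step along $R \to A$, obtaining a composite $A \to A \ten{R} S_1 \to \ldots \to A \ten{R} B$. By Lemma~\ref{lemma:base_change_for_L_homomorphism}, each of these is standard Nisnevich along the respective ideal, so concatenating this with a chosen presentation of $R \to A$ exhibits $A \ten{R} B$ as an object of $\ca{H}_{(R,I)}$. The natural $R$-algebra morphisms $A \to A \ten{R} B$ and $B \to A \ten{R} B$ then serve as the required morphisms in $\ca{H}_{(R,I)}$.

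For condition (3), consider a parallel pair $\varphi,\psi \colon A \to B$ in $\ca{H}_{(R,I)}$ and fix a presentation $R=R_0 \to R_1 \to \ldots \to R_k=A$ of $A$ as an iterated standard Nisnevich extension. I would proceed by induction on $k$. When $k=0$ we have $A=R$, so $\varphi$ and $\psi$ both coincide with the structure morphism $R \to B$, and the identity on $B$ coequalizes them. For $k \geq 1$, restrict $\varphi$ and $\psi$ to $R_{k-1}$; the inductive hypothesis, applied to the pair of composites $R_{k-1} \to R_k \to B$, produces a morphism $B \to B^{\prime}$ in $\ca{H}_{(R,I)}$ that coequalizes the restrictions. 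The maps $\varphi,\psi$ then give two $R_{k-1}$-algebra homomorphisms $R_k \to B^{\prime}$. Since $R_{k-1} \to R_k$ is standard Nisnevich along $IR_{k-1}$, Lemma~\ref{lemma:weak_coequalizer_in_H} furnishes an element $a \in IB^{\prime}$ such that $B^{\prime} \to B^{\prime}_{1+a}$ equalizes these two maps. By Lemma~\ref{lemma:standard_Nisnevich_and_Jacobson_radical}, the localization $B^{\prime} \to B^{\prime}_{1+a}$ is itself standard Nisnevich along $IB^{\prime}$, so concatenating it with the given presentation of $B^{\prime}$ shows that the composite $B \to B^{\prime} \to B^{\prime}_{1+a}$ is a morphism in $\ca{H}_{(R,I)}$ which coequalizes $\varphi$ and $\psi$, completing the induction.

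No serious obstacle is anticipated: condition (2) reduces cleanly to stability of the standard Nisnevich condition under base change, and condition (3) reduces, step by step along the presentation of the source, to the already-established Lemma~\ref{lemma:weak_coequalizer_in_H}. The only subtlety is bookkeeping in the inductive step of (3), where one must be careful that the two maps $R_k \to B^{\prime}$ agree on $R_{k-1}$ before invoking Lemma~\ref{lemma:weak_coequalizer_in_H}.
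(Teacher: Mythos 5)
Your proof is correct and follows the same route as the paper: non-emptiness via $\id_R$, upper bounds for pairs of objects via base change along $R \to A$ (using stability of standard Nisnevich homomorphisms from Lemma~\ref{lemma:base_change_for_L_homomorphism}), and coequalizing parallel pairs by iterating Lemma~\ref{lemma:weak_coequalizer_in_H} and Lemma~\ref{lemma:standard_Nisnevich_and_Jacobson_radical} along a chosen presentation of the source. Your inductive formulation of the third condition is a cleaner spelling-out of what the paper compresses into ``iteratively apply.''
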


\begin{proof}
Since $\id_R \in \ca{H}_{(R,I)}$, we have $\ca{H}_{(R,I)} \neq \varnothing$. Stability of standard Nisnevich homomorphisms under base change (see Lemma~\ref{lemma:base_change_for_L_homomorphism}) implies that any solid span
\[
\xymatrix{R \ar[r] \ar[d] & A^{\prime} \ar@{-->}[d] \\ A \ar@{-->}[r] & A^{\prime \prime} }
\]
can be completed to a commutative square in $\ca{H}_{(R,I)}$ indicated by the dashed arrows.

 Finally, given a pair of morphisms $\varphi, \psi \colon A \rightarrow B$ in $\ca{H}_{(R,I)}$, we can iteratively apply Lemma~\ref{lemma:weak_coequalizer_in_H} above and the fact that $B \rightarrow B_{1+b}$ is standard Nisnevich along $IB$ if $b \in IB$ (see Lemma~\ref{lemma:standard_Nisnevich_and_Jacobson_radical}) to find a morphism $\lambda \colon B \rightarrow C$ in $\ca{H}_{(R,I)}$ such that $\lambda \varphi=\lambda \psi$.
\end{proof}

\begin{thm}\label{thm:henselization}
Let $I \subseteq R$ be an ideal and let $H \in \CRing$ denote the colimit of the diagram
\[
\ca{H}_{(R,I)} \rightarrow \CRing
\]
which sends $R \rightarrow A$ to $A$. Then the henselization of $(R,I)$ is given by $(H,IH)$.
\end{thm}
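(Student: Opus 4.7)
The plan is to verify both the henselian pair condition for $(H, IH)$ and its universal property as the henselization. I would use the characterization of henselian pairs via $N$-polynomials from Proposition~\ref{prop:characterizations_of_henselian_pairs}(iii) for the first part, and directly construct the unique extension by combining Proposition~\ref{prop:L_homomorphism_implies_affine_Nisnevich_square} (to identify $A/IA$ with $R/I$) with the defining property of henselian pairs for the second part.

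First I would show that $(H, IH)$ is a henselian pair. To see $IH$ lies in the Jacobson radical of $H$: any element of $IH$ comes from some $a \in IA$ with $R \to A$ in $\ca{H}_{(R,I)}$; by Lemma~\ref{lemma:standard_Nisnevich_and_Jacobson_radical}, the localization $A \to A_{1+a}$ is standard Nisnevich along $IA$, so $A_{1+a}$ also lies in $\ca{H}_{(R,I)}$, forcing $1+a$ to become a unit in $H$. Given an $N$-polynomial $f(t) \in H[t]$, I would choose $A \in \ca{H}_{(R,I)}$ over which the coefficients of $f$ are defined and form an $N$-polynomial $\tilde{f} \in A[t]$; I then set $g(t) \in A[t]$ to be a polynomial with $g(0)=1$ chosen so that both $\tilde{f}'(t)$ and $\tilde{\tilde{f}}(t)$ (defined by $\tilde{f}(t) - \tilde{f}(0) = t \cdot \tilde{\tilde{f}}(t)$) become units in $(A[t]/\tilde{f})_g$. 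This composite $A \to (A[t]/\tilde{f})_g$ is standard Nisnevich along $IA$, so the target lies in $\ca{H}_{(R,I)}$, and the image of $t$ in $H$ supplies a root of $f$ in $IH$.

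Next I would establish the universal property. Given a henselian pair $(S,J)$ and a morphism $\varphi \colon (R,I) \to (S,J)$, for each $R \to A$ in $\ca{H}_{(R,I)}$ let $A' \defl S \ten{R} A$. By Lemma~\ref{lemma:base_change_for_L_homomorphism}, $S \to A'$ is a composite of standard Nisnevich homomorphisms along $JA'$, hence étale. The composite $A \to A/IA \cong R/I \to S/J$ (using Proposition~\ref{prop:L_homomorphism_implies_affine_Nisnevich_square} iteratively) induces an $S$-algebra morphism $A' \to S/J$, which lifts uniquely along the étale morphism $S \to A'$ to an $S$-algebra morphism $A' \to S$ by the henselian property of $(S,J)$. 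Composing with $A \to A'$ yields $\tilde{\varphi}_A \colon A \to S$. The uniqueness of these lifts guarantees that the $\tilde{\varphi}_A$ assemble into a natural transformation on $\ca{H}_{(R,I)}$, so they induce a ring homomorphism $\tilde{\varphi} \colon H \to S$, which visibly extends $\varphi$ and sends $IH$ into $J$ because each $\tilde{\varphi}_A$ sends $IA$ into $J$.

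For uniqueness, given any two extensions $\tilde{\varphi}_1, \tilde{\varphi}_2 \colon H \to S$ of $\varphi$ into $(S,J)$, their restrictions to any $A \in \ca{H}_{(R,I)}$ both lift the same morphism $A \to S/J$ (namely the one coming from $A/IA \cong R/I \xrightarrow{\bar{\varphi}} S/J$), so the induced $S$-algebra maps $A' \to S$ must coincide by the uniqueness clause in the definition of a henselian pair. Hence the restrictions to $A$ agree, and taking the colimit over $\ca{H}_{(R,I)}$ yields $\tilde{\varphi}_1 = \tilde{\varphi}_2$. The main technical point is organizing the compatibility of the local constructions $\tilde{\varphi}_A$ across morphisms in $\ca{H}_{(R,I)}$; this is handled cleanly by invoking the uniqueness aspect of the henselian lifting property, which is precisely what Lemma~\ref{lemma:weak_coequalizer_in_H} (used to verify filteredness in Lemma~\ref{lemma:category_H_filtered}) is designed to make available.
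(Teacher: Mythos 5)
Your proof follows the same broad strategy as the paper's: reduce both the henselian-pair verification and the universal property to the concrete structure of standard Nisnevich homomorphisms, using Lemma~\ref{lemma:weak_coequalizer_in_H} for uniqueness and the $N$-polynomial characterization of Proposition~\ref{prop:characterizations_of_henselian_pairs} to produce roots. The tensor-product formulation of the universal property (forming $A' = S \ten{R} A$ and invoking the \'etale lifting property of $(S,J)$, rather than building the extension step-by-step along the Nisnevich chain as the paper does) is a mild cosmetic variation and works; both amount to the same computation.

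There is, however, a gap in your verification that $(H, IH)$ is henselian. You choose $A \in \ca{H}_{(R,I)}$ carrying the coefficients of the given $N$-polynomial, form $\tilde{f} \in A[t]$, and assert that $A \to (A[t]/\tilde{f})_g$ is standard Nisnevich along $IA$ for suitably chosen $g$ with $g(0)=1$. But Definition~\ref{dfn:standard_Nisnevich_L_homomorphism} requires the coefficient $a_1$ to be a genuine unit of $A$, not merely a unit modulo $IA$; at this stage you only know the latter (since $\tilde{f}$ is an $N$-polynomial for the pair $(A, IA)$). Even if a $g$ making $\tilde{f}'$ and $\tilde{\tilde{f}}$ invertible can sometimes be found, the resulting homomorphism does not meet the definition of standard Nisnevich, so $(A[t]/\tilde{f})_g$ need not lie in $\ca{H}_{(R,I)}$. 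The fix is exactly the paper's extra step: use Lemma~\ref{lemma:standard_Nisnevich_and_Jacobson_radical} to pass to a further object of $\ca{H}_{(R,I)}$ in which $a_1$ becomes an actual unit, and only then form the localization $\bigl(A[t]/\tilde{f}\bigr)_{a_1^{-2}\tilde{f}'\tilde{\tilde{f}}}$. You should also spell out why the resulting root lies in $IH$ (namely $t = -\tilde{\tilde{f}}(t)^{-1}a_0$ with $a_0 \in IA$), and why the lifted coefficients give an $N$-polynomial over $(A, IA)$ in the first place (the latter uses that $R/I \to A/IA$ is an isomorphism, by Proposition~\ref{prop:L_homomorphism_implies_affine_Nisnevich_square}).
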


\begin{proof}
 We need to show two things: that $(H,IH)$ is a henselian pair, and that for any henselian pair $(B,J)$ and any ring homomorphism $\varphi \colon R \rightarrow B$ with $\varphi(I) \subseteq J$, there exists a unique ring homomorphism $\bar{\varphi} \colon H \rightarrow B$ whose composite with the canonical morphism $R \rightarrow H$ is equal to $\varphi$ (note that the condition $\bar{\varphi}(IH) \subseteq J$ is then automatic).
 
 We first establish existence and uniqueness of $\bar{\varphi}$. The proof idea is simple: the polynomials whose roots are gradually adjoined in the construction of $H$ have unique roots in henselian pairs, yielding the desired $\bar{\varphi}$.  To see this, note that for any morphism of pairs
 \[
 \psi \colon (A,K) \rightarrow (B,J)
 \]
 and any $\lambda \colon A \rightarrow A^{\prime}$ which is standard Nisnevich along $K$, there exists a unique $\bar{\psi} \colon A^{\prime} \rightarrow B$ with $\bar{\psi} \lambda=\psi$. Uniqueness follows from Lemma~\ref{lemma:weak_coequalizer_in_H} and the observation that $\psi(K) \subseteq J$ is contained in the Jacobson radical (since $(B,J)$ is henselian), so the localization in question is an isomorphism.
 
 The existence of the desired homomorphism $\bar{\psi}$ follows from the fact that any monic polynomial $f(t)=t^n+a_{n-1} t^{n-1}+ \ldots +a_1 t+a_0$ with $a_0 \in K$ and $a_1 \in A[t]$ is sent to an $N$-polynomial in $B[t]$, so it has a root $\beta$ in $J \subseteq B$ (see Proposition~\ref{prop:characterizations_of_henselian_pairs}). If $g(t) \in A[t]$ is any polynomial with $g(0)=1$, then $\psi(g)(\beta)$ is a unit (again since $J$ is contained in the Jacobson radical), so from the definition of standard Nisnevich homomorphisms (see Definition~\ref{dfn:standard_Nisnevich_L_homomorphism}) it follows that the desired $\bar{\psi} \colon A^{\prime} \rightarrow B$ exists.
 
 Applying this construction iteratively, we find that $\varphi \colon R \rightarrow B$ uniquely extends to a cocone on the diagram $\ca{H}_{(R,I)} \rightarrow \CRing$, and such cocones are in natural bijection with homomorphisms $\bar{\varphi}\colon H \rightarrow B$ as above.
 
  It remains to show that $(H,IH)$ is a henselian pair. If $h \in IH$ and $y \in H$ are arbitrary elements, then there exists some $R \rightarrow A$ in $\ca{H}_{(R,I)}$ such that $h$ (respectively $y$) is the image of some $a \in IA$ (respectively $x \in A$) under the canonical map $A \rightarrow H$ because the diagram $\ca{H}_{(R,I)}$ is filtered (see Lemma~\ref{lemma:category_H_filtered}). Since $A \rightarrow A_{1+ax}$ is standard Nisnevich along $IA$, hence a morphism in $\ca{H}_{(R,I)}$ (see Lemma~\ref{lemma:standard_Nisnevich_and_Jacobson_radical}), it follows that $1+hy$  is a unit in $H$. Since $h \in IH$ and $y \in H$ were arbitrary, it follows that $IH$ is contained in the Jacobson radical of $H$.
  
  From Proposition~\ref{prop:characterizations_of_henselian_pairs} it follows that it only remains to show that every $N$-polynomial $f(t) \in H[t]$ has a root in $IH$. Since the diagram is filtered, we can assume that this polynomial is the image of some $N$-polynomial 
  \[
  f(t)=a_0+a_1 t+ \ldots +t^n \in A[t]
  \]
  for the pair $(A,IA)$ for a suitable $R \rightarrow A$ in $\ca{H}_{(R,I)}$ (to check the condition on $a_1$, note that $R \slash I \rightarrow A\slash IA$ and thus $R \slash I \rightarrow H \slash IH$ are isomorphisms by Proposition~\ref{prop:L_homomorphism_implies_affine_Nisnevich_square}).
  
  Since $a_1$ is a unit modulo $IA$, there exists morphism $A \rightarrow A^{\prime}$ which is standard Nisnevich along $IA$ such that the image of $a_1$ in $A^{\prime}$ is a unit (see Lemma~\ref{lemma:standard_Nisnevich_and_Jacobson_radical}). By renaming $A^{\prime}$ we can assume without loss of generality that $a_1 \in A^{\times}$ is a unit in $A$. In this case, the homomorphism
  \[
  A \rightarrow (A[t] \slash f)_{a_1^{-2} f^{\prime} \tilde{f}}
  \]
  is standard Nisnevich along $IA$ (since $a_0 \in IA$ by definition of $N$-polynomials). By construction, $t$ is a root in the target, so it only remains to check that $t$ lies in the image of $I$. This follows from the fact that $t=-\tilde{f}(t)^{-1} a_0$. Thus the original $N$-polynomial does indeed have a root in $IH$, so the pair $(H,IH)$ is henselian by Proposition~\ref{prop:characterizations_of_henselian_pairs}.
\end{proof}

\begin{cor}\label{cor:weak_excision_for_henselization_diagrams}
 Let $a \in R$ be an element and let $B$ be a commutative $R$-algebra. Let $F \colon \CRing \rightarrow \Set$ be a finitary functor. If $F$ sends patching diagrams associated to homomorphisms $A \rightarrow A^{\prime}$ which are standard Nisnevich along a principal ideal to weak pullback squares, then the diagram
 \[
 \xymatrix{F(B) \ar[d] \ar[r] & F(B_a) \ar[d] \\ F(B \ten{R} R^h_{(a)}) \ar[r] & F \bigl( (B\ten{R} R^h_{(a)} )_a\bigr)}
 \]
 is also a weak pullback square.
\end{cor}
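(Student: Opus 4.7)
The plan is to realize the square in question as a filtered colimit of patching squares already covered by the hypothesis, then transport the weak pullback property through the colimit using finitariness of $F$.

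By Theorem~\ref{thm:henselization}, the henselization $R^h_{(a)}$ is the filtered colimit of the diagram $\ca{H}_{(R,(a))} \to \CRing$, $(R \to A_i) \mapsto A_i$. Since $B \ten{R} -$ and $(-)_a$ are left adjoints, they commute with this colimit, so
\[
B \ten{R} R^h_{(a)} \cong \colim_i B \ten{R} A_i, \qquad \bigl(B \ten{R} R^h_{(a)} \bigr)_a \cong \colim_i \bigl(B \ten{R} A_i\bigr)_a.
\]
As $F$ is finitary, applying $F$ to the square in the statement presents it as the filtered colimit of the squares
\[
\xymatrix{F(B) \ar[r] \ar[d] & F(B_a) \ar[d] \\ F(B \ten{R} A_i) \ar[r] & F\bigl( (B \ten{R} A_i)_a \bigr)}
\]
whose top row is constant in $i$.

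The next step is to verify that each such square is a weak pullback. Any object $R \to A_i$ of $\ca{H}_{(R,(a))}$ factors as a composite $R = R_0 \to R_1 \to \cdots \to R_k = A_i$ in which each $R_j \to R_{j+1}$ is standard Nisnevich along $(a) R_j$. Applying base change along $R \to B$ and invoking Lemma~\ref{lemma:base_change_for_L_homomorphism}, we obtain a factorization $B = B_0 \to B_1 \to \cdots \to B_k = B \ten{R} A_i$ in which each $B_j \to B_{j+1}$ is standard Nisnevich along the principal ideal $aB_j$. The hypothesis then gives that each ``short'' patching square associated to $B_j \to B_{j+1}$ is sent by $F$ to a weak pullback, and vertically pasting these yields that the full square for $B \to B \ten{R} A_i$ is a weak pullback (weak pullbacks compose vertically since surjectivity of the canonical comparison is preserved under pasting, using that the top horizontal map $F(B) \to F(B_a)$ is shared).

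Finally, I would observe that a filtered colimit of weak pullback squares of sets with constant top row is again a weak pullback. A commutative square $(X \to Y,\, X \to Z,\, Y \to W,\, Z \to W)$ of sets is a weak pullback iff the comparison $X \to Y \pb{W} Z$ is surjective; since filtered colimits in $\Set$ commute with finite limits and preserve surjections, the comparison for the colimit square is again surjective. I do not anticipate any serious obstacle: the main technical care is in identifying each intermediate square as one genuinely covered by the hypothesis (standard Nisnevich along a \emph{principal} ideal), which is ensured by the base-change stability of Lemma~\ref{lemma:base_change_for_L_homomorphism}, and in the routine verification that vertical pasting and filtered colimits preserve the weak pullback property.
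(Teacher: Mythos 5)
Your proof is correct and follows essentially the same route as the paper's: write $R^h_{(a)}$ as the filtered colimit from Theorem~\ref{thm:henselization}, base-change along $R \to B$ (using Lemma~\ref{lemma:base_change_for_L_homomorphism} to stay in the class of standard Nisnevich homomorphisms along a principal ideal), paste the resulting weak pullback squares, and pass to the colimit using finitariness of $F$. The only difference is that you spell out the colimit step (filtered colimits commute with finite limits and preserve surjections in $\Set$) in more detail than the paper, which simply asserts the reduction; this is harmless and arguably clarifying.
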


\begin{proof}
Since $F$ preserves filtered colimits and standard Nisnevich homomorphisms are stable under base change (see Lemma~\ref{lemma:base_change_for_L_homomorphism}), the construction of the henselization given in Theorem~\ref{thm:henselization} reduces the question to checking that the outer rectangle of the diagram
\[
\xymatrix{ F(B_0) \ar[d] \ar[r] & F(B_1) \ar[d] \ar[r] & F(B_2) \ar[d] \ar[r] & \ldots \ar[r] & F(B_k)\ar[d]  \\
F\bigl((B_0)_a\bigr) \ar[r] & F\bigl((B_1)_a\bigr) \ar[r] & F\bigl((B_2)_a\bigr) \ar[r] & \ldots \ar[r] & F\bigl((B_k)_a\bigr) }
\]
 is a weak pullback diagram whenever $B_i \rightarrow B_{i+1}$ is standard Nisnevich along (the image of) $a$ for all $0 \leq i <k$. This follows from the assumption and the fact that weak pullback squares are stable under pasting.
\end{proof}

 In \S \ref{section:pseudoelementary}, we will encounter a functor from commutative rings to pointed sets for which it seems difficult to establish weak Nisnevich excision. It is possible to do this for a large class of Nisnevich squares, though, namely for the squares associated to \emph{basic} Nisnevich homomorphisms defined below.
 
 Recall that any {\'e}tale ring homomorphism $R \rightarrow S$ can be written as
 \[
 R \rightarrow R[x_1, \ldots, x_n] \slash (f_1, \ldots, f_n) \cong S
 \]
 where the $f_i$ are polynomials such that the image of $\mathrm{det}(\partial f_j \slash \partial x_i)_{i,j=1}^n$ in $S$ is invertible (see \cite[\href{https://stacks.math.columbia.edu/tag/00U9}{Lemma 00U9}]{stacks-project}).
 
\begin{dfn}
Let $I \subseteq R$ be an ideal. A ring homomorphism $R \rightarrow S$ is called \emph{basic Nisnevich along $I$} if there exists a presentation
\[
 R \rightarrow R[x_1, \ldots, x_n] \slash (f_1, \ldots, f_n) \cong S
\]
such that the image of $\mathrm{det}(\partial f_j \slash \partial x_i)_{i,j=1}^n$ is invertible in $S$ and $f_j \equiv x_j \mod I$ for all $j=1, \ldots, n$.
\end{dfn}

 From the definition it is clear that such a ring homomorphism is {\'e}tale and that $R \slash I \rightarrow S \slash IS$ is an isomorphism. In particular, for $I=(a)$ principal, we find that $R \rightarrow S$ is an analytic isomorphism along $a$ (see Lemma~\ref{prop:etale_homomorphism_analytic_isomorphism}). It is also clear that basic Nisnevich homomorphisms are stable under base change: if $R \rightarrow R^{\prime}$ is any ring homomorphism, then $R^{\prime} \rightarrow R^{\prime} \ten{R} S$ is basic Nisnevich along $IR^{\prime}$.
  
 \begin{example}\label{example:localization_basic_Nisnevich}
 Let $I \subseteq R$ be an ideal and let $a \in I$. Then the localization $R \rightarrow R_{1+a}$ is basic Nisnevich along $I$. Indeed, we can present this homomorphism as $R \rightarrow R[y] \slash \bigl( 1-y(1+a) \bigr)$ and from the change of variables $x=1-y$ we find that $R \rightarrow R_{1+a}$ is isomorphic to $R \rightarrow R[x] \slash \bigl( x+(x-1)a \bigr)$.
 \end{example}
 
 \begin{prop}\label{prop:basic_Nisnevich_refinement}
 Let $I \subseteq R$ be an ideal and let $R \rightarrow S$ be an {\'e}tale ring homomorphism such that $R \slash I \rightarrow S \slash IS$ is an isomorphism. Then there exists an element $a \in IS$ such that the composite
 \[
 R \rightarrow S \rightarrow S_{1+a}
 \]
 is basic Nisnevich along $I$.
 \end{prop}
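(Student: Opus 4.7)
The plan is to exhibit an explicit basic Nisnevich presentation of $S_{1+a}$ by successively modifying an arbitrary finite presentation of $S$. Start by writing $S \cong R[y_1, \ldots, y_n] \slash (g_1, \ldots, g_n)$ with $\det(\partial g_j \slash \partial y_i)$ invertible in $S$ (Jacobian criterion, \href{https://stacks.math.columbia.edu/tag/00U9}{Tag 00U9}). Using the isomorphism $R \slash I \cong S \slash IS$, each $y_i$ reduces in $S \slash IS$ to some $\bar{r}_i \in R \slash I$; lifting $\bar{r}_i$ to $r_i \in R$ and substituting $y_i \mapsto y_i + r_i$, one may assume every $y_i$ lies in $IS$, which forces $g_j(0) \in I$ for all $j$.

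Next, I would analyze the linearization $A := (\partial g_j \slash \partial y_i)\vert_{y=0} \in M_n(R)$. Since $y_i$ lies in $(\bar{g}_1, \ldots, \bar{g}_n) \subseteq R \slash I [y]$ (because $R \slash I [y] \slash (\bar{g}) \cong R \slash I$ with each $y_i \mapsto 0$), writing $y_i = \sum_j \bar{c}_{ij}(y) \bar{g}_j(y)$ and differentiating at $y = 0$ gives $\bar{C}(0) \bar{A} = I_n$, so $\bar{A}$ is invertible in $R \slash I$. Picking $b \in R$ lifting $(\det \bar{A})^{-1}$ yields $b \det(A) = 1 + c$ for some $c \in I$; the localization $R \to R_{1+c}$ is itself basic Nisnevich along $I$ by Example~\ref{example:localization_basic_Nisnevich}, and over $R_{1+c}$ both $\det(A)$ and $b$ become units.

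With $\tilde{B} \in M_n(R)$ any lift of $\bar{A}^{-1}$, I would form $\tilde{f}_j := \sum_i \tilde{B}_{ji} g_i$, so that $\tilde{f}_j(0) \in I$ and $\tilde{f}_j \equiv y_j$ modulo $(I, (y_1, \ldots, y_n)^2)$. The remaining defect $\bar{h}_j := \bar{\tilde{f}}_j - y_j$ lies in $(y)^2 \subseteq (\bar{g}_l)$ inside $R \slash I[y]$, so may be written as $\bar{h}_j = \sum_l \bar{p}_{jl}(y) \bar{g}_l(y)$. Comparing linear parts at $y = 0$ and using invertibility of $\bar{A}$, one forces $\bar{p}_{jl}(0) = 0$. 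Lifting to $\tilde{p}_{jl} \in R[y]$ vanishing at $0$ and setting $f_j := \tilde{f}_j - \sum_l \tilde{p}_{jl} g_l$ then produces relations with $f_j \equiv y_j \pmod{I}$. Moreover, since $\tilde{p}_{jl}(0) \in I$, the Jacobian of $(f_j)$ at $y = 0$ agrees with that of $(\tilde{f}_j)$ modulo $I$, namely $I_n$, so $\det(\partial f_j \slash \partial y_i)$ is a unit modulo $IS$ and hence, after a further localization of $S_{1+c}$ at some $1 + a'$ with $a' \in IS$, becomes a unit in $S_{1+a}$ for $a := c + a' + c a' \in IS$.

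The hard part will be the careful bookkeeping: verifying that the ideal generated by the $f_j$ still equals $(g_1, \ldots, g_n)$ in $R_{1+c}[y]$ (so that $R[y] \slash (f)$ recovers $S$ after inverting $1+c$), handling the two localizations (at $1 + c$ in $R$ and at $1 + a'$ in $S$) simultaneously by adjoining one extra variable $w$ with a relation $w(1+a) + (\text{element of } I) \equiv w \pmod{I}$ to encode the localization, and showing the Jacobian remains invertible in $S_{1+a}$ throughout. Combining these ingredients with stability of basic Nisnevich homomorphisms under base change yields the required factorization $R \to S \to S_{1+a}$.
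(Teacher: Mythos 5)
Your approach diverges from the paper's at a critical point and, as written, has a genuine gap that is not merely ``bookkeeping.''

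The paper's proof shares your opening move (shift $y_i \mapsto y_i + r_i$ so that each $y_i$ maps into $IS$, forcing $g_j(0) \in I$), but then it does something you do not: it \emph{abandons} the ambient ideal $(g_1,\ldots,g_n)$ entirely. Since $y_i$ maps into $IS$, one can write $y_i \equiv h_i \bmod (g_1,\ldots,g_n)$ with $h_i \in I\!\cdot\!R[y]$, and one \emph{defines} new relations $g_i' \defl y_i - h_i$. These lie in $\ker(R[y]\to S)$ automatically, they satisfy $g_i'\equiv y_i \bmod I$ by construction, and their Jacobian is $I_n$ minus a matrix with entries in $I\!\cdot\!R[y]$, so its determinant is $1+a$ for some $a\in I\!\cdot\!R[y]$ with no further effort. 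One gets a \emph{surjection} $(R[y]/(g'))_{1+a} \twoheadrightarrow S_{1+a}$ which need not be an isomorphism, but since it is a map between \'etale $R$-algebras it is \'etale and hence, by \cite[\href{https://stacks.math.columbia.edu/tag/00U8}{Lemma 00U8}]{stacks-project}, a localization at an idempotent $e$. Because both sides are $\cong R/I$ modulo $I$, necessarily $e \equiv 1 \bmod I$, and the localization at $e$ (combined with that at $1+a$) is absorbed by adjoining a single extra variable as in Example~\ref{example:localization_basic_Nisnevich}.

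Your construction instead sets $f_j = \sum_i M_{ji} g_i$ with $M = \tilde B - \tilde p$, and your plan requires $(f_1,\ldots,f_n) = (g_1,\ldots,g_n)$ in $R_{1+c}[y]$, which you call ``careful bookkeeping.'' This is where the argument breaks. The matrix $\bar M \in M_n\bigl((R/I)[y]\bigr)$ has $\bar M(0)=\bar A^{-1}$ invertible, but its higher-order terms coming from $\tilde p$ are arbitrary, so $\det\bar M$ is generically a non-constant polynomial in $y$ over $R/I$ and is \emph{not} a unit in $(R/I)[y]$. Localizing $R$ at $1+c$ (with $c\in I$) does not change the reduction mod $I$, so $\det M$ remains a non-unit in $R_{1+c}[y]$ and $(f)$ is, in general, a \emph{proper} subideal of $(g)$. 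Thus $R_{1+c}[y]/(f)$ only surjects onto $S_{1+c}$; it does not recover it. Inverting $\det M$ would create yet another localization, and (as you start to see with your auxiliary variable $w$) the resulting extra relation would not be $\equiv w \bmod I$ unless $\det M \equiv 1 \bmod I$, which fails since $\det\bar M(0)=\det\bar A^{-1}$ need not be $1$. The missing idea that closes this gap is precisely the paper's observation: do not try to force ideal equality; accept the proper containment, note that the surjection between \'etale $R$-algebras is a localization at an idempotent $\equiv 1 \bmod I$, and absorb it with one further variable. Your linear-algebra normalization (lifting $\bar A^{-1}$, subtracting off $(y)^2$ corrections) is then unnecessary, since the paper's relations $y_i - h_i$ already have Jacobian $\equiv I_n \bmod I$ for free.
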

 
 \begin{proof}
 Choose a presentation $ R \rightarrow R[x_1, \ldots, x_n] \slash (f_1, \ldots, f_n)$ of $R \rightarrow S$ such that $\mathrm{det}(\partial f_j \slash \partial x_i)$ is invertible in $S$. Since $R \slash I \rightarrow S \slash IS$ is an isomorphism, there exist polynomials $h_i \in R[x_j]$ with coefficients in $I$ and elements $r_i \in R$ such that
 \[
 x_i \equiv r_i+h_i \mod (f_1, \ldots, f_n)
 \]
 holds for all $i=1,\ldots, n$. Since we can apply the change of variables $x_i^{\prime}=x_i-r_i$, we can assume without loss of generality that $r_i=0$. Writing $g_i=x_i-h_i$, we thus get a canonical surjection
 \[
 R[x_j] \slash (g_i) \rightarrow R[x_j] \slash (f_i) \cong S
 \]
 sending $x_j$ to $x_j$.

Since $(\partial g_j \slash \partial x_i)$ is congruent to the identity modulo $I$, there exists an element $a \in IR[x_j]$ such that $\mathrm{det} (\partial g_j \slash \partial x_i)=1+a$. It follows that the composite
\[
R \rightarrow R[x_j] \slash (g_i) \rightarrow \bigl( R[x_j] \slash (g_i)\bigr)_{1+a}
\]
 is {\'e}tale and thus that the resulting surjection $\varphi \colon \bigl( R[x_j] \slash (g_i)\bigr)_{1+a} \rightarrow S_{1+a}$ is also {\'e}tale (since any morphism between {\'e}tale $R$-algebras is {\'e}tale). From \cite[\href{https://stacks.math.columbia.edu/tag/00U8}{Lemma 00U8}]{stacks-project} we conclude that $\varphi$ is isomorphic to the localization at some idempotent element $e$.
 
 Since $\varphi$ induces an isomorphism modulo $I$, it follows that $e \equiv 1 \mod I$. This shows that the composite
 \[
 R \rightarrow S \rightarrow S_{1+a}
 \]
 can up to isomorphism be written as $R \rightarrow \bigl( R[x_j] \slash (g_i)\bigr)_{1+b}$ for some $b \in IR[x_j]$. As in Example~\ref{example:localization_basic_Nisnevich}, we can rewrite this localization as
 \[
 R \rightarrow R[x_1, \ldots, x_n, x_{n+1}] \slash \bigl(g_1, \ldots, g_n, x_{n+1}+(x_{n+1}-1)b \bigr) \smash{\rlap{,}}
 \]
 which is basic Nisnevich along $I$ since $\mathrm{det} (\partial g_j \slash \partial x_i)_{i,j=1}^n=1+a$ is by construction invertible in the target.
 \end{proof}
 
 The important application of basic Nisnevich homomorphisms that we will rely on in \S \ref{section:pseudoelementary} is that basic Nisnevich homomorphisms can be used to construct the henselization of a pair.
 
 \begin{prop}\label{prop:henselization_using_basic_Nisnevich}
 The henselization $R \rightarrow R^h_I$ of $(R,I)$ is a filtered colimit of basic Nisnevich homomorphisms along $I$.
 \end{prop}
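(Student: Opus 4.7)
The plan is to mirror the construction from Theorem~\ref{thm:henselization}, substituting basic Nisnevich homomorphisms for standard Nisnevich homomorphisms throughout. Concretely, I define a category $\ca{H}^{bN}_{(R,I)}$ exactly as in Definition~\ref{dfn:diagram_category_for_henselization}, but requiring each composition factor $R_i \to R_{i+1}$ to be basic Nisnevich along $IR_i$. The goal is to show that $\ca{H}^{bN}_{(R,I)}$ is filtered and that the colimit of its tautological diagram to $\CRing$ is a henselization of $(R,I)$; uniqueness of the henselization then identifies it with $R^h_I$ and establishes the proposition.

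For filtered-ness I follow the pattern of Lemma~\ref{lemma:category_H_filtered}. Non-emptiness and the completion of spans are immediate from the fact that basic Nisnevich homomorphisms are stable under base change. The heart of the argument is a variant of Lemma~\ref{lemma:weak_coequalizer_in_H}: given a basic Nisnevich $\varphi \colon R \to R' = R[x_1,\ldots,x_n] \slash (f_1,\ldots,f_n)$ and two $R$-algebra maps $\alpha,\beta \colon R' \to A$, I need to produce $a \in IA$ so that $\alpha$ and $\beta$ agree after composition with $A \to A_{1+a}$. Viewing $\alpha$ and $\beta$ as solutions $\vec{\alpha},\vec{\beta} \in (IA)^n$ of the system $f_j = 0$ (the entries lie in $IA$ since $f_j \equiv x_j \bmod I$ forces $x_j \in IR'$), and expanding $0 = f_j(\vec{\beta}) - f_j(\vec{\alpha}) = \sum_i (\beta_i - \alpha_i)\, g_{ji}(\vec{\alpha},\vec{\beta})$ for suitable polynomials $g_{ji}$ with $g_{ji}(\vec{x},\vec{x}) = \partial f_j \slash \partial x_i$, Cramer's rule yields $D \cdot (\beta_i - \alpha_i) = 0$ for all $i$, where $D = \det\bigl( g_{ji}(\vec{\alpha},\vec{\beta})\bigr)$. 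Since $D \equiv \det(\partial f_j \slash \partial x_i)|_{\vec{\alpha}} \bmod IA$ and the latter is a unit in $A$ by hypothesis on $\varphi$, the element $D$ has the form $u(1+a)$ with $u \in A^{\times}$ and $a \in IA$, which is the desired witness.

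The remaining steps closely imitate the proof of Theorem~\ref{thm:henselization}. To see that the colimit $H$ is henselian, I note that for $h \in IH$ and $y \in H$ one can realize both in some object $A$ of the diagram, and then the localization $A \to A_{1+ay}$ is basic Nisnevich along $IA$ by Example~\ref{example:localization_basic_Nisnevich}, so $1+hy$ becomes a unit in $H$; hence $IH$ lies in the Jacobson radical. Given an $N$-polynomial over $H$, I descend it to some $A$ in the diagram, further localize to make the coefficient of $t$ a unit, and then adjoin a root via an \'etale homomorphism $A \to (A[t] \slash f)_{g}$; by Proposition~\ref{prop:basic_Nisnevich_refinement}, one more localization at some $1+a'$ renders the composite basic Nisnevich along $IA$, so the root lives in the colimit. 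For the universal property, given a henselian pair $(B,J)$ and a map $\psi \colon R \to B$ with $\psi(I) \subseteq J$, I handle each basic Nisnevich step $A \to A'$ by base change: the \'etale map $B \to B \otimes_A A'$ carries a distinguished $B$-algebra map to $B \slash J$ coming from the isomorphism $A \slash IA \cong A' \slash IA'$, so the defining property of $(B,J)$ supplies a unique section $B \otimes_A A' \to B$ realizing this map modulo $J$, producing the required unique extension $A' \to B$.

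The main obstacle I anticipate is producing the polynomials $g_{ji}$ in the weak coequalizer step. The standard device is to decompose each monomial difference $y_1^{k_1} \cdots y_n^{k_n} - x_1^{k_1} \cdots x_n^{k_n}$ telescopically into $n$ summands, the $i$-th of which carries an explicit $(y_i - x_i)$ factor; collecting these contributions across the monomials of $f_j$ yields the required $g_{ji} \in R[x_1, \ldots, x_n, y_1, \ldots, y_n]$, and a direct check confirms that the diagonal specialization $\vec{y} = \vec{x}$ recovers the corresponding partial derivative. Once this is in place, everything else—Cramer's rule, the reduction modulo $IA$, and the henselian checks—is a routine extension of the corresponding arguments for standard Nisnevich homomorphisms.
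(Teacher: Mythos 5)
Your proof is correct, but it takes a genuinely different and more self-contained route than the paper. The paper's proof is very short: it cites the Stacks Project result that $R^h_I$ is the filtered colimit of \emph{all} \'etale $R$-algebras $A$ with $R/I \to A/IA$ an isomorphism, and then invokes Proposition~\ref{prop:basic_Nisnevich_refinement} to show that the basic Nisnevich algebras form a final subdiagram. You instead reconstruct the henselization from the ground up, mirroring the proof of Theorem~\ref{thm:henselization}: you build a diagram category $\ca{H}^{bN}_{(R,I)}$ of iterated basic Nisnevich homomorphisms, establish filteredness, and verify the universal property. The key new ingredient your route requires is a weak coequalizer lemma in the style of Lemma~\ref{lemma:weak_coequalizer_in_H}, and your adjugate/Cramer's-rule argument for it is correct: the telescoped polynomials $g_{ji}$ do satisfy $g_{ji}(\vec x,\vec x)=\partial f_j/\partial x_i$, so $D=\det\bigl(g_{ji}(\vec\alpha,\vec\beta)\bigr)$ is congruent modulo $IA$ to the unit $\alpha\bigl(\det(\partial f_j/\partial x_i)\bigr)$, hence has the form $u(1+a)$ with $a\in IA$, and $D\cdot(\vec\beta-\vec\alpha)=0$ kills the difference in $A_{1+a}$. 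What the paper's approach buys is brevity and avoiding a new version of the equalizer lemma; what yours buys is independence from the Stacks Project characterization of the henselization and a uniform treatment in which the same weak-coequalizer technique now handles both the standard and basic Nisnevich cases (in particular your lemma would let one prove Theorem~\ref{thm:henselization} for basic Nisnevich homomorphisms directly, making the standard Nisnevich intermediary unnecessary). One small point worth spelling out: the objects of your category $\ca{H}^{bN}_{(R,I)}$ are composites of basic Nisnevich maps, not single basic Nisnevich maps, so to match the statement literally you should either verify that composites of basic Nisnevich homomorphisms (along the appropriately extended ideals) are again basic Nisnevich --- which holds, after replacing the lifted relations $\tilde g_\ell$ by $\tilde g_\ell - \sum_j c_{\ell j} f_j$ so that they become congruent to $y_\ell$ modulo $I$ rather than merely modulo $IA$ --- or restrict further to the final subdiagram of genuine basic Nisnevich objects. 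Likewise, in the universal-property step the defining property of a henselian pair only gives existence of the lift; uniqueness comes from your weak coequalizer lemma together with the fact that $J$ lies in the Jacobson radical of $B$, which you have the tools for but do not make explicit.
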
 
 
 \begin{proof}
 From \cite[\href{https://stacks.math.columbia.edu/tag/0A02}{Lemma 0A02}]{stacks-project} we know that the henselization can be written as the filtered colimit of all $R$-algebras $A$ such that $R \rightarrow A$ is {\'e}tale and the induced homomorphism $R \slash I \rightarrow A \slash IA$ is an isomorphism. By Proposition~\ref{prop:basic_Nisnevich_refinement}, the basic Nisnevich homomorphisms along $I$ form a final subdiagram of this diagram, so the colimit can be computed using only basic Nisnevich homomorphisms along $I$.
 \end{proof}
 
 Recall that an affine Nisnevich square is an analytic patching diagram
 \[
 \xymatrix{ A\ar[r]^-{\lambda_f} \ar[d]_{\varphi} & A_f \ar[d]^{\varphi_f} \\ B \ar[r]_-{\lambda_{\varphi(f)}} & B_{\varphi(f)} }
 \]
 where $\varphi \colon A \rightarrow B$ is {\'e}tale (and $A \slash f \rightarrow B \slash \varphi(f)$ is an isomorphism).
 
 \begin{cor}
 To check that a functor from commutative rings sends affine Nisnevich squares to (homotopy) pullback squares, it suffices to check this for the homomorphisms $\varphi \colon A \rightarrow B$ which are basic Nisnevich along $(f) \subseteq A$.
 \end{cor}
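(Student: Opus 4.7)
The plan is to reduce an arbitrary affine Nisnevich square to a pasting of two squares, each of which is associated with a basic Nisnevich homomorphism along a principal ideal, and then invoke the pasting law for (homotopy) pullbacks. The input to the reduction is Proposition~\ref{prop:basic_Nisnevich_refinement}, which allows us to refine any étale homomorphism inducing an isomorphism modulo the relevant ideal into a basic Nisnevich one after a further localization.

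More concretely, given an affine Nisnevich square with left vertical $\varphi \colon A \rightarrow B$ étale and $A \slash f \rightarrow B \slash \varphi(f)$ an isomorphism, I would apply Proposition~\ref{prop:basic_Nisnevich_refinement} to the principal ideal $(f) \subseteq A$ to obtain an element $a \in \varphi(f)B$ such that the composite $A \rightarrow B \rightarrow B_{1+a}$ is basic Nisnevich along $(f)$. Next, by Example~\ref{example:localization_basic_Nisnevich}, the localization $B \rightarrow B_{1+a}$ is itself basic Nisnevich, this time along the ideal $(\varphi(f)) \subseteq B$ (since $a \in \varphi(f)B$). These two observations give rise to a stacked diagram
\[
\xymatrix{A \ar[r] \ar[d] & A_f \ar[d] \\ B \ar[r] \ar[d] & B_{\varphi(f)} \ar[d] \\ B_{1+a} \ar[r] & (B_{1+a})_{\varphi(f)}}
\]
in which the lower square is the affine Nisnevich square associated with $B \rightarrow B_{1+a}$ (basic Nisnevich along $(\varphi(f))$), and the outer rectangle is the affine Nisnevich square associated with $A \rightarrow B_{1+a}$ (basic Nisnevich along $(f)$). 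By hypothesis, $F$ sends both to (homotopy) pullbacks.

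The final step is to invoke the pasting law: if the bottom square and the outer rectangle are (homotopy) pullbacks, then so is the top square, which is precisely the original affine Nisnevich square. The only genuine obstacle is making sure that the pasting law applies in the flavour of pullback relevant to the reader's use case. For strict and for homotopy pullbacks this is the standard two-out-of-three pasting lemma; for the merely \emph{weak} pullbacks considered elsewhere in the paper the pasting step requires slightly more care (since weak pullbacks lack the uniqueness part of the universal property), but this is exactly why the corollary is stated with the parenthetical ``(homotopy)'' — so no further work is needed beyond the classical pasting law.
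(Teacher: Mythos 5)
Your proof is correct and takes essentially the same route as the paper: Proposition~\ref{prop:basic_Nisnevich_refinement} to refine $\varphi$ after a further localization $B \to B_{1+a}$, Example~\ref{example:localization_basic_Nisnevich} to see that this localization is itself basic Nisnevich along $(\varphi(f))$, and then the cancellation law for (homotopy) pullbacks applied to the stacked squares. Your concluding remark about the argument failing for \emph{weak} pullbacks matches the remark the paper places immediately after the corollary.
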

 
 \begin{proof}
 This follows from the cancellation property of (homotopy) pullback squares and Proposition~\ref{prop:basic_Nisnevich_refinement} since the localization $B \rightarrow B_{1+a}$ is basic Nisnevich along $\bigl(\varphi(f)\bigr)$ by Example~\ref{example:localization_basic_Nisnevich}.
 \end{proof}
 
 Note that the above corollary does not apply to functors which send these squares to \emph{weak} pullback squares, since the necessary cancellation property does not hold in this case.
\section{Overrings of polynomial rings}\label{section:overrings}

 Throughout this section, we fix a commutative ring $R$. Given an $R$-algebra $A$ and an element $\alpha \in A$, we can form a new $R$-algebra
 \[
 C \defl A[x,y] \slash (xy-\alpha)
 \]
 obtained by formally ``factoring'' $\alpha$. Note that this is an overring of the polynomial ring $A[x]$ in the sense of \cite{BHATWADEKAR_ROY_OVERRINGS}: we have $A[x] \subseteq C \subseteq A[x,x^{-1}]$, in fact, the localization of the homomorphism $A[x] \rightarrow C$ yields an isomorphism $A[x,x^{-1}] \cong C_x$. Broadly speaking, we are interested in the possible values that a functor $F \colon \CAlg_R \rightarrow \Set$ can take on such algebras if $F$ satisfies weak analytic excision. More precisely, we want to relate these values to the corresponding values on the localization $A_{\alpha}$ and the quotient $A \slash \alpha$.
 
 The basic idea is to consider an analytic patching diagram such as
 \[
 \xymatrix{C \ar[r] \ar[d] & C_{\alpha} \ar[d] \\ C^{\wedge}_{(\alpha)} \ar[r] & C^{\wedge}_{(\alpha)}[\frac{1}{\alpha}] }
 \]
 obtained by completion at the ideal $(\alpha)$. This is only useful if we can understand:
 \begin{itemize}
 \item[-] The value of $F$ on $C_{\alpha}$ and $C^{\wedge}_{(\alpha)}$, that is, the ``pieces'' away from $\alpha$ and the pieces ``near'' $\alpha$ respectively;
 
\item[-] The ``ways of gluing pieces'' on the ``intersection'' $C^{\wedge}_{(\alpha)}[\frac{1}{\alpha}]$.
 \end{itemize}
 
 The latter turns out to be difficult in general. In this section, we will see that the slight modification of $C^{\wedge}_{(\alpha)}$ given by the following lemma is often more useful.
 
 \begin{lemma}\label{lemma:henselian_analytic_iso_for_overrings}
 Let $A$ be a commutative $R$-algebra and let $\alpha \in A$ be an arbitrary element. Let $B \defl A^{h}_{(\alpha)}[x,y] \slash (xy-\alpha)$. Then the homomorphism
 \[
 A[x,y] \slash (xy-\alpha) \rightarrow B
 \]
 obtained from the canonical homomorphism $A \rightarrow A^{h}_{(\alpha)}$ is an analytic isomorphism along $\alpha$
 \end{lemma}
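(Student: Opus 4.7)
The plan is to transfer the analytic isomorphism property from $A \to A^h_{(\alpha)}$ to $\varphi \colon C \to B$ by base change along $A \to C$, where $C := A[x,y]/(xy-\alpha)$ and $B \cong C \otimes_A A^h_{(\alpha)}$, exploiting the flatness of $C$ over $A$.

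First, I would observe that $A \to A^h_{(\alpha)}$ is itself an analytic isomorphism along $\alpha$. By Theorem~\ref{thm:henselization}, the henselization is a filtered colimit of standard Nisnevich homomorphisms along $(\alpha)$, each of which is an analytic isomorphism along $\alpha$ (combining Propositions~\ref{prop:L_homomorphism_implies_affine_Nisnevich_square} and~\ref{prop:etale_homomorphism_analytic_isomorphism}). Since filtered colimits of commutative rings preserve finite limits, both the pullback condition and the isomorphisms $A/\alpha^n \to A^h_{(\alpha)}/\alpha^n A^h_{(\alpha)}$ are inherited by the colimit.

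Next, I would show that $C$ is flat over $A$ by verifying that multiplication by $xy-\alpha$ is injective on $M[x,y]$ for every $A$-module $M$: taking $m \in M[x,y]$ and picking a monomial $x^i y^j$ with $j$ maximal among those appearing with nonzero coefficient in $m$, the coefficient of $x^{i+1}y^{j+1}$ in $m \cdot (xy-\alpha)$ is exactly that nonzero coefficient (the possible cancellation from $-\alpha \cdot x^{i+1}y^{j+1}$ is excluded by maximality of $j$). Applying this to $M=A$ gives the resolution $0 \to A[x,y] \xrightarrow{\cdot(xy-\alpha)} A[x,y] \to C \to 0$, and the general case shows this remains exact after tensoring with any $M$, i.e.\ $\mathrm{Tor}_1^A(C,-)=0$.

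I then conclude as follows. Applying $C \otimes_A -$ to the pullback square witnessing that $A \to A^h_{(\alpha)}$ is analytic along $\alpha$ yields, by flatness of $C$, a pullback square of $A$-modules, hence of $A$-algebras (the forgetful functor from commutative $A$-algebras preserves limits). Under the identifications $C \otimes_A A_\alpha \cong C_\alpha$, $C \otimes_A A^h_{(\alpha)} \cong B$, and $C \otimes_A A^h_{(\alpha)}[\alpha^{-1}] \cong B_{\varphi(\alpha)}$, this is precisely the pullback square required by Definition~\ref{dfn:analytic_patching_diagram}. For the remaining quotient condition, Lemma~\ref{lemma:analytic_iso_generators} reduces checking $C/\alpha^n \to B/\alpha^n$ to the single generator $\alpha$ (the kernel hypothesis of that lemma is supplied by the pullback square just produced), and $C/\alpha = (A/\alpha)[x,y]/(xy)$ versus $B/\alpha = (A^h_{(\alpha)}/\alpha)[x,y]/(xy)$ reduces the question to the already established fact that $A/\alpha \to A^h_{(\alpha)}/\alpha$ is an isomorphism. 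There is no essential obstacle; the only step requiring care is the flatness verification, but even there the leading-term argument is entirely elementary.
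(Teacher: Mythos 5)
Your proof is correct, but it takes a different route than the paper's. The paper's proof keeps the base change at the level of each \emph{standard Nisnevich} homomorphism: since $A \to A^h_{(\alpha)}$ is the filtered colimit of standard Nisnevich homomorphisms along $(\alpha)$ (Theorem~\ref{thm:henselization}), and standard Nisnevich homomorphisms are stable under base change by Lemma~\ref{lemma:base_change_for_L_homomorphism} (and each is an analytic isomorphism by Propositions~\ref{prop:L_homomorphism_implies_affine_Nisnevich_square} and~\ref{prop:etale_homomorphism_analytic_isomorphism}), one gets $C \to B$ directly as a filtered colimit of analytic isomorphisms along $\alpha$. You instead first assemble the colimit into the statement that $A \to A^h_{(\alpha)}$ is an analytic isomorphism, then transport the pullback square across $C \ten{A} -$; this requires the separate fact that $C=A[x,y]/(xy-\alpha)$ is flat over $A$, which you supply with a clean leading-monomial argument showing $xy-\alpha$ is $M[x,y]$-regular for every $A$-module $M$ (hence $\Tor_1^A(C,-)=0$). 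Your approach is more modular and isolates a reusable ingredient (flatness of the overring over the base, even when $\alpha$ is a zerodivisor), at the price of needing both the flatness verification and an extra appeal to Lemma~\ref{lemma:analytic_iso_generators} for the quotient condition; the paper's argument is shorter because the étale (hence flat) maps it base-changes already carry all the required structure, so no flatness or generator-reduction step is needed.
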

 
 \begin{proof}
 The homomorphism in question is obtained by taking the base change of $A \rightarrow A^{h}_{(\alpha)}$ along the inclusion $A \rightarrow A[x,y]\slash (xy-\alpha)$. Since $A \rightarrow A^{h}_{(\alpha)}$ is a filtered colimit of standard Nisnevich homomorphisms along $(\alpha)$ (see Theorem~\ref{thm:henselization}), we see that the base change can also be written as a filtered colimit of such homomorphism. Since each of these is an analytic isomorphism along $\alpha$ (see Propositions~\ref{prop:L_homomorphism_implies_affine_Nisnevich_square} and \ref{prop:etale_homomorphism_analytic_isomorphism}), the claim follows.
 \end{proof}
 
 Note that the localization $B_{\alpha}$ in the above lemma is isomorphic to the Laurent polynomial ring $A^{h}_{(\alpha)}[x,x^{-1}]$. We will see some examples where the ``ways of gluing'' on this algebra can be understood. Since the localization of the domain at $\alpha$ is isomorphic to the Laurent polynomial ring $A_{\alpha} [x,x^{-1}]$, we can hope to understand the ``pieces'' on this ring if we understand the ground algebra $A$ well enough. The first half of this section provides some methods that can be used to understand the ``pieces'' on the ring $B$ of the above lemma. Very roughly, the idea is to use some auxiliary patching diagrams to relate the ``pieces'' on $B$ to the ``pieces'' on the localization $B_{1+\alpha B}$. If the functor $F$ we care about is $j$-injective, it is thus possible to reduce question on $B$ to questions on $B \slash \alpha$ (or even $A \slash \alpha$ in particularly good cases). To carry this out, we need a few additional examples of patching diagrams.
 
\begin{lemma}\label{lemma:zariski_analytic_patching_diagram}
Let $A$ be a commutative $R$-algebra and let $S, T \subseteq A$ be two multiplicatively closed sets. Then the localization $\lambda_T \colon A \rightarrow A_T$ is an analytic isomorphism along $S$ if and only if for all $s \in S$ and all $t \in T$, the ideal $(s,t)$ is equal to $A$.
\end{lemma}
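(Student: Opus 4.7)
The plan is to verify both implications of this iff statement directly from Definition~\ref{dfn:analytic_patching_diagram}.

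For the forward direction, I would exploit the fact that each $t \in T$ becomes a unit in $A_T$ (hence in $A_T \slash s$) for any $s \in S$. If the map $\bar{\lambda_T} \colon A \slash s \rightarrow A_T \slash s$ is an isomorphism, then $t$ must already map to a unit in $A \slash s$, and any representative of its inverse gives a B{\'e}zout relation witnessing $(s,t)=A$.

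For the reverse direction, assume $(s,t)=A$ for all $s \in S$ and $t \in T$. I would check the two conditions in Definition~\ref{dfn:analytic_patching_diagram} separately. For the isomorphism condition: given $s \in S$, each $t \in T$ is a unit modulo $s$, so the projection $A \rightarrow A \slash s$ factors uniquely through $\lambda_T$ and then through $A_T \slash sA_T$, producing a candidate inverse $\bar{\phi} \colon A_T \slash sA_T \rightarrow A \slash s$ for $\bar{\lambda_T}$. A short computation with B{\'e}zout coefficients $\alpha s + \beta t = 1$ shows that $[a \slash t] \in A_T \slash sA_T$ satisfies $[a\alpha't/1]=[a/t]$ after choosing $\alpha' t \equiv 1 \bmod s$, so $\bar{\phi}$ and $\bar{\lambda_T}$ are mutually inverse. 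For the pullback condition, I would first make the identification
\[
(A_T)_{\lambda_T(S)} \cong A_{ST}\smash{\rlap{,}}
\]
where $ST$ denotes the multiplicative subset generated by $S \cup T$, reducing the square to
\[
\xymatrix{ A \ar[r] \ar[d] & A_S \ar[d] \\ A_T \ar[r] & A_{ST}. }
\]
Injectivity of $A \rightarrow A_S \times_{A_{ST}} A_T$ is immediate: if $sa=0$ and $ta=0$ then $a = (\alpha s + \beta t) a = 0$. For surjectivity, given $(a \slash s, b \slash t)$ agreeing in $A_{ST}$, one absorbs the witness into the numerators and denominators to reduce to the case $ta = sb$ in $A$; then $c := \alpha a + \beta b$ satisfies
\[
sc - a = \beta(sb-ta)=0 \quad \text{and} \quad tc - b = \alpha(ta-sb)=0\smash{\rlap{,}}
\]
so $c \in A$ is the required lift.

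The argument is entirely formal; there is no real conceptual obstacle once one sees the identification $(A_T)_{\lambda_T(S)} \cong A_{ST}$ and uses B{\'e}zout coefficients to perform the two key pieces of bookkeeping (the factorization yielding $\bar{\phi}$ and the construction of the lift $c = \alpha a + \beta b$). The only mild subtlety is keeping straight the different multiplicative sets when reducing the pullback witness to a single equation $ta=sb$.
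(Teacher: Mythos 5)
Your proof is correct, and it reaches the conclusion by a more hands-on route than the paper. For the forward direction you match the paper. For the reverse direction you differ twice: where the paper gets the isomorphism $A\slash s \cong A_T \slash s$ by noting that base change preserves localizations (so $A_T\slash s \cong (A\slash s)_T$, and localizing $A\slash s$ at $T$ does nothing because each $t$ is already a unit there), you instead build the inverse map explicitly with a B\'ezout identity; and where the paper verifies the pullback condition by observing that $A \rightarrow A_S \times A_T$ is \emph{faithfully flat} (a consequence of the comaximality: no prime can meet both $S$ and $T$) and then checking after base change to $A_S$ or $A_T$, you instead do a direct diagram chase, reducing the agreement in $A_{ST}$ to an honest equality $ta = sb$ in $A$ and constructing the lift $c = \alpha a + \beta b$ by hand. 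Both are valid; the paper's approach is shorter and reuses standard machinery, while yours is entirely self-contained and avoids invoking flatness descent. The one thing to be careful with in your write-up is the ``absorb the witness'' step: the equality in $A_{ST}$ gives $s't'(ta-sb)=0$ for some $s'\in S$, $t'\in T$, and you must replace $a, s, b, t$ by $s'a, s's, t'b, t't$ \emph{simultaneously} (not just rescale the numerators) so that the resulting identity becomes $t''a'=s''b'$ with the new denominators; you gesture at this correctly, and with that detail spelled out the argument is airtight.
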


\begin{proof}
 If $A \slash s \rightarrow A_T \slash \lambda_T(s)$ is an isomorphism, then each $t \in T$ is a unit in the quotient $A \slash s$, hence $(s,t)=A$. The converse implication follows since base change along $A \rightarrow A \slash s$ preserves localizations. It only remains to check that
 \[
 \xymatrix{ A \ar[r] \ar[d] & A_S \ar[d] \\ A_T \ar[r] & A_{S,T} }
 \]
 is a pullback diagram in this case. Since $A \rightarrow A_S \times A_T$ is faithfully flat, it suffices to check this after base change to $A_S$ or $A_T$. But then either both vertical or both horizontal morphisms are isomorphisms, so the diagram is indeed a pullback.
\end{proof}

 To get examples of such pairs $S$ and $T$, we will use the following form of the Top-Bottom Lemma (see \cite[Lemma~IV.5.3]{LAM}).
 
\begin{lemma}\label{lemma:top_bottom_lemma}
 Let $A$ be a commutative $R$-algebra with Jacobson radical $J$ and let $f(x)=x^n+a_{n-1} x^{n-1}+ \ldots + a_0 \in A[x]$ be a monic polynomial such that all coefficients $a_i$ lie in $J$. Then for all $g \in 1+xA[x]$ we have $(f,g)=A[x]$.
\end{lemma}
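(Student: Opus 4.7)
The plan is to reduce the claim to a computation modulo the Jacobson radical $J$ and then apply Nakayama's lemma. The key observation is that since $f$ is monic of degree $n$, the quotient $A[x] \slash (f)$ is a free $A$-module of rank $n$, and hence every further quotient is a finitely generated $A$-module. In particular, $M \defl A[x] \slash (f,g)$ is finitely generated over $A$.

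Next I would pass to the quotient $(A \slash J)[x] \cong A[x] \slash J[x]$. Under this reduction, $\bar f(x) = x^n$ since every $a_i$ lies in $J$, while $\bar g$ still has constant term $1$ because $g \in 1 + xA[x]$. Thus $\bar g$ and $x$ are comaximal in $(A\slash J)[x]$ (explicitly, writing $g = 1 + x h$ gives $\bar g - x \bar h = 1$), and so $\bar g$ and $x^n = \bar f$ are also comaximal. This yields the equality $(f,g) + J[x] = A[x]$, which translates into $M = JM$.

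Finally, since $J$ is the Jacobson radical of $A$ and $M$ is a finitely generated $A$-module with $JM = M$, Nakayama's lemma gives $M = 0$, i.e.\ $(f,g) = A[x]$, which is the desired conclusion. There is no real obstacle here; the only point to verify carefully is that the quotient $A[x] \slash (f,g)$ is finitely generated as an $A$-module, which is immediate from the monicity of $f$ via polynomial division.
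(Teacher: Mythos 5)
Your proof is correct, but it takes a genuinely different route from the paper's. The paper follows Lam's argument for local rings and observes it works in general: if $M \subseteq A[x]$ is a maximal ideal containing $f$, then since $A[x]/f$ is integral over $A$ (monicity), the contraction $\mathfrak{m} = M \cap A$ is maximal in $A$ by lying-over; then $J \subseteq \mathfrak{m}$ forces all $a_i \in M$, hence $x^n \in M$, hence $x \in M$, so $M = (\mathfrak{m},x)$, which cannot contain $g$ because $g(0)=1$. Your approach instead exploits monicity to see that $A[x]/(f,g)$ is a finitely generated $A$-module, reduces modulo $J$ to show this module equals $J$ times itself, and then invokes Nakayama's lemma. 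Both are valid two-step arguments of comparable length; yours is arguably a bit more self-contained, since it replaces the lying-over theorem for integral extensions with the (more elementary, and more ubiquitous in this circle of ideas) Nakayama lemma, at the modest cost of explicitly forming the quotient module and checking its finite generation.
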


\begin{proof}
 The proof in \cite[Lemma~IV.5.3]{LAM} is stated for local rings $A$ but works verbatim in our case. Indeed, since $A[x] \slash f$ is integral over $A$, any maximal ideal $M \subseteq A[x] \slash f$ intersects $A$ in a maximal ideal $\mathfrak{m}$. Since $J \subseteq \mathfrak{m}$, it follows from $f \in M$ that $M=(\mathfrak{m},x)$, hence there is no maximal ideal $M$ containing both $f$ and $g$.
\end{proof}

 Recall from \cite[Lemma~4.1]{BHATWADEKAR_ROY_OVERRINGS} that each overring of a polynomial ring gives rise to an analytic isomorphism, which yields the following in our case.
 
 \begin{lemma}\label{lemma:analytic_iso_arising_from_overring}
 Let $A$ be a commutative $R$-algebra and let $\alpha \in A$ be a nonzerodivisor. If $B=A[x,y] \slash (xy-\alpha)$, then $A[x] \rightarrow B$ is an analytic isomorphism along $S \defl 1+xA[x] \subseteq A[x]$. Moreover, both $x$ and $y$ are nonzerodivisors in $B$ and the set $S$ and its image in $B$ consist of nonzerodivisors.
 \end{lemma}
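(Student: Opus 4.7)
The plan is to realize $B$ as a subring of the Laurent polynomial ring $A[x,x^{-1}]$ via the map sending $y$ to $\alpha x^{-1}$, transfer the nonzerodivisor statements to $B$ via this embedding, and then apply Lemma~\ref{lemma:analytic_iso_nonzerodivisors} after verifying directly that $A[x] \slash s \to B \slash s$ is an isomorphism for every $s \in S$.

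First I would construct an $A$-algebra homomorphism $\varphi \colon B \to A[x,x^{-1}]$ sending $x \mapsto x$ and $y \mapsto \alpha x^{-1}$. Using the relation $xy = \alpha$, every element of $B$ has a normal form $f(x) + y\, g(y)$ with $f \in A[x]$ and $g \in A[y]$, giving an $A$-module decomposition $B \cong A[x] \oplus yA[y]$. Applying $\varphi$ and reading coefficients of the resulting Laurent polynomial, the vanishing of $\varphi\bigl(f(x) + yg(y)\bigr)$ forces $f=0$ and $b_i \alpha^{i+1}=0$ for all coefficients $b_i$ of $g$; since $\alpha$ is a nonzerodivisor, we get $g=0$, so $\varphi$ is injective. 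The nonzerodivisor claims are then immediate: $x$ is a unit of $A[x,x^{-1}]$, hence a nonzerodivisor in $B$; and $y = \alpha x^{-1}$ is a product of nonzerodivisors in $A[x,x^{-1}]$. For $s = 1 + x h(x) \in S$, an equation $sa=0$ with $a \in A[x]$ forces $a=0$ by inspecting constant terms, so $s$ is a nonzerodivisor in $A[x]$ and hence in the subring $B \subseteq A[x,x^{-1}]$.

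Next I would verify that $\bar{\varphi} \colon A[x]\slash s \to B \slash s$ is an isomorphism for each $s = 1 + xh(x) \in S$. Modulo $s$ the element $x$ becomes invertible with inverse $-h(x)$, so the relation $xy = \alpha$ forces $y \equiv -\alpha h(x)$. To make this rigorous, define an $A$-algebra homomorphism $A[x,y] \to A[x]\slash s$ by $x \mapsto x$, $y \mapsto -\alpha h(x)$; this factors through $B \slash s$ since $xy - \alpha \mapsto -\alpha\bigl(1 + xh(x)\bigr) = -\alpha s \equiv 0$ and $s \mapsto s \equiv 0$. The resulting map $B \slash s \to A[x]\slash s$ is a two-sided inverse of $\bar{\varphi}$: one composite is tautologically the identity on $A[x]\slash s$; the other sends $y$ to $-\alpha h(x)$ and back to $-\alpha h(x) \in B \slash s$, which equals $y$ there because $y = x^{-1}\alpha = -h(x)\alpha$ modulo $s$.

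Taking $S$ itself as a (tautological) generating set of the multiplicative set $S = 1 + xA[x]$, Lemma~\ref{lemma:analytic_iso_nonzerodivisors} then applies and delivers that $A[x] \to B$ is an analytic isomorphism along $S$. The main technical point will be the injectivity of the embedding $\varphi \colon B \to A[x,x^{-1}]$, which is precisely where the hypothesis that $\alpha$ is a nonzerodivisor is used; the explicit construction of the inverse to $\bar{\varphi}$ is then just book-keeping with the relation $xy = \alpha$ and $x^{-1} = -h(x)$ in $B \slash s$.
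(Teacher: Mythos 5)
Your proof is correct and follows essentially the same route as the paper's: both establish the embedding $B\hookrightarrow A[x,x^{-1}]$ (you via the normal-form/coefficient argument, the paper by showing $x$ is a nonzerodivisor via a coefficient argument in $A[x,y]$), deduce the nonzerodivisor claims from the embedding, and then verify $\bar{\varphi}$ is an isomorphism by inverting $x\equiv -h(x)^{-1}$ modulo $s$ before invoking Lemma~\ref{lemma:analytic_iso_nonzerodivisors}. One small point of wording: the direct-sum decomposition $B\cong A[x]\oplus yA[y]$ is what you end up \emph{proving} via the injectivity of $\varphi$, not an a priori input; what you actually need beforehand is only that $\{1,x,x^2,\ldots,y,y^2,\ldots\}$ spans $B$ over $A$, which is immediate from the relation $xy=\alpha$.
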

 
 \begin{proof}
 The elements of $S$ are clearly nonzerodivisors in $A[x]$ and we first show that the same is true for their images in $B$. To see this, it suffices to check that the localization $B \rightarrow B_x \cong A[x,x^{-1}]$ is injective, hence that the element $x \in B$ is a nonzerodivisor. Suppose that $x \cdot p(x,y)=(xy-\alpha) q(x,y)$ inn $A[x,y]$ and write $q=q_0(y)+q_1(y)x+ \ldots$ as a polynomial in $x$ with coefficients in $A[y]$. It follows that $\alpha q_0(y)=0$, so all coefficients of $q_0$ must be zero since $\alpha$ is a nonzerodivisor in $A$. Thus $x$ (and by symmetry $y$) is a nonzerodivisor and the final claim of the lemma is established.
 
 It only remains to check that $A[x] \slash \bigl( 1+xf(x) \bigr) \rightarrow B \slash \bigl( 1+xf(x) \bigr)$ is an isomorphism for all $f(x) \in A[x]$. Since $x$ is a unit in these quotients, the inverse is obtained by sending $y$ to $\alpha x^{-1}=-\alpha f(x)$.
 \end{proof}
 
 Combining all this, we get the following patching diagram.
 
 \begin{prop}\label{prop:composite_patching_for_overrings}
  Let $A$ be a commutative $R$-algebra, let $\alpha \in A$ be a nonzerodivisor, and let $B=A[x,y]\slash (xy-\alpha)$. If $\alpha$ lies in the Jacobson radical of $A$, then the composite
  \[
  A[x] \rightarrow B \rightarrow B_{1+y A[y]}
  \]
  is an analytic isomorphism along $S \defl 1+xA[x] \subseteq A[x]$. In particular, the diagram
  \[
  \xymatrix{ A[x] \ar[d] \ar[r] & A[x]_{1+xA[x]} \ar[dd] \\ B \ar[d]_{\lambda_{1+yA[y]}}  \\ B_{1+yA[y]} \ar[r] & B_{1+yA[y],1+xA[x]} }
  \]
  is an analytic patching diagram in this case.
 \end{prop}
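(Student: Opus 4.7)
The plan is to split the composite as $A[x] \xrightarrow{\varphi_1} B \xrightarrow{\varphi_2} B_{1+yA[y]}$ and to exhibit each factor as an analytic isomorphism along a compatible multiplicative set, then paste the two resulting patching squares. Since $\alpha$ is a nonzerodivisor, Lemma~\ref{lemma:analytic_iso_arising_from_overring} immediately gives that $\varphi_1$ is an analytic isomorphism along $S = 1+xA[x]$. Thus the real work lies in showing that $\varphi_2$ is an analytic isomorphism along the image $\varphi_1(S) \subseteq B$. Applying Lemma~\ref{lemma:zariski_analytic_patching_diagram} with the two multiplicative subsets of $B$ being the images of $1+xA[x]$ and $1+yA[y]$, this reduces to the following comaximality statement: for every $s = 1+xf(x) \in 1+xA[x]$ and every $t = 1+yg(y) \in 1+yA[y]$, one has $(s,t) = B$ in $B$.

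The main step is to verify this comaximality. Writing $g(y) = g_0 + g_1 y + \cdots + g_m y^m$ and using the relation $xy = \alpha$ (so that $x^{m+1}y^j = \alpha^j x^{m+1-j}$ for $1 \leq j \leq m+1$), one computes in $B$ that
\[
x^{m+1} t \;=\; x^{m+1} + g_0\alpha\, x^m + g_1\alpha^2\, x^{m-1} + \cdots + g_m\alpha^{m+1}.
\]
This is a monic polynomial $P(x) \in A[x]$ of degree $m+1$, all of whose non-leading coefficients lie in $\alpha A \subseteq J(A)$ by hypothesis. Since $s \in 1 + xA[x]$, the Top-Bottom Lemma (Lemma~\ref{lemma:top_bottom_lemma}) gives $(P, s) = A[x]$. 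Transporting this equation along $A[x] \hookrightarrow B$ and using $P = x^{m+1} t \in (t)$, we conclude $(s,t) \supseteq (s,P) = B$, as desired.

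With comaximality established, Lemma~\ref{lemma:zariski_analytic_patching_diagram} makes $\varphi_2$ an analytic isomorphism along $\varphi_1(S)$. Vertically pasting the defining pullback square for $\varphi_1$ (from Lemma~\ref{lemma:analytic_iso_arising_from_overring}) on top of the defining pullback square for $\varphi_2$ yields precisely the diagram displayed in the proposition, with top-right corner $A[x]_{1+xA[x]}$ and bottom-right corner $B_{1+yA[y],\,1+xA[x]}$. The residual isomorphism condition in Definition~\ref{dfn:analytic_patching_diagram} for the composite follows by composing the isomorphisms $A[x]/s \xrightarrow{\cong} B/\varphi_1(s) \xrightarrow{\cong} B_{1+yA[y]}/\varphi_2\varphi_1(s)$ for each $s \in S$.

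I expect the main obstacle to be spotting the right ``clearing denominators'' trick in the second paragraph: multiplying $t$ by $x^{m+1}$ converts the $y$-side generator into a polynomial in $x$ over $A$, and the substitution $xy = \alpha$ ensures that all the intermediate coefficients are controlled by $\alpha$. This is exactly what the Top-Bottom Lemma demands, and it is the only place where the hypothesis $\alpha \in J(A)$ is used; once seen, everything else is formal manipulation of pullback squares.
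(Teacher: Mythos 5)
Your proof is correct and takes essentially the same approach as the paper: decompose the composite, reduce the second factor to a comaximality statement via Lemma~\ref{lemma:zariski_analytic_patching_diagram}, and clear denominators to produce a monic polynomial for the Top-Bottom Lemma. The only cosmetic difference is that you carry out the clearing-denominators computation directly in $B$ using $xy=\alpha$, while the paper passes to $B_x\cong A[x,x^{-1}]$ and substitutes $y=\alpha x^{-1}$; the resulting monic polynomial and comaximality conclusion are identical.
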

 
 \begin{proof}
  Since pullback diagrams can be pasted together and isomorphisms can be composed, it follows that analytic isomorphisms can be composed as well. From Lemma~\ref{lemma:analytic_iso_arising_from_overring} we know that $A[x] \rightarrow B$ is an analytic isomorphism along $1+xA[x]$, so it only remains to show that $\lambda_{1+yA[y]} \colon B \rightarrow B_{1+yA[y]}$ is an analytic isomorphism along $1+xA[x]$. Since this is a Zariski patching diagram, this amounts to showing that each element $1+yg(y) \in 1+yA[y]$ is sent to a unit in $B \slash \bigl(1+xf(x)\bigr)$ for all $f(x) \in A[x]$ (see Lemma~\ref{lemma:zariski_analytic_patching_diagram}).
  
  Since $x$ is a unit in the quotient $B \slash \bigl(1+xf(x)\bigr)$, the right vertical morphism in the diagram
  \[
  \xymatrix{B \ar[r]^-{\pi} \ar[d]_{\lambda_x} & B \slash \bigl(1+xf(x)\bigr) \ar[d]_{\lambda_x}^{\cong} \\ 
  B_x \ar[d]_{\varphi}^{\cong} \ar[r]^-{\pi} & B_x \slash \bigl(1+xf(x)\bigr) \ar[d]_{\bar{\varphi}}^{\cong}  \\ A[x,x^{-1}] \ar[r]^-{\pi} & A[x,x^{-1}] \slash \bigl( 1+xf(x) \bigr)  }
  \]
 is an isomorphism, where $\varphi$ is given by $\varphi(x)=x$ and $\varphi(y)=\alpha x^{-1}$. It thus suffices to check that the image of $\varphi(1+yg(y))=1+\alpha x^{-1} \cdot g(\alpha x^{-1})$ in the quotient $A[x,x^{-1}] \slash \bigl( 1+xf(x) \bigr)$ is a unit. Equivalently, we want to show that
 \[
 1+\alpha x^{-1} \cdot g(\alpha x^{-1})
 \quad \text{and} \quad
1+xf(x)
 \]
 are comaximal in $A[x,x^{-1}]$.
 
 For sufficiently large $n$, we have $h(x) \defl x^n \cdot \bigl( 1+\alpha x^{-1} \cdot g(\alpha x^{-1}) \bigr) \in A[x]$. Since $h(x)$ is monic and $\alpha$ lies in the Jacobson radical, the Top-Bottom lemma implies that $h(x)$ and $1+xf(x)$ are comaximal in $A[x]$ (see Lemma~\ref{lemma:top_bottom_lemma}).
 \end{proof}
 
\begin{cor}\label{cor:trivial_kernel_from_jacobson_patching}
Let $A$ be a commutative $R$-algebra, let $\alpha \in A$ be a nonzerodivisor which is contained in the Jacobson radical of $A$, and let $B \defl A[x,y] \slash (xy-\alpha)$. Let $F \colon \CAlg_R \rightarrow \Set_{\ast}$ be a functor and suppose that the following conditions hold:
\begin{enumerate}
\item[(i)] The functor $F$ satisfies weak analytic excision;
\item[(ii)] The algebra $A[x]$ is $F$-contractible.
\end{enumerate}
 Then the function
 \[
 F(B) \rightarrow F(B_{1+xA[x],1+yA[y]})
 \]
 of pointed sets induced by the localization homomorphism has trivial kernel.
\end{cor}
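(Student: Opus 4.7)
The plan is to use the patching diagram of Proposition~\ref{prop:composite_patching_for_overrings} together with a second (symmetric) analytic patching diagram obtained from Lemma~\ref{lemma:analytic_iso_arising_from_overring}, and to argue in two stages: first localize at $1+yA[y]$ to reduce to triviality of $\sigma$ in $F(B_{1+yA[y]})$, then use the symmetric version of Lemma~\ref{lemma:analytic_iso_arising_from_overring} (with the roles of $x$ and $y$ swapped, which is allowed since the relation $xy-\alpha$ is symmetric) to pull the triviality back to $F(B)$.

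For the first stage, let $\sigma \in F(B)$ have trivial image in $F(B_{1+xA[x],1+yA[y]})$ and let $\sigma'$ be its image in $F(B_{1+yA[y]})$. Applying $F$ to the analytic patching diagram of Proposition~\ref{prop:composite_patching_for_overrings} gives (by weak analytic excision) a weak pullback square
\[
\xymatrix{ F(A[x]) \ar[d] \ar[r] & F(A[x]_{1+xA[x]}) \ar[d] \\ F(B_{1+yA[y]}) \ar[r] & F(B_{1+yA[y],1+xA[x]}) }
\]
Since $\sigma'$ and the basepoint $\ast \in F(A[x]_{1+xA[x]})$ both map to the trivial element of $F(B_{1+yA[y],1+xA[x]})$, the weak pullback property yields a lift $\tau \in F(A[x])$ with $\tau \mapsto \sigma'$. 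By assumption (ii), $F(A[x])=\{\ast\}$, so $\tau=\ast$ and hence $\sigma' = \ast$.

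For the second stage, apply $F$ to the analytic patching diagram obtained by swapping the roles of $x$ and $y$ in Lemma~\ref{lemma:analytic_iso_arising_from_overring} (i.e., the analytic isomorphism $A[y] \rightarrow B$ along $1+yA[y]$); note that $\alpha$ being a nonzerodivisor is already in force, and $A[y] \cong A[x]$ is $F$-contractible. Weak analytic excision gives a weak pullback
\[
\xymatrix{ F(A[y]) \ar[d] \ar[r] & F(A[y]_{1+yA[y]}) \ar[d] \\ F(B) \ar[r] & F(B_{1+yA[y]}) }
\]
Since $\sigma$ and the basepoint $\ast \in F(A[y]_{1+yA[y]})$ now both map to $\sigma'=\ast$ in the lower right corner, we obtain a lift in $F(A[y])=\{\ast\}$, forcing $\sigma=\ast$.

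There is no real obstacle to carrying out this plan; the only subtlety is matching up the patching diagrams correctly, in particular recognizing that $B_{1+yA[y],1+xA[x]}$ and $B_{1+xA[x],1+yA[y]}$ coincide, and that the symmetric analogue of Lemma~\ref{lemma:analytic_iso_arising_from_overring} is available for free from the symmetry of the defining relation $xy-\alpha$. The $F$-contractibility hypothesis on $A[x]$ is used twice (once directly, once via $A[y]\cong A[x]$) to kill the lifted elements in each weak pullback step.
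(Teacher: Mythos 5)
Your proof is correct and takes essentially the same approach as the paper's: the paper also factors $F(B)\to F(B_{1+xA[x],1+yA[y]})$ through $F(B_{1+yA[y]})$, appeals to Proposition~\ref{prop:composite_patching_for_overrings} for the second localization and (the $x\leftrightarrow y$ symmetric form of) Lemma~\ref{lemma:analytic_iso_arising_from_overring} for the first, and kills the weak-pullback lifts using $F$-contractibility of $A[x]$. The only difference is presentational — the paper phrases the two stages uniformly as ``a composite of functions with trivial kernel has trivial kernel,'' while you unpack the two weak-pullback arguments explicitly and are a bit more careful in flagging the use of the $x\leftrightarrow y$ symmetry, which the paper leaves implicit.
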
 
 
\begin{proof}
 The composite of two functions with trivial kernel also has trivial kernel, so we can treat the two localization morphisms
 \[
 \lambda_1 \colon B \rightarrow B_{1+yA[y]} \quad \text{and} \quad \lambda_2 \colon B_{1+yA[y]} \rightarrow B_{1+yA[y],1+xA[x]}
 \]
 separately.
 
 In both cases, we can find an analytic patching diagram of the form
 \[
 \xymatrix{A[x] \ar[r] \ar[d] & \cdot \ar[d] \\ \cdot \ar[r]_{\lambda_i} & \cdot }
 \]
 (by Lemma~\ref{lemma:analytic_iso_arising_from_overring} for $\lambda_1$ and by Proposition~\ref{prop:composite_patching_for_overrings} for $\lambda_2$). This implies that any object in the kernel of $F(\lambda_i)$ extends to an object of $F(A[x])$, which are all trivial by assumption.
\end{proof}

 To use this corollary, we need to make some additional assumptions. It is here that we apply the results of \S \ref{section:henselian_pairs} about henselian pairs. Our main applications all rely on the following proposition.
 
  \begin{prop}\label{prop:overrings_in_henselian_case}
 Let $A$ be a commutative $R$-algebra and let $\alpha \in A$. Let $B=A[x,y] \slash (xy-\alpha)$ and consider an element $1+\alpha g(x,y) \in 1+\alpha B$. If the pair $\bigr(A,(\alpha)\bigl)$ is henselian, then there exist elements $p(x) \in A[x]$ and $q(y) \in A[y]$ such that the ideal generated by $1+\alpha g(x,y)$ in the localization $B_{1+xp(x)}$ contains the element $1+yq(y)$ (we can even find such a $q(y)$ which is furthermore divisible by $\alpha$). In particular, the localization
 \[
 B \rightarrow B_{1+xA[x],1+yA[y]}
 \]
 factors through the localization $B \rightarrow B_{1+\alpha B}$.
\end{prop}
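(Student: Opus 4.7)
The strategy is to construct, from a given $1+\alpha g \in 1+\alpha B$, elements $p(x) \in \alpha A[x]$ and $q(y) \in \alpha A[y]$ together with $w \in A^{\times}$ such that $1+\alpha g$ and $w(1+xp(x))(1+yq(y))$ differ by an $\alpha^{d+1}$-torsion element in $B$, where $d$ is the $y$-degree arising from a suitable decomposition of $g$. The final containment will then follow from an elementary observation used throughout: in $B \slash (1+\alpha g)$, the element $\alpha$ is a unit with inverse $-g$ (since $(1+\alpha g)+\alpha(-g)=1$), hence any element killed by a power of $\alpha$ automatically lies in the ideal $(1+\alpha g)$. Combined with the fact that $(1+xp(x))$ becomes a unit after localization, this yields $1+yq(y) \in (1+\alpha g)B_{1+xp(x)}$.

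Using the $A$-module decomposition $B = A \oplus xA[x] \oplus yA[y]$, write $g = a + xp_1(x) + yq_1(y)$, set $d \defl \deg q_1$ and $u \defl 1+\alpha a$. Since $(\alpha) \subseteq \operatorname{Jac}(A)$, $u$ is a unit. Computing $x^{d+1}(1+\alpha g)$ inside $B$ and collapsing all monomials $x^j y^i$ via $xy=\alpha$ produces a genuine polynomial
\[
f(x) = u x^{d+1} + \alpha x^{d+2} p_1(x) + \sum_{j=0}^{d} c_j \alpha^{j+2} x^{d-j} \in A[x],
\]
with $\bar f = x^{d+1}$ modulo $\alpha$ and leading coefficient in $(\alpha)$. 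Corollary~\ref{cor:splitting_into_monic_times_1+I_polynomial} then factors $f(x) = w \cdot m(x) \cdot h(x)$ in $A[x]$ with $w \in 1+(\alpha)$, $m(x) = x^{d+1}+\alpha\tilde m(x)$ monic of degree $d+1$, and $h(x) \in 1+x\alpha A[x]$.

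The heart of the argument is an inductive coefficient comparison in the identity $f = wmh$: matching the coefficient of $x^i$ for $0 \leq i \leq d$, dividing through by $\alpha$ and using that the convolution with $h$ contributes an extra factor of $\alpha$, one obtains $\tilde m_i \in (\alpha^{d+1-i})$ by induction on $i$. Now invoke $\alpha = xy$ to write $m(x) = x(x^d + y\tilde m(x))$ in $B$; multiplying $x^{d+1}(1+\alpha g) = wmh$ by $y^{d+1}$ and collapsing $y^{d+1}x^i = \alpha^i y^{d+1-i}$ on the right transforms the identity into
\[
\alpha^{d+1}(1+\alpha g) = wh(x)\Bigl(\alpha^{d+1} + \sum_{i=0}^{d} \tilde m_i \alpha^{i+1} y^{d+1-i}\Bigr)
\]
in $B$. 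The divisibility $\tilde m_i \in (\alpha^{d+1-i})$ is exactly what is needed for each summand in the parenthesis to lie in $(\alpha^{d+2})$, so that the parenthesis factors as $\alpha^{d+1}(1+\alpha y \tilde T(y))$ for some $\tilde T(y) \in A[y]$. Setting $p(x) \in \alpha A[x]$ by $h(x) = 1+xp(x)$ and $q(y) \defl \alpha \tilde T(y) \in \alpha A[y]$, one obtains
\[
\alpha^{d+1}\bigl[(1+\alpha g) - w(1+xp(x))(1+yq(y))\bigr] = 0 \text{ in } B.
\]

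The first paragraph's observation now shows that $(1+\alpha g) - w(1+xp(x))(1+yq(y)) \in (1+\alpha g)B$; after localizing at $1+xp(x)$ the element $w(1+xp(x))$ becomes a unit, which rearranges to $1+yq(y) \in (1+\alpha g)B_{1+xp(x)}$, as required. The final factoring-through claim is immediate: every $1+\alpha g \in 1+\alpha B$ becomes a unit in $B_{1+xp(x),1+yq(y)}$ (as it divides the unit $1+yq(y)$ there), hence a unit in $B_{1+xA[x],1+yA[y]}$. The main obstacle will be the $\alpha$-adic bookkeeping establishing $\tilde m_i \in (\alpha^{d+1-i})$: the induction is clean when $\alpha$ is a nonzerodivisor, and any $\alpha$-torsion ambiguities in the general case are harmlessly absorbed into the final $\alpha^{d+1}$-torsion error, which is killed by the invertibility of $\alpha$ modulo $(1+\alpha g)$.
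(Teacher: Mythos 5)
Your proof is correct and takes a genuinely different route from the paper, though both hinge on the same preliminary steps: multiplying $1+\alpha g$ by a suitable power of $x$ to land in $A[x]$, and invoking Corollary~\ref{cor:splitting_into_monic_times_1+I_polynomial} to factor the resulting polynomial as (unit)$\cdot$(monic)$\cdot$(element of $1+x(\alpha)[x]$). From there the paths diverge. The paper argues ideal-theoretically: it passes to the ideal $J=(1+\alpha g, f_0(x))\subseteq B$ generated by $1+\alpha g$ and the monic factor $f_0$, iterates $(1+\alpha^k g')(1-\alpha^k g')$ to produce elements $1+\alpha^k g'\in J$ with $k$ arbitrarily large, uses $\alpha^k=x^ky^k$ to write one of these as $1+\alpha y^N g'(x,y)$, and then reduces $g'$ modulo the monic $f_0$ to kill the $x$-dependence. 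This is clean and avoids any coefficient analysis. You instead track divisibility by $\alpha$ of the coefficients $\tilde m_i$ of the monic factor by comparing coefficients in $f=wmh$, multiply by $y^{d+1}$ to obtain the identity $\alpha^{d+1}\bigl[(1+\alpha g)-w(1+xp)(1+yq)\bigr]=0$, and then finish with the nice observation that $\alpha$ is a unit in $B/(1+\alpha g)$, so $\alpha$-power torsion is absorbed into the ideal $(1+\alpha g)$. Your approach is more computational and produces $q(y)$ more explicitly. One remark on the last paragraph: rather than saying the torsion ambiguity is "absorbed into the final error," the cleanest way to make the induction unconditional is to prove directly that $\alpha^{i+1}\tilde m_i\in(\alpha^{d+2})$, which follows by multiplying the coefficient identity by $\alpha^i$ (no division by $\alpha$ needed, so no nonzerodivisor hypothesis), and which is precisely what the factorization $y^{d+1}m(x)=\alpha^{d+1}(1+yq(y))$ requires. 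With that adjustment the argument is airtight for arbitrary $\alpha$.
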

 
 \begin{proof}
 Let $I$ be the principal ideal of $B$ generated by $1+\alpha g(x,y)$ and let $N \in \mathbb{N}$ be the highest power of $y$ that occurs in a monomial of $g(x,y) \in A[x,y]$ (here we choose a preimage of our element in $1+\alpha B$ which lies in $A[x,y]$). Since $xy=\alpha$ in $B$, there exists some $h(x) \in A[x]$ such that
 \[
 x^N \bigr(1+\alpha g(x,y) \bigl)=x^N + \alpha h(x)
 \]
 holds in $B$.
 
 Since $x^N+\alpha h(x) \equiv x^N \cdot 1 \mod \alpha$ and $\bigr(A,(\alpha) \bigl)$ is a henselian pair, Corollary~\ref{cor:splitting_into_monic_times_1+I_polynomial} is applicable. This corollary shows that there exist polynomials $f_0(x), f_1(x) \in A[x]$ and a unit $u \in A$ such that $f_0$ is monic of degree $N$, $f_1 \in 1 + x(\alpha)[x]$, and $u$ is congruent to $1$ modulo $(\alpha)$, such that $x^N+\alpha h(x)=f_0(x) \cdot u \cdot f_1(x)$ holds. 
 
 It follows that we can write $f_1(x)=1+x p(x)$ for a suitable $p(x) \in A[x]$. Since $u f_1(x)$ is a unit in the localization $B_{1+xp(x)}$, we can conclude that the ideal in $B_{1+xp(x)}$ generated by $1+\alpha g(x,y)$ contains the (image of the) monic polynomial $f_0(x)$. To show the first claim, it therefore suffices to show that the ideal $J \subseteq B$ generated by $1+\alpha g(x,y)$ and $f_0(x)$ already contains the desired element $1+yq(y)$ for some $q(y) \in A[y]$.
 
 First note that $J$ contains elements $1+\alpha^k g^{\prime}(x,y)$ for arbitrarily large $k$ (for example by iteratively multiplying with $1\pm \alpha^k g^{\prime} (x,y)$). Since $\alpha^k=x^k y^k$ in $B$, we can in particular find a $g^{\prime}(x,y) \in A[x,y]$ such that $1+\alpha y^N g^{\prime}(x,y)$ lies in $J$.
 
 Writing $g^{\prime}(x,y)=\sum_{i=0}^m g_i(y) x^i$ with $g_i(y) \in A[y]$, we can gradually subtract multiples of the monic polynomial $f_0(x) \in J$ from $g^{\prime}(x,y)$ to obtain an element
 \[
 1+ \alpha y^N \bigl(\textstyle\sum_{i=0}^m g_i(y)x^i \bigr) \in J
 \]
 such that $m <N$. Since $y^N x^i=y^{N-i} \alpha^i$ in $B$, this yields the desired element in $J$ of the form $1+\alpha y q(y)$ for some $q \in A[y]$.
 
 This concludes the proof of the first claim. To see the second claim, note that the ideal generated in $B_{1+xA[x],1+yA[y]}$ by any element $1+\alpha b$, $b \in B$, contains a unit by the first claim, so we do indeed get the desired factorization through $B_{1+\alpha B}$.
 \end{proof}
 
 By construction, $\alpha$ lies in the Jacobson radical of $B_{1+\alpha B}$. We will need some compatibility of $F$ with ideals contained in the Jacobson radical in order to utilize this fact. It turns out that something slightly weaker than $j$-injectivity suffices.

 \begin{dfn}\label{dfn:partially_j_injective}
 We say that a functor $F \colon \CAlg_R \rightarrow \Set_{\ast}$ is \emph{partially $j$-injective} if $F(A) \rightarrow F(A \slash I)$ has trivial kernel whenever $I \subseteq A$ is contained in the Jacobson radical of $A \in \CAlg_R$.
 \end{dfn}
 
 \begin{example}\label{example:partially_j_injective}
 Clearly any $j$-injective functor such as $P_r$, $SK_{1,r}$, $K_0$, and $SK_1$ is also partially $j$-injective. The reason for introducing this terminology is that the functors
\[
W_r \colon \CAlg_R \rightarrow \Set_{\ast}
\]
are partially $j$-injective for all $r \geq 3$. Indeed, if a unimodular row is elementary completable modulo the Jacobson radical, then its elementary orbit contains a row with one element equal to a unit. The claim follows since such rows are elementary completable.
 \end{example}
 
  The following theorem summarizes the main results.
  
  \begin{thm}\label{thm:henselian_implies_trivial_kernel_for_overring}
  Let $A$ be a commutative $R$-algebra and let $\alpha \in A$ be a nonzerodivisor. Let $B=A[x,y]\slash (xy-\alpha)$ and assume that $\bigl(A,(\alpha)\bigr)$ is a henselian pair. Let $F \colon \CAlg_R \rightarrow \Set_{\ast}$ be a functor and suppose that the following conditions hold:
 \begin{enumerate}
 \item[(i)] The functor $F$ satisfies weak analytic excision;
 \item[(ii)] The algebra $A[x]$ is $F$-contractible;
 \item[(iii)] The functor $F$ is partially $j$-injective.
 \end{enumerate}
 Then the function
 \[
 F(B) \rightarrow F(B \slash \alpha) \cong F\bigl(A \slash \alpha [x,y]\slash (xy)\bigr)
 \]
 of pointed sets induced by the projection $B \rightarrow B \slash \alpha$ has trivial kernel.
  \end{thm}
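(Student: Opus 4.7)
The plan is to combine three inputs: Corollary~\ref{cor:trivial_kernel_from_jacobson_patching} (which gives the trivial-kernel statement for the ``doubly localized'' ring $B_{1+xA[x],1+yA[y]}$), Proposition~\ref{prop:overrings_in_henselian_case} (which refines this localization through the smaller localization $B_{1+\alpha B}$ using the henselian hypothesis), and partial $j$-injectivity of $F$ applied to the ideal $\alpha B_{1+\alpha B}$, which sits in the Jacobson radical of $B_{1+\alpha B}$ by construction.

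First I would apply Corollary~\ref{cor:trivial_kernel_from_jacobson_patching}: since $\bigl(A,(\alpha)\bigr)$ is henselian, $\alpha$ lies in the Jacobson radical of $A$; weak analytic excision and $F$-contractibility of $A[x]$ are assumed; and $\alpha$ is a nonzerodivisor by hypothesis. The corollary yields that
\[
F(B) \rightarrow F(B_{1+xA[x],1+yA[y]})
\]
has trivial kernel. By Proposition~\ref{prop:overrings_in_henselian_case}, the underlying ring map factors through $B \rightarrow B_{1+\alpha B}$; applying $F$, the trivial-kernel property of the composite implies that $F(B) \rightarrow F(B_{1+\alpha B})$ already has trivial kernel.

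Next, I would identify $B/\alpha$ with $B_{1+\alpha B}/\alpha$: every element of $1+\alpha B$ becomes $1$ modulo $\alpha$, so the canonical map $B \slash \alpha \rightarrow B_{1+\alpha B} \slash \alpha$ is an isomorphism. Since $\alpha$ is in the Jacobson radical of $B_{1+\alpha B}$ by construction of the localization, partial $j$-injectivity (Definition~\ref{dfn:partially_j_injective}) applied to $\alpha B_{1+\alpha B}$ shows that
\[
F(B_{1+\alpha B}) \rightarrow F(B_{1+\alpha B} \slash \alpha) \cong F(B \slash \alpha)
\]
has trivial kernel. Combining the two steps via the evident commutative square: if $\sigma \in F(B)$ maps to the base point in $F(B \slash \alpha)$, then it maps to the base point in $F(B_{1+\alpha B} \slash \alpha)$, hence (by partial $j$-injectivity) in $F(B_{1+\alpha B})$, and hence (by the first step) $\sigma$ is itself the base point.

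The main obstacle is conceptual rather than computational: recognizing that the henselian hypothesis on $\bigl(A,(\alpha)\bigr)$ is exactly what is needed to pass, via Proposition~\ref{prop:overrings_in_henselian_case}, from the somewhat awkward two-step localization appearing in Corollary~\ref{cor:trivial_kernel_from_jacobson_patching} to the single localization $B \rightarrow B_{1+\alpha B}$, whose kernel $\alpha B_{1+\alpha B}$ sits in the Jacobson radical and is therefore directly amenable to partial $j$-injectivity. Once this bridging observation is in place, the statement follows by a diagram chase.
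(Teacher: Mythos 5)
Your proposal is correct and follows essentially the same route as the paper: apply Corollary~\ref{cor:trivial_kernel_from_jacobson_patching} to the double localization, use Proposition~\ref{prop:overrings_in_henselian_case} to factor through $B_{1+\alpha B}$ and deduce that $F(B) \rightarrow F(B_{1+\alpha B})$ has trivial kernel, and then finish via partial $j$-injectivity applied to $\alpha B_{1+\alpha B}$. The only cosmetic difference is that you spell out the identification $B \slash \alpha \cong B_{1+\alpha B} \slash \alpha$ and phrase the conclusion as a diagram chase, where the paper simply invokes closure of trivial-kernel maps under composition.
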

 
 \begin{proof}
  Note that if a composite $gf$ of maps of pointed sets has trivial kernel, so does $f$. The factorization of Proposition~\ref{prop:overrings_in_henselian_case}, combined with Corollary~\ref{cor:trivial_kernel_from_jacobson_patching}, thus shows that $F(B) \rightarrow F(B_{1+\alpha B})$ has trivial kernel. By assumption, $F(B_{1+\alpha B}) \rightarrow F(B \slash \alpha)$ also has trivial kernel, so the claim follows from the fact that maps with trivial kernel are stable under composition.
 \end{proof}
 
 In order to understand algebras of the form $C[x,y] \slash (xy)$, it is convenient to have additional weak excision properties.
 
 \begin{dfn}\label{dfn:weak_milnor_excision}
 A pullback square
 \[
 \xymatrix{A \ar[r]^-{\varphi^{\prime}} \ar[d]_{\psi^{\prime}} & C \ar[d]^{\psi} \\ B \ar[r]_-{\varphi}  & D }
 \]
 of commutative $R$-algebras is called a \emph{Milnor square} if one of $\varphi$ or $\psi$ is surjective (in which case $\varphi^{\prime}$ respectively $\psi^{\prime}$ is surjective, too). A functor $F \colon \CAlg_R \rightarrow \Set$ satisfies \emph{weak Milnor excision} if $F$ sends Milnor squares to weak pullback diagrams.
 \end{dfn}
 
 \begin{example}\label{example:weak_milnor_squares}
 For any commutative $R$-algebra $C$, the diagrams
 \[
 \vcenter{\xymatrix{
C[x] \ar[r] \ar[d] & C \ar[d] \\ C[x,y] \slash (xy) \ar[r] & C[y] 
 }}
 \quad \text{and} \quad
 \vcenter{\xymatrix{
C[x,y] \slash (xy) \ar[r] \ar[d] & C[y] \ar[d] \\ C[x] \ar[r] & C \\ 
 }}
 \]
 are Milnor squares, where all the unlabelled morphisms send the variable $x$ to $x$ (respectively $y$ to $y$) if these variables occur in the target, and to $0$ otherwise.
 
 More generally, given a commutative $R$-algebra $A$ with ideals $I$, $J \subseteq A$, the diagram
 \[
 \xymatrix{ A \slash I \cap J \ar[r] \ar[d] & A \slash I \ar[d] \\ A \slash J \ar[r] & A \slash I+J}
 \]
 is a Milnor square. This follows from the snake lemma, applied to the diagram
 \[
  \xymatrix@C=15pt{0 \ar[r] & I \cap J \ar[r] \ar[d] & I \oplus J \ar[r] \ar[d] & I+J \ar[d] \ar[r] & 0\\
 0 \ar[r] & A \ar[r]^-{\left(\begin{smallmatrix} 1\\ 1 \end{smallmatrix} \right)} \ar[d] & A \oplus A \ar[r]^-{\left(\begin{smallmatrix} 1 &  -1 \end{smallmatrix} \right)} \ar[d] & A \ar[r] \ar[d] & 0 \\ & A \slash I \cap J \ar[r] & A \slash I \oplus A \slash J \ar[r] & A \slash I+J  }
 \]
 with the top two rows exact. Specializing this to $A=C[x,y]$, $I=(x)$, and $J=(y)$, we get the right Milnor square above. From the pullback cancellation law it follows that the left square is a pullback as well (by stacking the left square on top of the right square).
 \end{example}
 
 Many of the functors we have considered in \S \ref{section:analytic} satisfy weak Milnor excision. Several examples follow from the observation that Vorst's Lemma (and its generalizations to other reductive group schemes over $R$) hold for Milnor squares.
 
 \begin{prop}\label{prop:weak_milnor_excision_group_case}
 For each $r \geq 3$, the functors $W_r$, $K_{1,r}$, and $SK_{1,r}$ satisfy weak Milnor excision. More generally, if $G$ is a reductive group scheme over $R$ with parabolic subgroup $P$, then $K_1^{G,P} \colon \CAlg_R \rightarrow \Set_{\ast}$ satisfies weak Milnor excision.
 \end{prop}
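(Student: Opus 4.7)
The strategy is to imitate the proofs of weak analytic excision (specifically Lemma~\ref{lemma:Vorsts_lemma}, Propositions~\ref{prop:W_r_weak_analytic_excision} and \ref{prop:unstable_K1_excision_reductive_group}), with the role of Vorst's factorization lemma replaced by a much simpler observation: when one of the maps in a Milnor square is surjective, the relevant elementary subgroup lifts along that surjection.

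Concretely, I would fix a Milnor square and assume without loss of generality that $\psi \colon C \rightarrow D$ is surjective (so $\psi' \colon A \rightarrow B$ is surjective as well). The first step is to show that $\mathrm{E}_r(C) \rightarrow \mathrm{E}_r(D)$ and, more generally, $\mathrm{E}_P(C) \rightarrow \mathrm{E}_P(D)$ are surjective. For $\mathrm{E}_r$ this is immediate: each generator $e_{ij}(d)$ of $\mathrm{E}_r(D)$ lifts to $e_{ij}(c) \in \mathrm{E}_r(C)$ by choosing any $c \in C$ with $\psi(c)=d$. For $\mathrm{E}_P(D)$, recall from the proof of Lemma~\ref{lemma:K_1_GP_finitary} that $U_P(-) \cong \Hom_R(V,-)$ for a finitely generated projective $R$-module $V$; projectivity of $V$ therefore yields surjectivity of $U_P(C) \rightarrow U_P(D)$, and likewise for $U_{P^{-}}$, whence $\mathrm{E}_P(C) \rightarrow \mathrm{E}_P(D)$ is surjective.

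With this lifting in hand, weak Milnor excision for $K_{1,r}$, $SK_{1,r}$, and $K_1^{G,P}$ follows exactly as in Claim~(2) of Lemma~\ref{lemma:Vorsts_lemma}. Given representatives $g_1 \in G(B)$, $g_2 \in G(C)$, and a witness $\varepsilon \in \mathrm{E}_P(D)$ to the equality of their images in the lower right corner, one lifts $\varepsilon$ to $\varepsilon_2 \in \mathrm{E}_P(C)$ and obtains $\varphi_{\ast}(g_1)=\psi_{\ast}(g_2 \varepsilon_2^{-1})$. Since $G$ (respectively $\mathrm{GL}_r$, $\mathrm{SL}_r$) is representable, it sends the Milnor square to a pullback, so one obtains an element of $G(A)$ whose images are $g_1$ and $g_2 \varepsilon_2^{-1}$, yielding the desired element of $K_1^{G,P}(A)$. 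Note that no isotropic rank hypothesis is required here, since we are not using Stavrova's version of Vorst's lemma.

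The remaining case is $W_r$, and this is where the main additional work is required. Given unimodular rows $u$ over $B$ and $v$ over $C$ agreeing in $W_r(D)$, the same argument produces a row $w \in A^r$ whose images are $u$ and $v \varepsilon_2^{-1}$; the obstacle is to check that $w$ itself is unimodular. I would adapt the final argument of Proposition~\ref{prop:W_r_weak_analytic_excision}: since $\psi'_{\ast}(w)=u$ is unimodular in $B^r$, lifting a unimodular combination $\sum b_i u_i=1$ produces $a \in A$ with $\psi'(a)=1$, hence $a=1-k$ for some $k \in \ker(\psi')$. Under $\varphi'$, the kernel $\ker(\psi')$ maps isomorphically onto the ideal $\ker(\psi) \subseteq C$, and the unimodularity of $\varphi'_{\ast}(w)=v \varepsilon_2^{-1}$ over $C$ allows one to augment $a$ by an element of $\ker(\psi') \subseteq A$ so as to produce a genuine unimodular combination expressing $1$ as a combination of the entries of $w$. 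This establishes weak Milnor excision for $W_r$ and completes the proof.
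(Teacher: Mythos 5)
Your proposal is correct, and the first two thirds (lifting the elementary subgroup along a surjection and then invoking the cartesian property of the Milnor square for representable $G$) coincide with the paper's argument. The $W_r$ case is where you take a genuinely different route. The paper shows that the patched row $w$ is unimodular by arguing locally: for a maximal ideal $\mathfrak{m} \subseteq A$ not containing $I = \ker(\psi')$, localizing the Milnor square at $\mathfrak{m}$ makes $\varphi' \ten{A} A_{\mathfrak{m}}$ an isomorphism and transports unimodularity from $C$; for $\mathfrak{m} \supseteq I$, unimodularity of $\psi'(w)$ gives $(w_1, \ldots, w_r)+I=A$ directly. You instead argue globally: from unimodularity of $\psi'(w)$ you lift a combination to get $a=\sum a_i w_i$ with $a=1-k$, $k \in \ker(\psi')$; then, since $\varphi'\vert_{\ker(\psi')}$ is a bijection onto $\ker(\psi) \subseteq C$ and $\varphi'(w)$ is unimodular over $C$, you write $c_i \varphi'(k) = \varphi'(k_i)$ for unique $k_i \in \ker(\psi')$ (where $\sum c_i \varphi'(w_i)=1$) and check via the pullback property that $\sum k_i w_i = k$, so that $\sum(a_i+k_i)w_i = 1$. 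Your phrasing (``augment $a$ by an element of $\ker(\psi')$'') leaves this last verification implicit, and it does need the observation that $\sum k_i w_i$ and $k$ agree after applying both $\psi'$ and $\varphi'$, but the idea is sound. Both arguments are roughly the same length; yours avoids prime avoidance and case analysis in favor of the explicit pullback structure, while the paper's is slightly more robust in that it only uses the fact that the patching diagram is a pullback, not the concrete description of the kernel.
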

 
 \begin{proof}
 The proof proceeds as in the analytic case (see Lemma~\ref{lemma:Vorsts_lemma}) once we know that $E_r(A) \rightarrow E_r(A \slash I)$ respectively $E_P(A) \rightarrow E_P(A \slash I)$ is surjective for any ideal $I \subseteq A$. This is well-known for $G=\mathrm{GL}_r$ or $\mathrm{SL}_r$, and for general $G$ it is a consequence of \cite[Expos{\'e}~XXVI, Corollaire~2.5]{SGA3_NEW},  which tells us that the unipotent radicals $U_{P}=\mathrm{rad}^u(P)$ and $U_{P^{-}}=\mathrm{rad}^u(P^{-})$ are isomorphic to $\Spec\bigl(\Sym(V_i)\bigr)$ for certain projective modules $V_i$, $i=1,2$. Since $\CAlg_R\bigl(\Sym(V_i),B\bigr) \cong \mathrm{Hom}_R(V_i,B)$, the projectivity of the $V_i$ implies that the homomorphisms $U_{P^{\pm}}(A) \rightarrow U_{P^{\pm}}(A \slash I)$ are indeed surjective.
 
 We can use this to turn ``weak excision problems'' for $K_{1,r}$, $SK_{1,r}$, and $K_1^{G,P}$ into corresponding problems for $\mathrm{GL}_r$, $\mathrm{SL}_r$, and $G$. Since Milnor squares are cartesian, the latter problems have in fact unique solutions, so the proof proceeds exactly as in the analytic case (see the second part of Lemma~\ref{lemma:Vorsts_lemma} for details).
 
 It remains to check weak excision for $W_r \colon \CAlg_r \rightarrow \Set_{\ast}$. A special case of this was first observed by Gubeladze, see \cite[Proposition~9.1]{GUBELADZE_ELEMENTARY}. Given a Milnor patching diagram
 \[
 \xymatrix{A \ar[r]^-{\varphi^{\prime}} \ar[d]_{\psi^{\prime}} & B \ar[d]^{\psi} \\ A \slash I \ar[r]_-{\varphi}  & B \slash J}
 \]
 and $u \in \mathrm{Um}_r(A \slash I)$, $v \in \mathrm{Um}_r(B)$, and $\bar{\varepsilon} \in E_r(B \slash J)$ such that $\varphi(u)=\psi(v) \bar{\varepsilon}$ holds, we can pick $\varepsilon \in E_r(B)$ such that $\psi(\varepsilon)=\bar{\varepsilon}$ to reduce to the case $\varphi(u)=\psi(v)$. Since the diagram is cartesian, there exists a row $w=(w_1, \ldots, w_r) \in A^{r}$ with $\psi^{\prime}(w)=u$ and $\varphi^{\prime}(w)=v$. We claim that $w$ is unimodular, which implies the desired weak excision property of $W_r$.
 
 Thus let $\mathfrak{m} \subseteq A$ be a maximal ideal. If the ideal $I$ is not contained in $\mathfrak{m}$, then we have $A \slash I \ten{A} A_{\mathfrak{m}}=0$ and $B \slash J \ten{A} A_{\mathfrak{m}}=0$ since a unit is sent to $0$ in both these rings. By applying the functor $-\ten{A} A_{\mathfrak{m}}$ to the above Milnor square we thus find that $\varphi^{\prime} \ten{A} A_{\mathfrak{m}}$ is an isomorphism. From the fact that $\varphi^{\prime}(w)$ is unimodular it follows that $(w_1, \ldots, w_r)$ is not contained in $\mathfrak{m}$.
 
 It remains to check the same for maximal ideals $\mathfrak{m}$ with $I \subseteq \mathfrak{m}$. Since $\psi^{\prime}(w)$ is unimodular, there exists an element in $1+I$ contained in the ideal $(w_1, \ldots, w_r) \subseteq A$, so no maximal ideal which contains $I$ can contain all the $w_i$, as claimed.
 \end{proof}
 
 A second class of examples is given by the following result.
 
 \begin{prop}\label{prop:weak_milnor_excision_adams_case}
 The functors $P$ and $P_r$, $r \geq 1$ satisfy weak Milnor excision. More generally, if $X$ is an Adams stack over $R$ and $F \defl \pi_0 X$ is the functor which sends $A$ to the set of isomorphism classes of the groupoid $X(A)$, then $F$ satisfies weak Milnor excision.
 \end{prop}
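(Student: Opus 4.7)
The plan is to mimic the strategy used for Proposition~\ref{prop:Adams_stack_weak_analytic_excision} and Corollary~\ref{cor:weak_analytic_excision_for_P_and_P_r}: reduce everything to the Tannakian criterion \cite[Theorem~1.1.1]{SCHAEPPI_DESCENT} which identifies pushouts of affine schemes in the 2-category of Adams stacks in terms of two concrete conditions on module categories. Once the Milnor square is shown to be such a pushout, any Adams stack $X$ turns it into an equivalence
\[
X(A) \xrightarrow{\;\simeq\;} X(B) \pb{X(D)} X(C),
\]
and taking $\pi_0$ of the iso-comma groupoid on the right yields a weak pullback of sets, which is exactly weak Milnor excision for $\pi_0 X$. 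Applied to $X = \BGL_r$ and $X = \coprod_{r \geq 0} \BGL_r$ this recovers $P_r$ and $P$ respectively.

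So the bulk of the work is verifying the two Tannakian hypotheses for an arbitrary Milnor square with $\psi \colon C \twoheadrightarrow D$ surjective. Write $I = \ker(\psi)$ and $I' = \ker(\psi' \colon A \to B)$; the projection $\varphi' \colon A \to C$ restricts to an isomorphism $I' \to I$ of $A$-modules (with $A$ acting on $I$ via $\varphi'$). The first condition---that
\[
\VB^c\bigl(\Spec(A)\bigr) \rightarrow \VB^c\bigl(\Spec(B)\bigr) \pb{\VB^c(\Spec(D))} \VB^c\bigl(\Spec(C)\bigr)
\]
is an equivalence---is the classical Milnor patching theorem for projective modules; the surjectivity of $\psi$ lets one lift elementary matrices (and units) from $D$ to $C$ directly, avoiding Vorst's lemma entirely, and then Milnor's argument in \cite[Lemma~2.6]{MILNOR} produces the required essential surjectivity. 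Fullness and faithfulness are, as in the proof of Proposition~\ref{prop:patching_projective_modules}, immediate consequences of the Milnor square being cartesian.

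The second condition is the vanishing statement: if $M$ is a finitely presentable $A$-module with $B \ten{A} M \cong 0$ and $C \ten{A} M \cong 0$, then $M \cong 0$. I would argue it as follows. The right exact sequence
\[
\xymatrix{ I' \ten{A} M \ar[r] & M \ar[r] & B \ten{A} M \ar[r] & 0 }
\]
together with $B \ten{A} M = 0$ shows that $I' \ten{A} M \to M$ is surjective. On the other hand, because the $A$-action on $I' \cong I$ factors through $\varphi' \colon A \to C$, base change gives
\[
I' \ten{A} M \;\cong\; I \ten{C} (C \ten{A} M) \;=\; I \ten{C} 0 \;=\; 0 \smash{\rlap{,}}
\]
so $M = 0$. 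This is the only nontrivial input, and I expect it to be the main obstacle only in the sense of bookkeeping---the real content is the identification of the $A$-action on $I'$.

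With both Tannakian conditions verified, \cite[Theorem~1.1.1]{SCHAEPPI_DESCENT} implies that the diagram
\[
\xymatrix{\Spec(D) \ar[r] \ar[d] & \Spec(C) \ar[d] \\ \Spec(B) \ar[r] & \Spec(A)}
\]
is a pushout in the 2-category of Adams stacks, which gives the equivalence of groupoids mentioned above for every Adams stack $X$, and hence the weak Milnor excision for $\pi_0 X$. Specializing to the Adams stacks classifying rank-$r$ (resp.\ arbitrary-rank) vector bundles yields the statement for $P_r$ and $P$; the rank claim for $P_r$ follows exactly as in the proof of Corollary~\ref{cor:weak_analytic_excision_for_P_and_P_r}, using that $\Spec(B) \sqcup \Spec(C) \to \Spec(A)$ is surjective for a Milnor square.
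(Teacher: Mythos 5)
Your proof is correct, and it takes a different route from the paper, one worth noting. The paper's own proof handles $P$ and $P_r$ first via classical Milnor patching, then disposes of general Adams stacks by a direct citation of \cite[Theorem~4.2.5]{SCHAEPPI_DESCENT} (a statement specifically about Milnor squares being pushouts), treating the degenerate case $D=0$ separately since there $X(A)\simeq X(B)\times X(C)$ by preservation of finite products. You instead reduce everything at once to the general Tannakian pushout criterion \cite[Theorem~1.1.1]{SCHAEPPI_DESCENT} and verify its two hypotheses by hand---exactly the strategy the paper itself uses in the analytic-patching case (Proposition~\ref{prop:Adams_stack_weak_analytic_excision})---and then deduce $P$ and $P_r$ as special cases. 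The gain is uniformity: you carry the analytic argument over verbatim, and you never need to invoke Theorem~4.2.5. Your verification of the vanishing hypothesis is clean and correct: tensoring $0 \to I' \to A \to B \to 0$ with $M$ gives surjectivity of $I' \ten{A} M \to M$ once $B \ten{A} M = 0$, and the identification of the $A$-action on $I' \cong I$ with the one through $\varphi'$ yields $I' \ten{A} M \cong I \ten{C} (C \ten{A} M) = 0$. One could argue the paper's approach is more economical (Theorem~4.2.5 packages precisely this content), while yours is more explicit about the mechanism; both are valid, and yours has the pedagogical virtue of making visible that the Milnor and analytic cases rest on the same criterion.
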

 
 \begin{proof}
 For $P$ and $P_r$, $r \geq 1$, this follows from Milnor's result \cite[\S 2]{MILNOR} that $\Proj(-)$ sends Milnor squares to cartesian squares of groupoids (note that this can be proved exactly as in the case of analytic patching diagrams, see the proof of Proposition~\ref{prop:patching_projective_modules}). The case of $P_r$ requires one additional observation: given a Milnor square
 \[
 \xymatrix{A \ar[r]^-{\varphi^{\prime}} \ar[d]_{\psi^{\prime}} & B \ar[d]^{\psi} \\ A \slash I \ar[r]_-{\varphi}  & B \slash J}
 \]
 and a finitely generated projective $A$-module $V$, if both $A \slash I \ten{A} V$ and $B \ten{A} V$ have constant rank $r$, then so does $V$. This follows as in the above proof from the fact that $\varphi^{\prime} \ten{A} A_{\mathfrak{m}}$ is an isomorphism for any maximal ideal not containing all of $I$.
 
 Finally, for the case $F=\pi_0 X$, $X$ an Adams stack over $R$, we note that $X$ sends the above diagram to a cartesian diagram of groupoids in the special case where $B \slash J=0$ since (Adams) stacks preserve finite products of $R$-algebras. We know from generalized Tannaka duality that
 \[
 X(A) \rightarrow X(A \slash I) \pb{X(B \slash J)} X(B)
 \]
 is also an equivalence if $B \slash J \neq 0$ (see \cite[Theorem~4.2.5]{SCHAEPPI_DESCENT}). The weak excision property for $F$ follows directly from the description of cartesian squares of groupoids.
 \end{proof}
 
 These results imply that the first two $K$-groups $K_0$ and $K_1$ satisfy weak Milnor excision. In contrast, the higher $K$-groups $K_i$ for $i \geq 2$ in general do \emph{not} satisfy weak Milnor excision (see for example the introduction of \cite{LAND_TAMME} for a discussion).
 
 Applying this to Example~\ref{example:weak_milnor_squares}, we obtain the following consequence.
 
 \begin{lemma}\label{lemma:excision_for_coordinate_cross}
 Let $F \colon \CAlg_R \rightarrow \Set_{\ast}$ be a functor which satisfies weak Milnor excision and let $C$ be an $R$-algebra such that $C[x]$ is $F$-contractible. Then the commutative algebra $C[x,y] \slash (x,y)$ is $F$-contractible as well.
 \end{lemma}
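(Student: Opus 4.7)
Interpreting $C[x,y] \slash (x,y)$ as the coordinate cross $C[x,y] \slash (xy)$ (the ring whose Milnor square appears in Example~\ref{example:weak_milnor_squares}, and which is the natural companion to the assumption that $C[x]$ be $F$-contractible), the plan is to apply weak Milnor excision to the first square of Example~\ref{example:weak_milnor_squares}, namely
\[
\xymatrix{ C[x] \ar[r]^-{\mathrm{ev}_0} \ar[d] & C \ar[d]^{\iota_y} \\ C[x,y] \slash (xy) \ar[r]_-{\pi_x} & C[y] }
\]
where the left vertical morphism is the evident inclusion, $\iota_y$ is the inclusion of constants in $C[y]$, and $\pi_x$ sends $x$ to $0$.

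First I would verify that all corners other than $C[x,y] \slash (xy)$ are $F$-contractible. By hypothesis $F(C[x]) = \ast$, and the isomorphism $C[x] \cong C[y]$ gives $F(C[y]) = \ast$ as well. For $F(C)$, note that the composite $C \to C[x] \xrightarrow{\mathrm{ev}_0} C$ is the identity, so $F(C) \to F(C[x]) = \ast$ is injective; since $F$ takes values in pointed sets and $F(C)$ is therefore non-empty, we conclude $F(C) = \ast$.

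Applying $F$ to the Milnor square and using weak Milnor excision, the resulting diagram
\[
\xymatrix{ F(C[x]) \ar[r] \ar[d] & F(C) \ar[d] \\ F(C[x,y] \slash (xy)) \ar[r] & F(C[y]) }
\]
is a weak pullback. Now given any $\sigma \in F(C[x,y] \slash (xy))$, its image in $F(C[y])$ is the basepoint (the only element there). The basepoint $\ast \in F(C)$ also maps to the basepoint in $F(C[y])$, so the pair $(\ast, \sigma) \in F(C) \times F(C[x,y] \slash (xy))$ is compatible for the weak pullback. Consequently there exists $z \in F(C[x]) = \{\ast\}$ whose image in $F(C[x,y] \slash (xy))$ is $\sigma$. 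But $z$ is forced to be the basepoint, and since the map $F(C[x]) \to F(C[x,y] \slash (xy))$ is pointed, we conclude $\sigma = \ast$. This shows $F(C[x,y] \slash (xy)) = \ast$, as required.

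There is no serious obstacle here: the only subtle point is recognizing that, although weak pullbacks fail to pin down elements uniquely, the fact that three of the four corners are singletons lets us use the existence part of the weak-pullback property to force the remaining corner to be a singleton as well.
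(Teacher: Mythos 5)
Your proof is correct and takes essentially the same route as the paper: apply weak Milnor excision to the coordinate-cross square, observe that $F(C[y]) \cong \ast$ makes every $\sigma$ in the lower-left corner compatible with the basepoint of $F(C)$, and conclude from the existence half of the weak-pullback property together with $F(C[x]) \cong \ast$ that $\sigma$ must be the basepoint. The only difference is cosmetic: you spell out the verification (including noting $F(C) \cong \ast$, which isn't actually needed), whereas the paper compresses the argument to the single observation that $F(C[x]) \to F\bigl(C[x,y]/(xy)\bigr)$ is surjective.
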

 
\begin{proof}
 Since
 \[
 \xymatrix{F(C[x]) \ar[r] \ar[d] & F(C) \ar[d] \\ F\bigl( C[x,y]\slash (xy)\bigr) \ar[r] & F(C[y])}
 \]
 is a weak pullback diagram by Example~\ref{example:weak_milnor_squares} and $F(C[y])=\ast$, it follows that
 \[
 \ast=F(C[x]) \rightarrow F\bigl(C[x,y] \slash (xy) \bigr)
 \]
 is surjective.
\end{proof} 

This yields the following corollary of Theorem~\ref{thm:henselian_implies_trivial_kernel_for_overring}. 
 
\begin{cor}\label{cor:henselian_implies_trivial_kernel_for_overring}
 Let $F \colon \CAlg_R \rightarrow \Set_{\ast}$ be a functor, let $A$ be a commutative $R$-algebra, and let $\alpha \in A$ be a nonzerodivisor such that $\bigl(A,(\alpha) \bigr)$ is a henselian pair. If, in addition to the conditions of Theorem~\ref{thm:henselian_implies_trivial_kernel_for_overring}, the conditions
\begin{enumerate}
\item[(iv)] The functor $F$ satisfies weak Milnor excision;
\item[(v)] The $R$-algebra $A\slash \alpha [x]$ is $F$-contractible
\end{enumerate}
 hold, then $F\bigl(A[x,y] \slash (xy-\alpha)\bigr)=\ast$.
\end{cor}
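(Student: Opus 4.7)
The plan is to combine Theorem~\ref{thm:henselian_implies_trivial_kernel_for_overring} with Lemma~\ref{lemma:excision_for_coordinate_cross} in a direct way. First, I would invoke Theorem~\ref{thm:henselian_implies_trivial_kernel_for_overring} under hypotheses (i), (ii), (iii) of the current corollary (which are shared with that theorem) to conclude that the map
\[
F(B) \rightarrow F(B\slash \alpha) \cong F\bigl(A \slash \alpha [x,y]\slash (xy)\bigr)
\]
induced by the projection has trivial kernel, where $B = A[x,y]\slash (xy - \alpha)$.

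Next, I would apply Lemma~\ref{lemma:excision_for_coordinate_cross} with $C = A\slash\alpha$: hypothesis (v) provides that $C[x] = A\slash\alpha[x]$ is $F$-contractible, while hypothesis (iv) gives the weak Milnor excision needed to apply the lemma. The conclusion is that the target
\[
F\bigl(A\slash\alpha[x,y]\slash (xy)\bigr) \cong \ast
\]
is a singleton.

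Finally, having trivial kernel for a map of pointed sets whose target is $\ast$ means exactly that the preimage of the basepoint is only the basepoint; since every element of $F(B)$ lies in this preimage, we get $F(B) = \ast$, as required. There is no substantive obstacle here: the corollary is essentially the composition of the two previous results, with condition (iii) ensuring that the reduction modulo $\alpha$ detects nontrivial classes and conditions (iv),(v) ensuring that no nontrivial classes exist to begin with on the coordinate cross $A\slash\alpha[x,y]\slash (xy)$.
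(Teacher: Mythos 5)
Your proposal is correct and matches the paper's proof: invoke Theorem~\ref{thm:henselian_implies_trivial_kernel_for_overring} to get that $F(B) \to F(A\slash\alpha[x,y]\slash(xy))$ has trivial kernel, then apply Lemma~\ref{lemma:excision_for_coordinate_cross} with $C = A\slash\alpha$ (using conditions (iv) and (v)) to see that the target is a singleton, whence the domain is trivial.
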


\begin{proof}
 From Theorem~\ref{thm:henselian_implies_trivial_kernel_for_overring} we know that
 \[
 F\bigl(A[x,y] \slash (xy-\alpha) \bigr) \rightarrow F\bigl(A \slash \alpha[x,y]\slash (xy)\bigr)
 \]
 has trivial kernel and from Lemma~\ref{lemma:excision_for_coordinate_cross} it follows that the target is trivial.
\end{proof}

Returning to the patching diagram
\[
\xymatrix{A[x,y] \slash (xy-\alpha) \ar[r] \ar[d] & A_{\alpha}[x,x^{-1}] \ar[d] \\ A^h_{(\alpha)}[x,y] \slash(xy-\alpha) \ar[r] & A^h_{(\alpha)}[\frac{1}{\alpha}][x,x^{-1}] }
\]
 discussed at the beginning of this section, we see that the above corollary can often be used very effectively to show that there are no non-trivial ``pieces near $\alpha$'' (that is, the value of $F$ at the bottom left corner is trivial).
 
 To understand the pieces ``away from $\alpha$,'' that is, the value of $F\bigl(A_{\alpha}[x,x^{-1}]\bigr)$, we can sometimes use the following geometric argument. We say that an $R$-algebra $A$ with structural morphism $\eta \colon R \rightarrow A$ is \emph{naively $\mathbb{A}^{1}$-contractible over $R$} if there exists an augmentation $\varepsilon \colon A \rightarrow R$ and an $R$-homomorphism $A \rightarrow A[t]$ such that $\mathrm{ev}_1 \circ h=\id_A$ and $\mathrm{ev}_0 \circ h=\eta \circ \varepsilon$. Note that $h$ gives a ``naive $\mathbb{A}^{1}$-homotopy'' $\eta \varepsilon \sim \id $, so $\eta$ and $\varepsilon$ are indeed mutually inverse $\mathbb{A}^{1}$-equivalences (since $\varepsilon \eta=\id$). This is of limited use as long as we do not know if $A$ is $F$-regular. Nevertheless, we get the following consequence.
 
 \begin{prop}\label{prop:F_and_naive_A1_contractible_A}
 Let $F \colon \CAlg_R \rightarrow \Set_{\ast}$ be a functor such that the ground ring $R$ is $F$-contractible. If $A$ is naively $\mathbb{A}^{1}$-contractible over $R$, then the composite
 \[
 \xymatrix{ NF(A) \ar[r]^-{\mathrm{incl}} & F(A[t]) \ar[r]^-{\mathrm{ev}_1} & F(A) }
 \]
 is surjective. If $B$ is a $F$-regular $R$-algebra and $\varphi \colon A \rightarrow B$ is any $R$-homomorphism, then $F\varphi \colon FA \rightarrow FB$ factors through the trivial pointed set, that is, $F\varphi(\sigma)=\ast$ for all $\sigma \in FA$.
 \end{prop}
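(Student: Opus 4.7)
The plan is to prove both claims directly from the naive $\mathbb{A}^{1}$-homotopy $h \colon A \to A[t]$, using functoriality of $F$ and the fact that any morphism of pointed sets preserves the basepoint. Neither claim appears to require any of the heavier machinery of the preceding sections; the whole argument is essentially a formal consequence of the definitions.

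For the first claim, I would fix $\sigma \in F(A)$ and set $\tau \defl h_{\ast}(\sigma) \in F(A[t])$. The identity $\mathrm{ev}_1 \circ h=\id_A$ immediately gives $(\mathrm{ev}_1)_{\ast} \tau = \sigma$, so I only need to check that $\tau$ lies in $NF(A) = \ker\bigl(\mathrm{ev}_0 \colon F(A[t]) \to F(A)\bigr)$. From $\mathrm{ev}_0 \circ h = \eta \circ \varepsilon$ I would compute $(\mathrm{ev}_0)_{\ast} \tau = \eta_{\ast}(\varepsilon_{\ast} \sigma)$, and then use $F(R)=\ast$ to conclude $\varepsilon_{\ast}\sigma=\ast$, after which $\eta_{\ast}$ (a map of pointed sets) sends $\ast$ to $\ast$. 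This places $\tau$ in $NF(A)$ and proves surjectivity of the composite.

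For the second claim, I would apply the first to obtain, for any $\sigma \in F(A)$, a preimage $\tau \in NF(A)$ with $(\mathrm{ev}_1)_{\ast}\tau = \sigma$, and then push $\tau$ forward along $\varphi_{\ast} \colon F(A[t]) \to F(B[t])$. Naturality of $\mathrm{ev}_0$ in the ring argument gives $(\mathrm{ev}_0)_{\ast}\varphi_{\ast}(\tau) = \varphi_{\ast}(\mathrm{ev}_0)_{\ast}(\tau) = \ast$, so $\varphi_{\ast}(\tau) \in NF(B)$. Now the assumption that $B$ is $F$-regular means in particular $B \in \ca{E}_1^F$, so $\iota_{\ast} \colon F(B) \to F(B[t])$ is a bijection; equivalently its section $(\mathrm{ev}_0)_{\ast}$ is also a bijection, forcing $NF(B) = \{\ast\}$. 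Therefore $\varphi_{\ast}(\tau) = \ast$, and applying $(\mathrm{ev}_1)_{\ast}$ and invoking naturality once more yields $\varphi_{\ast}(\sigma) = (\mathrm{ev}_1)_{\ast} \varphi_{\ast}(\tau) = \ast$, as required.

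There is no serious obstacle here; the only subtlety to keep track of is that one must use $F$-contractibility of $R$ (not just of $A$) in the first step, and that in the second step the $F$-regularity hypothesis on $B$ is used solely to conclude $NF(B)=\ast$. The rest is naturality and basepoint preservation.
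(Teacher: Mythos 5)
Your proof is correct and follows essentially the same route as the paper: for the first claim, both use $h$ to produce $\tau=h_{\ast}\sigma$, checking via $\mathrm{ev}_0\circ h=\eta\varepsilon$ and $F(R)=\ast$ that $\tau\in NF(A)$; for the second, both push $\tau$ through the naturality square for $\varphi$ and use that $F$-regularity of $B$ gives $NF(B)=\ast$.
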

 
 \begin{proof}
 Choose $\varepsilon$ and $h$ as above. Since $NF(A)=\ker\bigl(F(\mathrm{ev}_0)\bigr)$ and
 \[
 F(\mathrm{ev}_0) \circ F(h)=F(\mathrm{ev}_0 h)=F(\eta \varepsilon)
 \]
 factors through $F(R)=\ast$, we find that $Fh(\sigma) \in NF(A)$ for all $\sigma \in FA$. But $F(\mathrm{ev}_1) F(h)=\id$, so the map in question is indeed surjective.
 
 Because of this surjectivity, it suffices to show that the diagonal composite in the diagram
 \[
 \xymatrix{NF(A) \ar[d]_{NF (\varphi)} \ar[r]^-{\mathrm{incl}} & F(A[t]) \ar[d]^{\varphi[t]} \ar[r]^{\mathrm{ev_1}}  & FA \ar[d]^{\varphi} \\ NF(B) \ar[r]_-{\mathrm{incl}} & F(B[t]) \ar[r]_-{\mathrm{ev}_1} & FB}
 \]
 factors through $\ast$ in order to establish the second claim. Since the $F$-regularity of $B$ implies that $NF(B)=\ast$, the claim follows from the commutativity of the diagram.
 \end{proof}
 
 In order to apply these results, we need to work with particular kinds of functors $F$ for which it is also possible to understand the ``ways of glueing'' on the $R$-algebra $B_{\alpha}=A^h_{(\alpha)}[\frac{1}{\alpha}][x,x^{-1}]$. In the remainder of this section, we will see that this is for example possible for the functor $SK_1 \colon \CAlg_R \rightarrow \Ab$.
 
 Since the higher $K$-groups are homotopy groups and the underlying functor to spectra sends analytic patching diagrams to cartesian diagrams (see Proposition~\ref{prop:K_theory_weak_analytic_excision}), the ``ways of glueing'' are parametrized by the next higher homotopy group. In other words, for $B \defl A^h_{(\alpha)}[x,y] \slash (xy-\alpha)$ and $C \defl A_{\alpha}[x,x^{-1}]$, there exists an exact sequence
 \[
 \xymatrix@C=15pt@R=15pt{K_2(B) \oplus K_2(C) \ar[r] & K_2(B_{\alpha}) \ar[r]^-{\partial} & SK_1\bigl(A[x,y] \slash (xy-\alpha)\bigr) \ar[r] & SK_1(B) \oplus SK_1(C) }
 \]
 of abelian groups. A priori, we should consider $K_1$ instead of $SK_1$ for the two groups on the right above, but $K_1(A) \cong SK_1(A) \oplus A^{\times}$ and the morphism on units is injective since any analytic patching diagram is a pullback diagram. We have already discussed the rightmost homomorphism of the above sequence at length. To prove the main result of this section, we need conditions which ensure that it is injective, or, equivalently, that $\partial=0$. In other words, we need a convenient formula for the connecting homomorphism $\partial$. 

 In the case where $\alpha$ is a nonzerodivisor, we have a better model of the homotopy fiber of $K(\lambda_{\alpha})$, which makes this analysis possible. Recall that the \emph{Steinberg symbol} is a function
 \[
 \{-,-\} \colon K_1(A) \times K_1(A) \rightarrow K_2(A)
 \]
 which is bilinear and skew-symmetric (see for example \cite[Theorem~III.5.12.2]{WEIBEL_KBOOK}). The formula for $\partial$ can be deduced from the special case appearing in the Fundamental Theorem of $K$-theory.
 
 \begin{thm}[Fundamental Theorem of $K$-theory]\label{thm:Fundamental_Theorem_K_Theory}
  For any commutative $R$-algebra $A$, and any $i \geq 1$, the sequence
  \[
  \xymatrix@C=15pt{0 \ar[r] & K_i (A) \ar[r] & K_i(A[t]) \oplus K_i(A[t^{-1}]) \ar[r] & K_i(A[t,t^{-1}]) \ar[r]^-{\partial} & K_{i-1}(A) \ar[r] & 0 }
  \]
  is split exact. For $i=2$, the splitting of $\partial$ is given by
  \[
  \xymatrix{K_1(A) \ar[r] & K_1(A[t,t^{-1}]) \ar[r]^{\{t,-\}} & K_2(A[t,t^{-1}]) }.
  \] 
 \end{thm}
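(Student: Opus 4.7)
The plan is to appeal to Grayson's theorem from \cite{GRAYSON_FUNDAMENTAL} for the split exactness and then verify the Steinberg symbol formula for $\partial$ by a direct computation; the theorem is sufficiently classical that in practice one would just cite \cite{GRAYSON_FUNDAMENTAL}, but let me sketch how the argument proceeds.

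For the split exactness, I would start from Quillen's localization theorem applied to the flat localization $A[t] \to A[t,t^{-1}]$. The homotopy fiber of the induced map on $K$-theory spectra is the $K$-theory of the category of perfect complexes of $A[t]$-modules that become acyclic after inverting $t$. A dévissage entirely analogous to Lemma~\ref{lemma:cofibers_of_s_generate} (applied to the multiplicative set $S = \{1,t,t^2,\ldots\}$) identifies this fiber with $K(A)$ via $P \mapsto (P[t] \xrightarrow{\,t\,} P[t])$. This yields the long exact sequence
\[
\cdots \to K_i(A) \to K_i(A[t]) \to K_i(A[t,t^{-1}]) \xrightarrow{\partial} K_{i-1}(A) \to \cdots,
\]
and the analogous sequence with $t^{-1}$ in place of $t$. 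Splicing the two together---using that the inclusions $K_i(A) \hookrightarrow K_i(A[t^{\pm}])$ are split by evaluation at $t=1$, and that the two boundaries $\partial$ agree up to sign on the common target---produces the desired four-term Mayer--Vietoris sequence.

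The splittings, and in particular the splitting of $\partial$ for general $i$, are constructed in \cite{GRAYSON_FUNDAMENTAL} via the Bass construction together with the projective bundle formula $K(\mathbb{P}^1_A) \simeq K(A) \oplus K(A)$, which realizes $K_{i-1}(A)$ as a direct summand of $K_i(A[t,t^{-1}])$ by a natural cup-product lift of the form $a \mapsto \{t,a\}$. For the Steinberg symbol formula at $i=2$ specifically, I would verify that the composite
\[
K_1(A) \xrightarrow{\iota} K_1(A[t,t^{-1}]) \xrightarrow{\{t,-\}} K_2(A[t,t^{-1}]) \xrightarrow{\partial} K_1(A)
\]
is the identity. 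The boundary $\partial$ on $K_2$ admits a tame-symbol description with respect to the $t$-adic valuation $v$ on $A[t,t^{-1}]$; applied to the symbol $\{t,\iota(a)\}$, with $v(t)=1$ and $v(\iota(a))=0$, the tame-symbol formula collapses (with the appropriate sign convention) to $\partial\{t,\iota(a)\} = a$.

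The main obstacle is the splitting of $\partial$ for arbitrary $i$, which is the genuinely non-formal input and requires the apparatus of \cite{GRAYSON_FUNDAMENTAL}. By contrast, the exactness alone is a fairly direct consequence of the localization fiber sequence and a dévissage, and the Steinberg symbol formula for $i=2$ reduces to a routine tame-symbol computation once one has identified $\partial$ correctly.
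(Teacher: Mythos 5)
Your proposal cites the same underlying source (Grayson) for the split exactness, and your sketch of the localization/d\'evissage mechanism is consistent with that source, so that part is fine. Where you diverge from the paper is in how you pin down the splitting formula at $i=2$. You propose a tame-symbol computation: treat the $t$-adic filtration on $A[t,t^{-1}]$ as a discrete valuation and apply the classical formula $\partial\{f,g\}=(-1)^{v(f)v(g)}\overline{f^{v(g)}g^{-v(f)}}$ to $\{t,\iota(a)\}$. The paper instead identifies the splitting constructed in \cite{GRAYSON_FUNDAMENTAL} with the Loday product $-\star t$ (citing \cite[Corollary~2.3.7]{LODAY}), and then uses the fact that the Loday pairing $K_1\times K_1\to K_2$ is the inverse of the Steinberg symbol pairing (\cite[Proposition~2.2.3]{LODAY}). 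Both routes reach the same conclusion, but the paper's is self-contained given those two citations, whereas yours treats the tame-symbol description of $\partial$ on $K_2(A[t,t^{-1}])\to K_1(A)$ as a ``routine'' given. That formula is classical only when $A$ is a field (Milnor's residue computation); for a general commutative base ring it is genuinely a result about the Grayson/Quillen boundary map, and establishing it directly is essentially as much work as identifying the Loday product with $\{-,-\}$. So your approach would work, but you would need to supply (or cite a reference for) the tame-symbol identity over a general $A$ rather than asserting it as routine. Note also that Lemma~\ref{lemma:formula_for_connecting_homomorphism} in the paper, which looks like it provides exactly the residue formula you want, is itself deduced from this theorem, so appealing to it here would be circular.
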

 
 \begin{proof}
  From the proof of the Fundamental Theorem in \cite{GRAYSON_FUNDAMENTAL}, we know that a splitting is given by the Loday product as in \cite[Corollary~2.3.7]{LODAY}, that is, by $-\star t$. This product factors through the pairing
  \[
  - \star - \colon K_1(A[t,t^{-1}]) \times K_1(A[t,t^{-1}]) \rightarrow K_2(A[t,t^{-1}])
  \]
  which coincides with the inverse of $\{-,-\}$ by \cite[Proposition~2.2.3]{LODAY}. Thus $\partial \{t,u\}=u$ for all $u \in K_1(A)$.
 \end{proof}
 
 In order to describe the connecting homomorphism $\partial$ in the localization sequence explicitly, we need to introduce some notation. Let $\ca{M}$ denote the category of pairs $(A,S)$ where $A$ is a commutative ring and $S \subseteq A$ is a multiplicative set consisting of nonzerodivisors. The morphisms $(A,S) \rightarrow (B,T)$ in $\ca{M}$ are the ring homomorphisms $\varphi \colon A \rightarrow B$ with $\varphi(S) \subseteq T$.
 
 Given $(A,S) \in \ca{M}$, we write $\mathbf{H}_{1,S}(A)$ for the category of $S$-torsion $A$-modules of projective dimension $1$, that is, $A$-modules $M$ which fit in a short exact sequence
 \[
 \xymatrix{0 \ar[r] & P \ar[r] & Q \ar[r] & M \ar[r] & 0}
 \]
 where $P$ and $Q$ are finitely generated projective and $A_S \ten{A} M \cong 0$. Since $\mathbf{H}_{1,S}(A)$ is a full subcategory of the abelian category $\Mod_A$ which is closed under extensions, it inherits the structure of an exact category in the sense of Quillen. We can thus define its $K$-groups using the Quillen $Q$-construction, that is, $K_i \mathbf{H}_{1,S}(A) \defl \pi_{i+1} BQ\bigl(\mathbf{H}_{1,S}(A) \bigr)$.
 
 To describe the connecting homomorphism, we use the functor $U \colon \ca{M} \rightarrow \Set$ given by the set
 \[
 U(A,S) \defl \{ (t,u) \in A^2 \; \vert \; \lambda_S(t) \in A_S^{\times}, \; u \in A^{\times} \}
 \]
  of pairs of elements which are units (respectively sent to a unit in $A_S$). Note that the element $t$ in the above set is a nonzerodivisor in $A$. This follows from the fact that the localization $\lambda_S \colon A \rightarrow A_S$ is injective. We define a natural transformation
  \[
  \psi_{(A,S)} \colon U(A,S) \rightarrow K_1\bigl(\mathbf{H}_{1,S}(A) \bigr)
  \]
 as follows. Given $(t,u) \in A^2$ such that $\lambda_S(t)$ is a unit, it follows from the above observation that $A\slash t$ is an object of $\mathbf{H}_{1,S}(A)$. Thus we can extend the two commutative triangles on the left
 \[
 \xymatrix@R=10pt{ & A \slash t \ar[dd]^{\bar{u}_{!}} \\ 0 \ar[ru]^{0_{!}} \ar[rd]_{0_{!}} && 0 \ar[ld]^{0^{!}} \ar[lu]_{0^{!}} \\ & A \slash t} \quad 
 \quad
 \xymatrix@R=10pt{ & 2 \ar[dd] \\ 0 \ar[rr]|!{[ru];[rd]}\hole \ar[ru] \ar[rd] && 1 \ar[lu] \ar[ld] \\ & 3 }
 \]
 in $Q\mathbf{H}_{1,S}(A)$ to a map $\partial \Delta^3 \rightarrow BQ \mathbf{H}_{1,S}(A)$ by sending the two remaining faces on the right diagram to the same $2$-simplex $\sigma$ which defines the loop in $BQ \mathbf{H}_{1,S}(A)$ represented by $A \slash t$ (for the definition of the symbols $(-)^{!}$ and $(-)_{!}$, see \cite[p.~101]{QUILLEN_HIGHER_K}). We let $\psi_{(A,S)} \bigl( (t,u) \bigr)$ be its homotopy class in $K_1 \bigl( \mathbf{H}_{1,S}(A) \bigr)$.
 
 The important thing for us to note is that $\psi_{(A,S)} \bigl( (t,u) \bigr)=0$ if $\bar{u} \colon A \slash t \rightarrow A \slash t$ is the identity morphism, for then the map constructed above coincides with the boundary of the degenerate $3$-simplex $s_2(\sigma)$.
 
 With these preliminaries in hand, we can now describe the connecting homomorphism $\partial$ in the exact sequence of $K$-groups associated to a localization at a set of nonzerodivisors.
 
 \begin{lemma}\label{lemma:formula_for_connecting_homomorphism}
 Let $A$ be a commutative ring and let $S \subseteq A$ be a multiplicative set of nonzerodivisors. Then there exists an exact sequence
 \[
 \xymatrix{K_2(A) \ar[r]^-{K_2(\lambda_S)} & K_2(A_S) \ar[r]^-{\partial} & K_1\bigl(\mathbf{H}_{1,S}(A)\bigr) \ar[r]^-{\kappa} & K_1(A) \ar[r]^-{K_1(\lambda_S)} & K_1(A_S)}
 \]
 which is natural for morphisms in $\ca{M}$. Moreover, the formula
 \[
 \partial \{t,u\} = \psi_{(A,S)} \bigl((t,u)\bigr)
 \]
 holds for all $u \in A^{\times}$ and all $t \in A$ with $\lambda_S(t) \in A_S^{\times}$. In particular, $\partial \{t,u\}=0$ if $u \equiv 1 \mod t$.
 \end{lemma}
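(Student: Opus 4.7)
The proof naturally splits into constructing the long exact sequence and verifying the explicit formula for $\partial$ on symbols, and I would handle these two parts in that order.

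For the exact sequence, my plan is to invoke Quillen's localization theorem, realized through the same Waldhausen model used in the proof of Proposition~\ref{prop:K_theory_weak_analytic_excision}. The key identification is that the exact category $\mathbf{H}_{1,S}(A)$ has the same $K$-theory as the Waldhausen category $\mathrm{Ch}^b_S\bigl(\Proj(A) \bigr)$ of bounded $S$-acyclic complexes of finitely generated projective $A$-modules: each short exact sequence $0 \to P \to Q \to M \to 0$ with $P,Q$ projective and $M$ an $S$-torsion module of projective dimension $\leq 1$ corresponds to a two-term complex $(P \to Q)$, and a straightforward resolution argument (together with Lemma~\ref{lemma:cofibers_of_s_generate}) shows that the inclusion induces an equivalence on $K$-theory. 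Combined with the Waldhausen localization sequence from the proof of Proposition~\ref{prop:K_theory_weak_analytic_excision}, this yields the asserted 5-term exact sequence, with $\kappa$ induced by sending $[P \to Q]$ to $[Q]-[P]$ at the level of $K_0$-like data. Naturality in $\ca{M}$ is built into the constructions.

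For the explicit formula $\partial \{t, u\} = \psi_{(A,S)}\bigl((t,u)\bigr)$, the plan is to reduce to a universal case by naturality. Set $A^{\star} \defl \mathbb{Z}[T, U, U^{-1}]$ and $S^{\star} \defl \{1, T, T^2, \ldots\}$, so $(A^{\star}, S^{\star}) \in \ca{M}$ with $A^{\star}_{S^{\star}} = R[T, T^{-1}]$, where $R \defl \mathbb{Z}[U, U^{-1}]$. Any element $(t,u) \in U(A,S)$ induces a morphism $(A^{\star}, S^{\star}) \to (A,S)$ in $\ca{M}$, and by naturality of $\partial$, of the Steinberg symbol, and of $\psi_{(-)}$, it suffices to check the formula for the universal pair $(T,U)$.

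In this universal case, I would apply Theorem~\ref{thm:Fundamental_Theorem_K_Theory} with $R$ as the base ring: the Loday product $u \mapsto \{T,u\}$ is a splitting of the Bass--Heller--Swan connecting map $K_2(R[T, T^{-1}]) \to K_1(R)$, so this connecting map sends $\{T, U\}$ to $U$. To identify this with our $\partial$, I would use Quillen's devissage theorem to identify $K_1\bigl(\mathbf{H}_{1,S^{\star}}(A^{\star})\bigr) \cong K_1(R)$ via the map sending the class of an automorphism $\varphi$ of $A^{\star}/T \cong R$ to its class in $K_1(R)$. Under this identification, the definition of $\psi_{(A^{\star}, S^{\star})}\bigl((T, U)\bigr)$ in terms of the automorphism ``multiplication by $U$'' on $R$ corresponds to $U \in K_1(R)$, matching $\partial\{T,U\}$. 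The final assertion---that $\partial\{t,u\} = 0$ when $u \equiv 1 \mod t$---is then immediate from the construction of $\psi_{(A,S)}$: if $\bar{u} \colon A/t \to A/t$ is the identity, the map $\partial \Delta^3 \to BQ\mathbf{H}_{1,S}(A)$ defining $\psi_{(A,S)}\bigl((t,u)\bigr)$ factors through the degenerate $3$-simplex $s_2(\sigma)$ and hence represents the trivial class. The main obstacle will be carefully identifying the connecting map from the Quillen localization sequence with the Bass--Heller--Swan connecting map of the Fundamental Theorem in the universal case: both are defined via simplicial boundary maps, and the compatibility---together with the devissage identification---is a bookkeeping exercise that must be justified by appeal to Quillen's original comparison results rather than being entirely formal.
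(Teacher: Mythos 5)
Your treatment of the formula $\partial\{t,u\} = \psi_{(A,S)}\bigl((t,u)\bigr)$ follows essentially the same route as the paper: reduce by naturality to the universal pair $(\mathbb{Z}[T,U,U^{-1}], \{T^n\})$, identify $K_1\bigl(\mathbf{H}_{1,S^\star}(A^\star)\bigr)$ with $K_1(\mathbb{Z}[U,U^{-1}])$ via regularity, and invoke Theorem~\ref{thm:Fundamental_Theorem_K_Theory}. However, there is a concrete gap in your reduction step: a pair $(t,u) \in U(A,S)$ does \emph{not} generally induce a morphism $(A^\star, S^\star) \to (A,S)$ in $\ca{M}$, because the condition $\lambda_S(t) \in A_S^\times$ only guarantees $t \in \bar{S}$ (the saturation of $S$), not $t \in S$ itself, so $\varphi(S^\star) = \{1,t,t^2,\ldots\}$ need not be contained in $S$. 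You must first observe that $\id_A$ gives a morphism $(A,S) \to (A,\bar{S})$ inducing an isomorphism of localization sequences, replace $S$ by $\bar{S}$, and only then construct the comparison map to $(A,S)$.

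For the exact sequence itself you take a genuinely different route from the paper. You propose the Waldhausen model $\mathrm{Ch}^b_S\bigl(\Proj(A)\bigr)$ and then a comparison with the exact category $\mathbf{H}_{1,S}(A)$, whereas the paper works entirely in the Quillen $Q$-construction framework following Swan's appendix: it introduces the exact category $\Proj^{\prime}(A)$ of modules of projective dimension $\leq 1$ whose localizations are projective, uses \cite[Proposition~A13]{SWAN_QUADRIC} to produce a homotopy fiber square of $Q$-constructions directly, applies the resolution theorem for $K(\Proj^{\prime}(A)) \simeq K(A)$, and establishes naturality via the $2$-functoriality of $Q$. Your route is workable, but the claim that the comparison $K\bigl(\mathbf{H}_{1,S}(A)\bigr) \simeq K\bigl(\mathrm{Ch}^b_S(\Proj(A))\bigr)$ is a ``straightforward resolution argument'' undersells the difficulty: the homology of an object of $\mathrm{Ch}^b_S\bigl(\Proj(A)\bigr)$ has no a priori bound on projective dimension, so the functor does not go in the naive direction, and one needs an agreement-theorem style argument (Gillet--Waldhausen plus a careful cofinality/approximation step, all kept functorial in $\ca{M}$). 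The paper's Swan-style construction avoids this extra comparison precisely to keep the naturality of $\partial$ under control, which it explicitly flags as the subtle point of the lemma. Your acknowledgment that matching the Quillen localization boundary with the Bass--Heller--Swan connecting map is the ``main obstacle'' is accurate; the paper asserts this identification without detailed justification as well, so you are on equal footing there.
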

 
\begin{proof}
 The last claim follows from the formula for $\partial$ and the above discussion of the natural transformation $\psi$. This formula is a special case of a result of Grayson, see \cite[Remark~7.8]{GRAYSON_FLAT}. Since Grayson's argument is rather intricate, we indicate a proof of this using only the Fundamental Theorem and basic facts about Loday's product.
 
 If $\bar{S}$ denotes the saturation of $S$, then the identity on $A$ gives a morphism $(A,S) \rightarrow (A,\bar{S})$ in $\ca{M}$ which gives a levelwise isomorphism between the respective long exact sequences. Thus we can assume without loss of generality that $S$ is saturated and thus that $t \in S$. By naturality, we can therefore reduce to the universal case $A=\mathbb{Z}[u,u^{-1},t]$ and $S=\{1,t,t^2,\ldots\}$, in which case the localization sequence is of the form
 \[
 \xymatrix{K_2(B[t]) \ar[r] & K_2(B[t,t^{-1}]) \ar[r]^-{\partial} & K_1 \mathbf{H}_{1,t}(B[t])}
 \]
 for $B=\mathbb{Z}[u,u^{-1}]$.
 
 In this case, $\psi_{(A,S)}$ factors through the inclusion $K_1(B) \rightarrow K_1\bigl( \mathbf{H}_{1,t} (B[t]) \bigr)$ induced by the functor which sends a finitely generated projective $B$ module $P$ to $P$, equipped with the (nilpotent) endomorphism $0$. Since $B$ is regular, this inclusion is in fact an isomorphism, with inverse $h \colon K_1\bigl( \mathbf{H}_{1,t} (B[t]) \bigr) \rightarrow K_1 B$ given by forgetting the $t$-action. The composite $h\partial$ is precisely the morphism $\partial$ appearing in the Fundamental Theorem, so $h \partial \{t,u\}=u \in B^{\times}$ by Theorem~\ref{thm:Fundamental_Theorem_K_Theory}. That this coincides with $\psi_{(B,t)}\bigl((t,u)\bigr)$ follows from the description of the inclusion of $B^{\times}$ in $K_1(B)=\pi_2 BQ\bigl(\Proj(B)\bigr)$, see for example \cite[Exercise~IV.7.9]{WEIBEL_KBOOK}.
 
 There is a subtle point about the naturality of this long exact sequence that should be taken into account. As Swan noted in \cite[Appendix]{SWAN_QUADRIC}, one has to be a bit careful to ensure that the connecting homomorphism $\partial$ is natural.
 
 In order to do this, we first have to give a precise definition of the homomorphisms $\partial$ and of $\kappa$. Following \cite[Proposition~A13]{SWAN_QUADRIC}, we write $\Proj^{\prime}(A)$ for the exact subcategory of $A$-modules $M$ which have projective dimension at most $1$ and which satisfy the condition that the localization $M_S$ is a projective $A_S$-module. Note that we have an exact inclusion $\mathbf{H}_{1,S}(A) \rightarrow \Proj^{\prime}(A)$ and that the resulting square
 \[
 \xymatrix{ \mathbf{H}_{1,S}(A) \ar[r] \ar[d] & \Proj^{\prime}(A) \ar[d] \\ \ast \ar[r]^-{0} & \Proj(A_S) }
 \]
 commutes up to unique natural isomorphism. Applying the $Q$-construction to this square, we obtain a homotopy fiber square by \cite[Proposition~A13]{SWAN_QUADRIC}.
 
 We claim that this construction gives a functor from $\ca{M}$ to such squares of exact categories. To see this, we need to check that for any $\varphi \colon (A,S) \rightarrow (B,T)$ in $\ca{M}$ and any $M \in \Proj^{\prime}(A)$, the module $B \ten{A} M$ lies in $\Proj^{\prime}(B)$. In fact, any exact sequence
 \[
 \xymatrix{0 \ar[r] & P \ar[r]^-{\alpha} & Q \ar[r] & M \ar[r] & 0}
 \]
 with $P$, $Q \in \Proj(A)$ and such that $M_S$ is projective is sent to a short exact sequence by $B \ten{A} -$. To see this, we only need to check that $B \ten{A} \alpha$ is injective, and since $T$ consists of nonzerodivisors, it suffices to check this after applying the localization $B_T \ten{B} -$. But we know that $B_T \ten{A} \alpha$ is a split monomorphism since $A_S \ten{A} \alpha$ is a split monomorphism (which in turn is a consequence of the projectivity of $M_S$).
 
 We let $\partial$ be the connecting homomorphism in the homotopy fiber square resulting from the above square of exact categories, and we let $\kappa$ be the composite
 \[
 \pi_2 BQ \bigl( \mathbf{H}_{1,S}(A) \bigr) \rightarrow \pi_2 BQ \bigl( \Proj^{\prime}(A) \bigr) \cong \pi_2 BQ \bigl( \Proj(A)  \bigr)
 \]
 where the isomorphism is induced from the inclusion $\Proj(A) \rightarrow \Proj^{\prime} (A)$ of exact categories.
 
 To get the desired naturality, it is perhaps easiest to note that the $Q$-construction is a 2-functor from the 2-category of exact categories, exact functors, and natural isomorphisms to the 2-category of categories. Thus any time we apply the $Q$-construction to a coherent diagram of exact functors and natural isomorphisms, we get a fully coherent homotopy commutative diagram of spaces after applying $BQ(-)$.
 
 Note that the alternative method suggested by Swan needs to be modified slightly: the object $0 \in Q\ca{E}$ is rarely a zero-object, but it is \emph{rigid}, that is, it has no non-trivial automorphisms. By considering rigid objects and natural \emph{isomorphisms} (instead of zero-objects and arbitrary natural transformations), one can alternatively use the arguments of \cite[Appendix]{SWAN_QUADRIC} to establish the desired naturality.
\end{proof} 
 
 After this detour, we are now ready to show that the connecting homomorphism in our original exact sequence vanishes in many cases.
 
 \begin{prop}\label{prop:connecting_homomorphism_K_theory_vanishes}
  Let $A$ be a commutative ring, let $\alpha \in A$ be an element, let $C=A[x,y] \slash (xy-\alpha)$ and let $B \defl A^h_{(\alpha)}[x,y] \slash (xy-\alpha)$. Assume the following conditions hold:
  \begin{enumerate}
  \item[(1)] $SK_1(A[x,x^{-1}]) \cong 0$;
  \item[(2)] $A^h_{(\alpha)}$ is $K_2$-regular;
  \item[(3)] $SK_1(A \slash \alpha) \cong 0$;
  \item[(4)] $A^{\times} \rightarrow (A \slash \alpha)^{\times}$ is surjective;
  \item[(5)] The element $\alpha \in A[x,y] \slash (xy -\alpha)$ is a nonzerodivisor. 
  \end{enumerate}
 Then the connecting homomorphism $\partial$ in the exact sequence
 \[
 \xymatrix{K_2(B) \oplus K_2(C_{\alpha}) \ar[r] & K_2(B_{\alpha}) \ar[r]^-{\partial} & SK_1(C ) \ar[r] & SK_1(B) \oplus SK_1(C_{\alpha}) }
 \]  
  (associated to the analytic isomorphism $C \rightarrow B$ along $\alpha$) is equal to $0$.
 \end{prop}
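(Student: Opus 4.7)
The plan is to apply the Fundamental Theorem of $K$-theory to decompose $K_2(B_\alpha)$ and then check that $\partial$ vanishes on each summand using conditions (1)--(5). First, since $K_2 \colon \CRing \to \Ab$ satisfies Roitman's principle $(\mathrm{R})$ (by Theorem~\ref{thm:excision_implies_three_principles} applied to weak analytic excision from Proposition~\ref{prop:K_theory_weak_analytic_excision}), condition~(2) implies that $D \defl A^h_{(\alpha)}[\alpha^{-1}]$ is also $K_2$-regular. Since $B_\alpha \cong D[x,x^{-1}]$, Theorem~\ref{thm:Fundamental_Theorem_K_Theory} yields a split exact decomposition
\[
K_2(B_\alpha) \cong K_2(D) \oplus K_1(D),
\]
with the second summand embedded via the Steinberg pairing $u \mapsto \{x,u\}$.

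Next, I extract from condition~(1) via the Fundamental Theorem for $K_1$ that $SK_1(A) = 0$ and $\widetilde{K}_0(A) = 0$, so $K_1(A) = A^\times$ and $K_0(A) = \mathbb{Z}$. The analytic-excision Mayer--Vietoris long exact sequence (Proposition~\ref{prop:K_theory_weak_analytic_excision}) for the patching square formed by $A$, $A^h_{(\alpha)}$, $A_\alpha$, and $D$ shows that the maps $K_1(A) \to K_1(A^h_{(\alpha)}) \oplus K_1(A_\alpha)$ and $K_0(A) \to K_0(A^h_{(\alpha)}) \oplus K_0(A_\alpha)$ are both injective: the first because $A^\times \hookrightarrow A_\alpha^\times$ (using that $\alpha$ is a nonzerodivisor in $A$ by (5)), the second because $K_0(A) = \mathbb{Z}\cdot[A]$ and $[A]\mapsto [A^h_{(\alpha)}]\ne 0$. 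By exactness, I obtain
\[
K_2(A^h_{(\alpha)}) \oplus K_2(A_\alpha) \twoheadrightarrow K_2(D) \quad \text{and} \quad K_1(A^h_{(\alpha)}) \oplus K_1(A_\alpha) \twoheadrightarrow K_1(D).
\]
The first surjection, combined with the fact that the constant inclusions $A^h_{(\alpha)} \to B$ and $A_\alpha \to C_\alpha$ induce factorisations of $K_2(A^h_{(\alpha)}) \to K_2(D) \to K_2(B_\alpha)$ through $K_2(B)$ and of $K_2(A_\alpha) \to K_2(D) \to K_2(B_\alpha)$ through $K_2(C_\alpha)$, immediately shows that the $K_2(D)$ summand of $K_2(B_\alpha)$ is contained in the image of $K_2(B) \oplus K_2(C_\alpha)$.

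For the $K_1(D)\cdot\{x\}$ summand, the task is to show $\partial\{x,u\} = 0$ for every $u \in K_1(D)$. Using the second surjection, write $u = u_1 + u_2$ with $u_2 \in K_1(A_\alpha)$ and $u_1 \in K_1(A^h_{(\alpha)})$. By $j$-invariance of $SK_1$ (Lemma~\ref{lemma:SL_r_j_invariant}) applied to the henselian pair $(A^h_{(\alpha)},\alpha)$ with residue ring $A^h_{(\alpha)}/\alpha \cong A/\alpha$, condition~(3) gives $SK_1(A^h_{(\alpha)}) = 0$, so $u_1 \in (A^h_{(\alpha)})^\times$. The symbol $\{x, u_2\}$ lifts canonically to $K_2(A_\alpha[x,x^{-1}]) = K_2(C_\alpha)$, hence $\partial\{x, u_2\} = 0$ by exactness. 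By condition~(4) I can lift $\bar{u}_1 \in (A/\alpha)^\times$ to some $v \in A^\times$, whence $u_1 = v(1 + \alpha w)$ with $w \in A^h_{(\alpha)}$, giving $\{x, u_1\} = \{x, v\} + \{x, 1+\alpha w\}$; again $\{x,v\}$ lies in the image of $K_2(C_\alpha)$ since $v \in A^\times \subseteq A_\alpha^\times$. For the remaining symbol, $1+\alpha w \in (A^h_{(\alpha)})^\times$ (since $\alpha$ lies in the Jacobson radical of the henselisation), so $1+\alpha w \in B^\times$, and the identity $\alpha = xy$ in $B$ gives $1+\alpha w \equiv 1 \pmod{x}$. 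Identifying the Mayer--Vietoris boundary $\partial$ with the composite
\[
K_2(B_\alpha) \xrightarrow{\;\partial^B\;} K_1 \mathbf{H}_{1,\alpha}(B) \cong K_1\mathbf{H}_{1,\alpha}(C) \xrightarrow{\;\kappa\;} K_1(C),
\]
where the isomorphism comes from analytic excision (compare the proof of Proposition~\ref{prop:K_theory_weak_analytic_excision}), Lemma~\ref{lemma:formula_for_connecting_homomorphism} applied to the localisation $B \to B_\alpha$ (valid since $\alpha$ is a nonzerodivisor in $B$ by flatness of $A \to A^h_{(\alpha)}$) yields $\partial^B\{x,1+\alpha w\} = \psi_{(B,\alpha)}\bigl((x,1+\alpha w)\bigr) = 0$. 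Hence $\partial\{x, u_1\} = 0$, completing the verification of $\partial = 0$.

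The main delicate step is the compatibility of the Mayer--Vietoris boundary $\partial$ with the localisation boundary $\partial^B$ via the excision isomorphism on $K_1$ of the categories $\mathbf{H}_{1,\alpha}(-)$ of $\alpha$-torsion modules of projective dimension one. This identification is what allows importing the explicit formula for $\partial^B$ on Steinberg symbols from Lemma~\ref{lemma:formula_for_connecting_homomorphism}, and it rests on the fact that the proof of analytic excision for $K$-theory proceeds through the Waldhausen localisation theorem applied to bounded perfect complexes becoming exact after $\alpha$-inversion, so the relevant torsion categories appear compatibly on both sides of the analytic isomorphism $C \to B$.
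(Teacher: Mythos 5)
There is a genuine gap in the passage where you claim that condition~(1) implies $\widetilde{K}_0(A)=0$, hence $K_0(A)=\mathbb{Z}\cdot[A]$. This does not follow from $SK_1(A[x,x^{-1}])\cong 0$: for example, if $A=k\times k$ for a field $k$, then $SK_1(A[x,x^{-1}])\cong SK_1(k[x,x^{-1}])^2\cong 0$, but $K_0(A)\cong\mathbb{Z}^2$, so $\widetilde{K}_0(A)\cong\mathbb{Z}\neq 0$. The Fundamental Theorem does embed $K_0(A)$ into $K_1(A[x,x^{-1}])$, and if $SK_1(A[x,x^{-1}])=0$ this becomes an embedding into $A[x,x^{-1}]^\times$; but that does not force $\widetilde{K}_0(A)$ to vanish, because the Laurent units themselves can carry the extra $\mathbb{Z}$'s coming from idempotents of $A$. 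This claim is load-bearing: you use it to conclude that $K_0(A)\to K_0(A^h_{(\alpha)})\oplus K_0(A_\alpha)$ is injective, which is what gives the second surjection $K_1(A^h_{(\alpha)})\oplus K_1(A_\alpha)\twoheadrightarrow K_1(D)$, which is in turn what lets you write an arbitrary $u\in K_1(D)$ as $u_1+u_2$. Without that decomposition, your analysis of the $\{x,-\}\cdot K_1(D)$ summand does not cover all of $K_1(D)$.

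The underlying structural choice that creates the problem is applying the Fundamental Theorem to $K_2(B_\alpha)\cong K_2(D[x,x^{-1}])$. This produces the summand $K_1(D)=K_1\bigl(A^h_{(\alpha)}[\tfrac1\alpha]\bigr)$, whose units and class group are not directly controlled by conditions~(1)--(5). The paper's proof avoids this by first exploiting the analytic patching square for $A[x,x^{-1}]\to A^h_{(\alpha)}[x,x^{-1}]$ along $\alpha$: using $SK_1(A[x,x^{-1}])=0$ and the fact that units of $A[x,x^{-1}]$ inject into those of $A_\alpha[x,x^{-1}]\times A^h_{(\alpha)}[x,x^{-1}]$ (no $K_0$ needed, because the relevant $K_0$ data is already packaged inside the Laurent-polynomial units), one reduces to symbols in the image of $K_2\bigl(A^h_{(\alpha)}[x,x^{-1}]\bigr)$, and only then applies the Fundamental Theorem to that group, using condition~(2) to decompose it as $K_2(A^h_{(\alpha)})\oplus K_1(A^h_{(\alpha)})\{x\}$. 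That yields the summand $K_1(A^h_{(\alpha)})$ rather than $K_1(D)$, and conditions~(3) and (4) together with $j$-invariance of $SK_1$ then control this group completely. Your remaining steps --- splitting off $SK_1(A^h_{(\alpha)})$, lifting units modulo $\alpha$ via condition~(4), and applying Lemma~\ref{lemma:formula_for_connecting_homomorphism} to $\psi_{(B,\alpha)}\bigl((x,1+\alpha w)\bigr)$ with $1+\alpha w\equiv 1\bmod x$ --- agree with the paper's treatment and would go through unchanged once the reduction is set up as above.
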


 \begin{proof}
 We first use the Fundamental Theorem and the fact that $SK_1(A[x,x^{-1}])=0$ to reduce the problem to checking that $\partial \{x,u\}=0$ holds for all $u \in (A^h_{(\alpha)})^{\times}$. The assumption on units further simplifies this to the case $u \equiv 1 \mod \alpha$. In this case, the claim follows from the explicit formula for the connecting homomorphism in the localization sequence given in Lemma~\ref{lemma:formula_for_connecting_homomorphism}. We give a detailed proof in five steps, where each step uses the corresponding assumption.
 
 \textbf{Step 1.} If we tensor the patching diagram associated to the analytic isomorphism $A \rightarrow A^h_{(\alpha)}$ along $\alpha$ with $\mathbb{Z}[x,x^{-1}]$, we obtain the patching diagram on the left below
 \[
 \vcenter{\xymatrix{ A[x,x^{-1}] \ar[d] \ar[r] & A_{\alpha}[x,x^{-1}] \ar[d]^{\varphi^{\prime}} \\
 A^h_{(\alpha)}[x,x^{-1}] \ar[r]_-{\varphi} & A^h_{(\alpha)}[\frac{1}{\alpha}][x,x^{-1}] \cong B_{\alpha}
 }} \quad \quad
\vcenter{\xymatrix{ C \ar[d] \ar[r] & A_{\alpha}[x,x^{-1}] \ar[d]^{\varphi^{\prime}} \\ B \ar[r]_-{\lambda_{\alpha}} & B_{\alpha}
}}
 \]
 while the analytic isomorphism $C \rightarrow B$ along $\alpha$ yields the diagram on the right. An important fact that we will use is that the two vertical homomorphisms on the right of each square are equal.
 
 The long exact sequence of $K$-groups for the left diagram and the fact that $SK_1(A[x,x^{-1}])=0$ (Assumption~(1)) imply that $K_2(\varphi^{\prime})$ and $K_2(\varphi)$ are jointly surjective. Since the connecting homomorphism $\partial$ for the right diagram above vanishes on the image of $K_2(\varphi^{\prime})$, we have reduced the problem to checking that the composite $\partial K_2(\varphi)$ vanishes.
 
 \textbf{ Step 2.} We can use the Fundamental Theorem to give a simpler description of the domain $K_2(A^h_{(\alpha)}[x,x^{-1}])$ of $K_2(\varphi)$. Namely, from the Fundamental Theorem we know that the top row of the diagram
 \[
 \xymatrix@C=15pt{ K_2(A^h_{(\alpha)}) \ar[r] & K_2(A^h_{(\alpha)}[x]) \oplus K_2(A^h_{(\alpha)}[x^{-1}]) \ar[r]^-{\pm} & K_2(A^h_{(\alpha)}[x,x^{-1}])  \ar@<-0.5ex>[r] & K_1(A^h_{(\alpha)}) \ar@<-0.5ex>[l]_-{ \{x,-\} } \\ & K_2(A^h_{(\alpha)}) \oplus K_2(A^h_{(\alpha)}) \ar[u]^{\iota \oplus \iota} \ar[r]^-{\pm} & K_2(A^h_{(\alpha)}) \ar[u]^{\iota} }
 \]
 is split exact. Since $A^h_{(\alpha)}$ is $K_2$-regular (Assumption~(2)), the left vertical homomorphism in the above diagram is an isomorphism.
 
 Thus 
 \[
 \{x,-\} \colon K_1(A^h_{(\alpha)}) \rightarrow K_2(A^h_{(\alpha)}[x,x^{-1}]) \quad \text{and} \quad K_2(\iota) \colon K_2(A^h_{(\alpha)}) \rightarrow K_2(A^h_{(\alpha)}[x,x^{-1}])
 \]
 are jointly surjective. Since $\varphi \iota$ factors through the localization $\lambda_{\alpha} \colon B \rightarrow B_{\alpha}$, it follows that $\partial K_2(\varphi) K_2(\iota)=0$. This reduces the problem to checking that $\partial$ vanishes on $K_2(\varphi) \{x,u\}=\{x,K_1(\varphi)(u)\}$ for all $u \in K_1(A^h_{(\alpha)})$.
 
 \textbf{Step 3.} Recall that $K_1(A^h_{(\alpha)}) \cong (A^h_{(\alpha)})^{\times} \oplus SK_1(A^h_{(\alpha)})$. Since the functor $SK_1$ is $j$-invariant (see Lemma~\ref{lemma:SL_r_j_invariant}), we know that
 \[
 SK_1(A^h_{(\alpha)}) \cong SK_1(A \slash \alpha) \cong 0
 \]
 from Assumption~(3). Thus  we only need to consider elements of the form $\{x,\varphi(u)\}$ with $u \in (A^h_{(\alpha)})^{\times}$.
 
 \textbf{Step 4.} Recall that $A \rightarrow A^h_{(\alpha)}$ is an analytic isomorphism along $\alpha$, so the induced homomorphism $A \slash \alpha \rightarrow A^h_{(\alpha)} \slash \alpha$ is an isomorphism. From Assumption~(4) it follows that for each unit $u \in (A^h_{(\alpha)})^{\times}$ there exists a unit $v \in A^{\times}$ such that the image of $v$ under the diagonal composite in the square
 \[
 \xymatrix{A \ar[r] \ar[d] & A^h_{(\alpha)} \ar[d] \\ A \slash \alpha  \ar[r]_-{\cong} & A^h_{(\alpha)} \slash \alpha  }
 \]
 coincides with $\bar{u}$.
 
 From the bilinearity of $\{-,-\}$ it follows that we can treat $\{x,v\}$ and $\{x,v^{-1}u\}$ separately. Note that $\{x,v\}$ lies in the image of $K_2(A[x,x^{-1}]) \rightarrow K_2(B_{\alpha})$, which factors through $K_2(\varphi^{\prime}) \colon K_2(A_{\alpha}[x,x^{-1}]) \rightarrow K_2(B_{\alpha})$. Since $\partial K_2(\varphi^{\prime})=0$, it only remains to check that $\partial(\{x,u\})=0$ whenever $u \in A^h_{(\alpha)}$ is a unit such that $u \equiv 1 \mod \alpha$.
 
 \textbf{Step 5.} Finally we use the fact that $\alpha \in C$ is a nonzerodivisor (Assumption~(5)) to give a simple description of the connecting homomorphism $\partial$ in the long exact sequence coming from the analytic isomorphism $C \rightarrow B$ along $\alpha$. Since $C \rightarrow B$ is flat (as base change of the flat homomorphism $A \rightarrow A^h_{(\alpha)}$ ), it follows that $\alpha$ is also a nonzerodivisor in $B$. Thus the connecting homomorphism $\partial \colon K_2(B_{\alpha}) \rightarrow K_1 (C)$ can be written as the composite of the three solid arrows in the diagram
 \[
 \xymatrix{K_2 (B) \ar@{-->}[r] & K_2(B_{\alpha}) \ar[r]^-{\partial^{\prime}} & K_1 \bigl( \mathbf{H}_{1,\alpha}(B)\bigr ) \ar@{-->}[r] & K_1(B) \ar@{-->}[r] & K_1(B_{\alpha}) \\ 
 K_2 (C) \ar@{-->}[r] \ar@{-->}[u] & K_2(C_{\alpha}) \ar@{-->}[r] \ar@{-->}[u] & K_1 \bigl( \mathbf{H}_{1,\alpha}(C)\bigr ) \ar[r]^-{\kappa} \ar[u]^{\cong} & K_1(C) \ar@{-->}[r] \ar@{-->}[u] & K_1(C_{\alpha}) \ar@{-->}[u] }
 \]
 of the natural long exact sequences obtained from localization at $\alpha$ by Lemma~\ref{lemma:formula_for_connecting_homomorphism} (here the middle vertical homomorphism is an isomorphism since $C \rightarrow B$ is an analytic isomorphism along $\alpha$). Thus it suffices to check that $\partial^{\prime}(\{x,u\})=0$ whenever $u \in A^h_{(\alpha)}$ is a unit such that $u \equiv 1 \mod \alpha$. From Lemma~\ref{lemma:formula_for_connecting_homomorphism} we know that $\partial^{\prime}(\{x,u\})$ is given by $\psi_{(B,\alpha)} \bigl( (x,u)\bigr)$, and that this element is $0$ if $u \equiv 1 \mod x$. Since $\alpha =xy$ in $B$, the latter follows from the assumption $u \equiv 1 \mod \alpha$.
 \end{proof}

 Before coming to the main theorem of this section, we need two more lemmas.
 
 \begin{lemma}\label{lemma:ufd_implies_nonzerodivisor}
 Let $A$ be a unique factorization domain and let $f \neq 0$ be an element of $A$. Then the element $xy+f \in A[x,y]$ is irreducible, so $A[x,y] \slash (xy+f)$ is a domain. Moreover, we have $f \not\equiv 0 \mod xy+f$.
 \end{lemma}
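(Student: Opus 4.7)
The plan is to work in $A[x,y] = A[x][y]$ and analyze $xy+f$ as a degree $1$ polynomial in $y$ with coefficients in the UFD $A[x]$. This reduces both claims to elementary coefficient comparisons.

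First I would verify irreducibility directly rather than via Gauss's Lemma. Suppose $xy+f = p\cdot q$ in $A[x,y]$ with $p,q$ nonunits. Since $\deg_y(xy+f)=1$, one factor, say $p$, lies in $A[x]$ and the other has the form $q = q_1(x) y + q_0(x)$. Comparing coefficients yields $p\,q_1 = x$ and $p\,q_0 = f$. The element $x\in A[x]$ is irreducible (because $A$ is a domain), so either $p \in A^{\times}$, giving a trivial factorization, or $p$ is an associate of $x$ in $A[x]$. In the latter case, $p\,q_0 = f$ forces $x \mid f$ in $A[x]$; but $f \in A$, so this forces $f = 0$, contradicting the hypothesis $f \neq 0$. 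Hence $xy+f$ is irreducible. Since $A$ is a UFD, so is $A[x,y]$, whence $(xy+f)$ is a prime ideal and $A[x,y]\slash (xy+f)$ is an integral domain.

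For the final assertion, suppose towards a contradiction that $f \equiv 0 \mod xy+f$, i.e.\ $f = (xy+f)\cdot g$ in $A[x,y]$ for some $g$. Comparing $y$-degrees: the right-hand side has $y$-degree $\deg_y(g)+1 \geq 1$ whenever $g \neq 0$, while the left-hand side has $y$-degree $0$. Hence $g = 0$, which gives $f = 0$, again contradicting the hypothesis.

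There is no real obstacle here; the argument is purely formal, and the only mild subtlety is ensuring that the case analysis for the factorization of $x$ in $A[x]$ excludes the possibility $p \sim x$ by using the nonvanishing of $f$.
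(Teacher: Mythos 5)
Your proof is correct and follows essentially the same route as the paper: view $xy+f$ as a degree-$1$ polynomial in $y$ over $A[x]$, split any proposed factorization into a degree-$0$ and a degree-$1$ (in $y$) factor, use the irreducibility of $x$ in $A[x]$ on the leading coefficient, and derive a contradiction from $x\mid f$ with $f\in A\setminus\{0\}$; the final claim follows from the same $y$-degree comparison. The only cosmetic difference is that you case-split on the degree-$0$ factor $p$ directly, whereas the paper writes the factorization as $(ay+b)c$ and argues about $c$, but the substance is identical.
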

 
 \begin{proof}
 Considering $xy+f$ as a polynomial in $y$ with coefficients in $A[x]$, we see that $f \not\in (xy+f)$ for degree reasons, which establishes the final claim. Since $A$ is a unique factorization domain, so is $A[x,y]$, so the claim that the quotient is a domain follows from the fact that irreducible elements are prime.
 
 To see the irreducibility of $xy+f$, note that any factorization must be of the form $xy+f=(ay+b)c$ for certain elements $a,b,c \in A[x]$, again for degree reasons. It follows that $x=ac$. Moreover, since $A$ is a domain, $x \in A[x]$ is a prime element. From the uniqueness of prime factorizations in $A[x]$ it follows that either $a$ or $c$ must be a unit. To establish the claim, we need to show that $c$ is a unit. If not, then $a \in A[x]$ is a unit, so it follows in particular that $a \in A$. Thus $c=a^{-1}x$ is a multiple of $x$, so from
 \[
 xy+f=(ay+b)c
 \]
 it follows that $f \in A \setminus \{0\}$ satisfies $f \equiv 0 \mod x$ in $A[x]$, a contradiction. Thus $a$ is a multiple of $x$ and $c$ is a unit, so $xy+f$ is indeed irreducible.
 \end{proof}
 
 Recall that the rings $R_{\ell} \defl R \langle z_1, \ldots, z_{\ell} \rangle$ are defined inductively by setting $R_0 \defl R$, $R_{\ell +1} \defl R_{\ell} \langle z_{\ell+1} \rangle$. The second lemma establishes a close relationship between the low-dimensional $K$-groups of $R$ and of $R_{\ell}$.
 
 \begin{lemma}\label{lemma:low_dimensional_K_for_R_l}
 Let $R$ be a regular commutative ring such that $K_0(R) \cong \mathbb{Z}$ and $SK_1(R)\cong 0$. Then $K_0(R_{\ell}) \cong \mathbb{Z}$ and $SK_1(R_{\ell}) \cong 0$ holds for all $\ell \geq 0$.
 \end{lemma}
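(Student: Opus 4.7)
The plan is to argue by induction on $\ell$, with the case $\ell = 0$ being the hypothesis. For the inductive step, set $R' \defl R_{\ell-1}$; since regularity is preserved under polynomial extensions and localizations, $R'$ is again regular, so it suffices to prove: for any regular commutative ring $R'$ with $K_0(R') \cong \mathbb{Z}$ and $SK_1(R') \cong 0$, the ring $R'\langle z\rangle$ shares these properties.

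By Quillen's fundamental theorem of higher $K$-theory, regular rings are $K_i$-regular for all $i \geq 0$, so $K_0(R'[z]) \cong \mathbb{Z}$ and $SK_1(R'[z]) \cong 0$. Let $B \defl R'[z]_{1+zR'[z]}$. Then $B$ is still regular, the ideal $zB$ is contained in the Jacobson radical of $B$, and $B/zB \cong R'$. Using $j$-invariance of $K_0$ (from Nakayama's lemma) and of $SK_1$ (Lemma~\ref{lemma:SL_r_j_invariant}), we obtain $K_0(B) \cong \mathbb{Z}$ and $SK_1(B) \cong 0$. The discussion preceding Lemma~\ref{lemma:pullback_version_of_H} provides a canonical isomorphism $B_z \cong R'\langle z^{-1}\rangle$, which I identify with $R_\ell = R'\langle z\rangle$ by renaming the variable.

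The heart of the argument will be the Quillen localization exact sequence associated to the nonzerodivisor $z$ in the regular ring $B$, combined with devissage:
\[
K_1(B) \longrightarrow K_1(B_z) \xrightarrow{\partial} K_0\bigl(H_z(B)\bigr) \longrightarrow K_0(B) \longrightarrow K_0(B_z) \longrightarrow 0,
\]
where $H_z(B)$ is the category of finitely generated $z$-torsion $B$-modules (automatically of finite projective dimension since $B$ is regular). Quillen's devissage theorem applied to the $z$-adic filtration, together with the resolution theorem (since $R'$ is regular), identifies $K_\ast H_z(B) \cong K_\ast(R')$. The map $K_0(R') \to K_0(B)$ sends the generator $[R'] = [B/zB]$ to $[B] - [B] = 0$ via the short exact sequence $0 \to B \xrightarrow{z} B \to B/zB \to 0$, so $K_0(B) \xrightarrow{\cong} K_0(B_z)$, giving $K_0(R_\ell) \cong \mathbb{Z}$. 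Moreover, $\partial$ is then surjective, and the class $[z] \in K_1(B_z)$ coming from the unit $z \in B_z^\times$ maps to $[R'] = 1 \in K_0(R') \cong \mathbb{Z}$. Combined with $SK_1(B) = 0$ (so $K_1(B) = B^\times$), every class in $K_1(B_z)$ equals $[z]^n \cdot [u]$ for some $n \in \mathbb{Z}$ and some $u \in B^\times$, and in particular lies in the image of $B_z^\times$; this forces $SK_1(R_\ell) = 0$.

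The main technical obstacle I expect is the devissage identification $K_\ast H_z(B) \cong K_\ast(R')$ and the computation $\partial([z]) = [R']$; these follow from Quillen's devissage and resolution theorems together with the explicit description of the $K_1$-$K_0$ boundary in a localization sequence (in the spirit of Lemma~\ref{lemma:formula_for_connecting_homomorphism}), but will have to be spelled out carefully to ensure that the generators and signs line up correctly.
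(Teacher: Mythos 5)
Your proposal is correct and follows essentially the same route as the paper's own proof: reduce by induction to the case $\ell=1$, pass to the ring $B=R'[z]_{1+zR'[z]}$, use $j$-(in)variance to compute its $K_0$ and $SK_1$, and then invoke the localization sequence at $z$ (with devissage identifying $K_*(H_z(B))$ with $K_*(R')$, the paper leaves this implicit), observing that $\partial$ is split by $[z]\mapsto [R']$ and hence that $K_1(B_z)$ is generated by images of units, forcing $SK_1(B_z)=0$. The only small imprecision is the reference to ``$j$-invariance'' of $K_0$ --- $K_0$ is in general only $j$-injective, but combined with the retraction $\mathbb{Z}\to K_0(B)\to K_0(R')\cong\mathbb{Z}$ this still yields $K_0(B)\cong\mathbb{Z}$, so the conclusion stands (the paper instead derives $K_0(A)\cong\mathbb{Z}$ directly via a finite free resolution argument over $R[t]$).
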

 
 \begin{proof}
 By induction, it suffices to show this in the case $\ell=1$. To see this case, let $A \defl R[t]_{1+tR[t]}$ and note that the localization $A_t$ is isomorphic to $R \langle z \rangle$, with $R$-isomorphism given by $t \mapsto z^{-1}$.
 
 The regularity assumption implies that $K_0(R[t]) \cong \mathbb{Z}$. The same is therefore true for any localization of $R[t]$, so $K_0(A) \cong \mathbb{Z}$ and $K_0(A_t) \cong \mathbb{Z}$ (write a finitely generated projective $P$ in the localization at $S$ as $M_S$ for some finitely presentable $R[t]$-module $M$, which has a finite resolution by finitely generated free modules since $K_0(R[t]) \cong \mathbb{Z}$).
 
 Since $SK_1$ is $j$-invariant (see Lemma~\ref{lemma:SL_r_j_invariant}), we have $SK_1(A) \cong SK_1(R) \cong 0$. Since $A$ and $A \slash t$ are regular, the localization sequence for $A$ at $t$ is given by
 \[
 \xymatrix@R=5pt{A^{\times} \ar[r]^-{
 \bigr( \begin{smallmatrix}
 0 \\ \lambda_t
 \end{smallmatrix} \bigr) } & SK_1(A_t) \oplus A_t^{\times} \ar[r] & K_0(R) \ar[r]^-{0} & K_0(A) \ar[r]^{\cong} & K_0(A_t) \\
 & & \mathbb{Z} \ar@{=}[u] & \mathbb{Z} \ar@{=}[u] & \mathbb{Z} \ar@{=}[u]
 }
 \]
 where the second morphism has a splitting given sending the generator $[R]$ to $t \in A_t^{\times}$. Since the first morphism and the splitting are jointly epimorphic, it follows that $SK_1(A_t) \cong 0$, as claimed.
 \end{proof}
 
 With these ingredients in place, we can prove the following theorem.
 
 \begin{thm}\label{thm:SK1-contractible}
 Let $R$ be a regular unique factorization domain with $SK_1(R) \cong 0$ and $K_0(R) \cong \mathbb{Z}$. Let $\beta \neq 0$ be a homogeneous polynomial in $R[t_1, \ldots, t_k]$ of degree $\geq 2$. If
 \[
 R_{\ell}[t_1, \ldots, t_k] \slash \beta
 \]
 is reduced and $SK_1$-contractible for all $\ell \geq 0$, then
 \[
 SK_1 \bigl( R_{\ell}[t_1, \ldots, t_k, x_i, y_i \vert i=1, \ldots, n] \slash (\textstyle \sum_{i=1}^n x_i y_i+\beta) \bigr) \cong 0
 \]
 for all $n \geq 1$ and all $\ell \geq 0$.
 \end{thm}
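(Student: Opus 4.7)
The plan is to induct on $n$, using the convention $C_0^\ell \defl R_\ell[t_1,\ldots,t_k]\slash \beta$ (which is $SK_1$-contractible for all $\ell$ by hypothesis) as the base case. For the inductive step $n-1 \to n$, fix $\ell$ and set $A \defl R_\ell[t_1,\ldots,t_k,x_1,y_1,\ldots,x_{n-1},y_{n-1}]$ and $\alpha \defl -\sum_{i=1}^{n-1}x_iy_i - \beta \in A$, so that $C_n^\ell \cong A[x,y]\slash(xy-\alpha)$ and $A\slash \alpha \cong C_{n-1}^\ell$. Several general facts then apply: $A$ is a regular noetherian UFD with $SK_1(A) = 0$ and $K_0(A) \cong \mathbb{Z}$ (Lemma~\ref{lemma:low_dimensional_K_for_R_l} and polynomial extension), $\alpha$ is nonzero and homogeneous of degree $\deg\beta \geq 2$ once we assign $\deg t_i = 1$ and $\deg x_j + \deg y_j = \deg \beta$, and $\alpha$ is a nonzerodivisor in $C_n^\ell$ with $C_n^\ell$ a domain by Lemma~\ref{lemma:ufd_implies_nonzerodivisor}.

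From the analytic patching diagram of Lemma~\ref{lemma:henselian_analytic_iso_for_overrings} associated to the henselization $A \to A^h_{(\alpha)}$ along $\alpha$, Proposition~\ref{prop:K_theory_weak_analytic_excision} yields a long exact sequence
\[
K_2(B) \oplus K_2\bigl(A_\alpha[x,x^{-1}]\bigr) \to K_2(B_\alpha) \xrightarrow{\partial} SK_1(C_n^\ell) \to SK_1(B) \oplus SK_1\bigl(A_\alpha[x,x^{-1}]\bigr),
\]
where $B \defl A^h_{(\alpha)}[x,y]\slash(xy-\alpha)$, so it suffices to establish (i) $\partial = 0$, (ii) $SK_1(B) = 0$, and (iii) $SK_1\bigl(A_\alpha[x,x^{-1}]\bigr) = 0$. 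For (i) I would feed the five hypotheses of Proposition~\ref{prop:connecting_homomorphism_K_theory_vanishes}: the Fundamental Theorem for the regular ring $A$ and the fact that $A$ is a positively graded domain with $A_0 = R_\ell$ give $SK_1(A[x,x^{-1}]) = 0$ and $A^\times = R_\ell^\times = (A\slash \alpha)^\times$; $A^h_{(\alpha)}$ is regular (henselization of a regular noetherian ring) hence $K_2$-regular; $SK_1(A\slash \alpha) = SK_1(C_{n-1}^\ell) = 0$ is the inductive hypothesis; and Lemma~\ref{lemma:ufd_implies_nonzerodivisor} provides the nonzerodivisor condition. For (ii) I would invoke Corollary~\ref{cor:henselian_implies_trivial_kernel_for_overring} with $F = SK_1$ applied to the henselian pair $(A^h_{(\alpha)},\alpha)$: weak analytic and Milnor excision hold (Propositions~\ref{prop:K_theory_weak_analytic_excision}, \ref{prop:weak_milnor_excision_group_case}), partial $j$-injectivity follows from Lemma~\ref{lemma:SL_r_j_invariant}, and the two contractibility conditions reduce to showing that $A^h_{(\alpha)}[x]$ and $C_{n-1}^\ell[x]$ are $SK_1$-contractible. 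For the first, writing $A^h_{(\alpha)}$ as a filtered colimit of {\'e}tale (hence regular) $A$-algebras and applying the Fundamental Theorem gives $SK_1(A^h_{(\alpha)}[x]) = SK_1(A^h_{(\alpha)}) = SK_1(C_{n-1}^\ell) = 0$ by $j$-invariance. For the second, the inductive hypothesis combined with the identification $C_{n-1}^\ell \ten{R_\ell} R_{\ell+m} \cong C_{n-1}^{\ell+m}$ says precisely that $C_{n-1}^\ell$ has the strong $SK_1$-extension property over $R_\ell$, so Corollary~\ref{cor:strong_F_extension_implies_F-regular} (plus the Horrocks principle for $SK_1$ from Suslin) forces $SK_1$-regularity and hence $SK_1(C_{n-1}^\ell[x]) = 0$.

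The hard part will be step (iii). Since $A_\alpha$ is a regular UFD, the Fundamental Theorem and the identification of units in a Laurent polynomial ring over a domain reduce the problem to proving $SK_1(A_\alpha) = 0$. My plan is to extract this from a second application of weak analytic excision: the analytic isomorphism $A \to A^h_{(\alpha)}$ along $\alpha$ yields a Mayer--Vietoris sequence
\[
SK_1\bigl(A[x,x^{-1}]\bigr) \to SK_1\bigl(A^h_{(\alpha)}[x,x^{-1}]\bigr) \oplus SK_1\bigl(A_\alpha[x,x^{-1}]\bigr) \to SK_1(B_\alpha),
\]
and the outer left term vanishes by the argument above while the left summand of the middle term is computed via the Fundamental Theorem together with $j$-invariance applied to $A^h_{(\alpha)}\slash \alpha = C_{n-1}^\ell$. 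The remaining task is then to control the map from $SK_1(A_\alpha[x,x^{-1}])$ into $SK_1(B_\alpha)$, which I expect to trace back to $G$-theoretic invariants of the (possibly singular) ring $C_{n-1}^\ell$ via the Quillen localization sequence; the strong $SK_1$-extension property of $C_{n-1}^\ell$ (combined with its reducedness, inherited from the reducedness hypothesis on $R_\ell[t]\slash\beta$) should then let us conclude. This translation between $SK_1$ and boundary classes from localization is where I anticipate the most technical care being needed.
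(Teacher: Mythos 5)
Steps (i) and (ii) of your plan match the paper's Claims (C) and (B) essentially verbatim, including the derivation of $SK_1(C_{n-1}^\ell[x]) \cong 0$ from the inductive hypothesis, the Horrocks principle, and the strong extension property, and the correct assembly of the five hypotheses of Proposition~\ref{prop:connecting_homomorphism_K_theory_vanishes}.

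The gap lies in step (iii). You aim to prove $SK_1\bigl(A_\alpha[x,x^{-1}]\bigr) \cong 0$, which (via the Fundamental Theorem) you reduce to $SK_1(A_\alpha)\cong 0$. But this is a strictly stronger statement than what the exact sequence requires, and it is not clear it is even true: the Quillen localization sequence identifies $SK_1(A_\alpha)$ modulo the image of $SK_1(A)\cong 0$ with a subquotient of $G_0(A\slash \alpha) \cong G_0(C_{n-1}^\ell)$, the $G$-theory of the \emph{singular} ring $C_{n-1}^\ell$; nothing in the hypotheses or the inductive assumption controls this group. Your fallback via a second weak-excision Mayer--Vietoris fragment plus $G$-theory gives at best an injection of $SK_1(A_\alpha[x,x^{-1}])$ into $SK_1(B_\alpha)$, which does not yield the vanishing you need, so this route does not close.

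The paper avoids the issue by proving a much weaker statement in place of (iii): the map $\varphi_* \colon SK_1(C_n^\ell) \to SK_1\bigl(A_\alpha[x,x^{-1}]\bigr)$ is \emph{the zero map}, without computing either group. This is Claim~(A) in the paper's proof, and it follows from Proposition~\ref{prop:F_and_naive_A1_contractible_A}: since $\sum_{i=1}^n x_i y_i + \beta$ is homogeneous of degree $\geq 2$, the Swan--Weibel homotopy trick makes $C_n^\ell$ naively $\mathbb{A}^1$-contractible over $R_\ell$, while $A_\alpha[x,x^{-1}]$ is a localization of a polynomial ring over the regular ring $R_\ell$ and hence $SK_1$-regular. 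The proposition then says that any natural transformation from a naively $\mathbb{A}^1$-contractible ring to an $SK_1$-regular one kills $SK_1$. Once you replace the goal of (iii) by the vanishing of $\varphi_*$ rather than of its codomain, your steps (i), (ii), and the corrected (iii) combine to give $SK_1(C_n^\ell) \cong 0$ exactly as you intended.
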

 
 \begin{proof}
 Let $A_{n,\ell} \defl R_{\ell}[t_1, \ldots, t_k, x_i, y_i \vert i=1, \ldots, n]$ be the polynomial $R_{\ell}$-algebra with $2n+k$ variables. Let $\beta_0 \defl \beta \in A_{0,0}$ and let $\beta_{n+1} \defl x_{n+1} y_{n+1} + \beta_n \in A_{n+1,0}$ for all $n \geq 0$. We use the same names for the corresponding polynomials in $A_{n,\ell}$, considered as polynomials with coefficients in $R_{\ell}$.
 
 We first claim that $A_{n,\ell} \slash \beta_n$ is a domain for all $n \geq 1$ and all $\ell \geq 0$. To see this, we can apply Lemma~\ref{lemma:ufd_implies_nonzerodivisor} to the unique factorization domain $A_{n-1,\ell}$ and the element $\beta_{n-1} \neq 0$. Next we prove by induction on $n \geq 0$ that $SK_1(A_{n, \ell} \slash \beta_n) \cong 0$ for all $n \geq 0$ and $\ell \geq 0$, which will establish the claim of the theorem. The proof uses the techniques we have developed in this section to study analytic patching diagrams arising from algebras of the form $A[x,y] \slash (xy-\alpha)$.
 
 The base case of the induction is covered by the assumption of the theorem: $A_{0, \ell} \slash \beta_0=R_{\ell}[t_1, \ldots, t_k] \slash \beta$, so $SK_1(A_{0,\ell} \slash \beta_0) \cong 0$ for all $\ell \geq 0$.
 
 We prove the inductive step $n \rightsquigarrow n+1$ simultaneously for all $\ell \geq 0$, so we fix some $\ell \geq 0$ and set $R^{\prime} \defl R_{\ell}=R \langle z_1, \ldots z_{\ell} \rangle$. Let
 \[
 A \defl A_{n, \ell}=R^{\prime}[t_1, \ldots, t_k, x_i, y_i \vert i=1, \ldots, n]
 \]
 and note that $A[x,y] \slash (xy-\beta_n)$ is isomorphic to $A_{n+1,\ell} \slash \beta_{n+1}$. Indeed, we can send $x$ to $x_{n+1}$ and $y$ to $-y_{n+1}$ to get the desired isomorphism.
 
 To establish the claim, it thus suffices to show that $SK_1\bigl(A[x,y] \slash (xy-\beta_n)\bigr) \cong 0$ holds. If we write $\alpha \defl \beta_n$, then we can consider the analytic patching diagram
 \[
 \xymatrix{A[x,y] \slash (xy-\alpha) \ar[r]^-{\varphi} \ar[d]_{\psi} & A_{\alpha}[x,x^{-1}] \ar[d] \\ B \ar[r] & B_{\alpha} } 
 \]
 where $B=A^h_{(\alpha)}[x,y] \slash (xy-\alpha)$. We will prove the inductive step by establishing the following three claims:
 
 \begin{enumerate}
 \item[(A)] The homomorphism $\varphi_{\ast} \colon SK_1 \bigl(A[x,y] \slash (xy-\alpha)\bigr) \rightarrow SK_1 (A_{\alpha}[x,x^{-1}])$ sends each $\sigma$ in the domain to $0$;
 \item[(B)] The group $SK_1(B)$ is trivial, so $\psi_{\ast} \colon SK_1 \bigl(A[x,y] \slash (xy-\alpha)\bigr) \rightarrow SK_1(B)$ also sends each $\sigma$ in the domain to $0$;
 \item[(C)] The homomorphisms $\varphi_{\ast}$ and $\psi_{\ast}$ are jointly monomorphic.
 \end{enumerate}

 Taken together, these three claims show that $SK_1 \bigl(A[x,y] \slash (xy-\alpha)\bigr) \cong 0$. The argument can now be summarized as follows. We establish Claim~(A) by using the naive $\mathbb{A}^{1}$-contractibility of $A[x,y] \slash (xy-\alpha)$ and the fact that
 \[
 NSK_1(A_{\alpha}[x,x^{-1}])\cong 0
 \]
 holds  (see Proposition~\ref{prop:F_and_naive_A1_contractible_A}). Claim~(B) is proved by applying Theorem~\ref{thm:henselian_implies_trivial_kernel_for_overring} and its Corollary~\ref{cor:henselian_implies_trivial_kernel_for_overring}, while Claim~(C) is proved by applying Proposition~\ref{prop:connecting_homomorphism_K_theory_vanishes}. In both cases, five conditions need to be checked. Most of these are fairly straightforward, only Condition~(v) of Corollary~\ref{cor:henselian_implies_trivial_kernel_for_overring} is a bit more involved. This condition requires that
 \[
 SK_1(A \slash \alpha [x]) \cong 0
 \]
 holds, but a priori we only know that $SK_1(A \slash \alpha)=SK1(A_{n, \ell} \slash \beta_n) \cong 0$ holds (by the inductive assumption). To deduce the above fact, we will use the monic inversion principle for $SK_1$ and the fact that we also know that $SK_1(A_{n, \ell +1} \slash \beta_n) \cong 0$ holds. Next we give the details for these three steps.

\textbf{Step~(A).} Since $R^{\prime}$ is regular, so is the localization $A_{\alpha}[x,x^{-1}]$ of the polynomial algebra $A[x]=R^{\prime}[t_1,\ldots, t_k,x_i,y_i,x]$. Thus it is in particular $SK_1$-regular.

Since $\beta$ is homogeneous of degree $\geq 2$, the polynomial $xy-\alpha=\sum_{i=0}^{n+1} x_i y_i+\beta$ can be made homogeneous of the same degree by choosing appropriate degrees for $x_i$ and $y_i$. It follows that $A[x,y] \slash (xy-\alpha)$ is naively $\mathbb{A}^{1}$-contractible over $R^{\prime}$ by the Swan--Weibel homotopy trick (see \cite[Appendix to \S V.3]{LAM}). Note that we have $SK_1(R^{\prime}) \cong 0$ by Lemma~\ref{lemma:low_dimensional_K_for_R_l}. Thus Proposition~\ref{prop:F_and_naive_A1_contractible_A} is applicable and implies that $\varphi_{\ast} \sigma=0$ for all $\sigma \in SK_1\bigl( A[x,y] \slash (xy-\alpha) \bigr)$.

\textbf{Step~(B).} This step contains the heart of the argument, where we use the factorization property for not necessarily monic polynomials for henselian pairs (see Theorem~\ref{thm:generalized_hensel_lemma}). Namely, we use Corollary~\ref{cor:henselian_implies_trivial_kernel_for_overring} of Theorem~\ref{thm:henselian_implies_trivial_kernel_for_overring} to show that $SK_1(B)=0$. Recall that this corollary relates 
\[
F\bigl(A^h_{(\alpha)}[x,y]\slash (xy-\alpha) \bigr) \quad \text{and} \quad F(A^h_{(\alpha)} \slash \alpha) \cong F(A \slash \alpha)
\]
 for suitable functors $F$.

 Specifically, $\alpha \in A^h_{(\alpha)}$ needs to be a nonzerodivisor, $F$ needs to satisfy weak analytic excision, $A^h_{(\alpha)}[x]$ needs to be $F$-contractible, and $F$ needs to be (partially) $j$-injective (so that Conditions~(i)-(iii) of Theorem~\ref{thm:henselian_implies_trivial_kernel_for_overring} are satisfied), and $F$ needs to satisfy weak Milnor excision and $A^h_{(\alpha)} \slash \alpha [x] \cong A \slash \alpha [x] $ needs to be $F$-contractible (so that Conditions~(iv) and (v) of Corollary~\ref{cor:henselian_implies_trivial_kernel_for_overring}) are satisfied.
 
 Since $A^h_{(\alpha)}$ is flat over $A$, it suffices to check that $\alpha \in A$ is a nonzerodivisor, which is clear from the fact that $A$ is a domain. We have seen in \S \ref{section:analytic} that $SK_1$ satisfies weak analytic excision (either by noting that $SK_1=\mathrm{colim}_r SK_{1,r}$ and appealing to Vorst's Lemma, or by using the corresponding fact for $K_1$, see Proposition~\ref{prop:K_theory_weak_analytic_excision}). Moreover, $SK_1$ satisfies weak Milnor excision by Proposition~\ref{prop:weak_milnor_excision_group_case}. We know from Lemma~\ref{lemma:SL_r_j_invariant} that $SK_1$ is $j$-invariant, hence in particular $j$-injective.
 
 It remains to check that $SK_1(A^h_{(\alpha)}[x]) \cong 0$ and $SK_1(A \slash \alpha [x]) \cong 0$. Since the henselization of a regular ring is regular (see for example \cite[Corollary~5.7 and Theorem~5.10]{GRECO}), it suffices to show that $SK_1(A^h_{(\alpha)}) \cong 0$  to establish the first of these two claims. By $j$-injectivity of $SK_1$, it further suffices to check that $SK_1(A \slash \alpha) \cong 0$ (since $\alpha$ lies in the Jacobson radical of $A^h_{(\alpha)}$). By definition we have $A=A_{n, \ell}$ and $\alpha=\beta_n$, so $A \slash \alpha=A_{n, \ell} \slash \beta_n$, which is $SK_1$-contractible by the inductive assumption.
 
 To establish Claim~(B) it only remains to check that $SK_1(A \slash \alpha [x]) \cong 0$ holds. Since $A_{n, \ell+1} \slash \beta_n$ can be written as localization of $A_{n,\ell} \slash \beta_n [x]$ at the set of monic polynomials in $x$ with coefficients in $R_{\ell}=R^{\prime}$, it follows that 
 \[
 A_{n,\ell} \slash \beta_n [x] \rightarrow A_{n,\ell} \slash \beta_n \langle x \rangle
 \]
 factors through the algebra $A_{n, \ell+1} \slash \beta_n$. From the inductive assumption we know that we have $SK_1(A_{n,\ell+1} \slash \beta_n) \cong 0$, so
 \[
 SK_1( A_{n,\ell} \slash \beta_n [x] ) \rightarrow SK_1( A_{n,\ell} \slash \beta_n \langle x \rangle)
 \]
 is the zero morphism. From the Horrocks principle $(\mathrm{H})$ (which is a consequence of the Fundamental Theorem, see Theorem~\ref{thm:Fundamental_Theorem_K_Theory}, and Lemma~\ref{lemma:pullback_version_of_H} ) it follows that $SK_1(A_{n, \ell} \slash \beta_n[x]) \cong 0$, which concludes the proof of Claim~(B).
 
 \textbf{Step~(C).} Finally, we use Proposition~\ref{prop:connecting_homomorphism_K_theory_vanishes} to show that the connecting homomorphism $\partial$ in the exact sequence
 \[
 \xymatrix{K_2(B_{\alpha}) \ar[r]^-{\partial} & SK_1\bigl(A[x,y] \slash (xy-\alpha) \bigr) \ar[r]^-{\bigl(\begin{smallmatrix} \psi_{\ast} \\ \varphi_{\ast} \end{smallmatrix} \bigr)} & SK_1(B) \oplus SK_1(A_{\alpha}[x,x^{-1}]) }
 \]
 is equal to $0$, yielding the desired injectivity of $\bigl(\begin{smallmatrix} \psi_{\ast} \\ \varphi_{\ast} \end{smallmatrix} \bigr)$.
 
 We have already observed in the proof of Claim~(B) above that $SK_1(A \slash \alpha) \cong 0$, so Condition~(3) of Proposition~\ref{prop:connecting_homomorphism_K_theory_vanishes} holds. Since $A$ is regular, so is the henselization $A^h_{(\alpha)}$ (by \cite[Corollary~5.7 and Theorem~5.10]{GRECO}). Thus $A^h_{(\alpha)}$ is in particular $K_2$-regular (since regular rings are $K_i$-regular for all $i$, see for example \cite[Theorem~V.6.3]{WEIBEL_KBOOK}). This shows that Condition~(2) of Proposition~\ref{prop:connecting_homomorphism_K_theory_vanishes} also holds.
 
 To see that $SK_1(A[x,x^{-1}]) \cong 0$, note that the regularity of $R^{\prime}$ reduces this to the claim that $SK_1(R^{\prime}[x,x^{-1}]) \cong 0$. From the Fundamental Theorem (and the regularity of $R^{\prime}$) we know that $K_1(R^{\prime}[x,x^{-1}]) \cong K_1(R^{\prime}) \oplus K_0(R^{\prime})$, with $K_0(R^{\prime} )\cong \mathbb{Z}$ corresponding to the subgroup of $(R^{\prime}[x,x^{-1}])^{\times}$ generated by $x$. From the fact that $SK_1(R^{\prime}) \cong 0$ (see Lemma~\ref{lemma:low_dimensional_K_for_R_l}), it follows that $SK_1(R^{\prime}[x,x^{-1}]) \cong 0$ as well, so Condition~(1) of Proposition~\ref{prop:connecting_homomorphism_K_theory_vanishes} holds.
 
 From Lemma~\ref{lemma:ufd_implies_nonzerodivisor} we know that $A[x,y] \slash xy-\alpha$ is a domain and that $\alpha \neq 0$ in $A[x,y] \slash (xy-\alpha)$, so it is a nonzerodivisor (Condition~(5) of Proposition~\ref{prop:connecting_homomorphism_K_theory_vanishes}).
 
 It only remains to show that Condition~(4) of Proposition~\ref{prop:connecting_homomorphism_K_theory_vanishes} holds, that is, we need to show that the homomorphism
 \[
 A^{\times} \rightarrow (A \slash \alpha)^{\times}
 \]
 is surjective. To see this, it suffices to check that
 \[
 ( R^{\prime})^{\times} \rightarrow (A \slash \alpha)^{\times}=(A_{n, \ell} \slash \beta_n)^{\times}
 \]
 is surjective. 
 
 We claim that the target $A_{n,\ell} \slash \beta_n$ is reduced. For $n >0$, this follows from the fact that the target is a domain (a consequence of Lemma~\ref{lemma:ufd_implies_nonzerodivisor} established at the beginning of the proof). For $n=0$, we have $A_{0, \ell} \slash \beta_0=R_{\ell}[t_1, \ldots, t_k] \slash \beta$, which is reduced by assumption.
 
 If we let $\mathbb{G}_m(C) \defl C^{\times}$, then the $\mathbb{G}_{m}$-regular rings are precisely the reduced rings. In the proof of Claim~(A) we have seen that $A_{n, \ell} \slash \beta_n$ is naively $\mathbb{A}^{1}$-contractible over $R^{\prime}$. It follows that the inclusion $\eta \colon R^{\prime} \rightarrow A_{n, \ell} \slash \beta_n$ induces an isomorphism after applying $\mathbb{G}_{m}$, so
 \[
  (R^{\prime})^{\times} \rightarrow (A \slash \alpha)^{\times}=(A_{n, \ell} \slash \beta_n)^{\times}
 \]
 is in particular surjective, which establishes Claim~(4) of Proposition~\ref{prop:connecting_homomorphism_K_theory_vanishes}, and thus Claim~(C) above.
 
 Taken together, Claims~(A), (B), and (C) show that $SK_1\bigl(A[x,y] \slash (xy-\alpha) \bigr)=0$. Thus we find that $SK_1\bigl(A[x,y] \slash (xy+\beta_n) \bigr) \cong 0$, that is, $SK_1(A_{n+1, \ell} \slash \beta_{n+1}) \cong 0$. This concludes the proof of the inductive step $n \rightsquigarrow n+1$.
 \end{proof}
 
 With this result in hand, we can now prove two of the theorems stated in the introduction, which we restate for the convenience of the reader.
 
 \begin{cit}[Theorem~\ref{thm:converse_to_Swan's_thm}]
 Let $k$ be a field and let $\alpha \in k[t_1, \ldots, t_n]$ be a homogeneous polynomial of degree $\geq 2$. Assume that for all field extensions $k \subseteq k^{\prime}$, the algebra $k^{\prime}[t_1, \ldots, t_n] \slash \alpha$ is a $K_1$-regular domain. Then for all field extensions $k \subseteq k^{\prime}$, the algebra
 \[
 A \defl k^{\prime}[t_1, \ldots, t_n, x,y] \slash (xy-\alpha)
 \]
 is a $K_1$-regular unique factorization domain.
 \end{cit}
 
 \begin{proof}[Proof of Theorem~\ref{thm:converse_to_Swan's_thm}]
 Since $K_1(A) \cong SK_1(A) \oplus A^{\times}$, it suffices to check that $A$ is $SK_1$-regular and $\mathbb{G}_m$-regular, where $\mathbb{G}_m(C)=C^{\times}$. The latter follows from the fact that $A$ is reduced, which is clear once we establish the claim that $A$ is a unique factorization domain. Since $A_{x} \cong k^{\prime}[t_1, \ldots, t_n, x, x^{-1}]$ is a unique factorization domain and $x \in A$ is prime ($A \slash x \cong k^{\prime}[t_1, \ldots, t_n] \slash \alpha [y]$ is by assumption a domain), this follows from Nagata's criterion (see \cite[\href{https://stacks.math.columbia.edu/tag/0AFU}{Lemma 0AFU}]{stacks-project}).
 
 It remains to show that $A$ is $SK_1$-regular. Note that $k^{\prime}_{\ell} =k^{\prime} \langle z_1, \ldots, z_{\ell} \rangle$ is a field extension of $k^{\prime}$, hence it is also a field extension of $k$. Since $SK_1(k^{\prime})\cong 0$ and $K_0(k^{\prime}) \cong \mathbb{Z}$ hold, Theorem~\ref{thm:SK1-contractible} is applicable with $\beta=-\alpha$. It implies that
 \[
 SK_1\bigl(k^{\prime} \langle z_1, \ldots, z_{\ell} \rangle [t_1, \ldots, t_n,x,y] \slash (xy-\alpha) \bigr) \cong 0
 \]
 holds for all $\ell \geq 0$. Thus the $k^{\prime}$-algebra $A$ satisfies the strong $SK_1$-extension property over $k^{\prime}$, see Definition~\ref{dfn:strong_extension_property}. Since $SK_1$ satisfies the Horrocks principle $(\mathrm{H})$ (by the Fundamental Theorem and Lemma~\ref{lemma:pullback_version_of_H}), the strong $SK_1$-extension property implies $SK_1$-regularity, see Corollary~\ref{cor:strong_F_extension_implies_F-regular}.
 \end{proof}
 
 Next we can prove Theorem~\ref{thm:K_theory_of_simple_Hermite_conjecture_rings}, which we again restate for convenience.
 
 \begin{cit}[Theorem~\ref{thm:K_theory_of_simple_Hermite_conjecture_rings}]
 If $R$ is a regular ring and $n \geq 2$, then the ring $R[x_i,y_i] \slash ( \sum_{i=1}^n x_i y_i )$ is $K_1$-regular. In particular, the induced morphism
 \[
 K_1(R) \rightarrow K_1\bigl( R[x_i,y_i] \slash ( \textstyle\sum_{i=1}^n x_i y_i ) \bigr)
 \]
 is an isomorphism.
 \end{cit}
 
 \begin{proof}[Proof of Theorem~\ref{thm:K_theory_of_simple_Hermite_conjecture_rings}]
 Since the polynomial $\sum_{i=1}^n x_i y_i$ is homogeneous, the ring $R[x_i,y_i] \slash (\sum_{i=1}^n x_i y_i)$ is graded, with degree zero part $R$. The second claim therefore follows from the first and the Swan--Weibel homotopy trick \cite[Appendix to \S V.3]{LAM}.
 
 Since $K_1(A)\cong SK_1(A) \oplus A^{\times}$, we can treat the two functors $\mathrm{SK}_1$ and $(-)^{\times}=\mathbb{G}_m$ separately. From the fact that $\sum_{i=1}^n x_i y_i$ is homogeneous, it follows that the ring $\mathbb{Z}[x_i,y_i]\slash (\sum_{i=1}^n x_i y_i)$ is a graded domain with degree zero part $\mathbb{Z}$. It is therefore $\mathbb{Z}$-torsion free, hence flat over $\mathbb{Z}$. Thus the two functors
 \[
 F,F^{\prime} \colon \CAlg_R \rightarrow \Ab
 \]
 given by  
 \[
 F(A) \defl SK_1 \bigl( A \ten{\mathbb{Z}} \mathbb{Z}[x,y] \slash (\textstyle \sum_{i=1}^n x_i y_i )\bigr)
 \]
 and
 \[
 F^{\prime}(A) \defl \mathbb{G}_m\bigr(A \ten{\mathbb{Z}} \mathbb{Z}[x,y] \slash (\textstyle \sum_{i=1}^n x_i y_i ) \bigl)
 \]
 both satisfy weak analytic excision. The claim amounts to showing that $R$ is both $F$-regular and $F^{\prime}$-regular. Since both these functors satisfy the Quillen principle $(\mathrm{Q})$ (see Theorem~\ref{thm:weak_excision_implies_QR}), it suffices to check this for the \emph{local} rings of $R$. Translating back, we need to show that $R_{\mathfrak{m}}[x_i,y_i] \slash ( \sum_{i=1}^n x_i y_i )$ is $SK_1$-regular and $\mathbb{G}_m$-regular for all regular local rings $R_{\mathfrak{m}}$ of $R$ (localized at a maximal ideal $\mathfrak{m} \subseteq R$). The second of these claims is straightforward: From Lemma~\ref{lemma:ufd_implies_nonzerodivisor} it follows that 
 \[
 R_{\mathfrak{m}}[x_i,y_i] \slash ( \textstyle\sum_{i=1}^n x_i y_i )
 \]
 is a domain, hence in particular reduced and therefore $\mathbb{G}_m$-regular.
 
 Since the local rings $R_{\mathfrak{m}}$ of $R$ are unique factorization domains (and satisfy $SK_1(R_{\mathfrak{m}})=0$, $K_0(R_{\mathfrak{m}})=\mathbb{Z}$), we can apply Theorem~\ref{thm:SK1-contractible} to $\beta=xy \in R[x,y]$. If we can establish the premise that $R_{\mathfrak{m}} \langle z_1, \ldots, z_{\ell} \rangle [x,y] \slash (xy)$ is reduced and $SK_1$-regular for all $\ell \geq 1$, then the theorem implies that $R_{\mathfrak{m}} [x_i,y_i] \slash (\sum_{i=1}^n x_i y_i)$ has the strong $SK_1$-extension property over $R_{\mathfrak{m}}$ (see Definition~\ref{dfn:strong_extension_property}). As in the proof above, the fact that $SK_1$ satisfies the Horrocks prinicple $(\mathrm{H})$ then implies that the ring $R_{\mathfrak{m}} [x_i,y_i] \slash (\sum_{i=1}^n x_i y_i)$ is $SK_1$-regular (see Corollary~\ref{cor:strong_F_extension_implies_F-regular}).
 
 We have thus reduced the problem to showing that $R_{\ell}[x,y] \slash (xy)$ is reduced and $SK_1$-regular, where $R_{\ell}=R \langle z_1, \ldots, z_\ell \rangle$, for any regular \emph{local} ring $R$. The second fact follows from Lemma~\ref{lemma:excision_for_coordinate_cross}, applied to the functor 
 \[
 G(A) \defl \mathrm{SK_1}(A \ten{\mathbb{Z}} \mathbb{Z}[t_1, \ldots, t_k]).
 \]
  Indeed, since $SK_1$ satisfies weak Milnor excision and $\mathbb{Z} \rightarrow \mathbb{Z}[t_1, \ldots, t_k]$ is flat, so does $G$. The isomorphisms
 \[
 SK_1(R_{\ell}[x][t_1, \ldots, t_k]) \cong SK_1(R_{\ell}) \cong 0
 \]
 (coming from the regularity of $R_{\ell}$ and from Lemma~\ref{lemma:low_dimensional_K_for_R_l}) imply that the conditions of Lemma~\ref{lemma:excision_for_coordinate_cross} hold, so $F\bigl(R_{\ell}[x,y]\slash (xy) \bigr) \cong 0$. This shows that $R_{\ell}[x,y] \slash (xy)$ is indeed $SK_1$-regular.
 
 To see that $R_{\ell}[x,y] \slash (xy)$ is reduced, note that $(xy)$ is contained in the prime ideals $(x)$ and $(y)$. So if $f^k \in (xy)$, then $f \in (x)$ and $f \in (y)$. Since $R_{\ell}[x,y]$ is a unique factorization domain and $x$, $y$, are not associates, it follows that $f\in (xy)$.
 \end{proof}
 
 Recall from \cite{SCHAEPPI_HERMITE} that the rings $\mathbb{Z}[x_i,y_i] \slash (\sum_{i=1}^n x_i y_i)$ are part of a ``test set'' $\ca{H}$ for the Hermite ring conjecture, meaning that the Hermite ring conjecture holds if and only if it holds for all $R \in \ca{H}$. The rings in $\ca{H}$ have some nice additional properties, for example, they are all unique factorization domains, see \cite{SCHAEPPI_HERMITE} for more details.
 
 From the $K_1$-regularity of $\mathbb{Z}[x_i,y_i] \slash (\sum_{i=1}^n x_i y_i)$ (see Theorem~\ref{thm:K_theory_of_simple_Hermite_conjecture_rings}) it follows that $\mathbb{Z}[x_i,y_i] \slash (\sum_{i=1}^n x_i y_i)$ is also $K_0$-regular (see \cite[Corollary~2.1]{VORST_POLYNOMIAL}), hence
 \[
 \mathbb{Z} \cong K_0(\mathbb{Z}) \rightarrow K_0 \bigl( \mathbb{Z}[x_i,y_i] \slash (\textstyle\sum_{i=1}^n x_i y_i) \bigr)
 \]
 is an isomorphism by the Swan--Weibel homotopy trick. Thus the Hermite ring conjecture would imply that all finitely generated projective modules over $\mathbb{Z}[x_i,y_i] \slash (\sum_{i=1}^n x_i y_i)$ are free. More can be said: since the Hermite ring conjecture implies its own $K_1$-analogue (see Proposition~\ref{prop:Hermite_ring_conjecture_K1_analogue} below), all unimodular rows over $\mathbb{Z}[x_i,y_i] \slash (\sum_{i=1}^n x_i y_i)$ are \emph{elementary} completable if the Hermite ring conjecture holds (see Corollary~\ref{cor:Hermite_and_A1_contractible_implies_elementary} below). Thus a natural question is whether an isomorphism
 \[
 W_r \bigl( \mathbb{Z}[x_i,y_i] \slash (\textstyle\sum_{i=1}^n x_i y_i) \bigr) \cong \ast 
 \]
 can be established by different means. We first provide some details showing that these two claims are indeed consequences of the Hermite ring conjecture.
 
 \begin{prop}\label{prop:Hermite_ring_conjecture_K1_analogue}
 Assume that the Hermite ring conjecture holds. Then if $\sigma(t) \in \mathrm{SL}_r(A[t])$ is stably elementary and $r \geq 3$, there exists an elementary matrix $\varepsilon(t) \in \mathrm{E}_r(A[t])$ such that $\sigma(t)=\sigma(0) \varepsilon(t)$ holds.
 \end{prop}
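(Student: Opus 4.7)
The plan is to reduce to a statement about stably free modules and invoke the Hermite ring conjecture. Setting $\tau(t) \defl \sigma(0)^{-1}\sigma(t)$, we have $\tau(0) = I_r$, and $\tau(t)$ remains stably elementary (since $\sigma(0)$ is itself stably elementary as the image of $\sigma(t) \oplus I_s$ under $\mathrm{ev}_0 \colon \mathrm{E}_{r+s}(A[t]) \to \mathrm{E}_{r+s}(A)$, and products of stably elementary matrices are stably elementary). The factorization $\sigma(t) = \sigma(0) \varepsilon(t)$ with $\varepsilon \in \mathrm{E}_r(A[t])$ is equivalent to $\tau(t) \in \mathrm{E}_r(A[t])$, which is the target of the proof.

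Pick $s$ with $\tau(t) \oplus I_s \in \mathrm{E}_{r+s}(A[t])$. I would then use the equivalent formulation of the Hermite conjecture (see \cite[\S V.3]{LAM}): for every commutative ring $B$ and every $n \geq 1$, the natural map $\mathrm{Um}_n(B)/\mathrm{GL}_n(B) \to \mathrm{Um}_n(B[t])/\mathrm{GL}_n(B[t])$ is a bijection. Apply this to the first column $v(t) \defl \tau(t) e_1 \in \mathrm{Um}_r(A[t])$: since $v(0) = e_1$ is trivially $\mathrm{GL}_r(A)$-equivalent to $e_1$, the Hermite bijection forces $v(t)$ to be $\mathrm{GL}_r(A[t])$-equivalent to $e_1$. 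On the other hand, the stabilized row $\bigl(v(t), 0, \ldots, 0 \bigr) \in \mathrm{Um}_{r+s}(A[t])$ is the first column of the elementary matrix $\tau(t) \oplus I_s$, so is already $\mathrm{E}_{r+s}(A[t])$-equivalent to $e_1$. For $r \geq 3$, combining these two facts via Suslin's stability argument (using normality of $\mathrm{E}_{r+s}$ in $\mathrm{GL}_{r+s}$) produces an $\varepsilon_1 \in \mathrm{E}_r(A[t])$ with $\varepsilon_1^{-1} \tau(t)$ having first column $e_1$. Clearing the first row by an elementary matrix from the right reduces $\tau(t)$ modulo $\mathrm{E}_r(A[t])$ to a block $\mathrm{diag}\bigl(1, M(t)\bigr)$ with $M(t) \in \mathrm{SL}_{r-1}(A[t])$, $M(0) = I_{r-1}$, and $M \oplus I_{s+1}$ still elementary in $\mathrm{E}_{r+s}(A[t])$.

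Iterating this reduction on successively smaller blocks brings $\tau(t)$ modulo $\mathrm{E}_r(A[t])$ to the identity, provided each iteration stays at size $\geq 3$. The main obstacle is the last iteration step when the residual block drops to $2 \times 2$: there the $\mathrm{GL}_2$-to-$\mathrm{E}_2$ upgrade fails in general, so the column-clearing argument cannot be applied directly. The plan to sidestep this difficulty is to perform all column reductions inside the stabilized matrix $\tau(t) \oplus I_s \in \mathrm{E}_{r+s}(A[t])$, where the ambient size $r + s \geq r \geq 3$ is always large enough for the Suslin upgrade to apply at every iteration; this yields a reduction of $\tau(t) \oplus I_s$ to $I_{r+s}$ via a sequence of elementary operations in $\mathrm{E}_{r+s}(A[t])$. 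Descending back to $\tau(t)$ itself is then a bookkeeping exercise: the elementary operations used in the global reduction restrict to elementary operations on the first $r$ rows and columns (after the $\mathrm{E}_r$-equivalence of $v(t)$ to $e_1$ established in the first step has cleared the interaction with the trailing $I_s$ block), yielding the desired factorization $\tau(t) \in \mathrm{E}_r(A[t])$.
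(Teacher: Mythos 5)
There is a genuine gap at the heart of the argument. Your application of the Hermite conjecture (in the $\mathrm{Um}_n(B)/\mathrm{GL}_n(B) \to \mathrm{Um}_n(B[t])/\mathrm{GL}_n(B[t])$ form) to conclude that $v(t)=\tau(t)e_1$ is $\mathrm{GL}_r(A[t])$-equivalent to $e_1$ is \emph{vacuous}: the matrix $\tau(t) \in \mathrm{SL}_r(A[t])$ itself already witnesses this equivalence, for any $\tau$ whatsoever. Likewise, the $E_{r+s}(A[t])$-equivalence of the stabilized row $(v(t),0,\ldots,0)$ to $e_1$ is an immediate consequence of the hypothesis that $\tau\oplus I_s$ is elementary. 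So at the point where you write ``combining these two facts via Suslin's stability argument produces an $\varepsilon_1\in \mathrm{E}_r(A[t])$ with $\varepsilon_1^{-1}\tau(t)$ having first column $e_1$,'' you are claiming that $\mathrm{GL}_r$-completability together with \emph{stable} elementary completability implies $\mathrm{E}_r$-completability of the first column. That implication does not hold in general for $r \geq 3$; it is precisely the gap between a matrix being stably elementary and being elementary, i.e., the nontriviality of the kernel of $SK_{1,r}(A[t]) \to SK_1(A[t])$, which is exactly what needs to be controlled. Normality of $\mathrm{E}_{r+s}$ in $\mathrm{GL}_{r+s}$ gives no information about $\mathrm{E}_r$-orbits of rows of length $r$. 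As a result, the Hermite conjecture is never used in a load-bearing way in your argument.

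The final paragraph does not repair this: ``perform all column reductions inside the stabilized matrix $\tau(t)\oplus I_s \in \mathrm{E}_{r+s}(A[t])$'' only reproduces the hypothesis (the stabilized matrix trivially reduces to $I_{r+s}$ by $\mathrm{E}_{r+s}$-operations since it is already elementary), and the claim that these operations ``restrict to elementary operations on the first $r$ rows and columns'' is exactly the statement being proved, not a bookkeeping observation. Elementary generators in $\mathrm{E}_{r+s}(A[t])$ can, and typically do, mix the first $r$ coordinates with the last $s$; removing that mixing is the whole problem.

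For comparison, the paper's proof avoids working with unimodular rows over $A[t]$ entirely. It reduces to $A$ finitely generated, writes $A$ as a quotient of a polynomial ring $\mathbb{Z}[t_1,\ldots,t_n]$, forms the Milnor square $B = \mathbb{Z}[t_i] \times_A \mathbb{Z}[t_i]$, and regards $\sigma(t)$ as a patching datum for a stably free $B[t]$-module $P$. The Hermite conjecture (in the stably-free-module form) implies $P$ is extended from $B$, and since $P/tP$ corresponds to $\sigma(0)=\mathrm{id}$ it is free; so $P$ is free. Unwinding, $\sigma(t) = \pi(\tau_1 \tau_0^{-1})$ for some $\tau_1\tau_0^{-1}\in \mathrm{SL}_r(\mathbb{Z}[t_i][t])$, and since $\mathbb{Z}$ is a Euclidean domain and $SK_{1,r}$-regular for $r\geq 3$, the lifted matrix is elementary. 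The crucial step you are missing is precisely this passage through a polynomial ring over $\mathbb{Z}$, where the elementary/invertible distinction collapses; working directly over $A[t]$ one cannot upgrade $\mathrm{GL}_r$-information to $\mathrm{E}_r$-information.
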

 
 \begin{proof}
 This is a consequence of Milnor patching for projective modules. Since $\mathrm{SL}_r(-)$ commutes with filtered colimits, we can reduce to the case where $A$ is finitely generated over $\mathbb{Z}$, so we can assume that there exists a surjective ring homomorphism $\pi \colon \mathbb{Z}[t_1, \ldots, t_n] \rightarrow A$. Let the left square below
 \[
 \vcenter{\xymatrix{
 B \ar[r] \ar[d] & \mathbb{Z}[t_1, \ldots, t_n] \ar[d]^{\pi} \\ \mathbb{Z}[t_1, \ldots, t_n] \ar[r]_-{\pi} & A
 }}\quad \quad
  \vcenter{\xymatrix{
 B[t] \ar[r] \ar[d] & \mathbb{Z}[t_1, \ldots, t_n][t] \ar[d]^{\pi[t]} \\ \mathbb{Z}[t_1, \ldots, t_n][t] \ar[r]_-{\pi[t]} & A[t]
 }}
 \]
 be a pullback diagram in the category of commutative rings. Since the left square is by construction a Milnor square, so is the right square obtained by applying the exact functor $\mathbb{Z}[t] \ten{\mathbb{Z}} -$. Thus we get an equivalence
 \[
 \Proj(B[t]) \rightarrow \Proj(\mathbb{Z}[t_i][t]) \pb{\Proj(A[t])} \Proj(\mathbb{Z}[t_i][t])
 \]
 of categories. Under this equivalence, $\sigma$ corresponds to a stably free module $P$ over $B[t]$ and we claim that the Hermite ring conjecture, applied to $P$, yields the desired elementary matrix (after translating back through the above equivalence of categories).
 
 First we note that the matrix $\sigma^{\prime}(t)=\bigl(\sigma(0)\bigr)^{-1} \sigma(t) $ satisfies $\sigma^{\prime}(0)=\id$, so it suffices to check that any $\sigma(t) \in \mathrm{SL}_r(A[t])$ with $\sigma(0)=\id$ is elementary. Let $P$ be the stably free $B[t]$-module corresponding to $\sigma$ under the above equivalence of categories. The Hermite ring conjecture implies that $P$ is extended from $B$. Since $P \slash tP$ corresponds to $\sigma(0)=\id \in \mathrm{SL}_r(B)$, it follows that $P$ is free. Translating this fact back, we find that there exist matrices $\tau_0, \tau_1 \in \mathrm{GL}_r(\mathbb{Z}[t_1, \ldots, t_n][t])$ such that the left square
 \[
\vcenter{ \xymatrix{A[t]^r \ar[d]_{\pi(\tau_0)} \ar[r]^-{\id} & A[t]^r \ar[d]^{\pi(\tau_1)} \\ A[t]^r \ar[r]_-{\sigma(t)} & A[t]^r}}
\quad \quad
\vcenter{\xymatrix{A[t]^r \ar[d]_{\pi \Bigl( \begin{smallmatrix} \mathrm{det} \tau_0^{-1} &0 \\ 0 & \id_{r-1} \end{smallmatrix} \Bigr) } \ar[r]^-{\id}  & A[t]^r \ar[d]^{\pi \Bigl( \begin{smallmatrix} \mathrm{det} \tau_1^{-1} &0 \\ 0 & \id_{r-1} \end{smallmatrix} \Bigr) } \\ A[t]^r \ar[r]_-{\id} & A[t]^r }}
 \]
 commutes. Since $\mathrm{det}\bigl(\sigma(t)\bigr)=1$, it follows that $\pi \bigl( \mathrm{det}(\tau_0) \bigr)=\pi \bigl( \mathrm{det}(\tau_1) \bigr)$. Thus the right diagram above also commutes. By stacking the right square on top of the left square, we find that we can assume that the matrices $\tau_0$ and $\tau_1$ both have determinant $1$. Thus we have
 \[
 \sigma(t)=\pi(\tau_1 \tau_0^{-1})
 \]
 with $\tau_1 \tau_0^{-1} \in \mathrm{SL}_r(\mathbb{Z}[t_1, \ldots, t_n][t])$. For all $r \geq 3$, $\mathbb{Z}$ is $SK_{1,r}$-regular by Proposition~\ref{prop:SK_1r_regularity_for_dvr_and_unramified}, so $\tau_1 \tau_0^{-1}$ is elementary since $\mathbb{Z}$ is a euclidean domain.
 \end{proof}
 
 \begin{cor}\label{cor:Hermite_and_A1_contractible_implies_elementary}
 Assume that the Hermite ring conjecture holds. If $A$ is naively $\mathbb{A}^{1}$-contractible over $R$ and $SK_{1,r}(R)=0$ for all $r \geq 3$, then all stably elementary matrices $\sigma \in \mathrm{SL}_r(A)$ are elementary.
 \end{cor}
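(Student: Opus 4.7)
The plan is to reduce the statement to Proposition~\ref{prop:Hermite_ring_conjecture_K1_analogue} by using the naive $\mathbb{A}^1$-homotopy to connect $\sigma$ to a matrix pulled back from $R$. Fix $r \geq 3$ and let $\sigma \in \mathrm{SL}_r(A)$ be stably elementary. Denote by $\pi \colon A \rightarrow R$ the augmentation (called $\varepsilon$ in the definition preceding Proposition~\ref{prop:F_and_naive_A1_contractible_A}; renamed here to avoid clashing with elementary matrices) and by $h \colon A \rightarrow A[t]$ the homotopy, so that $\mathrm{ev}_1 \circ h = \id_A$ and $\mathrm{ev}_0 \circ h = \eta \pi$, where $\eta \colon R \rightarrow A$ is the structural morphism.

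Applying $\mathrm{SL}_r(-)$ to $h$ yields $h_\ast(\sigma) \in \mathrm{SL}_r(A[t])$, which is again stably elementary since this property is preserved by ring homomorphisms. By Proposition~\ref{prop:Hermite_ring_conjecture_K1_analogue}, there exists an elementary matrix $\tau(t) \in \mathrm{E}_r(A[t])$ with $h_\ast(\sigma)(t) = h_\ast(\sigma)(0) \cdot \tau(t)$. Evaluating at $t = 1$ gives
\[
\sigma = h_\ast(\sigma)(1) = h_\ast(\sigma)(0) \cdot \tau(1) \smash{\rlap{,}}
\]
where $\tau(1) \in \mathrm{E}_r(A)$ since evaluation at an element is a ring homomorphism and sends elementary matrices to elementary matrices. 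The other factor satisfies $h_\ast(\sigma)(0) = \eta_\ast \pi_\ast(\sigma)$, so it is the image under $\eta$ of $\pi_\ast(\sigma) \in \mathrm{SL}_r(R)$. By the hypothesis $SK_{1,r}(R) = 0$ we have $\pi_\ast(\sigma) \in \mathrm{E}_r(R)$, and therefore $\eta_\ast \pi_\ast(\sigma) \in \mathrm{E}_r(A)$. Multiplying the two elementary factors, we conclude $\sigma \in \mathrm{E}_r(A)$.

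There is no essential obstacle once Proposition~\ref{prop:Hermite_ring_conjecture_K1_analogue} is in hand: the homotopy provides an interpolation in $\mathrm{SL}_r(A[t])$ between $\sigma$ at $t = 1$ and the pulled-back matrix at $t = 0$, the proposition ensures that the interpolation differs from its value at $0$ only by an elementary matrix in $\mathrm{E}_r(A[t])$, and the hypothesis on $SK_{1,r}(R)$ kills the pulled-back matrix modulo $\mathrm{E}_r(A)$. The only verifications needed are the routine facts that stable elementariness is preserved by ring homomorphisms (so that Proposition~\ref{prop:Hermite_ring_conjecture_K1_analogue} applies to $h_\ast(\sigma)$ over $A[t]$) and that evaluation sends $\mathrm{E}_r(A[t])$ into $\mathrm{E}_r(A)$.
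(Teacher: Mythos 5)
Your proof is correct and follows essentially the same approach as the paper: apply $h_{\ast}$ to $\sigma$, invoke Proposition~\ref{prop:Hermite_ring_conjecture_K1_analogue} to compare $h_{\ast}\sigma$ with its value at $t=0$, use $SK_{1,r}(R)=0$ to kill that value modulo $\mathrm{E}_r(A)$, and evaluate at $t=1$. The only difference is a cosmetic reordering of the final two steps.
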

 
 \begin{proof}
 Let $\eta \colon R \rightarrow A$ be the unit and let $\varepsilon \colon A \rightarrow R$ be an augmentation such that there exists a homomorphism $h \colon A \rightarrow A[t]$ with $\mathrm{ev}_1 h=\id$ and $\mathrm{ev}_0 h=\eta \varepsilon$.
 
 Since $\mathrm{ev}_0 h$ factors through $R$, the assumptions imply that $(\mathrm{ev}_0)_{\ast} h_{\ast}\sigma$ is elementary. From the fact that $\sigma$ is stably elementary, it follows that $h_{\ast} \sigma \in \mathrm{SL}_r(A[t])$ is stably elementary. Proposition~\ref{prop:Hermite_ring_conjecture_K1_analogue} therefore implies that $h_{\ast} \sigma$ is equal to $(\mathrm{ev}_0)_{\ast} h_{\ast}\sigma \cdot \varepsilon$ for some $\varepsilon \in \mathrm{E}_r(A[t])$, so $h_{\ast} \sigma$ is elementary. The claim now follows from the fact that $\sigma=(\mathrm{ev}_1)_{\ast} h_{\ast} \sigma$.
 \end{proof}
 
 One way to approach the problem of computing $W_r\bigl(\mathbb{Z}[x_i,y_i]\slash (\sum_{i=1}^n x_i y_i)\bigr)$ would be to adapt the inductive proof of Theorem~\ref{thm:SK1-contractible} with $W_r$ in the place of $SK_1$. Both Theorem~\ref{thm:henselian_implies_trivial_kernel_for_overring} and its Corollary~\ref{cor:henselian_implies_trivial_kernel_for_overring} are applicable in this case. Unfortunately, there is no clear candidate for the analogue of the functor $K_2$. The following proposition sheds some light on what the ``glueing data'' looks like for the functors $W_r$.
 
 \begin{prop}\label{prop:elementary_completable_implies_stably_elementary_patching}
 Let $\varphi \colon A \rightarrow B$ by an analytic isomorphism along $S$ and let $u=(u_1, \ldots, u_{r+1}) \in \mathrm{Um}_{r+1} (A)$ be a unimodular row. Assume that $u$ is elementary completable over $B$ and over $A_S$. Let $P \defl \mathrm{ker}(u \colon A^{r+1} \rightarrow A)$ be the stably free projective module associated to $u$. Then under the equivalence
 \[
 \Proj(A)  \simeq \Proj(B) \pb{\Proj(B_S)} \Proj(A_S)
 \]
 the module $P$ corresponds to a triple $(B^r,A_S^r, \sigma)$ where $\sigma \in \mathrm{SL}_r(B_S)$ is $1$-stably elementary.
 \end{prop}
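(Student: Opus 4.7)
The plan is to build the trivializations of $B\otimes_A P$ and $A_S \otimes_A P$ explicitly from the assumed elementary completions of $u$, read off the transition matrix $\sigma$ from these trivializations, and then use that the transition datum lives inside an elementary matrix to get $1$-stable elementarity.

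First, by the assumption, choose elementary matrices $\varepsilon_B \in \mathrm{E}_{r+1}(B)$ and $\varepsilon_{A_S} \in \mathrm{E}_{r+1}(A_S)$ with $e_1 \cdot \varepsilon_B = \varphi(u)$ and $e_1 \cdot \varepsilon_{A_S} = \lambda_S(u)$, where $e_1=(1,0,\ldots,0)$. Since $\ker(\varphi(u) \colon B^{r+1} \rightarrow B) = \varepsilon_B^{-1}\bigl(\{0\} \times B^{r}\bigr)$, the map $\alpha_B \colon B^{r} \rightarrow B \otimes_A P$ sending $v$ to $\varepsilon_B^{-1}(0,v)^T$ is an isomorphism; likewise we obtain an isomorphism $\alpha_{A_S} \colon A_S^{r} \rightarrow A_S \otimes_A P$ using $\varepsilon_{A_S}$. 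Transporting the canonical triple $(B\otimes_A P, A_S \otimes_A P, \kappa_P)$ through these trivializations (via the equivalence of Proposition~\ref{prop:patching_projective_modules}), we get an isomorphism to a triple $(B^r, A_S^r, \sigma)$, where $\sigma \in \mathrm{GL}_r(B_{\varphi(S)})$ is characterized by $(\alpha_{A_S})_{B_{\varphi(S)}} \circ \sigma = \kappa_P \circ (\alpha_B)_{\varphi(S)}$.

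The next step is to compute $\sigma$ via the auxiliary matrix
\[
M \defl \varphi_S(\varepsilon_{A_S}) \cdot \lambda_{\varphi(S)}(\varepsilon_B)^{-1} \in \mathrm{SL}_{r+1}(B_{\varphi(S)}).
\]
Since the two completions agree over $B_{\varphi(S)}$ when applied to $e_1$ (both yielding the image of $u$), one immediately obtains $e_1 \cdot M = e_1$. Unpacking the definition of $\sigma$ in terms of the maps $\alpha_B$ and $\alpha_{A_S}$ shows that $M$ is forced to have block form
\[
M = \begin{pmatrix} 1 & 0 \\ w & \sigma \end{pmatrix}
\]
for some column $w$ and the desired $\sigma \in \mathrm{SL}_r(B_{\varphi(S)})$ (equality of the last $r$ columns forces this shape; that the top-left corner is $1$ and the top-right block vanishes is exactly the condition $e_1 M = e_1$).

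Finally, since $\lambda_{\varphi(S)}(\varepsilon_B)$ and $\varphi_S(\varepsilon_{A_S})$ are both products of elementary matrices, the matrix $M$ lies in $\mathrm{E}_{r+1}(B_{\varphi(S)})$. Left multiplication by the elementary matrix $\bigl( \begin{smallmatrix} 1 & 0 \\ -w & I_r \end{smallmatrix} \bigr)$ then gives
\[
\begin{pmatrix} 1 & 0 \\ 0 & \sigma \end{pmatrix} \in \mathrm{E}_{r+1}(B_{\varphi(S)}),
\]
so $\sigma$ is $1$-stably elementary, as claimed. The only subtle point in this argument is checking carefully that the $\sigma$ extracted from the matrix $M$ really coincides with the transition matrix of the patching triple under the chosen trivializations; once conventions for rows versus columns and left versus right multiplication are fixed, this is a direct unwinding of definitions.
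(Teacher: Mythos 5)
Your proof is correct and follows essentially the same route as the paper: choose elementary completions $\varepsilon_B, \varepsilon_{A_S}$, use them to trivialize the two legs of the patching triple, identify the transition matrix $\sigma$ inside the elementary matrix $M=\varphi_S(\varepsilon_{A_S})\lambda_{\varphi(S)}(\varepsilon_B)^{-1}$ via the relation $e_1 M = e_1$, and then clear the bottom-left column to exhibit $\bigl(\begin{smallmatrix} 1 & 0 \\ 0 & \sigma \end{smallmatrix}\bigr)$ as elementary. This is the paper's argument up to notation.
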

 
 \begin{proof}
 Pick $\varepsilon_0 \in \mathrm{E}_{r+1}(B)$ and $\varepsilon_1 \in \mathrm{E}_{r+1}(A_S)$ with $e_1 \cdot \varepsilon_0=\varphi(u)$ and $e_1 \cdot \varepsilon_2=\lambda_S(u)$, where $e_1=(1, 0,\ldots, 0)$. These matrices induce isomorphisms $B \ten{A} P \cong B^r$ and $P_S \cong A_S^r$ respectively. The resulting commutative diagram
\[
\xymatrix@!R=15pt@!C=5pt{ 
& B_S^r \ar[rrr]^-{\bigl( \begin{smallmatrix} 0 \\ \id_{r} \end{smallmatrix} \bigr)} &&& B_S^{r+1} \ar[rrr]^-{e_1} &&& B_S \ar@{=}[ddd] \\
B_S^r \ar[ru]^{\sigma} \ar[rrr]^(0.6){\bigl( \begin{smallmatrix} 0 \\ \id_{r} \end{smallmatrix} \bigr)}  &&& B_S^{r+1} \ar[ru]^{\varepsilon} \ar[rrr]^-{e_1} &&& B_S \ar@{=}[ru] \\ 
\\ 
&B_S \ten{A_S} P\ar[rrr] |!{[rrd];[rruu]} \hole \ar[uuu]_(0.4){\cong} |!{[uul];[uurr]}\hole &&& B_S^{r+1} \ar[uuu]_(0.4){\varphi_S(\varepsilon_1)} |!{[uul];[uurr]} \hole \ar[rrr]^(0.4){\varphi_S \lambda_S(u)} |!{[rrd];[rruu]} \hole &&& B_S \\
(B \ten{A} P)_S  \ar[rrr] \ar[uuu]^(0.6){\cong} \ar[ru]^{\cong}_{\kappa} &&& B_S^{r+1} \ar@{=}[ru] \ar[uuu]^(0.6){\lambda_{\varphi(S)}(\varepsilon_0)} \ar[rrr]_-{\lambda_{\varphi(S)} \varphi(u)} &&& B_S \ar@{=}[uuu] \ar@{=}[ru]
 }
\] 
 where $\varepsilon=\varphi_S(\varepsilon_1) \circ \lambda_{\varphi(S)}(\varepsilon_0)^{-1}$ and $\sigma$ is the unique homomorphism induced on kernels shows that the image $(B \ten{A} P, P_S, \kappa)$ of $P$ is isomorphic to $(B^r,A_S^r,\sigma)$. Moreover, since the top diagram commutes we have $\varepsilon= \Bigl( \begin{smallmatrix} 1 & 0 \\ \ast & \sigma \end{smallmatrix} \Bigr)$, so $\Bigl( \begin{smallmatrix} 1 & 0 \\ \ast & \sigma \end{smallmatrix} \Bigr)$ is elementary. By subtracting suitable multiples of the first row from the rows below, it follows that $\Bigl( \begin{smallmatrix} 1 & 0 \\ 0 & \sigma \end{smallmatrix} \Bigr)$ is elementary, so $\sigma$ is indeed $1$-stably elementary.
 \end{proof}
 
 The above proposition shows that the study of unimodular rows $u$ over rings of the form $A[x,y] \slash (xy - \alpha)$ naturally leads to the study of 1-stably elementary matrices over $A^h_{(\alpha)} [\frac{1}{\alpha}][x,x^{-1}]$. Such a matrix arises whenever the uniomdular row $u$ is elementary completable over both the localization $A_{\alpha}[x,x^{-1}]$ and over $B \defl A^h_{(\alpha)}[x,y] \slash (xy - \alpha)$. However, even if the resulting $1$-stably elementary matrix can be factored into a matrix over $A_{\alpha}[x,x^{-1}]$ and one over $B$, it does not follow that the original unimodular row $u$ is \emph{elementary} completable, merely that it can be completed to a matrix in $\mathrm{GL}_r$. This is an obstacle to studying unimodular rows over rings such as $\mathbb{Z}[x_i,y_i] \slash (\sum_{i=1}^n x_i y_i)$ by induction on $n$.
 
 There is a further problem in this context. The simplest case where a $1$-stably elementary matrix over $B_{\alpha}$ can be factored into a matrix over $A_{\alpha}[x,x^{-1}]$ and one over $B$ is if the matrix itself is already elementary and the size $r$ of the matrix is $\geq 3$. This is a consequence of Vorst's lemma, see Part~(1) of Lemma~\ref{lemma:Vorsts_lemma}. This makes it possible to show in some specific cases that unimodular rows of length $\geq 4$ are completable. The aim of the next section is to provide an analogue of Vorst's lemma in the remaining case of $1$-stably elementary $2 \times 2$-matrices. This can be used to study the completability of unimodular rows of length $3$ in some examples.
\section{Pseudoelementary matrices}\label{section:pseudoelementary}

 It is a well-known fact that there are ``not enough'' elementary $2 \times 2$-matrices, meaning that many results involving elementary matrices require that the size $r$ of the matrix in question satisfies $r \geq 3$. In this section, we introduce the class of \emph{pseudoelementary} $2 \times 2$-matrices and we show that some of these problematic assertions become true if we replace the elementary $2\times 2$-matrices by the pseudoelementary ones.
 
 \begin{dfn}\label{dfn:pseudoelementary}
 Let $A$ be a commutative ring and let $\sigma \in \mathrm{SL}_2(A)$. The matrix $\sigma$ is called \emph{pseudoelementary} if there exists a finite set $T$, a ring homomorphism
 \[ 
 \varphi \colon \mathbb{Z}[T] \rightarrow A \smash{\rlap{,}}
 \]
 and a matrix $\sigma^{\prime} \in \mathrm{SL}_2(\mathbb{Z}[T])$ such that $\sigma=\varphi(\sigma^{\prime})$ holds. By Lemma~\ref{lemma:basic_properties_pseudoelementary} below, these form a subgroup of $\mathrm{SL}_2(A)$, which we denote by $\widetilde{\mathrm{E}}_2(A)$. We write $\widetilde{SK}_{1,2}(A)$ for the pointed set $\mathrm{SL}_2(A) \slash \widetilde{\mathrm{E}}_2(A)$ of left cosets of $\widetilde{\mathrm{E}}_2(A)$.
  \end{dfn}
  
  We record some immediate consequences of this definition.
  
  \begin{lemma}\label{lemma:basic_properties_pseudoelementary}
  The pseudoelementary matrices have the following basic properties:
  \begin{enumerate}
  \item[(1)] The subsets $\widetilde{\mathrm{E}}_2(A) \subseteq \mathrm{SL}_2(A)$ form a subgroup functor of $\mathrm{SL}_2$;
  \item[(2)] For any (possibly infinite) set $T$, we have $\mathrm{SL}_2(\mathbb{Z}[T])=\widetilde{\mathrm{E}}_2(\mathbb{Z}[T])$;
  \item[(3)] If $\pi \colon A \rightarrow A^{\prime}$ is surjective, then $\pi_{\ast} \colon \widetilde{\mathrm{E}}_2(A) \rightarrow \widetilde{\mathrm{E}}_2(A^{\prime})$ is surjective;
  \item[(4)] All elementary matrices are pseudoelementary, that is, $\mathrm{E}_2(A) \subseteq \widetilde{\mathrm{E}}_2(A)$;
  \item[(5)] The functor $\widetilde{SK}_{1,2} \colon \CRing \rightarrow \Set_{\ast}$ is finitary.
  \end{enumerate}
  \end{lemma}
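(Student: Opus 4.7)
The plan is to treat the five items in order, since each builds minimally on its predecessors. For (1), I would first observe that any two pseudoelementary matrices $\sigma_i=\varphi_i(\sigma_i')$ with $\varphi_i\colon \mathbb{Z}[T_i]\to A$ and $\sigma_i'\in\mathrm{SL}_2(\mathbb{Z}[T_i])$ can be realized on a common polynomial ring by taking the disjoint union $T=T_1\sqcup T_2$ and combining the $\varphi_i$ into a single homomorphism $\varphi\colon\mathbb{Z}[T]\to A$. Then $\sigma_1\sigma_2$ and $\sigma_1^{-1}$ are images under $\varphi$ of the corresponding product and inverse in $\mathrm{SL}_2(\mathbb{Z}[T])$, which is itself a group, while the identity is the image of the identity under the unique homomorphism $\mathbb{Z}\to A$. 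Functoriality is immediate: if $\psi\colon A\to B$ is a ring homomorphism, then $\psi(\sigma)=(\psi\circ\varphi)(\sigma')$ exhibits $\psi(\sigma)$ as pseudoelementary over $B$.

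For (2), every entry of a matrix $\sigma\in\mathrm{SL}_2(\mathbb{Z}[T])$ involves only finitely many of the generators, so $\sigma$ arises from some $\sigma_0\in\mathrm{SL}_2(\mathbb{Z}[T_0])$ with $T_0\subseteq T$ finite (the injectivity of the inclusion $\mathbb{Z}[T_0]\hookrightarrow\mathbb{Z}[T]$ ensures $\det\sigma_0=1$), and the inclusion itself witnesses that $\sigma$ is pseudoelementary. For (3), given $\sigma'=\tilde\varphi(\tau)$ with $\tilde\varphi\colon\mathbb{Z}[T]\to A'$ and $T$ finite, I would choose a set-theoretic lift of $\tilde\varphi$ on the generators—possible because $\pi$ is surjective and $\mathbb{Z}[T]$ is free on $T$—to obtain $\varphi\colon\mathbb{Z}[T]\to A$ with $\pi\varphi=\tilde\varphi$. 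Then $\varphi(\tau)\in\widetilde{\mathrm{E}}_2(A)$ is the desired lift.

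For (4), the generators $e_{12}(a)$ and $e_{21}(a)$ of $\mathrm{E}_2(A)$ are images of the ``generic'' elementary matrices in $\mathrm{SL}_2(\mathbb{Z}[t])$ under the homomorphism sending $t$ to $a$, so they are pseudoelementary; by (1), so is every element of $\mathrm{E}_2(A)$. Finally, for (5), I would show that both functors $\mathrm{SL}_2$ and $\widetilde{\mathrm{E}}_2$ preserve filtered colimits, which then implies the same for the quotient $\widetilde{SK}_{1,2}\colon\CRing\to\Set_{\ast}$ because filtered colimits commute with quotients by subgroup actions in $\Set_{\ast}$. The functor $\mathrm{SL}_2$ is representable by a finitely presented ring, hence finitary. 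For $\widetilde{\mathrm{E}}_2$, a pseudoelementary matrix is by definition given by a homomorphism from a finitely generated polynomial ring $\mathbb{Z}[T]$, and any such homomorphism into a filtered colimit factors through one of the terms, yielding the necessary compatibility.

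No step here presents a serious obstacle; the only minor bookkeeping is in (1) (combining two parameter sets into one) and in (3) (the set-theoretic lift of generators). The subtlety in (5) is reassembling the quotient of a filtered colimit from the filtered colimit of the quotients, which is a standard fact since filtered colimits in $\Set_{\ast}$ commute with all small colimits.
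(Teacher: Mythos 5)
Your proposal is correct and follows essentially the same route as the paper on items (1), (2), (3), and (5): the disjoint-union-of-parameter-sets construction in (1) is the same device as the paper's $\mathbb{Z}[T]\ten{\mathbb{Z}}\mathbb{Z}[T']$, and (2), (3), (5) match the paper nearly verbatim (including the reduction of (5) to the finitariness of $\mathrm{SL}_2$ and of $\widetilde{\mathrm{E}}_2$). Your argument for (4) is slightly more direct than the paper's: you exhibit each generator $e_{12}(a)$, $e_{21}(a)$ as the image of the universal elementary matrix in $\mathrm{SL}_2(\mathbb{Z}[t])$ under $t\mapsto a$ and invoke (1), whereas the paper chooses a surjection $\mathbb{Z}[T]\twoheadrightarrow A$ and passes through (2) and (3); both are valid, and yours avoids the detour through (2) and (3).
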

  
  \begin{proof}
  All these claims follow readily from the definitions. Detailed arguments can be found below.
  
  \textbf{Claim~(1).} If $\sigma \in \mathrm{SL}_2(A)$ lies in the image of the map induced by $\varphi \colon \mathbb{Z}[T] \rightarrow A$ and $\sigma^{\prime} \in \mathrm{SL}_2(A)$ lies in the image of the map induced by $\varphi^{\prime} \colon \mathbb{Z}[T^{\prime}] \rightarrow A$, then they both lie in the image of the map induced by
  \[
  \mu \circ  \varphi \ten{\mathbb{Z}} \varphi^{\prime} \colon \mathbb{Z}[T] \ten{\mathbb{Z}} \mathbb{Z}[T^{\prime}] \rightarrow A
  \]
  on $\mathrm{SL}_2$, where $\mu$ denotes the multiplication of $A$. Since the domain of this homomorphism is again a polynomial ring over $\mathbb{Z}$, we find that $\sigma (\sigma^{\prime})^{-1}$ is also pseudoelementary.
  
  If $\alpha \colon A \rightarrow B$ is any homomorphism, then $\alpha(\sigma)=\alpha \circ \varphi(\sigma_0)$ for some $\sigma_0 \in \mathrm{SL}_2(\mathbb{Z}[T])$, so $\alpha(\sigma)$ is pseudoelementary. Thus
  \[
  \alpha_{\ast} \colon \mathrm{SL}_2(A) \rightarrow \mathrm{SL}_2(B)
  \]
  sends $\widetilde{\mathrm{E}}_2(A)$ to $\widetilde{\mathrm{E}}_2(B)$.
  
  \textbf{Claim~(2).} It is immediate from the definition that $\mathrm{SL}_2(\mathbb{Z}[T])=\widetilde{\mathrm{E}}_2(\mathbb{Z}[T])$ for $T$ finite (we can take $\varphi=\id$ in the definition). The claim for infinite sets $T$ follows from the fact that $\mathrm{SL}_2(-)$ is a finitary functor.
  
  \textbf{Claim~(3).} Let $\sigma \in \widetilde{\mathrm{E}}_2(A^{\prime})$ be the image of $\sigma_0 \in \mathrm{SL}_2(\mathbb{Z}[T])$ under the map induced by $\varphi^{\prime} \colon \mathbb{Z}[T] \rightarrow A^{\prime}$. By picking elements $a_t \in A$ for all $t \in T$ with the property that $\pi(a_t)=\varphi^{\prime}(t)$, we obtain a homomorphism $\varphi \colon \mathbb{Z}[T] \rightarrow A$ sending $t$ to $a_t$, so $\pi \circ \varphi=\varphi^{\prime}$ by construction. Thus $\sigma=\pi\bigl(\varphi(\sigma_0)\bigr)$ and $\varphi(\sigma_0)$ lies by definition in $\widetilde{\mathrm{E}}_2(A)$.
  
  \textbf{Claim~(4).} For any commutative ring $A$, we can find a surjection $\pi \colon \mathbb{Z}[T] \rightarrow A$ letting $T$ be a generating set of $A$ (for example, we can take $T=A$). Thus the induced map $\pi_{\ast} \colon \mathrm{E}_2(\mathbb{Z}[T]) \rightarrow \mathrm{E}_2(A)$ on elementary matrices is surjective. By Claim~(2), all matrices in the domain are pseudoelementary. Since these are preserved by the induced map on $\mathrm{SL}_2$ (see Claim~(1)), all matrices in the domain are sent to pseudoelementary matrices over $A$. This implies that all elementary matrices are pseudoelementary.
  
 \textbf{Claim~(5).} Since $\mathrm{SL}_2(-)$ is finitary, it suffices to check that $\widetilde{\mathrm{E}}_2(-)$ is finitary to show that $\widetilde{SK}_{1,2}(A)=\mathrm{SL}_2(A) \slash \widetilde{\mathrm{E}}_2(A)$ is finitary.
 
 If $A=\mathrm{colim} A_i$ is a filtered colimit, we need to check that each $\sigma \in \widetilde{\mathrm{E}}_2(A)$ lies in the image of some $\widetilde{\mathrm{E}}_2(A_i)$ and that whenever $\sigma_1 \in \widetilde{\mathrm{E}}_2(A_{i_1})$ and $\sigma_2 \in \widetilde{\mathrm{E}}_2(A_{i_2})$ have the same image in $\widetilde{\mathrm{E}}_2(A)$, then they already have the same image in some $\widetilde{\mathrm{E}}_2(A_{i_3})$ for suitable morphisms $i_1 \rightarrow i_3$  and $i_2 \rightarrow i_3$. 
 
 The second claim follows from the corresponding fact for the functor $\mathrm{SL}_2(-)$ since $\widetilde{\mathrm{E}}_2(-) \subseteq \mathrm{SL}_2(-)$. To see the first claim, we note that any homomorphism $\varphi \colon \mathbb{Z}[T] \rightarrow A$ with $T$ finite factors through some $A_i \rightarrow A$ (since the hom-functor $\CRing(\mathbb{Z}[T],-)$ preserves filtered colimits), from which the claim follows immediately.
  \end{proof}
  
  In order to deduce some non-trivial facts about pseudoelementary matrices, we frequently think of $\sigma \in \mathrm{SL}_2(A)$ as a patching datum. If the corresponding projective module turns out to be free, on can in favourable cases conclude that the matrix $\sigma$ is pseudoelementary. We refer to this argument as the \emph{free patching lemma}.
  
 \begin{lemma}[free patching]\label{lemma:free_patching}
 Suppose the diagram
 \[
 \xymatrix{A \ar[r]^{\psi^{\prime}} \ar[d]_{\varphi^{\prime}} & C \ar[d]^{\varphi} \\ B \ar[r]_-{\psi} & D }
 \]
 is either a Milnor square or an analytic patching diagram, so that there exists a canonical equivalence
 \[
 \Proj(A) \simeq \Proj(B) \pb{\Proj(D)} \Proj(C)
 \]
 of categories.
 
 Let $\sigma \in \mathrm{SL}_2(D)$ and let $P$ be the projective $A$-module corresponding to the patching datum $(B^2,C^2, \sigma \colon D^2 \rightarrow D^2)$ under the above equivalence. If $P$ is free, then there exists a factorization
 \[
\sigma = \varphi(\sigma_1) \psi(\sigma_0)
 \]
 for some $\sigma_0 \in \mathrm{SL}_2(B)$ and $\sigma_1 \in \mathrm{SL}_2(C)$. In particular, if the equalities
 \[
\mathrm{SL}_2(B)=\widetilde{\mathrm{E}}_2(B) \quad \text{and} \quad \mathrm{SL}_2(C)=\widetilde{\mathrm{E}}_2(C)
 \]
 hold, then $\sigma$ is pseudoelementary.
 
 If the diagram is a Milnor square, then the converse holds: whenever $\sigma \in \mathrm{SL}_2(D)$ is pseudoelementary, then the associated projective module is free.
 \end{lemma}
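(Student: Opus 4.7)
The factorization claim is obtained by translating freeness of $P$ into an isomorphism in the iso-comma category of patching data; the two components of this isomorphism provide the matrices $\sigma_0$ and $\sigma_1$ up to a determinant correction. The pseudoelementary assertion is then immediate from functoriality of $\widetilde{\mathrm{E}}_2$ and its subgroup property. Finally, the Milnor-square converse is proved by a direct lifting argument that uses the surjectivity built into the definition of a Milnor square to lift the auxiliary polynomial ring homomorphism witnessing pseudoelementarity.

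\emph{Factorization.} Under the given equivalence, $A^2$ corresponds to the patching datum $(B^2,C^2,\id)$ and $P$ corresponds to $(B^2,C^2,\sigma)$, so freeness of $P$ produces an isomorphism between these two objects in the iso-comma category. Unwinding the definition, such an isomorphism is a pair $(f_0,f_1)$ with $f_0 \in \mathrm{GL}_2(B)$ and $f_1 \in \mathrm{GL}_2(C)$ whose images in $\mathrm{GL}_2(D)$ intertwine $\sigma$ with the identity; rearranging the resulting relation yields a factorization $\sigma = \varphi(\tau_1)\psi(\tau_0)$ with $\tau_0 \in \mathrm{GL}_2(B)$ and $\tau_1 \in \mathrm{GL}_2(C)$. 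To upgrade these to $\mathrm{SL}_2$, set $d \defl \det \tau_0 \in B^{\times}$ and $e \defl \det \tau_1 \in C^{\times}$; taking determinants in $D$ gives $\varphi(e)\psi(d)=1$. Replacing
\[
 \tau_0 \rightsquigarrow \sigma_0 \defl \bigl(\begin{smallmatrix} d^{-1} & 0 \\ 0 & 1 \end{smallmatrix}\bigr)\tau_0 \in \mathrm{SL}_2(B),
 \qquad
 \tau_1 \rightsquigarrow \sigma_1 \defl \tau_1 \bigl(\begin{smallmatrix} e^{-1} & 0 \\ 0 & 1 \end{smallmatrix}\bigr) \in \mathrm{SL}_2(C),
\]
the two inserted diagonal matrices have product $\mathrm{diag}\bigl(\varphi(e^{-1})\psi(d^{-1}),1\bigr) = \id$ in $\mathrm{GL}_2(D)$, so $\varphi(\sigma_1)\psi(\sigma_0) = \varphi(\tau_1)\psi(\tau_0)=\sigma$ as required.

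\emph{Pseudoelementarity.} If moreover $\mathrm{SL}_2(B) = \widetilde{\mathrm{E}}_2(B)$ and $\mathrm{SL}_2(C) = \widetilde{\mathrm{E}}_2(C)$, then $\sigma_0$ and $\sigma_1$ are pseudoelementary by hypothesis. By Lemma~\ref{lemma:basic_properties_pseudoelementary}(1) the pushforwards $\psi(\sigma_0)$ and $\varphi(\sigma_1)$ lie in $\widetilde{\mathrm{E}}_2(D)$, and the subgroup property (also Lemma~\ref{lemma:basic_properties_pseudoelementary}(1)) gives $\sigma = \varphi(\sigma_1)\psi(\sigma_0) \in \widetilde{\mathrm{E}}_2(D)$.

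\emph{Milnor-square converse.} Assume the diagram is a Milnor square; by symmetry we may suppose $\psi \colon B \to D$ is surjective. Given $\sigma \in \widetilde{\mathrm{E}}_2(D)$, choose a finite set $T$, a ring homomorphism $\omega \colon \mathbb{Z}[T] \to D$, and a matrix $\sigma' \in \mathrm{SL}_2(\mathbb{Z}[T])$ with $\omega(\sigma')=\sigma$. Since $\psi$ is surjective, pick a preimage $b_t \in B$ of $\omega(t)$ for each $t \in T$; the assignment $t \mapsto b_t$ extends uniquely to a ring homomorphism $\tilde{\omega} \colon \mathbb{Z}[T] \to B$ with $\psi \tilde{\omega} = \omega$. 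Setting $\sigma_0 \defl \tilde{\omega}(\sigma') \in \mathrm{SL}_2(B)$ we have $\psi(\sigma_0) = \sigma$, and the pair $(\sigma_0,\id_{C^2})$ is an isomorphism $(B^2,C^2,\sigma) \cong (B^2,C^2,\id)$ in the iso-comma category, exhibiting $P$ as isomorphic to $A^2$. The argument contains no substantive obstacles; the only points needing care are the sign conventions in the iso-comma category and the determinant cancellation in $D$, both of which reduce to the single identity $\varphi(e)\psi(d)=1$.
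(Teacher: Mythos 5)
Your proof is correct and follows essentially the same route as the paper: translate freeness of $P$ into an isomorphism of patching data, extract a $\mathrm{GL}_2$-factorization, fix determinants with diagonal matrices (you insert them between $\tau_1$ and $\tau_0$ where the paper stacks two commutative squares, but the computation is the same), and for the Milnor-square converse lift $\sigma$ along the surjective map. The only cosmetic difference is that you unfold the lifting argument instead of citing Part~(3) of Lemma~\ref{lemma:basic_properties_pseudoelementary}.
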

 
 \begin{proof}
 The freeness of $P$ amounts to the existence of $\tau_0 \in \mathrm{GL}_2(B)$ and $\tau_1 \in \mathrm{GL}_2(C)$ such that the left square below
 \[
 \vcenter{ \xymatrix{D^2 \ar[r]^-{\id} \ar[d]_{\psi(\tau_0)} & D^2 \ar[d]^{\varphi(\tau_1)} \\ D^2 \ar[r]_-{\sigma} & D^2} }
  \quad \quad
  \vcenter{ \xymatrix{D^2 \ar[r]^-{\id} \ar[d]_{\psi \left(\begin{smallmatrix} \mathrm{det}(\tau_0)^{-1} & 0 \\ 0 & 1 \end{smallmatrix} \right) } & D^2 \ar[d]^{\varphi \left(\begin{smallmatrix} \mathrm{det}(\tau_1)^{-1} & 0 \\ 0 & 1 \end{smallmatrix} \right)} \\ D^2 \ar[r]_-{\id} & D^2} }
 \]
 is commutative. Since $\sigma$ lies in $\mathrm{SL}_2(D)$, it follows that $\mathrm{det}\bigl(\psi(\tau_0)\bigr)=\mathrm{det}\bigl(\varphi(\tau_1)\bigr)$, so the right diagram above commutes, too. If we stack the right diagram on top of the left, we find that there exist matrices $\tau_0$ and $\tau_1$ as above which furthermore have determinant equal to $1$. This proves the first claim of the lemma. Under the additional assumption that $\tau_0$ and $\tau_1$ are pseudoelementary, we can conclude that $\sigma$ is pseudoelementary, too.
 
 Finally, if the diagram is a Milnor square and $\sigma$ is pseudoelementary, we need to check that the projective module $P$ obtained by patching free modules along $\sigma$ is free. In this case, one of the maps $\varphi$ or $\psi$ is surjective. This implies that $\sigma=\varphi(\sigma_0)$ (respectively $\sigma=\psi(\sigma_0)$) holds by Part~(3) of Lemma~\ref{lemma:basic_properties_pseudoelementary}, which implies that $(B^2,C^2,\sigma)$ is isomorphic to $(B^2,C^2,\id)$.
 \end{proof}
 
 To make use of this lemma, we also need the following naturality property of patching data.
 
 \begin{lemma}\label{lemma:naturality_of_patching_data}
Let
\[
\xymatrix{ A \ar[rr] \ar[dd] \ar[rd] && C \ar[rd] \ar[dd]|!{[ld];[rd]} \hole  \\ & A^{\prime} \ar[rr] \ar[dd] && C^{\prime}  \ar[dd] \\ B \ar[rd] \ar[rr]|!{[ru];[rd]} \hole && D \ar[rd]^(0.4){\varphi} \\ & B^{\prime} \ar[rr] && D^{\prime}}
\]
be a commutative diagram in the category of commutative rings and assume that the front face and the back face are Milnor squares or analytic patching diagrams. Let $\sigma \in \mathrm{SL}_2(D)$ and let $P \in \Proj(A)$ be the projective module corresponding to $(B^2,C^2,\sigma)$. Then $A^{\prime} \ten{A} P$ is isomorphic to the projective module corresponding to the patching datum $\bigl((B^{\prime})^2,(C^{\prime})^2, \varphi(\sigma) \bigr)$. 
 \end{lemma}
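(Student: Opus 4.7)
The plan is to exploit the naturality of the equivalence of categories from Proposition~\ref{prop:patching_projective_modules} (and the corresponding statement for Milnor squares, which is established in the same way). The cube of rings yields a morphism between patching squares, and all that needs to be checked is that the equivalences on the back and front face intertwine base change on projective modules with base change on patching data.

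More concretely, I would first set up the two equivalences
\[
\rho_A \colon \Proj(A) \to \Proj(B) \pb{\Proj(D)} \Proj(C)
\quad \text{and} \quad
\rho_{A^{\prime}} \colon \Proj(A^{\prime}) \to \Proj(B^{\prime}) \pb{\Proj(D^{\prime})} \Proj(C^{\prime})
\]
from Proposition~\ref{prop:patching_projective_modules}, each sending a projective module to its pair of scalar extensions together with the canonical identification over $D$ (respectively $D^{\prime}$). The commutativity of the cube gives a functor $\Phi$ between the two target iso-comma categories, obtained by base change along $B \to B^{\prime}$, $C \to C^{\prime}$, and $D \to D^{\prime}$. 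The key point is then that the square with top row $\rho_A$, bottom row $\rho_{A^{\prime}}$, left vertical $A^{\prime} \ten{A} -$ and right vertical $\Phi$ commutes up to canonical natural isomorphism. This commutativity reduces to the canonical isomorphisms
\[
B^{\prime} \ten{B} (B \ten{A} P) \cong B^{\prime} \ten{A} P \cong B^{\prime} \ten{A^{\prime}} (A^{\prime} \ten{A} P)
\]
provided by the two commutative triangles $A \to B \to B^{\prime}$ and $A \to A^{\prime} \to B^{\prime}$ sitting inside the cube (and symmetrically for $C$, $D$), together with the fact that the canonical identifications $\kappa_{P}$ defining the image of $\rho_A$ are preserved by these base changes.

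Having established this, the proof is a chase. By hypothesis $\rho_A(P) \cong (B^2, C^2, \sigma)$. Applying $\Phi$ gives
\[
\bigl(B^{\prime} \ten{B} B^2,\, C^{\prime} \ten{C} C^2,\, D^{\prime} \ten{D} \sigma \bigr) \cong \bigl((B^{\prime})^2, (C^{\prime})^2, \varphi(\sigma)\bigr),
\]
because base-changing a $D$-linear endomorphism of $D^2$ along $\varphi \colon D \to D^{\prime}$ is exactly the operation of applying $\varphi$ entrywise to its defining matrix. Commutativity of the square then yields $\rho_{A^{\prime}}(A^{\prime} \ten{A} P) \cong \bigl((B^{\prime})^2, (C^{\prime})^2, \varphi(\sigma)\bigr)$, which is the desired conclusion.

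The only real content hides in the 2-commutativity of the outer square of functors; the rest is formal. I expect this naturality check to be the main (though still routine) obstacle, since one has to verify that the canonical isomorphisms $\kappa_P$ used in the construction of $\rho_A$ are compatible with all the identifications coming from the two commutative triangles on each edge of the cube. Once this bookkeeping is done, the statement follows without any further input about Milnor squares or analytic isomorphisms beyond the existence of the equivalence $\rho_A$ itself.
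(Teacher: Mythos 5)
Your proposal is correct and takes essentially the same route as the paper: both arguments identify the 2-commutativity of the cube of module categories (i.e., the pseudofunctoriality of $\Proj$ together with coherence of the base-change isomorphisms) as the key input, deduce that the square formed by $\rho_A$, $\rho_{A^{\prime}}$, $A^{\prime} \ten{A} -$, and the induced functor on iso-comma categories commutes up to natural isomorphism, and then read off the conclusion by chasing the patching datum $\sigma$ through that square. The paper phrases the coherence condition via a hexagon rather than the two triangles you mention, but this is a cosmetic difference, not a different argument.
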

 
\begin{proof}
The pseudofunctor $\Proj \colon \CRing \rightarrow \Cat$ sends the above cube to a cube of categories and functors which is 2-commutative, meaning that all the squares in the image of this cube can be canonically filled with natural isomorphisms in such a way that the two resulting natural isomorphisms filling the hexagon
\[
\xymatrix{ & \Proj(A^{\prime}) \ar[r] & \Proj(C^{\prime}) \ar[rd] \\ \Proj(A)  \ar[ru] \ar[rd] & \rtwocell\omit & & \Proj(D^{\prime}) \\ & \Proj(B) \ar[r] & \Proj(D) \ar[ru]_{\varphi_{\ast}}}
\]
coincide.

 It is a formal consequence of this that the square
 \[
 \xymatrix{ \Proj(A) \ar[r]^-{\simeq} \ar[d] & \Proj(B) \pb{\Proj(D)} \Proj(C) \ar[d] \\
 \Proj(A^{\prime}) \ar[r]^-{\simeq}  & \Proj(B^{\prime}) \pb{\Proj(D^{\prime})} \Proj(C^{\prime}) }
 \]
 commutes up to natural isomorphism. Thus the square with the inverse equivalences as the horizontal functors also commutes up to natural isomorphism, from which the claim follows.
\end{proof} 
 
 \begin{rmk}\label{rmk:naturality_of_patching_data_generalized}
 The argument of the above lemma applies more generally to the case of $G$-torsors for flat affine group schemes $G$ with the resolution property. These stacks $BG$ (which send $A$ to the groupoid of $G$-torsors over $A$) also send the front and back faces to 2-categorical pullback diagrams since they are Adams stacks. Moreover, these stacks are pointed by the trivial torsor, so we can associate torsors to patching data as in the case of projective modules. Namely, in the situation of the above lemma we have functions $T \colon G(D) \rightarrow BG(A)$ sending $\sigma$ to the torsor associated to $(\ast, \ast, \sigma)$, where $\ast$ denotes the trivial torsor. Applying the argument of the lemma in this situation shows that the square
 \[
 \xymatrix{G(D) \ar[d]_-{G(\varphi)} \ar[r]^-{T} & BG(A) \ar[d] \\ G(D^{\prime}) \ar[r]_-{T} & BG(A^{\prime}) }
 \]
 commutes up to natural isomorphism. 
 \end{rmk}
 
 Given a commutative ring $A$, we write $\mathbb{Z}[A]$ for the polynomial ring on the underlying set of $A$. We write $\pi \colon \mathbb{Z}[A] \rightarrow A$ for the canonical surjection which sends the variable $a \in \mathbb{Z}[A]$ to the element $a \in A$. Finally, we define the ring $\Omega A$ by the pullback diagram
 \[
 \xymatrix{\Omega A \ar[d]_{\varphi} \ar[r]^-{\psi} & \mathbb{Z}[A] \ar[d]^{\pi} \\ \mathbb{Z}[A] \ar[r]_-{\pi} & A}
 \]
 of commutative rings. Since $\pi$ is surjective, this is a Milnor square. It follows that the square
 \[
 \xymatrix{\Omega A[t] \ar[d]_{\varphi[t]} \ar[r]^-{\psi[t]} & \mathbb{Z}[A][t] \ar[d]^{\pi[t]} \\ \mathbb{Z}[A][t] \ar[r]_-{\pi[t]} & A[t]}
 \]
 is a Milnor square, too.
 
 \begin{prop}\label{prop:local_global_for_pseudoelementary}
 Let $\sigma(t) \in \mathrm{SL}_2(A[t])$ be a matrix with $\sigma(0)=\id$. If the localization $\sigma_{\mathfrak{m}}(t)$ is pseudoelementary for all maximal ideals $\mathfrak{m} \subseteq A$, then $\sigma(t) \in \widetilde{\mathrm{E}}_2(A[t])$. 
 \end{prop}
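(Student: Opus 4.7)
The plan is to reformulate the pseudoelementarity of $\sigma(t)$ as freeness of a projective module over $\Omega A[t]$ via Milnor patching, and then to invoke Quillen's classical local-global principle for finitely generated projective modules. Let $P$ denote the projective $\Omega A[t]$-module corresponding under Milnor patching to the datum $\bigl(\mathbb{Z}[A][t]^2,\mathbb{Z}[A][t]^2,\sigma(t)\bigr)$ of the Milnor square displayed above. Since $\mathrm{SL}_2(\mathbb{Z}[T])=\widetilde{\mathrm{E}}_2(\mathbb{Z}[T])$ for every set $T$ by Lemma~\ref{lemma:basic_properties_pseudoelementary}, both directions of the free patching lemma (Lemma~\ref{lemma:free_patching}) apply, so $\sigma(t)\in\widetilde{\mathrm{E}}_2(A[t])$ if and only if $P$ is free over $\Omega A[t]$. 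The assumption $\sigma(0)=\mathrm{id}$ identifies $P/tP$ with the module associated to the trivial patching datum, hence $P/tP\cong(\Omega A)^2$. Thus $P$ is free iff it is extended from $\Omega A$, and by Quillen's classical local-global principle it suffices to check that $P_\mathfrak{n}$ is extended from $(\Omega A)_\mathfrak{n}$ for every maximal ideal $\mathfrak{n}\subseteq\Omega A$.

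I would split this verification by whether $\mathfrak{n}$ contains the ideal $\ker(\pi_0)$, where $\pi_0\defl\pi\circ\varphi=\pi\circ\psi\colon\Omega A\to A$. The identity $(0,v)\cdot(u,0)=0$ in $\Omega A$, combined with primality of $\mathfrak{n}$, forces $\ker(\varphi)\subseteq\mathfrak{n}$ or $\ker(\psi)\subseteq\mathfrak{n}$. If exactly one of these holds, then $\ker(\pi_0)\not\subseteq\mathfrak{n}$; say $\ker(\psi)\subseteq\mathfrak{n}$. In this case $\psi$ induces an isomorphism $(\Omega A)_\mathfrak{n}\cong(\mathbb{Z}[A])_{\psi(\mathfrak{n})}$ and the bottom-right vertex of the localized Milnor square becomes the zero ring, so by naturality of patching data (Lemma~\ref{lemma:naturality_of_patching_data}) the localized module $P_\mathfrak{n}$ collapses to the free module of rank $2$.

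The remaining case is $\ker(\pi_0)\subseteq\mathfrak{n}$, so $\mathfrak{n}=\pi_0^{-1}(\mathfrak{m})$ for a unique maximal ideal $\mathfrak{m}\subseteq A$; this is where the local hypothesis on $\sigma_\mathfrak{m}(t)$ enters. The natural composition $\Omega A\to\Omega A_\mathfrak{m}\to(\Omega A_\mathfrak{m})_{\pi_0^{-1}(\mathfrak{m} A_\mathfrak{m})}$ sends elements outside $\mathfrak{n}$ to units (as is verified after applying $\pi_0$), so it factors through a local homomorphism $(\Omega A)_\mathfrak{n}\to(\Omega A_\mathfrak{m})_{\pi_0^{-1}(\mathfrak{m} A_\mathfrak{m})}$. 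Combining naturality with the hypothesis that $\sigma_\mathfrak{m}(t)$ is pseudoelementary (which, by the converse direction of the free patching lemma applied to the Milnor square for $A_\mathfrak{m}$, trivializes the associated $\Omega A_\mathfrak{m}[t]$-module), the base change of $P_\mathfrak{n}$ along this local homomorphism is free. The main obstacle will be descending this freeness back to extendedness of $P_\mathfrak{n}$ over $(\Omega A)_\mathfrak{n}[t]$: since $P_\mathfrak{n}/tP_\mathfrak{n}$ is already free of rank $2$, extendedness reduces to freeness of $P_\mathfrak{n}$, and the remaining task is to trivialize its patching datum directly over $(\Omega A)_\mathfrak{n}$. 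I expect this to follow by carefully lifting a factorization witnessing the pseudoelementarity of $\sigma_\mathfrak{m}(t)$ through the localization $(\mathbb{Z}[A])_{\pi^{-1}(\mathfrak{m})}[t]$ and reapplying the free patching lemma to the localized Milnor square.
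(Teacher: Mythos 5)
Your overall architecture — encode $\sigma(t)$ as a projective module over $\Omega A[t]$ via the Milnor square, use the free patching lemma in both directions, and reduce to Quillen's local-global principle for $P_2$ — is exactly the paper's. Your Case~1 ($\ker\pi_0\not\subseteq\mathfrak{n}$) is correct and even a touch more direct than the paper's version: you observe that once the complementary copy of $\mathbb{Z}[A]$ localizes to the zero ring, the patching datum itself trivializes, whereas the paper instead notes that $\Omega A_\mathfrak{n}$ is a localization of a polynomial ring and invokes the Roitman principle together with $P_2$-contractibility of local rings.

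The gap is in Case~2 ($\ker\pi_0\subseteq\mathfrak{n}$). You introduce an auxiliary local homomorphism $(\Omega A)_\mathfrak{n}\to\bigl(\Omega(A_\mathfrak{m})\bigr)_{\pi_0^{-1}(\mathfrak{m}A_\mathfrak{m})}$, deduce that the base change of $P_\mathfrak{n}$ is free, and then acknowledge you still need to descend this freeness. That descent is a genuine obstacle, and the detour creates it needlessly. The paper avoids the auxiliary ring entirely: localize the Milnor square
\[
\xymatrix{\Omega A[t] \ar[r] \ar[d] & \mathbb{Z}[A][t] \ar[d]^{\pi[t]} \\ \mathbb{Z}[A][t] \ar[r]_-{\pi[t]} & A[t]}
\]
at $\mathfrak{n}=\pi_0^{-1}(\mathfrak{m}_0)$. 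Because $\pi_0(\Omega A\setminus\mathfrak{n})=A\setminus\mathfrak{m}_0$, the bottom-right corner of the localized square is $A_{\mathfrak{m}_0}[t]$, the localized maps $\pi[t]$ are still surjective (so the square is again a Milnor square), and by Lemma~\ref{lemma:naturality_of_patching_data} the module $P_\mathfrak{n}$ corresponds to the patching datum $\sigma_{\mathfrak{m}_0}(t)$, which is pseudoelementary by hypothesis. The converse direction of the free patching lemma (Lemma~\ref{lemma:free_patching}) then gives freeness of $P_\mathfrak{n}$ outright. Your closing sentence about ``carefully lifting a factorization'' points in this direction, but no lifting needs to be done by hand: the lift of a pseudoelementary matrix along a surjection is exactly Claim~(3) of Lemma~\ref{lemma:basic_properties_pseudoelementary}, and this is already packaged inside the converse direction of the free patching lemma. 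Replacing your detour through $\Omega(A_\mathfrak{m})$ with this direct localization closes the gap.
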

 
 \begin{proof}
 Let $P \in \Proj(\Omega A[t])$ denote the rank two projective module corresponding to the patching datum $(\mathbb{Z}[A][t]^2, \mathbb{Z}[A][t]^2, \sigma(t))$. Since $\mathrm{SL}_2(\mathbb{Z}[A][t])=\widetilde{\mathrm{E}}_2(\mathbb{Z}[A][t])$, the free patching lemma is applicable, so it only remains to show that $P$ is free.
 
 The fact that $\sigma(0)=\id$ implies that the $\Omega A$-module $P \slash tP \cong \Omega A^2$ is free of rank two (this follows from Lemma~\ref{lemma:naturality_of_patching_data}, applied to the diagram whose vertical morphisms send a polynomial $f(t)$ to $f(0)$). Thus it suffices to show that $P$ is extended from $\Omega A$.
 
 Since $P_2 \colon \CRing \rightarrow \Set$ satisfies the Quillen principle $(\mathrm{Q})$, it suffices to check that $P_{\mathfrak{m}}$ is extended for all maximal ideals $\mathfrak{m} \subseteq \Omega A$. Let $I=\mathrm{ker}(\pi \varphi \colon \Omega A \rightarrow A)$ and let $S \defl \Omega A \setminus \mathfrak{m}$. If $I \not \subseteq \mathfrak{m}$, then the pullback diagram
 \[
 \xymatrix{ \Omega A_{\mathfrak{m}} \ar[r] \ar[d] & \mathbb{Z}[A]_{\psi(S)} \ar[d] \\ \mathbb{Z}[A]_{\varphi(S)} \ar[r] & A_{\pi \varphi(S)} \cong 0 }
 \]
 shows that $\Omega A_{\mathfrak{m}} \cong \mathbb{Z}[A]_{\varphi(S)} \times \mathbb{Z}[A]_{\psi(S)}$. Since this ring is local, it is in particular connected, so one of $\mathbb{Z}[A]_{\varphi(S)}$ or $\mathbb{Z}[A]_{\psi(S)}$ must be the zero ring. Thus $\Omega A_{\mathfrak{m}}$ is a localization of some polynomial ring (usually involving infinitely many variables). Since the functor $P_2 \colon \CRing \rightarrow \Set_{\ast}$ satisfies the Roitman principle $(\mathrm{R})$, it follows that all rank two projective $\Omega A_{\mathfrak{m}}[t]$-modules are extended from the local ring $\Omega A_{\mathfrak{m}}$, hence free. This shows that $P_{\mathfrak{m}}$ is free whenever $I \not \subseteq \mathfrak{m}$.
 
 If $I \subseteq \mathfrak{m}$, then $\mathfrak{m}=(\pi \varphi)^{-1}(\mathfrak{m}_0)$ for some maximal ideal $\mathfrak{m}_0 \subseteq A$. By tensoring the square
 \[
  \xymatrix{\Omega A[t] \ar[d]_{\varphi[t]} \ar[r]^-{\psi[t]} & \mathbb{Z}[A][t] \ar[d]^{\pi[t]} \\ \mathbb{Z}[A][t] \ar[r]_-{\pi[t]} & A[t]}
 \]
 with the localization $\Omega A \rightarrow \Omega A_{\mathfrak{m}}=\Omega A_S$, we obtain a cube as in Lemma~\ref{lemma:naturality_of_patching_data} whose diagonal morphisms are given by the respective localization morphisms. It follows that $P_{\mathfrak{m}}$ is isomorphic to the module associated to the patching datum $\sigma_{\mathfrak{m}_0}(t)$. Since $\sigma_{\mathfrak{m}_0}(t)$ is by assumption pseudoelementary and the square is a Milnor square, the surjectivity of the localization of $\pi[t] \colon \mathbb{Z}[A][t] \rightarrow A[t]$ implies that $P_{\mathfrak{m}}$ is free (this also follows from the final part of the free patching lemma).
 
 We have established that $P_{\mathfrak{m}}$ is free for all $\mathfrak{m} \subseteq \Omega A$ maximal, so $P$ is extended from $\Omega A$, hence free. Thus the free patching lemma implies that $\sigma(t)$ is pseudoelementary.
  \end{proof}
  
 \begin{prop}\label{prop:pseudoelementary_monic_inversion}
 Let $g \in A[t]$ be a monic polynomial and let $\sigma(t) \in \mathrm{SL}_2(A[t])$. If the localization $\sigma(t)_g \in \widetilde{\mathrm{E}}_2(A[t]_g)$ is pseudoelementary, then $\sigma(t)$ is pseudoelementary.
 \end{prop}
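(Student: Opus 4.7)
The plan is to adapt the Milnor-square argument of Proposition~\ref{prop:local_global_for_pseudoelementary} to the present setting, using the hypothesis on $\sigma(t)_g$ in place of pointwise local pseudoelementariness. Specifically, consider the base change of the defining Milnor square of $\Omega A \defl \mathbb{Z}[A] \times_A \mathbb{Z}[A]$ along the flat morphism $\mathbb{Z} \to \mathbb{Z}[t]$, which yields the Milnor square
\[
\xymatrix{\Omega A[t] \ar[r] \ar[d] & \mathbb{Z}[A][t] \ar[d]^-{\tau[t]} \\ \mathbb{Z}[A][t] \ar[r]_-{\tau[t]} & A[t]}
\]
and let $P$ be the rank $2$ projective $\Omega A[t]$-module corresponding to the patching datum $(\mathbb{Z}[A][t]^2, \mathbb{Z}[A][t]^2, \sigma(t))$. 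Since $\mathbb{Z}[A][t] \cong \mathbb{Z}[A \cup \{t\}]$ is a polynomial ring over $\mathbb{Z}$, Claim~(2) of Lemma~\ref{lemma:basic_properties_pseudoelementary} gives $\mathrm{SL}_2(\mathbb{Z}[A][t]) = \widetilde{\mathrm{E}}_2(\mathbb{Z}[A][t])$, and the free patching lemma (Lemma~\ref{lemma:free_patching}) reduces the proof to showing that $P$ is free as an $\Omega A[t]$-module.

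To establish freeness, I would apply Quillen's principle $(\mathrm{Q})$ for $P_2$ (which holds by Corollary~\ref{cor:weak_analytic_excision_for_P_and_P_r} and Theorem~\ref{thm:weak_excision_implies_QR}) to reduce to showing that $P_\mathfrak{m}$ is extended from $\Omega A_\mathfrak{m}$ for every maximal ideal $\mathfrak{m} \subseteq \Omega A$. As in Proposition~\ref{prop:local_global_for_pseudoelementary}, two cases arise depending on whether the kernel $I$ of the composition $\Omega A \to A$ is contained in $\mathfrak{m}$. When $I \not\subseteq \mathfrak{m}$, the local ring $\Omega A_\mathfrak{m}$ is a localization of a polynomial ring over $\mathbb{Z}$, and Roitman's principle for $P_2$ yields the extension property. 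When $I \subseteq \mathfrak{m}$, the hypothesis enters: since $\sigma(t)_g$ is pseudoelementary, the free patching lemma applied to the localized Milnor square obtained by inverting $g$ shows that $P_\mathfrak{m}$ becomes free over $\Omega A_\mathfrak{m}[t]_g$, and Horrocks' classical theorem---applicable because $\Omega A_\mathfrak{m}$ is local in this case and $g$ is monic---produces the required extension.

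The main obstacle is handling the reduction modulo $t$: for $P$ itself to be free (not merely extended from $\Omega A$) one also needs $P/tP$ to be free, and $P/tP$ is associated to the patching datum $\sigma(0) \in \mathrm{SL}_2(A)$, whose pseudoelementariness is not directly supplied by the assumption. The anticipated remedy is a two-stage argument: first apply the above machinery to $\sigma(t) \sigma(0)^{-1}$, which has value $\id$ at $t=0$ and therefore falls squarely into the framework of Proposition~\ref{prop:local_global_for_pseudoelementary}, and then separately show that $\sigma(0) \in \widetilde{\mathrm{E}}_2(A)$. This last step should follow by combining the given pseudoelementary factorization of $\sigma(t)_g$ with the Top--Bottom lemma (Lemma~\ref{lemma:top_bottom_lemma}) to clear denominators in $g$ when evaluating at $t = 0$, exploiting the monicity of $g$ to guarantee comaximality with the polynomials $1 + tA[t]$ that encode the denominators appearing in the witness $\varphi \colon \mathbb{Z}[T] \to A[t]_g$.
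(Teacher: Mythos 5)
Your reduction via the Milnor square for $\Omega A[t]$, the free patching lemma, and the observation that $P$ being extended from $\Omega A$ is not by itself enough (one also needs $P/tP$ to be free) all match the paper's line of reasoning. The route you take to ``$P$ is extended'' --- Quillen's local--global principle with a case split on whether $I \subseteq \mathfrak{m}$, followed by Roitman and classical Horrocks over the local rings $\Omega A_\mathfrak{m}$ --- is more roundabout than the paper's, which applies the global Horrocks principle $(\mathrm{H})$ for $P_2$ directly after observing that $P_h$ is free (where $h \in \Omega A[t]$ is a monic lift of $g$), but that part is essentially sound.

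The gap is in your proposed repair of the ``$P/tP$ might not be free'' issue. First, the two stages are entangled the wrong way round: to run stage (a) on $\tau(t) = \sigma(t)\sigma(0)^{-1}$ you would need $\tau(t)_g$ pseudoelementary, which (since $\widetilde{\mathrm{E}}_2(A[t]_g)$ is a subgroup and $\sigma(t)_g$ already lies in it) is equivalent to $\sigma(0)_g$ being pseudoelementary --- precisely what stage (b) is meant to supply. More seriously, stage (b) itself does not go through. The Top--Bottom lemma (Lemma~\ref{lemma:top_bottom_lemma}) requires all lower-order coefficients of the monic polynomial to lie in the Jacobson radical of $A$, a condition $g$ need not satisfy; and the underlying comaximality claim is false in general: in $\mathbb{Z}[t]$ the monic $g = t-1$ and $1+3t \in 1 + t\mathbb{Z}[t]$ generate $(t-1,4)$, a proper ideal. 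Evaluation at $t=0$ does not factor through $A[t]_g$ (since $g(0)$ need not be a unit), so the pseudoelementary witness for $\sigma(t)_g$ cannot be specialized to a witness for $\sigma(0)$ over $A$.

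What actually closes the argument is Proposition~\ref{prop:uniquely_etxtended_after_monic_inversion}: since $P_2$ satisfies $(\mathrm{H})$ and weak Zariski excision, the map $P_2(\Omega A) \rightarrow P_2(\Omega A\langle t \rangle)$ is \emph{injective}. Once you know $P$ is extended from some $[Q] \in P_2(\Omega A)$ and that $P_h$ (hence the further localization at all monics) is free, injectivity forces $[Q]$ to be the free class, so $P$ itself is free and the free patching lemma finishes. You should invoke this proposition rather than attempt a direct manipulation of $\sigma(0)$.
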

 
 \begin{proof}
 We use the notation introduced above Proposition~\ref{prop:local_global_for_pseudoelementary}. Let $h \in \Omega A[t]$ be a monic polynomial with $\pi \varphi(h)=g$. The patching datum $\sigma(t) \in \mathrm{SL}_2(A[t])$ corresponds to a projective module $P \in \Proj(\Omega A[t])$.
 
 If we tensor the Milnor patching diagram
 \[
  \xymatrix{\Omega A[t] \ar[d]_{\varphi[t]} \ar[r]^-{\psi[t]} & \mathbb{Z}[A][t] \ar[d]^{\pi[t]} \\ \mathbb{Z}[A][t] \ar[r]_-{\pi[t]} & A[t]}
 \]
 with $\Omega A[t] \rightarrow \Omega A[t]_h$, we can apply Lemma~\ref{lemma:naturality_of_patching_data} and conclude that $P_h$ corresponds to the patching datum $\sigma(t)_g$. Since $\sigma(t)_g$ is by assumption pseudoelementary and the localized square is still a Milnor square, it follows from the free patching lemma that $P_h$ is free.
 
 Thus the fact that $P_2 \colon \CRing \rightarrow \Set_{\ast}$ satisfies the Horrocks principle $(\mathrm{H})$ implies that $[P] \in P_2(\Omega A[t]) $ is extended. Moreover, since $P_h$ is free, it follows from Proposition~\ref{prop:uniquely_etxtended_after_monic_inversion} that $P$ is extended from the free module, hence free. Thus $\sigma(t)$ is pseudoelementary by the free patching lemma.
 \end{proof}
 
 \begin{cor}\label{cor:SK_12_satisfies_Q_and_H}
 The functor $\widetilde{SK}_{1,2} \colon \CRing \rightarrow \Set_{\ast}$ satisfies the Quillen principle $(\mathrm{Q})$ and the Horrocks principle $(\mathrm{H})$.
 \end{cor}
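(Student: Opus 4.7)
The plan is to deduce both principles directly from the two preceding propositions, via the standard observation that for a pointed-set-valued functor arising from a group with subgroup, extendedness of a class in $\widetilde{SK}_{1,2}(A[t])$ from $\widetilde{SK}_{1,2}(A)$ can be detected on the matrix representative. Concretely, a class represented by $\sigma(t) \in \mathrm{SL}_2(A[t])$ is extended from the class of $\tau \in \mathrm{SL}_2(A)$ if and only if $\tau^{-1} \sigma(t)$ lies in $\widetilde{\mathrm{E}}_2(A[t])$; in particular, choosing $\tau = \sigma(0)$ shows that $\sigma$ is extended from $\sigma(0)$ if and only if $\sigma(0)^{-1} \sigma(t)$ is pseudoelementary (this uses that $\widetilde{\mathrm{E}}_2(A)$ sits inside $\widetilde{\mathrm{E}}_2(A[t])$ via $\iota$ and that pseudoelementary matrices are closed under the evaluation $t \mapsto 0$).

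For Quillen's principle $(\mathrm{Q})$, I would start with $\sigma(t) \in \mathrm{SL}_2(A[t])$ whose class in $\widetilde{SK}_{1,2}(A_{\mathfrak{m}}[t])$ is extended from $\widetilde{SK}_{1,2}(A_{\mathfrak{m}})$ for every maximal ideal $\mathfrak{m} \subseteq A$. Setting $\tau(t) \defl \sigma(0)^{-1} \sigma(t)$, we have $\tau(0) = \id$. By the above reformulation applied to $\sigma_{\mathfrak{m}}$, the localization $\tau_{\mathfrak{m}}(t) = \sigma_{\mathfrak{m}}(0)^{-1} \sigma_{\mathfrak{m}}(t)$ is pseudoelementary for each maximal $\mathfrak{m}$. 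Proposition~\ref{prop:local_global_for_pseudoelementary} then gives $\tau(t) \in \widetilde{\mathrm{E}}_2(A[t])$, and hence $\sigma(t) = \sigma(0) \cdot \tau(t)$ is extended from the class of $\sigma(0)$ in $\widetilde{SK}_{1,2}(A)$.

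For the Horrocks principle $(\mathrm{H})$, I would proceed analogously. Suppose $\sigma(t) \in \mathrm{SL}_2(A[t])$ represents a class in $\widetilde{SK}_{1,2}(A[t])$ whose image in $\widetilde{SK}_{1,2}(A\langle t \rangle)$ is extended from the class of some $\tau_0 \in \mathrm{SL}_2(A)$. Then $\tau_0^{-1} \sigma(t)$ maps to an element of $\widetilde{\mathrm{E}}_2(A\langle t \rangle)$. Since $\widetilde{SK}_{1,2}$ (and equivalently $\widetilde{\mathrm{E}}_2$) is finitary by Part~(5) of Lemma~\ref{lemma:basic_properties_pseudoelementary}, the matrix $\tau_0^{-1}\sigma(t)$ is already pseudoelementary in $\mathrm{SL}_2(A[t]_g)$ for some monic $g \in A[t]$. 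Proposition~\ref{prop:pseudoelementary_monic_inversion} then lifts this to $\tau_0^{-1} \sigma(t) \in \widetilde{\mathrm{E}}_2(A[t])$, whence $\sigma(t)$ is extended from the class of $\tau_0$.

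There is essentially no obstacle left at this stage: all of the genuine content is contained in Propositions~\ref{prop:local_global_for_pseudoelementary} and \ref{prop:pseudoelementary_monic_inversion}, both of which reduce weak excision questions for $\widetilde{SK}_{1,2}$ to the already-established corresponding statements for $P_2$ via the free patching lemma together with the Milnor square built from the resolution $\mathbb{Z}[A] \twoheadrightarrow A$. The only subtle point to verify carefully is the reformulation of ``extendedness'' used in both arguments, which relies on the fact that $\widetilde{\mathrm{E}}_2(-)$ is a \emph{subgroup} functor of $\mathrm{SL}_2(-)$ and that the evaluation map $\mathrm{SL}_2(A[t]) \to \mathrm{SL}_2(A)$ sends $\widetilde{\mathrm{E}}_2(A[t])$ into $\widetilde{\mathrm{E}}_2(A)$.
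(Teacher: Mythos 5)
Your proof is correct and follows essentially the same route as the paper: translate extendedness of a class in $\widetilde{SK}_{1,2}$ into the matrix $\sigma(0)^{-1}\sigma(t)$ (resp.\ $\tau_0^{-1}\sigma(t)$) being pseudoelementary, then invoke Proposition~\ref{prop:local_global_for_pseudoelementary} for $(\mathrm{Q})$ and Proposition~\ref{prop:pseudoelementary_monic_inversion} (together with the finitary property to descend from $A\langle t\rangle$ to a single $A[t]_g$) for $(\mathrm{H})$. The added remarks about $\widetilde{\mathrm{E}}_2(-)$ being a subgroup functor and being closed under evaluation at $t=0$ are precisely the justifications implicit in the paper's shorter phrasing.
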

 
 \begin{proof}
 We first show that the functor satisfies the Quillen principle $(\mathrm{Q})$. Let $\sigma(t) \in \mathrm{SL}_2(A[t])$ be a matrix such that the class of $\sigma_{\mathfrak{m}}(t)$ in $\widetilde{SK}_{1,2}(A_{\mathfrak{m}}[t])$ is extended for all maximal ideals $\mathfrak{m} \subseteq A$. For a fixed maximal ideal we thus have $\sigma_{\mathfrak{m}}(t)=\sigma_{\mathfrak{m}}(0) \varepsilon(t)$ for some $\varepsilon(t) \in \widetilde{\mathrm{E}}_2(A_{\mathfrak{m}}[t])$. It follows that the matrix $\tau(t) \defl \sigma(0)^{-1} \sigma(t)$ satisfies $\tau(0)=\id$ and $\tau_{\mathfrak{m}}(t) \in \widetilde{\mathrm{E}}_2(A_{\mathfrak{m}}[t])$ for all maximal ideals $\mathfrak{m} \subseteq A$. From Proposition~\ref{prop:local_global_for_pseudoelementary} we can conclude that $\tau(t)$ is pseudoelementary. Thus $[\sigma(t)]=[\sigma(0)\tau(t)]=[\sigma(0)]$ is extended from $\widetilde{SK}_{1,2}(A)$, which shows that $\widetilde{SK}_{1,2}$ does indeed satisfy the Quillen principle $(\mathrm{Q})$.
 
  To see that the Horrocks principle $(\mathrm{H})$ holds, let $\sigma(t) \in \mathrm{SL}_2(A[t])$ be a matrix such that the class of $\sigma(t) \in \widetilde{SK}_{1,2}(A \langle t \rangle)$ is extended from $A$. This means that the localization of $\sigma(t)$ is equal to $\tau \varepsilon$ for some $\tau \in \mathrm{SL}_2(A)$ and $\varepsilon \in \widetilde{\mathrm{E}}_2(A \langle t \rangle)$. Thus $\tau^{-1} \sigma(t)$ is sent to the trivial class in $\widetilde{SK}_{1,2}(A \langle t \rangle)$. Since the functor $\widetilde{SK}_{1,2}$ is finitary, it follows that this class is already trivial after localization at a single monic polynomial. Thus Proposition~\ref{prop:pseudoelementary_monic_inversion} is applicable and implies that $\tau^{-1} \sigma(t)$ is pseudoelementary. Therefore the class of $\sigma(t)=\tau\bigl(\tau^{-1} \sigma(t) \bigr)$ is extended from $\widetilde{SK}_{1,2}(A)$, which concludes the proof that $\widetilde{SK}_{1,2}$ satisfies the Horrocks principle $(\mathrm{H})$.
 \end{proof}
 
 In order to show that $\widetilde{SK}_{1,2}$ also satisfies the Roitman principle $(\mathrm{R})$, we need the following lemma.
 
 \begin{lemma}\label{lemma:pseudoelementary_Roitman}
 Let $f \in A$ and let $[\sigma(t)] \in N \widetilde{SK}_{1,2}(A)$. For $n \in \mathbb{N}$, let
 \[
 \mu_{f^n} \colon A_f[t] \rightarrow A_f[t]
 \]
 be the $A_f$-linear isomorphism sending $t$ to $f^n t$. Then there exists a $k \in \mathbb{N}$ and a matrix $\tau(t) \in \mathrm{SL}_2(A[t])$ with $\tau(0)=\id$ such that $[\lambda_f\bigl(\tau(t)\bigr)]=[\mu_{f^k} \sigma(t)]$ holds.
 \end{lemma}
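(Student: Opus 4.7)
The hypothesis $[\sigma(t)] \in N\widetilde{SK}_{1,2}(A)$ unpacks as follows: $\sigma(t) \in \mathrm{SL}_2(A[t])$ and its evaluation $\sigma(0) \in \mathrm{SL}_2(A)$ lies in $\widetilde{\mathrm{E}}_2(A)$, since the basepoint of $\widetilde{SK}_{1,2}(A) = \mathrm{SL}_2(A)/\widetilde{\mathrm{E}}_2(A)$ is the coset of the identity. The target equality $[\lambda_f(\tau(t))] = [\mu_{f^k}\sigma(t)]$ lives in $\widetilde{SK}_{1,2}(A_f[t])$, and with the left coset convention it is equivalent to the membership $\lambda_f(\tau(t))^{-1}\,\mu_{f^k}\sigma(t) \in \widetilde{\mathrm{E}}_2(A_f[t])$.

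For any $k \geq 0$, the plan is to take the explicit matrix
\[
\tau(t) \defl \sigma(f^k t) \cdot \sigma(0)^{-1}.
\]
This is a genuine element of $\mathrm{SL}_2(A[t])$: the substitution $t \mapsto f^k t$ is a well-defined endomorphism of $A[t]$ itself (not merely of $A_f[t]$), so $\sigma(f^k t) \in \mathrm{SL}_2(A[t])$, and $\sigma(0)^{-1} \in \mathrm{SL}_2(A) \subseteq \mathrm{SL}_2(A[t])$. Evaluating at $t=0$ gives $\tau(0) = \sigma(0)\sigma(0)^{-1} = \id$, as required.

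Passing to $\mathrm{SL}_2(A_f[t])$ via $\lambda_f$ and using that $\mu_{f^k}$ coincides with the substitution $t \mapsto f^k t$ on polynomials, one computes
\[
\lambda_f(\tau(t)) \;=\; \mu_{f^k}\sigma(t) \cdot \sigma(0)^{-1},
\]
so that $\lambda_f(\tau(t))^{-1}\,\mu_{f^k}\sigma(t) = \sigma(0)$. Finally, Part~(1) of Lemma~\ref{lemma:basic_properties_pseudoelementary} (functoriality of $\widetilde{\mathrm{E}}_2$) implies that the image of $\sigma(0) \in \widetilde{\mathrm{E}}_2(A)$ along the ring map $A \to A_f[t]$ still lies in $\widetilde{\mathrm{E}}_2(A_f[t])$, giving the required coset equality.

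The construction works for any $k \geq 0$ (in particular even $k=0$), so there is no substantive obstacle; the only property of $\widetilde{\mathrm{E}}_2$ needed beyond its subgroup structure is that it is preserved by arbitrary ring homomorphisms. The freedom in the choice of $k$ is the reason for stating the lemma in this form, as it is presumably tailored to downstream applications that match $k$ to the filtered colimit presentation of $A\pb{A_f}A_f[t]$ given by Lemma~\ref{lemma:Vorst_diagram_filtered_colimit}.
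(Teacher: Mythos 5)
Your argument is correct for the statement exactly as printed, but the printed statement contains a typo that your reading inherits: the hypothesis should be $[\sigma(t)] \in N\widetilde{SK}_{1,2}(A_f)$, not $N\widetilde{SK}_{1,2}(A)$. This is forced by the way the lemma is used in Proposition~\ref{prop:pseudoelementary_Roitman}: to prove the Roitman principle one starts from an \emph{arbitrary} class in $N\widetilde{SK}_{1,2}(A_f)$, i.e.\ from $\sigma(t) \in \mathrm{SL}_2(A_f[t])$ with $\sigma(0) \in \widetilde{\mathrm{E}}_2(A_f)$, and must descend it — up to the reparametrization $\mu_{f^k}$ and up to $\widetilde{\mathrm{E}}_2$ — to a matrix defined over $A[t]$. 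Your own closing remark that the construction works even for $k=0$, so that ``there is no substantive obstacle,'' is the symptom: a lemma that trivial could not do this job, and under your reading it only restates that the image of $\sigma(0)$ under $A \to A_f[t]$ is still pseudoelementary.

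Under the intended hypothesis your construction $\tau(t) \defl \sigma(f^k t)\,\sigma(0)^{-1}$ breaks at the first step: the entries of $\sigma$ lie in $A_f[t]$, so $\sigma(f^k t)$ is not a matrix over $A[t]$. The actual argument needs two ideas that are absent from your proposal. First, after reducing to $\sigma(0)=\id$ (which you do correctly), one chooses $k$ so large that every coefficient of $\mu_{f^k}\bigl(\sigma(t)\bigr)$ lies in the \emph{image} of $\lambda_f \colon A \to A_f$, and then picks a lift $\tau^{\prime}(t)$ over $A[t]$ with $\tau^{\prime}(0)=\id$. Second — and this is the genuinely delicate point — since $\lambda_f$ need not be injective, the lifted matrix need not have determinant $1$: one only knows that $g(t) \defl \det \tau^{\prime}(t)$ satisfies $g(0)=1$ and $\lambda_f\bigl(g(t)\bigr)=1$, so the coefficients of $t^i$ in $g$ for $i>0$ are annihilated by powers of $f$, and a further substitution $t \mapsto f^{\ell}t$ with $\ell \gg 0$ forces $g(f^{\ell}t)=1$, whence $\tau(t) \defl \tau^{\prime}(f^{\ell}t) \in \mathrm{SL}_2(A[t])$. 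Neither the denominator-clearing step nor the determinant correction appears in your argument, so relative to the statement the lemma is actually meant to prove, there is a genuine gap.
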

 
 \begin{proof}
 By definition of $N \widetilde{SK}_{1,2}$, we have $\sigma(0) \in \widetilde{\mathrm{E}}_2(A_f)$. Since $\mu_{f^k}\bigl( \sigma(0) \bigr)=\sigma(0)$, the equalities
 \[
 [\mu_{f^k} \sigma(t)]=[\mu_{f^k} \bigl( \sigma(t) \sigma(0)^{-1} \bigr) \sigma(0)]=[\mu_{f^k} \bigl( \sigma(t) \sigma(0)^{-1} \bigr)]
 \]
 show that we can replace $\sigma(t)$ with $\sigma(t) \sigma(0)^{-1}$, which reduces the problem to the case where $\sigma(0)=\id$.
 
 For $k$ large enough, all the coefficients of the polynomials occurring in $\mu_{f^k} \bigl( \sigma(t) \bigr)$ lie in the image of $\lambda_f \colon A \rightarrow A_f$. Since $\sigma(0)=\id$, there exists a $2 \times 2$-matrix $\tau^{\prime}(t)$ with entries in $A[t]$ such that $\lambda_f \bigl( \tau^{\prime}(t) \bigr)=\mu_{f^k} \bigl( \sigma(t) \bigr)$ holds. Moreover, we can choose the matrix in such a way that $\tau^{\prime}(0)=\id$. The matrix $\tau^{\prime}(t)$ need however not be invertible, so we let $g(t) \in A[t]$ be its determinant.
 
 It follows that $g(0)=1$ and $\lambda_f\bigl(g(t)\bigr)=1$. This implies that for all $i >0$, the coefficient of $t^i$ in $g(t)$ is annihilated by sufficiently large powers of $f$. Thus for $\ell \in \mathbb{N}$ large enough, we have $g(f^{\ell} t)=1$. It follows that $\tau(t) \defl \tau^{\prime}(f^{\ell} t)$ lies in $\mathrm{SL}_2(A[t])$. We have $\tau(0)=\id$ and $\lambda_f\bigl(\tau(t)\bigr)=\mu_{f^{k+\ell}} \bigl( \sigma(t) \bigr)$ by construction.
 \end{proof}
 
 \begin{prop}\label{prop:pseudoelementary_Roitman}
 The functor $\widetilde{SK}_{1,2} \colon \CRing \rightarrow \Set_{\ast}$ satisfies the Roitman principle $(\mathrm{R})$. In particular, if $A$ is $\widetilde{SK}_{1,2}$-regular and $S \subseteq A$ is a multiplicative set, then $A_S$ is $\widetilde{SK}_{1,2}$-regular.
 \end{prop}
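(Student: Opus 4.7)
The plan is to deduce $(\mathrm{R})$ directly from Lemma~\ref{lemma:pseudoelementary_Roitman} and then invoke Proposition~\ref{prop:R_several_variables} for the ``in particular'' statement. Concretely, once $(\mathrm{R})$ is established, Proposition~\ref{prop:R_several_variables} (applicable since $\widetilde{SK}_{1,2}$ is finitary by Claim~(5) of Lemma~\ref{lemma:basic_properties_pseudoelementary}) gives $(\mathrm{R}_n)$ for all $n$, and since $A$ being $\widetilde{SK}_{1,2}$-regular means $A \in \ca{E}_n^{\widetilde{SK}_{1,2}}$ for every $n \geq 1$, we then conclude $A_S \in \ca{E}_n^{\widetilde{SK}_{1,2}}$ for every $n$, i.e., $A_S$ is $\widetilde{SK}_{1,2}$-regular. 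So the real task is to prove $(\mathrm{R})$.

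For $(\mathrm{R})$, suppose $A \in \ca{E}_1^{\widetilde{SK}_{1,2}}$ and let $S \subseteq A$ be multiplicative; using that $\widetilde{SK}_{1,2}$ is finitary we first reduce to $S = \{1, f, f^2, \ldots\}$ for some $f \in A$. Given an arbitrary class $[\sigma(t)] \in \widetilde{SK}_{1,2}(A_f[t])$, I would replace $\sigma(t)$ by $\sigma(t)\sigma(0)^{-1}$ so that $[\sigma(t)] \in N\widetilde{SK}_{1,2}(A_f)$. Lemma~\ref{lemma:pseudoelementary_Roitman} then produces $k \in \mathbb{N}$ and a matrix $\tau(t) \in \mathrm{SL}_2(A[t])$ with $\tau(0) = \id$ such that $[\lambda_f\bigl(\tau(t)\bigr)] = [\mu_{f^k}\bigl(\sigma(t)\bigr)]$ in $\widetilde{SK}_{1,2}(A_f[t])$. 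Since $A \in \ca{E}_1^{\widetilde{SK}_{1,2}}$, the class $[\tau(t)]$ is extended from $A$, and since $\tau(0) = \id$ the extended value is the basepoint; thus $[\tau(t)] = \ast$ already in $\widetilde{SK}_{1,2}(A[t])$, whence $[\mu_{f^k}\bigl(\sigma(t)\bigr)] = \ast$ in $\widetilde{SK}_{1,2}(A_f[t])$.

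The final ingredient, and the step that requires a moment of care, is to observe that $\mu_{f^k} \colon A_f[t] \to A_f[t]$ is in fact an \emph{$A_f$-algebra automorphism} (since $f$, and therefore $f^k$, is invertible in $A_f$, with inverse sending $t$ to $f^{-k}t$). Consequently $\mu_{f^k}$ induces a bijection on $\widetilde{SK}_{1,2}(A_f[t])$ preserving the basepoint, so we can cancel it to conclude $[\sigma(t)] = \ast$ in $\widetilde{SK}_{1,2}(A_f[t])$; back-tracking through the reduction, the original class is extended from $[\sigma(0)] \in \widetilde{SK}_{1,2}(A_f)$. This shows $A_f \in \ca{E}_1^{\widetilde{SK}_{1,2}}$, establishing $(\mathrm{R})$. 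The main substantive work has already been done in Lemma~\ref{lemma:pseudoelementary_Roitman}; the only conceptual subtlety in the present step is the invertibility of $\mu_{f^k}$ after localization at $f$, which is precisely what forces us to work with $\mu_{f^k}$ rather than with $\sigma(t)$ directly at the level of $A$.
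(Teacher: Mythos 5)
Your overall strategy matches the paper's: reduce to $S=\{1,f,f^2,\ldots\}$ by finitarity, invoke Lemma~\ref{lemma:pseudoelementary_Roitman}, use the invertibility of $\mu_{f^k}$ over $A_f$, and conclude the ``in particular'' from Proposition~\ref{prop:R_several_variables}. The paper packages the normalization step through the abstract equivalence $A\in\ca{E}_1^{\widetilde{SK}_{1,2}}\Leftrightarrow N\widetilde{SK}_{1,2}(A)\cong\ast$ (via Lemma~\ref{lemma:transitive_action_implies_extended_if_trivial_NF}), whereas you normalize by hand; that is a reasonable alternative in principle.

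However, there is a genuine gap in your normalization. You replace $\sigma(t)$ by $\sigma(t)\sigma(0)^{-1}$, i.e.\ you multiply on the \emph{right}. Recall that $\widetilde{SK}_{1,2}(R)=\mathrm{SL}_2(R)/\widetilde{\mathrm{E}}_2(R)$ is defined as the set of \emph{left} cosets $\sigma\widetilde{\mathrm{E}}_2$, and the natural transitive group action is by \emph{left} multiplication. If you prove $\sigma(t)\sigma(0)^{-1}\in\widetilde{\mathrm{E}}_2(A_f[t])$, you get $\sigma(t)=\varepsilon\,\sigma(0)$ with $\varepsilon$ pseudoelementary, and the back-tracking step ``the original class is extended from $[\sigma(0)]$'' requires $\varepsilon\sigma(0)\widetilde{\mathrm{E}}_2=\sigma(0)\widetilde{\mathrm{E}}_2$, i.e.\ $\sigma(0)^{-1}\varepsilon\sigma(0)\in\widetilde{\mathrm{E}}_2$. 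This is a normality assertion for $\widetilde{\mathrm{E}}_2\subseteq\mathrm{SL}_2$, which the paper never establishes (and which is not at all obvious from Definition~\ref{dfn:pseudoelementary}). Note that inside the proof of Lemma~\ref{lemma:pseudoelementary_Roitman} the same replacement $\sigma(t)\mapsto\sigma(t)\sigma(0)^{-1}$ \emph{is} harmless, but only because there $\sigma(0)\in\widetilde{\mathrm{E}}_2(A_f)$ is part of the hypothesis $[\sigma(t)]\in N\widetilde{SK}_{1,2}$, so right-multiplying by $\sigma(0)^{-1}$ does not change the left coset. In your situation $\sigma(0)$ is arbitrary, so the same justification is unavailable. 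The fix is simple: normalize on the left, replacing $\sigma(t)$ by $\sigma(0)^{-1}\sigma(t)$. Then $\sigma(0)^{-1}\sigma(t)\in\widetilde{\mathrm{E}}_2(A_f[t])$ says exactly that $\sigma(t)=\sigma(0)\,\varepsilon$ with $\varepsilon$ pseudoelementary, i.e.\ $[\sigma(t)]=[\sigma(0)]$ is extended, with no appeal to normality. With this change your argument goes through and agrees with the paper.
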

 
 \begin{proof}
 Recall that $\ca{E}_1^{\widetilde{SK}_{1,2}}$ denotes the category of commutative rings $A$ such that all $[\sigma(t)] \in \widetilde{SK}_{1,2}(A[t])$ are extended from $\widetilde{SK}_{1,2}(A)$. We need to check that this category is closed under the operation $A \mapsto A_S$ for all multiplicative sets $S \subseteq A$.
 
 We first note that $A \in \ca{E}_1^{\widetilde{SK}_{1,2}}$ holds if and only if $N \widetilde{SK}_{1,2}(A) \cong \ast$ holds. If all $[\sigma(t)]$ are extended from $A$, then we clearly have $N \widetilde{SK}_{1,2}(A) \cong \ast$. To see the converse, note that we have a natural transitive group action 
 \[
 \mathrm{SL}_2(A) \times \widetilde{SK}_{1,2}(A) \rightarrow \widetilde{SK}_{1,2}(A)
 \]
  given by left multiplication (see Definition~\ref{dfn:F_contractible_and_transitive_group_action}). The converse implication therefore follows from Lemma~\ref{lemma:transitive_action_implies_extended_if_trivial_NF}.
 
 We have reduced the problem to checking that $N \widetilde{SK}_{1,2}(A_S) \cong \ast$ whenever $N \widetilde{SK}_{1,2}(A) \cong \ast$ holds. This is a direct consequence of Lemma~\ref{lemma:pseudoelementary_Roitman} above. Indeed, the automorphism $\mu_{f^n}$ defined in Lemma~\ref{lemma:pseudoelementary_Roitman} satisfies $\mathrm{ev}_0 \mu_{f^n}=\mathrm{ev}_0$, so it induces an automorphism of $N\widetilde{SK}_{1,2}(A_f)$. The conclusion of the lemma therefore shows that $N \widetilde{SK}_{1,2}(A_f) \cong \ast$ whenever $N \widetilde{SK}_{1,2}(A) \cong \ast$. The case for general multiplicative sets follows from this since $\widetilde{SK}_{1,2}$ commutes with filtered colimits.
 
 This concludes the proof that $\widetilde{SK}_{1,2}$ satisfies the Roitman priniciple $(\mathrm{R})$. The claim about $\widetilde{SK}_{1,2}$-regular rings follows from this by Proposition~\ref{prop:R_several_variables}.
 \end{proof}
 
 We have now established that $\widetilde{SK}_{1,2}$ satisfies the three principles $(\mathrm{Q})$, $(\mathrm{R})$, and $(\mathrm{H})$ discussed in \S \ref{section:excision} and \S \ref{section:monic}. The first two would be a consequence of weak analytic excision, but it is unclear if this holds in full generality. Before discussing various cases where weak excision \emph{does} hold, we record one more general fact about $\widetilde{SK}_{1,2}$.
 
 \begin{prop}\label{prop:pseudoelementary_j_invariant}
 The functor $\widetilde{SK}_{1,2} \colon \CRing \rightarrow \Set_{\ast}$ is $j$-invariant.
 \end{prop}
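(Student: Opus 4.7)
The plan is to verify both surjectivity and injectivity of the map $\pi_{\ast} \colon \widetilde{SK}_{1,2}(A) \rightarrow \widetilde{SK}_{1,2}(A \slash I)$ induced by $\pi \colon A \rightarrow A \slash I$, under the assumption that $I$ is contained in the Jacobson radical. Both parts will reduce to combining the definition of pseudoelementary matrices (Definition~\ref{dfn:pseudoelementary}) with the first part of Lemma~\ref{lemma:SL_r_j_invariant}, which already gives the corresponding statement for elementary matrices in the case $r = 2$.

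For surjectivity, I will start from an arbitrary representative $\bar{\sigma} \in \mathrm{SL}_2(A \slash I)$ and lift each of its four entries to $A$, obtaining a matrix $\tau$ whose determinant lies in $1+I$, hence is a unit. Rescaling the first row by the inverse of this determinant produces a matrix $\sigma \in \mathrm{SL}_2(A)$ whose reduction modulo $I$ agrees with $\bar{\sigma}$ (since $1+I$ maps to $1$ in $A \slash I$). This already supplies a lift at the level of $\mathrm{SL}_2$, which is more than enough for the class in $\widetilde{SK}_{1,2}(A)$.

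For injectivity, suppose $\sigma \in \mathrm{SL}_2(A)$ satisfies $\bar{\sigma} \in \widetilde{\mathrm{E}}_2(A \slash I)$. Unpacking the definition, there is a finite set $T$, a homomorphism $\varphi^{\prime} \colon \mathbb{Z}[T] \rightarrow A \slash I$ and a matrix $\sigma^{\prime} \in \mathrm{SL}_2(\mathbb{Z}[T])$ with $\varphi^{\prime}(\sigma^{\prime}) = \bar{\sigma}$. Because $\mathbb{Z}[T]$ is free on $T$ and $\pi$ is surjective, I can lift $\varphi^{\prime}$ to a homomorphism $\varphi \colon \mathbb{Z}[T] \rightarrow A$ (choosing preimages for each $\varphi^{\prime}(t)$). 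Then $\varphi(\sigma^{\prime}) \in \widetilde{\mathrm{E}}_2(A)$ by definition, and it reduces to $\bar{\sigma}$ modulo $I$. Consequently $\tau \defl \sigma \cdot \varphi(\sigma^{\prime})^{-1}$ lies in $\mathrm{SL}_2(A)$ and reduces to the identity matrix in $\mathrm{SL}_2(A \slash I)$.

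The concluding step is to observe that the identity matrix is (trivially) elementary in $\mathrm{SL}_2(A \slash I)$, so the first statement of Lemma~\ref{lemma:SL_r_j_invariant}, applied with $r = 2$, yields $\tau \in \mathrm{E}_2(A)$. By Part~(4) of Lemma~\ref{lemma:basic_properties_pseudoelementary} we have $\mathrm{E}_2(A) \subseteq \widetilde{\mathrm{E}}_2(A)$, and Part~(1) of that lemma says $\widetilde{\mathrm{E}}_2(A)$ is a subgroup of $\mathrm{SL}_2(A)$; therefore $\sigma = \tau \cdot \varphi(\sigma^{\prime}) \in \widetilde{\mathrm{E}}_2(A)$, proving injectivity. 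There is no real obstacle here: unlike the case of $SK_{1,2}$, where passing from $\bar{\sigma}$ elementary to $\sigma$ elementary is insufficient to recover $j$-invariance because one cannot control lifts of general matrices in $\mathrm{SL}_2(A \slash I)$, the enlarged class of pseudoelementary matrices has the ``lift the witness'' property built into its definition, and this is precisely what makes the argument go through.
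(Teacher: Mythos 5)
Your proof is correct and follows essentially the same route as the paper's: surjectivity by lifting entries and rescaling to fix the determinant, and injectivity by factoring $\sigma$ into a pseudoelementary lift of $\bar\sigma$ (which the paper gets by citing Part~(3) of Lemma~\ref{lemma:basic_properties_pseudoelementary} where you unpack the same argument directly) times a matrix reducing to the identity, which is then elementary by the first part of Lemma~\ref{lemma:SL_r_j_invariant}. The only stylistic difference is that you establish the ``trivial kernel'' case without spelling out the standard reduction from injectivity of the coset map to that case — the paper records this reduction explicitly in its first sentence — but since $\widetilde{\mathrm{E}}_2$ is a subgroup this is immediate.
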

 
 \begin{proof}
 Let $A$ be a commutative ring and let the ideal $I\subseteq A$ be contained in the Jacobson radical. If $\sigma, \tau \in \mathrm{SL}_2(A)$ are matrices such that $[\bar{\sigma}]=[\bar{\tau}]$ holds in $\widetilde{SK}_{1,2}(A \slash I)$, then $\tau^{-1} \sigma$ is sent to a pseudoelementary matrix in $\widetilde{\mathrm{E}}_2(A \slash I)$. Thus it suffices to show that $\sigma$ is pseudoelementary whenever its image $\bar{\sigma}$ in $\mathrm{SL}_2(A \slash I)$ is pseudoelementary. To see this, pick $\varepsilon \in \widetilde{\mathrm{E}}_2(A)$ with $\bar{\varepsilon}=\bar{\sigma}$. Then $\varepsilon^{-1} \sigma$ is sent to the identity matrix in $\mathrm{SL}_2(A\slash I)$, hence it is elementary by Whitehead's lemma (see Lemma~\ref{lemma:SL_r_j_invariant} for details).
 
 To see surjectivity, pick any lift $\sigma$ of $\bar{\sigma} \in \mathrm{SL}_2(A \slash I)$. Then $d \defl \mathrm{det}(\sigma) \in 1+I$ is a unit, so $\sigma$ lies in $\mathrm{GL}_2(A)$. If we multiply the first row of $\sigma$ with $d^{-1}$, we obtain the desired lift of $\bar{\sigma}$ in $\mathrm{SL}_2(A)$.
 \end{proof}
 
 In order to establish weak excision for pullback square
 \[
 \xymatrix{A \ar[d] \ar[r] & C \ar[d] \\ B \ar[r] & D}
 \]
 in the category of commutative rings, we need to show that any pseudoelementary matrix over $D$ factors into a product of pseudoelementary matrices over $B$ and $C$.
 
 We first show that Vorst's lemma (and Suslin's lemma \cite[Lemma~3.3]{SUSLIN_SPECIAL} it is based upon) holds if $\varphi \colon A \rightarrow B$ is an analytic isomorphism along $f$ and the matrix over $D=B_{\varphi(f)}$ is moreover elementary (instead of merely pseudoelementary).
 
 \begin{lemma}\label{lemma:pseudoelementary_Suslin_lemma}
  Let $A$ be a commutative ring, let $f \in A$, and let $\gamma \in \mathrm{SL}_2(A_f)$. Let $g(t) \in A_f[t]$ be any polynomial and let $\varepsilon(t)=e_{12}\bigl(t g(t) \bigr) \in \mathrm{E}_2(A_f[t])$ or $\varepsilon(t)=e_{21}\bigl(t g(t) \bigr)$. Then there exist a $k \in \mathbb{N}$ and a pseudoelementary matrix $\tau(t) \in \widetilde{\mathrm{E}}_2(A[t])$ such that
  \[
  \lambda_f\bigl( \tau(t) \bigr)=\gamma \varepsilon(f^k t) \gamma^{-1}
  \]
  holds.
 \end{lemma}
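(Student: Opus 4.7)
The strategy is to exhibit $\tau(t)$ as the image of a ``universal'' pseudoelementary matrix over a polynomial ring over $\mathbb{Z}$, under a ring homomorphism $\varphi \colon S \to A[t]$ that clears the denominators of the entries of $\gamma$ and of the coefficients of $g$ uniformly.

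The first step is to compute the conjugate explicitly. If $\gamma = \bigl(\begin{smallmatrix} a & b \\ c & d \end{smallmatrix}\bigr) \in \mathrm{SL}_2(A_f)$, then a direct computation using $ad - bc = 1$ gives
\[
\gamma e_{12}(s) \gamma^{-1} = I + s M_a, \qquad \gamma e_{21}(s) \gamma^{-1} = I + s M_b,
\]
where $M_a \defl \bigl(\begin{smallmatrix} -ac & a^2 \\ -c^2 & ac \end{smallmatrix}\bigr)$ and $M_b \defl \bigl(\begin{smallmatrix} bd & -b^2 \\ d^2 & -bd \end{smallmatrix}\bigr)$ are nilpotent of square zero and depend only on one column of $\gamma$. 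Over the polynomial ring $S \defl \mathbb{Z}[X, Y, Z_0, \ldots, Z_m, T]$ (with $m = \deg g$) I would then introduce the universal matrix
\[
\Sigma \defl I + T \bigl(\textstyle\sum_{i=0}^m Z_i T^i\bigr) \begin{pmatrix} -XY & X^2 \\ -Y^2 & XY \end{pmatrix}.
\]
The $2 \times 2$ matrix on the right squares to zero, so $\det(\Sigma) = 1$, and hence $\Sigma \in \mathrm{SL}_2(S) = \widetilde{\mathrm{E}}_2(S)$ by Claim~(2) of Lemma~\ref{lemma:basic_properties_pseudoelementary}.

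The main step is to define a ring homomorphism $\varphi \colon S \to A[t]$ whose composition with $\lambda_f$ yields the desired conjugate. Choose $N \in \mathbb{N}$ large enough that $a = a'/f^N$, $c = c'/f^N$, and $g_i = g_i'/f^N$ for suitable $a', c', g_i' \in A$ (a common $N$ exists because only finitely many denominators are involved). For any $k \geq 3N$, set $\varphi(X) = a'$, $\varphi(Y) = c'$, $\varphi(Z_i) = g_i' f^{k(i+1) - 3N}$, and $\varphi(T) = t$, and define $\tau(t) \defl \varphi(\Sigma)$. By Claim~(1) of Lemma~\ref{lemma:basic_properties_pseudoelementary}, $\tau(t)$ is pseudoelementary. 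A short computation in $A_f$ (using $a' = a f^N$, $g_i' = g_i f^N$, and so on) then shows
\[
\lambda_f\bigl(\tau(t) \bigr) = I + f^k t\, g(f^k t) \begin{pmatrix} -ac & a^2 \\ -c^2 & ac \end{pmatrix} = \gamma \varepsilon(f^k t) \gamma^{-1}
\]
in the case $\varepsilon(t) = e_{12}(tg(t))$. The case $\varepsilon(t) = e_{21}(tg(t))$ is handled identically after replacing $\Sigma$ by $\Sigma' \defl I + T(\sum_i Z_i T^i) \bigl(\begin{smallmatrix} XY & -X^2 \\ Y^2 & -XY \end{smallmatrix}\bigr)$ and setting $\varphi(X) = b'$, $\varphi(Y) = d'$ (enlarging $N$ if necessary so that $b = b'/f^N$ and $d = d'/f^N$).

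The main obstacle is essentially bookkeeping: arranging the exponents of $f$ in $\varphi(Z_i)$ so that the total power of $f$ is consistent across all four entries of the matrix. The choice $\varphi(Z_i) = g_i' f^{k(i+1) - 3N}$ works because each of the monomials $XY$, $X^2$, $Y^2$ in $M_a$ contributes a factor $f^{2N}$ after applying $\lambda_f$, and the chosen exponent balances this so that the coefficient of $t^{i+1}$ in each entry acquires exactly the factor $f^{k(i+1)}$ appearing in $f^k t\,g(f^k t)$. The underlying algebraic input is merely the identity $(XY)^2 = X^2 Y^2$ that allows the nilpotent ``conjugation matrix'' $M_a$ to be expressed as a polynomial in two independent variables.
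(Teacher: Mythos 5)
Your proof is correct and follows the same strategy as the paper: compute the conjugate explicitly, observe that it is the image of a universal matrix over a polynomial ring over $\mathbb{Z}$ under a substitution that clears denominators after rescaling $t$ by a power of $f$, and invoke Claims~(1) and (2) of Lemma~\ref{lemma:basic_properties_pseudoelementary}. The only difference is cosmetic: the paper packages the whole polynomial factor into a single variable, using the $3$-variable universal matrix $\bigl(\begin{smallmatrix} 1+xyz & -x^2 z \\ y^2 z & 1-xyz \end{smallmatrix}\bigr) \in \mathrm{SL}_2(\mathbb{Z}[x,y,z])$ and sending $z \mapsto t\, g^{\prime}(f^{3k}t)$, whereas you introduce a variable $Z_i$ for each coefficient of $g$ plus the variable $T$; both choices lead to the same bookkeeping with powers of $f$ and the same conclusion.
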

 
 \begin{proof}
 The two cases are symmetric, so let $\varepsilon(t)=e_{21}\bigl(t g(t) \bigr)$. Let $\left( \begin{smallmatrix} b \\ d \end{smallmatrix} \right)$ be the second column of $\gamma$. It follows that the second row of $\gamma^{-1}$ is $\left(\begin{smallmatrix} d & -b \end{smallmatrix}\right)$. Multiplying out we find that
 \[
 \gamma \varepsilon(t) \gamma^{-1}= \begin{pmatrix} 1 + tg(t)bd & -tg(t) b^2 \\ tg(t) d^2 & 1- tg(t)bd \end{pmatrix}
 \]
 holds. Now we can choose $k$ large enough so that $b^{\prime}=f^k b$, $d^{\prime}=f^k d$ both lie in $A$, and $g^{\prime}(t)=f^k g(t)$ lies in $A[t]$. Thus the equality
 \[
 \gamma \varepsilon(f^{3k} t) \gamma^{-1}= \begin{pmatrix} 1 + tg^{\prime}(f^{3k} t)b^{\prime}d^{\prime} & -tg^{\prime}(f^{3k}t) (b^{\prime})^2 \\ tg^{\prime}(f^{3k}t) (d^{\prime})^2 & 1- tg^{\prime}(f^{3k}t)b^{\prime}d^{\prime} \end{pmatrix}
 \]
 holds. The matrix on the right above is the (localization of the) image of the matrix
 \[
 \begin{pmatrix}
 1+xyz & -x^2 z \\ y^2 z & 1-xyz 
 \end{pmatrix} \in \mathrm{SL}_2(\mathbb{Z}[x,y,z])
 \]
 under the ring homomorpism $\varphi \colon \mathbb{Z}[x,y,z] \rightarrow A[t]$ given by $x \mapsto b^{\prime}$, $y \mapsto d^{\prime}$, and $z \mapsto tg^{\prime}(f^{3k} t)$. Thus $\gamma \varepsilon(f^{3k} t) \gamma^{-1}$ is indeed the localization of a matrix in $\widetilde{\mathrm{E}}_2(A[t])$, as claimed.
 \end{proof}
 
 With the above adaptation of \cite[Lemma~3.3]{SUSLIN_SPECIAL} to the case of $2 \times 2$-matrices, we obtain the following version of Vorst's lemma.
 
 \begin{lemma}\label{lemma:partial_Vorst_lemma}
 Let $\varphi \colon A \rightarrow B$ be an analytic isomorphism along $f$. Then for every elementary matrix $\varepsilon \in \mathrm{E}_2(B_{\varphi(f)})$, there exist a pseudoelementary matrix $\varepsilon_1 \in \widetilde{\mathrm{E}}_2(B)$ and an elementary matrix $\varepsilon_2 \in \mathrm{E}_2(A_f)$ such that
 \[
 \varepsilon=(\varepsilon_1)_{\varphi(f)} \cdot \varphi_f(\varepsilon_2)
 \]
 holds in $\mathrm{SL}_2(B_{\varphi(f)})$.
 \end{lemma}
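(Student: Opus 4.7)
The plan is to follow the strategy of Vorst's original argument (Lemma~\ref{lemma:Vorsts_lemma}(1)), substituting the pseudoelementary analogue of Suslin's lemma that has been prepared as Lemma~\ref{lemma:pseudoelementary_Suslin_lemma}. Write $\varepsilon = \prod_{k=1}^m e_{i_k j_k}(c_k)$ for suitable $i_k \neq j_k \in \{1,2\}$ and $c_k \in B_{\varphi(f)}$, and set $\sigma_p \defl \prod_{k=1}^{p-1} e_{i_k j_k}(c_k)$. First, I would invoke Lemma~\ref{lemma:pseudoelementary_Suslin_lemma} (applied to the ring $B$, the element $\varphi(f)$, the matrix $\gamma = \sigma_k$, and $g(t)=1$) separately for each $k$, producing natural numbers $s_k \in \mathbb{N}$ and pseudoelementary matrices $\tau_k(t) \in \widetilde{\mathrm{E}}_2(B[t])$ with
\[
\lambda_{\varphi(f)}\bigl(\tau_k(t)\bigr) = \sigma_k \cdot e_{i_k j_k}\bigl(\varphi(f)^{s_k} t\bigr) \cdot \sigma_k^{-1}.
\]

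With these exponents $s_k$ now fixed, I would then use the analytic isomorphism hypothesis (exactly as in the proof of Lemma~\ref{lemma:Vorsts_lemma}: every element of $B_{\varphi(f)}$ has the form $\varphi(f)^s b + \varphi(a/f^m)$) to choose $b_k \in B$, $a_k \in A$, and $m_k \in \mathbb{N}$ satisfying
\[
c_k = \varphi(f)^{s_k} b_k + \varphi(a_k / f^{m_k}).
\]
Substituting this decomposition into the product gives the telescoping identity
\[
\varepsilon = \prod_{k=m}^{1} \sigma_k \cdot e_{i_k j_k}\bigl(\varphi(f)^{s_k} b_k\bigr) \cdot \sigma_k^{-1} \cdot \prod_{k=1}^m e_{i_k j_k}\bigl(\varphi(a_k / f^{m_k})\bigr),
\]
in which the right-hand product is already $\varphi_f(\varepsilon_2)$ for $\varepsilon_2 \defl \prod_{k=1}^m e_{i_k j_k}(a_k/f^{m_k}) \in \mathrm{E}_2(A_f)$.

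Setting $\varepsilon_1 \defl \prod_{k=m}^{1} \tau_k(b_k)$ (which lies in $\widetilde{\mathrm{E}}_2(B)$ since pseudoelementary matrices form a subgroup by Lemma~\ref{lemma:basic_properties_pseudoelementary}(1)), evaluation at $t=b_k$ turns the first product into $\lambda_{\varphi(f)}(\varepsilon_1)$, yielding the desired factorization $\varepsilon = (\varepsilon_1)_{\varphi(f)} \cdot \varphi_f(\varepsilon_2)$.

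The only genuinely delicate point is the order of choices: Lemma~\ref{lemma:pseudoelementary_Suslin_lemma} produces specific exponents $s_k$ which I cannot choose freely, whereas Vorst's original argument for $r \geq 3$ benefits from the much stronger Suslin lemma that allows arbitrary prescribed exponents. This is why I apply Lemma~\ref{lemma:pseudoelementary_Suslin_lemma} \emph{first}, read off the $s_k$, and only then invoke the analytic isomorphism condition to solve for $b_k$ and $a_k/f^{m_k}$. Beyond this bookkeeping, the proof is a direct translation of Vorst's argument to the $2 \times 2$ setting, the only substantive modification being that conjugates of $e_{ij}(\cdot)$ by an arbitrary $\sigma_k \in \mathrm{SL}_2$ need not be elementary and can only be shown to be pseudoelementary — which is precisely the content of Lemma~\ref{lemma:pseudoelementary_Suslin_lemma}.
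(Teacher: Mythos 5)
Your proof is correct and mirrors the paper's argument exactly: the paper likewise notes that the proof is Vorst's lemma verbatim with Lemma~\ref{lemma:pseudoelementary_Suslin_lemma} substituted for Suslin's, and spells out the identical telescoping decomposition and choice of $\varepsilon_1$, $\varepsilon_2$. One small correction to your closing remark: the original Vorst argument for $r \geq 3$ (as reproduced in Lemma~\ref{lemma:Vorsts_lemma}) already applies Suslin's lemma \emph{first} to obtain the exponents $s_p$ and only then decomposes $c_k$, so the ``order of choices'' issue you flag is not actually a departure from the $r \geq 3$ case.
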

 
 \begin{proof}
 This proof follows verbatim the proof of Vorst's lemma (see Lemma~\ref{lemma:Vorsts_lemma}), the only difference is that we appeal to Lemma~\ref{lemma:pseudoelementary_Suslin_lemma} instead of \cite[Lemma~3.3]{SUSLIN_SPECIAL}.
 
 Here are the details. Let $\varepsilon=\prod_{k=1}^m e_{i_k j_k}(c_k)$ with $i_k, j_k \in \{1,2\}$, $i_k \neq j_k$, and $c_k \in B_{\varphi(f)}$. For $1 \leq p \leq m$, let $\sigma_p \defl \prod_{k=1}^{p-1} e_{i_k j_k}(c_k)$. From Lemma~\ref{lemma:pseudoelementary_Suslin_lemma} it follows that there are matrices $\tau_p(t) \in \widetilde{\mathrm{E}}_2(B[t])$ and $s_p \in \mathbb{N}$ such that 
 \[
 \sigma_p e_{i_p j_p}\bigl(\varphi(f)^{s_p} t\bigr) \sigma_p^{-1}=\bigl(\tau_p(t)\bigr)_{\varphi(f)}
 \]
 holds. As in the proof of Lemma~\ref{lemma:Vorsts_lemma}, we can find elements $a_k \in A$, $b_k \in B$, and $n_k \in \mathbb{N}$ such that
 \[
 c_k=\varphi(f)^{s_k} b_k + \varphi(a_k \slash f^{n_k})
 \]
 for all $1 \leq k \leq m$. It follows that
 \[
\varepsilon = \textstyle \prod_{k=m}^1  \sigma_k e_{i_k j_k} \bigl( \varphi(f)^{s_k} b_k\bigr) \sigma_k^{-1} \cdot \textstyle \prod_{k=1}^m e_{i_k j_k}\bigl(\varphi(a_k \slash f^{n_k}) \bigr)
 \]
 holds (this is a Formula valid for any group, see \cite[Formula~(VI.1.15)]{LAM}). If we write $\varepsilon_1 = \prod_{k=m}^1 \tau_k(b_k)$ and $\varepsilon_2=\prod_{k=1}^m e_{i_k j_k}(a_k \slash f^{n_k})$, then $\varepsilon_1$ lies in $\widetilde{\mathrm{E}}_2(B)$, $\varepsilon_2$ lies in $\mathrm{E}_2(A_f)$, and the above equation shows that the desired equality
 \[
 \varepsilon=(\varepsilon_1)_{\varphi(f)} \cdot \varphi_f(\varepsilon_2)
 \]
 holds.
 \end{proof}
 
 The following lemma gives a useful class of rings $A$ with the property that $\mathrm{SL}_2(A)=\mathrm{E}_2(A)$.
 
 \begin{lemma}\label{lemma:SL2=E2_for_A<t>}
 Let $A$ be a principal ideal domain such that $\mathrm{SL}_2(A)=\mathrm{E}_2(A)$. Then $A \langle t \rangle$ is also a principal ideal domain such that
 \[
 \mathrm{SL}_2(A \langle t \rangle) = \mathrm{E}_2(A \langle t \rangle)
 \]
 holds.
 \end{lemma}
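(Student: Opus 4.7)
The first assertion, that $A\langle t\rangle$ is a principal ideal domain, is \cite[Corollary~IV.1.3]{LAM}. The key structural fact I would use throughout is that every non-zero ideal of $A\langle t\rangle$ is of the form $nA\langle t\rangle$ for some $n\in A$, and that the non-zero primes of $A\langle t\rangle$ are in bijection with non-zero primes of $A$ via $\mathfrak{p}\mapsto \mathfrak{p}A\langle t\rangle$; this follows from \cite[Proposition~IV.1.2]{LAM}, since every maximal ideal of $A[t]$ of height $2$ contains a monic polynomial and is therefore killed in the localization at monics.

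To prove $\mathrm{SL}_2(A\langle t\rangle)=\mathrm{E}_2(A\langle t\rangle)$, my plan is to take $\sigma\in\mathrm{SL}_2(A\langle t\rangle)$ and reduce its first column $\binom{a}{c}$ to $\binom{1}{0}$ via elementary row operations. The condition $\det\sigma=1$ gives $(a,c)A\langle t\rangle=A\langle t\rangle$, so by the structural fact I may write $a=u\alpha$ and $c=w\gamma$ with $u,w\in A\langle t\rangle^\times$ and $\alpha,\gamma\in A$ coprime. Since $\mathrm{diag}(u^{-1},u)$ is elementary by Whitehead's lemma, left multiplication by this matrix reduces the first column to $\binom{\alpha}{v\gamma}$ with $v=uw\in A\langle t\rangle^\times$. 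If either $\alpha$ or $\gamma$ is a unit of $A$, the column reduces to $\binom{1}{0}$ by a direct elementary computation, so the substantive case is when both are non-units.

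The key step is to absorb $v$ into elementary operations. The main tool I would exploit is that for any $z\in A$, the matrix $E_{12}(z/v)$ lies in $\mathrm{E}_2(A\langle t\rangle)$ (since $v$ is a unit, $z/v\in A\langle t\rangle$) and sends $\binom{\alpha}{v\gamma}$ to $\binom{\alpha+z\gamma}{v\gamma}$, altering only the first entry by $z\gamma\in A$. Dually, operations of the form $E_{21}(y/v)$ with $y\in A$ modify only the second entry. Combining these with the decomposition $v=u_0 f/g$ for $u_0\in A^\times$ and monic coprime $f,g\in A[t]$, with the diagonal elementary matrices $\mathrm{diag}(\lambda,\lambda^{-1})$, and with division-with-remainder by $\alpha$ in $A[t]$, the plan is to iterate in the spirit of the Euclidean algorithm in $A$ to reduce $\binom{\alpha}{v\gamma}$ to a column both of whose entries lie in $A$ and are coprime. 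At that point the hypothesis $\mathrm{SL}_2(A)=\mathrm{E}_2(A)$ provides an elementary matrix of $\mathrm{SL}_2(A)\subseteq\mathrm{SL}_2(A\langle t\rangle)$ completing the reduction to $\binom{1}{0}$.

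The hard part is carrying out this simplification explicitly: the image of $v$ in a residue ring $A\langle t\rangle/n\cong (A/n)\langle t\rangle$ need not lie in the subring $A/n$, so the naive move of ``reducing $v$ modulo $\alpha$ inside $A$'' does not directly work, and the correct sequence of elementary moves has to be threaded more carefully through the monic-polynomial structure of $v$. An alternative approach, more in the spirit of the section, would be to first establish $\widetilde{SK}_{1,2}(A\langle t\rangle)=\ast$: by $j$-invariance of $\widetilde{SK}_{1,2}$ (Proposition~\ref{prop:pseudoelementary_j_invariant}) applied to $B=A[t]_{1+tA[t]}$ with $B/tB\cong A$ one obtains $\widetilde{SK}_{1,2}(B)\cong \widetilde{SK}_{1,2}(A)=\ast$, and one then transfers this to $B_t\cong A\langle t^{-1}\rangle\cong A\langle t\rangle$ along the lines of Proposition~\ref{prop:SK_1r_regularity_for_dvr_and_unramified}; the remaining task is to promote ``pseudoelementary'' to ``elementary'' in the PID $A\langle t\rangle$ using its explicit ideal structure.
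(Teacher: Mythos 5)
Your proposal does not complete the proof; you acknowledge this yourself in both directions you sketch. The hands-on column reduction stalls precisely at the point you flag: you cannot simply absorb the unit $v\in A\langle t\rangle^\times$ into elementary $2\times 2$ operations, because the residue rings $(A/\alpha)\langle t\rangle$ are not semilocal and the image of $v$ there need not lie in $A/\alpha$, so the Euclidean-style iteration you gesture at is not actually set up. Your alternative route via $\widetilde{SK}_{1,2}$ also has a gap of a different kind: the machinery of \S\ref{section:pseudoelementary} yields at best that $\sigma$ is \emph{pseudoelementary}, but the lemma needs $\sigma\in\mathrm{E}_2(A\langle t\rangle)$, and ``promoting pseudoelementary to elementary over a PID'' is not established anywhere (and is not automatic; $\widetilde{\mathrm E}_2\supsetneq\mathrm E_2$ in general). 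Moreover the transfer step ``along the lines of Proposition~\ref{prop:SK_1r_regularity_for_dvr_and_unramified}'' uses $\mathrm{SK}_{1,r}$ with $r\ge 3$ and does not directly port to $\widetilde{SK}_{1,2}$.

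The missing ingredient is Lam's Lemma~IV.6.1, which is exactly a descent result of the form: if $s$ lies in the Jacobson radical of $B$, $\mathrm{SL}_2(B)=\mathrm{E}_2(B)$, and $\mathrm{SL}_2(B/s)=\mathrm{E}_2(B/s)$, then $\mathrm{SL}_2(B_s)=\mathrm{E}_2(B_s)$. You correctly identify the right auxiliary ring $B=A[s]_{1+sA[s]}$, and the right facts about it ($\mathrm{SL}_2(B)=\mathrm{E}_2(B)$ by Whitehead/$j$-injectivity of $\mathrm E_2$ since $s$ is in the Jacobson radical; $B/s\cong A$; $B_s\cong A\langle t\rangle$ via $t\mapsto s^{-1}$), but the step that ties these together is exactly Lam~IV.6.1. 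With that citation in hand the whole argument is three lines; without it, neither of your sketches closes.
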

 
 \begin{proof}
 This follows from \cite[Lemma~IV.6.1]{LAM}. To see this, let $B \defl A[s]_{1+sA[s]}$. Then $\mathrm{SL}_2(B)=\mathrm{E}_2(B)$ holds by Lemma~\ref{lemma:SL_r_j_invariant}. Moreover, $\mathrm{SL}_2( B \slash s)=\mathrm{E}_2(B \slash s)$ holds by assumption on $A \cong B \slash s$. From \cite[Lemma~IV.6.1]{LAM} it follows $\mathrm{SL}_2(B_s)=\mathrm{E}_2(B_s)$ holds, so the second claim follows from the isomorphism $A \langle t \rangle \cong B_s$ given by $t \mapsto s^{-1}$. That $A \langle t \rangle$ is a principal ideal domain is for example shown in \cite[Corollary~IV.1.3.(3)]{LAM}.
 \end{proof}
 
 With these ingredients in place, we can show that $\widetilde{SK}_{1,2}$ is trivial for some important ``building blocks.'' The vanishing of $\widetilde{SK}_{1,2}$ on this set of rings will be used to establish all of our weak excision results for $\widetilde{SK}_{1,2}$.
 
\begin{prop}\label{prop:pseudoelementary_trivial_for_single_Laurent_variable}
 For each $n \in \mathbb{N}$ we have $\widetilde{SK}_{1,2}(\mathbb{Z}[x,x^{-1},t_1, \ldots, t_n]) \cong \ast$.
\end{prop}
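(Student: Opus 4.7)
The plan has two main steps: a reduction via the Roitman principle, and an explicit lifting argument for the base case $n=0$.

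For the reduction, I would argue as follows. By Lemma~\ref{lemma:basic_properties_pseudoelementary}(2), the functor $\widetilde{SK}_{1,2}$ is trivial on every polynomial ring over $\mathbb{Z}$, so in particular each map $\widetilde{SK}_{1,2}(\mathbb{Z}[x]) \to \widetilde{SK}_{1,2}(\mathbb{Z}[x, u_1, \ldots, u_m])$ is a bijection between singletons; hence $\mathbb{Z}[x]$ is trivially $\widetilde{SK}_{1,2}$-regular. Proposition~\ref{prop:pseudoelementary_Roitman} combined with Proposition~\ref{prop:R_several_variables} then shows that the localization $\mathbb{Z}[x,x^{-1}] = \mathbb{Z}[x]_x$ is also $\widetilde{SK}_{1,2}$-regular, so the extension map
\[
\widetilde{SK}_{1,2}(\mathbb{Z}[x,x^{-1}]) \longrightarrow \widetilde{SK}_{1,2}(\mathbb{Z}[x,x^{-1},t_1, \ldots, t_n])
\]
is a bijection for every $n \geq 0$. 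It therefore suffices to prove the base case $\widetilde{SK}_{1,2}(\mathbb{Z}[x, x^{-1}]) = \ast$.

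For the base case, given $\sigma = \bigl(\begin{smallmatrix} a & b \\ c & d \end{smallmatrix} \bigr) \in \mathrm{SL}_2(\mathbb{Z}[x,x^{-1}])$, I plan to produce a finite set $T$, a homomorphism $\varphi \colon \mathbb{Z}[T] \to \mathbb{Z}[x,x^{-1}]$, and a matrix $\sigma' \in \mathrm{SL}_2(\mathbb{Z}[T])$ with $\varphi(\sigma') = \sigma$. Via the identification $\mathbb{Z}[x,x^{-1}] = \mathbb{Z}[y,z]/(yz-1)$ (with $y=x$, $z=x^{-1}$), I first lift each entry of $\sigma$ to obtain $\tilde\sigma \in \mathrm{M}_2(\mathbb{Z}[y,z])$ with $\det \tilde\sigma = 1 + (yz-1) h$ for some $h \in \mathbb{Z}[y,z]$. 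Since $\det \sigma = 1$ forces the entries of $\sigma$ to generate the unit ideal of $\mathbb{Z}[x,x^{-1}]$, the lifted entries generate the unit ideal of $\mathbb{Z}[y,z]$, so one can choose $p,q,r,s \in \mathbb{Z}[y,z]$ with $p\tilde a + q\tilde b + r\tilde c + s\tilde d = 1$. Setting $M = h \bigl(\begin{smallmatrix} -s & r \\ q & -p \end{smallmatrix}\bigr)$, a direct computation yields $\det(\tilde\sigma + (yz-1)M) = 1 + (yz-1)^2 \det M$, pushing the determinantal defect up in the $(yz-1)$-adic filtration. I would iterate this correction, adjoining fresh indeterminates at each stage to record the required coefficients, and assemble the data into a matrix in $\mathrm{SL}_2(\mathbb{Z}[T])$ over a finitely generated polynomial ring $\mathbb{Z}[T]$ that maps to $\sigma$.

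The main obstacle is to verify that the iterative correction terminates after finitely many steps using only finitely many auxiliary variables. This is essentially a question about realizing, at the polynomial level, the formal lifting along the surjection $\mathbb{Z}[y,z] \twoheadrightarrow \mathbb{Z}[y,z]/(yz-1)$, and I expect it to follow from the smoothness of $\mathrm{SL}_2$ as an affine group scheme over $\mathbb{Z}$ via an explicit tower of corrections encoded in a polynomial ring with fresh variables. Alternatively, one could seek a slick one-step construction by building $\sigma'$ directly from the eight-tuple $(\tilde a, \tilde b, \tilde c, \tilde d, p, q, r, s, h)$ together with one extra variable $w$, arranged so that $\det \sigma' = 1$ holds identically in $\mathbb{Z}[T]$ and the substitution $y\mapsto x$, $z \mapsto x^{-1}$, $w \mapsto 0$ recovers $\sigma$.
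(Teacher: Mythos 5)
Your reduction to the base case $n=0$ is correct and is precisely what the paper does: $\mathbb{Z}[x]$ is trivially $\widetilde{SK}_{1,2}$-regular, and Propositions~\ref{prop:pseudoelementary_Roitman} and~\ref{prop:R_several_variables} transfer this regularity to the localization $\mathbb{Z}[x,x^{-1}]$. The problem lies entirely in the base case.

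Your proposed lifting argument for $\widetilde{SK}_{1,2}(\mathbb{Z}[x,x^{-1}]) \cong \ast$ has a genuine gap. The determinantal correction you describe pushes the defect from $1+(yz-1)h$ to $1+(yz-1)^2(\cdot)$, and iterating indeed raises the $(yz-1)$-adic order of the defect at each step, but it never reaches zero: the ideal $(yz-1) \subseteq \mathbb{Z}[y,z]$ is not nilpotent, and adjoining fresh polynomial indeterminates cannot make it so. The smoothness of $\mathrm{SL}_2$ only furnishes infinitesimal lifting along nilpotent (or henselian) thickenings; it gives no control over lifting along an arbitrary surjection such as $\mathbb{Z}[y,z] \twoheadrightarrow \mathbb{Z}[x,x^{-1}]$. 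Your alternative ``slick one-step'' construction, as written, has the problem that if $\det\sigma'=1$ identically over $\mathbb{Z}[T]$ then setting $w\mapsto 0$ already yields a matrix of determinant $1$ over $\mathbb{Z}[y,z]$, forcing $h=0$ up front, which is exactly the hypothesis you cannot assume. You identify the termination issue as ``the main obstacle,'' and it is indeed an obstacle, not a technical point that will succumb to bookkeeping: there is no reason why every $\sigma\in\mathrm{SL}_2(\mathbb{Z}[x,x^{-1}])$ should lift to $\mathrm{SL}_2$ of a polynomial ring surjecting onto $\mathbb{Z}[x,x^{-1}]$, and a direct argument along these lines is not available.

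The paper proves the base case by an entirely different route: it uses the analytic patching diagram whose corners are $\mathbb{Z}[x]$, $\mathbb{Z}[x,x^{-1}]$, $\mathbb{Z}[x]_{1+x\mathbb{Z}[x]}$, and $\mathbb{Z}\langle x^{-1}\rangle$ (the last being the localization at monic polynomials in $x^{-1}$). Given $\sigma\in\mathrm{SL}_2(\mathbb{Z}[x,x^{-1}])$, one first notes that its image in $\mathrm{SL}_2(\mathbb{Z}\langle x^{-1}\rangle)$ is genuinely \emph{elementary} by Lemma~\ref{lemma:SL2=E2_for_A<t>}, since $\mathbb{Z}$ is euclidean. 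The partial Vorst lemma (Lemma~\ref{lemma:partial_Vorst_lemma}), which is the sharpening of Vorst's factorization lemma to the $2\times 2$ case using pseudoelementary matrices, then factors this elementary matrix into a pseudoelementary piece over $\mathbb{Z}[x]_{1+x\mathbb{Z}[x]}$ and an elementary piece over $\mathbb{Z}[x,x^{-1}]$. Because $\mathrm{SL}_2(-)$ preserves pullbacks, this factorization descends to a matrix over $\mathbb{Z}[x]$, witnessing that $\sigma$ is pseudoelementary. This patching-based argument is what replaces the na{\"i}ve lifting you were attempting; without it (or something equivalent), the base case does not close.
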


\begin{proof}
 The ring $\mathbb{Z}[x]$ is by definition of pseudoelementary matrices $\widetilde{SK}_{1,2}$-regular. Since $\widetilde{SK}_{1,2}$ satisfies the Roitman principle $(\mathrm{R})$ (see Proposition~\ref{prop:pseudoelementary_Roitman}), it follows that the localization $\mathbb{Z}[x,x^{-1}]$ is also $\widetilde{SK}_{1,2}$-regular. Thus it suffices to check that $\widetilde{SK}_{1,2}(\mathbb{Z}[x,x^{-1}])\cong \ast$.
 
 To see this, consider the analytic patching diagram
 \[
 \xymatrix{\mathbb{Z}[x] \ar[r] \ar[d]_{\varphi} & \mathbb{Z}[x,x^{-1}] \ar[d]^{\varphi_x} \\ \mathbb{Z}[x]_{1+x\mathbb{Z}[x]} \ar[r] & \mathbb{Z} \langle x^{-1} \rangle }
 \]
 and fix $\sigma \in \mathrm{SL}_2(\mathbb{Z}[x,x^{-1}])$. By Lemma~\ref{lemma:SL2=E2_for_A<t>}, the image of $\sigma$ in $\mathbb{Z} \langle x^{-1} \rangle$ is an elementary matrix. Thus the partial version of Vorst's lemma implies that there exists a factorization 
 \[
 \varphi_x(\sigma)=(\varepsilon_1)_x \varphi_x(\varepsilon_2)
 \]
 for some $\varepsilon_1 \in \widetilde{\mathrm{E}}_2(\mathbb{Z}[x]_{1+x \mathbb{Z}[x]})$ and some $\varepsilon_2 \in \mathrm{E}_2(\mathbb{Z}[x,x^{-1}])$, see Lemma~\ref{lemma:partial_Vorst_lemma}.
 
 Since $\mathrm{SL}_2(-)$ preserves pullbacks, it follows that there exists a unique matrix $\tau \in \mathrm{SL}_2(\mathbb{Z}[x])$ such that $\varphi(\tau)=\varepsilon_1$ and $\tau_x=\sigma (\varepsilon_2)^{-1}$. It follows that $\sigma$ is pseudoelementary, as claimed.
\end{proof}

In order to establish weak Zariski excision for $\widetilde{SK}_{1,2}$, we use a technique due to Plumstead. We call two matrices $\sigma_0, \sigma_1 \in \mathrm{SL}_r(A)$ \emph{isotopic} if there exists a $\tau(t) \in \mathrm{SL}_2(A[t])$ with $\sigma_0=\tau(0)$ and $\sigma_1 =\tau(1)$. This is a symmetric relation since $\tau(1-t)$ gives an isotopy ``in the other direction.''

\begin{lemma}[Plumstead]\label{lemma:Plumstead}
 Let $A$ be a commutative ring and let $f, g \in A$ be comaximal elements. Let $\sigma_0, \sigma_1 \in \mathrm{SL}_2(A_{fg})$ and let $P_i \in \Proj(A)$, $i=1,2$ be the projective module corresponding to the patching data $(A_f^2, A_g^2, \sigma_i)$. If $\sigma_0$ and $\sigma_1$ are isotopic, then there exists an isomorphism $P_0 \cong P_1$ of $A$-modules.
\end{lemma}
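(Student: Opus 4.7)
The plan is to use the isotopy to produce a single projective module over $A[t]$ that interpolates between $P_0$ and $P_1$, then invoke Quillen's local-global principle for $P_2$ to conclude that this module is extended from $A$, so that its fibres at $t=0$ and $t=1$ automatically coincide.

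In detail, fix an isotopy $\tau(t)\in\mathrm{SL}_2(A_{fg}[t])$ with $\tau(0)=\sigma_0$ and $\tau(1)=\sigma_1$. The commutative square
\[
\xymatrix{A[t] \ar[r] \ar[d] & A_f[t] \ar[d] \\ A_g[t] \ar[r] & A_{fg}[t] }
\]
is a Milnor square (base-changing the Zariski patching square for $A$ along the flat morphism $A\rightarrow A[t]$), so the patching datum $(A_f[t]^2,A_g[t]^2,\tau(t))$ produces a projective module $P\in\Proj(A[t])$ via Proposition~\ref{prop:patching_projective_modules}. Applying Lemma~\ref{lemma:naturality_of_patching_data} to the two cubes whose vertical maps are evaluation at $t=0$ respectively $t=1$ yields canonical isomorphisms $P\otimes_{A[t],\mathrm{ev}_0}A\cong P_0$ and $P\otimes_{A[t],\mathrm{ev}_1}A\cong P_1$. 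It therefore suffices to show that $P$ is extended from $A$, for then $P\cong Q\otimes_A A[t]$ for some $Q\in\Proj(A)$ and both fibres are identified with $Q$.

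To verify that $P$ is extended, I invoke Quillen's principle $(\mathrm{Q})$ for the functor $P_2\colon\CRing\rightarrow\Set_\ast$, which holds by Corollary~\ref{cor:analytic_excision_implies_Q} combined with the weak analytic excision of Corollary~\ref{cor:weak_analytic_excision_for_P_and_P_r}. By the several-variable version of Proposition~\ref{prop:Q_several_variables} (applied to $P_2$, which is finitary by Remark~\ref{rmk:M_P_Pr_finitary}), it is enough to check that $P_{\mathfrak{m}}$ is extended from $A_{\mathfrak{m}}$ for every maximal ideal $\mathfrak{m}\subseteq A$. Since $(f,g)=A$, at least one of $f,g$ lies outside $\mathfrak{m}$; say $f\notin\mathfrak{m}$. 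Then $A_{\mathfrak{m}}$ is a localization of $A_f$, so base-changing the above Milnor square to $A_{\mathfrak{m}}$ collapses the top horizontal arrow to an isomorphism. Applying Lemma~\ref{lemma:naturality_of_patching_data} once more (to the cube with diagonal arrows given by localization at $\mathfrak{m}$) identifies $P_{\mathfrak{m}}$ with a rank-$2$ \emph{free} $A_{\mathfrak{m}}[t]$-module, which is trivially extended from $A_{\mathfrak{m}}$.

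The only mild subtlety in this argument is ensuring that the two identifications $P|_{t=i}\cong P_i$ are genuinely canonical and compatible with the reduction step, which is precisely what Lemma~\ref{lemma:naturality_of_patching_data} provides; no truly hard step is involved, because all the heavy lifting has already been encapsulated in the Quillen principle for $P_2$. Putting the pieces together, $P\cong Q\otimes_A A[t]$ for some $Q\in\Proj(A)$, hence $P_0\cong Q\cong P_1$, as required.
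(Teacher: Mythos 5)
Your proof is correct and follows essentially the same route as the paper: build the module $P$ over $A[t]$ from the isotopy $\tau(t)$, observe that $P$ is locally free (hence locally extended), invoke the Quillen principle $(\mathrm{Q})$ for $P_2$ to see $P$ is extended from $A$, and then read off $P_0\cong P_1$ via Lemma~\ref{lemma:naturality_of_patching_data}. One small terminological slip: the square you display is a Zariski patching square (an analytic patching diagram in the sense of Lemma~\ref{lemma:zariski_analytic_patching_diagram}), not a Milnor square in the sense of Definition~\ref{dfn:weak_milnor_excision} (neither horizontal map there is surjective); this does not affect the argument since Proposition~\ref{prop:patching_projective_modules} and Lemma~\ref{lemma:naturality_of_patching_data} both cover analytic patching diagrams.
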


\begin{proof}
 This is proved in \cite[Lemma~1]{PLUMSTEAD}. To see this, let $P \in \Proj(A[t])$ be the projective module corresponding to the patching datum given by a fixed isotopy $\bigl(A_f[t]^2,A_g[t]^2,\tau(t) \bigr)$. Note that, by construction, the localized modules $P_{\mathfrak{m}} \in \Proj(A_{\mathfrak{m}}[t])$ are free for all maximal ideals $\mathfrak{m} \subseteq A$ (since each localization $A \rightarrow A_{\mathfrak{m}}$ factors either through $A \rightarrow A_f$ or through $A \rightarrow A_g$ and the modules $P_f$ and $P_g$ are both free by definition of $P$). 
 
 Since $P_2 \colon \CRing \rightarrow \Set_{\ast}$ satisfies the Quillen principle $(\mathrm{Q})$, it follows that $P$ is extended. Thus the patching data $\sigma_0=\tau(0)$ and $\sigma_1=\tau(1)$ define isomorphic modules since they correspond to $P \slash t P$ and $P \slash (t-1)P$ respectively (see Lemma~\ref{lemma:naturality_of_patching_data}).
\end{proof}

\begin{lemma}\label{lemma:pseudoelementary_isotopies}
 Each pseudoelementary matrix $\sigma \in \widetilde{\mathrm{E}}_2(A)$ is isotopic to an elementary matrix and each elementary matrix is isotopic to the identity matrix.
\end{lemma}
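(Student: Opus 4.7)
The plan is to handle both claims by means of the standard polynomial scaling trick, combined with the classical fact that $\mathrm{SL}_2(\mathbb{Z}) = \mathrm{E}_2(\mathbb{Z})$ (since $\mathbb{Z}$ is Euclidean).

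For the second claim, given an elementary matrix $\varepsilon = \prod_{k=1}^{m} e_{i_k j_k}(a_k) \in \mathrm{E}_2(A)$, I would simply set
\[
\tau(t) \defl \prod_{k=1}^{m} e_{i_k j_k}(t a_k) \in \mathrm{SL}_2(A[t]),
\]
so that $\tau(0) = \id$ and $\tau(1) = \varepsilon$, giving the desired isotopy between $\id$ and $\varepsilon$.

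For the first claim, I would write $\sigma = \varphi(\sigma')$ for some ring homomorphism $\varphi \colon \mathbb{Z}[t_1, \ldots, t_n] \rightarrow A$ and some matrix $\sigma' \in \mathrm{SL}_2(\mathbb{Z}[t_1, \ldots, t_n])$, as permitted by the definition of pseudoelementary. The key idea is to contract $\sigma'$ to its value at the origin by scaling all variables simultaneously. Concretely, let $s \colon \mathbb{Z}[t_1, \ldots, t_n, t] \rightarrow \mathbb{Z}[t_1, \ldots, t_n, t]$ be the ring endomorphism fixing $t$ and sending $t_i \mapsto t \cdot t_i$, applied entrywise to $\sigma'$ viewed as a matrix over $\mathbb{Z}[t_1, \ldots, t_n, t]$. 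Pushing the resulting matrix forward along the canonical extension $\varphi[t] \colon \mathbb{Z}[t_1, \ldots, t_n, t] \rightarrow A[t]$ produces
\[
\tau(t) \defl \varphi[t]\bigl(s(\sigma')\bigr) \in \mathrm{SL}_2(A[t])
\]
with $\tau(0) = \varphi\bigl(\sigma'(0, \ldots, 0)\bigr)$ and $\tau(1) = \varphi(\sigma') = \sigma$.

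The proof then concludes by observing that $\sigma'(0, \ldots, 0)$ lies in the image of $\mathrm{SL}_2(\mathbb{Z})$. Since $\mathrm{SL}_2(\mathbb{Z}) = \mathrm{E}_2(\mathbb{Z})$, this integer matrix is a product of elementary matrices over $\mathbb{Z}$, and its image $\tau(0)$ is therefore elementary over $A$. Hence $\tau(t)$ furnishes the required isotopy between $\sigma$ and an elementary matrix. There is no substantive obstacle: the whole argument rests on the universal polynomial-scaling homotopy, together with the Euclidean-algorithm presentation of $\mathrm{SL}_2(\mathbb{Z})$.
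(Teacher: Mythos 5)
Your proof is correct and follows essentially the same route as the paper: the linear scaling $t_i \mapsto t\cdot t_i$ to contract $\sigma'$ to its value at the origin, the identification $\mathrm{SL}_2(\mathbb{Z}) = \mathrm{E}_2(\mathbb{Z})$ to recognize $\tau(0)$ as elementary, and the obvious scaling $c_k \mapsto t c_k$ for the second claim. No discrepancies.
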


\begin{proof}
 The claim about elementary matrices is straightforward: if the elementary matrix $\varepsilon$ is given by $\prod_{k=1}^m e_{i_k j_k}(c_k)$, then $\tau(t) \defl \prod_{k=1}^m e_{i_k j_k}(c_k \cdot t)$ gives the desired isotopy between $\varepsilon$ and $\id$.
 
 To see the first claim, let $\sigma$ be the image of $\sigma^{\prime} \in \mathrm{SL}_2(\mathbb{Z}[x_1, \ldots, x_n])$ under $\varphi \colon \mathbb{Z}[x_1, \ldots, x_n] \rightarrow A$. Let 
 \[
 h \colon \mathbb{Z}[x_1,\ldots, x_n] \rightarrow \mathbb{Z}[x_1, \ldots, x_n][t]
\]
be the homomorphism given by $x_i \mapsto x_i t$. Then we have $\mathrm{ev}_1 \circ h(\sigma^{\prime})=\sigma^{\prime}$ and the matrix $\mathrm{ev}_0 \circ h (\sigma^{\prime})$ is elementary since $\mathrm{ev}_0 h$ factors through $\mathrm{SL}_2(\mathbb{Z})=\mathrm{E}_2(\mathbb{Z})$. The matrix
\[
\tau(t) \defl \varphi[t] \circ h (\sigma)
\]
gives an isotopy between $\sigma$ and an elementary matrix since $\mathrm{ev}_i \circ \varphi[t]=\varphi \circ \mathrm{ev}_i$ for $i=0,1$.
\end{proof}

\begin{thm}\label{thm:pseudoelementary_weak_Zariski_excision}
 Let $A$ be a commutative ring and let $S, T \subseteq A$ be two multiplicative sets such that for each $f \in S$ and each $g \in T$ we have $Af+Ag=A$. Then the following hold:
 \begin{enumerate}
 \item[(a)] For each $\varepsilon \in \widetilde{\mathrm{E}}_2(A_{S,T})$, there exist $\varepsilon_1 \in \widetilde{\mathrm{E}}_2(A_S)$ and $\varepsilon_2 \in \widetilde{\mathrm{E}}_2(A_T)$ such that $\varepsilon=(\varepsilon_1)_T (\varepsilon_2)_S$ holds;
 \item[(b)] The square
 \[
 \xymatrix{ \widetilde{SK}_{1,2}(A) \ar[r] \ar[d] & \widetilde{SK}_{1,2}(A_S) \ar[d] \\ \widetilde{SK}_{1,2}(A_T) \ar[r] & \widetilde{SK}_{1,2}(A_{S,T})}
 \]
 is a weak pullback diagram.
 \end{enumerate}
\end{thm}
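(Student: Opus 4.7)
The plan is to establish (a), from which (b) follows by a formal argument based on representability of $\mathrm{SL}_2$. Throughout, since $\widetilde{SK}_{1,2}$ is finitary by Lemma~\ref{lemma:basic_properties_pseudoelementary}, we reduce both assertions to the case where $S=\{1,f,f^2,\ldots\}$ and $T=\{1,g,g^2,\ldots\}$ with $Af+Ag=A$.

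For (a), the central idea is to lift $\varepsilon \in \widetilde{\mathrm{E}}_2(A_{fg})$ to a \emph{universal} model over which a genuine Zariski patching is available and where Proposition~\ref{prop:pseudoelementary_trivial_for_single_Laurent_variable} applies. Write $\varepsilon=\varphi(\sigma')$ with $\varphi \colon \mathbb{Z}[x_1,\ldots,x_n] \to A_{fg}$ and $\sigma' \in \mathrm{SL}_2(\mathbb{Z}[x_1,\ldots,x_n])$, choose $N$ large enough that $\varphi(x_i)=c_i/(fg)^N$ with $c_i \in A$, and fix $u_0,v_0 \in A$ with $u_0 f+v_0 g=1$. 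Let
\[
Q \defl \mathbb{Z}[x_1,\ldots,x_n,z,w,u,v]/(uz+vw-1)
\]
and define $\pi \colon Q \to A$ by $x_i \mapsto c_i$, $z \mapsto g$, $w \mapsto f$, $u \mapsto v_0$, $v \mapsto u_0$. By construction $z$ and $w$ are comaximal in $Q$, and the matrix $\tilde{\sigma} \defl \sigma'\bigl(x_i/(zw)^N\bigr) \in \mathrm{SL}_2(Q_{zw})$ satisfies $\pi_{zw}(\tilde{\sigma})=\varepsilon$.

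The crucial feature of $Q$ is that solving $uz+vw=1$ for $u$ (respectively $v$) after inverting $z$ (respectively $w$) identifies $Q_z$ and $Q_w$ with Laurent polynomial rings of the form $\mathbb{Z}[z,z^{-1},\text{polynomial variables}]$ and $\mathbb{Z}[w,w^{-1},\text{polynomial variables}]$, so Proposition~\ref{prop:pseudoelementary_trivial_for_single_Laurent_variable} gives $\mathrm{SL}_2(Q_z)=\widetilde{\mathrm{E}}_2(Q_z)$ and $\mathrm{SL}_2(Q_w)=\widetilde{\mathrm{E}}_2(Q_w)$. By Lemma~\ref{lemma:pseudoelementary_isotopies} there is an isotopy $\tau(t) \in \mathrm{SL}_2(Q_{zw}[t])$ with $\tilde{\sigma}^*=\tau(0) \in \mathrm{E}_2(Q_{zw})$ and $\tau(1)=\tilde{\sigma}$. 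Lemma~\ref{lemma:partial_Vorst_lemma}, applied to the analytic isomorphism $Q \to Q_w$ along $z$, factors $\tilde{\sigma}^*$ as $(\alpha_1)_z(\alpha_2)_w$ with $\alpha_1 \in \widetilde{\mathrm{E}}_2(Q_w)$ and $\alpha_2 \in \mathrm{E}_2(Q_z)$; such a factorization provides an explicit trivialization of the patching datum $(Q_z^2,Q_w^2,\tilde{\sigma}^*)$, so the associated projective $Q$-module $P_{\tilde{\sigma}^*}$ is free. Plumstead's Lemma~\ref{lemma:Plumstead} transfers this to $P_{\tilde{\sigma}} \cong P_{\tilde{\sigma}^*}$, and the free patching Lemma~\ref{lemma:free_patching} now yields $\tilde{\sigma}=(\sigma_1)_z(\sigma_0)_w$ with $\sigma_1 \in \mathrm{SL}_2(Q_w)$ and $\sigma_0 \in \mathrm{SL}_2(Q_z)$, both of which are pseudoelementary by the identification above. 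Pushing through $\pi$ produces
\[
\varepsilon=(\pi_w(\sigma_1))_g\,(\pi_z(\sigma_0))_f=(\varepsilon_1)_T\,(\varepsilon_2)_S
\]
with $\varepsilon_1=\pi_w(\sigma_1) \in \widetilde{\mathrm{E}}_2(A_f)=\widetilde{\mathrm{E}}_2(A_S)$ and $\varepsilon_2=\pi_z(\sigma_0) \in \widetilde{\mathrm{E}}_2(A_g)=\widetilde{\mathrm{E}}_2(A_T)$, as required.

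Part~(b) then follows formally. Given $[\sigma_1] \in \widetilde{SK}_{1,2}(A_S)$ and $[\sigma_2] \in \widetilde{SK}_{1,2}(A_T)$ mapping to the same element of $\widetilde{SK}_{1,2}(A_{S,T})$, we have $(\sigma_1)_T=\varepsilon \cdot (\sigma_2)_S$ for some $\varepsilon \in \widetilde{\mathrm{E}}_2(A_{S,T})$. Factoring $\varepsilon=(\varepsilon_1)_T(\varepsilon_2)_S$ via (a) and replacing $\sigma_1$ by $\varepsilon_1^{-1}\sigma_1$ and $\sigma_2$ by $\varepsilon_2\sigma_2$ preserves the classes in $\widetilde{SK}_{1,2}$ and equalizes their images in $\mathrm{SL}_2(A_{S,T})$, whereupon representability of $\mathrm{SL}_2$ (which preserves the analytic pullback) supplies a common preimage in $\mathrm{SL}_2(A)$.

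The principal obstacle is to guarantee that the free patching argument over $Q$ produces \emph{pseudoelementary} rather than merely $\mathrm{SL}_2$ factors, which is what forces us to introduce the auxiliary variables $u,v$ so that $Q_z$ and $Q_w$ can be identified with Laurent polynomial rings covered by Proposition~\ref{prop:pseudoelementary_trivial_for_single_Laurent_variable}; matching the order of the factors to that required by the statement is what dictates the specific assignment $\pi(z)=g,\pi(w)=f$ in the construction.
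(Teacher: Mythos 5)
Your proposal is correct and follows essentially the same route as the paper: you introduce the universal ring with a pair of comaximal elements (your $Q$ is the paper's $B[t_1,\ldots,t_n]$ with $z,w,u,v$ playing the roles of $g,f,\bar g,\bar f$), observe that its localizations at $z$ and $w$ are Laurent polynomial rings covered by Proposition~\ref{prop:pseudoelementary_trivial_for_single_Laurent_variable}, and combine the isotopy Lemma~\ref{lemma:pseudoelementary_isotopies}, Plumstead's Lemma~\ref{lemma:Plumstead}, and the free patching Lemma~\ref{lemma:free_patching} to conclude. The only cosmetic difference is that you stop the isotopy at the elementary matrix $\tilde\sigma^*$ and then invoke Lemma~\ref{lemma:partial_Vorst_lemma} to see that $P_{\tilde\sigma^*}$ is free, whereas the paper uses the full force of Lemma~\ref{lemma:pseudoelementary_isotopies} (elementary matrices are isotopic to the identity) to reduce directly to the identity patching datum; your extra step is a harmless detour, not a gap.
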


\begin{proof}
 Part~(b) follows from Part~(a) and the fact that $\mathrm{SL}_2(-)$ preserves pullbacks: if $(\sigma)_S=(\tau)_T \varepsilon$ in $\mathrm{SL}_2(A_{S,T})$, then $\bigl(\sigma (\varepsilon_2)^{-1} \bigr)_S=(\tau \varepsilon_1)_T$, so both these matrices come from a unique matrix in $\mathrm{SL}_2(A)$.
 
 To see Part~(a), let $\varepsilon \in \widetilde{\mathrm{E}}_2(A_{S,T})$ and choose a homomorphism
 \[
 \varphi \colon \mathbb{Z}[t_1, \ldots, t_n] \rightarrow A_{S,T}
 \]
 such that $\varepsilon$ is the image of some $\varepsilon^{\prime} \in \mathrm{SL}_2(\mathbb{Z}[t_1, \ldots, t_n])$ under $\varphi$. Since the domain of $\varphi$ is finitely presentable, we can reduce to the case where both $S$ and $T$ are generated by a single element $f$ respectively $g$.
 
 Now we let $B$ be the ``free'' ring with two comaximal elements, that is, $B=\mathbb{Z}[f,\bar{f},g,\bar{g}] \slash ( f\bar{f}+g \bar{g} -1)$. Any choice of ``witnesses'' that $f,g \in A$ are comaximal induces a ring homomorphism $B \rightarrow A$ sending $f$ to $f$ and $g$ to $g$. We can extend this to a ring homomorphism $\psi \colon B[t_1, \ldots, t_n] \rightarrow A$ in such a way that the image of $\varphi$ above is contained in the image of $\psi_{fg} \colon B_{fg}[t_1, \ldots, t_n] \rightarrow A_{fg}$. It follows that there exists a lift
 \[
 \xymatrix{ & B_{fg}[t_1, \ldots, t_n] \ar[d]^{\psi_{fg}} \\ \mathbb{Z}[t_1, \ldots, t_n] \ar[r]_-{\varphi} \ar@{-->}[ru]^{\varphi^{\prime}} & A_{fg}}
 \]
 making the above triangle commutative. We have reduced the problem to checking that the pseudoelementary matrix $\varphi^{\prime}(\varepsilon^{\prime})$ has the desired factorization, for then we get the factorization of $\varepsilon$ by applying $\psi_f$ and $\psi_g$ respectively.
 
 Since $\varphi^{\prime}(\varepsilon^{\prime})$ is isotopic to an elementary matrix and all elementary matrices are isotopic to the identity (see Lemma~\ref{lemma:pseudoelementary_isotopies}), Plumstead's lemma implies that there exist an isomorphism
 \[
 (B_f[t_i]^2,B_g[t_i], \varphi^{\prime}(\varepsilon^{\prime})) \cong (B_f[t_i]^2,B_g[t_i], \id )
 \]
 of patching data (see Lemma~\ref{lemma:Plumstead}). From the free patching lemma it thus follows that we have a factorization $\varphi^{\prime}(\varepsilon^{\prime})=(\sigma_1)_g (\sigma_2)_f$ for some $\sigma_1 \in \mathrm{SL}_2(B_f[t_i])$ and $\sigma_2 \in \mathrm{SL}_2(B_g[t_i])$.
 
 To conclude the proof, it thus suffices to check that the equalities
 \[
 \mathrm{SL}_2(B_f[t_i]) =\widetilde{\mathrm{E}}_2(B_f[t_i]) \quad \text{and} \quad  \mathrm{SL}_2(B_g[t_i]) =\widetilde{\mathrm{E}}_2(B_g[t_i])
 \]
 hold. This follows from the two isomorphisms $B_{f} \cong \mathbb{Z}[f,f^{-1},g,\bar{g}]$ and $B_g \cong \mathbb{Z}[f,\bar{f},g,g^{-1}]$ and Proposition~\ref{prop:pseudoelementary_trivial_for_single_Laurent_variable} (which tells us that $\widetilde{SK}_{1,2}$ is trivial on any polynomial ring over $\mathbb{Z}[x,x^{-1}]$).
\end{proof}

 As a consequence, we can now extend Proposition~\ref{prop:pseudoelementary_trivial_for_single_Laurent_variable} to an arbitrary number of Laurent polynomial variables.

 \begin{cor}\label{cor:pseudoelementary_over_several_Laurent_variables}
 If $R$ is either a field or a principal ideal domain such that the equality $\mathrm{SL}_2(R)=\mathrm{E}_2(R)$ holds, then
 \[
 \widetilde{SK}_{1,2}(R[x_1^{\pm}, \ldots, x_n^{\pm}, t_1, \ldots, t_m]) \cong \ast
 \]
 holds for all $n, m \in \mathbb{N}$.
 \end{cor}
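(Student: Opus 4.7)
The plan is to apply Theorem~\ref{thm:strong_F_extension_for_Laurent} to the functor $F = \widetilde{SK}_{1,2}$ with $R^{\prime} = A = R$. To do this, I need to verify three things: that $R$ has the strong $\widetilde{SK}_{1,2}$-extension property over itself, that $\widetilde{SK}_{1,2}$ satisfies the Horrocks principle $(\mathrm{H})$, and that $\widetilde{SK}_{1,2}$ satisfies the form of weak excision required by the theorem.

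The latter two are already in place: Corollary~\ref{cor:SK_12_satisfies_Q_and_H} gives $(\mathrm{H})$, and Theorem~\ref{thm:pseudoelementary_weak_Zariski_excision} gives weak Zariski excision (which includes weak excision for the ``unit ideal patching'' diagrams of the form
\[
\xymatrix{B \ar[r]^-{\lambda_s} \ar[d] & B_s \ar[d] \\ B_{1+sB} \ar[r]_-{\lambda_s} & B_{1+sB,s}}
\]
appearing in Theorem~\ref{thm:strong_F_extension_for_Laurent}, since $s$ and every element of $1+sB$ are comaximal in $B$).

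For the strong extension property, I need to show that the function
\[
\widetilde{SK}_{1,2}(R) \rightarrow \widetilde{SK}_{1,2}(R\langle t_1, \ldots, t_k\rangle)
\]
is surjective for every $k \geq 1$. In fact both source and target will be trivial: whenever a commutative ring $S$ satisfies $\mathrm{SL}_2(S) = \mathrm{E}_2(S)$ we have $\widetilde{SK}_{1,2}(S) \cong \ast$ by Part~(4) of Lemma~\ref{lemma:basic_properties_pseudoelementary}, and the hypothesis on $R$ gives this for $S = R$ directly (for fields, Gaussian elimination applies and they are in particular principal ideal domains). Lemma~\ref{lemma:SL2=E2_for_A<t>} propagates the property $\mathrm{SL}_2 = \mathrm{E}_2$ from a principal ideal domain to its localization at monic polynomials, so inductively each $R\langle t_1, \ldots, t_k\rangle$ is a principal ideal domain with $\mathrm{SL}_2 = \mathrm{E}_2$, hence $\widetilde{SK}_{1,2}$-trivial.

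With the strong extension property established, Theorem~\ref{thm:strong_F_extension_for_Laurent} then yields that the canonical map $\widetilde{SK}_{1,2}(R) \rightarrow \widetilde{SK}_{1,2}(R[x_1^{\pm}, \ldots, x_n^{\pm}])$ is bijective, so the target is $\ast$, and moreover that $R[x_1^{\pm}, \ldots, x_n^{\pm}]$ has the strong $\widetilde{SK}_{1,2}$-extension property over $R$. Corollary~\ref{cor:strong_F_extension_implies_F-regular} (combined with $(\mathrm{H})$) then gives $\widetilde{SK}_{1,2}$-regularity of $R[x_1^{\pm}, \ldots, x_n^{\pm}]$, from which
\[
\widetilde{SK}_{1,2}(R[x_1^{\pm}, \ldots, x_n^{\pm}, t_1, \ldots, t_m]) \cong \widetilde{SK}_{1,2}(R[x_1^{\pm}, \ldots, x_n^{\pm}]) \cong \ast
\]
follows. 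There is no real obstacle here; the argument is essentially a packaging of previously established facts. The only place that required genuine work was Theorem~\ref{thm:pseudoelementary_weak_Zariski_excision} and Lemma~\ref{lemma:SL2=E2_for_A<t>}, both already in hand.
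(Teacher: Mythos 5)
Your proposal is correct and takes essentially the same route as the paper: establish the strong $\widetilde{SK}_{1,2}$-extension property of $R$ over itself via Lemma~\ref{lemma:SL2=E2_for_A<t>}, cite Corollary~\ref{cor:SK_12_satisfies_Q_and_H} for $(\mathrm{H})$ and Theorem~\ref{thm:pseudoelementary_weak_Zariski_excision} for the needed weak excision, and then invoke Theorem~\ref{thm:strong_F_extension_for_Laurent}. The only difference is that you unpack the final step slightly more explicitly (routing the $t_j$-variables through $\widetilde{SK}_{1,2}$-regularity via Corollary~\ref{cor:strong_F_extension_implies_F-regular}), which matches what the paper leaves implicit.
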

 
 \begin{proof}
 Both these types of rings are closed under $R \mapsto R\langle t \rangle$, see Lemma~\ref{lemma:SL2=E2_for_A<t>}. It follows that $R$ has the strong $\widetilde{SK}_{1,2}$-extension property over itself, that is, the function
 \[
 \widetilde{SK}_{1,2}(R) \rightarrow \widetilde{SK}_{1,2}(R \langle t_1, \ldots, t_k \rangle) \cong \ast
 \]
 is tautologically surjective for all $k$ (see Definition~\ref{dfn:strong_extension_property}). Since $\widetilde{SK}_{1,2}$ satisfies the Horrocks principle $(\mathrm{H})$ (see Corollary~\ref{cor:SK_12_satisfies_Q_and_H}) and weak Zariski excision (see Theorem~\ref{thm:pseudoelementary_weak_Zariski_excision}), we can apply Theorem~\ref{thm:strong_F_extension_for_Laurent}, which establishes the claim.
\end{proof}

 With this in hand, we can prove our first result whose statement does not involve the newly introduced notion of pseudoelementary matrices. Namely, we can now show that all finitely generated projective modules over the coordinate ring of $\mathrm{SL}_2(R)$ are free for suitable ground rings $R$.
 
 \begin{cor}\label{cor:projectives_free_over_coordinate_ring_of_SL2}
 Let  $R$ be either a field or a principal ideal domain such that the equality $\mathrm{SL}_2(R)=\mathrm{E}_2(R)$ holds. Then all finitely generated projective modules over the two rings
 \[
 A=R[x,y,z,w] \slash (xy-zw-1) \quad \text{and} \quad B = R[x,y,z,w] \slash (xy+zw-1)
 \]
 are free.
 \end{cor}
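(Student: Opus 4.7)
The plan is to cover $A$ by two Laurent polynomial open subsets, convert freeness of a projective module into a matrix-factorization problem via Milnor--Roy patching, and solve that problem using pseudoelementary matrices for rank~$2$ and classical elementary matrix theory for rank~$\geq 3$.

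Since the change of variables $z \mapsto -z$ identifies $A$ with $B$, it suffices to treat $A$. Because $xy - zw = 1$, the elements $x$ and $z$ are comaximal, and eliminating $y$ or $w$ in the respective localizations gives
\[
A_x \cong R[x^{\pm}, z, w], \quad A_z \cong R[x, y, z^{\pm}], \quad A_{xz} \cong R[x^{\pm}, z^{\pm}, w];
\]
each is a (Laurent) polynomial extension of $R$ with all finitely generated projectives free (by Quillen--Suslin and its Laurent analogue, valid whether $R$ is a field or a PID as in the statement). Since Zariski patching is a special case of analytic patching (Lemma~\ref{lemma:zariski_analytic_patching_diagram}), Proposition~\ref{prop:patching_projective_modules} identifies any rank-$r$ projective $P$ over $A$ with a triple $(A_x^r, A_z^r, \sigma)$ where $\sigma \in \mathrm{GL}_r(A_{xz})$, and $P$ is free iff $\sigma$ lies in the image of $\mathrm{GL}_r(A_z) \times \mathrm{GL}_r(A_x) \to \mathrm{GL}_r(A_{xz})$ by the free patching lemma (Lemma~\ref{lemma:free_patching}). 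Because $(A_{xz})^{\times} = R^{\times} \cdot x^{\mathbb{Z}} \cdot z^{\mathbb{Z}} = (A_z)^{\times} \cdot (A_x)^{\times}$, diagonal matrices absorb the determinant of $\sigma$, reducing us to $\sigma \in \mathrm{SL}_r(A_{xz})$; this also disposes of the rank~$1$ case.

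For $r = 2$, Corollary~\ref{cor:pseudoelementary_over_several_Laurent_variables} gives $\widetilde{SK}_{1,2}(A_{xz}) = *$, so every $\sigma \in \mathrm{SL}_2(A_{xz})$ is pseudoelementary. Theorem~\ref{thm:pseudoelementary_weak_Zariski_excision}, applied to the comaximal multiplicative sets $\{x^n\}$ and $\{z^n\}$ (and the symmetry between them), then factors $\sigma$ in the form required by the free patching lemma, so $P$ is free. For $r \geq 3$, Vorst's Lemma (Lemma~\ref{lemma:Vorsts_lemma}), applied to the analytic isomorphism $\lambda_z \colon A \to A_z$ along $\{x^n\}$, gives the analogous factorization for every \emph{elementary} $\sigma$, so it suffices to verify $SK_{1,r}(A_{xz}) = 0$. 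This follows from $SK_{1,r}(R) = 0$ (trivial over a field by Gauss elimination, and over our PIDs because $\mathrm{SL}_2(R) = \mathrm{E}_2(R)$ by hypothesis while $\mathrm{SL}_n(R) = \mathrm{E}_n(R)$ for $n \geq 3$ holds over every PID) together with $K_1$-regularity of $R$ propagating through the relevant (Laurent-)polynomial extensions via the Fundamental Theorem of $K$-theory.

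The principal obstacle is handling the rank $r \geq 3$ case concisely without a lengthy detour into the finer $K$-theory of Laurent polynomial rings; alternatively, one can iterate Serre's splitting theorem together with Bass cancellation to reduce rank $\geq 3$ entirely to the rank-$2$ argument above, sidestepping any explicit appeal to $SK_{1,r}$-vanishing for small $r$.
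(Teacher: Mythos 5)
Your decomposition of $A$ by the comaximal elements $x$ and $z$, the Milnor--Roy patching reduction, the units computation $(A_{xz})^\times=R^\times\cdot x^{\mathbb{Z}}\cdot z^{\mathbb{Z}}$ absorbing the determinant, and the rank-$2$ argument via Corollary~\ref{cor:pseudoelementary_over_several_Laurent_variables} and Theorem~\ref{thm:pseudoelementary_weak_Zariski_excision} all match the paper's proof (which works with $B$ and gets the unit decomposition from the UFD property of $B$ rather than by direct computation, but this is cosmetic).

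The gap is in the rank $r\geq 3$ case. You want $SK_{1,r}(A_{xz})=0$ with $A_{xz}\cong R[x^{\pm},z^{\pm},w]$, and you appeal to ``$K_1$-regularity of $R$ propagating\ldots via the Fundamental Theorem of $K$-theory.'' The Fundamental Theorem only controls stable $SK_1=\operatorname{colim}_r SK_{1,r}$, and in the range that matters ($r=3,4$ when $R$ is a field so $\dim A_{xz}=3$, and also $r=5$ when $R$ is a one-dimensional PID) injective stability does \emph{not} identify $SK_{1,r}(A_{xz})$ with the stable group. The paper instead shows $R$ has the strong $SK_{1,r}$-extension property (for a field this is immediate from $SK_{1,r}$-contractibility plus Lemma~\ref{lemma:strong_F_extension_from_F_contractible}; for a PID with $\mathrm{SL}_2=\mathrm{E}_2$ it uses the argument of Proposition~\ref{prop:SK_1r_regularity_for_dvr_and_unramified} together with surjective stability $W_r(R)\cong\ast$) and then applies Theorem~\ref{thm:strong_F_extension_for_Laurent} to kill $SK_{1,r}$ over the Laurent extension for \emph{every} $r\geq 3$. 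That machinery cannot be replaced by the Fundamental Theorem alone. Relatedly, your side claim that $\mathrm{SL}_n(R)=\mathrm{E}_n(R)$ for $n\geq 3$ ``holds over every PID'' is not justified; what you actually have is that the hypothesis $\mathrm{SL}_2(R)=\mathrm{E}_2(R)$ forces $SK_1(R)=0$ by surjective stability for Dedekind domains, whence $SK_{1,r}(R)=0$ for $r\geq 3$ by injective stability in dimension one.

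Your proposed alternative does not repair this. Serre splitting and Bass cancellation apply to projectives of rank strictly greater than $\dim A$, which is $3$ (field case) or $4$ (PID case), so they leave ranks $3$ (and $4$, $5$ for a PID) untouched; in particular they do not reduce rank $3$ to rank $2$. The low-rank cases $3\leq r\leq\dim A$ are exactly where the strong $SK_{1,r}$-extension property (equivalently, the Suslin--Vorst-type theorem for \emph{unstable} $SK_{1,r}$ over Laurent polynomial rings) is indispensable.
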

 
 \begin{proof}
 The map $B \rightarrow A$ sending $z$ to $-z$ and all other variables to themselves is an isomorphism, so it suffices to consider the ring $B$. Applying Nagata's criterion (see \cite[\href{https://stacks.math.columbia.edu/tag/0AFU}{Lemma 0AFU}]{stacks-project}) to the prime element $x \in B$, we find that $B$ is a unique factorization domain, so all projective modules of rank $1$ are free (see for example \cite[Theorem~II.1.3]{LAM}).
 
 Note that all finitely generated projective modules over the rings
 \[
  B_x \cong R[x,x^{-1},z,w] \quad \text{and} \quad B_z \cong R[x,y,z,z^{-1}]
 \]
 are free. (This follows for example from the fact that these rings $R$ have the strong $P_r$-extension property over themselves for all $r \geq 1$ and Theorem~\ref{thm:strong_F_extension_for_Laurent}.) Thus each finitely generated projective $B$-module $P$ corresponds to a patching datum of the form $(B_x^2, B_z^2,\sigma)$ for some $\sigma \in \mathrm{GL}_r(B_{xz})$.
 
 Since all rank $1$ projectives over $B$ are free, it follows that each unit in $B_{xy}$ decomposes as a product of a unit in $B_x$ and a unit in $B_z$. Applying this to the determinant of $\sigma$, we can further reduce this to the case of patching datum where $\sigma \in \mathrm{SL}_r(B)$.
 
 Note that we have an isomorphism $B_{xz} \cong R[x^{\pm},z^{\pm},w]$, so if $r > 2$, we have $SK_{1,r}(B_{xz}) \cong 0$ since $R$ satisfies the strong $SK_{1,r}$-extension property (in the case where $R$ is a principal ideal domain, we can see this by using \cite[Lemma~IV.6.1]{LAM} as in the proof of Lemma~\ref{lemma:SL2=E2_for_A<t>}, and the fact that $\mathrm{SL}_2(R) \rightarrow SK_{1,r}(R)$ is surjective since $W_{r}(R) \cong \ast$ for all $r \geq 3$). For $r=2$, we have $\widetilde{SK}_{1,2}(B_{xz}) \cong \ast$ by Corollary~\ref{cor:pseudoelementary_over_several_Laurent_variables}.
 
 Thus Vorst's lemma (see Lemma~\ref{lemma:Vorsts_lemma}) in the case $r \geq 3$ and Theorem~\ref{thm:pseudoelementary_weak_Zariski_excision} in the case $r=2$ imply that there exist matrices $\sigma_1 \in \mathrm{SL}_r(B_x)$ and $\sigma_2 \in \mathrm{SL}_r(B_z)$ such that $\sigma=(\sigma_1)_z (\sigma_2)_x$ holds. These matrices give the desired isomorphism $P \cong B^r$. 
 \end{proof}
 
 In order to prove further weak excision results, we introduce some terminology.
 
 \begin{dfn}\label{dfn:good_lifts}
 Let $\varphi \colon A \rightarrow B$ be an analytic isomorphism along $f$. A commutative diagram
 \[
 \xymatrix{ \mathbb{Z}[t_1, \ldots, t_n][x] \ar[d]_{\psi} \ar[r]^-{\varphi^{\prime}} & B^{\prime} \ar[d]^{\psi^{\prime}} \\ A \ar[r]_-{\varphi} & B}
 \]
 is called a \emph{good lift of $\varphi$} if the following conditions hold:
 \begin{enumerate}
 \item[(1)] We have $\psi(x)=f$;
 \item[(2)] The homomorphism $\varphi^{\prime} \colon \mathbb{Z}[t_i,x] \rightarrow B^{\prime}$ is an analytic isomorphism along $x$;
 \item[(3)] The $\widetilde{SK}_{1,2}(B^{\prime}) \rightarrow \widetilde{SK}_{1,2}(B^{\prime}_{\varphi^{\prime}(x)})$ has trivial kernel.
 \end{enumerate}
 
  We say that $\varphi$ \emph{admits enough good lifts} if for every finitely generated subring $B_0 \subseteq B_{\varphi(f)}$, there exists a good lift as above such that $B_0$ is contained in the image of the homomorphisms $\psi^{\prime}_{\varphi^{\prime}(x)} \colon B^{\prime}_{\varphi^{\prime}(x)} \rightarrow B_{\varphi(f)}$. 
 \end{dfn}
 
 \begin{prop}\label{prop:good_lifts_implies_weak_excision}
 Let $\varphi \colon A \rightarrow B$ be an analytic isomorphism along $f$ which admits enough good lifts. Then the following hold:
 \begin{enumerate}
 \item[(a)] For each $\varepsilon \in \widetilde{\mathrm{E}}_2(B_{\varphi(f)})$, there exist $\varepsilon_1 \in \widetilde{\mathrm{E}}_2(B)$ and $\varepsilon_2 \in \widetilde{\mathrm{E}}_2(A_f)$ such that $\varepsilon=(\varepsilon_1)_{\varphi(f)} \varphi_f (\varepsilon_2)$ holds;
 \item[(b)] The square
 \[
 \xymatrix{ \widetilde{SK}_{1,2}(A) \ar[r]^-{\lambda_f} \ar[d]_{\varphi} & \widetilde{SK}_{1,2}(A_f) \ar[d]^{\varphi_f} \\ \widetilde{SK}_{1,2}(B) \ar[r]_-{\lambda_{\varphi(f)}} & \widetilde{SK}_{1,2}(B_{\varphi(f)}) }
 \]
 is a weak pullback diagram.
 \end{enumerate}
 \end{prop}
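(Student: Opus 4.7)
The plan is to reduce~(b) to~(a) by the standard pullback argument and to prove~(a) by lifting the pseudoelementary matrix $\varepsilon$ to a good lift, then invoking the Quillen--Suslin theorem on the resulting auxiliary polynomial ring. For the reduction of~(b) to~(a): if $\sigma \in \mathrm{SL}_2(B)$ and $\tau \in \mathrm{SL}_2(A_f)$ represent classes that agree in $\widetilde{SK}_{1,2}(B_{\varphi(f)})$, there exists an $\varepsilon \in \widetilde{\mathrm{E}}_2(B_{\varphi(f)})$ with $\lambda_{\varphi(f)}(\sigma) \cdot \varepsilon = \varphi_f(\tau)$. Applying~(a) to write $\varepsilon = (\varepsilon_1)_{\varphi(f)} \varphi_f(\varepsilon_2)$ yields $\lambda_{\varphi(f)}(\sigma \varepsilon_1) = \varphi_f(\tau \varepsilon_2^{-1})$, and since $\mathrm{SL}_2(-)$ is representable (hence preserves pullbacks), these matrices come from a common element of $\mathrm{SL}_2(A)$, providing the required preimage for the weak pullback.

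To prove~(a), I start by using finitariness of $\widetilde{SK}_{1,2}$ (Lemma~\ref{lemma:basic_properties_pseudoelementary}(5)) to write $\varepsilon = \chi(\varepsilon')$ for some ring homomorphism $\chi \colon \mathbb{Z}[s_1, \ldots, s_m] \to B_{\varphi(f)}$ with finitely generated image and some $\varepsilon' \in \mathrm{SL}_2(\mathbb{Z}[s_i])$. Since $\varphi$ admits enough good lifts, there exists a good lift $(B', \psi, \varphi', \psi')$ such that $\mathrm{im}(\chi) \subseteq \mathrm{im}(\psi'_{\varphi'(x)})$. Hence $\chi$ factors as $\psi'_{\varphi'(x)} \circ \tilde{\chi}$ for some $\tilde{\chi} \colon \mathbb{Z}[s_i] \to B'_{\varphi'(x)}$, and setting $\tilde{\varepsilon} := \tilde{\chi}(\varepsilon')$ gives an element of $\widetilde{\mathrm{E}}_2(B'_{\varphi'(x)})$ with $\psi'_{\varphi'(x)}(\tilde{\varepsilon}) = \varepsilon$.

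The heart of the argument applies the free patching lemma (Lemma~\ref{lemma:free_patching}) to the analytic patching diagram
\[
\xymatrix{\mathbb{Z}[t_1, \ldots, t_n, x] \ar[r]^-{\lambda_x} \ar[d]_-{\varphi'} & \mathbb{Z}[t_1, \ldots, t_n, x, x^{-1}] \ar[d]^-{\varphi'_x} \\ B' \ar[r]_-{\lambda_{\varphi'(x)}} & B'_{\varphi'(x)}}
\]
which is a pullback by condition~(2) of a good lift. The projective module $P$ associated to the patching datum $(B'^{2}, \mathbb{Z}[t_i, x, x^{-1}]^{2}, \tilde{\varepsilon})$ lives over the top-left corner $\mathbb{Z}[t_1, \ldots, t_n, x]$, which is simply a polynomial ring over $\mathbb{Z}$; since $\tilde{\varepsilon} \in \mathrm{SL}_2$, the module has constant rank $2$ and is therefore free by Quillen--Suslin. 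The free patching lemma then produces $\sigma_0 \in \mathrm{SL}_2(B')$ and $\sigma_1 \in \mathrm{SL}_2(\mathbb{Z}[t_i, x, x^{-1}])$ with $\tilde{\varepsilon} = \varphi'_x(\sigma_1) \cdot \lambda_{\varphi'(x)}(\sigma_0)$.

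It remains to promote $\sigma_0$ and $\sigma_1$ to pseudoelementary matrices and push them down. By Corollary~\ref{cor:pseudoelementary_over_several_Laurent_variables} applied with $R = \mathbb{Z}$, we have $\mathrm{SL}_2(\mathbb{Z}[t_i, x, x^{-1}]) = \widetilde{\mathrm{E}}_2(\mathbb{Z}[t_i, x, x^{-1}])$, so $\sigma_1$ is automatically pseudoelementary. Rearranging $\tilde{\varepsilon} = \varphi'_x(\sigma_1) \cdot \lambda_{\varphi'(x)}(\sigma_0)$ shows that $\lambda_{\varphi'(x)}(\sigma_0)$ is pseudoelementary over $B'_{\varphi'(x)}$, and condition~(3) of a good lift then forces $\sigma_0 \in \widetilde{\mathrm{E}}_2(B')$. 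Pushing $\sigma_0$ along $\psi'$ and $\sigma_1$ along $\psi_x$, and using commutativity of the good-lift square together with naturality of pseudoelementary matrices, produces $\psi'(\sigma_0) \in \widetilde{\mathrm{E}}_2(B)$ and $\psi_x(\sigma_1) \in \widetilde{\mathrm{E}}_2(A_f)$ satisfying $\varepsilon = \varphi_f(\psi_x(\sigma_1)) \cdot \lambda_{\varphi(f)}(\psi'(\sigma_0))$; applying this whole argument to $\varepsilon^{-1}$ and inverting the resulting pseudoelementary factors yields the desired ordering $\varepsilon = (\varepsilon_1)_{\varphi(f)} \varphi_f(\varepsilon_2)$. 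The main obstacle is conceptual rather than technical: one must recognize that the ``upstairs'' analytic patching diagram furnished by a good lift is engineered so that its pullback ring is a polynomial ring over $\mathbb{Z}$, which is precisely what reduces the unstable $2 \times 2$ patching problem to Quillen--Suslin together with the axiomatic condition~(3).
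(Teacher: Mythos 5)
Your proposal is correct and follows essentially the same route as the paper's proof: reduce~(b) to~(a) via representability of $\mathrm{SL}_2$, lift the pseudoelementary matrix through a good lift, apply the free patching lemma to the auxiliary analytic patching diagram over $\mathbb{Z}[t_i][x]$ (whose rank-$2$ projective modules are free, so the patching datum splits), observe that one factor is automatically pseudoelementary because $\widetilde{SK}_{1,2}(\mathbb{Z}[t_i][x,x^{-1}]) \cong \ast$, and then use condition~(3) of good lifts to conclude that the other factor is pseudoelementary as well before pushing both factors down along the good-lift square. The only cosmetic difference is that you invoke finitariness of $\widetilde{SK}_{1,2}$ to obtain the presentation $\varepsilon = \chi(\varepsilon')$, whereas this is immediate from the definition of a pseudoelementary matrix; otherwise the two arguments coincide.
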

 
 \begin{proof}
 As in Theorem~\ref{thm:pseudoelementary_weak_Zariski_excision}, Part~(b) follows from Part~(a) and the fact that $\mathrm{SL}_2(-)$ preserves pullback diagrams. Thus let $\varepsilon \in \widetilde{\mathrm{E}}_2(B_{\varphi(f)})$ be pseudoelementary and choose a homomorphism $\gamma \colon \mathbb{Z}[x_1, \ldots, x_k] \rightarrow B_{\varphi(f)}$ and $\varepsilon^{\prime} \in \mathrm{SL}_2(\mathbb{Z}[x_i])$ such that $\varepsilon=\gamma(\varepsilon^{\prime})$. By assumption, we can find a good lift 
 \[
 \xymatrix{ \mathbb{Z}[t_1, \ldots, t_n][x] \ar[d]_{\psi} \ar[r]^-{\varphi^{\prime}} & B^{\prime} \ar[d]^{\psi^{\prime}} \\ A \ar[r]_-{\varphi} & B}
 \]
 such that the image of $\gamma$ is contained in the image of $\psi^{\prime}_{\varphi^{\prime}(x)}$. Thus there exists a homomorphism $\gamma^{\prime}$ making the triangle
 \[
 \xymatrix{ & B^{\prime}_{\varphi^{\prime}(x)}  \ar[d]^{\psi^{\prime}_{\varphi^{\prime}(x)}} \\ \mathbb{Z}[x_i] \ar@{-->}[ru]^{\gamma^{\prime}} \ar[r]_-{\gamma} & B_{\varphi(f)}}
 \]
 commutative. To establish the claim, it therefore suffices to show that the pseudoelementary matrix $\gamma^{\prime}(\varepsilon^{\prime})$ admits a factorization as in Part~(a).
 
 Since
 \[
 \xymatrix{ \mathbb{Z}[t_1, \ldots, t_n][x] \ar[d]_{\varphi^{\prime}} \ar[r] & \mathbb{Z}[t_1, \ldots, t_n][x,x^{-1}] \ar[d]^{\varphi^{\prime}_x} \\ B^{\prime} \ar[r] & B^{\prime}_{\varphi^{\prime}(x)} }
 \]
 is an analytic patching diagram, the free patching lemma implies that $\gamma^{\prime}(\varepsilon^{\prime})$ factors as  $(\sigma_1)_{\varphi^{\prime}(x)} \varphi^{\prime}_x (\sigma_2)$ for some $\sigma_1 \in \mathrm{SL}_2(B^{\prime})$ and $\sigma_2 \in \mathrm{SL}_2(\mathbb{Z}[t_i][x,x^{-1}])$. From Proposition~\ref{prop:pseudoelementary_trivial_for_single_Laurent_variable} it follows that $\sigma_2$ is pseudoelementary. This implies that the image $(\sigma_1)_{\varphi^{\prime}(x)}=\gamma^{\prime} (\varepsilon^{\prime}) \varphi^{\prime}_x(\sigma_2^{-1})$ of $\sigma_1$ under localization at $\varphi^{\prime}(x)$ is pseudoelementary. From Part~(3) of the definition of good lifts it therefore follows that $\sigma_1$ is pseudoelementary, too, which concludes the proof of Part~(a).
 \end{proof}
 
 In order to construct good lifts, we need criteria which ensure that the homomorphism $\widetilde{SK}_{1,2}(B^{\prime}) \rightarrow \widetilde{SK}_{1,2}(B^{\prime}_{\varphi^{\prime}}(x))$ induced by localization has trivial kernel. One such criterion is given by the next lemma.
 
 \begin{lemma}\label{lemma:localization_induces_injection_on_SK12}
 Let $R$ be a commutative ring such that $\widetilde{SK}_{1,2}(R[x]) \cong \ast$ holds. Then for any polynomial $h(x) \in R[x]$, the function
 \[
 \widetilde{SK}_{1,2}(R[x]_{1+xh(x)}) \rightarrow \widetilde{SK}_{1,2}(R[x]_{1+xh(x),x})
 \]
 induced by localization at $x$ has trivial kernel.
 \end{lemma}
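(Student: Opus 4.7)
The plan is to deduce this lemma as a direct corollary of the weak Zariski excision property for $\widetilde{SK}_{1,2}$ established in Theorem~\ref{thm:pseudoelementary_weak_Zariski_excision}. I will apply that theorem to the commutative ring $R[x]$ with the two multiplicative sets $S = \{(1+xh(x))^n : n \geq 0\}$ and $T = \{x^n : n \geq 0\}$, so that $R[x]_S = R[x]_{1+xh(x)}$, $R[x]_T = R[x, x^{-1}]$, and $R[x]_{S,T} = R[x]_{1+xh(x),x}$.

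To invoke Theorem~\ref{thm:pseudoelementary_weak_Zariski_excision} I need to check that every $f \in S$ and every $g \in T$ are comaximal in $R[x]$. This follows from the identity $(1+xh(x)) - h(x) \cdot x = 1$, which shows that $1+xh(x)$ and $x$ generate the unit ideal of $R[x]$, together with the elementary observation that if two elements generate the unit ideal then so do any of their positive powers (by expanding a suitable power of the unit relation). The theorem then provides a weak pullback square
\[
\xymatrix{ \widetilde{SK}_{1,2}(R[x]) \ar[r] \ar[d] & \widetilde{SK}_{1,2}(R[x]_{1+xh(x)}) \ar[d] \\ \widetilde{SK}_{1,2}(R[x,x^{-1}]) \ar[r] & \widetilde{SK}_{1,2}(R[x]_{1+xh(x),x}) }
\]
of pointed sets, and the hypothesis $\widetilde{SK}_{1,2}(R[x]) \cong \ast$ collapses its upper-left corner to a singleton.

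The claim now follows formally. Given $[\sigma] \in \widetilde{SK}_{1,2}(R[x]_{1+xh(x)})$ whose image in $\widetilde{SK}_{1,2}(R[x]_{1+xh(x),x})$ is trivial, the pair $([\sigma],\ast)$ lies in the ordinary pullback of the bottom-right corner, since both entries are sent to $\ast$ there. By the weak pullback property, this pair lifts to some $[\tau] \in \widetilde{SK}_{1,2}(R[x])$. Since that set is trivial by hypothesis, $[\tau] = \ast$, and therefore $[\sigma]$, being the image of $[\tau]$ in $\widetilde{SK}_{1,2}(R[x]_{1+xh(x)})$, is also $\ast$. I do not anticipate any substantive obstacle: the entire argument is a formal extraction from the weak Zariski excision theorem, once the almost trivial comaximality check is made.
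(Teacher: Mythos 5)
Your proof is correct and takes essentially the same route as the paper: both invoke Theorem~\ref{thm:pseudoelementary_weak_Zariski_excision} for the comaximal pair $x$ and $1+xh(x)$ to get a weak pullback square, then use $\widetilde{SK}_{1,2}(R[x]) \cong \ast$ to collapse the lift. Your explicit check that powers of comaximal elements remain comaximal is a helpful detail that the paper leaves implicit.
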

 
 \begin{proof}
 Since $x$ and $1+xh(x)$ are comaximal, the square
 \[
 \xymatrix{\widetilde{SK}_{1,2}(R[x]) \ar[d] \ar[r] & \widetilde{SK}_{1,2}(R[x,x^{-1}]) \ar[d] \\ \widetilde{SK}_{1,2}(R[x]_{1+xh(x)}) \ar[r] & \widetilde{SK}_{1,2}(R[x]_{1+xh(x),x})}
 \]
 is a weak pullback diagram (see Theorem~\ref{thm:pseudoelementary_weak_Zariski_excision}). Thus any $\sigma$ in the kernel of the lower map is the image of some $\tau \in \widetilde{SK}_{1,2}(R[x])$, hence pseudoelementary by assumption.
 \end{proof}
 
 \begin{prop}\label{prop:L_homomorphism_good_lifts}
 Let $A \rightarrow B \defl \bigl(A[x] \slash f(x) \bigr)_{g(x)}$ be an $L$-homomorphism (for example, a standard Nisnevich homomorphism) along $a_0 \defl f(0)$. Then the patching diagram
 \[
 \xymatrix{ A \ar[d] \ar[r] & A_{a_0} \ar[d] \\ B \ar[r] & B_{a_0}}
 \]
 admits enough good lifts.
 \end{prop}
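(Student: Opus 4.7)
The plan is to construct good lifts explicitly by lifting the defining polynomials $f$ and $g$ of the $L$-homomorphism to a polynomial ring over $\mathbb{Z}$. First I would make a preliminary reduction: writing $f(x)=a_0+a_1 x+\ldots+a_n x^n$ with $a_1 \in A^{\times}$, the $A$-algebra automorphism of $A[x]$ sending $x$ to $a_1^{-1} x$ identifies $B$ with $(A[x] \slash \bar{f}(x))_{\bar{g}(x)}$ where $\bar f(x)=a_0+x+c_2 x^2+\ldots+c_n x^n$ and $\bar g(0)=1$. Since the patching diagram in the proposition depends only on the $A$-algebra $B$, I may assume $a_1=1$ from the outset. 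To avoid clashing with the variable $x$ of Definition~\ref{dfn:good_lifts}, I will now write $y$ for the polynomial variable in $A[y] \slash f(y)$.

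Given a finitely generated subring $B_0 \subseteq B_{\varphi(a_0)}$, choose finitely many elements $\alpha_1, \ldots, \alpha_k \in A$ such that the coefficients $a_2, \ldots, a_n$ of $f(y)$, all coefficients of $g(y)$, and enough elements of $A$ to express every generator of $B_0$ as a fraction $p(y) \slash (g(y)^M a_0^N)$ with $p(y) \in \mathbb{Z}[a_0, \alpha_i][y]$ all lie in $\mathbb{Z}[a_0, \alpha_1, \ldots, \alpha_k] \subseteq A$. Define $\psi \colon \mathbb{Z}[t_1, \ldots, t_k][x] \rightarrow A$ by $\psi(x)=a_0$ and $\psi(t_i)=\alpha_i$, and lift $f$ and $g$ to polynomials
\[
F(y)=x+y+\tau_2 y^2+\ldots+\tau_n y^n \quad \text{and} \quad G_0(y) \in \mathbb{Z}[t_i][x][y]
\]
with $G_0(0)=1$ and the $\tau_j$ and remaining coefficients of $G_0$ chosen among the $t_i$ so that $\psi$ sends $F$ and $G_0$ to $f$ and $g$ respectively. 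Letting $\tilde F(y) \defl (F(y)-x) \slash y=1+\tau_2 y+\ldots+\tau_n y^{n-1}$ and noting that $F^{\prime}(0)=\tilde F(0)=1$, set $G(y) \defl G_0(y) F^{\prime}(y) \tilde F(y)$ and $B^{\prime} \defl (\mathbb{Z}[t_i][x][y] \slash F(y))_{G(y)}$. The homomorphism $\mathbb{Z}[t_i][x] \rightarrow B^{\prime}$ then satisfies all conditions of Definition~\ref{dfn:standard_Nisnevich_L_homomorphism}, so it is an $L$-homomorphism along $(x)$, and Propositions~\ref{prop:L_homomorphism_implies_affine_Nisnevich_square} and \ref{prop:etale_homomorphism_analytic_isomorphism} yield condition~(2) of Definition~\ref{dfn:good_lifts}. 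The homomorphism $\psi^{\prime} \colon B^{\prime} \rightarrow B$ sending $y$ to $y$ is well-defined because $\psi(F)=f=0$ in $B$ and $\psi(G)=g(y) f^{\prime}(y) \tilde f(y)$ is a unit in $B$ (as $\varphi$ is an $L$-homomorphism along $(a_0)$). By choice of the $\alpha_i$, $B_0$ lies in the image of $\psi^{\prime}_{\varphi^{\prime}(x)}$.

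The main work is verifying condition~(3). Since $F(y)=x+y \tilde F(y)$, the relation $x=-y \tilde F(y)$ lets one eliminate $x$, giving a canonical isomorphism
\[
B^{\prime} \cong R[y]_{G(y)} \quad \text{with} \quad R \defl \mathbb{Z}[t_1, \ldots, t_k]
\]
under which $\varphi^{\prime}(x)$ corresponds to $-y \tilde F(y)$. Since $\tilde F(y)$ divides $G(y)$ and is therefore already a unit in $B^{\prime}$, the localization $B^{\prime}_{\varphi^{\prime}(x)}$ is isomorphic to $R[y]_{G(y),y}$. As $G(0)=G_0(0) F^{\prime}(0) \tilde F(0)=1$, one can write $G(y)=1+y H(y)$ for some $H(y) \in R[y]$, placing $B^{\prime}$ in the exact form $R[y]_{1+y H(y)}$ required by Lemma~\ref{lemma:localization_induces_injection_on_SK12}. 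That lemma further requires $\widetilde{SK}_{1,2}(R[y]) \cong \ast$, which holds by Part~(2) of Lemma~\ref{lemma:basic_properties_pseudoelementary} since $R[y]=\mathbb{Z}[t_1, \ldots, t_k,y]$ is a polynomial ring over $\mathbb{Z}$. Lemma~\ref{lemma:localization_induces_injection_on_SK12} then gives condition~(3) and completes the construction. The main obstacle is essentially just bookkeeping; the conceptual point is that the unit $a_1$ can be absorbed by a change of variable so that, after eliminating $x$, $B^{\prime}$ takes the very specific shape $R[y]_{1+y H(y)}$ required by Lemma~\ref{lemma:localization_induces_injection_on_SK12}.
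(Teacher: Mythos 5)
Your proof is correct and follows essentially the same strategy as the paper: lift the defining polynomials $f$ and $g$ to a universal polynomial ring over $\mathbb{Z}$, form the universal $L$-homomorphism $B^{\prime}$, eliminate the inner variable via the relation $f=0$ to exhibit $B^{\prime}$ as a localization $R[y]_{1+yH(y)}$ of a polynomial ring over $\mathbb{Z}$, and finish with Lemma~\ref{lemma:localization_induces_injection_on_SK12}. The only genuine difference is cosmetic: to normalize $a_1=1$, the paper divides $f$ by $a_1$ (which replaces the constant term by $a_0 a_1^{-1}$, an associate of $a_0$), whereas you use the $A$-automorphism $x \mapsto a_1^{-1}x$, which fixes the constant term $a_0=f(0)$ exactly; your variant therefore matches condition~(1) of Definition~\ref{dfn:good_lifts} on the nose without any implicit replacement of $a_0$ by an associate, which is slightly cleaner. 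Beyond that, the paper names the universal coefficients $a_i$, $b_j$ while you absorb them into a single anonymous list $t_1, \ldots, t_k$ — purely a notational choice.
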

 
 \begin{proof}
 Let $a_i$ denote the coefficients of $f$ and let $b_i$ denote the coefficients of $g$. By definition, we thus have $b_0=1$ and $a_1$ is a unit (see Definition~\ref{dfn:standard_Nisnevich_L_homomorphism}). By dividing $f$ through $a_1$, we can furthermore assume that $a_1=1$. Consider the polynomial algebra $\mathbb{Z}[a_i,b_j,t_1, \ldots, t_k]$ (excluding $b_0=1$ and $a_1=1$) and the homomorphism
 \[
 \psi \colon \mathbb{Z}[a_i,b_j,t_1, \ldots, t_k] \rightarrow A
 \]
 sending $a_i$ to $a_i$, $b_i$ to $b_i$ and the $t_i$ to arbitrary elements of $A$. We write $f(x)$ and $g(x)$ for the polynomials over $\mathbb{Z}[a_i,b_j,t_1, \ldots, t_k]$ lifting the corresponding polynomials over $A$. Recall that $\tilde{f}$ denotes the unique polynomial such that $f(x)-a_0=\tilde{f}(x) \cdot x$ holds. Let 
 \[
 B^{\prime} \defl \bigl( \mathbb{Z}[a_i,b_j,t_1, \ldots, t_k][x] \slash f(x) \bigr)_{f^{\prime} \tilde{f} g}
 \]
 and let $\varphi^{\prime} \colon \mathbb{Z}[a_i,b_j,t_1, \ldots, t_k] \rightarrow B^{\prime}$ be the corresponding $L$-homomorphism along $a_0$. Note that sending $x \in B^{\prime}$ to $x \in B$ induces a homomorphism $\psi^{\prime} \colon B^{\prime} \rightarrow B$ such that the square
 \[
 \xymatrix{\mathbb{Z}[a_i,b_j,t_1, \ldots, t_k]\ar[r]^-{\varphi^{\prime}} \ar[d]_{\psi} & B^{\prime} \ar[d]^{\psi^{\prime}} \\ A \ar[r]_-{\varphi} & B}
 \]
 is commutative (since both the image of $f^{\prime}$ and of $\tilde{f}$ are invertible in $B$, see Definition~\ref{dfn:standard_Nisnevich_L_homomorphism}).
 
 Since the images of the $t_i$ in $A$ can be chosen arbitrarily, it follows that any fixed finitely generated subring of $B_{a_0}$ is contained in the image of $\psi^{\prime}_{a_0}$ for a suitably chosen $\psi$, so it only remains to check that the above square defines a good lift of $\varphi$.
 
 By construction, $\varphi^{\prime}$ is an $L$-homomorphism along $a_0$, hence an analytic isomorphism along $a_0$ (see Proposition~\ref{prop:L_homomorphism_implies_affine_Nisnevich_square}). We have $\psi(a_0)=a_0$ by definition of $\psi$, so it only remains to check that
 \[
 \widetilde{SK}_{1,2}(B^{\prime}) \rightarrow \widetilde{SK}_{1,2}(B^{\prime}_{a_0})
 \]
 has trivial kernel.
 
 By ``solving for $a_0$'' we get an isomorphism
 \[
 \mathbb{Z}[\{a_i,b_j,t_{\ell}, x\} \setminus \{a_0\}] \rightarrow \mathbb{Z}[a_i,b_j,t_1, \ldots, t_k][x] \slash f(x) \smash{\rlap{,}}
 \]
 so $B^{\prime}$ can be written as localization of the domain of this isomorphism at $f^{\prime} \tilde{f} g$. From the facts that $g(0)=1$ and $a_1=1$, it follows that this is a polynomial of the form $1+xh(x)$. Since $a_0$ and $x$ are associates in $B^{\prime}$, we are reduced to checking that
 \[
 \widetilde{SK}_{1,2} \bigl( \mathbb{Z}[\{a_i,b_j,t_{\ell}, x\} \setminus \{a_0\}]_{1+xh(x)} \bigr) \rightarrow  \widetilde{SK}_{1,2}  \bigl( \mathbb{Z}[\{a_i,b_j,t_{\ell}, x\} \setminus \{a_0\}]_{1+xh(x),x} \bigr)
 \]
 is injective, which follows from Lemma~\ref{lemma:localization_induces_injection_on_SK12}.
 \end{proof}
 
 \begin{lemma}\label{lemma:good_lifts_for_filtered_colimit}
 Let $A \rightarrow B_i$ be a filtered diagram of $A$-algebras and let $f \in A$. Assume that $A \rightarrow B_i$ is an analytic isomorphism along $f$ and that $A \rightarrow B_i$ admits enough good lifts for all $i$. Then $A \rightarrow \mathrm{colim} B_i$ is an analyitic isomorphism along $f$ which admits enough good lifts.
 \end{lemma}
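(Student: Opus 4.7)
Write $B \defl \mathrm{colim}_i B_i$. The plan is to verify the two assertions separately: first that $\varphi_B \colon A \rightarrow B$ is an analytic isomorphism along $f$, then that enough good lifts can be assembled from those at finite stages by exploiting finite generation.

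For the first assertion, recall that filtered colimits in $\CRing$ are computed at the level of underlying sets and in particular commute with finite limits and with localization. Applying $- \slash f$ (a right exact functor commuting with filtered colimits) to the system $\{A \rightarrow B_i\}_i$ and using that each $A \slash f \rightarrow B_i \slash f$ is an isomorphism, I obtain that $A \slash f \rightarrow B \slash f$ is an isomorphism. Likewise, each analytic patching square
\[
\xymatrix{ A \ar[r]^-{\lambda_f} \ar[d] & A_f \ar[d] \\ B_i \ar[r] & (B_i)_{f} }
\]
is a pullback, and taking the filtered colimit over $i$ (noting that $A$ and $A_f$ are constant in $i$ while $\mathrm{colim}_i (B_i)_f \cong B_f$) produces the corresponding square for $\varphi_B$, which is therefore a pullback.

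For the second assertion, fix a finitely generated subring $B_0 \subseteq B_f$ and choose finitely many generators $c_1, \ldots, c_r$. Since $B_f \cong \mathrm{colim}_i (B_i)_f$ and the diagram is filtered, there exists an index $i$ such that each $c_k$ is the image of some $c_k^{\prime} \in (B_i)_f$ under the canonical map $(B_i)_f \rightarrow B_f$. Let $B_0^{\prime} \subseteq (B_i)_f$ be the finitely generated subring generated by the $c_k^{\prime}$, so that $B_0$ is contained in the image of $B_0^{\prime}$. By hypothesis, $\varphi_i \colon A \rightarrow B_i$ admits a good lift
\[
\xymatrix{ \mathbb{Z}[t_1, \ldots, t_n][x] \ar[d]_-{\psi} \ar[r]^-{\varphi^{\prime}} & B^{\prime} \ar[d]^-{\psi^{\prime}} \\ A \ar[r]_-{\varphi_i} & B_i }
\]
such that $B_0^{\prime}$ is contained in the image of $\psi_{\varphi^{\prime}(x)}^{\prime} \colon B^{\prime}_{\varphi^{\prime}(x)} \rightarrow (B_i)_f$. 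Composing $\psi^{\prime}$ with the colimit map $B_i \rightarrow B$ yields a commutative square with lower edge $\varphi_B$, and the localization at $\varphi^{\prime}(x)$ of this composite now has $B_0$ in its image.

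It only remains to observe that this new square is a good lift of $\varphi_B$: conditions (1) and (2) of Definition~\ref{dfn:good_lifts} depend only on the top row and left column, which are unchanged, while condition (3) is a property purely of $B^{\prime}$, which is also unchanged. Thus $\varphi_B$ admits enough good lifts, completing the proof. The only potentially subtle point is the interchange of the filtered colimit with the pullback in the first step, but this is standard because finite limits in $\CRing$ are created by the forgetful functor to $\Set$, in which filtered colimits commute with finite limits.
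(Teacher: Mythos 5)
Your proof is correct and follows the same route as the paper's (very brief) argument: both assertions are reduced to the facts that filtered colimits of commutative rings commute with quotients and pullbacks, and that every finitely generated subring of $(\mathrm{colim}_i B_i)_f$ factors through some $(B_i)_f$. The only thing the paper leaves implicit, which you correctly spell out, is that conditions (1)--(3) of Definition~\ref{dfn:good_lifts} depend only on the top and left edges of the square and on $B^{\prime}$, hence are preserved when post-composing with $B_i \to B$.
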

 
 \begin{proof}
 The homomorphism $A \rightarrow \mathrm{colim} B_i$ is an analytic isomorphism along $f$ since filtered colimits commute with the formation of quotients and with pullbacks. The claim about good lifts follows from the fact that any finitely generated subring of $(\mathrm{colim} B_i)_f$ is contained in the image of some $(B_i)_f$.
 \end{proof}
 
 \begin{cor}\label{cor:good_lifts_for_etale_nbhd}
 Let $(R, \mathfrak{m})$ be a local ring and let $R \rightarrow R^{\prime}$ be an {\'e}tale neighbourhood of $R$, exhibited as such by $R^{\prime} \cong \bigl(R[x] \slash f(x) \bigr)_{(\mathfrak{m},x)}$ with $f(x)$ monic. Let $a_0=f(0)$ and let $\gamma \colon R \rightarrow A$ be any ring homomorphism. Then the analytic isomorphism $A \rightarrow A \ten{R} R^{\prime}$ along $\gamma(a_0)$ admits enough good lifts.
 \end{cor}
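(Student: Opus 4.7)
The plan is to reduce to the case of standard Nisnevich homomorphisms already handled in Proposition~\ref{prop:L_homomorphism_good_lifts}, using the filtered colimit description of étale neighbourhoods together with the base change stability from Lemma~\ref{lemma:base_change_for_L_homomorphism} and the colimit permanence from Lemma~\ref{lemma:good_lifts_for_filtered_colimit}.

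First I would apply Proposition~\ref{prop:etale_neighbourhood_standard_Nisnevich} to express $R \to R^{\prime}$ as a directed colimit
\[
R^{\prime} \cong \colim_i R_i
\]
where each $R \to R_i$ is standard Nisnevich along $(a_0)$. Concretely, Proposition~\ref{prop:etale_neighbourhood_standard_Nisnevich} gives such a presentation with $R_i=(R[x]\slash f)_{a_1^{-2} f' \tilde f \cdot g_i}$ indexed by polynomials $g_i \in A[x]$ with $g_i(0)=1$.

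Next I would base change along $\gamma \colon R \to A$. Since tensor products commute with filtered colimits, the homomorphism $A \to A \ten{R} R^{\prime}$ is the filtered colimit of the homomorphisms $A \to A \ten{R} R_i$. By Lemma~\ref{lemma:base_change_for_L_homomorphism}, each of these base changes is standard Nisnevich along $(a_0) A = \bigl(\gamma(a_0)\bigr)$. In particular each such homomorphism is an $L$-homomorphism along $\gamma(a_0)$, so Proposition~\ref{prop:L_homomorphism_good_lifts} applies and shows that the patching diagram
\[
\xymatrix{ A \ar[d] \ar[r] & A_{\gamma(a_0)} \ar[d] \\ A \ten{R} R_i \ar[r] & (A \ten{R} R_i)_{\gamma(a_0)} }
\]
admits enough good lifts for every $i$.

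Finally, Lemma~\ref{lemma:good_lifts_for_filtered_colimit} is tailor-made for this situation: it implies that the filtered colimit $A \to \colim_i(A\ten{R} R_i) \cong A \ten{R} R^{\prime}$ is itself an analytic isomorphism along $\gamma(a_0)$ admitting enough good lifts. This completes the proof. No step is expected to present a real obstacle, since each input result has already been established; the only minor check is the routine identification $(a_0)A = \bigl(\gamma(a_0)\bigr)$ needed to invoke Lemma~\ref{lemma:base_change_for_L_homomorphism} in the form we want.
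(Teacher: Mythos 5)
Your proposal is correct and matches the paper's own proof precisely: both express $R'$ as a filtered colimit of standard Nisnevich homomorphisms via Proposition~\ref{prop:etale_neighbourhood_standard_Nisnevich}, base change along $\gamma$ using Lemma~\ref{lemma:base_change_for_L_homomorphism}, apply Proposition~\ref{prop:L_homomorphism_good_lifts} to each term, and conclude with Lemma~\ref{lemma:good_lifts_for_filtered_colimit}. Your write-up is just slightly more explicit about the base-change step than the paper's terse version.
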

 
 \begin{proof}
 The  ring $R^{\prime}$ can be written as filtered colimit of the rings $\bigl( R[x] \slash f(x) \bigr)_{g(x)}$ where $f(x)$ is fixed and $g(x)$ varies among all polynomials with $g(0)=1$ and such that $f^{\prime}$ and $\tilde{f}$ are invertible in the localization at $g$ (see Proposition~\ref{prop:etale_neighbourhood_standard_Nisnevich}). The base change to $A$ is of the same form and thus admits enough good lifts by Proposition~\ref{prop:L_homomorphism_good_lifts} and Lemma~\ref{lemma:good_lifts_for_filtered_colimit}. 
 \end{proof}
 
 With this in hand, we can prove Theorem~\ref{thm:Vorsts_Theorem_for_2x2_matrices} of the introduction.
 
 \begin{cit}[Theorem~\ref{thm:Vorsts_Theorem_for_2x2_matrices}]
 Let $R$ be a commutative ring whose local rings are unramified regular local rings. For each $\sigma(t_1, \ldots, t_n) \in \mathrm{SL}_2(R[t_1, \ldots, t_n])$, there exists a pseudoelementary matrix $\varepsilon(t_1, \ldots, t_n) \in \widetilde{\mathrm{E}}_2(R[t_1, \ldots, t_n])$ such that the equation
 \[
 \sigma(t_1, \ldots, t_n)=\sigma(0, \ldots, 0) \cdot \varepsilon(t_1, \ldots, t_n)
 \]
 holds.
 \end{cit}
 
 \begin{proof}[Theorem~\ref{thm:Vorsts_Theorem_for_2x2_matrices}]
 We first note that the claim is equivalent to the assertion that $R$ is $\widetilde{SK}_{1,2}$-regular. The claim of the theorem implies that the function
 \[
 \widetilde{SK}_{1,2}(R) \rightarrow \widetilde{SK}_{1,2}(R[t_1, \ldots, t_n])
 \]
 is bijective for all $n$, so conversely assume that $R$ is $\widetilde{SK}_{1,2}$-regular. Then for each $\sigma$ as in the theorem, there exist $\tau \in \mathrm{SL}_2(R)$ and $\varepsilon^{\prime} \in \widetilde{\mathrm{E}}_2(R[t_1, \ldots, t_n])$ such that
 \[
 \sigma(t_1, \ldots, t_n)=\tau \cdot \varepsilon^{\prime}(t_1, \ldots, t_n)
 \]
 holds. Evaluating this in $t_i=0$, we find that $[\sigma(0,\ldots,0)]=[\tau]$ in $\widetilde{SK}_{1,2}(R)$, from which the claim of the theorem follows.
 
 Since $\widetilde{SK}_{1,2}$ satisfies the Quillen principle $(\mathrm{Q})$ (see Corollary~\ref{cor:SK_12_satisfies_Q_and_H}), it only remains to check that all unramfied regular local rings are $\widetilde{SK}_{1,2}$-regular. To see this, it suffices to check that the premise of Theorem~\ref{thm:F_regularity_for_unramified_regular_rings} holds.
 
 Since $\widetilde{SK}_{1,2}$ admits a natural transitive group action, the implication
 \[
 N \widetilde{SK}_{1,2}(R) \cong \ast \quad \Rightarrow \quad R \in \ca{E}_1^{\widetilde{SK}_{1,2}}
 \]
 holds (see Lemma~\ref{lemma:transitive_action_implies_extended_if_trivial_NF}). Note that $\widetilde{SK}_{1,2}$ satisfies the Roitman principle $(\mathrm{R})$ by Proposition~\ref{prop:pseudoelementary_Roitman}.
 
 From Corollary~\ref{cor:good_lifts_for_etale_nbhd} we know that patching diagrams associated to {\'e}tale neighbourhoods admit enough good lifts. Thus Part~(b) of Proposition~\ref{prop:good_lifts_implies_weak_excision} implies that $\widetilde{SK}_{1,2}$ satisfies weak excision for patching diagrams associated to {\'e}tale neighbourhoods.
 
 Finally, we know from Corollary~\ref{cor:pseudoelementary_over_several_Laurent_variables} that all fields and all discrete valuation rings are $\widetilde{SK}_{1,2}$-regular, so both Part~$(i)$ and Part~$(ii)$ of Theorem~\ref{thm:F_regularity_for_unramified_regular_rings} are applicable, which concludes the proof that all unramified regular local rings are $\widetilde{SK}_{1,2}$-regular.
 \end{proof}
 
 In order to show that analytic isomorphisms arising from completions admit enough good lifts, we need to use for the first time the fact $\widetilde{SK}_{1,2}$ is $j$-invariant.
 
 \begin{prop}\label{prop:good_lifts_for_completion}
 Let $A$ be a finitely generated commutative ring and let $f \in A$. Then the analytic isomorphism $A \rightarrow A^{\wedge}_{(f)}$ along $f$ admits enough good lifts.
 \end{prop}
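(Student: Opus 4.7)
The plan is to exhibit a single good lift that works uniformly for every finitely generated subring $B_0\subseteq B_{\varphi(f)}$, exploiting the fact that $\widetilde{SK}_{1,2}$ of a power series ring of the form $\mathbb{Z}[y_1,\ldots,y_k][[x]]$ is trivial. This makes condition~(3) of Definition~\ref{dfn:good_lifts} collapse automatically. Concretely, fix a surjection $\pi\colon\mathbb{Z}[y_1,\ldots,y_k]\twoheadrightarrow A$ witnessing finite generation, define $\psi\colon\mathbb{Z}[y_1,\ldots,y_k][x]\to A$ by $\pi$ on the $y_i$ and $x\mapsto f$, set $B'\defl\mathbb{Z}[y_1,\ldots,y_k][[x]]$, and let $\varphi'$ be the canonical inclusion $\mathbb{Z}[y_1,\ldots,y_k][x]\hookrightarrow B'$. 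Since the composite $\mathbb{Z}[y_1,\ldots,y_k][x]\to A\to A^{\wedge}_{(f)}$ carries $x$ to $f$, it is continuous for the respective adic topologies and extends to a homomorphism $\psi'\colon B'\to A^{\wedge}_{(f)}$ obtained by evaluating formal power series $\sum G_j x^j$ at $x=f$.

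Condition~(1) of Definition~\ref{dfn:good_lifts} is then $\psi(x)=f$ by construction. For condition~(2), $x$ is a nonzerodivisor in both $\mathbb{Z}[y_1,\ldots,y_k][x]$ and $B'$, and the projection identifies $\mathbb{Z}[y_1,\ldots,y_k][x]/x^n$ with $\mathbb{Z}[y_1,\ldots,y_k][[x]]/x^n$ for every $n\geq 1$; hence Lemma~\ref{lemma:analytic_iso_nonzerodivisors} shows $\varphi'$ is an analytic isomorphism along $x$. For condition~(3), the ideal $(x)\subseteq B'$ lies in the Jacobson radical, because for any $g\in B'$ the element $1+xg$ is invertible with inverse $\sum_{n\geq 0}(-xg)^n$ converging in the $x$-adic topology. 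Combining the $j$-invariance of $\widetilde{SK}_{1,2}$ (Proposition~\ref{prop:pseudoelementary_j_invariant}) with Part~(2) of Lemma~\ref{lemma:basic_properties_pseudoelementary} yields
\[
\widetilde{SK}_{1,2}(B')\;\cong\;\widetilde{SK}_{1,2}\bigl(\mathbb{Z}[y_1,\ldots,y_k]\bigr)\;\cong\;\ast,
\]
so the localization map $\widetilde{SK}_{1,2}(B')\to\widetilde{SK}_{1,2}(B'_{\varphi'(x)})$ has trivial kernel for the trivial reason that its domain is a singleton.

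Finally, the image requirement on $\psi'_{\varphi'(x)}$ holds because $\psi'$ is already surjective. Since $A$ is Noetherian (as a quotient of $\mathbb{Z}[y_1,\ldots,y_k]$), every $z\in A^{\wedge}_{(f)}$ can be written as $\sum_{j\geq 0}a_j f^j$ with $a_j\in A$; lifting each $a_j$ through $\pi$ to $G_j\in\mathbb{Z}[y_1,\ldots,y_k]$, the element $\sum_{j\geq 0}G_j x^j\in B'$ is a $\psi'$-preimage of $z$. Hence $\psi'_{\varphi'(x)}$ surjects onto $B_{\varphi(f)}=(A^{\wedge}_{(f)})_f$, and any $B_0$ is trivially contained in its image, so the single good lift constructed above is enough. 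The argument presents no genuine obstacle: $j$-invariance combined with the triviality of $\widetilde{SK}_{1,2}$ on polynomial rings over $\mathbb{Z}$ is precisely what makes condition~(3) vacuous in this setting.
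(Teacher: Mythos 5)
Your proof is correct and follows essentially the same route as the paper's: you construct the single good lift $\varphi'\colon\mathbb{Z}[y_1,\ldots,y_k][x]\to\mathbb{Z}[y_1,\ldots,y_k]\llbracket x\rrbracket$ and observe that $\widetilde{SK}_{1,2}$ of the target is already trivial (via $j$-invariance and triviality on polynomial rings over $\mathbb{Z}$), so condition~(3) collapses, while surjectivity of $\psi'$ makes the subring condition automatic. The paper obtains $\psi'$ and its surjectivity via the base-change identity $A^{\wedge}_{(f)}\cong B'\otimes_{\mathbb{Z}[t_i][x]}A\cong B'/IB'$, whereas you argue by adic continuity and a hands-on power-series lifting; these are equivalent given that $A$ is Noetherian, so the difference is purely cosmetic.
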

 
 \begin{proof}
 Choose a surjective homomorphism $\psi \colon \mathbb{Z}[t_1, \ldots, t_n][x] \rightarrow A$ sending $x$ to $f$ and let $B^{\prime} \defl \mathbb{Z}[t_1, \ldots, t_n]\llbracket x \rrbracket$. The completion morphism
 \[
 \varphi^{\prime} \colon \mathbb{Z}[t_1, \ldots, t_n][x] \rightarrow \mathbb{Z}[t_1, \ldots, t_n]\llbracket x \rrbracket=B^{\prime}
 \]
 is an analytic isomorphism along $x$. Moreover, since $A \cong \mathbb{Z}[t_i][x] \slash I$ for some (necessarily finitely generated) ideal $I \subseteq \mathbb{Z}[t_i][x]$, we have 
 \[
 A^{\wedge}_{(f)} \cong B^{\prime} \otimes_{\mathbb{Z}[t_i][x]} \mathbb{Z}[t_i][x] \slash I \cong B^{\prime} \slash I B^{\prime} \smash{\rlap{,}}
 \]
 so the homomorphism $\psi^{\prime} \colon B^{\prime} \rightarrow A^{\wedge}_{(f)}$ induced by $\psi$ is surjective. It follows that its localization $\psi^{\prime}_x \colon B^{\prime}_x \rightarrow A^{\wedge}_{(f)}[\frac{1}{f}]$ is surjective, too. It thus suffices to check that $\varphi^{\prime}$ is a good lift.
 
 To see this, it only remains to show that the function
 \[
 \widetilde{SK}_{1,2}(B^{\prime}) \rightarrow \widetilde{SK}_{1,2}(B^{\prime}_x)
 \]
 has trivial kernel. Since $x$ lies in the Jacobson radical of $B^{\prime}=\mathbb{Z}[t_i] \llbracket x \rrbracket$, we have
 \[
 \widetilde{SK}_{1,2}(\mathbb{Z}[t_i] \llbracket x \rrbracket) \cong \widetilde{SK}_{1,2}(\mathbb{Z}[t_i]) \cong \ast
 \]
 by Proposition~\ref{prop:pseudoelementary_j_invariant} and by definition of pseudoelementary matrices. Thus the domain of the above function is trivial, so the function clearly has trivial kernel.
 \end{proof}
 
 Our final (and most technical) example of analytic isomorphisms with enough good lifts concerns basic Nisnevich homomorphisms along $f$. Recall that 
 \[
 R \rightarrow R[x_1, \ldots, x_n] \slash (g_1, \ldots, g_c)
 \]
 is called a \emph{relative global complete intersection} if all the non-zero fiber rings over the residue fields of $R$ have dimension $n-c$ (here we assume $n \geq c$), see \cite[\href{https://stacks.math.columbia.edu/tag/00SP}{Definition 00SP}]{stacks-project}.
 
 \begin{lemma}\label{lemma:ideal_intersection_equals_product_relative_gci}
 Let $R \rightarrow R[x_1, \ldots, x_n] \slash (g_1, \ldots, g_c)$ be a relative global complete intersection. Then the ideals $I=(t_1, \ldots, t_c)$ and $J=(t_1-g_1, \ldots, t_c-g_c)$ in $R[x_1, \ldots, x_n,t_1, \ldots, t_c]$ satisfy $I \cap J=I \cdot J$.
 \end{lemma}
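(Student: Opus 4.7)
The plan is to rephrase the equality $I \cap J = I \cdot J$ as the vanishing of a $\Tor$ group, then compute this group via a Koszul resolution. Let $A = R[x_1, \ldots, x_n, t_1, \ldots, t_c]$ and $S = R[x_1, \ldots, x_n]$, so that $A/I \cong S$ via $t_i \mapsto 0$ and $A/J \cong S$ via $t_i \mapsto g_i$. Tensoring the short exact sequence $0 \to J \to A \to A/J \to 0$ with $A/I$ gives a four-term exact sequence
\[
\Tor_1^A(A/I, A/J) \rightarrow J/I \cdot J \rightarrow A/I \rightarrow A/(I+J) \rightarrow 0,
\]
which identifies the obstruction $(I \cap J)/I \cdot J$ with $\Tor_1^A(A/I, A/J)$. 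It therefore suffices to show that this $\Tor$ group vanishes.

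For the computation, I would use a Koszul resolution. Since $t_1, \ldots, t_c$ are variables over $S$, they form a regular sequence in $A$, and hence the Koszul complex $K_\bullet(t_1, \ldots, t_c; A)$ is a free $A$-resolution of $A/I$. Base-changing along $A \to A/J \cong S$, under which each $t_i$ is sent to $g_i$, yields a natural isomorphism
\[
\Tor_i^A(A/I, A/J) \cong H_i\bigl(K_\bullet(g_1, \ldots, g_c; S)\bigr),
\]
so the problem reduces to the vanishing of the Koszul homology of $(g_1, \ldots, g_c)$ on $S$ in positive degrees.

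The final step is to invoke the fact that for a relative global complete intersection, the sequence $g_1, \ldots, g_c$ is a regular sequence in $R[x_1, \ldots, x_n]$ (see \cite[\href{https://stacks.math.columbia.edu/tag/00SV}{Lemma 00SV}]{stacks-project}). Since Koszul homology of a regular sequence vanishes in positive degrees, we conclude in particular that $\Tor_1^A(A/I, A/J) = 0$, which is the desired claim.

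The main obstacle in giving a fully self-contained proof would be establishing the regularity of $g_1, \ldots, g_c$ in $S$; but this is already a standard consequence of the dimension condition built into the definition of a relative global complete intersection, so the argument reduces to applying known results from the Stacks project together with the $\Tor$/Koszul formalism.
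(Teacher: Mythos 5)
Your approach is exactly the one the paper uses: identify $(I \cap J)/(I \cdot J)$ with $\Tor_1^A(A/I, A/J)$, compute this Tor group using the Koszul resolution of $A/I$ coming from the regular sequence $t_1, \ldots, t_c$, and thereby reduce to the acyclicity in positive degrees of the Koszul complex $K_\bullet(g_1, \ldots, g_c)$ over $R[x_1, \ldots, x_n]$.

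The one genuine gap is in the final step. The lemma you cite, \cite[\href{https://stacks.math.columbia.edu/tag/00SV}{Lemma 00SV}]{stacks-project}, only asserts that $g_1, \ldots, g_c$ is a regular sequence in the local rings $R[x_1, \ldots, x_n]_{\mathfrak{q}}$ at primes $\mathfrak{q}$ containing the ideal $(g_1, \ldots, g_c)$; it does not say that $g_1, \ldots, g_c$ is a regular sequence in $R[x_1, \ldots, x_n]$ globally, and the Stacks project deliberately phrases the global statement in terms of Koszul-regularity rather than regularity. What you actually need is Koszul-regularity, which is exactly the vanishing of the higher Koszul homology. The paper fills this in by arguing locally: Koszul homology commutes with localization, so it suffices to check acyclicity at each prime $\mathfrak{p}$ of $R[x_1, \ldots, x_n]$. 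At primes not containing some $g_i$, the cone description $K(g_1, \ldots, g_c) \simeq \mathrm{cone}\bigl(g_c \colon K(g_1, \ldots, g_{c-1}) \to K(g_1, \ldots, g_{c-1})\bigr)$ (after reordering so $g_c \notin \mathfrak{p}$) gives acyclicity directly; at primes containing all the $g_i$, \cite[\href{https://stacks.math.columbia.edu/tag/00SV}{Lemma 00SV}]{stacks-project} applies, and a regular sequence is Koszul-regular. With that replacement, your argument goes through and is the paper's proof.
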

 
 \begin{proof}
 Let $A \defl R[x_1, \ldots, x_n, t_1, \ldots, t_c]$. Since $\mathrm{Tor}_1^A(A \slash I, A \slash J) \cong I \cap J \slash IJ$, it suffices to show that $\mathrm{Tor}^A_1(A \slash I, A \slash J) \cong 0$ (see \cite{SPEYER_IDEALS_MATHOVERFLOW}). We recall this argument here. If we tensor the exact sequence
 \[
 \xymatrix{0 \ar[r] & J \ar[r] & A \ar[r] & A \slash J \ar[r] & 0}
 \]
 with $A \slash I$, we obtain the exact sequence
 \[
 \xymatrix{0 \ar[r] & \mathrm{Tor}^A_1(A \slash I, A \slash J) \ar[r] & J \slash I \cdot J \ar[r] & A \slash I \ar[r] & A \slash I \otimes A \slash J \ar[r] & 0}
 \]
 of $A$-modules. The snake lemma applied to the lower two exact rows in the diagram
 \[
 \xymatrix{& 0 \ar[d] & 0 \ar[d] \\0 \ar[r] & I \cdot J \ar@{=}[d] \ar[r] & I \cap J \ar[d] \ar[r] & I \cap J \slash I \cdot J \ar[d] \ar[r] & 0\\
 0 \ar[r] & I \cdot J \ar[d] \ar[r] & J \ar[d] \ar[r] & J \slash I \cdot J \ar[d] \ar[r] & 0 \\
 0 \ar[r] & 0 \ar[r] & A \slash I \ar@{=}[r] & A \slash I \ar[r] & 0
  }
 \]
 shows that the kernel of $J \slash I \cdot J \rightarrow A \slash I$ is isomorphic to $I \cap J \slash I \cdot J$. Thus we have  $\mathrm{Tor}^A_1(A \slash I, A \slash J) \cong I \cap J \slash I\cdot J$, as claimed.
 
 To compute this Tor group, we use the Koszul resolution $K(t_1, \ldots, t_c)$ of $A \slash I$. Since $A \slash J \otimes K(t_1, \ldots, t_c)$ is the Koszul complex $K(g_1, \ldots, g_c)$ over the poynomial ring $A \slash J \cong R[x_1, \dots, x_n]$, it suffices to check $K(g_1, \ldots, g_c)$ over $R[x_1, \ldots, x_n]$ is exact, which is a local question.
 
 We first consider the case where we localize at a prime $\mathfrak{p}$ which does  not contain one of the $g_i$. In this case, we can permute $g_i$ and $g_c$ so that without loss of generality, $g_c \not \in \mathfrak{p}$ (see \cite[\href{https://stacks.math.columbia.edu/tag/0625}{Lemma 0625}]{stacks-project}). Since $K(g_1, \ldots, g_c)$ can be obtained as cone of multiplication with $g_c$ on $K(g_1, \ldots, g_{c-1})$ (see \cite[\href{https://stacks.math.columbia.edu/tag/0629}{Lemma 0629}]{stacks-project}), we find that $K(g_1, \ldots, g_c)$ is indeed exact when localized at such primes $\mathfrak{p}$.
 
 If $\mathfrak{p} \subseteq R[x_1, \ldots, x_n]$ is a prime containing all the elements $g_1, \ldots, g_c$, then \cite[\href{https://stacks.math.columbia.edu/tag/00SV}{Lemma 00SV}]{stacks-project} is applicable and shows that $g_1, \ldots, g_c$ is a regular sequence. Since regular sequences are in particular Koszul-regular (see \cite[\href{https://stacks.math.columbia.edu/tag/062F}{Lemma 062F}]{stacks-project}), the localization of $K(g_1,\ldots, g_c)$ at such primes is also exact.
 \end{proof}
 
 Recall from Example~\ref{example:weak_milnor_squares} that, given a commutative ring $A$ and two ideals $I, J \subseteq A$, the diagram
 \[
 \xymatrix{ A \slash I \cap J \ar[r] \ar[d] & A \slash J \ar[d] \\ A \slash I \ar[r] & A \slash I+J }
 \]
 is a Milnor square.
 
 \begin{lemma}\label{lemma:SK12_weak_Milnor_excision}
 The functor $\widetilde{SK}_{1,2} \colon \CRing \rightarrow \Set_{\ast}$ satisfies weak Milnor excision.
 \end{lemma}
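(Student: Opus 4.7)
The plan is to reduce weak Milnor excision for $\widetilde{SK}_{1,2}$ to two facts we already have: that $\mathrm{SL}_2(-)$ is representable and therefore preserves pullbacks, and that pseudoelementary matrices can be lifted along surjections, i.e.\ Part~(3) of Lemma~\ref{lemma:basic_properties_pseudoelementary}. No patching lemma for projective modules will be needed.

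Fix a Milnor square
\[
\xymatrix{A \ar[r]^{\varphi^{\prime}} \ar[d]_{\psi^{\prime}} & C \ar[d]^{\psi} \\ B \ar[r]_-{\varphi}  & D }
\]
and, without loss of generality, assume that $\psi \colon C \rightarrow D$ is surjective. Let $\alpha \in \widetilde{SK}_{1,2}(B)$ and $\beta \in \widetilde{SK}_{1,2}(C)$ be classes with $\varphi_{\ast} \alpha = \psi_{\ast} \beta$. Choose representatives $\sigma_B \in \mathrm{SL}_2(B)$ of $\alpha$ and $\sigma_C \in \mathrm{SL}_2(C)$ of $\beta$. By definition of $\widetilde{SK}_{1,2}$, there exists a pseudoelementary matrix $\varepsilon \in \widetilde{\mathrm{E}}_2(D)$ such that the equation $\varphi(\sigma_B) = \psi(\sigma_C) \cdot \varepsilon$ holds in $\mathrm{SL}_2(D)$.

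Now we invoke Part~(3) of Lemma~\ref{lemma:basic_properties_pseudoelementary}: since $\psi \colon C \rightarrow D$ is surjective, the induced map $\psi_{\ast} \colon \widetilde{\mathrm{E}}_2(C) \rightarrow \widetilde{\mathrm{E}}_2(D)$ is surjective. Pick any pseudoelementary lift $\tilde{\varepsilon} \in \widetilde{\mathrm{E}}_2(C)$ of $\varepsilon$. Then $\psi(\sigma_C \tilde{\varepsilon}) = \psi(\sigma_C) \cdot \varepsilon = \varphi(\sigma_B)$, so the pair $(\sigma_B, \sigma_C \tilde{\varepsilon}) \in \mathrm{SL}_2(B) \times \mathrm{SL}_2(C)$ is compatible over $\mathrm{SL}_2(D)$. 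Because $\mathrm{SL}_2(-)$ is a representable functor, it preserves the cartesian square, so there exists a unique $\zeta \in \mathrm{SL}_2(A)$ with $\psi^{\prime}(\zeta) = \sigma_B$ and $\varphi^{\prime}(\zeta) = \sigma_C \tilde{\varepsilon}$.

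Passing to $\widetilde{SK}_{1,2}$, we obtain $\psi^{\prime}_{\ast}[\zeta] = [\sigma_B] = \alpha$, and, since $\tilde{\varepsilon} \in \widetilde{\mathrm{E}}_2(C)$, also $\varphi^{\prime}_{\ast}[\zeta] = [\sigma_C \tilde{\varepsilon}] = [\sigma_C] = \beta$. This establishes the weak pullback property. The only step that might be called delicate is the surjectivity of $\psi_{\ast} \colon \widetilde{\mathrm{E}}_2(C) \rightarrow \widetilde{\mathrm{E}}_2(D)$, but this is a formal consequence of the definition of $\widetilde{\mathrm{E}}_2$ via homomorphisms from polynomial rings $\mathbb{Z}[T]$ together with the fact that any choice of preimages under $\psi$ of a finite family of elements of $D$ yields a lift of the corresponding ring map.
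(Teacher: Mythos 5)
Your proof is correct and takes exactly the paper's approach: the paper's proof also reduces to the two facts that $\mathrm{SL}_2(-)$ preserves pullbacks and that surjections induce surjections on $\widetilde{\mathrm{E}}_2$ (Part~(3) of Lemma~\ref{lemma:basic_properties_pseudoelementary}). The paper simply states this in one sentence; you have supplied the routine verification.
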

 
 \begin{proof}
 This follows directly from the two facts that surjective ring homomorphisms $A \rightarrow B$ induce surjective maps $\widetilde{\mathrm{E}}_2(A) \rightarrow \widetilde{\mathrm{E}}_2(B)$ (see Lemma~\ref{lemma:basic_properties_pseudoelementary}) and that $\mathrm{SL}_2(-)$ preserves pullbacks.
 \end{proof}
 
 So far we have used the functor $P_2 \colon \CRing \rightarrow \Set_{\ast}$ and the free patching lemma to deduce various results about the functor $\widetilde{SK}_{1,2}$. Alternatively, we could have used $\mathrm{SL}_2$-torsors, that is, projective $A$-modules of rank $2$ with a chosen trivialization of the determinant bundle $\Lambda^2 P$. We denote the set of isomorphism classes of $\mathrm{SL}_2$-torsors over $A$ by $SP_2(A)$. Since this set can be obtained as set of isomorphism classes $\pi_0 B\mathrm{SL}_2$ of the Adams stack $B\mathrm{SL}_2$, it follows that $SP_2$ satisfies weak analytic excision and weak Milnor excision (see Proposition~\ref{prop:Adams_stack_weak_analytic_excision} and Proposition~\ref{prop:weak_milnor_excision_adams_case}). It is of course possible to prove this much more directly in this particular case.
 
 Since the automorphism group of the trivial $\mathrm{SL}_2$-torsor over $A$ is given by $\mathrm{SL}_2(A)$, we automatically get a factorization as in the free patching lemma into a product of matrices in $\mathrm{SL}_2$ whenever a patching datum yields a trivial torsor. The advantage of working with $SP_2$ is that we can apply this argument even in the case where the Picard group of $A$ (and therefore $P_2(A)$) are non-trivial.
 
 \begin{lemma}\label{lemma:SP2_partially_h_injective}
 The functor $SP_2 \colon \CRing \rightarrow \Set_{\ast}$ is partially $h$-injective, that is, the function
 \[
 SP_2(R) \rightarrow SP_2(R \slash I)
 \]
 has trivial kernel whenever $(R,I)$ is a henselian pair.
 \end{lemma}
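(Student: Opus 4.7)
The plan is to reduce the question about $SL_2$-torsors to the corresponding (known) fact about the underlying rank~$2$ projective modules, and then to fix up the trivialization of the determinant by hand.

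First, I would unwind the definitions. An $SL_2$-torsor over $R$ can be represented by a pair $(P,\tau)$, where $P \in \Proj_2(R)$ and $\tau \colon R \xrightarrow{\cong} \Lambda^2 P$ is a trivialization of the determinant; the base point $\ast \in SP_2(R)$ is represented by $(R^2,\sigma_{\mathrm{std}})$, where $\sigma_{\mathrm{std}}(1) = e_1 \wedge e_2$. Given $(P,\tau)$ with trivial image in $SP_2(R \slash I)$, there exists in particular an isomorphism of $(R \slash I)$-modules $P \slash I P \cong (R \slash I)^2$. By the $h$-invariance of $P_2$ (see Example~\ref{example:P_r_is_j_injective_and_h_invariant}), the corresponding class in $P_2(R)$ must therefore equal the class $[R^2]$, so $P$ itself is free.

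Next, I would pick any isomorphism $\varphi \colon P \xrightarrow{\cong} R^2$ and analyse the resulting trivialization. The composite
\[
 \Lambda^2 \varphi \circ \tau \colon R \xrightarrow{\cong} \Lambda^2 P \xrightarrow{\cong} \Lambda^2 R^2 \cong R
\]
is an automorphism of $R$, hence multiplication by some unit $u \in R^{\times}$. Replacing $\varphi$ by $h \circ \varphi$ for $h \in \mathrm{GL}_2(R)$ rescales this composite by $\mathrm{det}(h)$. Taking $h = \bigl( \begin{smallmatrix} u^{-1} & 0 \\ 0 & 1 \end{smallmatrix} \bigr)$ gives $\mathrm{det}(h) \cdot u = 1$, so the new isomorphism $h \varphi \colon P \to R^2$ satisfies $\Lambda^2 (h\varphi) \circ \tau = \mathrm{id}_R$. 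This is exactly the data of an $SL_2$-torsor isomorphism $(P,\tau) \cong (R^2,\sigma_{\mathrm{std}})$, so $[(P,\tau)] = \ast$ in $SP_2(R)$, as required.

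No step should present a genuine obstacle: the only nontrivial input is the known $h$-invariance of $P_2$, and the rest is a short determinant rescaling. It is worth noting that the argument nowhere uses the henselian pair hypothesis except through the quoted $h$-invariance of $P_2$, so the same proof shows that $SP_2$ is $j$-injective, $\ell n$-injective, and $n$-injective whenever $P_2$ is known to have the corresponding invariance property.
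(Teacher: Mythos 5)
Your proof is correct and matches the paper's argument exactly: use $h$-invariance of $P_2$ to conclude the underlying rank-$2$ module is free, then observe that any two trivializations of $\Lambda^2$ of the free module differ by a unit that can be absorbed into an automorphism with that determinant. One small caveat in your closing remark: your argument establishes only \emph{partial} $j$-/$\ell n$-/$n$-injectivity of $SP_2$ (trivial kernel), whereas the unqualified terms in Definition~\ref{dfn:ideal_injective_and_ideal_invariant} denote full injectivity of the map on pointed sets.
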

 
 \begin{proof}
 If the $\mathrm{SL}_2$-torsor $P$ over $R$ is sent to the trivial torsor over $R \slash I$, then its underlying projective module is free (since $P_2$ is $h$-invariant, see Example~\ref{example:P_r_is_j_injective_and_h_invariant}). The claim now follows from the fact that any two trivializations of the determinant bundle of the free module are isomorphic (since any unit occurs as determinant of an automorphism of the free module).
 \end{proof}
 
 Note that the argument above shows in particular that $SP_2(A) \cong \ast$ whenever $P_2(A) \cong \ast$, a fact which we we will use in the proof of the following lemma. This is the main technical lemma used in the construction of good lifts for basic Nisnevich homomorphisms.
 
 \begin{lemma}\label{lemma:SK12_injective_for_basic_Nisnevich}
 Let $R$ be a finitely generated $\mathbb{Z}$-algebra and let $R \rightarrow S$ be a basic Nisnevich homomorphism along $f \in R$. Assume that $f$ is a nonzerodivisor in $R$ and the following conditions hold:
 \begin{enumerate}
 \item[(1)] The two rings $R$ and $R \slash f$ are both $P_2$-regular and $P_2$-contractible;
 \item[(2)] The two rings $R$ and $R \slash f$ are both $\widetilde{SK}_{1,2}$-regular and $\widetilde{SK}_{1,2}$-contractible;
 \item[(3)] The localization $R_f$ is $\widetilde{SK}_{1,2}$-contractible.
 \end{enumerate}
 Then the function
 \[
 \widetilde{SK}_{1,2}(S) \rightarrow \widetilde{SK}_{1,2}(S_{f})
 \]
 induced by the localization at $f$ has trivial kernel.
 \end{lemma}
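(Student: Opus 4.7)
The plan is to realize $\sigma$ as a Milnor patching datum for a rank-$2$ projective module with trivialized determinant, establish freeness of this module using the three hypotheses on $R$, $R/f$, and $R_f$, and then conclude via the free patching lemma (Lemma~\ref{lemma:free_patching}) that $\sigma$ is pseudoelementary. To this end, write $S \cong R[x_1, \ldots, x_n]/(g_1, \ldots, g_n)$ with $g_j \equiv x_j \pmod f$ and invertible Jacobian. In the polynomial ring $R[x_i, t_j]$ the ideals $I = (t_1, \ldots, t_n)$ and $J = (t_1 - g_1, \ldots, t_n - g_n)$ satisfy $I \cap J = IJ$ by Lemma~\ref{lemma:ideal_intersection_equals_product_relative_gci} (the $g_j$ form a relative global complete intersection by the basic Nisnevich hypothesis), so $A := R[x_i, t_j]/(IJ)$ fits into a Milnor square with projections $A \twoheadrightarrow A/I \cong R[x_i]$ and $A \twoheadrightarrow A/J \cong R[x_i]$ whose common quotient is $A/(I+J) = S$.

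Given $\sigma \in \mathrm{SL}_2(S)$ with $\sigma_f \in \widetilde{\mathrm{E}}_2(S_f)$, associate to it the projective $A$-module $P$ obtained by Milnor patching $(R[x_i]^2, R[x_i]^2, \sigma)$. By hypothesis~(2) every matrix in $\mathrm{SL}_2(R[x_i])$ is pseudoelementary, so the free patching lemma reduces the proof to showing that $P$ is free as an $A$-module. I would verify this freeness at three natural places. Modulo $f$: since $g_j \equiv x_j \pmod f$ the Milnor square descends to $A/f \cong (R/f)[x_i] \times_{R/f} (R/f)[x_i]$ and $\sigma$ reduces to a matrix in $\mathrm{SL}_2(S/f) = \mathrm{SL}_2(R/f)$, which is pseudoelementary by~(2), whence the free patching lemma forces $P/fP$ to be free over $A/f$. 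Over the henselization $A^h := A^h_{(f)}$: since $P_2$ is $h$-invariant (Example~\ref{example:P_r_is_j_injective_and_h_invariant}) and $A^h/f A^h = A/f$, the freeness of $P/fP$ upgrades to freeness of $A^h \otimes_A P$ over $A^h$. Over $A_f$: the Milnor square stays Milnor after localizing at $f$, $\sigma_f$ is pseudoelementary by assumption, and the free patching lemma yields $P_f$ free over $A_f$.

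To assemble these local freenesses into global freeness of $P$, I would invoke analytic patching for the Adams stack $B\mathrm{SL}_2$ (Proposition~\ref{prop:Adams_stack_weak_analytic_excision}) applied to the analytic isomorphism $A \to A^h$ along $f$, giving an equivalence of groupoids
\[
B\mathrm{SL}_2(A) \simeq B\mathrm{SL}_2(A_f) \times_{B\mathrm{SL}_2(A^h_f)} B\mathrm{SL}_2(A^h).
\]
The $\mathrm{SL}_2$-torsor corresponding to $P$ is trivial on both $A_f$ and $A^h$, so it is classified by a clutching matrix $\Phi \in \mathrm{SL}_2(A^h_f)$ determined by chosen trivializations of $P_f$ and $P^h$. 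Global triviality of $P$ amounts to showing that $\Phi$ lies in the image of the product map $\mathrm{SL}_2(A_f) \times \mathrm{SL}_2(A^h) \to \mathrm{SL}_2(A^h_f)$ coming from the Mayer--Vietoris boundary.

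The main obstacle will be producing this factorization of $\Phi$, and this is precisely where hypothesis~(3) should enter. My expectation is that since $A_f$ is a Milnor pullback of two copies of $R_f[x_i]$ over $S_f$, condition~(3) together with the Roitman principle for $\widetilde{SK}_{1,2}$ (Proposition~\ref{prop:pseudoelementary_Roitman}) should force $\widetilde{SK}_{1,2}(R_f[x_i]) \cong \ast$, and combining this with the $j$-invariance of $\widetilde{SK}_{1,2}$ (Proposition~\ref{prop:pseudoelementary_j_invariant}) on the henselian side should make $\Phi$ itself pseudoelementary over $A^h_f$. A pseudoelementary $\Phi$ can then be decomposed across the pair $(A_f, A^h)$ by an analogue of the partial Vorst-type factorization established for good lifts in Proposition~\ref{prop:good_lifts_implies_weak_excision}. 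The delicate point will be tracking how the explicit trivializations of $P_f$ and $P^h$ obtained from the free patching lemma restrict compatibly to $A^h_f$, so that the resulting clutching matrix is accessible to this decomposition rather than merely being an abstract cohomology class.
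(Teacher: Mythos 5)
Your Milnor-square setup is exactly the paper's: $A = R[x_i,t_j]/(IJ)$ with $I = (t_i)$, $J = (t_j - g_j)$, the use of Lemma~\ref{lemma:ideal_intersection_equals_product_relative_gci}, and the translation of $\sigma$ into a patching datum are all correct, as is the observation that freeness over $A/f$, $A^h_{(f)}$ and $A_f$ can be extracted from hypotheses~(1)--(3). However, there are two linked problems in the remainder.

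First, you reach for the henselization $A \rightarrow A^h_{(f)}$ and propose to decompose the clutching matrix $\Phi \in \mathrm{SL}_2(A^h_f)$ via the good-lift machinery of Proposition~\ref{prop:good_lifts_implies_weak_excision}. But the only result in the paper giving enough good lifts for a henselization is Corollary~\ref{cor:good_lifts_henselization}, which is built on Proposition~\ref{prop:good_lifts_for_basic_Nisnevich}, whose proof invokes precisely the lemma you are trying to prove. Following that route is circular. The paper sidesteps this by working with the $f$-adic completion $B^{\wedge}_{(f)}$ instead (with $B = A/IJ$ in their notation), for which Proposition~\ref{prop:good_lifts_for_completion} establishes good lifts without any appeal to the present lemma: the key point there is that $\widetilde{SK}_{1,2}(\mathbb{Z}[t_i]\llbracket x \rrbracket) \cong \ast$ follows directly from $j$-invariance, since $x$ lies in the Jacobson radical of the power series ring, and the completion commutes with the finitely presented quotient defining $A$, so the needed surjection onto $B^{\wedge}_{(f)}$ exists. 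Neither fact has an obvious analogue for the henselization, which does not commute with arbitrary finite-type quotients.

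Second, and independently, your argument that $\Phi$ is pseudoelementary is not a proof: the ``should force'' step is genuinely the crux. The paper establishes the corresponding assertion, namely $\widetilde{SK}_{1,2}\bigl(B^{\wedge}_{(f)}[\tfrac{1}{f}]\bigr) \cong \ast$, via a reduction you do not make: the hypothesis $g_j \equiv x_j \bmod f$ is exploited again to produce an analytic isomorphism along $f$ between $B$ and the far simpler ring $C = R[u_i,v_j]/(u_i)\cdot(v_j)$, giving $B^{\wedge}_{(f)} \cong C^{\wedge}_{(f)}$. Once one is working with $C$, two further applications of Milnor and analytic excision, together with the partial $h$-injectivity of $SP_2$ (Lemma~\ref{lemma:SP2_partially_h_injective}) and $j$-invariance of $\widetilde{SK}_{1,2}$, reduce everything to the explicit vanishing statements over $R$, $R_f$, and $R/f$ supplied by your hypotheses. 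Without this second use of the basic Nisnevich structure the clutching matrix remains an opaque element of $\mathrm{SL}_2(A^h_f)$, and I do not see how to conclude. If you replace the henselization by the completion and carry out the $B \leadsto C$ reduction, your outline becomes the paper's proof.
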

 
 \begin{proof}
 Let $S \cong R[x_1, \ldots, x_n] \slash (g_1, \ldots g_n)$ such that $g_i \equiv x_i \mod f$. Consider the polynomial algebra $A \defl R[x_1,\ldots,x_n,t_1,\ldots, t_n]$ with the two ideals $I=(t_i)_{i=1}^n$ and $J=(t_i-g_i)_{i=1}^n$. Since standard smooth ring maps such as $R \rightarrow S$ are relative global complete intersections (see \cite[\href{https://stacks.math.columbia.edu/tag/00T7}{Lemma 00T7}]{stacks-project}), Lemma~\ref{lemma:ideal_intersection_equals_product_relative_gci} implies that $I \cap J=I \cdot J$. The above example of a pullback square thus shows that
 \[
 \xymatrix{ A\slash I \cdot J  \ar[r] \ar[d] & A \slash J \cong R[x_1, \ldots, x_n] \ar[d]  \\ R[x_1, \ldots, x_n] \cong A \slash I \ar[r] & A \slash I+J \cong S}
 \]
 is a Milnor square.
 
 Every $\sigma \in \mathrm{SL}_2(S)$ therefore gives rise to a $\mathrm{SL}_2$-torsor $P$ over $A \slash I \cdot J$ (by patching the trivial torsors over $A \slash I$ and $A \slash J$). From Condition~(2) we know that $\mathrm{SL}_2(R[x_i])=\widetilde{\mathrm{E}}_2(R[x_i])$ holds, so to show that $\sigma$ is pseudoelementary, we only need to check that $P$ is the trivial torsor. Since the localized torsor $P_f$ corresponds to the patching datum $\sigma_f$ (see Remark~\ref{rmk:naturality_of_patching_data_generalized}), it suffices to show that the function
 \[
 SP_2(A \slash I \cdot J) \rightarrow SP_2 \bigl( (A \slash I \cdot J)_f \bigr)
 \]
 has trivial kernel.
 
 Let $B \defl A \slash I \cdot J$ and consider the analytic patching diagram
 \[
 \xymatrix{ B \ar[d] \ar[r] & B_f \ar[d] \\ B^{\wedge}_{(f)} \ar[r] & B^{\wedge}_{(f)}[\frac{1}{f}] }
 \]
 which admits enough good lifts by Proposition~\ref{prop:good_lifts_for_completion}.
 
 We claim that $SP_2(B^{\wedge}_{(f)}) \cong \ast$ and $\widetilde{SK}_{1,2}(B^{\wedge}_{(f)}[\frac{1}{f}]) \cong \ast$ hold. Before establishing these claims, we show that they together imply the desired fact that a $\mathrm{SL}_2$-torsor over $B$ which becomes trivial over $B_f$ has to be trivial. Indeed, by the above two claims, such a torsor corresponds to a patching datum $\bigl((B^{\wedge}_{(f)})^2,(B_f)^2,\sigma\bigr)$ where $\sigma$ is pseudoelementary. Since $B \rightarrow B^{\wedge}_{(f)}$ admits enough good lifts, $\sigma$ factors into a product of matrices in $\mathrm{SL}_2(B^{\wedge}_{(f)})$ and $\mathrm{SL}_2(B_f)$ (see Proposition~\ref{prop:good_lifts_implies_weak_excision}), so this patching datum represents the trivial $\mathrm{SL}_2$-torsor.
 
 Thus it remains to check that $SP_2(B^{\wedge}_{(f)}) \cong \ast$ and $\widetilde{SK}_{1,2}(B^{\wedge}_{(f)}[\frac{1}{f}]) \cong \ast$ hold. To do this, we let $C \defl R[u_1, \ldots, u_n, v_1, \ldots, v_n] \slash (u_i) \cdot (v_j)$ and we consider the ring homomorphism $\varphi \colon C \rightarrow B=A \slash I \cdot J$ given by $\varphi(u_i)=t_i$ and $\varphi(v_j)=t_j-g_j$. We claim that $\varphi$ is an analytic isomorphism along $f$. First note that the assumption that $f \in R$ is a nonzerodivisor and the Milnor square at the beginning of the proof show that $f \in B=A \slash I \cdot J$ is a nonzerodivisor. The analogous Milnor square for the simpler ring $C$ shows that $f \in C$ is also a nonzerodivisor. Thus it suffices to show that $C \slash f \rightarrow B \slash f$ is an isomorphism.
 
 Since quotients of the form $A \slash I \cdot J$ are preserved by passage to the quotient modulo $f$ (as opposed to the case $I \cap J$ for general ideals), this homomorphism is given by
 \[
 \bar{\varphi} \colon R \slash f [u_i,v_j] \slash (u_i) \cdot (v_j) \rightarrow R \slash f [x_i,t_j] \slash (t_i) \cdot (t_j-x_j)
 \]
 where $\bar{\varphi}(u_i)=t_i$ and $\bar{\varphi}(v_j)=t_j - x_j$ (it is here that we use the fact that $g_j \equiv x_j \mod f$ holds). The homomorphism $\bar{\varphi}$ has an inverse given by $x_i \mapsto u_i -v_i$ and $t_i \mapsto u_i$. The resulting isomorphism $B^{\wedge}_{(f)} \cong C^{\wedge}_{(f)}$ preserves $f$ and thus reduces the problem to checking that
 \[
 SP_2\bigl(C^{\wedge}_{(f)} \bigr) \cong \ast \quad \text{and} \quad \widetilde{SK}_{1,2} \bigl(C^{\wedge}_{(f)}[\textstyle \frac{1}{f}] \bigr) \cong \ast
 \]
 hold.
 
 To establish the second of these, we can use the patching diagram
 \[
 \xymatrix{C \ar[d] \ar[r] & C_f \ar[d] \\ C^{\wedge}_{(f)} \ar[r] & C^{\wedge}_{(f)}[\frac{1}{f}]}
 \]
 associated to $C \rightarrow C^{\wedge}_{(f)}$, which reduces the second of the above problems to checking that $\widetilde{SK}_{1,2}\bigl(C^{\wedge}_{(f)}\bigr) \cong \ast$, $\widetilde{SK}_{1,2}(C_f) \cong \ast$, and $SP_2(C) \cong \ast$ hold.
 
 Taking into account the partial $h$-injectivity of $SP_2$ and the $j$-invariance of $\widetilde{SK}_{1,2}$, we have reduced the problem to showing that
 \[
 SP_2(C)\cong \ast, \quad SP_2(C\slash f) \cong \ast, \quad \widetilde{SK}_{1,2}(C_f) \cong \ast, \quad \text{and} \quad \widetilde{SK}_{1,2}(C \slash f) \cong \ast
 \]
 hold.
 
 These follow readily from the assumptions of the lemma and weak Milnor excision, applied to the Milnor squares
 \[
 \xymatrix{D[u_i] \ar[r] \ar[d] & D \ar[d] \\ D[u_i,v_j] \slash (u_i) \cdot (v_j) \ar[r] & D[v_j]}
 \]
 for the rings $D=R$, $D=R_f$, and $D=R \slash f$.
\end{proof}

\begin{prop}\label{prop:good_lifts_for_basic_Nisnevich}
Let $\varphi \colon R \rightarrow S$ be a basic Nisnevich homomorphism along $f$. Then $\varphi \colon R \rightarrow S$ admits enough good lifts.
\end{prop}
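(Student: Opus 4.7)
The plan is to follow the pattern of Proposition~\ref{prop:L_homomorphism_good_lifts}, constructing a ``universal'' basic Nisnevich homomorphism over the polynomial ring $R' \defl \mathbb{Z}[t_1, \ldots, t_k, x]$ (with $x$ playing the role of $f$), and then invoking Lemma~\ref{lemma:SK12_injective_for_basic_Nisnevich} to verify the third condition in the definition of a good lift.

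Fix a basic Nisnevich presentation $S \cong R[y_1, \ldots, y_n] \slash (g_1, \ldots, g_n)$ with $g_j \equiv y_j \mod f$, and write $g_j = y_j + f h_j$ for certain $h_j \in R[y_1, \ldots, y_n]$. Let $\mu \in R[y_1, \ldots, y_n]$ satisfy $\det(\partial g_j \slash \partial y_i) = 1 + f \mu$; by assumption $1+f\mu$ is a unit in $S$. Given a finitely generated subring $B_0 \subseteq S_{\varphi(f)}$, collect finitely many elements $r_1, \ldots, r_k \in R$ containing $f$, all coefficients of the $h_j$, and all coefficients appearing in some chosen polynomial representatives of a finite generating set of $B_0$. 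Define $\psi \colon R' \to R$ by $\psi(t_i) = r_i$ and $\psi(x) = f$; let $h_j' \in R'[y_1, \ldots, y_n]$ be the lift of $h_j$ obtained by substituting $t_i$ for $r_i$, set $g_j' \defl y_j + x h_j'$, and let $\mu' \in R'[y_1, \ldots, y_n]$ be the corresponding lift of $\mu$, so that $\det(\partial g_j' \slash \partial y_i) = 1 + x \mu'$.

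The first subtlety is that $1 + x\mu'$ is only a unit modulo $x$, not in $R'[y_1, \ldots, y_n] \slash (g_1', \ldots, g_n')$ itself. To remedy this I adjoin an extra variable $w$ with relation $g_{n+1}' \defl w + (w-1) x \mu'$ (which satisfies $g_{n+1}' \equiv w \mod x$), and set
\[
B' \defl R'[y_1, \ldots, y_n, w] \slash (g_1', \ldots, g_n', g_{n+1}') .
\]
A direct check shows that $1 - w$ is the inverse of $1 + x\mu'$ in $B'$, so the Jacobian of $(g_1', \ldots, g_{n+1}')$ with respect to $(y_1, \ldots, y_n, w)$, which is block-triangular with determinant $(1+x\mu')^2$, is a unit. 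Thus $\varphi' \colon R' \to B'$ is basic Nisnevich along $x$ and in particular an analytic isomorphism along $x$. The map $\psi$ extends to $\psi' \colon B' \to S$ by sending $y_i \mapsto y_i$ and $w \mapsto 1 - (1+f\mu)^{-1}$, and by construction the image of $\psi'_x \colon B'_x \to S_f$ contains $B_0$.

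It then remains to verify condition~(3) of Definition~\ref{dfn:good_lifts}, which I would do by applying Lemma~\ref{lemma:SK12_injective_for_basic_Nisnevich} to the basic Nisnevich homomorphism $R' \to B'$ along $x$. All the hypotheses are easily verified: $R'$ is finitely generated over $\mathbb{Z}$ and $x$ is a nonzerodivisor; the rings $R' = \mathbb{Z}[t_1, \ldots, t_k, x]$ and $R' \slash x \cong \mathbb{Z}[t_1, \ldots, t_k]$ are both polynomial rings over $\mathbb{Z}$, hence $P_2$-regular and $P_2$-contractible by Quillen--Suslin, and $\widetilde{SK}_{1,2}$-contractible by Lemma~\ref{lemma:basic_properties_pseudoelementary}(2); they are moreover $\widetilde{SK}_{1,2}$-regular since each of their polynomial extensions is again a polynomial ring over $\mathbb{Z}$; and $R'_x = \mathbb{Z}[t_1, \ldots, t_k, x^{\pm}]$ is $\widetilde{SK}_{1,2}$-contractible by Corollary~\ref{cor:pseudoelementary_over_several_Laurent_variables}. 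The only step I expect to be slightly delicate is the auxiliary construction introducing the variable $w$: without it, $R' \to B'$ fails to be basic Nisnevich and Lemma~\ref{lemma:SK12_injective_for_basic_Nisnevich} cannot be invoked directly.
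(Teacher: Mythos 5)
Your proof is correct and follows the paper's argument: lift the basic Nisnevich presentation to the polynomial ring $\mathbb{Z}[t_1,\ldots,t_k,x]$, with enough free $t$-variables to hit any prescribed finitely generated subring of $S_f$, and then invoke Lemma~\ref{lemma:SK12_injective_for_basic_Nisnevich} for the universal lift. The auxiliary variable $w$ with relation $w+(w-1)x\mu'$ is precisely the device of Example~\ref{example:localization_basic_Nisnevich} that converts localization at the Jacobian determinant into a basic Nisnevich presentation; you spell this out explicitly where the paper only cites that example in a parenthetical remark.
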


\begin{proof}
 We can choose a presentation $S \cong R[x_1, \ldots, x_n] \slash (g_1, \ldots, g_n)$ such that $g_i=x_i +fh_i$ for suitable $h_i \in R[x_1, \ldots, x_n]$. Let $k \in \mathbb{N}$ be the number of nonzero coefficients in all the $h_i$ and consider the homomorphism
 \[
 \psi \colon \mathbb{Z}[t_1, \ldots, t_k, t_{k+1}, \ldots, t_{k+c}][f] \rightarrow R
 \]
 sending $f$ to $f$, each $t_i$ to the corresponding coefficient of one of the polynomials $h_j$ whenever $i \leq k$, and sending $t_i$ to an arbitrary element for $i >k$. Let 
 \[
 S^{\prime} \defl \bigl( \mathbb{Z}[t_i][f][x_1, \ldots, x_n] \slash (x_j+fh_j) \bigr)_{\mathrm{det}(J)} \smash{\rlap{,}}
 \]
 where $J \defl \bigl(\partial (x_j+fh_j) \slash \partial x_i \bigr)$ is the Jacobian matrix. Sending $x_i \in S^{\prime}$ to $x_i \in S$ defines a homomorphism $\psi^{\prime} \colon S^{\prime} \rightarrow S$ making the diagram
 \[
 \xymatrix{ \mathbb{Z}[t_i][f] \ar[d]_{\psi} \ar[r] & S^{\prime} \ar[d]^{\psi^{\prime}} \\ R \ar[r]_-{\varphi} & S}
 \]
 commutative. By choosing the values $\psi(t_i)$, $i >k$ appropriately, we find that any fixed finitely generated subring of $S_f$ is contained in the image of the localization $\psi^{\prime}_f$, so it only remains to show that the above diagram is a good lift for all such $\psi$.
 
 The homomorphism $\mathbb{Z}[t_i][f] \rightarrow S^{\prime}$ is by construction a basic Nisnevich homomorphism along $f$ (to see this, note that $\mathrm{det}(J) \equiv 1 \mod f$, so the desired presentation can be obtained as in Example~\ref{example:localization_basic_Nisnevich}). Moreover, we have $\psi(f)=f$ by construction, so it only remains to check that the function
 \[
 \widetilde{SK}_{1,2}(S^{\prime}) \rightarrow \widetilde{SK}_{1,2}(S^{\prime}_f)
 \]
 has trivial kernel. This follows from Lemma~\ref{lemma:SK12_injective_for_basic_Nisnevich}, applied to the basic Nisnevich homomorphism $\mathbb{Z}[t_i][f] \rightarrow S^{\prime}$ along $f$.
\end{proof}

\begin{cor}\label{cor:good_lifts_henselization}
Let $\varphi \colon A \rightarrow B$ be a ring homomorphism and let $\alpha \in A$. Then the analytic isomorphism $B \rightarrow B \ten{A} A^h_{(\alpha)}$ along $\varphi(\alpha)$ admits enough good lifts.
\end{cor}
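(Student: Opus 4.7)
The plan is to combine three ingredients already established in the paper: the description of the henselization as a filtered colimit of basic Nisnevich homomorphisms (Proposition~\ref{prop:henselization_using_basic_Nisnevich}), the fact that basic Nisnevich homomorphisms themselves admit enough good lifts (Proposition~\ref{prop:good_lifts_for_basic_Nisnevich}), and the stability of the ``admits enough good lifts'' property under filtered colimits of analytic isomorphisms (Lemma~\ref{lemma:good_lifts_for_filtered_colimit}).

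First I would write $A \rightarrow A^h_{(\alpha)}$ as a filtered colimit $\colim_i (A \rightarrow A_i)$ where each $A \rightarrow A_i$ is basic Nisnevich along $(\alpha)$, using Proposition~\ref{prop:henselization_using_basic_Nisnevich}. Base change along $\varphi \colon A \rightarrow B$ commutes with filtered colimits and preserves the property of being basic Nisnevich along the relevant principal ideal (this was observed in \S\ref{section:henselian_pairs} right after the definition of basic Nisnevich homomorphisms). Therefore
\[
B \rightarrow B \ten{A} A^h_{(\alpha)} \cong \colim_i (B \rightarrow B \ten{A} A_i)
\]
is a filtered colimit of ring homomorphisms $B \rightarrow B \ten{A} A_i$, each of which is basic Nisnevich along $(\varphi(\alpha))$.

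Next I would invoke Proposition~\ref{prop:good_lifts_for_basic_Nisnevich} to conclude that each map $B \rightarrow B \ten{A} A_i$ admits enough good lifts (and is in particular an analytic isomorphism along $\varphi(\alpha)$, since basic Nisnevich homomorphisms are {\'e}tale and induce an isomorphism modulo the relevant ideal, hence analytic isomorphisms by Proposition~\ref{prop:etale_homomorphism_analytic_isomorphism}). Finally, Lemma~\ref{lemma:good_lifts_for_filtered_colimit} assembles the filtered colimit of these homomorphisms into a single analytic isomorphism along $\varphi(\alpha)$ that admits enough good lifts, which is exactly the claim. There is essentially no obstacle here: the statement is structured precisely so that the three cited results plug together, and the main work has already been done in Proposition~\ref{prop:good_lifts_for_basic_Nisnevich}, whose proof relied on the technical Lemma~\ref{lemma:SK12_injective_for_basic_Nisnevich}.
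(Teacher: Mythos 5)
Your proposal is correct and follows exactly the same route as the paper: express $A \to A^h_{(\alpha)}$ as a filtered colimit of basic Nisnevich homomorphisms via Proposition~\ref{prop:henselization_using_basic_Nisnevich}, pass to the base change along $\varphi$, apply Proposition~\ref{prop:good_lifts_for_basic_Nisnevich} to each stage, and conclude with Lemma~\ref{lemma:good_lifts_for_filtered_colimit}. The only addition is the explicit parenthetical pointing to Proposition~\ref{prop:etale_homomorphism_analytic_isomorphism} for why each stage is an analytic isomorphism, which the paper leaves implicit.
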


\begin{proof}
 From Proposition~\ref{prop:henselization_using_basic_Nisnevich} we know that the henselization $A \rightarrow A^h_{(\alpha)}$ is a filtered colimit of basic Nisnevich homomorphisms along $\alpha$. Since such homomorphisms are stable under base change, the homomorphism $B \rightarrow B \ten{A} A^{h}_{(\alpha)}$ is a filtered colimit of basic Nisnevich homomorphisms along $\varphi(\alpha)$. The conclusion thus follows from Proposition~\ref{prop:good_lifts_for_basic_Nisnevich} and Lemma~\ref{lemma:good_lifts_for_filtered_colimit}.
\end{proof}

\begin{rmk}
Applying Corollary~\ref{cor:good_lifts_henselization} and Part~(a) of Proposition~\ref{prop:good_lifts_implies_weak_excision} to the analytic patching diagram
\[
\xymatrix{A[x,y] \slash (xy-\alpha) \ar[d] \ar[r] & A_{\alpha}[x,x^{-1}] \ar[d] \\ 
A^h_{(\alpha)}[x,y] \slash (xy-\alpha) \ar[r] & A^h_{(\alpha)}[\frac{1}{\alpha}][x,x^{-1}]}
\]
 we find that any patching datum with a pseudoelementary matrix defines a free $A[x,y]\slash (xy-\alpha)$-module. This particular consequence of Corollary~\ref{cor:good_lifts_henselization} can be seen more directly by considering homomorphisms $\psi \colon \mathbb{Z}[t_i][a] \rightarrow A$ sending $a$ to $\alpha$. Indeed, we can then lift the patching datum in question to a patching datum for a module over $\mathbb{Z}[t_i][a][x,y] \slash (xy-a) \cong \mathbb{Z}[t_i][x,y]$, yielding the desired matrix factorization. This does however not show that the matrices appearing in the factorization can be chosen to be pseudoelementary.
\end{rmk}

 We now return to the study of the completability of unimodular rows over rings of the form $A[x,y] \slash (xy-\alpha)$. In favourable cases, we can use the patching diagram
 \[
\xymatrix{A[x,y] \slash (xy-\alpha) \ar[d] \ar[r] & A_{\alpha}[x,x^{-1}] \ar[d] \\ 
A^h_{(\alpha)}[x,y] \slash (xy-\alpha) \ar[r] & A^h_{(\alpha)}[\frac{1}{\alpha}][x,x^{-1}]}
 \]
 to reduce the problem of completing a unimodular row to the question whether a certain $1$-stably elementary matrix over the ring $A^h_{(\alpha)}[\frac{1}{\alpha}][x,x^{-1}]$ is elementary (respectively pseudoelementary if it is a $2 \times 2$-matrix), see Proposition~\ref{prop:elementary_completable_implies_stably_elementary_patching}. 
 
 For a commutative ring $B$ and $r \geq 3$ we introduce the notation 
 \[
 SK_{1,r}^{1-\mathrm{st}}(B) \quad \text{and} \quad \widetilde{SK}_{1,2}^{1-\mathrm{st}}(B)
 \]
  for the subgroup (respectively subset) of $SK_{1,r}(B)$ (respectively $\widetilde{SK}_{1,2}(B)$) consisting of the classes $[\sigma]$ such that $\sigma$ is $1$-stably elementary. This is well-defined since (pseudo)elementary matrices are in particular $1$-stably elementary.
 
 The following lemma allows us to reduce the problem further to a question about the ring $A^{h}_{(\alpha)}[x,x^{-1}]$ (instead of its localization at $\alpha$).
 
 \begin{lemma}\label{lemma:stably_elementary_matrix_factorization}
 Let $\varphi \colon A \rightarrow B$ be an analytic isomorphism along $S$ and let $r \geq 2$. Assume that $SK_{1,r+1}(A) \cong 0$ and $\alpha \in \mathrm{SL}_r(A_S)$ and $\beta \in \mathrm{SL}_r(B)$ are matrices such that the product $(\beta)_{\varphi(S)} \cdot \varphi_S(\alpha) \in \mathrm{SL}_r(B_{\varphi(S)})$ is $1$-stably elementary. Then both $\alpha$ and $\beta$ are $1$-stably elementary.
 \end{lemma}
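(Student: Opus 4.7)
The plan is to stabilize both matrices to size $r+1$, apply Vorst's Lemma (which needs size $\geq 3$ and is therefore available precisely because $r \geq 2$), and then use the cartesian property of the analytic patching diagram together with the vanishing of $SK_{1,r+1}(A)$ to absorb the correction terms on both sides.

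First, I would stabilize by setting $\tilde\alpha = \bigl(\begin{smallmatrix} \alpha & 0 \\ 0 & 1 \end{smallmatrix}\bigr) \in \mathrm{SL}_{r+1}(A_S)$ and $\tilde\beta = \bigl(\begin{smallmatrix} \beta & 0 \\ 0 & 1 \end{smallmatrix}\bigr) \in \mathrm{SL}_{r+1}(B)$. The stabilization of $\beta_{\varphi(S)} \varphi_S(\alpha)$ is exactly $\tilde\beta_{\varphi(S)} \varphi_S(\tilde\alpha)$, and this element lies in $\mathrm{E}_{r+1}(B_{\varphi(S)})$ by the 1-stable elementary hypothesis.

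Next, since $r+1 \geq 3$, I would apply Claim~(1) of Vorst's Lemma (Lemma~\ref{lemma:Vorsts_lemma}) to obtain a factorization
\[
\tilde\beta_{\varphi(S)} \varphi_S(\tilde\alpha) = \lambda_{\varphi(S)}(\varepsilon_1) \,\varphi_S(\varepsilon_2)
\]
with $\varepsilon_1 \in \mathrm{E}_{r+1}(B)$ and $\varepsilon_2 \in \mathrm{E}_{r+1}(A_S)$. Rearranging yields the equality
\[
\lambda_{\varphi(S)}\bigl(\tilde\beta\, \varepsilon_1^{-1}\bigr) = \varphi_S\bigl(\varepsilon_2\, \tilde\alpha^{-1}\bigr)
\]
in $\mathrm{GL}_{r+1}(B_{\varphi(S)})$. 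Since $\mathrm{GL}_{r+1}(-) \colon \CAlg_R \rightarrow \Set$ is representable, it preserves the pullback square associated to the analytic isomorphism $\varphi$, so there exists a (unique) $\gamma \in \mathrm{GL}_{r+1}(A)$ with $\varphi(\gamma) = \tilde\beta\, \varepsilon_1^{-1}$ and $\lambda_S(\gamma) = \varepsilon_2\, \tilde\alpha^{-1}$. Taking determinants gives $\gamma \in \mathrm{SL}_{r+1}(A)$.

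Finally, the hypothesis $SK_{1,r+1}(A) \cong 0$ forces $\gamma \in \mathrm{E}_{r+1}(A)$. Consequently,
\[
\tilde\beta = \varphi(\gamma)\, \varepsilon_1 \in \mathrm{E}_{r+1}(B)
\qquad\text{and}\qquad
\tilde\alpha = \varepsilon_2\, \lambda_S(\gamma)^{-1} \in \mathrm{E}_{r+1}(A_S),
\]
which is exactly the statement that $\alpha$ and $\beta$ are both 1-stably elementary. There is no real obstacle in the argument; the only subtle point to watch is that one must stabilize by exactly one to reach size $\geq 3$ (so the hypothesis $r \geq 2$ is sharp for this method), and that the cartesian property of $\mathrm{GL}_{r+1}$ applied to the patching diagram for $\varphi$ is what provides the lift $\gamma$ living globally over $A$.
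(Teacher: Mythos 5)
Your proof follows the paper's argument exactly: stabilize to size $r+1$, invoke Vorst's Lemma (valid since $r+1 \geq 3$), lift through the cartesian square, and then use $SK_{1,r+1}(A)\cong 0$ to conclude the lift is elementary. One small algebraic slip: multiplying the equation $\tilde\beta_{\varphi(S)}\,\varphi_S(\tilde\alpha)=\lambda_{\varphi(S)}(\varepsilon_1)\,\varphi_S(\varepsilon_2)$ on the left by $\lambda_{\varphi(S)}(\varepsilon_1)^{-1}$ and on the right by $\varphi_S(\tilde\alpha)^{-1}$ gives $\lambda_{\varphi(S)}(\varepsilon_1^{-1}\tilde\beta)=\varphi_S(\varepsilon_2\,\tilde\alpha^{-1})$, so the lift $\gamma$ satisfies $\varphi(\gamma)=\varepsilon_1^{-1}\tilde\beta$ and $\lambda_S(\gamma)=\varepsilon_2\,\tilde\alpha^{-1}$ (not $\tilde\beta\,\varepsilon_1^{-1}$ and not $\tilde\alpha=\varepsilon_2\,\lambda_S(\gamma)^{-1}$), but the conclusion $\tilde\beta=\varepsilon_1\varphi(\gamma)\in\mathrm{E}_{r+1}(B)$ and $\tilde\alpha=\lambda_S(\gamma)^{-1}\varepsilon_2\in\mathrm{E}_{r+1}(A_S)$ is unaffected.
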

 
 \begin{proof}
 By assumption, there exists an $\varepsilon \in \mathrm{E}_{r+1}(B_{\varphi(S)})$ such that
 \[
 \left( \begin{smallmatrix} \beta & 0 \\ 0 & 1 \end{smallmatrix} \right)_{\varphi(S)} \cdot \varphi_S \left( \begin{smallmatrix} \alpha & 0 \\ 0 & 1 \end{smallmatrix} \right)=\varepsilon
 \]
 holds. By Vorst's lemma (which is applicable since $r+1 \geq 3$), there exist $\varepsilon_1 \in \mathrm{E}_{r+1}(A_S)$ and $\varepsilon_2 \in \mathrm{E}_{r+1}(B)$ with $\varepsilon=(\varepsilon_2)_{\varphi(S)} \cdot \varphi_S (\varepsilon_1)$. Thus the equality
 \[
 \varphi_S \Bigl( \left( \begin{smallmatrix} \alpha & 0 \\ 0 & 1 \end{smallmatrix} \right) \varepsilon_1^{-1} \Bigr)=\Bigl(  \left( \begin{smallmatrix} \beta^{-1} & 0 \\ 0 & 1 \end{smallmatrix} \right) \varepsilon_2 \Bigr)_{\varphi(S)}
 \]
 holds in $B_S$. Since analytic patching diagrams are pullbacks, it follows that there exists a matrix $\gamma \in \mathrm{SL}_{r+1}(A)=\mathrm{E}_{r+1}(A)$ which is sent to these two matrices by $\lambda_S$ respectively $\varphi$. This shows that $\alpha$ and $\beta^{-1}$, hence $\beta$, are $1$-stably elementary.
 \end{proof}
 
 The following lemma gives the aforementioned reduction step from the ring $A^h_{(\alpha)}[\frac{1}{\alpha}]$ to $A^h_{(\alpha)}$.
 
\begin{lemma}\label{lemma:stably_elementary_factorization_for_henselization_patching_diagram}
 Let $A$ be a commutative ring such that for a fixed $r \geq 2$, there are isomorphisms $P_r(A[x,x^{-1}]) \cong \ast$ and $SK_{r+1}(A[x,x^{-1}]) \cong 0$. Then every $1$-stably elementary matrix over $A^h_{(\alpha)}[\frac{1}{\alpha}][x,x^{-1}]$ factors as a product of a pair of a $1$-stably elementary matrix over $A^h_{(\alpha)}[x,x^{-1}]$ and a $1$-stably elementary matrix over $A_{\alpha}[x,x^{-1}]$.
\end{lemma}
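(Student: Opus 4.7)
The plan is to combine the three tools developed in the excerpt: the equivalence of categories for analytic patching data (Proposition~\ref{prop:patching_projective_modules}), the free patching lemma (Lemma~\ref{lemma:free_patching}), and the ``stable elementarity from the product'' statement (Lemma~\ref{lemma:stably_elementary_matrix_factorization}). Specifically, I would first form the analytic patching diagram
\[
\xymatrix{A[x,x^{-1}] \ar[r]^-{\lambda_{\alpha}} \ar[d] & A_{\alpha}[x,x^{-1}] \ar[d] \\ A^h_{(\alpha)}[x,x^{-1}] \ar[r] & A^h_{(\alpha)}[\tfrac{1}{\alpha}][x,x^{-1}] }
\]
which is obtained from the analytic patching square for $A \rightarrow A^h_{(\alpha)}$ along $\alpha$ by the flat base change $A \rightarrow A[x,x^{-1}]$, and is therefore itself an analytic patching diagram along $\alpha$.

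Next, given a $1$-stably elementary matrix $\sigma \in \mathrm{SL}_r\bigl(A^h_{(\alpha)}[\tfrac{1}{\alpha}][x,x^{-1}]\bigr)$, I would view it as a patching datum $\bigl(A^h_{(\alpha)}[x,x^{-1}]^r,\, A_{\alpha}[x,x^{-1}]^r,\, \sigma\bigr)$. By Proposition~\ref{prop:patching_projective_modules}, this yields a rank-$r$ projective module $P$ over $A[x,x^{-1}]$. The hypothesis $P_r\bigl(A[x,x^{-1}]\bigr)\cong \ast$ forces $P$ to be free, so the free patching lemma (applied to the analytic patching diagram above; the argument works verbatim for any $r$, not only $r=2$) produces matrices $\tau_1 \in \mathrm{SL}_r\bigl(A^h_{(\alpha)}[x,x^{-1}]\bigr)$ and $\tau_2 \in \mathrm{SL}_r\bigl(A_{\alpha}[x,x^{-1}]\bigr)$ whose images in $A^h_{(\alpha)}[\tfrac{1}{\alpha}][x,x^{-1}]$ multiply to $\sigma$.

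It remains to upgrade each of $\tau_1,\tau_2$ from merely being in $\mathrm{SL}_r$ to being $1$-stably elementary. This is exactly the content of Lemma~\ref{lemma:stably_elementary_matrix_factorization} applied to the analytic isomorphism $A[x,x^{-1}] \rightarrow A^h_{(\alpha)}[x,x^{-1}]$ along the multiplicative set generated by $\alpha$, with $\beta=\tau_1$ and the role of ``$\alpha$'' played by $\tau_2$: the hypothesis $SK_{r+1}\bigl(A[x,x^{-1}]\bigr)\cong 0$ (read as $SK_{1,r+1}$, in the unstable notation of the excerpt) is precisely what the lemma requires, and since the product $\sigma$ is by assumption $1$-stably elementary, the lemma concludes that $\tau_1$ and $\tau_2$ are both $1$-stably elementary. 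This delivers the desired factorization.

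The main obstacle is conceptual rather than technical: one needs to recognize that the freeness of the glued projective module gives \emph{some} factorization in $\mathrm{SL}_r$, but not one with any stable-elementarity control---the extra control comes only from separately invoking the $K$-theoretic hypothesis $SK_{r+1}\bigl(A[x,x^{-1}]\bigr)\cong 0$ via Lemma~\ref{lemma:stably_elementary_matrix_factorization}. Beyond that, the proof is essentially a bookkeeping exercise, with the only minor point being the extension of the free patching lemma from $\mathrm{SL}_2$ to $\mathrm{SL}_r$ for arbitrary $r\geq 2$, which is immediate from its proof.
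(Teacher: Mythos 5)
Your proof is correct and follows essentially the same route as the paper: form the analytic patching square obtained by base change along $A \to A[x,x^{-1}]$, use the hypothesis $P_r(A[x,x^{-1}]) \cong \ast$ together with the (argument of the) free patching lemma to get a factorization of $\sigma$ into matrices in $\mathrm{SL}_r$, then invoke Lemma~\ref{lemma:stably_elementary_matrix_factorization} with the hypothesis $SK_{1,r+1}(A[x,x^{-1}]) \cong 0$ to upgrade each factor to being $1$-stably elementary. You simply spell out the intermediate steps (constructing the patching datum, applying Proposition~\ref{prop:patching_projective_modules}) that the paper compresses into the phrase ``applying the (argument of the) free patching lemma.''
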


\begin{proof}
 Applying the (argument of the) free patching lemma to the analytic patching diagram
 \[
 \xymatrix{A[x,x^{-1}] \ar[r] \ar[d] & A_{\alpha}[x,x^{-1}] \ar[d] \\ A^h_{(\alpha)}[x,x^{-1}] \ar[r] & A^h_{(\alpha)}[\frac{1}{\alpha}][x,x^{-1}] }
 \]
 associated to $A[x,x^{-1}] \rightarrow A^h_{(\alpha)}[x,x^{-1}]$, we find that every $1$-stably elementary matrix $\gamma \in \mathrm{SL}_r(A^h_{(\alpha)}[\frac{1}{\alpha}][x,x^{-1}])$ factors as a product of a matrix $\gamma_1$ in $\mathrm{SL}_{r}(A_{\alpha}[x,x^{-1}])$ and $\gamma_2 \in \mathrm{SL}_r(A^{h}_{(\alpha)}[x,x^{-1}])$. Since $\gamma$ is $1$-stably elementary, so are $\gamma_1$ and $\gamma_2$ by Lemma~\ref{lemma:stably_elementary_matrix_factorization}.
\end{proof}

 The following proposition establishes a relationship between $1$-stably elementary matrices over $A^h_{(\alpha)}[x,x^{-1}]$ and the corresponding matrices modulo $\alpha$. This is based on the generalization of Hensel's lemma (see Theorem~\ref{thm:generalized_hensel_lemma}). More precisely, we use its Corollary~\ref{cor:localization_of_monic_polynomials_jacobson_radical}, which tells us that $\alpha$ lies in the Jacobson radical of the ring $A^h_{(\alpha)} \langle x \rangle_{1+\alpha A^h_{(\alpha)}[x]}$.
 
  We introduce the notation $\widetilde{SK}_{1,r} \defl SK_{1,r}$ for $r \geq 3$ in order to streamline the statement and the proof of the next proposition. Note that this is in accordance with the definition of pseudoelementary matrices since we have $\mathrm{SL}_r(\mathbb{Z}[t_i])=\mathrm{E}_r(\mathbb{Z}[t_i])$ for all $r \geq 3$. In other words, all pseudoelementary matrices of size $\geq 3$ are elementary.
  
 \begin{prop}\label{prop:stably_elementary_patching_trivial_if_trivial_modulo_alpha}
 Let $A$ be a noetherian commutative ring, let $\alpha \in A$, and let $r \geq 2$ be fixed. Assume that the following conditions hold:
 \begin{enumerate}
 \item[(1)] There are isomorphisms $P_r(A[x,x^{-1}]) \cong \ast$ and $SK_{r+1}(A[x,x^{-1}]) \cong 0$;
 \item[(2)] The local rings of $A$ are unramified regular local rings;
 \item[(3)] We have $\widetilde{SK}_{1,r}^{1-\mathrm{st}}(A \slash \alpha) \cong \ast$ and the image of
 \[
 \widetilde{SK}_{1,r}^{1-\mathrm{st}}(A \slash \alpha[s,s^{-1}] ) \rightarrow \widetilde{SK}_{1,r}^{1-\mathrm{st}}(A \slash \alpha \langle s \rangle)
 \]
 is trivial.
 \end{enumerate}
 Then $\widetilde{SK}_{1,r}^{1-\mathrm{st}}(A_{\alpha}[x,x^{-1}]) \rightarrow \widetilde{SK}_{1,r}^{1-\mathrm{st}}(A^h_{(\alpha)}[\frac{1}{\alpha}][x,x^{-1}])$ is surjective.
 \end{prop}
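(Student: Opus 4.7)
The plan is to apply Lemma~\ref{lemma:stably_elementary_factorization_for_henselization_patching_diagram} to reduce the surjectivity to a pseudoelementarity claim, extract this via a reduction modulo $\alpha$ using the henselian structure, and finally descend via the Horrocks principle.

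Given $\gamma \in \widetilde{SK}_{1,r}^{1-\mathrm{st}}(D)$, where $D := A^h_{(\alpha)}[\tfrac{1}{\alpha}][x,x^{-1}]$, the hypotheses~(1) are exactly those of Lemma~\ref{lemma:stably_elementary_factorization_for_henselization_patching_diagram}, so I obtain a matrix factorization $\gamma = (\sigma_C)|_D \cdot (\sigma_E)|_D$, where $\sigma_C \in \mathrm{SL}_r(C)$ and $\sigma_E \in \mathrm{SL}_r(E)$ are both $1$-stably elementary, with $C := A_\alpha[x,x^{-1}]$ and $E := A^h_{(\alpha)}[x,x^{-1}]$. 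Then $[\gamma]$ equals the image of $[\sigma_C]$ under $\widetilde{SK}_{1,r}^{1-\mathrm{st}}(C) \to \widetilde{SK}_{1,r}^{1-\mathrm{st}}(D)$ as soon as $(\sigma_E)|_D$ is pseudoelementary in $D$. Hence the goal reduces to showing: for every $1$-stably elementary $\sigma \in \mathrm{SL}_r(E)$, the image $\sigma|_D \in \mathrm{SL}_r(D)$ is pseudoelementary.

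For this, I first reduce modulo $\alpha$. Since $A \to A^h_{(\alpha)}$ is an analytic isomorphism along $\alpha$, we have $E/\alpha \cong (A/\alpha)[x,x^{-1}]$, and $\sigma$ reduces to a class $\bar\sigma \in \widetilde{SK}_{1,r}^{1-\mathrm{st}}((A/\alpha)[x,x^{-1}])$ whose image in $\widetilde{SK}_{1,r}^{1-\mathrm{st}}((A/\alpha)\langle x\rangle)$ is trivial by Assumption~(3). The key observation is that $\alpha$ lies in the Jacobson radical of $A^h_{(\alpha)}\langle x\rangle$: by \cite[Proposition~IV.1.2]{LAM} every maximal ideal of $A^h_{(\alpha)}\langle x\rangle$ contracts to a maximal ideal of $A^h_{(\alpha)}$, and all such contain $\alpha$ since $(A^h_{(\alpha)}, (\alpha))$ is henselian. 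By $j$-invariance of $\widetilde{SK}_{1,r}$ (Lemma~\ref{lemma:SL_r_j_invariant} for $r \geq 3$; Proposition~\ref{prop:pseudoelementary_j_invariant} for $r = 2$), the reduction $\widetilde{SK}_{1,r}(A^h_{(\alpha)}\langle x\rangle) \to \widetilde{SK}_{1,r}((A/\alpha)\langle x\rangle)$ is a bijection, so $\sigma$ becomes pseudoelementary over $A^h_{(\alpha)}\langle x\rangle$. Inverting $\alpha$ preserves pseudoelementarity, hence $\sigma|_D$ is pseudoelementary over $R\langle x\rangle$, where $R := A^h_{(\alpha)}[\tfrac{1}{\alpha}]$.

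To conclude, I would descend the pseudoelementarity from $R\langle x\rangle$ back to $D = R[x,x^{-1}] \subseteq R\langle x\rangle$ using the Horrocks principle $(\mathrm{H})$ for $\widetilde{SK}_{1,r}$ (Corollary~\ref{cor:SK_12_satisfies_Q_and_H}) combined with the $\widetilde{SK}_{1,r}$-regularity of $R$, which follows from Assumption~(2) via Theorem~\ref{thm:Vorsts_Theorem_for_2x2_matrices} (and its analogue for $r \geq 3$) since the local rings of $R$ remain unramified regular. The pullback form of Horrocks (Lemma~\ref{lemma:pullback_version_of_H}), combined with the injectivity of the monic inversion $\widetilde{SK}_{1,r}(R) \hookrightarrow \widetilde{SK}_{1,r}(R\langle x\rangle)$ (Proposition~\ref{prop:uniquely_etxtended_after_monic_inversion}, valid since $\widetilde{SK}_{1,r}$ is finitary, satisfies $(\mathrm{H})$, and satisfies weak Zariski excision by Theorem~\ref{thm:pseudoelementary_weak_Zariski_excision}), then forces $\sigma|_D$ to become trivial in $\widetilde{SK}_{1,r}(D)$, modulo the $K_0$-summand arising from the Laurent variable. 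The main obstacle lies in this last descent: while Lemma~\ref{lemma:pullback_version_of_H} applies cleanly for the group-valued $SK_{1,r}$ ($r \geq 3$), for $r = 2$ the pointed-set structure of $\widetilde{SK}_{1,2}$ together with the $K_0(R)$-contribution appearing in the Laurent Fundamental Theorem requires more delicate tracking, for instance via stabilization $\sigma \mapsto \sigma \oplus 1$ to reduce to the group-valued case $SK_{1,3}$.
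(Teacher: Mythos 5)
The proposal follows the same high-level outline as the paper (reduce via Lemma~\ref{lemma:stably_elementary_factorization_for_henselization_patching_diagram}, pass to $A\slash\alpha$, and invoke Condition~(3)), but it contains a genuine gap at the central step. You assert that $\alpha$ lies in the Jacobson radical of $A^h_{(\alpha)}\langle x\rangle$, citing \cite[Proposition~IV.1.2]{LAM}. This is false: that result concerns the Krull dimension of $R\langle t\rangle$ and does \emph{not} say that every maximal ideal of $R\langle t\rangle$ contracts to a maximal ideal of $R$. As a concrete counterexample, take $A^h_{(\alpha)}=k\llbracket a\rrbracket$ and $\alpha=a$: the element $1+at$ is not a unit in $k\llbracket a\rrbracket\langle t\rangle$, because a monic multiple $m(t)$ of $1+at$ would vanish at $t=-1/a$, exhibiting $-1/a$ as integral over the integrally closed ring $k\llbracket a\rrbracket$ --- a contradiction. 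Hence $a$ is not in the Jacobson radical of $k\llbracket a\rrbracket\langle t\rangle$, and the $j$-invariance argument you invoke does not apply to $A^h_{(\alpha)}\langle x\rangle$ directly.

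This is exactly the point where the paper's proof takes a more careful detour. After reducing, via a Zariski patching square and the contractibility $\widetilde{SK}_{1,r}^{1-\mathrm{st}}(A^h_{(\alpha)}[t])\cong\ast$, to showing triviality of the image of $[\sigma]$ in $\widetilde{SK}_{1,r}^{1-\mathrm{st}}(A^h_{(\alpha)}\langle s\rangle)$ (with $s=x^{-1}$), the paper inserts a \emph{second} Zariski patching square which further localizes at the multiplicative set $1+\alpha A^h_{(\alpha)}[s]$. Only after this extra localization is $\alpha$ in the Jacobson radical of the resulting ring --- this is precisely the content of Corollary~\ref{cor:localization_of_monic_polynomials_jacobson_radical}, which in turn rests on the generalized Hensel factorization (Theorem~\ref{thm:generalized_hensel_lemma}); it does not hold for $A^h_{(\alpha)}\langle s\rangle$ alone. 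The extra square is shown to be a Zariski patching diagram using the top-bottom argument (comaximality of monic polynomials and $1+\alpha A^h_{(\alpha)}[s]$, exploiting that $\alpha$ lies in the Jacobson radical of $A^h_{(\alpha)}$). You would need to replicate this two-stage reduction; without it, the final step from $R\langle x\rangle$ back to $D$, which you flag as ``the main obstacle,'' rests on an unproven Jacobson-radical claim and does not close. Note also that the paper proves the stronger assertion $\widetilde{SK}_{1,r}^{1-\mathrm{st}}(A^h_{(\alpha)}[x,x^{-1}])\cong\ast$ and never needs to descend pseudoelementarity from a monic localization back into $A^h_{(\alpha)}[\tfrac{1}{\alpha}][x,x^{-1}]$; all descent happens through the weak-pullback property of the Zariski squares at the level of $A^h_{(\alpha)}[x,x^{-1}]$, which sidesteps the delicate $K_0$-tracking you anticipate for $r=2$.
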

 
 \begin{proof}
 From Condition~(1) it follows that Lemma~\ref{lemma:stably_elementary_factorization_for_henselization_patching_diagram} is applicable, which reduces the problem to checking that $\widetilde{SK}_{1,r}^{1-\mathrm{st}}(A^h_{(\alpha)}[x,x^{-1}]) \cong \ast$. To see this, we first show that
 \begin{equation}\label{eqn:sk1r_trivial}
 \widetilde{SK}_{1,r}^{1-\mathrm{st}}\bigl(A^h_{(\alpha)}[t]\bigr) \cong \ast \tag{$\star$}
 \end{equation}
 holds.
 
 Note that Condition~(2) implies that the local rings of $A^h_{(\alpha)}$ are unramified regular local rings as well. Indeed, from \cite[Corollary~6.9]{GRECO} it follows that $A^h_{(\alpha)}$ is noetherien, so $A \rightarrow A^h_{(\alpha)}$ is a regular ring map as filtered colimit of smooth ring maps (see \cite[\href{https://stacks.math.columbia.edu/tag/07EP}{Lemma 07EP}]{stacks-project}). Since ``regular'' is a local property (see \cite[\href{https://stacks.math.columbia.edu/tag/07C0}{Lemma 07C0}]{stacks-project}) and regular maps between noetherian rings compose (see \cite[\href{https://stacks.math.columbia.edu/tag/07QI}{Lemma 07QI}]{stacks-project}), Condition~(2) does imply that the local rings of $A^h_{(\alpha)}$ are unramified regular local rings, as claimed.
 
 Thus by Proposition~\ref{prop:SK_1r_regularity_for_dvr_and_unramified} and Theorem~\ref{thm:Vorsts_Theorem_for_2x2_matrices}, evaluation in $0$ induces an isomorphism $\widetilde{SK}_{1,r}^{1-\mathrm{st}}(A^h_{(\alpha)}[t])  \cong \widetilde{SK}_{1,r}^{1-\mathrm{st}}(A^h_{(\alpha)}) $. Using the $j$-invariance of $\widetilde{SK}_{1,r}$ (see Lemma~\ref{lemma:SL_r_j_invariant} and Proposition~\ref{prop:pseudoelementary_j_invariant}), it suffices to check that $\widetilde{SK}_{1,r}^{1-\mathrm{st}}(A^h_{(\alpha)} \slash \alpha) \cong \ast $ holds. This follows from the isomorphism $A^h_{(\alpha)} \slash \alpha \cong A \slash \alpha$ and the first part of Condition~(3). This concludes the proof of Claim~\eqref{eqn:sk1r_trivial}.
 
 Next we fix an element $[\sigma] \in \widetilde{SK}_{1,r}^{1-\mathrm{st}}(A^h_{(\alpha)}[x,x^{-1}])$. We claim that it suffices to check that its image in $\widetilde{SK}_{1,r}^{1-\mathrm{st}}(A^h_{(\alpha)} \langle x^{-1} \rangle) $ is trivial in order to conclude that $[\sigma]=[\id]$. Indeed, if the image of $[\sigma]$ is trivial, then the Zariski patching diagram
 \[
 \xymatrix{A^h_{(\alpha)}[x] \ar[r] \ar[d] & A^h_{(\alpha)}[x,x^{-1}] \ar[d] \\ A^h_{(\alpha)}[x]_{1+x A^h_{(\alpha)}[x]} \ar[r] & A^h_{(\alpha)}\langle x^{-1} \rangle}
 \]
 implies that $[\sigma]$ is the image of some $[\tau] \in \widetilde{SK}_{1,r}^{1-\mathrm{st}}(A^h_{(\alpha)}[x])$ (here we use Theorem~\ref{thm:pseudoelementary_weak_Zariski_excision} in the case $r=2$). Moreover, from the monic inversion principle for $\widetilde{SK}_{1,r+1}=SK_{1,r+1}$ it follows that $\tau$ is $1$-stably elementary. Claim~\eqref{eqn:sk1r_trivial} above thus shows that $[\tau]$, hence $[\sigma]$, is trivial.
 
 To show that the image of $[\sigma]$ in $\widetilde{SK}_{1,r}^{1-\mathrm{st}}(A^h_{(\alpha)} \langle x^{-1} \rangle)$ is trivial, we use yet another Zariski patching diagram. Writing $s=x^{-1}$, the diagram
 \[
 \xymatrix{ A^h_{(\alpha)}[s] \ar[r] \ar[d] & A^h_{(\alpha)} \langle s \rangle \ar[d] \\ 
 A^h_{(\alpha)}[s]_{1+ \alpha A^h_{(\alpha)}[s]}  \ar[r] &A^h_{(\alpha)} \langle s \rangle_{1+\alpha A^h_{(\alpha)}[s]} }
 \]
 is a Zariski patching diagram by the top-bottom lemma (see Lemma~\ref{lemma:top_bottom_lemma}). The top horizontal homomorphism is again given by a localization at monic polynomials, so the same argument as above reduces the problem to showing that the image of $[\sigma]$ in the pointed set $\widetilde{SK}_{1,r}^{1-\mathrm{st}} \bigl(A^h_{(\alpha)} \langle s \rangle_{1+\alpha A^h_{(\alpha)}[s]} \bigr)$ is trivial.
 
 Finally, we use the $j$-invariance of $\widetilde{SK}_{1,r}$ and the fact that $\alpha$ lies in the Jacobson radical of $A^h_{(\alpha)} \langle s \rangle_{1+\alpha A^h_{(\alpha)}[s]}$ (see Corollary~\ref{cor:localization_of_monic_polynomials_jacobson_radical}) to reduce the problem to checking that the image of $[\sigma]$ in $\widetilde{SK}_{1,r}^{1-\mathrm{st}}(A \slash \alpha \langle s \rangle)$ is trivial. This follows from the commutative diagram
 \[
 \xymatrix{ A^h_{(\alpha)}[x, x^{-1}] \ar[r] \ar[d] & A^h_{(\alpha)}\langle x^{-1} \rangle \ar[r] \ar[d] & A^h_{(\alpha)} \langle x^{-1} \rangle_{1+\alpha A^h_{(\alpha)}[x^{-1}]} \ar[ld] \\ A \slash \alpha [x,x^{-1}]  \ar[r] & A\slash \alpha \langle x^{-1} \rangle}
 \]
 and the second part of Condition~(3).
 \end{proof}
\section{Unipotent unimodular rows and symplectic K-theory}\label{section:symplectic}

 We call a unimodular row $(a_1, \ldots, a_r) \in \mathrm{Um}_r(A)$ \emph{unipotent} if $a_1$ is a unipotent element modulo the ideal generated by $(a_2, \ldots, a_r)$. The Hermite ring conjecture implies that all unipotent unimodular rows are completable (see \cite[Proposition~6.(2)]{ROITMAN}). The same is true if $(r-1)!$ is a unit in $A$ by \cite[Proposition~6.(1)]{ROITMAN}. One of the simplest cases where this problem is open is therefore when $r=3$ and when $A$ is an $\mathbb{F}_2$-algebra. In \cite[\S VIII.5]{LAM}, this specific case is called ``Murthy's $(a,b,c)$-problem.'' The goal of this section is to relate this problem to a stable question (concerning the interaction of symplectic and ordinary $K$-theory) and to settle this stable question in some particular cases.
 
  We first note that the problem can be reduced to the following universal example once a ground field $k$ has been fixed: if the row $(1+a,x_1,x_2)$ over the $k$-algebra
  \[
  A=k[a,x_1,y_1,x_2,y_2] \slash (\textstyle\sum_{i=1}^2 x_i y_i - a^{\ell})
  \]
  is completable for all $\ell \geq 2$, then all unipotent unimodular rows $(a,b,c) \in \mathrm{Um}_r(B)$ are completable over all $k$-algebras $B$.
  
  As in \S \ref{section:pseudoelementary}, we write $\widetilde{SK}_{1,r}^{1-\mathrm{st}}(R)$ for $SK_{1,r}^{1-\mathrm{st}}(R)$ and $\widetilde{\mathrm{E}}_r(R)$ for $\mathrm{E}_r(R)$ in the case where $r >2$ to treat all $r \geq 2$ uniformly.
  
 \begin{prop}\label{prop:universal_unipotent_unimodular_row}
 Let $\ell \geq 2$ and let $k$ be a field. Suppose the following holds for some $r \geq 2$: there are isomorphisms
 \[
 \widetilde{SK}_{1,r}^{1-\mathrm{st}} \bigl(k[a,x,y] \slash (xy-a^{\ell}) \bigr) \cong \ast \quad \text{and} \quad
 \widetilde{SK}_{1,r}^{1-\mathrm{st}} \bigl(k\langle s \rangle [a,x,y] \slash (xy-a^{\ell}) \bigr) \cong \ast \smash{\rlap{.}}
 \]
 Then all unimodular rows of length $r+1$ over the ring
 \[
 k[a,x_1,y_1,x_2,y_2] \slash (\textstyle\sum_{i=1}^2 x_i y_i - a^{\ell})
 \]
 are completable.
 \end{prop}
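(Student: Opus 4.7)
Setting $x = x_2$, $y = y_2$, and writing $A = k[a, x_1, y_1]$, $\alpha = a^{\ell} - x_1 y_1 \in A$, one has $R \cong A[x,y]/(xy-\alpha)$; note $\alpha$ is a nonzerodivisor in both $A$ and $R$ by Lemma~\ref{lemma:ufd_implies_nonzerodivisor}. The central tool is the analytic patching diagram (Lemma~\ref{lemma:henselian_analytic_iso_for_overrings})
\[
\xymatrix{R \ar[d] \ar[r] & R_\alpha \cong A_\alpha[x,x^{-1}] \ar[d] \\ B \ar[r] & B_\alpha \cong A^h_{(\alpha)}[\tfrac{1}{\alpha}][x,x^{-1}]}
\]
with $B \defl A^h_{(\alpha)}[x,y]/(xy-\alpha)$. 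Given $u \in \mathrm{Um}_{r+1}(R)$ and $P \defl \ker u$, the aim is to show $P \cong R^r$.

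The strategy is first to invoke Proposition~\ref{prop:elementary_completable_implies_stably_elementary_patching} in order to replace $P$ with a patching datum $(B^r, R_\alpha^r, \sigma)$ in which $\sigma \in \mathrm{SL}_r(B_\alpha)$ is \emph{$1$-stably elementary}, and then to trivialize $\sigma$ via Proposition~\ref{prop:stably_elementary_patching_trivial_if_trivial_modulo_alpha} together with the good-lift machinery of \S\ref{section:pseudoelementary}. The hypotheses of Proposition~\ref{prop:stably_elementary_patching_trivial_if_trivial_modulo_alpha} are the cleaner use of the assumptions and I would verify them first: $A[x,x^{-1}]$ is regular, so $P_r$ and $SK_{r+1}$ vanish on it; the local rings of $A = k[a,x_1,y_1]$ contain the field $k$ and are therefore unramified regular; the first part of condition~(3) is precisely the first hypothesis, while for the second part I factor $A/\alpha[s,s^{-1}] \to A/\alpha \langle s \rangle$ through $k\langle s\rangle \otimes_k A/\alpha \cong k\langle s\rangle[a,x,y]/(xy-a^{\ell})$, on which the second hypothesis provides the required triviality.

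Before Proposition~\ref{prop:elementary_completable_implies_stably_elementary_patching} can be applied, I must check that $u$ is elementary completable over both $R_\alpha$ and $B$. Over $R_\alpha$: since $A_\alpha$ is a regular $k$-algebra, combining Theorem~\ref{thm:F_regularity_for_unramified_regular_rings} (applied to $W_{r+1}$) with Theorem~\ref{thm:strong_F_extension_for_Laurent} for the Laurent variable $x$ yields $W_{r+1}(R_\alpha) \cong \ast$. Over $B$: I would apply Corollary~\ref{cor:henselian_implies_trivial_kernel_for_overring} to $W_{r+1}$ using the henselian pair $(A^h_{(\alpha)},\alpha)$, reducing the problem to the triviality of $W_{r+1}(A/\alpha[x])$. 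The latter must then be extracted from $\widetilde{SK}_{1,r}^{1-\mathrm{st}}(A/\alpha) \cong \ast$ via a Vaserstein-type bridge between $1$-stably elementary matrices in $\mathrm{SL}_r$ and elementary orbits of unimodular rows of length $r+1$, followed by the Horrocks principle $(\mathrm{H})$ to pass from $A/\alpha$ to $A/\alpha[x]$.

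Once the $1$-stably elementary patching matrix $\sigma$ is produced, Proposition~\ref{prop:stably_elementary_patching_trivial_if_trivial_modulo_alpha} furnishes $\tau \in \mathrm{SL}_r(R_\alpha)$ (itself $1$-stably elementary) together with $\varepsilon \in \widetilde{\mathrm{E}}_r(B_\alpha)$ such that $\sigma = (\text{image of }\tau) \cdot \varepsilon$. Since $R \to B$ admits enough good lifts by Corollary~\ref{cor:good_lifts_henselization}, Proposition~\ref{prop:good_lifts_implies_weak_excision} factors $\varepsilon = \varepsilon_B \cdot \varepsilon_{R_\alpha}$ with $\varepsilon_B \in \widetilde{\mathrm{E}}_r(B)$ and $\varepsilon_{R_\alpha} \in \widetilde{\mathrm{E}}_r(R_\alpha)$; absorbing these factors into the trivializations $(B^r, R_\alpha^r, \sigma) \cong (B^r, R_\alpha^r, \mathrm{id})$ shows $P \cong R^r$, so $u$ is completable. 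The hard part is the Vaserstein bridge in the previous paragraph: for $r = 2$ this is precisely the connection with symplectic $K$-theory alluded to in the introduction (via the theorem of Vaserstein cited there), and is the principal reason the hypotheses are stated in terms of $\widetilde{SK}_{1,r}^{1-\mathrm{st}}$ rather than directly in terms of $W_{r+1}$.
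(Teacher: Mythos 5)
Your overall strategy --- set up the analytic patching diagram with $B = A^h_{(\alpha)}[x_2,y_2]/(x_2 y_2 - \alpha)$, verify elementary completability of the unimodular row over both $R_\alpha$ and $B$, apply Proposition~\ref{prop:elementary_completable_implies_stably_elementary_patching} to produce a $1$-stably elementary patching matrix $\sigma$, and then trivialize $\sigma$ via Proposition~\ref{prop:stably_elementary_patching_trivial_if_trivial_modulo_alpha} together with the good-lift machinery of \S\ref{section:pseudoelementary} --- is precisely the paper's, and your verification of Conditions~(1)--(3) of Proposition~\ref{prop:stably_elementary_patching_trivial_if_trivial_modulo_alpha} is correct. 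That is where the two stated hypotheses enter, and it is in fact the \emph{only} place they enter.

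The genuine gap is the sentence where you propose to derive $W_{r+1}(A/\alpha[x]) \cong \ast$ from $\widetilde{SK}_{1,r}^{1-\mathrm{st}}(A/\alpha) \cong \ast$ via a ``Vaserstein-type bridge'' plus the Horrocks principle. No such bridge is available here: $\widetilde{SK}_{1,r}^{1-\mathrm{st}}(A/\alpha) \cong \ast$ says that $1$-stably elementary matrices in $\mathrm{SL}_r(A/\alpha)$ are pseudoelementary, which says nothing about whether unimodular rows of length $r+1$ over $A/\alpha$ are completable at all (let alone elementary completable) --- non-free kernels would already obstruct this before one even reaches the question of elementary orbits. Vaserstein's theorem as cited in the paper (\cite[Th{\'e}or{\`e}me~4]{BASS_LIBERATION}) identifies $W(R)$ with $\mathrm{Um}_3(R)/\mathrm{SL}_3(R)$ --- $\mathrm{SL}_3$-orbits, not $\mathrm{E}_3$-orbits --- and in \S\ref{section:symplectic} it is applied to the auxiliary ring $C$, not to $A/\alpha$. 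The paper instead establishes $W_{r+1}(A/\alpha[x]) \cong \ast$ unconditionally, with no reference to the hypotheses: one observes that $A/\alpha[x] \cong k[s^{\ell}, st, t^{\ell}, x]$ is the monoid algebra of a cancellative submonoid of $\mathbb{N}^3$, so Gubeladze's theorem on unimodular rows over monoid algebras applies directly. This unconditional input --- which is also why the hypotheses can be stated solely in terms of $\widetilde{SK}_{1,r}^{1-\mathrm{st}}$ --- is the one ingredient your outline lacks; once supplied, the rest of your argument goes through.
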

 
 \begin{proof}
 Let $A=k[a,x_1,y_1]$ and $\alpha=a^{\ell}-x_1 y_1$ so that
 \[
 \xymatrix{  k[a,x_1,y_1,x_2,y_2] \slash (\textstyle\sum_{i=1}^2 x_i y_i - a^{\ell}) \ar[r] \ar[d] & A_{\alpha}[x_2,x_2^{-1}] \ar[d] \\ A^h_{(\alpha)}[x_2,y_2] \slash (x_2 y_2 - \alpha) \ar[r] & A^h_{(\alpha)}[\frac{1}{\alpha}][x_2,x_2^{-1}] }
 \]
 is an analytic patching diagram. Since $k$ is $W_{r+1}$-regular, so is $A_{\alpha}[x_2,x_2^{-1}]$ (by the Roitman principle for $W_{r+1}$). The naive $\mathbb{A}^{1}$-contractibility of 
 \[
 k[a,x_i,y_i] \slash \textstyle\sum_{i=1}^2 x_i y_i-a^{\ell}
 \]
 implies that each unimodular row (of length $r+1$) over this ring becomes elementary completable over the localization $A_{\alpha}[x_2,x_2^{-1}]$ at $\alpha$.
 
 We claim that we also have $W_{r+1} \bigl(A^h_{(\alpha)})[x_2,y_2] \slash (x_2y_2 - \alpha) \bigr) \cong \ast$. To see this, we will apply Theorem~\ref{thm:henselian_implies_trivial_kernel_for_overring} and its Corollary~\ref{cor:henselian_implies_trivial_kernel_for_overring}. The functor $W_{r+1}$ satisfies weak analytic and weak Milnor excision (see Propositions~\ref{prop:W_r_weak_analytic_excision} and \ref{prop:weak_milnor_excision_group_case}), it is partially $j$-injective (see Example~\ref{example:partially_j_injective}), and $A[x_2]$ is $W_{r+1}$-contractible. It only remains to check that $A \slash \alpha [x]$ is $W_{r+1}$-contractible.
 
 To see this, we use the isomorphism
 \[
 A \slash \alpha [x] \cong k[s^{\ell},st, t^{\ell},x] \smash{\rlap{,}}
 \]
 where the right hand side denotes the monoid algebra $k[M] \subseteq k[s,t,x]$ generated by $s^{\ell}, st, t^{\ell}, x$. Clearly $M$ is cancellative as a submonoid of the cancellative monoid $\mathbb{N}^{3}$. Thus Gubeladze's result \cite[Theorem~1.1]{GUBELADZE_UNIMODULAR} is applicable and implies that $W_{r+1}(A\slash \alpha [x]) \cong \ast$.
 
  From Proposition~\ref{prop:elementary_completable_implies_stably_elementary_patching} it follows that the patching data corresponding to the projective modules arising as kernels of unimodular rows of length $r+1$ over the ring
  \[
 k[a,x_i,y_i] \slash \textstyle\sum_{i=1}^2 x_i y_i-a^{\ell}
  \]
  are given by $1$-stably elementary matrices $\sigma \in \mathrm{SL}_r(A^h_{(\alpha)}[\frac{1}{\alpha}][x_2,x_2^{-1}])$. We claim that Proposition~\ref{prop:stably_elementary_patching_trivial_if_trivial_modulo_alpha} is applicable in our case. Since $A$ is a polynomial algebra over a field, we have $P_r(A[x_2,x_2^{-1}]) \cong \ast$ and $SK_{r+1}(A[x_2,x_2^{-1}]) \cong 0$. Moreover, the local rings of $A$ are unramified regular local rings since they contain the field $k$. This shows that Conditions~(1) and (2) of Proposition~\ref{prop:stably_elementary_patching_trivial_if_trivial_modulo_alpha} are satisfied.
  
  In order to check Condition~(3), we need the assumption of the current proposition. The first condition tells us that $\widetilde{SK}_{1,r}^{1-\mathrm{st}}(A \slash \alpha) \cong \ast$. To see the second part of Condition~(3), note that we have a factorization
  \[
  A \slash \alpha[s,s^{-1}] \rightarrow k\langle s \rangle[a,x,y] \slash (xy-a^{\ell}) \rightarrow A \slash \alpha \langle s \rangle
  \]
  so the assumption that $\widetilde{SK}_{1,r}^{1-\mathrm{st}} \bigl(k\langle s\rangle[a,x,y]\slash (xy-a^{\ell}) \bigr) \cong \ast$ implies that the image of the induced map on $\widetilde{SK}_{1,r}^{1-\mathrm{st}}$ is trivial.
  
  It follows from Proposition~\ref{prop:stably_elementary_patching_trivial_if_trivial_modulo_alpha} that
  \[
  \widetilde{SK}_{1,r}^{1-\mathrm{st}}(A_{\alpha}[x_2,x_2^{-1}]) \rightarrow \widetilde{SK}_{1,r}^{1-\mathrm{st}}(A^h_{(\alpha)}[\textstyle \frac{1}{\alpha}][x_2,x_2^{-1}])
  \]
  is surjective. Applying this to $\sigma^{-1}$ we find that there exists a $\tau \in \mathrm{SL}_r(A_{\alpha}[x_2,x_2^{-1}])$ and a (pseudo)elementary matrix $\varepsilon \in \widetilde{\mathrm{E}}_r(A^h_{(\alpha)}[\frac{1}{\alpha}][x_2,x_2^{-1}])$ such that $\sigma=\varepsilon \varphi_{\alpha}(\tau)$ holds, where $\varphi$ denotes the homomorphism
  \[
  \varphi \colon A[x_2,y_2] \slash (x_2y_2 -\alpha) \rightarrow A^h_{(\alpha)}[x_2,y_2] \slash (x_2y_2 -\alpha)
  \]
  arising from the henselization of $A$ at $(\alpha)$.
  
  From Vorst's lemma in the case $r \geq 3$ (respectively from Corollary~\ref{cor:good_lifts_henselization} and Part~(a) of Proposition~\ref{prop:good_lifts_implies_weak_excision}) it follows that there exists a factorization 
  \[
  \varepsilon=(\varepsilon_1)_{\varphi(\alpha)} \cdot \varphi_{\alpha}(\varepsilon_2)
  \] 
  for some $\varepsilon_1 \in \widetilde{\mathrm{E}}_r \bigl(A^h_{(\alpha)}[x_2,y_2] \slash (x_2 y_2- \alpha)\bigr)$ and $\varepsilon_2 \in \widetilde{\mathrm{E}}_r(A_{\alpha}[x_2,x_2^{-1}])$. This shows that the patching datum $\sigma$ is isomorphic to $\id$, so the unimodular row corresponding to $\sigma$ is completable. Since $\sigma$ was an arbitrary $1$-stably elementary $r \times r$-matrix, this implies that all unimodular rows of length $r+1$ are completable, as claimed.
 \end{proof}
 
 The above proposition reduces the study of unimodular rows over
 \[
 k[a,x_1,y_1,x_2,y_2] \slash (\textstyle\sum_{i=1}^2 x_i y_i - a^{\ell})
 \]
 to the study of $1$-stably elementary matrices over the ring
 \[
 k[a,x,y] \slash (xy-a^{\ell})
 \]
 (albeit for two different fields $k$). This includes in particular the unimodular row $(1+a,x_1,x_2)$ appearing in Murthy's $(a,b,c)$-problem. In what follows, we will link the study of $1$-stably elementary matrices over the above ring to a stable question. In order to do this, we need to establish one more type of analytic patching diagram where pseudoelementary matrices behave as elementary matrices of size $\geq 3$ do.
 
 \begin{lemma}\label{lemma:good_lift_for_overring_patching_diagram}
 Let $A$ be a commutative ring, let $\alpha \in A$ be a nonzerodivisor, let $S=1+xA[x] \subseteq A[x]$, and let $\varphi \colon A[x] \rightarrow A[x,y] \slash (xy-\alpha)$ denote the canonical homomorphism. For every $\varepsilon \in \widetilde{\mathrm{E}}_2\Bigl( \bigl(A[x,y] \slash(xy-\alpha)\bigr)_S \Bigr)$, there exist pseudoelementary matrices $\varepsilon_1 \in \widetilde{\mathrm{E}}_2 \bigl(A[x,y]\slash(xy-\alpha)\bigr)$ and $\varepsilon_2 \in \widetilde{\mathrm{E}}_2(A[x]_S)$ such that
 \[
 \varepsilon=(\varepsilon_1)_S \cdot \varphi_S(\varepsilon_2)
 \]
 holds.
 
  Moreover, the diagram
  \[
  \xymatrix{
  \widetilde{SK}_{1,2}^{1-\mathrm{st}}(A[x]) \ar[r] \ar[d]_{\varphi} &  \widetilde{SK}_{1,2}^{1-\mathrm{st}}(A[x]_S) \ar[d]^{\varphi_S} \\  \widetilde{SK}_{1,2}^{1-\mathrm{st}} \bigl(A[x,y] \slash (xy-\alpha)\bigr) \ar[r] &  \widetilde{SK}_{1,2}^{1-\mathrm{st}} \Bigl(\bigl(A[x,y]\slash (xy-\alpha)\bigr)_S\Bigr)
  }
  \]
  is a weak pullback diagram.
 \end{lemma}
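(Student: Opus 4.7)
The weak pullback assertion follows from the factorization by the same argument as in Proposition~\ref{prop:good_lifts_implies_weak_excision}(b): given a compatible pair of $1$-stably elementary matrices $(\sigma_1,\sigma_2)$, the factorization lets us modify them by pseudoelementary factors until their images in $\mathrm{SL}_2$ of the bottom-right agree on the nose, and the pullback property of $\mathrm{SL}_2$ produces $\sigma \in \mathrm{SL}_2(A[x])$ reducing to them. The $1$-stability of $\sigma$ follows by mimicking the universal-setup argument for Claim~1 on the pair $(\sigma_1,\sigma_2,\varepsilon)$ itself: after adjoining polynomial variables for the finitely many entries involved, $\sigma$ arises as a specialization of a matrix in $\mathrm{SL}_2(\tilde{A}[X])$ over a polynomial ring over $\mathbb{Z}$, and Suslin's stability theorem forces its $3 \times 3$-stabilization to be elementary. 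I focus on the factorization claim below.

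The strategy mirrors Proposition~\ref{prop:good_lifts_implies_weak_excision}: universalize $\varepsilon$ and apply the free patching lemma together with Quillen--Suslin. Since $\widetilde{SK}_{1,2}$ is finitary, write $\varepsilon = \gamma(\varepsilon^{*})$ for a homomorphism $\gamma \colon \mathbb{Z}[r_1,\ldots,r_n] \to (A[x,y]/(xy-\alpha))_s$ factoring through the localization at a single $s = 1 + xh(x) \in S$ together with some $\varepsilon^{*} \in \mathrm{SL}_2(\mathbb{Z}[r_i])$. Introduce the polynomial ring $\tilde{A} \defl \mathbb{Z}[a, H_0,\ldots,H_d, T_1,\ldots,T_m]$ with sufficiently many variables $T_i$, and set $\tilde{B} \defl \tilde{A}[X,Y]/(XY-a)$ and $\tilde{s} \defl 1 + X\sum_{j=0}^{d} H_j X^{j}$. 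Choose $\psi_0 \colon \tilde{A} \to A$ by $a \mapsto \alpha$, $H_j \mapsto h_j$, and the $T_i$ to suitable elements so that $\gamma$ lifts to $\tilde{\gamma} \colon \mathbb{Z}[r_i] \to \tilde{B}_{\tilde{s}}$ with $\psi_0^{*} \tilde{\gamma} = \gamma$; set $\tilde{\varepsilon} \defl \tilde{\gamma}(\varepsilon^{*})$. Both $\tilde{A}[X]$ and $\tilde{B} \cong \mathbb{Z}[H_j, T_i, X, Y]$ (via $a=XY$) are themselves polynomial rings over $\mathbb{Z}$.

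By Lemma~\ref{lemma:analytic_iso_arising_from_overring} the map $\tilde{A}[X] \to \tilde{B}$ is an analytic isomorphism along $\tilde{s}$, and its pullback $\tilde{A}[X]$ is a polynomial ring over $\mathbb{Z}$, so rank~$2$ projective modules over it are free by Quillen--Suslin. The free patching lemma then yields
\[
\tilde{\varepsilon} = (\tilde{\sigma}_1)_{\tilde{s}} \cdot \tilde{\varphi}_{\tilde{s}}(\tilde{\sigma}_0)
\]
with $\tilde{\sigma}_1 \in \mathrm{SL}_2(\tilde{B})$ and $\tilde{\sigma}_0 \in \mathrm{SL}_2(\tilde{A}[X]_{\tilde{s}})$. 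Since $\tilde{B}$ is a polynomial ring over $\mathbb{Z}$, the matrix $\tilde{\sigma}_1$ is automatically pseudoelementary, and $\varepsilon_1 \defl \psi_0^{*}(\tilde{\sigma}_1) \in \widetilde{\mathrm{E}}_2(A[x,y]/(xy-\alpha))$ is the first desired factor.

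The main obstacle is showing that $\widetilde{SK}_{1,2}(\tilde{A}[X]_{\tilde{s}}) = \ast$, so that $\tilde{\sigma}_0$ is pseudoelementary and $\varepsilon_2 \defl \psi_0^{*}(\tilde{\sigma}_0) \in \widetilde{\mathrm{E}}_2(A[x]_s) \subseteq \widetilde{\mathrm{E}}_2(A[x]_S)$ gives the second factor. By Lemma~\ref{lemma:localization_induces_injection_on_SK12} applied to $R = \tilde{A}$ and the polynomial $h(X) = \sum_j H_j X^{j}$ (using $\widetilde{SK}_{1,2}(\tilde{A}[X]) = \ast$), the localization map $\widetilde{SK}_{1,2}(\tilde{A}[X]_{\tilde{s}}) \to \widetilde{SK}_{1,2}(\tilde{A}[X,X^{-1}]_{\tilde{s}})$ has trivial kernel, so it suffices to show that the target is trivial. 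Substituting $Y = X^{-1}$ and multiplying $\tilde{s}$ by the unit $Y^{d+1}$ identifies $\tilde{A}[X,X^{-1}]_{\tilde{s}}$ with $\tilde{A}[Y,Y^{-1}]_{q(Y)}$ for the monic polynomial $q(Y) = Y^{d+1} + H_0 Y^{d} + \ldots + H_d$. In the presentation $\tilde{A}[Y]_{q(Y)} = \tilde{A}[Y,u]/(u q(Y) - 1)$ the defining relation is linear in the top coefficient $H_d$; solving for $H_d$ in terms of the remaining variables and $u^{\pm 1}$ exhibits a ring isomorphism onto the Laurent polynomial ring $\mathbb{Z}[a, T_i, H_0,\ldots,H_{d-1}, Y, u^{\pm 1}]$ over $\mathbb{Z}$, and further inverting $Y$ preserves the Laurent structure. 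Corollary~\ref{cor:pseudoelementary_over_several_Laurent_variables} then gives triviality of $\widetilde{SK}_{1,2}$, completing the proof; pushing the universal factorization through $\psi_0$ produces $\varepsilon = (\varepsilon_1)_S \cdot \varphi_S(\varepsilon_2)$. The crucial trick, and the main novelty beyond the template of Proposition~\ref{prop:good_lifts_implies_weak_excision}, is this identification of the universal monic-polynomial localization with a Laurent polynomial ring via elimination of the top coefficient.
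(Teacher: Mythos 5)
Your factorization argument is correct, but it takes a genuinely different route from the paper's. The paper localizes at the \emph{full} multiplicative set $T=1+x\mathbb{Z}[t_i,a,x]$ (rather than at a single element), so that $x$ lies in the Jacobson radical of $\mathbb{Z}[t_i,a,x]_T$; the $j$-invariance of $\widetilde{SK}_{1,2}$ (Proposition~\ref{prop:pseudoelementary_j_invariant}) then immediately gives $\widetilde{SK}_{1,2}(\mathbb{Z}[t_i,a,x]_T)\cong\ast$, and the whole argument closes in one line. You localize at a single element $\tilde{s}=1+X\sum H_j X^j$, where $X$ is \emph{not} a Jacobson-radical element, and so you are forced into the clever replacement: apply Lemma~\ref{lemma:localization_induces_injection_on_SK12} to pass to $\tilde{A}[X,X^{-1}]_{\tilde{s}}$, identify this (after the substitution $Y=X^{-1}$ and multiplication by the unit $Y^{d+1}$, then elimination of the top coefficient $H_d$ via the relation $uq(Y)=1$) with a Laurent polynomial ring $\mathbb{Z}[a,T_i,H_0,\ldots,H_{d-1},Y^{\pm},u^{\pm}]$, and invoke Corollary~\ref{cor:pseudoelementary_over_several_Laurent_variables}. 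Both routes are valid; the paper's is shorter because it exploits the full multiplicative set where the Jacobson-radical/$j$-invariance argument becomes available, while your elimination-of-the-top-coefficient identification is a nice trick with some independent interest, but involves more machinery (in particular the Zariski weak excision Theorem~\ref{thm:pseudoelementary_weak_Zariski_excision} underlying Lemma~\ref{lemma:localization_induces_injection_on_SK12}).

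The sketch you give for the $1$-stability of $\sigma$ in the ``moreover'' part does not work. You claim that ``after adjoining polynomial variables for the finitely many entries involved, $\sigma$ arises as a specialization of a matrix in $\mathrm{SL}_2(\tilde{A}[X])$ over a polynomial ring over $\mathbb{Z}$.'' If that were true, $\sigma$ would be \emph{pseudoelementary} by definition, which is not the case in general: $\sigma$ is constructed so that $\varphi_{\ast}[\sigma]=[\sigma_1]$ and $(\lambda_S)_{\ast}[\sigma]=[\sigma_2]$, and these classes need not be trivial. The obstruction to lifting is precisely that the compatibility relation $(\sigma_1)_S^{-1}\varphi_S(\sigma_2)=\varepsilon$ (or equivalently the determinant-one condition for a lift of the pullback $\sigma$) cannot be imposed in a universal polynomial ring over $\mathbb{Z}$ without leaving the class of polynomial rings. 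In fact, showing that a matrix $\sigma\in\mathrm{SL}_2(A[x])$ whose images in $B$ and in $A[x]_S$ are $1$-stably elementary is itself $1$-stably elementary requires some input beyond the pullback property of $\mathrm{SL}_2$ (compare Lemma~\ref{lemma:stably_elementary_matrix_factorization}, which requires $SK_{1,r+1}(A)\cong 0$ for the analogous implication). In the application of this lemma (Lemma~\ref{lemma:localization_induces_bijection_SK_1,r}) the base is $A=k[a]$ and $SK_{1,3}(A[x])\cong 0$ makes the $1$-stability automatic, which is presumably why the paper is brief on the point; but your proposed justification is not a correct argument and should be replaced or the hypothesis made explicit.
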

 
 \begin{proof}
 The second claim is clear from the first and the fact that analytic patching diagrams are pullback squares.
 
 For the first claim, we use a similar argument to the ones involving \emph{good lifts} in \S \ref{section:pseudoelementary}. Specifically, we consider homomorphisms $\psi \colon \mathbb{Z}[t_i,a,x] \rightarrow A[x]$ with $\psi(a)=\alpha$ and $\psi(x)=x$, while the (finitely many) values of $\psi(t_i)$ are arbitrary. Letting $T=1+x \mathbb{Z}[t_i,a,x]$, we get a commutative diagram
 \[
 \xymatrix@R=20pt@C=5pt{\mathbb{Z}[t_i,a,x] \ar[rr] \ar[rd] \ar[dd] && \mathbb{Z}[t_i,a,x]_T \ar[rd] \ar[dd]|!{[ld];[rd]}\hole \\
 & \mathbb{Z}[t_i,a,x,y]\slash (xy-a) \ar[rr] \ar[dd] && \bigl(\mathbb{Z}[t_i,a,x,y] \slash (xy-a)\bigr)_T \ar[dd]  \\
 A[x] \ar[rr]|!{[ru];[rd]}\hole \ar[rd] && A[x]_S \ar[rd] \\ 
 & A[x,y] \slash (xy-\alpha) \ar[rr] && \bigl( A[x,y] \slash (xy-\alpha) \bigr)_S }
 \]
 and an isomorphism $\mathbb{Z}[t_i,a][x,y] \slash (xy-a) \cong \mathbb{Z}[t_i][x,y]$ given by $a \mapsto xy$. By choosing the values of $\psi(t_i)$ appropriately, we find for each $\varepsilon \in \widetilde{\mathrm{E}}_2\Bigl( \bigl(A[x,y] \slash(xy-\alpha)\bigr)_S \Bigr)$ a $\psi$ such that $\varepsilon$ lifts to a pseudoelementary matrix $\varepsilon^{\prime} \in \widetilde{\mathrm{E}}_2\Bigl( \bigl(\mathbb{Z}[t_i,a,x,y] \slash(xy-a)\bigr)_T \Bigr)$. Since $P_2(\mathbb{Z}[t_i,a,x]) \cong \ast$, this $\varepsilon^{\prime}$ factors as a product of matrices $\sigma_1 \in \mathrm{SL}_2(\mathbb{Z}[t_i][x,y])$ and $\sigma_2 \in \mathrm{SL}_2(\mathbb{Z}[t_i,a,x]_T)$ (by the free patching lemma).
 
 By definition, we have $\sigma_1 \in \widetilde{\mathrm{E}}_2$. On the other hand, the element $x$ lies in the Jacobson radical of $\mathbb{Z}[t_i,a,x]_T$, so $\widetilde{SK}_1(\mathbb{Z}[t_i,a,x]) \cong \ast$ since $\widetilde{SK}_1(\mathbb{Z}[t_i,a]) \cong \ast$ and $\widetilde{SK}_1$ is $j$-invariant (see Proposition~\ref{prop:pseudoelementary_j_invariant}). The images of the $\sigma_i$ under the above vertical homomorphisms thus give the desired factorization of $\varepsilon$.
 \end{proof}
 
 In what follows, we let $A \defl k[a]$ and $B \defl k[a,x,y]\slash(xy-a^{\ell})$, where $k$ denotes a fixed ground field and $\ell \geq 2$ a fixed natural number.
 
 \begin{lemma}\label{lemma:localization_induces_bijection_SK_1,r}
 Let $S = 1+xA[x] \subseteq A[x]$. Then the map
 \[
 \widetilde{SK}_{1,r}^{1-\mathrm{st}} (B) \rightarrow  \widetilde{SK}_{1,r}^{1-\mathrm{st}} (B_S)
 \]
 induced by the localization homomorphism $\lambda_S$ is bijective for all $r \geq 2$. 
 \end{lemma}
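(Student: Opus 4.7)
The plan is to work throughout with the analytic patching diagram
\[
\xymatrix{A[x] \ar[r]^-{\lambda_S} \ar[d]_{\varphi} & A[x]_S \ar[d]^{\varphi_S} \\ B \ar[r]_-{\lambda_S} & B_S}
\]
provided by Lemma~\ref{lemma:analytic_iso_arising_from_overring}, combined with two triviality statements. First, $\widetilde{SK}_{1,r}(A[x])=\widetilde{SK}_{1,r}(k[a,x])\cong\ast$: for $r\geq 3$ this is Vorst's $K_1$-regularity for regular rings containing a field, and for $r=2$ it is Corollary~\ref{cor:pseudoelementary_over_several_Laurent_variables}. Second, $\widetilde{SK}_{1,r}(A[x]_S)\cong\ast$, which I would deduce from $j$-invariance (Lemma~\ref{lemma:SL_r_j_invariant} for $r\geq 3$, Proposition~\ref{prop:pseudoelementary_j_invariant} for $r=2$) after checking that $x$ lies in the Jacobson radical of $A[x]_S$ (a maximal ideal avoiding $S=1+xk[a,x]$ must contain $x$, otherwise $x$ would be a unit in the residue field and produce an element of $S$ in the ideal) and that $A[x]_S/(x)\cong A=k[a]$ is itself $\widetilde{SK}_{1,r}$-contractible.

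For injectivity, I would apply Lemma~\ref{lemma:good_lift_for_overring_patching_diagram} (for $r=2$) or Vorst's Lemma~\ref{lemma:Vorsts_lemma} (for $r\geq 3$) to the diagram above, obtaining a weak pullback on $\widetilde{SK}_{1,r}^{1-\mathrm{st}}$ whose two upper entries are trivial. This forces the bottom horizontal map to have trivial kernel. Given $[\sigma_1],[\sigma_2]\in\widetilde{SK}_{1,r}^{1-\mathrm{st}}(B)$ with the same image in $B_S$, the ratio $\tau\defl\sigma_1^{-1}\sigma_2\in\mathrm{SL}_r(B)$ is 1-stably elementary (since stabilising commutes with inversion and multiplication) and $\tau_S\in\widetilde{\mathrm{E}}_r(B_S)$, so the trivial kernel yields $\tau\in\widetilde{\mathrm{E}}_r(B)$, whence $[\sigma_1]=[\sigma_2]$.

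For surjectivity, let $\sigma\in\mathrm{SL}_r(B_S)$ be 1-stably elementary. The patching datum $(B^r,A[x]_S^r,\sigma)$ defines a rank $r$ projective module $P$ over $A[x]=k[a,x]$, which is free by Quillen--Suslin. The free patching lemma~\ref{lemma:free_patching} then produces a factorisation $\sigma=\varphi_S(\sigma_1)\cdot(\sigma_0)_S$ with $\sigma_0\in\mathrm{SL}_r(B)$ and $\sigma_1\in\mathrm{SL}_r(A[x]_S)$; the triviality of $\widetilde{SK}_{1,r}(A[x]_S)$ forces $\sigma_1$ to be pseudoelementary, so $\varphi_S(\sigma_1)$ contributes nothing to the class in $\widetilde{SK}_{1,r}(B_S)$ and we have $[(\sigma_0)_S]=[\sigma]$. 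What remains, and what I regard as the main obstacle, is to show that $\sigma_0$ is itself 1-stably elementary over $B$, rather than merely a lift up to pseudoelementary matrices: this cannot be extracted from the $r\times r$ picture when $r=2$, since pseudoelementary matrices do not behave like elementary ones in general. I would resolve this by going up one dimension: the stabilisation $\tilde\sigma_0\in\mathrm{SL}_{r+1}(B)$ satisfies $(\tilde\sigma_0)_S=\varphi_S(\tilde\sigma_1)^{-1}\tilde\sigma\in\mathrm{E}_{r+1}(B_S)$ (the stabilisation of a pseudoelementary matrix is elementary because $\mathrm{SL}_{r+1}(\mathbb{Z}[T])=\mathrm{E}_{r+1}(\mathbb{Z}[T])$ for $r+1\geq 3$ and finite $T$). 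Applying Vorst's Lemma at the comfortable size $r+1\geq 3$ to our patching diagram, using the pullback property of $\mathrm{SL}_{r+1}$ together with the vanishing $SK_{1,r+1}(A[x])=0$ (Vorst's theorem for the regular ring $k[a,x]$), I obtain $\tilde\sigma_0\in\mathrm{E}_{r+1}(B)$, proving that $\sigma_0$ is 1-stably elementary and completing the argument.
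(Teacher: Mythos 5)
Your overall strategy matches the paper's: use the analytic patching diagram from Lemma~\ref{lemma:analytic_iso_arising_from_overring}, with injectivity deduced from weak excision plus the triviality of $\widetilde{SK}_{1,r}$ on $A[x]$ and $A[x]_S$, and surjectivity from the free patching lemma followed by the stabilisation-to-size-$(r+1)$ argument. Your inline argument for why $\sigma_0$ is $1$-stably elementary is precisely (a slight variant of) the paper's Lemma~\ref{lemma:stably_elementary_matrix_factorization}, which the paper simply cites at this point; your observation that the stabilisation of a pseudoelementary matrix is elementary is a nice shortcut that streamlines the proof of that lemma. The injectivity half and the triviality computations are all correct.

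There is, however, a small but genuine slip in the surjectivity step at $r=2$. You obtain the factorisation $\sigma=\varphi_S(\sigma_1)\cdot(\sigma_0)_S$ with $\sigma_1$ pseudoelementary, and then assert ``$\varphi_S(\sigma_1)$ contributes nothing to the class'' so that $[(\sigma_0)_S]=[\sigma]$. Since $\widetilde{SK}_{1,2}$ is by definition the set of \emph{left} cosets $\sigma\widetilde{\mathrm{E}}_2$, the equality $[(\sigma_0)_S]=[\sigma]$ requires $(\sigma_0)_S^{-1}\sigma=(\sigma_0)_S^{-1}\varphi_S(\sigma_1)(\sigma_0)_S\in\widetilde{\mathrm{E}}_2(B_S)$, i.e.\ that a pseudoelementary matrix remains pseudoelementary after conjugation by an arbitrary element of $\mathrm{SL}_2$. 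For $r\geq 3$ this is fine because $\mathrm{E}_r$ is normal (Suslin), but the paper never establishes normality of $\widetilde{\mathrm{E}}_2$ in $\mathrm{SL}_2$, and it does not follow readily from the definition (the obvious attempt produces a conjugate defined over a localization of a polynomial ring, not a polynomial ring itself). The paper sidesteps this entirely by applying the free patching lemma with the roles of the two patching corners swapped, which yields the factorisation $\sigma=(\sigma_1)_S\cdot\varphi_S(\sigma_2)$ with the pseudoelementary factor on the \emph{right}; then $[\sigma]=[(\sigma_1)_S]$ holds tautologically. Your argument goes through verbatim after this reordering, so the fix is immediate, but as written the claim ``$[(\sigma_0)_S]=[\sigma]$'' is not justified in the case $r=2$.
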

 
 \begin{proof}
 Let $\varphi \colon A[x] \rightarrow B$ denote the canonical homomorphism. By Lemma~\ref{lemma:analytic_iso_arising_from_overring}, the diagram
 \[
 \xymatrix{A[x] \ar[d]_{\varphi} \ar[r] & A[x]_S \ar[d]^{\varphi_S} \\ B \ar[r] & B_S}
 \]
 is an analytic patching diagram.
 
 Since $P_r(A[x]) \cong \ast$ for all $r \geq 2$, it follows from the free patching lemma that all $\sigma \in \mathrm{SL}_r(B_S)$ factor as a product $\sigma=(\sigma_1)_{S} \cdot \varphi_S(\sigma_2)$ with $\sigma_1 \in \mathrm{SL}_r(B)$ and $\sigma_2 \in \mathrm{SL}_r(A[x]_S)$. If $\sigma$ is $1$-stably elementary, then so are $\sigma_1$ and $\sigma_2$ because $SK_{1,r+1}(A[x]) \cong 0$ (see Lemma~\ref{lemma:stably_elementary_matrix_factorization}). Finally we note that $x$ lies in the Jacobson radical of $A[x]_S$, so $ \widetilde{SK}_{1,r}^{1-\mathrm{st}}(A[x]_S) \cong  \widetilde{SK}_{1,r}^{1-\mathrm{st}}(A) \cong \ast$ for all $r \geq 2$ by $j$-invariance. Thus $\sigma_2$ is pseudoelementary, which shows that
 \[
 \widetilde{SK}_{1,r}^{1-\mathrm{st}} (B) \rightarrow  \widetilde{SK}_{1,r}^{1-\mathrm{st}} (B_S)
 \]
 is surjective.
 
 To see injectivity, we first claim that it suffices to check that $\sigma \in \mathrm{SL}_r(B)$ is (pseudo)elementary if its image in $\mathrm{SL}_r(B_S)$ is. Indeed, if $[\sigma_S]=[\tau_S]$ for some $\sigma, \tau \in \mathrm{SL}_r(B)$, then $\sigma_S=\tau_S \varepsilon$ for some $\varepsilon \in \widetilde{\mathrm{E}}_r(B_S)$. It follows that $(\tau^{-1}\sigma)_S$ is (pseudo)elementary, hence that $\tau^{-1} \sigma \in \widetilde{\mathrm{E}}_r(B)$, which shows that $[\sigma]=[\tau]$ in $\widetilde{SK}_{1,r}^{1-\mathrm{st}} (B)$, as claimed.
 
 Thus let $[\sigma] \in \widetilde{SK}_{1,r}^{1-\mathrm{st}} (B)$ be an element such that $[\sigma_S]=[\id]$. From the weak pullback diagram
 \[
   \xymatrix{
  \widetilde{SK}_{1,2}^{1-\mathrm{st}}(A[x]) \ar[r] \ar[d]_{\varphi} &  \widetilde{SK}_{1,2}^{1-\mathrm{st}}(A[x]_S) \ar[d]^{\varphi_S} \\  \widetilde{SK}_{1,2}^{1-\mathrm{st}} (B) \ar[r] &  \widetilde{SK}_{1,2}^{1-\mathrm{st}} (B_S)
  }
 \]
 (see Lemma~\ref{lemma:good_lift_for_overring_patching_diagram}) it follows that there exists a $\tau \in \mathrm{SL}_r(A[x])$ and $\varepsilon \in \widetilde{\mathrm{E}}_r(B)$ such that $\sigma=\varphi(\tau) \varepsilon$ holds. Since $\widetilde{SK}_{1,r}(A[x]) \cong \ast$, it follows that $\tau$ and therefore $\sigma$ are both (pseudo)elementary (see Corollary~\ref{cor:pseudoelementary_over_several_Laurent_variables} for $r=2$).
\end{proof}

Next we want to relate these groups of $1$-stably elementary matrices to $1$-stably free modules, by considering the matrices as patching data for a suitable Milnor square. To do this, we need some notation. We write $P_r^{1-\mathrm{st}}(R)$ for the set of isomorphism classes of $1$-stably free projective $R$-modules of rank $r$ (that is, $R$-modules $P$ such that $P \oplus R \cong R^{r+1}$ holds). We write $\mathrm{SL}_r^{1-\mathrm{st}}(R)$ for the group of $1$-stably elementary matrices with coefficients in $R$.

 As before, we let $S \defl 1+x k[a,x] \subseteq k[a,x]$ and we let $B \defl k[a,x,y] \slash (xy-a^{\ell})$. Finally, we let $C \defl k[a,x]_S[y,t] \slash \bigl(t^2-t(xy-a^{\ell})\bigr)$.
 
 \begin{prop}\label{prop:stably_elementary_trivial_iff_stably_free_trivial}
 For a fixed natural number $r \geq 2$, we have $P_r^{1-\mathrm{st}}(C) \cong \ast$ if and only if $\widetilde{SK}_{1,r}^{1-\mathrm{st}}(B_S) \cong \ast$ holds.
 \end{prop}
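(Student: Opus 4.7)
The plan is to identify $P_r^{1-\mathrm{st}}(C)$ with $\widetilde{SK}_{1,r}^{1-\mathrm{st}}(B_S)$ as pointed sets via Milnor patching. First, I would set up the Milnor square: the defining relation factors as $t^2 - t\alpha = t(t - \alpha)$ (where $\alpha = xy - a^\ell$), and since $C$ is a free $A[y]$-module on $\{1, t\}$, the square
\[
\xymatrix{C \ar[r]^-{\pi_1} \ar[d]_-{\pi_0} & A[y] \ar[d] \\ A[y] \ar[r] & A[y]/\alpha \cong B_S}
\]
with $\pi_0(t) = 0$ and $\pi_1(t) = \alpha$ is a pullback with both horizontal maps surjective, hence a Milnor square.

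Next, I would establish three key properties of $A[y] = k[a,x]_S[y]$ that drive the matrix analysis. First, all finitely generated projective $A[y]$-modules are free, combining Quillen--Suslin on $k[a,x,y]$ with Roitman's principle for $P_r$ (stable under the localization at $S$). Second, $\mathrm{SL}_{r+1}(A[y]) = \mathrm{E}_{r+1}(A[y])$ for $r \geq 2$, since $r+1 \geq 3$ and $SK_{1,r+1}(A[y]) \cong 0$ by Proposition~\ref{prop:SK_1r_regularity_for_dvr_and_unramified} together with vanishing over fields and the Roitman principle. Third, and most delicately, $\widetilde{SK}_{1,2}(A[y]) \cong \ast$, so $\mathrm{SL}_2(A[y]) = \widetilde{\mathrm{E}}_2(A[y])$; I would obtain this by combining the $\widetilde{SK}_{1,2}$-regularity of $A[y]$ coming from Theorem~\ref{thm:Vorsts_Theorem_for_2x2_matrices} (its local rings are unramified regular local rings containing $k$) with the triviality $\widetilde{SK}_{1,2}(k[a,x,y]) \cong \ast$ from Corollary~\ref{cor:pseudoelementary_over_several_Laurent_variables}, transported across the localization at $S$ using Roitman's principle for $\widetilde{SK}_{1,2}$ (Proposition~\ref{prop:pseudoelementary_Roitman}) together with weak Zariski excision (Theorem~\ref{thm:pseudoelementary_weak_Zariski_excision}) applied to the comaximal pair $(f, 1-f)$ for each $f \in S$.

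With these in place, Milnor patching (as in Proposition~\ref{prop:patching_projective_modules} adapted to Milnor squares) gives an equivalence $\Proj(C) \simeq \Proj(A[y]) \times_{\Proj(B_S)} \Proj(A[y])$, so every rank-$r$ projective over $C$ has the form $P_\sigma = (A[y]^r, A[y]^r, \sigma)$ for some $\sigma \in \mathrm{SL}_r(B_S)$ (after absorbing a determinant into $\mathrm{diag}(\cdot, 1, \ldots, 1)$, possible because units extend from $A[y]$). By the free patching lemma (Lemma~\ref{lemma:free_patching}) together with the second property, $P_\sigma \oplus C \cong C^{r+1}$ iff $\sigma \oplus 1$ factors through the image of $\mathrm{E}_{r+1}(A[y])$ under the projection $\pi \colon A[y] \rightarrow B_S$, which (since elementary matrices lift factor-by-factor along the surjection $\pi$) is equivalent to $\sigma \oplus 1 \in \mathrm{E}_{r+1}(B_S)$, i.e., $\sigma \in \mathrm{SL}_r^{1-\mathrm{st}}(B_S)$. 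Similarly, $P_\sigma \cong C^r$ iff $\sigma \in \pi(\widetilde{\mathrm{E}}_r(A[y])) = \widetilde{\mathrm{E}}_r(B_S)$: the forward direction uses the third property, while the reverse uses the lifting of pseudoelementary matrices along surjections (Lemma~\ref{lemma:basic_properties_pseudoelementary}(3)) to produce $\tilde\sigma \in \widetilde{\mathrm{E}}_r(A[y])$ with $\pi(\tilde\sigma) = \sigma$, giving the trivial free-patching factorization $\sigma = \pi(\tilde\sigma)\pi(1)^{-1}$.

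Combining these descriptions yields a well-defined bijection of pointed sets $P_r^{1-\mathrm{st}}(C) \cong \widetilde{SK}_{1,r}^{1-\mathrm{st}}(B_S)$, from which the claimed biconditional on triviality is immediate. The main technical hurdle will be the verification in the $r = 2$ case that $\widetilde{SK}_{1,2}(A[y]) \cong \ast$, since $k[a,x]_S[y]$ is neither a polynomial nor a Laurent polynomial ring over a field and Corollary~\ref{cor:pseudoelementary_over_several_Laurent_variables} does not apply directly; without this vanishing one cannot upgrade a free-patching factorization over $\mathrm{SL}_2(A[y])$ into a pseudoelementary factorization over $\widetilde{\mathrm{E}}_2(B_S)$, and the two sides of the biconditional would genuinely decouple.
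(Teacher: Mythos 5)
Your overall scaffolding matches the paper's: you set up the same Milnor square, reduce to patching data over $A[y]=k[a,x]_S[y]$, use the free patching lemma to translate freeness of $P_\sigma$ (respectively of $P_\sigma \oplus C$) into factorization statements, and exploit the surjectivity of pseudoelementary matrices along the surjection to $B_S$. The reduction to $\sigma\in\mathrm{SL}_r$ via surjectivity of $\pi$ on units is also how the paper argues. So the skeleton is sound.

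The gap is exactly where you flag it, but your proposed repair does not close it. You need $\widetilde{SK}_{1,2}(A[y])\cong\ast$, and your plan is to transport the triviality $\widetilde{SK}_{1,2}(k[a,x,y])\cong\ast$ across the localization at $S$ via Roitman's principle plus weak Zariski excision for the comaximal pair $(f,1-f)$. This does not work. Roitman's principle (Proposition~\ref{prop:pseudoelementary_Roitman}) preserves \emph{$F$-regularity} of a ring under localization, not \emph{$F$-contractibility}: knowing $\widetilde{SK}_{1,2}(k[a,x,y])\cong\ast$ gives $N\widetilde{SK}_{1,2}$ trivial on any localization, i.e.\ that the localization lies in $\ca{E}_1^{\widetilde{SK}_{1,2}}$, but says nothing about $\widetilde{SK}_{1,2}$ itself being a singleton on the localization. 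Weak Zariski excision for $(f,1-f)$ is a \emph{weak pullback} statement: it only controls classes over $R_f$ that already come paired with a compatible class over $R_{1-f}$. To show every class in $\widetilde{SK}_{1,2}(R_f)$ is trivial you would first have to produce a matching class over $R_{1-f}=R_{xg}$, which is not available for general $g\in k[a,x]$.

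The missing ingredient is the $j$-invariance of $\widetilde{SK}_{1,2}$ (Proposition~\ref{prop:pseudoelementary_j_invariant}), together with the $j$-invariance of $SK_{1,r}$ for $r\geq 3$ (Lemma~\ref{lemma:SL_r_j_invariant}); you never invoke either. The correct chain is: $k[a,x]$ is $\widetilde{SK}_{1,r}$-regular (Theorem~\ref{thm:Vorsts_Theorem_for_2x2_matrices} for $r=2$, Vorst for $r\geq 3$), so by Roitman $k[a,x]_S$ is $\widetilde{SK}_{1,r}$-regular, giving the first isomorphism $\widetilde{SK}_{1,r}(A[y])\cong\widetilde{SK}_{1,r}(A)$. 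Then $x$ lies in the Jacobson radical of $A=k[a,x]_S$ (this is precisely why the localization at $S=1+xk[a,x]$ was chosen), so $j$-invariance gives $\widetilde{SK}_{1,r}(A)\cong\widetilde{SK}_{1,r}(k[a])$. Finally $k[a]$ is euclidean, so $\mathrm{SL}_r(k[a])=\mathrm{E}_r(k[a])$ and the last group is trivial. Note that $j$-invariance is applied to $A$, not to $A[y]$ directly -- the Jacobson radical of $A[y]$ is zero, which is why the Roitman/regularity step to pass from $A[y]$ to $A$ is needed first; your instinct to reach for a localization-transfer mechanism was right, but the mechanism has to be this two-step regularity-then-$j$-invariance argument rather than weak Zariski excision. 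The same remark applies to your second property: establishing $SK_{1,r+1}(A[y])\cong 0$ also goes through $j$-invariance of $SK_{1,r+1}$, not through Roitman alone.
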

 
 \begin{proof}
 We first note that
 \[
 \xymatrix{C \ar[r] \ar[d] & k[a,x]_S[y] \ar[d]^{\pi} \\ k[a,x]_S[y] \ar[r]_-{\pi} & B_S}
 \]
 is a Milnor patching diagram. To see this, we can consider the two ideals $I=(t)$ and $J=\bigl(t-(xy-a^{\ell})\bigr)$ and note that $I \cap J=I\cdot J$ (for example since $k[a,x]_S[y,t]$ is a unique factorization domain). 
 
 Moreover, we have $P_r(k[a,x]_S[y]) \cong P_r(k[a,x]_S)$ (since $P_r$ satisfies the Roitman principle $(\mathrm{R})$) and $P_r(k[a,x]_S) \cong \ast$ since $P_r(k[a]) \cong \ast$ and $P_r$ is $j$-injective (see Example~\ref{example:P_r_is_j_injective_and_h_invariant}). Similarly we find that
 \[
 \widetilde{SK}_{1,r}^{1-\mathrm{st}}(k[a,x]_S[y]) \cong \widetilde{SK}_{1,r}^{1-\mathrm{st}}(k[a,x]_S) \cong \widetilde{SK}_{1,r}^{1-\mathrm{st}}(k[a]) \cong \ast
 \]
 holds, since $\widetilde{SK}_{1,r}^{1-\mathrm{st}}$ satisfies $(\mathrm{R})$ and is $j$-invariant.
 
 From this, we get the implication ``$\Rightarrow$'': if $\sigma$ is $1$-stably elementary over $B_S$, then the patching datum
 \[
 (k[a,x]_S[y]^r,\; k[a,x]_S[y]^r,\; \sigma)
 \]
 defines a $1$-stably free $C$-module $P$. By assumption, $P$ is free, so from the free patching lemma (see Lemma~\ref{lemma:free_patching}) and the above result that $\widetilde{SK}_{1,r}^{1-\mathrm{st}}(k[a,x]_S[y]) \cong \ast$ it follows that $\sigma$ is (pseudo)elementary.
 
 To see the converse, let $P$ be a projective $C$-module such that $P \oplus C \cong C^{r+1}$. This corresponds to a patching datum
 \[
 (k[a,x]_S[y]^r,\; k[a,x]_S[y]^r,\; \tau)
 \]
 for some $\tau \in \mathrm{GL}_r(B_S)$.
 
 We claim that $\pi \colon k[a,x]_S[y] \rightarrow B_S$ induces a surjection on units. This follows from the analytic patching diagram
 \[
 \xymatrix{k[a,x] \ar[d] \ar[r] & k[a,x]_S \ar[d] \\ B \ar[r] & B_S}
 \]
 and the fact that the Picard group of $k[a,x]$ is trivial, combined with the observation that $B^{\times}=k^{\times}$ ($B=k[a,x,y] \slash(xy-a^{\ell})$ is reduced and graded, with degree $0$ part equal to $k$). This implies that the above patching datum is isomorphic to one where $\tau$ lies in $\mathrm{SL}_r(B_S)$.
 
 Since the patching datum $\bigl( \begin{smallmatrix} \tau & 0 \\ 0 & 1 \end{smallmatrix} \bigr)$ defines $P \oplus C \cong C^{r+1}$, it follows from the free patching lemma that $\bigl( \begin{smallmatrix} \tau & 0 \\ 0 & 1 \end{smallmatrix} \bigr)$ is elementary, so $\tau \in \mathrm{SL}_r^{1-\mathrm{st}}(B_S)$. The assumption therefore implies that $\tau$ is (pseudo)elementary. From this it follows that $P$ is free, as claimed.
 \end{proof}
 
 The following lemma shows that the ring $C$ is ``almost'' 2-dimensional. This result heavily relies on the fact that we have localized at the set $S=1+xk[a,x]$.
 
 \begin{lemma}\label{lemma:Specm(C)_covered_by_2-dimensional_subspaces}
 The noetherian topological space $\Spec_m(C)$ of maximal ideals in $C$ is covered by two subspaces of Krull dimension $2$.
 \end{lemma}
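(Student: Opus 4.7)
The plan is to cover $\Spec_m(C)$ by the closed subspace $F_1 \defl V(x) \cap \Spec_m(C)$ and the open subspace $F_2 \defl D(x) \cap \Spec_m(C)$ and to show that each has Krull dimension at most $2$ as a topological space. Since the topological Krull dimension of a subspace of $\Spec(R)$ is bounded by $\dim R$ (a strict chain of irreducible closed subsets of the subspace pulls back to a strict chain in $\Spec R$), and $F_1 \subseteq \Spec(C/(x))$ while $F_2 \subseteq \Spec(C_x)$, it suffices to prove that both $C/(x)$ and $C_x$ have Krull dimension at most $2$.

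For $C/(x)$, the image of $S = 1 + xk[a,x]$ in $k[a,x]/(x) \cong k[a]$ is $\{1\}$, so $k[a,x]_S/(x) \cong k[a]$. Setting $x=0$ in the defining relation of $C$ then yields
\[
C/(x) \cong k[a,y,t] \slash \bigl( t(t+a^\ell) \bigr)\smash{\rlap{,}}
\]
a ring whose two components are each isomorphic to the $2$-dimensional ring $k[a,y]$, so $\dim C/(x) = 2$.

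For $C_x$, the key observation is that $\dim k[a,x]_{S,x} = 1$. I would prove this by first showing that $x$ lies in the Jacobson radical of $k[a,x]_S$: given any $s = g/h \in k[a,x]_S$ with $g \in k[a,x]$ and $h = 1 + xh' \in S$, one computes
\[
1 + xs = \frac{h + xg}{h} = \frac{1 + x(h'+g)}{h}\smash{\rlap{,}}
\]
exhibiting $1+xs$ as a ratio of elements of $S$, hence a unit. Therefore every maximal ideal of the $2$-dimensional ring $k[a,x]_S$ contains $x$, so inverting $x$ kills all maximal ideals and drops the Krull dimension to $1$. Consequently $\Spec(C_x) = V(t) \cup V\bigl(t-(xy-a^\ell)\bigr)$ is the union of two components each isomorphic to $\Spec\bigl(k[a,x]_{S,x}[y]\bigr)$, which is $2$-dimensional, so $\dim C_x = 2$. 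The decomposition $\Spec_m(C) = F_1 \cup F_2$ then completes the proof. The main obstacle is the Jacobson-radical computation for $k[a,x]_S$; once this is available, the remaining dimension bookkeeping is routine.
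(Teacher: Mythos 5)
Your proof is correct and uses the same decomposition $\Spec_m(C) = \bigl(V(x)\cap\Spec_m(C)\bigr)\cup\bigl(D(x)\cap\Spec_m(C)\bigr)$ as the paper, reducing to bounding $\dim(C/x)$ and $\dim(C_x)$ by $2$. The only noteworthy variation is how you control $\dim k[a,x]_{S,x}$: the paper simply identifies $k[a,x]_{S,x}\cong k[a]\langle x^{-1}\rangle$ (a principal ideal domain, hence one-dimensional), whereas you argue via the Jacobson radical — showing $x\in\mathrm{rad}\bigl(k[a,x]_S\bigr)$ so that any height-$2$ prime (necessarily maximal in a finite-dimensional noetherian ring) contains $x$ and dies after inverting $x$. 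Both are valid; the paper's route is shorter because the identification with $k[a]\langle x^{-1}\rangle$ is already used elsewhere, while your Jacobson-radical argument is more self-contained and makes explicit where the choice of $S = 1+xk[a,x]$ enters. Likewise, where you split $\Spec(C/x)$ and $\Spec(C_x)$ into two irreducible components, the paper instead notes integrality of $C/x$ over $k[a,y]$ and of $C_x$ over $k[a,x]_{S,x}[y]$; the conclusions agree.
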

 
 \begin{proof}
 The space $\Spec_m(C)$ is covered by $D(x)=\Spec(C_x)$ and its closed complement $V(x)=\Spec(C \slash x)$. Since the dimension of a subspace is at most the dimension of the total space, it suffices to check that the two rings $C_x$ and $C \slash x$ have Krull dimension $2$.
 
 The quotient $C \slash x \cong k[a,y,t] \slash (t^2+ta^{\ell})$ is integral over $k[a,y]$, hence has Krull dimension $2$. The localization
 \[
C_x \cong k[a,x]_{S,x}[y,t] \slash \bigl(t^2-t(xy-a^{\ell})\bigr) 
 \]
 is integral over $k[a,x]_{S,x}[y]$, so it suffices to check that the latter ring has Krull dimension $2$. Since $k[a,x]_{S,x} \cong k[a]\langle t \rangle$ is a principal ideal domain, hence noetherian of Krull dimension $1$, it follows that $k[a,x]_{S,x}[y]$ has Krull dimension $1+1=2$.
\end{proof}

 It thus follows from \cite[Theorem~V.3.5]{BASS_KTHEORY} that the ring $C$ satisfies Bass' stable range condition $\mathrm{SR}_4$. This, in turn, implies that all unimodular rows over $C$ of length $\geq 4$ are elementary completable. This consequence can also be seen more directly, by mixing Bass' inductive argument with Lam's proof of \cite[Theorem~II.7.3']{LAM}.
 
 \begin{prop}\label{prop:dimension_implies_transitive_elementary_action}
 Let $R$ be a commutative ring such that $\Spec_m(R)$ is noetherian and a union of a finite number of subspaces of Krull dimension $\leq d$. Then $\mathrm{E}_r(R)$ acts transitively on $\mathrm{Um}_{r}(R)$ for all $r \geq d+2$.
 \end{prop}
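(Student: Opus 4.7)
The plan is to reduce every unimodular row $(a_1, \ldots, a_r) \in \mathrm{Um}_r(R)$ with $r \geq d+2$ to the standard row $(1, 0, \ldots, 0)$ via elementary transformations. Since a row whose first entry is a unit can always be reduced to $(1, 0, \ldots, 0)$ by elementary operations (clear the other entries, then rescale using $e_{ij}$ operations), the problem amounts to producing $\lambda_2, \ldots, \lambda_r \in R$ such that $b \defl a_1 + \sum_{i=2}^r \lambda_i a_i$ is a unit in $R$, or equivalently such that the ``bad locus''
\[
 B(\lambda) \defl \{\mathfrak{m} \in \Spec_m(R) : b \in \mathfrak{m}\}
\]
is empty.

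I would follow Bass' inductive strategy in the form of Lam's proof of Theorem~II.7.3', constructing the $\lambda_i$ one at a time so that at each step the maximum dimension of $B(\lambda) \cap X_i$ strictly decreases for the worst $i$. Write $\Spec_m(R) = X_1 \cup \cdots \cup X_k$ as in the hypothesis. At each step, one considers the finitely many generic points of the top-dimensional irreducible components of the current $B(\lambda) \cap X_i$ across all $i$; these are finite in number by noetherianness. At each such generic point $\eta$, unimodularity of $(a_1, \ldots, a_r)$ forces at least one of $a_2, \ldots, a_r$ to be nonzero — otherwise, together with $a_1 \in \eta$ (since $\eta \in B(\lambda)$), this would contradict unimodularity. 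A standard prime-avoidance argument in $R$ then produces an update to $\lambda$ that makes $b$ nonzero at every such generic point, strictly lowering the maximum dimension of $B \cap X_i$ for each $i$. Since each $X_i$ has dimension at most $d$, at most $d+1$ such iterations are needed, and the hypothesis $r - 1 \geq d+1$ guarantees we have enough coefficients $\lambda_2, \ldots, \lambda_r$ available.

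The main obstacle is that the subspaces $X_i$ are not assumed closed — indeed, in Lemma~\ref{lemma:Specm(C)_covered_by_2-dimensional_subspaces} one of them is the principal open $D(x)$ — so a closed irreducible subset of $\Spec_m(R)$ need not be contained in a single $X_i$, and one cannot simply apply Bass' dimension bound to $\Spec_m(R)$ as a single space. The bookkeeping must therefore track each $B(\lambda) \cap X_i$ individually, and one must verify that a single prime-avoidance step can simultaneously decrease the top dimension on every $X_i$ without reintroducing higher-dimensional components elsewhere. This is achieved by lumping all relevant generic points — collected from every $X_i$ at the current stage — into a single prime-avoidance application over $R$, producing one coefficient update that works uniformly across the covering. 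Once the recursion terminates with $B(\lambda) \cap X_i = \emptyset$ for every $i$, the union $B(\lambda)$ is empty, $b$ is a unit, and the reduction of $(a_1, \ldots, a_r)$ to $(1, 0, \ldots, 0)$ can be completed.
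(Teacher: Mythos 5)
Your proposal has a genuine gap, and in fact the strategy it pursues differs from the paper's in a way that turns out to matter.

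You set out to make a single element $b = a_1 + \sum_{i\geq 2}\lambda_i a_i$ a unit of $R$, and to do so by iteratively updating the coefficient vector $\lambda$, claiming that each prime-avoidance step ``strictly lower[s] the maximum dimension of $B(\lambda) \cap X_i$.'' That claim is false: replacing $\lambda$ by an updated $\lambda'$ replaces the hypersurface $V(b)$ by a different hypersurface $V(b')$, and the two loci $B(\lambda)$ and $B(\lambda')$ have no containment relation. The prime-avoidance step makes the new $b'$ avoid the finitely many generic points you chose, so $B(\lambda')$ omits the old top-dimensional components, but nothing prevents it from acquiring new components of the same (or greater) dimension supported elsewhere in $\Spec_m(R)$. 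So the ``maximum dimension'' is not monotone and the recursion does not terminate. (Independently of the recursion, your target statement --- that the coset $a_1 + (a_2, \dots, a_r)$ always contains a unit of $R$ when $r \geq d+2$ --- is a considerably stronger assertion than the proposition being proved, and is not what Bass/Lam's arguments establish.)

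The paper's proof avoids this by modifying a \emph{fresh} coordinate at each stage, which makes the successive vanishing loci nested. After one prime-avoidance step (applied to the generic points of the irreducible components of the $X_i$ themselves, not of the bad locus) one obtains $a_1'$ with every $X_i \cap V(a_1')$ of dimension $\leq d-1$; then one passes to the quotient $R/(a_1')$, where the row $(\bar a_2, \dots, \bar a_r)$ is still unimodular, and repeats. The chain
$V(a_1') \supseteq V(a_1',a_2') \supseteq \cdots \supseteq V(a_1',\dots,a_d')$
is genuinely decreasing because each step intersects with an additional hypersurface. After $d$ steps the quotient $R/(a_1',\dots,a_d')$ is semilocal; a final prime avoidance makes $a_{d+1}' = a_{d+1} + \sum_{i\geq d+2} c_i a_i$ a unit \emph{in the quotient} (not in $R$), so $(a_{d+1}',a_{d+2},\dots,a_r)$ is elementary completable there, and Lam's Proposition~I.5.6 lifts this completability back to $R$. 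This last lifting step is the one your proposal omits, and its omission is what forces you into the too-strong goal of producing a unit of $R$ itself. Finally, the worry you raise about the $X_i$ not being closed is not the real obstruction here --- the paper's argument applies prime avoidance to finitely many generic points drawn from each $X_i$ separately, which works whether or not the $X_i$ are closed.
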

 
 \begin{proof}
 Assume $\Spec_m(R)=\bigcup_{i=1}^n X_i$ and $\mathrm{dim}(X_i) \leq d$. Fix some $r \geq d+2$ and a unimodular row $(a_1, \ldots, a_r)$ over $R$. Now let $D \subseteq \Spec_m(R)$ be a finite subset which contains (at least) one point of each irreducible component of all the $X_i$.
 
 Let $I=(a_2, \ldots, a_r)$ and note that $(a_1,I) \not \subseteq \mathfrak{m}$ for all $\mathfrak{m} \in D$. It follows from \cite[Lemma~II.7.4]{LAM} that $a_1+I \not \subseteq \bigcup_{\mathfrak{m} \in D} \mathfrak{m}$, that is, there exist $b_2, \ldots, b_r \in R$ such that $a_1^{\prime}=a_1+\sum_{i=2}^r b_i a_i$ does not lie in $\bigcup_{\mathfrak{m}\in D} \mathfrak{m}$. Thus if we let $X^{\prime}=\Spec_m(R \slash a_1^{\prime})$, then every $X_i \cap X^{\prime}$ has Krull dimension $\leq d-1$. By iterating this, we find that there are elementary transformations
 \[
 (a_1, \ldots, a_r) \sim_{\mathrm{E}_r} (a_1^{\prime}, \ldots, a_d^{\prime},a_{d+1}, \ldots, a_r)
 \]
 such that $\Spec_m\bigr(R \slash (a_1^{\prime}, \ldots, a_d^{\prime}) \bigl)$ is covered by a finite number of subspaces of Krull dimension $0$. This implies that $R \slash (a_1^{\prime}, \ldots, a_d^{\prime})$ has only a finite number of maximal ideals, hence is semilocal.
 
 Using the same prime avoidance argument, we can find $c_{d+2}, \ldots, c_r \in R$ such that $a_{d+1}^{\prime}=a_{d+1} + \sum_{i=d+2}^r c_i a_i$ is a unit in $R \slash (a_1^{\prime}, \ldots, a_d^{\prime})$. Since we assumed that $r \geq d+2$, this implies that $(a_{d+1}^{\prime}, a_{d+2}, \ldots, a_r)$ is elementary completable in $R \slash (a_1^{\prime}, \ldots, a_d^{\prime})$. From \cite[Proposition~I.5.6]{LAM} it follows that
 \[
 (a_1^{\prime}, \ldots, a_d^{\prime}, a_{d+1}^{\prime},a_{d+2}, \ldots, a_r)
 \]
 is elementary completable over $R$.
 \end{proof}
 
 \begin{cor}\label{cor:unimodular_rows_length_geq4_completable}
 Let $k$ be any field and let $\ell \geq 2$. Then all unimodular rows over the ring
 \[
R= k[a,x_1,y_1,x_2,y_2] \slash (x_1 y_1 + x_2 y_2 - a^{\ell})
 \]
 of length $\geq 4$ are completable.
 \end{cor}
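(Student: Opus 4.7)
The plan is to chain together essentially every proposition and lemma stated in the latter half of the section. By Proposition~\ref{prop:universal_unipotent_unimodular_row} applied to the field $k$, in order to conclude that all unimodular rows of length $r+1$ over $R$ are completable, it suffices to prove that
\[
\widetilde{SK}_{1,r}^{1-\mathrm{st}}\bigl(k[a,x,y]\slash(xy-a^{\ell})\bigr)\cong\ast
\quad\text{and}\quad
\widetilde{SK}_{1,r}^{1-\mathrm{st}}\bigl(k\langle s\rangle[a,x,y]\slash(xy-a^{\ell})\bigr)\cong\ast.
\]
Since the statement of the corollary concerns rows of length $\geq 4$, it suffices to treat the case $r\geq 3$, in which case $\widetilde{SK}_{1,r}^{1-\mathrm{st}}=SK_{1,r}^{1-\mathrm{st}}$.

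Fix such an $r$ and write $B\defl k[a,x,y]\slash(xy-a^{\ell})$, $S=1+x\,k[a,x]$, and
\[
C\defl k[a,x]_S[y,t]\slash\bigl(t^2-t(xy-a^{\ell})\bigr).
\]
By Lemma~\ref{lemma:Specm(C)_covered_by_2-dimensional_subspaces}, the maximal spectrum $\Spec_m(C)$ is noetherian and is covered by two subspaces of Krull dimension at most $2$. Since $r+1\geq 4=2+2$, Proposition~\ref{prop:dimension_implies_transitive_elementary_action} (with $d=2$) implies that $\mathrm{E}_{r+1}(C)$ acts transitively on $\mathrm{Um}_{r+1}(C)$, so every unimodular row of length $r+1$ over $C$ is elementary completable. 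Equivalently, every $1$-stably free $C$-module of rank $r$ is free, i.e.\ $P_r^{1-\mathrm{st}}(C)\cong\ast$. Proposition~\ref{prop:stably_elementary_trivial_iff_stably_free_trivial} then yields $\widetilde{SK}_{1,r}^{1-\mathrm{st}}(B_S)\cong\ast$, and Lemma~\ref{lemma:localization_induces_bijection_SK_1,r} propagates this back along the localization to give $\widetilde{SK}_{1,r}^{1-\mathrm{st}}(B)\cong\ast$.

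The construction of $B$, $S$, and $C$ and all the inputs just used (the dimension bound, Proposition~\ref{prop:stably_elementary_trivial_iff_stably_free_trivial}, Lemma~\ref{lemma:localization_induces_bijection_SK_1,r}) depend on $k$ only as an abstract field, so the identical argument applied over the field $k\langle s\rangle$ gives the second required vanishing $\widetilde{SK}_{1,r}^{1-\mathrm{st}}\bigl(k\langle s\rangle[a,x,y]\slash(xy-a^{\ell})\bigr)\cong\ast$. Both hypotheses of Proposition~\ref{prop:universal_unipotent_unimodular_row} are met for every $r\geq 3$, and the corollary follows.

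Essentially every step here is a direct invocation, so there is no real obstacle; the only point worth checking is that the dimension estimate of Lemma~\ref{lemma:Specm(C)_covered_by_2-dimensional_subspaces} is compatible with the hypothesis $r\geq d+2$ of Proposition~\ref{prop:dimension_implies_transitive_elementary_action} in exactly the range we need (length $r+1\geq 4$). This is precisely why the argument yields completability for rows of length $\geq 4$ but sheds no light on the case of length $3$ (which is Murthy's problem, treated by different methods in the symplectic part of the section).
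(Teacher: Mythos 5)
Your proof is correct and matches the paper's own argument step for step: it deduces $P_r^{1-\mathrm{st}}(C)\cong\ast$ for $r\geq 3$ from Lemma~\ref{lemma:Specm(C)_covered_by_2-dimensional_subspaces} and Proposition~\ref{prop:dimension_implies_transitive_elementary_action}, then chains Proposition~\ref{prop:stably_elementary_trivial_iff_stably_free_trivial} and Lemma~\ref{lemma:localization_induces_bijection_SK_1,r} to obtain $SK_{1,r}^{1-\mathrm{st}}(B)\cong\ast$, and invokes Proposition~\ref{prop:universal_unipotent_unimodular_row} with the observation that the field is arbitrary (so the same applies over $k\langle s\rangle$). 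You simply spell out the intermediate identifications more explicitly than the paper does.
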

 
 \begin{proof}
 From the above proposition and Lemma~\ref{lemma:Specm(C)_covered_by_2-dimensional_subspaces} we know that $P_r^{1-\mathrm{st}}(C) \cong \ast$ for all $r \geq 3$. This implies that $SK_{1,r}^{1-\mathrm{st}}(B_S) \cong 0$ for all $r \geq 3$ (see Proposition~\ref{prop:stably_elementary_trivial_iff_stably_free_trivial}). From the isomorphism $SK_{1,r}^{1-\mathrm{st}}(B) \cong SK_{1,r}^{1-\mathrm{st}}(B_S)$ (see Lemma~\ref{lemma:localization_induces_bijection_SK_1,r}) it follows that all $1$-stably elementary matrices of size $\geq 3$ over the ring $B=k[a,x,y] \slash (xy-a^{\ell})$ are elementary.
 
 Since the ground field in the above argument was arbitrary, we can apply Proposition~\ref{prop:universal_unipotent_unimodular_row} to conclude that all unimodular rows of length $\geq 4$ over the ring $R$ are completable.
 \end{proof}
 
 It is well-known that the unimodular row $(1-a,x_1,x_2)$ over the above ring $R$ is completable if $2$ is a unit in $R$, that is, if $\mathrm{char}(k) \neq 2$. The following proposition shows that in fact \emph{all} unimodular rows over $R$ are completable in this case.
 
 \begin{prop}\label{prop:stably_free_trivial_if_char_neq_2}
 If $k$ is a field with $\mathrm{char}(k) \neq 2$ and $\ell \geq 2$ is a natural number, then all stably free modules over the ring
 \[
R= k[a,x_1,y_1,x_2,y_2] \slash (x_1 y_1 + x_2 y_2 - a^{\ell})
 \] 
 are free. 
 \end{prop}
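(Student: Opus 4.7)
The proof splits by rank $n$ of the stably free module $P$. Rank $0$ is vacuous, and for rank $1$ it suffices to observe that $R$ is a UFD: the localization $R_{x_2} \cong k[a,x_1,y_1,x_2^{\pm 1}]$ is a polynomial localization, hence a UFD, and $R \slash (x_2) \cong k[a,x_1,y_1,y_2] \slash (x_1 y_1 - a^{\ell})$ is a domain (the polynomial $x_1 y_1 - a^{\ell}$ is irreducible in $k[a,y_1,y_2][x_1]$, being linear in $x_1$ with coprime leading coefficient $y_1$ and constant term $-a^{\ell}$), so $x_2$ is prime in $R$ and Nagata's criterion applies. Hence $\mathrm{Pic}(R) = 0$ and rank $1$ stably free modules are free. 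For ranks $n \geq 3$, Corollary~\ref{cor:unimodular_rows_length_geq4_completable} already shows that all unimodular rows of length $\geq 4$ over $R$ are completable with no hypothesis on the characteristic, so every stably free module of rank $\geq 3$ is free.

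The heart of the argument is rank $n=2$, where the hypothesis $\mathrm{char}(k) \neq 2$ finally enters. I apply Proposition~\ref{prop:universal_unipotent_unimodular_row} with $r = 2$: it suffices to prove
\[
\widetilde{SK}_{1,2}^{1-\mathrm{st}}\bigl(k'[a,x,y] \slash (xy-a^{\ell})\bigr) \cong \ast
\]
for $k' \in \{k, k\langle s \rangle\}$. Since $k\langle s \rangle$ is again a field of characteristic $\neq 2$, by symmetry it is enough to prove this vanishing for $B \defl k'[a,x,y] \slash (xy-a^{\ell})$ with $k'$ an arbitrary field of characteristic $\neq 2$.

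To carry out this last step, I would use the theorem of Vaserstein (\cite[Th{\'e}or{\`e}me~4]{BASS_LIBERATION}) emphasized in the introduction. It identifies a $1$-stably elementary $\sigma \in \mathrm{SL}_2(B)$ with the symplectic matrix $\bigl(\begin{smallmatrix} \sigma & 0 \\ 0 & \sigma^{-T} \end{smallmatrix}\bigr) \in \mathrm{Sp}_4(B)$, and shows that $\sigma$ is pseudoelementary precisely when the corresponding class in $\ker\bigl(K_1\mathrm{Sp}(B) \to K_1(B)\bigr)$ vanishes. Since $2 \in B^{\times}$, the composition $K_1(B) \to K_1\mathrm{Sp}(B) \to K_1(B)$ of the hyperbolic and forgetful homomorphisms is multiplication by $2$; dividing by $2$ thus gives a retraction of the forgetful map, which is therefore split injective, and its kernel vanishes. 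Consequently $\widetilde{SK}_{1,2}^{1-\mathrm{st}}(B) \cong \ast$, as required.

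The main obstacle is making the Vaserstein correspondence precise in the \emph{unstable} setting ($r = 2$): one must verify that triviality of the associated symplectic class modulo the elementary symplectic subgroup implies that the given $\sigma$ is \emph{pseudoelementary}, not merely $1$-stably elementary (which holds by hypothesis). This requires tracking how matrices defined over $\mathbb{Z}$-polynomial rings on the symplectic side descend to pseudoelementary matrices on the $\mathrm{SL}_2$ side. Once this unstable translation is in place, the vanishing in characteristic $\neq 2$ follows essentially formally from the hyperbolic/forgetful composition being multiplication by $2$.
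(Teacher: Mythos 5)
Your rank decomposition is broadly sound, and the rank $1$ UFD argument and the rank $\geq 3$ reduction to Corollary~\ref{cor:unimodular_rows_length_geq4_completable} are both correct (though the rank $1$ case also follows formally once rank $\geq 2$ is handled, since every unimodular row of length $2$ is trivially completable). The central issue is in your treatment of rank $2$, where two problems arise.

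First, the ``multiplication by $2$'' step is flawed. Writing $H \colon K_1(B) \to K_1\mathrm{Sp}(B)$ and $F \colon K_1\mathrm{Sp}(B) \to K_1(B)$ for the hyperbolic and forgetful maps, the composite you describe is $F \circ H$, and if $F \circ H = 2\cdot\mathrm{id}$ then $\tfrac{1}{2}H$ is a \emph{section} of $F$, making $F$ split \emph{surjective}; this says nothing about the kernel of $F$. What you would need is $H \circ F = 2\cdot\mathrm{id}$. But even $F \circ H = 2\cdot\mathrm{id}$ fails: on matrices, $F\circ H(\alpha) = [\alpha \oplus (\alpha^T)^{-1}] = [\alpha] - [\alpha^T]$ in $K_1$, and applying this to a unit $u$ gives $[u] - [u] = 0$, not $2[u]$. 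The correct formula is $F\circ H = \mathrm{id} + \bar{(\,)}$ where $\bar{(\,)}$ is the duality involution, and this does not yield the vanishing of $\ker(F)$ in general, even with $2$ invertible.

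Second, the linkage you invoke --- that $\widetilde{SK}_{1,2}^{1-\mathrm{st}}(B) \cong \ast$ is equivalent to the vanishing of $\ker(K_1\mathrm{Sp}(B) \to K_1(B))$ --- is not a direct consequence of Vaserstein's theorem. In the paper this equivalence is the content of Proposition~\ref{prop:link_between_stable_and_unstable} together with Lemmas~\ref{lemma:W(C)_isomorphic_to_N1} and~\ref{lemma:N1_isomorphic_to_N2}, which rely on the auxiliary ring $C = k[a,x]_S[y,t]\slash\bigl(t^2 - t(xy-a^{\ell})\bigr)$ with $S = 1 + x k[a,x]$, a dimension count (Lemma~\ref{lemma:Specm(C)_covered_by_2-dimensional_subspaces}), Bass' stable range, and the long exact sequences of Proposition~\ref{prop:Bass_exact_sequences_K*Sp}. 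All of this material comes \emph{after} Proposition~\ref{prop:stably_free_trivial_if_char_neq_2} in the paper, and it is used to prove Theorem~\ref{thm:intermediate_stable_question}, which Proposition~\ref{prop:stably_free_trivial_if_char_neq_2} logically precedes. You correctly flag this as the ``main obstacle,'' but it is not a matter of making a known stable-to-unstable translation precise; it is substantive new machinery.

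The paper's actual proof of the rank $2$ case is shorter and avoids symplectic $K$-theory entirely. It takes the \emph{un-localized} ring $C' = k[a,x,y,t]\slash\bigl(t^2 - t(xy-a^{\ell})\bigr)$, observes that a suitable grading makes $C'$ naively $\mathbb{A}^1$-contractible over $k$, reduces via Quillen's local-global principle $(\mathrm{Q})$ to the local rings of $C'$ (which have Krull dimension $\leq 3$ and contain $\tfrac{1}{2}$), and invokes Rao's theorem \cite[Theorem~3.1]{RAO_UNIMODULAR} to conclude that $1$-stably free $C'[t]$-modules are extended. Hence $1$-stably free $C'$-modules are free, and the Milnor square $C' \to k[a,x,y] \times k[a,x,y] \to B$ together with the free patching lemma gives $\widetilde{SK}_{1,2}^{1-\mathrm{st}}(B) \cong \ast$. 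The hypothesis $\mathrm{char}(k) \neq 2$ enters precisely through Rao's requirement that $(r-1)! = 2$ be invertible.
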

 
 \begin{proof}
 Let $C^{\prime}=k[a,x,y,t] \slash \bigl(t^2-t(xy-a^{\ell})\bigr)$. This ring can be considered as a graded ring by setting the degree of $t$ to be $\ell$, the degree of $x$ to be $\ell-1$, and the degree of $y$ to be $1$ (for example). We first claim that all 1-stably free $C^{\prime}$-modules are free.
 
 To see this, we note that the grading above implies that $C^{\prime}$ is naively $\mathbb{A}^{1}$-contractible over $k$, so it suffices to show that all $1$-stably free $C^{\prime}[t]$-modules are extended from $C^{\prime}$. From Quillen's local-global principle $(\mathrm{Q})$ it follows that it suffices to check this for the local rings $C^{\prime}_{\mathfrak{m}}$ of $C^{\prime}$. Since $C^{\prime}$ is integral over $k[a,x,y]$, it has Krull dimension $3$, so the local rings have Krull dimension at most $3$ and $2$ is a unit in each of these local rings. The claim thus follows from \cite[Theorem~3.1]{RAO_UNIMODULAR}.
 
 From the Milnor patching square
 \[
 \xymatrix{C^{\prime} \ar[r] \ar[d] & k[a,x,y] \ar[d] \\ k[a,x,y] \ar[r] & B}
 \]
 and the free patching lemma it follows that $\widetilde{SK}_{1,2}^{1-\mathrm{st}}(B) \cong \ast$, where 
 \[
 B=k[a,x,y] \slash (xy-a^{\ell}) \smash{\rlap{.}}
 \] 
  We can apply the same reasoning with the field $k \langle s \rangle$ in place of $k$, hence Proposition~\ref{prop:universal_unipotent_unimodular_row} is applicable and shows that all unimodular rows of length $3$ over the ring $R$ are completable. Combined with Corollary~\ref{cor:unimodular_rows_length_geq4_completable}, this shows that all stably free $R$-modules are free.
 \end{proof}
 
 Thus we can focus on fields of characteristic $2$ from now on. Many of the arguments we give are however independent of the characteristic of the ground field, so we will make this assumption only where it is necessary. The remaining case of rows of length $3$ (and ground fields of characteristic $2$) can be linked to a stable question about the ring $C$. In order to state this, we need to recall some basic definitions of symplectic $K$-theory.
 
 A \emph{symplectic $R$-module} is a finitely generated projective $R$-module $P$, equipped with a nonsingular bilinear form $h$ (that is, $h$ induces an isomorphism between $P$ and $P^{\vee}$) which is moreover \emph{alternating}: $h(x,x)=0$ for all $x \in P$. We denote the category of symplectic $R$-modules and morphisms which preserve the alternating forms by $\Alt(R)$. The orthogonal direct sum equips $\Alt(R)$ with the structure of a symmetric monoidal category. The symplectic $K$-group $K_0 \mathrm{Sp}$ is defined to be the Grothendieck group of the monoid of isomorphism classes in $\Alt(R)$ with respect to orthogonal direct sum as addition. Forgetting the bilinear form defines a functor $\Alt(R) \rightarrow \Proj(R)$ which induces a corresponding homomorphism $K_0 \mathrm{Sp}(R) \rightarrow K_0(R)$. Following \cite{BASS_LIBERATION}, we denote its kernel by $W(R)$. In the following proposition, we still use the notation $B \defl k[a,x,y] \slash (xy-a^{\ell})$ and 
 \[
 C\defl k[a,x]_S[y,t] \slash \bigl(t^2-t(xy-a^{\ell})\bigr) \smash{\rlap{,}}
 \]
 where $S$ denotes the multiplicative set $1+xk[a,x]$.
 
 \begin{prop}\label{prop:link_between_stable_and_unstable}
 Let $k$ be a field. The pointed set $\widetilde{SK}_{1,2}^{1-\mathrm{st}}(B)$ is trivial if and only if the kernel $W(C)$ of
 \[
 K_0 \mathrm{Sp}(C) \rightarrow K_0(C) 
 \]
 is trivial. 
 \end{prop}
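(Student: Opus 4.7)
The plan is to reduce the statement to a question about rank-$2$ stably free modules over $C$ and then invoke Vaserstein's theorem (cited in the introduction as \cite[Th\'eor\`eme~4]{BASS_LIBERATION}) to bridge this with the symplectic $K$-theory of $C$.

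First, I would use the two prior results from this section together: Lemma~\ref{lemma:localization_induces_bijection_SK_1,r} provides the bijection $\widetilde{SK}_{1,2}^{1-\mathrm{st}}(B)\cong\widetilde{SK}_{1,2}^{1-\mathrm{st}}(B_S)$, and Proposition~\ref{prop:stably_elementary_trivial_iff_stably_free_trivial} shows $\widetilde{SK}_{1,2}^{1-\mathrm{st}}(B_S)\cong\ast$ if and only if $P_2^{1-\mathrm{st}}(C)\cong\ast$. The proposition therefore reduces to the equivalence
\[
P_2^{1-\mathrm{st}}(C)\cong\ast \quad \Longleftrightarrow \quad W(C)=0.
\]

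Next, I would set up the bridge to symplectic $K$-theory. For any rank-$2$ stably free $C$-module $P$, the isomorphism $P\oplus C\cong C^{3}$ gives $\Lambda^{2}P\cong C$, and each choice of trivialization of $\Lambda^{2}P$ equips $P$ with a non-singular alternating form $h$. Setting $\Psi(P)\defl[(P,h)]-[H]\in K_{0}\mathrm{Sp}(C)$, where $H$ is the standard hyperbolic plane, the image lies in $W(C)$ since the underlying $K_{0}(C)$-classes agree (both are stably equivalent to $C^{2}$). Vaserstein's theorem, in the form proved in \cite{BASS_LIBERATION}, asserts that $\Psi$ descends to a bijection of pointed sets from $P_{2}^{1-\mathrm{st}}(C)$ onto $W(C)$; granting this, the desired equivalence is immediate, since the basepoint of $P_{2}^{1-\mathrm{st}}(C)$ (the free module) maps to $0\in W(C)$.

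The remaining work is to verify that Vaserstein's theorem applies to the specific ring $C$. Well-definedness of $\Psi$ requires that two trivializations of $\Lambda^{2}P$, which differ by some $u\in C^{\times}$, yield the same class in $W(C)$; this is a cancellation statement at the level of symplectic modules. Surjectivity requires that every class in $W(C)$ be represented by such a rank-$2$ stably free module, which will use the fact that stably free modules over $C$ of rank $\geq 3$ are free --- a consequence of Lemma~\ref{lemma:Specm(C)_covered_by_2-dimensional_subspaces} and Proposition~\ref{prop:dimension_implies_transitive_elementary_action}, since $\Spec_m(C)$ is a union of two subspaces of Krull dimension $2$ so that $C$ satisfies Bass' $\mathrm{SR}_{4}$. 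Injectivity of $\Psi$ corresponds to the statement that $(P,h)\cong(H)$ as symplectic modules implies $P$ is free, which is automatic from the underlying module isomorphism. The main technical obstacle will be the symplectic cancellation/stabilization argument in the surjectivity step; this is precisely what Vaserstein's theorem packages, and I would apply it directly rather than reprove it.
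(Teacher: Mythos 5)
Your proposal follows exactly the same route as the paper: first reduce via Lemma~\ref{lemma:localization_induces_bijection_SK_1,r} to $B_S$, then via Proposition~\ref{prop:stably_elementary_trivial_iff_stably_free_trivial} to the triviality of $P_2^{1-\mathrm{st}}(C)$, establish $\mathrm{SR}_4$ for $C$ from Lemma~\ref{lemma:Specm(C)_covered_by_2-dimensional_subspaces} and Proposition~\ref{prop:dimension_implies_transitive_elementary_action}, and invoke \cite[Th\'eor\`eme~4]{BASS_LIBERATION}. The only cosmetic difference is how Vaserstein's theorem is phrased: the paper uses the form giving a bijection $W(C)\cong\mathrm{Um}_3(C)/\mathrm{SL}_3(C)$ and then observes that $\mathrm{SL}_3$- and $\mathrm{GL}_3$-completability agree, whereas you assert a bijection $\Psi\colon P_2^{1-\mathrm{st}}(C)\to W(C)$ directly; that slightly stronger formulation (a pointed bijection rather than an equivalence of the two triviality conditions) is not literally what the cited theorem states, but the extra strength is never used, so the argument is sound.
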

 
 \begin{proof}
 From Lemma~\ref{lemma:Specm(C)_covered_by_2-dimensional_subspaces} and Proposition~\ref{prop:dimension_implies_transitive_elementary_action} we know that $\mathrm{E}_r(C)$ acts transitively on $\mathrm{Um}_r(C)$ for all $r \geq 4$. Thus we can apply Bass' version of Vaserstein's theorem \cite[Th{\'e}or{\`e}me~4]{BASS_LIBERATION}, which implies that there is a bijection between $W(C)$ and $\mathrm{Um}_3(C) \slash \mathrm{SL}_3(C)$. Since a unimodular row is completable to an invertible matrix if and only if it is completable to a matrix in $\mathrm{SL}_3$, it follows that $W(C) \cong 0$ if and only if $P_2^{1-\mathrm{st}}(C) \cong \ast$. From Proposition~\ref{prop:stably_elementary_trivial_iff_stably_free_trivial} we know that the latter is equivalent to $\widetilde{SK}_{1,2}^{1-\mathrm{st}}(B_S) \cong \ast$. The claim thus follows from the isomorphism
 \[
 \widetilde{SK}_{1,2}^{1-\mathrm{st}}(B) \cong \widetilde{SK}_{1,2}^{1-\mathrm{st}}(B_S)
 \]
 of Lemma~\ref{lemma:localization_induces_bijection_SK_1,r}.
 \end{proof}
 
 Next we want to relate this kernel $W(C)$ to a similar kernel for a simpler ring than $C$ (at the expense of passing to higher symplectic $K$-groups). This requires a short detour on basic properties of symplectic $K$-groups, specifically on long exact sequences relating the two groups $K_0 \mathrm{Sp}$ and $K_1 \mathrm{Sp}$.
 
 We start with a version of Vorst's lemma for the first symplectic $K$-group. Recall that $\chi_r$ is the $2r \times 2r$-matrix
 \[
 \chi_r \defl \begin{pmatrix}
 0 & 1 \\ 
 -1 & 0 \\
 && 0 & 1\\
 && -1 & 0 \\
 &&&& \ddots \\
 &&&&& 0 & 1\\
 &&&&& -1 & 0
 \end{pmatrix}
 \]
 and the group of symplectic matrices is given by
 \[
 \mathrm{Sp}_{2r}(R) \defl \{\;\alpha \in \mathrm{GL}_{2r}(R) \;\vert\;  \alpha^t \chi_r \alpha=\chi_r \; \} \smash{\rlap{.}}
 \]
 To define the elementary subgroup $\mathrm{Ep}_{2r}(R) \subseteq \mathrm{Sp}_{2r}(R)$, we need to introduce some notation. First we let $\sigma \in \Sigma_{2r}$ denote the permutation given by $\sigma(2i)=2i-1$ and $\sigma(2i-1)=2i$. For $k, \ell \in \mathbb{N}$, we write $\varepsilon_{k,\ell}$ for the matrix with entry $1$ in row $k$ and column $\ell$, and $0$ everywhere else. Finally, for $0 \leq i \neq j \leq 2r$, we set
 \[
 se_{i,j}(z)= \begin{cases}
 \id_{2r}+z \varepsilon_{i,j} & \text{if}\ i=\sigma(j) \\
 \id_{2r}+z \varepsilon_{i,j} - (-1)^{i+j} z \varepsilon_{\sigma(j),\sigma(i)} & \text{if}\ i \neq \sigma(j) \ \text{and}\ i<j
 \end{cases}
 \]
 where $z \in R$. The subgroup of $\mathrm{Sp}_{2r}(R)$ generated by the $se_{i,j}(z)$ is denoted by $\mathrm{Ep}_{2r}(R)$. By \cite[Corollary~1.11]{KOPEIKO}, $\mathrm{Ep}_{2r}(R)$ is a normal subgroup of $\mathrm{Sp}_{2r}(R)$ whenever $r \geq 2$. We denote the quotient by $K_1 \mathrm{Sp}_{2r}(R) \defl \mathrm{Sp}_{2r}(R) \slash \mathrm{Ep}_{2r}(R)$. For $r \geq 2$, this defines a functor
 \[
 K_1 \mathrm{Sp}_{2r} \colon \CRing \rightarrow \Grp \smash{\rlap{.}}
 \]
 It follows from the construction that this functor is finitary.
 
 \begin{prop}\label{prop:symplectic_Vorst_lemma}
 let $\varphi \colon A \rightarrow B$ be an analytic isomorphism along $S$. Then for all $r \geq 2$, the following hold:
 \begin{enumerate}
 \item[(a)] Each $\varepsilon \in \mathrm{Ep}_{2r}(B_{\varphi(S)})$ factors as
 \[
 \varepsilon=(\varepsilon_1)_{\varphi(S)} \cdot \varphi_S(\varepsilon_2)
 \]
 for some $\varepsilon_1 \in \mathrm{Ep}_{2r}(B)$ and $\varepsilon_2 \in \mathrm{Ep}_{2r}(A_S)$; 
 \item[(b)]
  The commutative square
  \[
 \xymatrix{K_1 \mathrm{Sp}_{2r}(A) \ar[r] \ar[d]_{\varphi} & K_1 \mathrm{Sp}_{2r}(A_S)  \ar[d]^{\varphi_S} \\
 K_1 \mathrm{Sp}_{2r}(B) \ar[r] & K_1 \mathrm{Sp}_{2r}(B_{\varphi(S)}) }  
  \]
  is a weak pullback square.
\end{enumerate}  
 \end{prop}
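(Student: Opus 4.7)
The plan is to mirror the proof of Vorst's Lemma (Lemma~\ref{lemma:Vorsts_lemma}) in the symplectic setting. Since $\mathrm{Sp}_{2r}(-)$ is finitary (being represented by an affine scheme of finite type) and $\mathrm{Ep}_{2r}(-)$ is generated by the finitely many finitary subfunctors $z \mapsto se_{ij}(z)$, the functor $K_1\mathrm{Sp}_{2r}$ is finitary. Hence any given $\varepsilon \in \mathrm{Ep}_{2r}(B_{\varphi(S)})$ is already defined over $B_{\varphi(f)}$ for some $f \in S$, so for (a) we may assume $S = \{1, f, f^2, \ldots\}$. As in the proof of Lemma~\ref{lemma:Vorsts_lemma}, analyticity supplies the writing principle: every $c \in B_{\varphi(f)}$ has the form $\varphi(f)^s b + \varphi(a)/\varphi(f)^m$ for some $a \in A$, $b \in B$, and $s, m \in \mathbb{N}$.

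The essential input is a symplectic analogue of Suslin's factorization: writing $\varepsilon = \prod_{k=1}^m se_{i_k j_k}(c_k)$ and $\sigma_p \defl \prod_{k=1}^{p-1} se_{i_k j_k}(c_k)$, there should exist $s_p \in \mathbb{N}$ and matrices $\tau_p(t) \in \mathrm{Ep}_{2r}\bigl(B[t], (t)\bigr)$ with
\[
\lambda_{\varphi(f)}\bigl(\tau_p(t)\bigr) = \sigma_p \cdot se_{i_p j_p}\bigl(\varphi(f)^{s_p} t\bigr) \cdot \sigma_p^{-1} \smash{\rlap{.}}
\]
This is the analogue of \cite[Lemma~3.3]{SUSLIN_SPECIAL} for Chevalley groups of type $C_r$; it can either be extracted from Kopeiko's work on the elementary symplectic group or deduced from \cite[Lemma~3.4]{STAVROVA_HOMOTOPY} after identifying $\mathrm{Ep}_{2r}(A)$ with the abstract elementary subgroup $\mathrm{E}_P(A)$ for the standard Borel $P \subseteq \mathrm{Sp}_{2r}$, noting that $\mathrm{Sp}_{2r}$ is an isotropic reductive group of rank $r \geq 2$. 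Granted this factorization, I would write $c_k = \varphi(f)^{s_k} b_k + \varphi(a_k/f^{n_k})$ and set
\[
\varepsilon_1 \defl \prod_{k=m}^{1} \tau_k(b_k) \in \mathrm{Ep}_{2r}(B), \qquad \varepsilon_2 \defl \prod_{k=1}^m se_{i_k j_k}(a_k/f^{n_k}) \in \mathrm{Ep}_{2r}(A_f) \smash{\rlap{.}}
\]
The same telescoping identity used in Lemma~\ref{lemma:Vorsts_lemma} then yields $\varepsilon = \lambda_{\varphi(f)}(\varepsilon_1) \cdot \varphi_f(\varepsilon_2)$, establishing (a).

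Part (b) is formal given (a), by the argument of Claim~(2) of Lemma~\ref{lemma:Vorsts_lemma}. If $\sigma_1 \in \mathrm{Sp}_{2r}(B)$ and $\sigma_2 \in \mathrm{Sp}_{2r}(A_S)$ have equal images in $K_1\mathrm{Sp}_{2r}(B_{\varphi(S)})$, then $\lambda_{\varphi(S)}(\sigma_1) \cdot \varepsilon = \varphi_S(\sigma_2)$ for some $\varepsilon \in \mathrm{Ep}_{2r}(B_{\varphi(S)})$, and factoring $\varepsilon = (\varepsilon_1)_{\varphi(S)} \cdot \varphi_S(\varepsilon_2)$ via (a) gives $\lambda_{\varphi(S)}(\sigma_1 \varepsilon_1) = \varphi_S(\sigma_2 \varepsilon_2^{-1})$. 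Since $\mathrm{Sp}_{2r}(-) \colon \CRing \rightarrow \Set$ is representable, it preserves the pullback diagram defining an analytic patching square, yielding the required common preimage in $\mathrm{Sp}_{2r}(A)$.

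The main obstacle is the symplectic Suslin factorization itself. In the linear case, the formula for conjugates $\sigma e_{ij}(c) \sigma^{-1}$ in terms of the outer product of a column and a row of $\sigma$ makes the scaling-by-$f^{s_p}$ argument transparent. In the symplectic case, the generators $se_{ij}(z)$ come in two flavours (short and long roots) and the conjugation formulas are more elaborate; verifying that the resulting expressions decompose into products of relative elementary symplectic matrices $se_{kl}(f^s b)$ lifting to $\mathrm{Ep}_{2r}\bigl(B[t], (t)\bigr)$ after suitable scaling is the technical heart of the proof. Once this is in place — either via a direct matrix computation or by invoking the Chevalley-group framework of \cite{STAVROVA_HOMOTOPY} — the rest of the argument is parallel to the linear case and purely formal.
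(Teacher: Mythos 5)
Your proposal matches the paper's proof essentially step for step: the paper likewise observes the result follows from Stavrova's general theorem for isotropic reductive groups (Proposition~\ref{prop:unstable_K1_excision_reductive_group}), then gives the direct argument by transcribing Vorst's lemma with the symplectic Suslin-type conjugation lemma (sourced from Stepanov or Kopeiko, as you also suggest), the additivity of the $se_{i,j}$ in the argument, and the telescoping identity, with part (b) formal. The only cosmetic difference is that the paper explicitly lists the additivity $se_{i,j}(z+w)=se_{i,j}(z)se_{i,j}(w)$ as a second required ingredient, whereas you use it implicitly in the telescoping step.
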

 
 \begin{proof}
 This follows from the general result of Stavrova, see Proposition~\ref{prop:unstable_K1_excision_reductive_group}. In this particular case, it can however be seen more directly.
 
 To apply the argument of Vorst's lemma, we need two ingredients: first, we need a version of Suslin's lemma \cite[Lemma~3.3]{SUSLIN_SPECIAL} and second we need that $se_{i,j}(z+w)=se_{i,j}(z)se_{i,j}(w)$ holds for all $z$ and $w \in R$. The latter readily follows from the fact that $\varepsilon_{i,j} \varepsilon_{k,\ell}=\delta_{jk} \varepsilon_{i\ell}$ holds.
 
 For the first ingredient, we need to know that for each $f \in R$, each $p(t) \in R_f[t]$, and each $\alpha \in \mathrm{Ep}_{2r}(R_f)$, there exists some $m \in \mathbb{N}$ and $\sigma(t) \in \mathrm{Ep}_{2r}(R[t])$ such that
 \[
 \alpha \cdot se_{i,j}\bigl(f^m t p(f^m t) \bigr) \cdot \alpha^{-1}=\bigl(\sigma(t)\bigr)_f
 \]
 holds. A proof of this claim can be found in \cite[Theorem~5.2]{STEPANOV} (for a more general class of groups). It can also be proved similarly to \cite[Corollary~1.11]{KOPEIKO}, by replacing the variable $a$ with $tp(t)$.
 
 With these ingredients, we can follow the proof of Vorst's lemma verbatim to establish Part~(a). Part~(b) is a formal consequence of Part~(a).
 \end{proof}
 
 \begin{prop}\label{prop:Horrocks_for_K1Sp}
 For $r \geq 2$, the functors $K_1 \mathrm{Sp}_{2r} \colon \CRing \rightarrow \Grp$ satisfy the Horrocks principle $(\mathrm{H})$.
 \end{prop}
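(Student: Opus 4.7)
The plan is to deduce this from the general framework developed in \S\S\,\ref{section:excision}--\ref{section:monic}. First I would observe that $K_1 \mathrm{Sp}_{2r} \colon \CRing \rightarrow \Grp$ is a group-valued functor, so by Remark~\ref{rmk:group_valued_implies_natural_action} it admits a natural transitive group action in the sense of Definition~\ref{dfn:F_contractible_and_transitive_group_action}. The functor is finitary by construction (it is a colimit of affine group schemes of finite type, quotiented by a subgroup generated by finitely many types of elements $se_{i,j}(z)$). From Proposition~\ref{prop:symplectic_Vorst_lemma}(b), it satisfies weak analytic excision, so Theorem~\ref{thm:weak_excision_implies_QR} supplies the Quillen principle $(\mathrm{Q})$. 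By Proposition~\ref{prop:QHloc_implies_H}, under Assumption~(B), the global Horrocks principle $(\mathrm{H})$ will follow once the local Horrocks principle $(\mathrm{H}_{\mathrm{loc}})$ is established.

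The remaining task is therefore to show that for every local commutative ring $(R,\mathfrak{m})$, the kernel of
\[
K_1 \mathrm{Sp}_{2r}(R[t]) \rightarrow K_1 \mathrm{Sp}_{2r}(R \langle t \rangle)
\]
is trivial; equivalently, any $\sigma(t) \in \mathrm{Sp}_{2r}(R[t])$ which becomes elementary-symplectic after inverting all monic polynomials already lies in $\mathrm{Ep}_{2r}(R[t])$. I would invoke the symplectic analogue of Suslin's factorization theorem for $\mathrm{Ep}_{2r}$ over a local ring. For $r \geq 2$, the split simply connected reductive group $\mathrm{Sp}_{2r}$ has isotropic rank $r \geq 2$ on $\Spec(R)$, and there is a standard parabolic subgroup $P$ whose associated elementary subgroup $\mathrm{E}_P(R)$ coincides with $\mathrm{Ep}_{2r}(R)$ as defined via the generators $se_{i,j}(z)$. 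Stavrova's local Horrocks principle for isotropic reductive groups \cite[Theorem~1.1]{STAVROVA_HOMOTOPY} (whose proof relies on the Suslin-type factorization of \cite[Corollary~5.2]{STAVROVA_HOMOTOPY}) then yields exactly the desired triviality of the above kernel.

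Alternatively, if one prefers a proof intrinsic to the symplectic setting, the local Horrocks principle can be proved by the same pattern as Suslin's original argument: given $\sigma(t)$ whose image in $\mathrm{Sp}_{2r}(R\langle t \rangle)$ is a product of $se_{i,j}$'s, lift that factorization over some localization $R[t]_f$ with $f$ monic, then iteratively use the Suslin-style lemma already invoked in the proof of Proposition~\ref{prop:symplectic_Vorst_lemma}(a) (see \cite[Theorem~5.2]{STEPANOV}) to peel off the symplectic generators and reduce to a factor over $R[t]$. The expected main obstacle is precisely this identification step: one must verify that the abstract elementary subgroup $\mathrm{E}_P$ attached by Stavrova's formalism to the standard parabolic of $\mathrm{Sp}_{2r}$ agrees with the concretely-defined group $\mathrm{Ep}_{2r}$ used in the statement of the proposition, so that Stavrova's conclusion applies verbatim. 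Once this identification is in place, combining Theorem~\ref{thm:weak_excision_implies_QR}, Proposition~\ref{prop:symplectic_Vorst_lemma}, and Proposition~\ref{prop:QHloc_implies_H} completes the proof.
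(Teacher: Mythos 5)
Your proposal is correct and follows the same overall skeleton as the paper's proof: obtain $(\mathrm{Q})$ from weak analytic excision (Proposition~\ref{prop:symplectic_Vorst_lemma}) plus finitariness, supply $(\mathrm{H}_{\mathrm{loc}})$, and combine via Proposition~\ref{prop:QHloc_implies_H} under Assumption~(B) (which holds since $K_1\mathrm{Sp}_{2r}$ is group-valued). The one genuine point of divergence is the source for $(\mathrm{H}_{\mathrm{loc}})$: the paper cites Kopeiko \cite[Theorem~3.10]{KOPEIKO}, which is stated directly for $\mathrm{Sp}_{2r}$ and $\mathrm{Ep}_{2r}$, whereas you route through Stavrova's local Horrocks theorem for isotropic reductive groups \cite[Theorem~1.1]{STAVROVA_HOMOTOPY} (or an intrinsic Suslin-style argument via Stepanov's factorization). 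Your route is sound once one checks that the elementary subgroup $\mathrm{E}_P$ attached to the standard parabolic of $\mathrm{Sp}_{2r}$ agrees with the concretely-generated group $\mathrm{Ep}_{2r}$ --- a standard but nontrivial identification that you correctly flag as the main thing to verify. The Kopeiko reference avoids this identification entirely, which is presumably why the paper prefers it; your alternative has the virtue of slotting into the paper's more general $K_1^G$ framework and making the proposition a special case of a uniform statement about isotropic reductive groups.
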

 
 \begin{proof}
 From \cite[Theorem~3.10]{KOPEIKO} it follows that $K_1 \mathrm{Sp}_{2r}$ satisfies $(\mathrm{H}_{\mathrm{loc}})$. Since $K_1 \mathrm{Sp}_{2r}$ is finitary and satisfies weak analytic excision (by the above proposition), it satisfies the Quillen principle $(\mathrm{Q})$. It thus follows from Proposition~\ref{prop:QHloc_implies_H} that the functors $K_1 \mathrm{Sp}_{2r}$ satisfy the Horrocks principle $(\mathrm{H})$.
 \end{proof}
 
 \begin{cor}\label{cor:K1Sp_trivial_for_Laurent_rings}
 Let $k$ be a field and let $m,n \in \mathbb{N}$. Then
 \[
 K_1 \mathrm{Sp}_{2r}(k[x_1^{\pm}, \ldots, x_m^{\pm},t_1,\ldots, t_n]) \cong 0
 \]
 holds for all $r \geq 2$.
 \end{cor}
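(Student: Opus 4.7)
The plan is to apply Corollary~\ref{cor:Horrocks_and_analytic_excision} to the functor $F = K_1 \mathrm{Sp}_{2r} \colon \CRing \rightarrow \Grp$. The required ingredients are already in hand: $F$ is finitary by construction, satisfies weak analytic excision by Part~(b) of Proposition~\ref{prop:symplectic_Vorst_lemma}, admits a natural transitive group action because it is group-valued (Remark~\ref{rmk:group_valued_implies_natural_action}), and satisfies the Horrocks principle by Proposition~\ref{prop:Horrocks_for_K1Sp} (so in particular $(\mathrm{H}_{\mathrm{loc}})$). Thus the hypotheses of Corollary~\ref{cor:Horrocks_and_analytic_excision} with Condition~(B) are satisfied, and it only remains to show that $k$ has the strong $K_1 \mathrm{Sp}_{2r}$-extension property over itself.

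To establish the strong extension property I would invoke Lemma~\ref{lemma:strong_F_extension_from_F_contractible} applied to the class $\ca{C}$ of all fields. Fields form a class closed under $R^{\prime} \mapsto R^{\prime} \langle t \rangle$ (in fact, since a nonzero element of $k[t]$ for $k$ a field is an associate of a monic polynomial, $k\langle t \rangle$ coincides with the field of fractions $k(t)$, which is again a field). It then suffices to verify that every field is $K_1 \mathrm{Sp}_{2r}$-contractible, that is, $\mathrm{Sp}_{2r}(k) = \mathrm{Ep}_{2r}(k)$ whenever $k$ is a field. This is the classical symplectic analogue of the fact that $\mathrm{SL}_r(k) = \mathrm{E}_r(k)$ over a field: any symplectic matrix can be reduced to the identity by a sequence of left and right multiplications by elementary symplectic transvections $se_{i,j}(z)$, using a symplectic version of Gaussian elimination. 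Granting this, Lemma~\ref{lemma:strong_F_extension_from_F_contractible} gives the strong $K_1 \mathrm{Sp}_{2r}$-extension property of $k$ over itself.

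Putting these pieces together, Corollary~\ref{cor:Horrocks_and_analytic_excision} applied with $R^{\prime} = A = k$ yields that the function
\[
K_1 \mathrm{Sp}_{2r}(k) \to K_1 \mathrm{Sp}_{2r}\bigl( k[x_1^{\pm}, \ldots, x_m^{\pm}, t_1, \ldots, t_n] \bigr)
\]
is bijective. Combined with the vanishing $K_1 \mathrm{Sp}_{2r}(k) \cong 0$ from the previous paragraph, this proves the corollary. The only non-routine step in this outline is the identity $\mathrm{Sp}_{2r}(k) = \mathrm{Ep}_{2r}(k)$ for all fields $k$; this is however a standard fact from the structure theory of symplectic groups over fields (it is proved in the same spirit as the elementary reduction of $\mathrm{SL}_r(k)$, treating the pairs of symplectically dual basis vectors simultaneously), so the main step is really the packaging via Corollary~\ref{cor:Horrocks_and_analytic_excision} rather than any new computation.
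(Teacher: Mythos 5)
Your proof takes essentially the same route as the paper: establish that $K_1 \mathrm{Sp}_{2r}(k) \cong 0$ for all fields $k$ (the paper cites \cite[\S 4]{DICKSON} for this, which is the precise reference you should supply in place of the sketched symplectic Gaussian elimination), deduce the strong $K_1\mathrm{Sp}_{2r}$-extension property via Lemma~\ref{lemma:strong_F_extension_from_F_contractible}, and conclude by invoking the Horrocks/Laurent machinery (the paper goes directly through Theorem~\ref{thm:strong_F_extension_for_Laurent}, whereas you route through Corollary~\ref{cor:Horrocks_and_analytic_excision}, which is just a packaged form of the same theorem, so the two are interchangeable). The argument is correct.
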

 
 \begin{proof}
 From \cite[\S 4]{DICKSON} it follows that $K_1 \mathrm{Sp}_{2r}(k) \cong 0$ for all fields $k$ and all $r \geq 2$. Thus $k$ has the strong $K_1 \mathrm{Sp}_{2r}$-extension property over itself (see Definition~\ref{dfn:strong_extension_property}). The claim thus follows from Theorem~\ref{thm:strong_F_extension_for_Laurent}.
 \end{proof}
 
 Another ingredient we need is the $j$-injectivity of $K_1 \mathrm{Sp}_{2r}$. This follows from the following general fact about simply connected Chevalley--Demazure group schemes $G$. Recall that, for any ideal $I \subseteq R$, the group $G(R,I)$ denotes the kernel of the homomorphism $G(R) \rightarrow G(R \slash I)$.
 
 \begin{prop}\label{prop:Chevalley_Demazure_is_j_injective}
 If $G$ is a simply connected Chevalley--Demazure group scheme, then $K_1^G$ is $j$-injective.
 \end{prop}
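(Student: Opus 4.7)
The plan is to reduce $j$-injectivity to the statement that $G(R,I) \subseteq E(R)$ whenever $I$ is contained in the Jacobson radical of $R$, and then to establish this containment via a relative Gauss decomposition based on the big Bruhat cell.

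First I would show that $E(R) \to E(R/I)$ is surjective. Since $E(R)$ is generated by the root subgroup elements $x_\alpha(t)$ with $\alpha \in \Phi$ and $t \in R$, and each $x_\alpha(t)$ depends functorially on $t \in \mathbb{G}_a(R)$, any elementary generator over $R/I$ lifts to an elementary generator over $R$. Thus, given $\sigma \in G(R)$ with $\bar{\sigma} \in E(R/I)$, one can multiply $\sigma$ by a lift in $E(R)$ to reduce to the case $\sigma \in G(R,I)$. This reduces the whole statement to proving that $G(R,I) \subseteq E(R)$ whenever $I \subseteq \mathrm{Jac}(R)$.

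Next I would use the big cell (open Bruhat cell) $\Omega = U^{-} T U^{+} \subseteq G$, which is an open subscheme containing the identity section. Its complement is cut out by the vanishing of finitely many regular functions $f_1, \ldots, f_k$ on $G$ (for $\mathrm{SL}_n$ these are the leading principal minors, and similar Plücker-type coordinates exist in the general split case) satisfying $f_j(e) = 1$. For any $g \in G(R,I)$ we have $f_j(g) \equiv 1 \pmod{I}$, hence $f_j(g) \in 1+I \subseteq R^\times$ since $I \subseteq \mathrm{Jac}(R)$. Thus $g$ factors through $\Omega$, so it admits a Gauss decomposition $g = u^{-} t u^{+}$ with $u^{\pm} \in U^{\pm}(R)$ and $t \in T(R)$; comparing the decomposition of $g$ with that of $e$ modulo $I$ and using uniqueness of the decomposition in $\Omega$, one sees $u^{\pm} \in U^{\pm}(R,I)$ and $t \in T(R,I)$.

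The two unipotent pieces $u^{\pm}$ lie in $E(R)$ essentially by definition, since each $U^{\pm}$ is a product of root subgroups parametrised by positive (resp.\ negative) roots. The main obstacle, and the step requiring the most care, is handling the torus part $t \in T(R,I)$. Here I would use the simply connected assumption: the cocharacter lattice of $T$ is spanned by the simple coroots $h_{\alpha_i}$, so $t = \prod_i h_{\alpha_i}(u_i)$ for some $u_i \in R^\times$, and since $t$ reduces to the identity modulo $I$ one can arrange $u_i \in 1+I$. Each $h_\alpha(u)$ for a unit $u$ is expressed via the Steinberg relation
\[
h_\alpha(u) = w_\alpha(u) w_\alpha(-1), \qquad w_\alpha(u) = x_\alpha(u)\, x_{-\alpha}(-u^{-1})\, x_\alpha(u),
\]
so $h_\alpha(u) \in E(R)$. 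Concatenating, $t \in E(R)$, and hence $g = u^{-} t u^{+} \in E(R)$, completing the proof.
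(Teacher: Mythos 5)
Your proof is correct and takes a genuinely different route from the paper. You set up the same reduction the paper uses: since $E(R)\to E(R/I)$ is surjective (elementary generators lift), $j$-injectivity of $K_1^G$ reduces to showing $G(R,I)\subseteq E(R)$ whenever $I\subseteq\mathrm{Jac}(R)$. The paper then simply cites Abe--Suzuki for this containment, whereas you prove it from scratch via the Gauss decomposition in the big Bruhat cell. Your unwinding is correct: $U^\pm\times T\times U^+\to\Omega$ is an isomorphism, so $g\in\Omega(R)$ has a unique factorization $g=u^-tu^+$, and reducing modulo $I$ and comparing with the trivial factorization of $e$ forces $u^\pm\in U^\pm(R,I)$ and $t\in T(R,I)$; the unipotent parts are elementary by construction, and the torus part lands in $E(R)$ because the simple coroots are a basis of $X_*(T)$ when $G$ is simply connected, so $t=\prod_i h_{\alpha_i}(v_i)$ with $v_i\in 1+I$, and each $h_\alpha(v)=w_\alpha(v)w_\alpha(-1)$ is elementary by the Steinberg relations. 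This is precisely where the simply connected hypothesis enters, and your argument makes that visible in a way the citation does not. What the paper's approach buys is brevity; what yours buys is self-containment and transparency about the role of simple connectedness.

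One small inaccuracy worth correcting: the complement of the big cell is not $V(f_1,\ldots,f_k)$ (the common zero locus), at least not in the natural normalization. For $\mathrm{SL}_n$ the big cell is the principal open $D(\Delta_1\cdots\Delta_{n-1})$, so its complement is $V(\Delta_1\cdots\Delta_{n-1})=\bigcup_j V(\Delta_j)$, a union of hypersurfaces, not their intersection. The conclusion you want still holds, but a cleaner way to get $g\in\Omega(R)$ that avoids picking defining functions at all is this: $g^{-1}(\Omega)$ is open in $\Spec(R)$, and since $\bar g=e\circ\pi$ factors through $\Omega$, the open set $g^{-1}(\Omega)$ contains $V(I)$; because $I\subseteq\mathrm{Jac}(R)$, the closed set $V(I)$ contains every closed point of $\Spec(R)$, and an open set containing all closed points of an affine scheme must be the whole space. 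This sidesteps any question of whether the $f_j$ can all be normalized to have value $1$ at the identity.
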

 
 \begin{proof}
 Let $I \subseteq R$ be an ideal which is contained in the Jacobson radical of $R$. Pick a $\sigma \in G(R)$ whose image in $G(R \slash I)$ lies in the elementary subgroup. Then we can find $\varepsilon \in E(R)$ such that $\bar{\varepsilon}=\bar{\sigma}$ in $G(R \slash I)$. It follows that $\sigma \varepsilon^{-1}$ lies in $G(R,I)$. From \cite[Proposition~2.3]{ABE_SUZUKI} it follows that $G(R,I) \subseteq E(R)$, so $\sigma$ is elementary.
 
 The $j$-injectivity of $K_1^G$ follows from this particular case: if $[\bar{\sigma}]=[\bar{\tau}]$ holds in $K_1^G(R \slash I)$, then $\tau^{-1} \sigma$ is sent to an element of $E(R \slash I)$. From the argument above it follows that $\tau^{-1} \sigma$ lies in $E(R)$, which implies that $[\sigma]=[\tau]$ holds in $K_1^G(R)$.
 \end{proof}
 
 \begin{rmk}\label{rmk:K1Sp_j_injective}
 The above proposition implies that $K_1 \mathrm{Sp}_{2r}$ is $j$-injective for all $r \geq 2$. We will only apply this for the functor
 \[
 K_1 \mathrm{Sp} = \mathrm{colim}_r K_1 \mathrm{Sp}_{2r} \smash{\rlap{,}}
 \]
 in which case one can also use \cite[Corollary~II.7.12]{BASS_UNITARY} or \cite[Lemma~16.3]{SUSLIN_VASERSTEIN}.
 \end{rmk}
 
 Next we need some exact sequences relating $K_1 \mathrm{Sp}$ and $K_0 \mathrm{Sp}$. For the moment, the elementary exact sequences introduced in \cite[\S VII]{BASS_KTHEORY} suffice for our purposes. Recall that for any finitely generated projective $R$-module $Q$, the \emph{hyperbolic module} $H(Q)=(Q \oplus Q^{\vee},h_Q)$ lies in $\Alt(R)$, where $h_Q \bigl( (x,f),(y,g)\bigr)=f(y)-g(x)$. There exist isomorphisms $H(Q \oplus Q^{\prime}) \cong H(Q) \oplus H(Q^{\prime})$ and $(P,h) \oplus (P,-h) \cong H(P)$ for any symplectic $R$-module $(P,h) \in \Alt(R)$ (see \cite[Propositions~I.3.6.(c) and I.3.7]{BASS_UNITARY} for the latter). It follows that each $(P,h)$ is a direct summand of $H(R^n)$ for $n$ sufficiently large.
 
 \begin{prop}\label{prop:Bass_exact_sequences_K*Sp}
 let
 \[
 \xymatrix{A \ar[r] \ar[d] & C \ar[d] \\ B \ar[r] & D}
 \]
 be a Milnor square or an analytic patching diagram. Then there is an exact sequence
\[
\xymatrix@R=5pt@C=15pt{K_1 \mathrm{Sp}(A) \ar[r] & K_1 \mathrm{Sp}(B) \oplus K_1 \mathrm{Sp}(C) \ar[r] & K_1 \mathrm{Sp}(D) \ar[r] & \\ 
\ar[r] & K_0 \mathrm{Sp}(A) \ar[r] & K_0 \mathrm{Sp}(B) \oplus K_0 \mathrm{Sp}(C) \ar[r] & K_0 \mathrm{Sp}(D) }
\]
 of abelian groups.
 \end{prop}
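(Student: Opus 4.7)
The plan is to adapt the classical Milnor--Bass argument for low-dimensional $K$-theory to the symplectic setting, using symplectic modules in place of projective modules and hyperbolic modules $H(Q) \defl (Q \oplus Q^{\vee}, h_Q)$ in place of free modules. The key preliminary step is to establish a symmetric monoidal equivalence
\[
\Alt(A) \simeq \Alt(B) \pb{\Alt(D)} \Alt(C)
\]
where the right-hand side denotes the iso-comma category of triples $(P,Q,\alpha)$ with $\alpha$ an isomorphism of symplectic modules over $D$. Fullness and faithfulness are immediate from the fact that the underlying diagram is a pullback of $R$-modules. For essential surjectivity, I would follow the Milnor--Roy argument (see Proposition~\ref{prop:patching_projective_modules}): use the isomorphism $(P,h) \oplus (P,-h) \cong H(P)$ to reduce to patching data of the form $\bigl(H(B^n), H(C^n), \alpha\bigr)$, then double to realize $\bigl( \begin{smallmatrix} \alpha & 0 \\ 0 & \alpha^{-1} \end{smallmatrix} \bigr)$ as an element of $\mathrm{Ep}(D)$ via the symplectic Whitehead lemma, and finally factor this element as a product of matrices over $B$ and $C$ using Proposition~\ref{prop:symplectic_Vorst_lemma}.

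Next I would construct the connecting homomorphism $\partial \colon K_1 \mathrm{Sp}(D) \rightarrow K_0 \mathrm{Sp}(A)$ by sending $[\sigma]$ with $\sigma \in \mathrm{Sp}_{2r}(D)$ to $[P_{\sigma}]-[H(A^r)]$, where $P_{\sigma} \in \Alt(A)$ is the symplectic module corresponding under the patching equivalence to $\bigl(H(B^r), H(C^r), \sigma\bigr)$. Well-definedness requires showing that $\sigma \in \mathrm{Ep}_{2r}(D)$ implies $P_{\sigma} \cong H(A^r)$, which reduces to producing a factorization $\sigma = (\sigma_1)_D \cdot \varphi_D(\sigma_2)$ with $\sigma_1 \in \mathrm{Ep}_{2r}(B)$ and $\sigma_2 \in \mathrm{Ep}_{2r}(C)$. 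In the Milnor square case this is immediate since each generator $se_{i,j}(z)$ lifts along surjections, while in the analytic case this is exactly Part~(a) of Proposition~\ref{prop:symplectic_Vorst_lemma}.

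Exactness would then be checked position by position via the standard diagram chases. Exactness at $K_0 \mathrm{Sp}(B) \oplus K_0 \mathrm{Sp}(C)$ and $K_1 \mathrm{Sp}(B) \oplus K_1 \mathrm{Sp}(C)$ is immediate from the patching equivalence combined with pullback cancellation. Exactness at $K_1 \mathrm{Sp}(D)$ asserts that $\sigma$ lies in the image of $\mathrm{Sp}(B) \times \mathrm{Sp}(C)$ modulo $\mathrm{Ep}(D)$ precisely when $P_{\sigma}$ is hyperbolic, which again uses the factorization of elementary symplectic matrices. Exactness at $K_0 \mathrm{Sp}(A)$ follows Milnor's classical argument: a class $[P]-[H(A^r)]$ in the kernel is represented by a symplectic $A$-module whose base changes to $B$ and $C$ are stably hyperbolic, so after further stabilization $P$ is realized by a patching datum $\bigl(H(B^n), H(C^n), \sigma\bigr)$, yielding $[P]-[H(A^n)] = \partial[\sigma]$.

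The main obstacle is the careful handling of analytic patching diagrams in the generality used here (where $\varphi$ need not be injective and the elements of $S$ need not be nonzerodivisors): the patching equivalence and the Milnor--Bass exact sequence appear in \cite{BASS_KTHEORY} under more restrictive hypotheses, and the extension to the present setting rests on the symplectic Vorst lemma (Proposition~\ref{prop:symplectic_Vorst_lemma}) together with the observation, as in the proof of Proposition~\ref{prop:patching_projective_modules}, that the relevant $R$-module pullback property is preserved after tensoring with finitely generated projective modules.
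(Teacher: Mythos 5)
Your proposal is correct in substance, but it takes a different organizational route than the paper. The paper invokes Bass's general machinery \cite[Theorem~VII.4.3]{BASS_KTHEORY} for producing a Mayer--Vietoris exact sequence from a cofinal pair of functors between suitable symmetric monoidal categories, and then simply checks that the three premises of that theorem hold: cofinality (each object of $\Alt(R)$ is a direct summand of a hyperbolic $H(R^n)$), the cartesian condition (proved by an abstract birepresentability argument that reduces everything to the already-known fact that $\Proj$ patches, together with the observation that nonsingularity and alternation are detected), and $E$-surjectivity (the commutator result $[\mathrm{Sp}_{2n},\mathrm{Sp}_{2n}] \subseteq \mathrm{Ep}_{4n}$ plus Part~(a) of Proposition~\ref{prop:symplectic_Vorst_lemma}). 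What you do instead is to re-derive the exact sequence from scratch: you build the patching equivalence explicitly via a symplectic Milnor--Roy argument, you write down the connecting homomorphism $\partial$ by hand, and you chase the diagram to verify exactness. The essential ingredients (symplectic Vorst, the commutator-is-elementary result, the isomorphism $(P,h) \oplus (P,-h) \cong H(P)$, the stability of projective patching in the generalized analytic setting) are the same in both routes; what the paper's approach buys is brevity, since the exactness diagram chase is outsourced to Bass's theorem rather than repeated. One detail you should add to make your route airtight: after reducing a patching datum $(P,Q,\beta)$ to a direct summand of some $\bigl(H(B^n), H(C^n), \alpha \oplus \alpha^{-1}\bigr)$ and showing the latter lies in the essential image, you conclude the former does too, which tacitly uses that the image of $\Alt(A)$ is closed under orthogonal direct summands; this is fine because the complementary idempotent arising in the Milnor argument is itself an isometry onto an orthogonal factor, but it deserves a sentence. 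You should also be careful that the doubled automorphism is $\alpha \oplus \alpha^{-1}$ acting on the orthogonal direct sum $H(D^n) \perp H(D^n)$ and that the swap map used to exhibit this as a commutator is symplectic for that form (which it is, since it swaps isomorphic symplectic summands).
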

 
 \begin{proof}
 The case of Milnor squares is covered in \cite[Corollary~II.2.3]{BASS_UNITARY}. The proof of the two cases is very similar, so we include a proof of the case of Milnor squares as well.
 
 We prove the claim by showing that the premises of \cite[Theorem~VII.4.3]{BASS_KTHEORY} are satisfied. Thus we need to check that the functors $F_1$ and $F_2$ in the diagram
 \[
 \xymatrix{\Alt(A) \ar[r] \ar[d] & \Alt(C) \ar[d]^{F_2} \\ \Alt(B) \ar[r]_-{F_1} & \Alt(D) }
 \]
 form a cofinal pair, that the square is cartesian, and that it is $E$-surjective (see \cite[Definition~VII.3.3]{BASS_KTHEORY}). The cofinality condition is easiest to check: it follows directly from the fact that each $(P,h) \in \Alt(R)$ is a direct summand of some hyperbolic module $H(R^n)$. That $F_1$ and $F_2$ form a cofinal pair in the sense of \cite[Definition~VII.3.2]{BASS_KTHEORY} follows from the fact that $F_1\bigl(H(B^n)\bigr) \cong H(D^n)$ and similarly $F_2\bigl(H(C^n)\bigr) \cong H(D^n)$.
 
 That the square of categories is cartesian can be proved in the same way for Milnor squares and for analytic patching diagrams. The case of Milnor squares is treated in detail in \cite[Theorem~III.2.2]{BASS_UNITARY}. One can either follow the same argument in the case of analytic patching diagrams, or use the following more abstract reasoning. Namely, let
 \[
 \ca{F} \colon \SymMonCat \rightarrow \Cat
 \]
 be the 2-functor which sends a symmetric monoidal category $(\ca{M},\otimes,I)$ to the category whose objects are pairs $(M,h \colon M \otimes M \rightarrow I)$ (where $I$ denotes the unit object) and morphisms $(M,h) \rightarrow (M^{\prime},h^{\prime})$ given by morphisms $\varphi \colon M \rightarrow M^{\prime}$ such that $h^{\prime} \circ \varphi \otimes \varphi =h$ holds.
 
 This functor is birepresentable, hence it preserves cartesian squares. Indeed, the birepresentation in question is given by the coinserter
 \[
 \xymatrix@R=10pt{ & T_1 \ar[rd]^{F} \\ T_1 \rrtwocell\omit{h} \ar[ru]^{M \otimes M} \ar[rd]_{I} && \ca{M}\\
 & T_1 \ar[ru]_{F} }
 \]
 where $T_1$ denotes the free symmetric monoidal category on a single object $M$.
 
 Since the functor $\Proj(A) \rightarrow \Proj(B) \times \Proj(C)$ detects isomorphisms and the zero morphism, it follows that a pair $(P, h \colon P \otimes P \rightarrow A)$ is nonsingular and alternating if and only if its image in both $\Proj(B)$ and $\Proj(C)$ has these properties (for the second property, we consider the morphism $h \circ x \otimes x$ for $x \colon A \rightarrow P$ arbitrary). These considerations imply that the square
 \[
 \xymatrix{\Alt(A) \ar[r] \ar[d] & \Alt(C) \ar[d] \\ \Alt(B) \ar[r] & \Alt(D) } 
 \]
 is cartesian, as claimed.
 
 It only remains to check $E$-surjectivity. Since each $(P,h) \in \Alt(A)$ is a direct summand of some $H(A^n)$, it suffices to consider $\varepsilon$ in the commutator subgroup $[\mathrm{Sp}_{2n}(D),\mathrm{Sp}_{2n}(D)]$. Since we are allowed to add direct summands, we can apply the fact that $[\mathrm{Sp}_{2n},\mathrm{Sp}_{2n}] \subseteq \mathrm{Ep}_{4n}$ (see \cite[Theorem~II.5.2]{BASS_UNITARY} or \cite[Theorem~1.4]{VASERSTEIN}), so we can assume that $\varepsilon$ is an elementary symplectic matrix. In this case, the conclusion for $E$-surjectivity follows from Vorst's lemma in the case of an analytic patching diagram (see Part~(a) of Proposition~\ref{prop:symplectic_Vorst_lemma}), respectively directly from the surjectivity of one of the ring homomorphisms in the case of a Milnor square.
 
 It remains to show that the groups $K_1$ in \cite[Theorem~VII.4.3]{BASS_KTHEORY} coincide with the groups $K_1 \mathrm{Sp}$. From \cite[Corollary~VII.2.3]{BASS_KTHEORY}, applied to the constant functor $F$ and to the objects $A_i=H(R)$, it follows that 
 \[
 K_1\bigl(\Alt(R)\bigr)=\mathrm{Sp}(R) \slash [\mathrm{Sp}(R),\mathrm{Sp}(R)] \smash{\rlap{.}}
 \]
 The isomorphism $K_1\bigl(\Alt(R)\bigr) \cong K_1 \mathrm{Sp}(R)$ thus follows from $[\mathrm{Sp}(R),\mathrm{Sp}(R)]=\mathrm{Ep}(R)$ (see \cite[Theorem~II.5.2]{BASS_UNITARY} and \cite[Theorem~1.4]{VASERSTEIN}).
 \end{proof}
 
 \begin{rmk}\label{rmk:compatible_and_pfaffian}
 Since the exact sequence of \cite[Theorem~VII.4.3]{BASS_KTHEORY} is natural for functors between two cartesian squares, the forgetful functors $\Alt(R) \rightarrow \Proj(R)$ induce a natural transformation from the symplectic $K$-theory sequence to the ordinary $K$-theory sequence.
 
From the Pfaffian identity
\[
\mathrm{pf}(B^tAB)=\mathrm{det}(B)\mathrm{pf}(A)
\]
valid for any alternating $2r \times 2r$-matrix $A$ and arbitrary $2r \times 2r$-matrix $B$ it follows that symplectic matrices have determinant $1$. Thus the homomorphism
\[
K_1 \mathrm{Sp}(R) \rightarrow K_1(R)
\]
induced by the forgetful functor $\Alt(R) \rightarrow \Proj(R)$ factors through the subgroup $SK_1(R)$.
\end{rmk}

 With these ingredients in place, we can return to the study of the ring $C=k[a,x]_S[y,t] \slash \bigl(t^2-t(xy-a^{\ell})\bigr)$, where $S=1+xk[a,x]$. Recall that we are interested in the vanishing of the group
 \[
 W(C)=\mathrm{ker} \bigl(K_0 \mathrm{Sp}(C) \rightarrow K_0(C)\bigr) \smash{\rlap{.}}
 \]
 The exact sequences of Proposition~\ref{prop:Bass_exact_sequences_K*Sp} make it possible to show that $W(C)$ is isomorphic to the kernel of the forgetful homomorphism
 \[
 K_1 \mathrm{Sp} \bigl(k[a,x,y] \slash (xy-a^{\ell})\bigr) \rightarrow K_1 \bigl(k[a,x,y] \slash (xy-a^{\ell})\bigr) \smash{\rlap{.}}
 \]
 In order to simplify the notation, we let $D \defl k[a,x]_S$. The isomorphism is obtained in two steps, both of which rely on the following lemma.
 
\begin{lemma}\label{lemma:isomorphic_kernels_diagram_chase}
Suppose that in the commutative diagram
\[
\xymatrix@R=15pt@C=15pt{ & 0 \ar[d] & 0 \ar[d] \\ & K_1 \ar[d] \ar[r]^{\alpha} & K_2 \ar[d] \\ 0 \ar[r] & M \ar[d] \ar[r] & N \ar[d] \ar[r] & L \ar[d]^{m_1} \\& M^{\prime} \ar[r]^{m_2} & N^{\prime} \ar[r] & L^{\prime} }
\]
of abelian groups, the row and the two columns whose first entry is zero are exact. If $m_1$ and $m_2$ are both monic, then $\alpha$ is an isomorphism.
\end{lemma}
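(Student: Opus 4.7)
The plan is a direct diagram chase. First I would observe that $\alpha$ is well-defined: for $k_1 \in K_1 \subseteq M$, its image in $N$ lands in $\ker(N \to N')=K_2$ by commutativity of the square $M \to N \to N'$ with $M \to M' \to N'$ (using that $k_1 \mapsto 0$ in $M'$).

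For injectivity of $\alpha$, the composite $K_1 \hookrightarrow M \to N$ is injective because $M \to N$ is injective by exactness of the row at $M$, and this composite factors through $\alpha$. Hence $\alpha$ is injective.

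For surjectivity, I would fix $k_2 \in K_2 \subseteq N$ and show first that its image in $L$ is zero. Indeed, pushing $k_2$ to $L'$ along $N \to L \to L'$ agrees by commutativity with the path $N \to N' \to L'$, which vanishes because $k_2 \in \ker(N \to N')$. Since $m_1 \colon L \to L'$ is injective, the image of $k_2$ in $L$ is zero. Exactness of the row at $N$ then produces $m \in M$ mapping to $k_2$. Finally, to see that $m \in K_1$, I would push $m$ into $N'$ along both paths: via $M \to M' \to N'$ (using injectivity of $m_2$) and via $M \to N \to N'$ (which equals the image of $k_2$ in $N'$, namely $0$); since $m_2$ is injective, this forces $m \mapsto 0$ in $M'$, so $m \in K_1$ and $\alpha(m)=k_2$.

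There is no real obstacle here; the argument is purely formal and uses each of the four hypotheses (exactness of the row, exactness of the two kernel columns, and injectivity of $m_1$ and $m_2$) exactly once.
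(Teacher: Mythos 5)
Your argument is correct and is exactly the diagram chase the paper invokes (the paper only sketches it: injectivity from exactness, surjectivity "by a straightforward diagram chase"). The one small point you leave implicit at the very end is that $\alpha(m)=k_2$ requires injectivity of $K_2 \hookrightarrow N$ (exactness of the right column at $K_2$) to conclude that the two elements of $K_2$ with common image $n$ coincide; this is harmless but worth stating.
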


\begin{proof}
 Clearly $\alpha$ is injective from the exactness assumption, so it only remains to check surjectivity. This follows from a straightforward diagram chase.
\end{proof}

By definition of $C$, the diagram
\[
\xymatrix{C \ar[r] \ar[d] & D[y] \ar[d] \\ D[y] \ar[r] & D[y] \slash (xy-a^{\ell})}
\]
is a Milnor square (where the two arrows with domain $C$ are obtained by quotienting modulo $t$ respectively modulo $t-(xy-a^{\ell})$).

\begin{lemma}\label{lemma:W(C)_isomorphic_to_N1}
 The kernel
 \[
 W(C)=\mathrm{ker} \bigl( K_0 \mathrm{Sp}(C) \rightarrow K_0(C)\bigr)
 \]
 and the kernel
 \[
 N_1 \defl \mathrm{ker} \Bigl( K_1 \mathrm{Sp}\bigl(D[y] \slash (xy-a^{\ell})\bigr) \rightarrow K_1\bigl(D[y] \slash (xy-a^{\ell})\bigr) \Bigr)
 \]
 are isomorphic abelian groups.
\end{lemma}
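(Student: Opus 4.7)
The strategy is to show directly that the symplectic Bass--Milnor boundary $\partial_{sp} \colon K_1\mathrm{Sp}(E) \to K_0\mathrm{Sp}(C)$ of Proposition~\ref{prop:Bass_exact_sequences_K*Sp} (applied to the Milnor square with vertices $C$, $D[y]$, $D[y]$, $E$) restricts to an isomorphism from $N_1$ onto $W(C)$. Both horizontal maps $D[y] \to E$ of that square coincide with the quotient modulo $\alpha = xy-a^\ell$, so the ordinary Bass boundary $\beta \colon K_i(D[y])^2 \to K_i(E)$ simplifies to $(u,v) \mapsto \phi(u)-\phi(v)$ with $\phi$ the quotient, whence $\mathrm{im}(\beta) = \mathrm{im}(\phi)$.

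The crucial technical inputs all concern $D[y] = k[a,x]_S[y]$: (i) $W(D[y]) = 0$; (ii) $K_1\mathrm{Sp}(D[y]) = 0$; (iii) $SK_1(D[y]) = 0$. Since $(x)$ lies in the Jacobson radical of $D = k[a,x]_S$ with quotient $D/x \cong k[a]$, for (ii) and (iii) I will combine the $j$-injectivity of $K_1\mathrm{Sp}$ and $SK_1$ (Proposition~\ref{prop:Chevalley_Demazure_is_j_injective} and Lemma~\ref{lemma:SL_r_j_invariant}) with the split retraction $k[a] \hookrightarrow D \twoheadrightarrow k[a]$ to obtain isomorphisms with $K_1\mathrm{Sp}(k[a,y])$ and $SK_1(k[a,y])$ respectively; the former vanishes by Dickson's theorem together with Horrocks (Proposition~\ref{prop:Horrocks_for_K1Sp}, cf.\ Corollary~\ref{cor:K1Sp_trivial_for_Laurent_rings}), the latter by Gaussian elimination over a field. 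For (i), Quillen--Suslin gives $K_0(D[y]) \cong \mathbb{Z}$, and the hyperbolicity of symplectic modules over a polynomial ring over a field (applied to $D[y]/(x) \cong k[a,y]$), combined with a Nakayama-style lifting of the hyperbolic basis across the Jacobson radical, identifies $K_0\mathrm{Sp}(D[y])$ as $\mathbb{Z}$ generated by $[H(D[y])]$, so the forgetful map is multiplication by $2$ and hence injective.

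Granting (i)--(iii), the argument proceeds as follows. By (ii) the kernel $\mathrm{im}(\beta_{sp})$ of $\partial_{sp}$ vanishes, so $\partial_{sp}$ is injective. For $x \in N_1$, the vanishing $\pi(x) = 0$ (with $\pi$, $\rho$ the forgetful maps of Remark~\ref{rmk:compatible_and_pfaffian}) implies $\rho(\partial_{sp}(x)) = \partial(\pi(x)) = 0$, so $\partial_{sp}$ restricts to an injection $N_1 \hookrightarrow W(C)$. For surjectivity, given $w \in W(C)$, the image of $w$ in $K_0\mathrm{Sp}(D[y])^2$ lies in $W(D[y])^2 = 0$ by (i), so by the Bass exact sequence $w = \partial_{sp}(\tilde{x})$ for some $\tilde{x} \in K_1\mathrm{Sp}(E)$; commutativity of the forgetful transformation forces $\partial(\pi(\tilde{x})) = \rho(w) = 0$, hence $\pi(\tilde{x}) \in \ker(\partial) = \mathrm{im}(\beta)$.

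It remains to upgrade $\pi(\tilde{x}) \in \mathrm{im}(\beta)$ to $\pi(\tilde{x}) = 0$. By Remark~\ref{rmk:compatible_and_pfaffian}, $\mathrm{im}(\pi) \subseteq SK_1(E)$; by (iii), $K_1(D[y]) = D[y]^\times$, so $\mathrm{im}(\beta) = \mathrm{im}(\phi) \subseteq E^\times$. The determinant splitting $K_1(E) = SK_1(E) \oplus E^\times$ then forces $\mathrm{im}(\pi) \cap \mathrm{im}(\beta) = 0$, so $\pi(\tilde{x}) = 0$ and $\tilde{x} \in N_1$, completing the isomorphism $N_1 \cong W(C)$. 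The principal obstacle is likely to be the verification of (i): the lifting of hyperbolic structure across the Jacobson radical ideal $(x)$ requires care in characteristic $2$, where the usual $\frac{1}{2}$-splitting of Grothendieck--Witt theory is unavailable.
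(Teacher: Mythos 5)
You follow the same overall strategy as the paper: apply the Bass exact sequences for $K_*\mathrm{Sp}$ and $K_*$ (Proposition~\ref{prop:Bass_exact_sequences_K*Sp}) to the Milnor square defining $C$, then do a five-lemma-style chase using the three vanishing facts about $D[y]$. Your explicit chase is equivalent to the paper's appeal to Lemma~\ref{lemma:isomorphic_kernels_diagram_chase}, and your resolution of the delicate point---that $\mathrm{im}(\pi)\cap\mathrm{im}(\beta)=0$ in $K_1(E)$ because $\mathrm{im}(\beta)\subseteq E^\times$ (via (iii)) while $\mathrm{im}(\pi)\subseteq SK_1(E)$ (Pfaffian)---is exactly the paper's argument for the injectivity of $m_2$. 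The genuine divergence is (i). The paper reduces $W(D[y])=0$ via Bass--Vaserstein (\cite[Th{\'e}or{\`e}me~4]{BASS_LIBERATION}) to $W_r(D[y])\cong\ast$ for $r\geq 3$, which the already-developed $W_r$-machinery (weak analytic excision, partial $j$-injectivity, Roitman) supplies uniformly alongside (ii) and (iii). You instead try to compute $K_0\mathrm{Sp}(D[y])\cong\mathbb{Z}$ outright via a ``Nakayama-style lifting of the hyperbolic basis across the Jacobson radical,'' and you correctly flag this as the unresolved step: beyond the characteristic-$2$ subtlety you mention, it requires a Suslin-type theorem that symplectic bundles over $k[a,y]$ are hyperbolic before the lifting even starts, and the lifting itself (for a ring such as $D$, which is semi-local-like but not semi-local) is not a standard Nakayama statement. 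The paper's detour through unimodular rows is cleaner precisely because it sidesteps this. There is also a small imprecision in (ii) and (iii): $(x)$ lies in the Jacobson radical of $D$ but not of $D[y]$ (whose Jacobson radical vanishes since $D$ is a domain), so $j$-injectivity cannot be applied to $D[y]$ directly; one must first pass from $D[y]$ to $D$ via polynomial regularity of $K_1\mathrm{Sp}$ and $SK_1$ (Roitman's principle plus regularity of $D$), which the paper does explicitly and your sketch elides.
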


\begin{proof}
 The Milnor square above induces by Proposition~\ref{prop:Bass_exact_sequences_K*Sp} an exact sequence of symplectic $K$-groups, which is compatible with the corresponding sequence of ordinary $K$-groups by Remark~\ref{rmk:compatible_and_pfaffian}. Thus we have a commutative diagram
 \[
 \xymatrix{& N_1 \ar[r] \ar[d] & W(C) \ar[r] \ar[d] & W(D[y])^{2} \ar[d] \\
 K_1 \mathrm{Sp}(D[y])^{2} \ar[r] \ar[d] & K_1 \mathrm{Sp}\bigl(D[y]\slash (xy-a^{\ell})\bigr) \ar[r] \ar[d] & K_0\mathrm{Sp}(C) \ar[r] \ar[d] & K_0 \mathrm{Sp}(D[y])^{2} \ar[d] \\ 
 K_1(D[y])^{2} \ar[r] & K_1 \bigl(D[y] \slash (xy-a^{\ell})\bigr) \ar[r] & K_0(C) \ar[r] & K_0(D[y])^{2} }
 \]
 whose second and third rows are exact. In order to apply the above lemma, it suffices to check three things: that $W(D[y]) \cong 0$, that $K_1 \mathrm{Sp}(D[y]) \cong 0$, and that $m_2 \colon SK_1\bigl(D[y]\slash (xy-a^{\ell})\bigr) \rightarrow K_0(C)$ is injective (recall from Remark~\ref{rmk:compatible_and_pfaffian} that $K_1 \mathrm{Sp}(R) \rightarrow K_1(R)$ factors through $SK_1(R)$).
 
 Since $K_1$ splits as a direct sum $K_1(R) \cong SK_1(R) \oplus R^{\times}$, the injectivity of $m_2$ follows from the fact that $SK_1(D[y]) \cong SK_1(D)$ is trivial (which is a consequence of the $j$-invariance of $SK_1$, see Lemma~\ref{lemma:SL_r_j_invariant}).
 
 To see that $K_1 \mathrm{Sp}(D[y]) \cong 0$, note that $K_1 \mathrm{Sp}$ satisfies the Roitman principle $(\mathrm{R})$ (by Theorem~\ref{thm:weak_excision_implies_QR}) since it satisfies weak analytic excision (see Proposition~\ref{prop:symplectic_Vorst_lemma}) and it is group valued and finitary. Since $k$ is $K_1 \mathrm{Sp}$-regular, so is $k[a,x]$ and therefore $D=k[a,x]_S$. This shows that $K_1 \mathrm{Sp}(D[y]) \cong K_1 \mathrm{Sp}(D)$, and from the $j$-injectivity of $K_1 \mathrm{Sp}$ (see Remark~\ref{rmk:K1Sp_j_injective}) it follows that $K_1 \mathrm{Sp}(D) \rightarrow K_1 \mathrm{Sp}(k[a])$ is injective. Since $K_1 \mathrm{Sp}(k[a]) \cong 0$, this implies that $K_1 \mathrm{Sp}(D[y]) \cong 0$, as claimed.
 
 To see that $W(D[y]) \cong 0$, it suffices by \cite[Th{\'e}or{\`e}me~4]{BASS_LIBERATION} to check that $W_r(D[y])=\mathrm{Um}_r(D[y]) \slash \mathrm{E}_r(D[y])$ is trivial for all $r \geq 4$ and that furthermore $\mathrm{Um}_3(D[y]) \slash \mathrm{SL}_3(D[y]) \cong \ast$. We will in fact show the slightly stronger claim that $W_r(D[y]) \cong \ast$ for all $r \geq 3$. The argument follows the same reasoning as above, with $W_r$ in place of $K_1 \mathrm{Sp}$. Namely, since $k$ is $W_r$-regular, so is $D$. The fact that $W_r$ is partially $j$-injective (see Example~\ref{example:partially_j_injective}) implies that $W_r(D) \rightarrow W_r(k[a])$ has trivial kernel. The claim thus follows from the isomorphism $W_r(k[a]) \cong \ast$.
\end{proof}

\begin{lemma}\label{lemma:N1_isomorphic_to_N2}
The kernels
\[
 N_1 \defl \mathrm{ker} \Bigl( K_1 \mathrm{Sp}\bigl(D[y] \slash (xy-a^{\ell})\bigr) \rightarrow K_1\bigl(D[y] \slash (xy-a^{\ell})\bigr) \Bigr)
\]
and
\[
 N_2 \defl \mathrm{ker} \Bigl( K_1 \mathrm{Sp}\bigl(k[a,x,y] \slash (xy-a^{\ell})\bigr) \rightarrow K_1\bigl(k[a,x,y] \slash (xy-a^{\ell})\bigr) \Bigr)
\]
are isomorphic.
\end{lemma}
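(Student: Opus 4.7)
The approach is to compare the Mayer--Vietoris exact sequences for $K_1 \mathrm{Sp}$ and $K_1$ attached to the analytic patching diagram
\[
\xymatrix{k[a,x] \ar[r] \ar[d] & D \ar[d] \\ B \ar[r] & B_S = D[y]\slash (xy-a^{\ell})}
\]
that arises from the analytic isomorphism $k[a,x] \to B = k[a,x,y]\slash (xy-a^{\ell})$ along $S = 1+x k[a,x]$ (see Lemma~\ref{lemma:analytic_iso_arising_from_overring}). Proposition~\ref{prop:Bass_exact_sequences_K*Sp} supplies the relevant symplectic sequence, Proposition~\ref{prop:K_theory_weak_analytic_excision} supplies the ordinary one, and both are natural with respect to the forgetful map; the homomorphism $N_2 \to N_1$ is the one induced by the localization $B \to B_S$.

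First I would record three vanishing results. The identity $K_1 \mathrm{Sp}(k[a,x]) = 0$ is immediate from Corollary~\ref{cor:K1Sp_trivial_for_Laurent_rings} (with $m=0$, $n=2$). For $K_1 \mathrm{Sp}(D) = 0$ I would observe that every maximal ideal of $D$ contracts to a maximal ideal of $k[a,x]$ disjoint from $S$ and hence containing $x$, so $x$ lies in the Jacobson radical of $D$; $j$-injectivity (Remark~\ref{rmk:K1Sp_j_injective}) then gives an injection $K_1 \mathrm{Sp}(D) \hookrightarrow K_1 \mathrm{Sp}(D\slash x) \cong K_1 \mathrm{Sp}(k[a]) = 0$. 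Finally, to verify that $W(k[a,x]) = \ker\bigl(K_0 \mathrm{Sp}(k[a,x]) \to K_0(k[a,x])\bigr)$ is trivial, Proposition~\ref{prop:dimension_implies_transitive_elementary_action} ensures that $\mathrm{E}_r$ acts transitively on $\mathrm{Um}_r(k[a,x])$ for $r \geq 4$, so Vaserstein's theorem (as used in Proposition~\ref{prop:link_between_stable_and_unstable}) identifies $W(k[a,x])$ with $\mathrm{Um}_3(k[a,x])\slash \mathrm{SL}_3(k[a,x])$; by Quillen--Suslin every unimodular row of length $3$ over $k[a,x]$ extends to a matrix in $\mathrm{GL}_3$, and its determinant can be absorbed into the last column so that the completion lies in $\mathrm{SL}_3$.

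With these in hand, injectivity of $N_2 \to N_1$ is immediate, as the symplectic Mayer--Vietoris sequence together with the two vanishings forces $K_1 \mathrm{Sp}(B) \hookrightarrow K_1 \mathrm{Sp}(B_S)$. For surjectivity, given $\beta \in N_1$, commutativity of
\[
\xymatrix{K_1 \mathrm{Sp}(B_S) \ar[r]^-{\partial_{\mathrm{Sp}}} \ar[d] & K_0 \mathrm{Sp}(k[a,x]) \ar[d] \\ K_1(B_S) \ar[r]_-{\partial_K} & K_0(k[a,x])}
\]
shows that $\partial_{\mathrm{Sp}}(\beta)$ lands in $W(k[a,x]) = 0$; by exactness $\beta$ lifts uniquely to some $\alpha \in K_1 \mathrm{Sp}(B)$. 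To confirm $\alpha \in N_2$, I would chase in the ordinary Mayer--Vietoris sequence: the image $a$ of $\alpha$ in $K_1(B)$ maps to zero in $K_1(B_S)$, so $(0,a) \in K_1(D) \oplus K_1(B)$ lies in the image of some $u \in K_1(k[a,x])$; since $SK_1(k[a,x])$ and $SK_1(D)$ both vanish (Quillen--Suslin and $j$-invariance), the map $K_1(k[a,x]) \to K_1(D)$ reduces to the inclusion $k^{\times} \hookrightarrow D^{\times}$, which is injective, forcing $u = 0$ and thus $a = 0$.

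The principal technical obstacle I anticipate is the vanishing $W(k[a,x]) = 0$, which requires assembling Vaserstein's stable theorem, the transitivity supplied by Proposition~\ref{prop:dimension_implies_transitive_elementary_action}, and Quillen--Suslin. Once that is in place, the rest of the argument is a routine two-pass diagram chase across the two Mayer--Vietoris sequences using the compatibility with the forgetful map.
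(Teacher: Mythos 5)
Your proposal is correct and follows essentially the same route as the paper: compare the Bass-type Mayer--Vietoris sequences for $K_1 \mathrm{Sp}$ and $K_1$ coming from the analytic patching diagram along $S=1+xk[a,x]$, reduce to the vanishings $K_1\mathrm{Sp}(k[a,x])=0$, $K_1\mathrm{Sp}(D)=0$, $SK_1(k[a,x])=0$, and $W(k[a,x])=0$, and then chase the diagram (the paper packages this chase as Lemma~\ref{lemma:isomorphic_kernels_diagram_chase}). The only variation is your derivation of $W(k[a,x])=0$ via Proposition~\ref{prop:dimension_implies_transitive_elementary_action} and Quillen--Suslin for the rank-$3$ case, whereas the paper invokes the strong $W_r$-extension property of fields; both routes pass through the same Bass--Vaserstein theorem and are equally valid.
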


\begin{proof}
Recall that the square
\[
\xymatrix{k[a,x] \ar[r] \ar[d] & D \ar[d] \\ k[a,x,y] \slash (xy-a^{\ell}) \ar[r] & D[y] \slash (xy-a^{\ell})}
\]
is an analytic patching diagram (since $a^{\ell} \in k[a]$ is a nonzerodivisor, see Lemma~\ref{lemma:analytic_iso_arising_from_overring}). Thus Proposition~\ref{prop:Bass_exact_sequences_K*Sp} shows that there exists an exact sequence of symplectic $K$-groups associated to this square, which is compatible with the long exact sequence of ordinary $K$-groups by Remark~\ref{rmk:compatible_and_pfaffian}. In order to simplify the notation, we again write $A=k[a]$ and $B=k[a,x,y] \slash(xy-a^{\ell})$.

 We thus obtain the commutative diagram
 \[
 \xymatrix@C=15pt{& N \ar[r] \ar[d] & N_1 \ar[r] \ar[d] & W(A[x]) \ar[d] \\ 
 K_1 \mathrm{Sp}(A[x]) \ar[r] \ar[d] & K_1 \mathrm{Sp}(D) \oplus K_1 \mathrm{Sp}(B) \ar[r] \ar[d] & K_1 \mathrm{Sp}\bigl(D[y] \slash (xy-a^{\ell}) \bigr) \ar[r] \ar[d] & K_0 \mathrm{Sp}(A[x]) \ar[d] \\ 
  K_1 (A[x]) \ar[r] & K_1(D) \oplus K_1(B) \ar[r] & K_1 \bigl(D[y] \slash (xy-a^{\ell}) \bigr) \ar[r] & K_0 (A[x]) }
 \]
 whose second and third rows are exact and where $N$ denotes the kernel of the vertical homomorphism below.
 
 We first note that $N=N_2$ since $K_1 \mathrm{Sp}(D)$ is trivial (by $j$-injectivity). As in the proof of Lemma~\ref{lemma:W(C)_isomorphic_to_N1} above, it suffices to check that $K_1 \mathrm{Sp}(A[x]) \cong 0$, that $W(A[x]) \cong 0$, and that
 \[
 m_2 \colon SK_1(D) \oplus SK_1(B) \rightarrow SK_1 \bigl( D[y] \slash (xy-a^{\ell})\bigr)
 \]
 is injective (for then we can apply Lemma~\ref{lemma:isomorphic_kernels_diagram_chase}, which gives the desired isomorphism between $N_2$ and $N_1$).
 
 The last of these three claims follows from the fact that $SK_1(A[x]) \cong 0$ and the exactness of the third row. Secondly, we know from Corollary~\ref{cor:K1Sp_trivial_for_Laurent_rings} that $K_1 \mathrm{Sp}(A[x]) \cong 0$. 
 
 Finally, to see that $W(A[x]) \cong 0$, we again use \cite[Th{\'e}or{\`e}me~4]{BASS_LIBERATION}. This result implies that it suffices to check that $W_r(A[x]) \cong \ast$ for all $r \geq 3$. This follows from the fact that fields have the strong $W_r$-extension property, see Example~\ref{example:W_r_strong_extension_property}.
\end{proof}

 With these lemmas in hand, we can finally prove the following theorem stated in the introduction.
 
 \begin{cit}[Theorem~\ref{thm:intermediate_stable_question}]
 If the Hermite ring conjecture holds, then the kernel of
 \[
 K_1 \mathrm{Sp}\bigl(k[a,x,y] \slash (xy-a^{\ell}) \bigr) \rightarrow  K_1 \bigl(k[a,x,y] \slash (xy-a^{\ell}) \bigr)
 \]
 vanishes for all fields $k$ and all natural numbers $\ell \geq 2$.
 
 On the other hand, if this kernel vanishes for $k$ and $k \langle t \rangle$ as ground field, then all stably free modules over the ring
 \[
 R= k[a,x_1, y_1, x_2, y_2] \slash (x_1 y_1 + x_2 y_2 - a^{\ell})
 \]
 are free for all $\ell \geq 2$.
 \end{cit}
 
 \begin{proof}[Proof of Theorem~\ref{thm:intermediate_stable_question}]
 By Lemmas~\ref{lemma:W(C)_isomorphic_to_N1} and \ref{lemma:N1_isomorphic_to_N2}, the kernel in question is isomorphic to the kernel $W(C)=\mathrm{ker} \bigl( K_0 \mathrm{Sp}(C) \rightarrow K_0(C) \bigr)$, where 
 \[
 C=k[a,x]_S[y,t]\slash \bigl(t^2-t(xy-a^{\ell})\bigr)
 \]
 and $S=1+xk[a,x]$. By Proposition~\ref{prop:link_between_stable_and_unstable}, this kernel is trivial if and only if the pointed set
 \[
 \widetilde{SK}_{1,2}^{1-\mathrm{st}}\bigl(k[a,x,y] \slash (xy-a^{\ell}) \bigr)
 \]
 is trivial. Proposition~\ref{prop:universal_unipotent_unimodular_row} at the beginning of this section shows that all unimodular rows of length $3$ over $R$ are completable if the kernel vanishes for both $k$ and $k \langle t \rangle$. From Corollary~\ref{cor:unimodular_rows_length_geq4_completable} it follows that all stably free $R$-modules are free in this case.
 
 To see the first claim, we need to show (by the above equivalent characterisation of the kernel's vanishing) that all $\sigma \in \mathrm{SL}_2^{1-\mathrm{st}}\bigl(k[a,x,y] \slash (xy-a^{\ell})\bigr)$ are pseudoelementary if the Hermite ring conjecture holds.
 
 To see this, let $C^{\prime}=k[a,x,y,t] \slash \bigl(t^2-t(xy-a^{\ell})\bigr)$, so that
 \[
\xymatrix{C^{\prime} \ar[r] \ar[d] & k[a,x,y] \ar[d] \\ k[a,x,y] \ar[r] & k[a,x,y] \slash (xy-a^{\ell})} 
 \]
 is a Milnor square. Each $\sigma \in \mathrm{SL}_2^{1-\mathrm{st}}\bigl(k[a,x,y] \slash (xy-a^{\ell})\bigr)$ gives a patching datum for a $1$-stably free $C^{\prime}$-module of rank $2$, so it suffices to check that all stably free $C^{\prime}$-modules are free (by the free patching lemma applied to $\sigma$). We can consider $C^{\prime}$ as a graded ring by setting $\mathrm{deg}(t)=\ell$, $\mathrm{deg}(a)=1$, and $\mathrm{deg}(x),\mathrm{deg}(y) \geq 1$ so that they sum to $\ell$. The Hermite ring conjecture implies that all stably free modules over $C^{\prime}$ are extended from the degree $0$ part $k$ (by the Swan--Weibel homotopy trick, see \cite[Proposition~V.3.9]{LAM}), hence free.
 \end{proof}

 To proceed further, we need an exact sequence for the higher symplectic $K$-groups $K_{\ast} \mathrm{Sp}$ associated to an analytic patching diagram. In the case of interest, we will be working with rings of characteristic $2$, which in particular means that $2$ is \emph{not} invertible in our rings. A systematic treatment of this case can be found in \cite{CALMES_ET_AL_I, CALMES_ET_AL_II}, so all we need to do is to relate the higher Grothendieck--Witt groups of \cite{CALMES_ET_AL_II} to the classical symplectic $K$-groups. The necessary results for this can be found in \cite{HEBESTREIT_STEIMLE}.
 
 Since both these sources treat a much more general class of objects, we will show how the particular case we are interested in follows from the general theory. Given a form parameter
 \[
 \lambda=(\xymatrix{M_{C_2} \ar[r]^-{\tau} & Q \ar[r]^-{\rho} & M^{C_2}}) 
 \]
 on $(R,M)$ in the sense of \cite[Definition~4.2.26]{CALMES_ET_AL_I}, we have an associated groupoid $\mathrm{Unimod}^{\lambda}(R;M)$ of finitely generated projective $R$-modules equipped with a unimodular (=non-singular) $\lambda$-hermitian form. We are only interested in the case $M=-R$, that is, $R$ equipped with the involution $x \mapsto -x$, and the form parameter
 \[
 e=\bigl(\xymatrix@C=15pt{(-R)_{C_2} \ar[r] & 0 \ar[r] & (-R)^{C_2}} \bigr) \smash{\rlap{.}}
 \]
 Here the $e$-hermitian forms are precisely the alternating forms (see the discussion of \cite[Definition~4.2.26]{CALMES_ET_AL_I}). The extension
 \[
 q^{ge}_{-R} \colon \ca{D}^p(R) \rightarrow \mathbf{Sp}
 \]
 of the functor sending a projective module $P$ to the abelian group of alternating forms on $P$ coincides with the functor $q^{\geq 1}_{-R}$ discussed in \S \ref{section:analytic} (Indeed, by \cite[Proposition~4.2.18]{CALMES_ET_AL_I}, the extension is unique, so the claim follows from \cite[Proposition~4.2.22]{CALMES_ET_AL_I}).
 
 If we write $\mathrm{Iso}(\ca{C})$ for the groupoid core of a category $\ca{C}$, then we have
 \[
 \mathrm{Iso}\bigl(\Alt(R) \bigr)=\mathrm{Unimod}^e(R;-R)
 \]
 using our previous notation $\Alt(R)$ for the category of finitely generated projective modules equipped with a non-singular alternating form. Given a subgroup $c \subseteq K_0(R)$, we write $\mathrm{Unimod}^e_c(R;-R)$ respectively $\Alt^{c}(R)$ for the full subcategories consisting of objects $(P,h)$ with $[P] \in c$.
 
 Recall that $\mathcal{GW}(\ca{C},q)$ denotes the Grothendieck--Witt space of the Poincar{\'e} $\infty$-category $(\ca{C},q)$ (see \cite[Definition~4.1.1]{CALMES_ET_AL_II}). By \cite[Corollary~4.2.3]{CALMES_ET_AL_II}, this is equivalent to the infinite loop space $\Omega^{\infty} \mathrm{GW}(\ca{C},q)$ associated to the spectrum $\mathrm{GW}(\ca{C},q)$ of \cite[Definition~4.2.1]{CALMES_ET_AL_II}.
 
 Finally, we write $\mathrm{Iso}\bigl(\Alt(R)\bigr)^{\mathrm{grp}}$ for the group completion of $\mathrm{Iso}\bigl(\Alt(R)\bigr)$ with respect to orthogonal direct sum and given by Quillen's $S^{-1}S$-construction (for definiteness).
 
 \begin{prop}\label{prop:group_completion_alternating_htpy_pullback}
 Let $\varphi \colon A \rightarrow B$ be an analytic isomorphism along $S$ such that $A \slash 2 \rightarrow B \slash 2 \oplus A_S \slash 2$ is injective. Then the square
 \[
 \xymatrix{ \mathrm{Iso}\bigl(\Alt(A)\bigr)^{\mathrm{grp}} \ar[r] \ar[d] & \mathrm{Iso}\bigl(\Alt^{\mathrm{im}K_0 A}(A_S)\bigr)^{\mathrm{grp}} \ar[d] \\ \mathrm{Iso}\bigl(\Alt(B)\bigr)^{\mathrm{grp}} \ar[r] & \mathrm{Iso}\bigl(\Alt^{\mathrm{im} K_0 B}(B_{\varphi(S)})\bigr)^{\mathrm{grp}} }
 \]
 is a homotopy pullback square. 
 \end{prop}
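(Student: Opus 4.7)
The plan is to reduce the statement to the cartesian square of Grothendieck--Witt spectra already established in Proposition~\ref{prop:Grothendieck_Witt_weak_excision} and then apply the comparison theorem of Hebestreit--Steimle to identify each corner with the relevant group completion. Concretely, I would first specialise Proposition~\ref{prop:Grothendieck_Witt_weak_excision} to $m=1$, which corresponds (via \cite[Propositions~4.2.18 and 4.2.22]{CALMES_ET_AL_I}) to the Poincar\'{e} structure $q^{ge}_{-R}$ whose underlying quadratic functor on $\Proj(R)$ sends a module to its abelian group of alternating forms, and whose duality is $\Hom_R(-,R)$. This gives a cartesian square of spectra
\[
\xymatrix@C=15pt{\mathrm{GW}\bigl(\ca{D}^{p}(A), q^{\geq 1}_{-A}\bigr) \ar[r] \ar[d] & \mathrm{GW}\bigl(\ca{D}^{\mathrm{im}K_0 A}(A_S),q^{\geq 1}_{-A_S}\bigr) \ar[d] \\ \mathrm{GW}\bigl(\ca{D}^{p}(B),q^{\geq 1}_{-B}\bigr) \ar[r] & \mathrm{GW}\bigl(\ca{D}^{\mathrm{im}K_0 B}(B_{\varphi(S)}),q^{\geq 1}_{-B_{\varphi(S)}}\bigr) }
\]
whose validity requires exactly the injectivity hypothesis on $A\slash 2 \to B\slash 2 \oplus A_S \slash 2$. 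Passing to $\Omega^{\infty}$ (which preserves cartesian squares of spectra) yields a homotopy pullback of Grothendieck--Witt spaces.

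Next I would translate each corner into the desired group completion by invoking the main comparison result of \cite{HEBESTREIT_STEIMLE}. For a commutative ring $R$, the form parameter $e$ on $(R,-R)$ has $\mathrm{Unimod}^{e}(R;-R)=\mathrm{Iso}\bigl(\Alt(R)\bigr)$, and the Hebestreit--Steimle theorem supplies a natural equivalence
\[
\mathrm{Iso}\bigl(\Alt(R)\bigr)^{\mathrm{grp}} \simeq \mathcal{GW}\bigl(\ca{D}^{p}(R),q^{\geq 1}_{-R}\bigr)
\]
of infinite loop spaces. For the two right-hand corners, where we work with the full subcategory $\ca{D}^c$ determined by a subgroup $c \subseteq K_0$ stable under the involution, the same comparison holds, but on the unimodular side one must restrict to symplectic modules whose underlying projective class lies in $c$: this is precisely $\Alt^{c}(R)$. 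Applied to $c=\mathrm{im}K_0 A \subseteq K_0 A_S$ and $c=\mathrm{im}K_0 B \subseteq K_0 B_{\varphi(S)}$, this identifies the four corners of the Grothendieck--Witt pullback with the four corners appearing in the statement.

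Combining these two steps produces the required homotopy pullback square of group completions, so the proof is essentially assembly once the two blackbox inputs are in place. The main obstacle, and the only point requiring some care, will be verifying that the Hebestreit--Steimle comparison is natural in ring maps and is compatible with the restriction to $\mathrm{im}K_0$: one needs to check that the equivalence above respects the decomposition by the $K_0$-class of the underlying module, and that under base change the image of $K_0 A$ in $K_0 A_S$ corresponds to the subgroupoid $\Alt^{\mathrm{im}K_0 A}(A_S)$. Both facts are structural and follow once one inspects the definition of the relevant Poincar\'{e} $\infty$-category and the construction of the comparison map (which is built on the underlying symmetric monoidal functor $\mathrm{Iso}(\Alt(R)) \to \mathcal{GW}$ sending a form to itself), but I would make this explicit rather than leaving it implicit.
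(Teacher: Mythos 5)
Your proposal is correct and follows essentially the same route as the paper: specialize Proposition~\ref{prop:Grothendieck_Witt_weak_excision} to $m=1$, pass through $\Omega^{\infty}$, and use the Hebestreit--Steimle comparison (together with its restriction to the $\ca{D}^c$ subcategories) to identify each corner with the relevant group completion. The only difference is direction of presentation (you go from the GW-spectra square to the group completions, the paper first sets up the comparison and then invokes the spectral square), and the naturality concern you flag at the end is exactly the point the paper handles via uniqueness of group completions.
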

 
 \begin{proof}
 Throughout this proof, we use the the notation introduced in \cite{HEBESTREIT_STEIMLE}. It follows from \cite[Corollary~8.1.8]{HEBESTREIT_STEIMLE} that
 \[
 \mathrm{Pn}^{\mathrm{ht}} \bigl( \ca{D}^{p}(R), q_{-R}^{\geq 1} \bigr) \rightarrow \mathcal{GW} \bigl( \ca{D}^{p}(R), q_{-R}^{\geq 1} \bigr)
 \]
 is a group completion (since $q_{-R}^{\geq 1} \simeq q_{-R}^{ge}$ by the above discussion). Because the proof of \cite[Corollary~8.1.8]{HEBESTREIT_STEIMLE} proceeds by checking the premises of \cite[Corollary~8.1.5]{HEBESTREIT_STEIMLE}, we also find that
 \[
 \mathrm{Pn}^{\mathrm{ht}} \bigl( \ca{D}^{c}(R), q_{-R}^{\geq 1} \bigr) \rightarrow \mathcal{GW} \bigl( \ca{D}^{c}(R), q_{-R}^{\geq 1} \bigr)
 \]
 is a group completion for any $c \subseteq K_0(R)$.
 
 Moreover, the equivalence
 \[
 \mathrm{Unimod}^e(R;-R) \simeq \mathrm{Pn}^{\mathrm{ht}} \bigl( \ca{D}^{p}(R), q_{-R}^{\geq 1} \bigr)
 \]
 of the proof of \cite[Corollary~8.1.8]{HEBESTREIT_STEIMLE} restricts to an equivalence of groupoids
 \[
 \mathrm{Unimod}_c^e(R;-R) \simeq \mathrm{Pn}^{\mathrm{ht}} \bigl( \ca{D}^{c}(R), q_{-R}^{\geq 1} \bigr)
 \]
 (see \cite[Example~3.1.3(3)]{HEBESTREIT_STEIMLE}). In other words,
 \[
 \mathrm{Iso}\bigl(\Alt^c(R)\bigr) \simeq \mathrm{Pn}^{\mathrm{ht}} \bigl( \ca{D}^{c}(R), q_{-R}^{\geq 1} \bigr)
 \]
 is an equivalence and the resulting composite
 \[
  \mathrm{Iso}\bigl(\Alt^c(R)\bigr) \rightarrow \mathcal{GW} \bigl( \ca{D}^{c}(R), q_{-R}^{\geq 1} \bigr)
 \]
 is a group completion.
 
 From the naturality of the construction above and the uniqueness of group completions it follows that it suffices to check that the square below
 \[
 \xymatrix{ \mathcal{GW} \bigl( \ca{D}^{p}(A), q_{-A}^{\geq 1} \bigr) \ar[r] \ar[d] & \mathcal{GW} \bigl( \ca{D}^{\mathrm{im}K_0 A}(A_S), q_{-A_S}^{\geq 1} \bigr) \ar[d] \\ 
 \mathcal{GW} \bigl( \ca{D}^{p}(B), q_{-B}^{\geq 1} \bigr) \ar[r] & \mathcal{GW} \bigl( \ca{D}^{\mathrm{im}K_0 B}(B_{\varphi(S)}), q_{-B_{\varphi(S)}}^{\geq 1} \bigr) }
 \]
 is a homotopy pullback diagram. Since $\mathcal{GW} \simeq \Omega^{\infty} \mathrm{GW}$ (see \cite[Corollary~4.2.3]{CALMES_ET_AL_II}) and $\Omega^{\infty}$ is right adjoint, it suffices to check that the square
 \[
 \xymatrix{ \mathrm{GW} \bigl( \ca{D}^{p}(A), q_{-A}^{\geq 1} \bigr) \ar[r] \ar[d] & \mathrm{GW} \bigl( \ca{D}^{\mathrm{im}K_0 A}(A_S), q_{-A_S}^{\geq 1} \bigr) \ar[d] \\ 
 \mathrm{GW} \bigl( \ca{D}^{p}(B), q_{-B}^{\geq 1} \bigr) \ar[r] & \mathrm{GW} \bigl( \ca{D}^{\mathrm{im}K_0 B}(B_{\varphi(S)}), q_{-B_{\varphi(S)}}^{\geq 1} \bigr) } 
 \]
 is a homotopy pullback diagram of spectra. This follows from Proposition~\ref{prop:Grothendieck_Witt_weak_excision}, applied to the case $m=1$.
 \end{proof}
 
 Let $c \subseteq K_0(R)$ be a subgroup. In analogy with $\Alt^c(R)$, we write $\Proj^c(R)$ for the category of finitely generated projective $R$-modules $P$ with $[P] \in c$.
 
 \begin{prop}\label{prop:group_completion_projective_htpy_pullback}
 Let $\varphi \colon A \rightarrow B$ be an analytic isomorphism along $S$. Then the square
 \[
 \xymatrix{ \mathrm{Iso}\bigl(\Proj(A)\bigr)^{\mathrm{grp}} \ar[r] \ar[d] & \mathrm{Iso}\bigl(\Proj^{\mathrm{im}K_0 A}(A_S)\bigr)^{\mathrm{grp}} \ar[d] \\ \mathrm{Iso}\bigl(\Proj(B)\bigr)^{\mathrm{grp}} \ar[r] & \mathrm{Iso}\bigl(\Proj^{\mathrm{im}K_0 B}(B_{\varphi(S)})\bigr)^{\mathrm{grp}} }
 \]
 is a homotopy pullback square.
 \end{prop}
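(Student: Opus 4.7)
The proof parallels that of Proposition~\ref{prop:group_completion_alternating_htpy_pullback}, with ordinary algebraic K-theory in place of Grothendieck--Witt theory. My first step is to identify each corner of the square with the corresponding $\Omega^{\infty}$ of a K-theory spectrum: by Quillen's $+{=}Q$ theorem (equivalently, the group completion theorem applied to the $S^{-1}S$-construction), the space $\mathrm{Iso}(\Proj(R))^{\mathrm{grp}}$ is equivalent to $\Omega^{\infty} K(R)$ for $K(R)$ the (connective) Quillen K-theory spectrum; and the restricted version $\mathrm{Iso}(\Proj^c(R))^{\mathrm{grp}}$ is precisely the union of those connected components of $\Omega^{\infty} K(R)$ whose $\pi_0$-class lies in $c \subseteq K_0(R)$, which therefore sits inside $\Omega^{\infty} K(R)$ as an inclusion of components.

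Next, I would observe that the proof of Proposition~\ref{prop:K_theory_weak_analytic_excision} actually delivers a spectrum-level statement. The argument given there verifies that the scalar-extension functor $B \ten{A} - \colon \mathrm{Ch}^b_S(\Proj(A)) \rightarrow \mathrm{Ch}^b_{\varphi(S)}(\Proj(B))$ satisfies the hypotheses of the Waldhausen Approximation Theorem and thus induces an equivalence on Waldhausen K-theory spectra. Combined with the Waldhausen localization cofiber sequences $K\bigl(\mathrm{Ch}^b_S(\Proj(A))\bigr) \rightarrow K(A) \rightarrow K(A_S)$ and the analogous sequence over $B$, the rotation of fiber sequences yields that
\[
\xymatrix{K(A) \ar[r] \ar[d] & K(A_S) \ar[d] \\ K(B) \ar[r] & K(B_{\varphi(S)})}
\]
is a homotopy pullback of spectra. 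Since $\Omega^{\infty}$ is a right adjoint and so preserves homotopy pullbacks, the induced diagram of infinite loop spaces is a homotopy pullback, which (via the identifications of the previous paragraph) gives the unrestricted analogue of the asserted square.

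Finally, I would pass from the unrestricted to the restricted square. By naturality of $K_0$, the canonical map from $\mathrm{Iso}(\Proj(A))^{\mathrm{grp}}$ factors through $\mathrm{Iso}(\Proj^{\mathrm{im} K_0 A}(A_S))^{\mathrm{grp}}$ on the right, and (because $K_0 A \rightarrow K_0 B_{\varphi(S)}$ factors through $K_0 B$) the map from $\mathrm{Iso}(\Proj(B))^{\mathrm{grp}}$ factors through $\mathrm{Iso}(\Proj^{\mathrm{im} K_0 B}(B_{\varphi(S)}))^{\mathrm{grp}}$ on the bottom. Since each restriction is an inclusion of a union of connected components, the homotopy pullback of the restricted diagram is the preimage inside the unrestricted homotopy pullback of these unions of components, and this preimage coincides with the entire image of $\mathrm{Iso}(\Proj(A))^{\mathrm{grp}}$; so the canonical equivalence with the unrestricted pullback restricts to an equivalence with the restricted pullback. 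The main thing to verify carefully is the promotion of Proposition~\ref{prop:K_theory_weak_analytic_excision} from an assertion about homotopy groups to a spectrum-level homotopy pullback, but this is already implicit in the Waldhausen Approximation argument used there; everything after that is a formal manipulation of connected components.
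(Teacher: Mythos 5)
Your proof is correct, but it takes a noticeably different route from the paper's. The paper proves the claim by invoking \cite[Corollary~8.1.3]{HEBESTREIT_STEIMLE} to identify $\mathrm{Iso}\bigl(\Proj^c(R)\bigr)^{\mathrm{grp}} \simeq \mathcal{K}\bigl(\ca{D}^c(R)\bigr)$ via the exhaustive weight structure on $\ca{D}^c(R)$ (whose heart is $\Proj^c(R)$, by \cite[Example~3.1.3(3)]{HEBESTREIT_STEIMLE}), and then observes that the square of $\ca{D}^c$-categories is a Verdier square in the sense of \cite[Definition~1.5.1]{CALMES_ET_AL_II} and that $\mathcal{K}$ is Verdier localising; the restriction to $\mathrm{im}K_0$ is therefore absorbed from the start into the choice of $c$, and the proof runs exactly parallel to that of Proposition~\ref{prop:group_completion_alternating_htpy_pullback}. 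You instead identify the unrestricted corners with $\Omega^{\infty}K(R)$ via Quillen's $+{=}Q$, extract a spectrum-level cartesian square from the Waldhausen Approximation argument already present in the proof of Proposition~\ref{prop:K_theory_weak_analytic_excision}, and then handle the restriction to the components indexed by $\mathrm{im}K_0$ by hand at the end. Both approaches work. The paper's has the virtue of being formally identical to the alternating case and of never leaving the weight-structure framework in which the identifications with K-theory are made uniformly for all $c$; yours has the virtue of being closer to classical Waldhausen K-theory and of directly reusing the lemma already established. The one place where you should be more careful is the final step: the reason the restricted homotopy pullback agrees with the unrestricted one is that every $\pi_0$-class of the unrestricted pullback, which is identified with $K_0(A)$, has $A_S$-component lying in $\mathrm{im}(K_0 A)$ automatically; state that explicitly rather than appealing to it being a ``preimage.'' Also, your parenthetical justification for the bottom factorization is slightly misplaced (that factorization is immediate; the factorization of $K_0A \to K_0 B_{\varphi(S)}$ through $K_0 B$ is what makes the right-hand vertical arrow of the restricted square well-defined), but this does not affect the argument.
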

 
 \begin{proof}
 This can also be seen as a special case of the machinery developed in \cite{CALMES_ET_AL_II} and \cite{HEBESTREIT_STEIMLE}. Namely, if we let $\mathcal{K}(\ca{C})$ denote the algebraic $K$-theory space of a stable $\infty$-category $\ca{C}$ which has an exhaustive weight structure in the sense of \cite[Definition~3.1.1]{HEBESTREIT_STEIMLE}, then
 \[
 \mathrm{Iso}(\ca{C}^{\mathrm{ht}})^{\mathrm{grp}} \rightarrow \mathcal{K}(\ca{C})
 \]
 is a natural equivalence, where $\ca{C}^{\mathrm{ht}}$ denotes the heart of the weight structure, see \cite[Corollary~8.1.3]{HEBESTREIT_STEIMLE}. By \cite[Example~3.1.3(3)]{HEBESTREIT_STEIMLE}, for all $c \subseteq K_0(R)$, there exists an exhaustive weight structure on $\ca{D}^{c}(R)$ with heart $\Proj^c(R)$. Thus it only remains to show that
 \[
 \xymatrix{ \ca{D}^p(A) \ar[r] \ar[d] & \ca{D}^{\mathrm{im}K_0 A}(A_S) \ar[d] \\ \ca{D}^p(B) \ar[r] & \ca{D}^{\mathrm{im}K_0 B}(B_{\varphi(S)}) }
 \]
 is sent to a homotopy pullback square by the functor $\mathcal{K}$. This follows from a modern form of the localization theorem in $K$-theory. Namely, the square in question is a Verdier square in the sense of \cite[Definition~1.5.1]{CALMES_ET_AL_II} by the proof of \cite[Proposition~4.4.21]{CALMES_ET_AL_II}. That $\mathcal{K}$ sends Verdier squares to homotopy pullback squares (that is, $\ca{K}$ is \emph{Verdier localising} in the sense of \cite[Definition~2.7.1]{CALMES_ET_AL_II}) is shown in the proof of \cite[Corollary~4.4.15]{CALMES_ET_AL_II}. A self-contained account of this localization theorem can also be found in \cite[Theorem~6.1]{HEBESTREIT_LACHMANN_STEIMLE}.
 \end{proof}
 
 The upshot of the above two propositions is that the top and bottom squares in the cube
 \begin{equation*}\label{eqn:Iso_cube}
 \vcenter{ \xymatrix@C=10pt@R=15pt{
\mathrm{Iso}\bigl(\Alt(A)\bigr) \ar[rr] \ar[rd] \ar[dd] && \mathrm{Iso}\bigl(\Alt^{\mathrm{im}K_0A}(A_S)\bigr) \ar[dd] |!{[rd];[ld]}\hole \ar[rd] \\
& \mathrm{Iso}\bigl(\Alt(B)\bigr) \ar[rr] \ar[dd] && \mathrm{Iso}\bigl(\Alt^{\mathrm{im}K_0 B}(B_{\varphi(S)})\bigr) \ar[dd] \\
\mathrm{Iso}\bigl(\Proj(A)\bigr) \ar[rd] \ar[rr] |!{[ru];[rd]}\hole && \mathrm{Iso}\bigl(\Proj^{\mathrm{im}K_0 A}(A_S)\bigr) \ar[rd] \\
& \mathrm{Iso}\bigl(\Proj(B)\bigr) \ar[rr] && \mathrm{Iso}\bigl(\Proj^{\mathrm{im}K_0 B}(B_{\varphi(S)})\bigr)
 }}\tag{$\star \star$}
 \end{equation*}
 are sent to homotopy pullback squares by the $S^{-1}S$-construction whenever $A \rightarrow B$ is an analytic isomorphism along $S$ such that $A \slash 2 \rightarrow B \slash 2 \oplus A_S \slash 2$ is injective. Since the cube commutes up to coherent natural isomorphisms and the $S^{-1}S$-construction
 \[
 \SymMonCat \rightarrow \Cat
 \]
 is 2-functorial for natural isomorphisms, it follows that the resulting cube of spaces commutes up to coherent homotopies. This implies in particular that the vertical functors above induce a natural transformation between the long exact sequences associated to the top and bottom squares.
 
 Recall that an object $A$ in a symmetric monoidal category $(S,\perp,0)$ is called a \emph{basic object} if for all $B \in S$, there exists a $B^{\prime} \in S$ and an $n \in \mathbb{N}$ such that $B \perp B^{\prime} \cong A^{\perp n}$. In this case, we write $G(A^{\infty})$ for the group $\mathrm{colim}_n \mathrm{Aut}(A^{\perp n})$ and we let $E(A^{\infty})=[G(A^{\infty}),G(A^{\infty})]$ be the commutator subgroup of $G(A^{\infty})$.
 
 If $(T,\perp,0)$ is another symmetric monoidal category with a basic object $B$, then we say that a symmetric monoidal functor $F \colon S \rightarrow T$ preserves the basic object if there is a chosen isomorphism $FA \cong B$. This induces group homomorphisms
 \[
\mathrm{Aut}(A^{\perp n}) \rightarrow \mathrm{Aut}(FA^{\perp n}) \cong \mathrm{Aut}(B^{\perp n}) 
 \]
 and thus a group homomorphism $G(A^{\infty}) \rightarrow G(B^{\infty})$, which restricts to a group homomorphism $E(A^{\infty}) \rightarrow E(B^{\infty})$.
 
 \begin{prop}\label{prop:low_dimensional_K_group_natural_identification}
 Let $(S,\perp, 0)$ be a symmetric monoidal groupoid with basic object $A \in S$. Then there are isomorphisms
 \[
\pi_1 B(S^{-1} S) \cong G(A^{\infty}) \slash E(A^{\infty}) \quad \text{and} \quad \pi_2 B(S^{-1} S) \cong H_2\bigl(E(A^{\infty}); \mathbb{Z} \bigr)
 \]
 which are natural for symmetric monoidal functors which preserve basic objects.
 \end{prop}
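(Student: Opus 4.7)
The plan is to identify $B(S^{-1}S)$ with Quillen's plus construction $BG(A^{\infty})^{+}$ and then apply the classical computations of its low-dimensional homotopy groups. First, I would invoke the group completion theorem for the $S^{-1}S$ construction: the canonical map $BS \to B(S^{-1}S)$ realizes the group completion of the topological monoid $BS$. Because $A$ is a basic object, the commutative monoid $\pi_{0}(S)$ is generated modulo $\perp$ by the class $[A]$, so all path components of $B(S^{-1}S)$ are homotopy equivalent, and the base component is homotopy equivalent to the Quillen plus construction $BG(A^{\infty})^{+}$, performed with respect to the commutator subgroup $E(A^{\infty}) = [G(A^{\infty}), G(A^{\infty})] \trianglelefteq G(A^{\infty})$. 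This identification is a standard consequence of the group completion theorem applied to a symmetric monoidal groupoid with a basic object; see for example \cite[Theorem~IV.4.8]{WEIBEL_KBOOK}.

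Second, I would verify that $E(A^{\infty})$ is a perfect subgroup of $G(A^{\infty})$ via a Whitehead-lemma style stabilization argument. Concretely, for any $\gamma \in G(A^{\infty})$, the element $\gamma \perp \gamma^{-1}$ lies in $E(A^{\infty})$ (this is Whitehead's lemma in the form appropriate to a symmetric monoidal groupoid with a basic object), and the identity
\[
[\alpha \perp \alpha^{-1},\; \beta \perp \beta^{-1}] \;=\; [\alpha,\beta] \perp [\alpha^{-1},\beta^{-1}]
\]
then exhibits any commutator, after a controlled stabilization, as a commutator of elements already lying in $E(A^{\infty})$. This suffices to conclude that $E(A^{\infty})$ is perfect.

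Third, with the perfectness of $E(A^{\infty})$ in hand, the conclusions follow from standard features of the plus construction relative to a perfect normal subgroup. The identification $\pi_{1} BG(A^{\infty})^{+} \cong G(A^{\infty}) / E(A^{\infty})$ is immediate from the construction, which kills exactly $E(A^{\infty})$ on the fundamental group. The identification $\pi_{2} BG(A^{\infty})^{+} \cong H_{2}(E(A^{\infty}); \mathbb{Z})$ comes from analysing the homotopy fiber of $BG(A^{\infty}) \to BG(A^{\infty})^{+}$: this fiber is acyclic with fundamental group $E(A^{\infty})$, and passing to its universal cover, the Hurewicz theorem together with the Hopf formula computes its $\pi_{2}$ as $H_{2}(E(A^{\infty}); \mathbb{Z})$. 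A detailed account can be found in \cite[Theorem~IV.1.8]{WEIBEL_KBOOK}.

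Naturality in symmetric monoidal functors preserving the basic object is built into each step: the $S^{-1}S$ construction is 2-functorial, the identification with the plus construction is pinned down by its universal property as the initial acyclic map killing $E(A^{\infty})$, and the computations of $\pi_{1}, \pi_{2}$ are functorial in group homomorphisms. The main technical obstacle will be ensuring this naturality at the pointset level rather than merely up to homotopy, since the plus construction is a priori only defined up to weak equivalence; I would address this by appealing to the universal property just mentioned, which makes the assignment $G \mapsto BG^{+}$ functorial up to canonical homotopy, and hence induces a well-defined natural isomorphism on $\pi_{1}$ and $\pi_{2}$.
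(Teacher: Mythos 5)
Your overall route---identify the basepoint component of $B(S^{-1}S)$ with the plus construction $BG(A^{\infty})^{+}$, then read off $\pi_1$ and $\pi_2$ from the fibration of $BG(A^{\infty})$ over it---is the same as the paper's (the paper uses the explicit telescope model from Grayson, via \cite[Proposition~3]{WEIBEL_AZUMAYA}). The step computing $\pi_2$ is, however, not correct as written. You assert that the homotopy fiber $F$ of $BG(A^{\infty}) \to BG(A^{\infty})^{+}$ is acyclic with fundamental group $E(A^{\infty})$; in fact $\pi_1 F$ is the \emph{universal central extension} of $E(A^{\infty})$, not $E(A^{\infty})$ itself, and the kernel of that central extension is precisely $\pi_2 B(S^{-1}S)$. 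The isomorphism $\pi_2 B(S^{-1}S) \cong H_2\bigl(E(A^{\infty});\mathbb{Z}\bigr)$ then follows from the fact that the kernel of a universal central extension of a perfect group $P$ is $H_2(P;\mathbb{Z})$---not from a Hurewicz/Hopf computation on the universal cover of $F$. Indeed, if $\pi_1 F$ were $E(A^{\infty})$ and $F$ acyclic, the Serre spectral sequence of $\tilde F \to F \to BE(A^{\infty})$ would force $H_2\bigl(E(A^{\infty});\mathbb{Z}\bigr)=0$, contradicting the result you are trying to prove in general (e.g.\ it fails already for $G=\mathrm{GL}(\mathbb{Z})$). The reference you cite is the right place for the corrected argument, and this is exactly how the paper proceeds via \cite[Proposition~IV.1.7]{WEIBEL_KBOOK}.

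On naturality, you invoke the universal property of the plus construction, whereas the paper exploits the telescope model to produce an explicit homotopy commuting square $BG(A^{\infty}) \to B(S^{-1}S)$ over a symmetric monoidal functor preserving basic objects, and then takes a compatible map of homotopy fibers to get a map of long exact sequences. Both strategies can be made to work; the telescope is simply the more hands-on way to obtain a compatible map on fibers without having to unwind coherence issues for the plus construction. Your preliminary Whitehead-style verification that $E(A^{\infty})$ is perfect is correct but unnecessary to spell out here, since the paper takes it for granted as part of the standard plus-construction package.
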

 
 \begin{proof}
 By \cite[Proposition~3]{WEIBEL_AZUMAYA}, the telescope construction of \cite{GRAYSON_FUNDAMENTAL} exhibits the basepoint component of $B(S^{-1}S)$ as $BG(A^{\infty})^{+}$. The two isomorphisms arise from the long exact sequence associated to the homotopy fiber of $BG(A^{\infty}) \rightarrow B(S^{-1}S)$, which is given by
 \[
 0 \rightarrow \pi_2 B(S^{-1}S) \rightarrow \pi_1 F \rightarrow G(A^{\infty}) \rightarrow G(A^{\infty}) \slash E(A^{\infty}) \cong \pi_1 B(S^{-1}S) \smash{\rlap{.}}
 \]
 Namely, by \cite[Proposition~IV.1.7]{WEIBEL_KBOOK}, $\pi_1 F$ is a universal central extension of $E(A^{\infty})$, so $\pi_2 B(S^{-1}S) \cong H_2\bigl(E(A^{\infty});\mathbb{Z}\bigr)$.
 
 If $G \colon S \rightarrow T$ preserves the basic object, then we get a homotopy
 \[
\xymatrix{BG(A^{\infty}) \ar[r] \ar[d] \ar@{}[rd]|{\simeq_h} & BG(B^{\infty}) \ar[d] \\ B(S^{-1}S) \ar[r] & B(T^{-1}T) } 
 \]
 from the telescope construction of \cite{GRAYSON_FUNDAMENTAL}, which induces a compatible map on homotopy fibers. This, in turn, yields a natural transformation between the resulting long exact sequences of homotopy groups. This gives the desired naturality of the two isomorphisms.
 \end{proof}
 
 \begin{lemma}\label{lemma:surjectivity_on_symbols_K2Sp_to_K2}
 Let $R$ be a commutative ring. Then the homomorphism
 \[
 K_2 \mathrm{Sp}(R) =H_2 \bigl( \mathrm{Ep}(R) ; \mathbb{Z} \bigr) \rightarrow K_2(R) = H_2 \bigl( \mathrm{E}(R) ; \mathbb{Z} \bigr)
 \]
 induced by the inclusion $\mathrm{Ep}(R) \rightarrow \mathrm{E}(R)$ is surjective on Steinberg symbols and on Dennis--Stein symbols.
 \end{lemma}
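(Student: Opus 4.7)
The plan is to exhibit, for each kind of symbol, an explicit representative of the corresponding class in $K_2(R)=H_2(\mathrm{E}(R);\mathbb{Z})$ that already lies in the subgroup generated by symplectic elementary matrices; naturality of the identification of $H_2$ then gives a preimage in $H_2(\mathrm{Ep}(R);\mathbb{Z})=K_2\mathrm{Sp}(R)$. The key preliminary observation is that the ``short root'' generators of $\mathrm{Ep}(R)$ coincide with ordinary elementary matrices: with the convention for $\chi_r$ fixed before Proposition~\ref{prop:symplectic_Vorst_lemma}, the indices $2i-1$ and $2i$ pair up, so $\sigma(2i)=2i-1$; inspection of the definition of $se_{ij}$ gives $se_{2i-1,2i}(z)=\id + z\varepsilon_{2i-1,2i}=x_{2i-1,2i}(z)$ and $se_{2i,2i-1}(z)=x_{2i,2i-1}(z)$. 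Consequently $w_{2i-1,2i}(u)=x_{2i-1,2i}(u)\, x_{2i,2i-1}(-u^{-1})\, x_{2i-1,2i}(u)$ and $h_{2i-1,2i}(u)=w_{2i-1,2i}(u)\, w_{2i-1,2i}(-1)=\mathrm{diag}(1,\ldots,u,u^{-1},\ldots,1)$ belong to $\mathrm{Ep}(R)$ for every $u \in R^{\times}$ and every $i$.

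For the Steinberg symbol $\{u,v\}$ with $u,v \in R^{\times}$, I would use the classical description (Milnor's book on $K_2$) of $\{u,v\}$ as the image in $H_2(\mathrm{E}(R);\mathbb{Z})$ of a generator of $H_2(\mathbb{Z}^{2};\mathbb{Z})\cong \mathbb{Z}$ under the group homomorphism $\mathbb{Z}^{2} \to \mathrm{E}(R)$, $(m,n) \mapsto h_{12}(u)^m h_{34}(v)^n$, which uses that $h_{12}(u)$ and $h_{34}(v)$ commute in $\mathrm{E}(R)$ because they act on disjoint coordinate blocks. By the preliminary observation, both $h_{12}(u)$ and $h_{34}(v)$ already lie in $\mathrm{Ep}(R)$ and still commute there, so the same homomorphism factors through $\mathrm{Ep}(R)$ and yields a class in $H_2(\mathrm{Ep}(R);\mathbb{Z})$ whose image under $\iota_{\ast}$ is exactly $\{u,v\}$.

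For the Dennis--Stein symbol $\langle a,b\rangle$ with $1-ab \in R^{\times}$, I would use its standard presentation in the Steinberg group as a specific word in the generators $x_{12}(\cdot)$, $x_{21}(\cdot)$ together with the lift $\tilde{h}_{12}(1-ab)$ (see for instance \cite[Ch.~III, \S 5]{WEIBEL_KBOOK}). By the preliminary observation, all of these generators already come from elementary symplectic generators in the short-root $(1,2)$-block, so interpreted in the symplectic Steinberg group the same word defines a class in the kernel of $\mathrm{StSp}(R) \to \mathrm{Ep}(R)$, and hence a class in $K_2 \mathrm{Sp}(R)$ whose image under $\iota_{\ast}$ is $\langle a,b\rangle$.

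The main obstacle will be verifying that these Steinberg-group level formulas can be interpreted verbatim in the symplectic Steinberg group, i.e.\ that the Steinberg-type relations among the short-root generators of $\mathrm{Ep}(R)$ (those generated by $se_{2i-1,2i}$ and $se_{2i,2i-1}$) agree with the ordinary Steinberg relations on $x_{2i-1,2i}$ and $x_{2i,2i-1}$. Fortunately, the symbols we care about involve only generators from either a single short-root subgroup (for Dennis--Stein) or two disjoint short-root subgroups (for Steinberg), so only the rank-$1$ and rank-$2$ pieces of the symplectic Steinberg presentation enter the argument, and in these low ranks the relations coincide with those of $\mathrm{St}(R)$ by a direct inspection.
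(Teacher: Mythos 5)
Your preliminary observation is correct: with the convention for $\sigma$ fixed before Proposition~\ref{prop:symplectic_Vorst_lemma}, one has $se_{2i-1,2i}(z)=\id+z\varepsilon_{2i-1,2i}=x_{2i-1,2i}(z)$ and $se_{2i,2i-1}(z)=x_{2i,2i-1}(z)$, so all of $w_{2i-1,2i}(u)$ and $h_{2i-1,2i}(u)$ lie in $\mathrm{Ep}(R)$. This is also the starting point of the paper's proof. However, your argument for the Steinberg symbol contains a genuine error. The class in $H_2(\mathrm{E}(R);\mathbb{Z})$ you describe -- the image of a generator of $H_2(\mathbb{Z}^2;\mathbb{Z})$ under $(m,n)\mapsto g^m h^n$ for commuting $g,h$ -- is precisely the commutator $[\tilde{g},\tilde{h}]$ of lifts to the universal central extension. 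Milnor's formula for $\{u,v\}$ takes $g=h_{12}(u)$ and $h=h_{13}(v)$, whose index sets overlap, so that the lifts genuinely fail to commute. You instead take $g=h_{12}(u)$ and $h=h_{34}(v)$, whose index sets are disjoint; the generators $x_{12},x_{21}$ all commute with $x_{34},x_{43}$ in $\mathrm{St}(R)$ by the Steinberg relations, hence $\tilde{h}_{12}(u)$ and $\tilde{h}_{34}(v)$ already commute in $\mathrm{St}(R)$, and the class you produce is $0$, not $\{u,v\}$. This cannot be repaired by choosing different indices: the only diagonal matrices $h_{ij}(u)$ that preserve the form $\chi_r$ are those with $j=\sigma(i)$, whose index sets are pairwise disjoint. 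The paper avoids the issue by using the other standard description, $\{u,v\}=h_{12}(uv)h_{12}(u)^{-1}h_{12}(v)^{-1}$, which stays entirely inside the $(1,2)$-block and therefore can be written as a word in $x_{12},x_{21}$ alone.

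Your Dennis--Stein argument follows the same pattern as the paper's (and the paper uses the same single-block strategy for Steinberg symbols too), but it has a gap at the step ``whose image under $\iota_{\ast}$ is $\langle a,b\rangle$.'' The map $\iota_\ast$ on $H_2$ is induced by the unique homomorphism $F\colon\mathrm{StSp}(R)\to\mathrm{St}(R)$ covering $\mathrm{Ep}(R)\hookrightarrow\mathrm{E}(R)$. Since $F$ covers the inclusion, one knows a priori only that $F$ sends the lift of $se_{12}(z)$ to $x_{12}(z)\cdot c(z)$ for some central $c(z)\in K_2(R)$; one must verify that $c\equiv 1$. The paper does this by quoting Hahn--O'Meara's explicit construction of $F$ (see the citation in the proof), which gives $F(X_{2,2}(r))=x_{21}(r)$ and $F(X_{1,1}(-r))=x_{12}(r)$. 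Your proposed ``direct inspection'' that the relations among the $(1,2)$-block generators of $\mathrm{StSp}(R)$ match the ordinary Steinberg relations is not quite the right thing to verify: even if the subgroup they generate were isomorphic to $\mathrm{St}_2(R)$ via the obvious map on generators, this would not pin down the restriction of $F$, because $\mathrm{St}_2(R)$ is not perfect in general and so the lift of $\mathrm{E}_2(R)\hookrightarrow\mathrm{E}(R)$ to $\mathrm{St}(R)$ is not unique. Some explicit computation of $F$ on generators, as in Hahn--O'Meara, really is needed here.
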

 
 \begin{proof}
 Let $\mathrm{StSp}(R)$ be the symplectic Steinberg group and let $\mathrm{St}(R)$ be the Steinberg group of $\mathrm{GL}(R)$. There exists a unique homomorphism $F \colon \mathrm{StSp}(R) \rightarrow \mathrm{St}(R)$ making the diagram
 \[
 \xymatrix{\mathrm{StSp}(R) \ar@{-->}[d]_F \ar[r] & \mathrm{Ep}(R) \ar[d] \\ \mathrm{St}(R) \ar[r] & \mathrm{E}(R) }
 \]
 commutative. An explicit description of this homomorphism $F$ can be found in \cite[\S 5.5B]{HAHN_O_MEARA}. The only fact about this homomorphism (besides its existence) that we need is that both $x_{12}(r)$ and $x_{21}(r)$ lie in the image of $F$ for all $r \in R$. With the notation of \cite[\S 5]{HAHN_O_MEARA}, we have $F\bigl(X_{2,2}(r) \bigr)=x_{21(r)}$ and $F\bigl(X_{1,1}(-r)\bigr)=x_{12}(r)$.
 
 Setting $y_{12}(r)=X_{1,1}(-r)$ and $y_{21}(r)=X_{2,2}(r)$, we can define elements $\{s,t\}^{\prime}$ and $\langle x,y \rangle^{\prime} \in \mathrm{StSp}(R)$ for $s,t \in R^{\times}$ and $1-xy \in R^{\times}$ by the same formulas that appear in the definition of the Steinberg and Dennis--Stein symbols for $K_2(R)$.
 
 Since $\mathrm{Ep}(R) \rightarrow \mathrm{E}(R)$ is injective, it follows that $\{s,t\}^{\prime}$ and $\langle x,y \rangle^{\prime}$ lie in the kernel $K_2 \mathrm{Sp}(R)=\mathrm{ker} \bigl(\mathrm{StSp}(R) \rightarrow \mathrm{Ep}(R) \bigr)$, so they give the desired preimages of $\{s,t\}$ and $\langle x, y \rangle \in K_2(R)$.
 \end{proof}
 
 The following theorem summarizes those aspects of the above discussion that we need in the remainder of this section.
 
 \begin{thm}\label{thm:long_exact_sequences_K2Sp}
 Let $k$ be a field, let $A$ and $B$ be $k$-algebras, and let $\varphi \colon A \rightarrow B$ be an analytic isomorphism along $S$. Then there exists a commutative diagram
 \[
 \xymatrix@C=15pt{K_2 \mathrm{Sp}(A) \ar[r] \ar[d] & K_2 \mathrm{Sp}(B) \oplus K_2 \mathrm{Sp}(A_S) \ar[r] \ar[d] & K_2 \mathrm{Sp}(B_{\varphi(S)}) \ar[r] \ar[d] & K_1 \mathrm{Sp}(A) \ar[r] \ar[d] &  \ldots \ar[d] \\ 
 K_2(A) \ar[r]  & K_2(B) \oplus K_2(A_S) \ar[r] & K_2(B_{\varphi(S)}) \ar[r]  & K_1 \mathrm{Sp}(A) \ar[r]  &  \ldots }
 \]
 with exact rows such that the vertical homomorphisms are induced by the inclusion $\mathrm{Sp}(R) \rightarrow \mathrm{GL}(R)$. In particular, the first three vertical homomorphisms are surjective on Steinberg symbols and on Dennis--Stein symbols.
 \end{thm}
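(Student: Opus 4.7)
The plan is to extract the two long exact sequences from the homotopy pullback squares of Propositions~\ref{prop:group_completion_alternating_htpy_pullback} and \ref{prop:group_completion_projective_htpy_pullback}, applied to the analytic patching diagram for $\varphi$, and to identify the resulting low homotopy groups with the named $K$-groups via Proposition~\ref{prop:low_dimensional_K_group_natural_identification}.

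First I would verify that the hypothesis of Proposition~\ref{prop:group_completion_alternating_htpy_pullback} is automatic once $A$ and $B$ are $k$-algebras. If $\mathrm{char}(k) \neq 2$, then $2$ is a unit everywhere, so $A\slash 2$, $B\slash 2$ and $A_S \slash 2$ all vanish and injectivity is trivial. If $\mathrm{char}(k) = 2$, then the map $A\slash 2 \rightarrow B\slash 2 \oplus A_S \slash 2$ is just $\bigl(\begin{smallmatrix}\varphi \\ \lambda_S\end{smallmatrix}\bigr)\colon A \rightarrow B \oplus A_S$, which is monic because the analytic patching square is cartesian (see Definition~\ref{dfn:analytic_patching_diagram}).

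Next, the cube $(\star\star)$ commutes up to coherent natural isomorphisms, and its top and bottom faces are sent to homotopy pullback squares by the $S^{-1}S$-construction, by the two group-completion propositions just cited. Since the $S^{-1}S$-construction is $2$-functorial for natural isomorphisms, we obtain a homotopy commutative cube of spaces in which two parallel faces are homotopy cartesian. Passing to the long exact sequences of homotopy groups and using naturality yields a ladder of two Mayer--Vietoris sequences relating the $\pi_i B(S^{-1} S)$ of the alternating and the projective side, together with vertical comparison maps induced by the forgetful functors $\mathrm{Iso}\bigl(\Alt(R)\bigr) \rightarrow \mathrm{Iso}\bigl(\Proj(R)\bigr)$.

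To identify the homotopy groups with the named $K$-groups, I apply Proposition~\ref{prop:low_dimensional_K_group_natural_identification}. Choose $H(R) = R \oplus R^\vee$ as the basic object on the alternating side and $R^2$ (which is the underlying module of $H(R)$) as the basic object on the projective side; then the forgetful functor preserves basic objects in the sense of the proposition. The stable automorphism groups $G(A^\infty)$ become $\mathrm{Sp}(R)$ and $\mathrm{GL}(R)$ respectively, the commutator subgroups $E(A^\infty)$ become $\mathrm{Ep}(R) = [\mathrm{Sp}(R),\mathrm{Sp}(R)]$ and $\mathrm{E}(R) = [\mathrm{GL}(R),\mathrm{GL}(R)]$, and hence $\pi_1 \cong K_1 \mathrm{Sp}(R)$, $\pi_2 \cong H_2(\mathrm{Ep}(R);\mathbb{Z}) = K_2 \mathrm{Sp}(R)$ on the alternating side and the usual $K_1(R)$, $K_2(R)$ on the projective side; the naturality clause of the proposition then identifies the vertical maps in our ladder with those induced by $\mathrm{Sp}(R) \hookrightarrow \mathrm{GL}(R)$. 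A small bookkeeping point is that Proposition~\ref{prop:group_completion_alternating_htpy_pullback} involves the restricted subcategories $\Alt^{\mathrm{im}K_0A}(A_S)$ and $\Proj^{\mathrm{im}K_0A}(A_S)$; but $[H(A_S)] = 2[A_S]$ and $[A_S^2] = 2[A_S]$ both lie in $\mathrm{im}\,K_0 A$, so the telescopes along the chosen basic objects stay inside the restricted subcategories, and the stable groups $G(A^\infty)$, $E(A^\infty)$ (hence $\pi_1$ and $\pi_2$ of the group completion) are unaffected by the restriction.

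The final assertion about Steinberg and Dennis--Stein symbols is Lemma~\ref{lemma:surjectivity_on_symbols_K2Sp_to_K2} applied separately to $A$, $B$ and $A_S$ (the $K_2$ of the product $B \oplus A_S$ is just the product of the $K_2$'s on each factor since $K_2$-groups split over finite products of rings, which follows at once from the representability of $\mathrm{GL}_r$ and the $S^{-1}S$-description of $K_2$). I do not expect a serious obstacle here: the technical heart sits in the two group-completion propositions and in Proposition~\ref{prop:low_dimensional_K_group_natural_identification}, both of which have already been established; the remaining work is to verify that the comparison maps produced by the forgetful functors are the expected ones, which is a direct consequence of the naturality built into Proposition~\ref{prop:low_dimensional_K_group_natural_identification}.
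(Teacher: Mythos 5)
Your proposal is correct and follows essentially the same route as the paper: verify the $A/2$-injectivity hypothesis from the field assumption, apply Propositions~\ref{prop:group_completion_alternating_htpy_pullback} and \ref{prop:group_completion_projective_htpy_pullback} to the two faces of the cube~($\star\star$), pass to the long exact sequences with the naturality from Proposition~\ref{prop:low_dimensional_K_group_natural_identification}, and invoke Lemma~\ref{lemma:surjectivity_on_symbols_K2Sp_to_K2} for the final claim. One minor remark: your parenthetical about $K_2$ of the product ring $B\times A_S$ is unnecessary, since the middle term of the Mayer--Vietoris sequence is already the direct sum $K_2(B)\oplus K_2(A_S)$ of the $K_2$'s of the two rings, so componentwise surjectivity on symbols is immediate; and your explicit check that $[H(A_S)]$ and $[A_S^2]$ lie in $\mathrm{im}\,K_0A$ (so the telescopes stay inside the restricted subcategories) is a welcome clarification of a point the paper leaves implicit.
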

 
 \begin{proof}
 Since $A$ is an algebra over a field, we either have $2 \in A^{\times}$ or $2=0$ in $A$. In both cases, the homomorphism $A \slash 2 \rightarrow B \slash 2 \oplus A_S \slash 2$ is injective (either we have $A \slash 2 \cong 0$, or this map coincides with $A \rightarrow B \oplus A_S$, which is injective since analytic patching diagrams are pullback squares). Propositions~\ref{prop:group_completion_alternating_htpy_pullback} and \ref{prop:group_completion_projective_htpy_pullback} imply that the top and bottom face of the cube~\eqref{eqn:Iso_cube} become homotopy pullback squares after group completion. Since the resulting cube of spaces commutes up to coherent homotopies, we get a natural transformation between the two long exact sequences induced by the vertical functors.
 
 If we pick the basic object given by the hyperbolic module $H(R)$ in $\mathrm{Iso}\bigl(\Alt(R)\bigr)$ and $R^2$ in $\mathrm{Iso} \bigl(\Proj(R) \bigr)$, then all functors in the cube~\eqref{eqn:Iso_cube} preserve the basic object. Combined with the natural isomorphisms of Proposition~\ref{prop:low_dimensional_K_group_natural_identification}, we get the desired horizontal exact sequences, connected by commuting vertical homomorphisms induced by the inclusion $\mathrm{Sp}(R) \rightarrow \mathrm{GL}(R)$. The final claim thus follows from Lemma~\ref{lemma:surjectivity_on_symbols_K2Sp_to_K2}.
 \end{proof}
 
 Note that $k[a,x,y] \slash (xy-a^{\ell}) \rightarrow k \llbracket a \rrbracket [x,y] \slash (xy-a^{\ell})$ is an analytic isomorphism along $a^{\ell}$ since $k[a] \slash a^{\ell} \rightarrow k \llbracket a \rrbracket \slash a^{\ell}$ is an isomorphism and both rings are domains (by Lemma~\ref{lemma:analytic_iso_arising_from_overring}, the element $x$ is a nonzerodivisor in both rings, so the rings are subrings of $k[a,x,x^{-1}]$ and $k\llbracket a \rrbracket [x,x^{-1}]$ respectively). If we apply the above theorem to this analytic isomorphism along $a^{\ell}$, we obtain the diagram
 \begin{equation*}\label{eqn:four_lemma}
\vcenter{ \xymatrix{
 K_2 \mathrm{Sp}(B) \oplus K_2 \mathrm{Sp}(k[a^{\pm},x^{\pm}]) \ar[r]^-{\varphi_1} \ar[d] & K_2 (B) \oplus K_2 (k[a^{\pm},x^{\pm}]) \ar[d] \\
 K_2 \mathrm{Sp}\bigl(k(\!(a)\!)[x,x^{-1}] \bigr) \ar[r]^-{\varphi_2} \ar[d] & K_2 \bigl(k(\!(a)\!)[x,x^{-1}] \bigr) \ar[d] \\
 K_1 \mathrm{Sp} \bigl(k[a,x,y] \slash (xy-a^{\ell}) \bigr) \ar[r]^-{\varphi_3} \ar[d] & K_1 \bigl(k[a,x,y] \slash (xy-a^{\ell}) \bigr) \ar[d] \\
  K_1 \mathrm{Sp}(B) \oplus K_1 \mathrm{Sp}(k[a^{\pm},x^{\pm}]) \ar[r]^-{\varphi_4} & K_1 (B) \oplus K_1 (k[a^{\pm},x^{\pm}])
 }} \tag{$\square$}
 \end{equation*}
 with exact columns, where we have used the notation $B \defl k \llbracket a \rrbracket [x,y] \slash (xy-a^{\ell})$ to keep the diagram legible. In the remainder of this section, we will show that the premises of the four lemma are satisfied (under some assumptions on the ground field $k$): the homomorphisms $\varphi_2$ and $\varphi_4$ are monomorphisms and $\varphi_1$ is an epimorphism, from which it follows that $\varphi_3$ is a monomorphism.
 
 We already know from Corollary~\ref{cor:K1Sp_trivial_for_Laurent_rings} that $K_1 \mathrm{Sp}(k[a^{\pm},x^{\pm}]) \cong 0$. The following lemma shows that $K_1 \mathrm{Sp} \bigl( k \llbracket a \rrbracket [x,y] \slash (xy-a^{\ell}) \bigr) \cong 0$ as well, so $\varphi_4$ is indeed a monomorphism.
 
 \begin{lemma}\label{lemma:K1Sp_of_power_series_overring_vanishes}
 For all fields $k$ and all $\ell \geq 2$ we have $K_1 \mathrm{Sp} \bigl( k \llbracket a \rrbracket [x,y] \slash (xy-a^{\ell}) \bigr) \cong 0$.
 \end{lemma}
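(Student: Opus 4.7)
The plan is to apply Corollary~\ref{cor:henselian_implies_trivial_kernel_for_overring} with $A = k\llbracket a \rrbracket$, $\alpha = a^{\ell}$, and $F = K_1\mathrm{Sp}$, so that $A[x,y] \slash (xy - \alpha)$ is precisely the ring $B$ whose symplectic $K_1$ we want to kill. Several hypotheses are essentially immediate: $a^{\ell}$ is a nonzerodivisor in the domain $k \llbracket a \rrbracket$; the pair $\bigl(k\llbracket a \rrbracket,(a^{\ell})\bigr)$ is henselian, because $(a)$ and $(a^{\ell})$ share the same radical and $\bigl(k\llbracket a \rrbracket,(a)\bigr)$ is henselian, so every $N$-polynomial over the smaller ideal is an $N$-polynomial over the larger and any root in $(a)$ is forced into $(a^{\ell})$ by direct inspection of the defining equation (Proposition~\ref{prop:characterizations_of_henselian_pairs}); $K_1\mathrm{Sp}$ is $j$-injective by Remark~\ref{rmk:K1Sp_j_injective}; it satisfies weak Milnor excision as an immediate consequence of the long exact sequence of Proposition~\ref{prop:Bass_exact_sequences_K*Sp}; and it satisfies weak analytic excision since each $K_1\mathrm{Sp}_{2r}$ does by Proposition~\ref{prop:symplectic_Vorst_lemma} and filtered colimits of weak pullbacks of sets are weak pullbacks.

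The contractibility of $A \slash \alpha [x] = k[a] \slash a^{\ell} [x]$ required by condition~(v) of Corollary~\ref{cor:henselian_implies_trivial_kernel_for_overring} is straightforward: the ideal $(a)$ is nilpotent in this ring, so $j$-injectivity embeds $K_1\mathrm{Sp}\bigl(k[a] \slash a^{\ell} [x]\bigr)$ into $K_1\mathrm{Sp}(k[x])$, and the latter vanishes by Corollary~\ref{cor:K1Sp_trivial_for_Laurent_rings}. The main obstacle, however, is verifying condition~(ii), the contractibility of $A[x] = k\llbracket a \rrbracket[x]$ itself: here $j$-injectivity is not available since $(a)$ is not contained in the Jacobson radical of $k\llbracket a \rrbracket[x]$.

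To handle this, I would consider the analytic patching diagram
\[
\xymatrix{k[a,x] \ar[r] \ar[d] & k[a,a^{-1}][x] \ar[d] \\ k\llbracket a \rrbracket[x] \ar[r] & k(\!(a)\!)[x]}
\]
associated to the inclusion $k[a,x] \rightarrow k \llbracket a \rrbracket[x]$, which is an analytic isomorphism along $a$ by Lemma~\ref{lemma:analytic_iso_nonzerodivisors} (the element $a$ is a nonzerodivisor in both rings and each quotient modulo $a$ is $k[x]$). Weak analytic excision for $K_1\mathrm{Sp}$ converts this diagram into a weak pullback of pointed sets whose three other corners all vanish: the top-left and top-right by Corollary~\ref{cor:K1Sp_trivial_for_Laurent_rings} applied to the field $k$, and the bottom-right by the same corollary applied to the field $k(\!(a)\!)$. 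A direct chase in a weak pullback square whose other three corners are singletons forces the fourth corner $K_1\mathrm{Sp}\bigl(k\llbracket a \rrbracket[x]\bigr)$ to be a singleton as well. With all hypotheses of Corollary~\ref{cor:henselian_implies_trivial_kernel_for_overring} thereby in place, we conclude that $K_1\mathrm{Sp}(B) \cong 0$.
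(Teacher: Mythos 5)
Your proof is correct and follows the paper's strategy of applying Corollary~\ref{cor:henselian_implies_trivial_kernel_for_overring} with $A = k\llbracket a \rrbracket$ and $\alpha = a^{\ell}$, verifying conditions (i)--(v) one by one. The verification of the henselian pair, weak analytic excision, weak Milnor excision, $j$-injectivity, and condition~(v) all align with the paper's argument.

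The one place where you genuinely diverge is condition~(ii), the contractibility of $K_1\mathrm{Sp}\bigl(k\llbracket a \rrbracket[x]\bigr)$. The paper invokes a result of Grunewald, Mennicke, and Vaserstein~\cite[Corollary~1.4]{GRUNEWALD_MENNICKE_VASERSTEIN} to get $K_1\mathrm{Sp}\bigl(k\llbracket a\rrbracket[x]\bigr) \cong K_1\mathrm{Sp}(k\llbracket a\rrbracket)$, then concludes by $j$-injectivity and $K_1\mathrm{Sp}(k) \cong 0$. You instead build the analytic patching diagram for $k[a,x] \to k\llbracket a\rrbracket[x]$ along $a$, use Corollary~\ref{cor:K1Sp_trivial_for_Laurent_rings} (applied to $k$ and to $k(\!(a)\!)$) to kill the top-left, top-right, and bottom-right corners, and then observe that in a weak pullback of pointed sets with trivial top-left corner and trivial top-right and bottom-right corners, the remaining bottom-left corner must be trivial. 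That deduction is sound precisely because the top-left corner is the source of the surjection in the weak pullback, and the resulting fiber product then collapses onto $K_1\mathrm{Sp}\bigl(k\llbracket a\rrbracket[x]\bigr)$. The payoff of your route is that it stays entirely within the machinery the paper has already set up and avoids the external Grunewald--Mennicke--Vaserstein reference; the cost is a slightly longer argument than the one-line citation the paper uses. Both are perfectly fine, and the paper's own later uses of the GMV result (e.g., in Lemma~\ref{lemma:surjectivity_of_first_half_of_varphi_1}) suggest the authors simply preferred to keep that reference on hand.

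One stylistic remark: for the henselian pair, rather than reasoning about $N$-polynomials directly, it is cleaner to note that $k\llbracket a\rrbracket$ is $a$-adically complete, hence $\bigl(k\llbracket a\rrbracket,(a)\bigr)$ is henselian, and that henselianity of a pair depends only on the radical of the ideal (so $(a^{\ell})$ works as well). Your hands-on verification is correct but more laborious than needed.
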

 
 \begin{proof}
 Since $\bigl(k \llbracket a \rrbracket, (a^{\ell}) \bigr)$ is a henselian pair, we can apply Theorem~\ref{thm:henselian_implies_trivial_kernel_for_overring} and its Corollary~\ref{cor:henselian_implies_trivial_kernel_for_overring}. The functor $F=K_1 \mathrm{Sp}$ satisfies weak analytic excision by Part~(b) of Proposition~\ref{prop:symplectic_Vorst_lemma} and it satisfies weak Milnor excision as a consequence of the exact sequence of Proposition~\ref{prop:Bass_exact_sequences_K*Sp}. By Remark~\ref{rmk:K1Sp_j_injective}, the functor $K_1 \mathrm{Sp}$ is $j$-injective. This shows that Conditions~(i), (iii), and (iv) of Theorem~\ref{thm:henselian_implies_trivial_kernel_for_overring} and Corollary~\ref{cor:henselian_implies_trivial_kernel_for_overring} are satisfied.
 
 It remains to check that $K_1 \mathrm{Sp}(k \llbracket a \rrbracket [x]) \cong 0$ and $K_1 \mathrm{Sp}(k \llbracket a \rrbracket \slash a^{\ell} [x]) \cong 0$ hold. The first follows from a general result of Grunewald, Mennicke, and Vaserstein, which implies that $K_1 \mathrm{Sp}(k \llbracket a \rrbracket [x]) \cong K_1 \mathrm{Sp}(k \llbracket a \rrbracket)$ (see \cite[Corollary~1.4]{GRUNEWALD_MENNICKE_VASERSTEIN}). From the $j$-injectivity and the fact that $K_1 \mathrm{Sp}(k) \cong 0$ we get the first claim.
 
 For the second claim, we can again use $j$-injectivity to reduce the problem to checking that $K_1 \mathrm{Sp}(k[x]) \cong 0$ holds, which we know to be true from Corollary~\ref{cor:K1Sp_trivial_for_Laurent_rings}.
 \end{proof}
 
 Next we show that $\varphi_1$ is surjective, again for arbitrary ground fields $k$.
 
 \begin{lemma}\label{lemma:surjectivity_of_second_half_of_varphi_1}
 The group $K_2 (k[a^{\pm},x^{\pm}])$ is generated by Steinberg symbols, so
 \[
 K_2 \mathrm{Sp} (k[a^{\pm},x^{\pm}]) \rightarrow K_2 (k[a^{\pm},x^{\pm}])
 \]
 is surjective.
 \end{lemma}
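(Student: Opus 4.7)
The plan is to reduce both claims to a description of $K_2(k[a^\pm,x^\pm])$ via the Fundamental Theorem of $K$-theory (Theorem~\ref{thm:Fundamental_Theorem_K_Theory}) and then invoke Matsumoto's theorem for the base case of a field.

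The first step is to observe that the second assertion follows directly from the first together with Theorem~\ref{thm:long_exact_sequences_K2Sp}: the latter tells us that the forgetful map $K_2\mathrm{Sp}(R)\rightarrow K_2(R)$ hits every Steinberg symbol. Hence as soon as we know that $K_2(k[a^\pm,x^\pm])$ is additively generated by Steinberg symbols, the map $K_2\mathrm{Sp}(k[a^\pm,x^\pm])\rightarrow K_2(k[a^\pm,x^\pm])$ is surjective.

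The main step is therefore to decompose $K_2(k[a^\pm,x^\pm])$ and check that each summand is generated by Steinberg symbols. Since $k$ is regular, iterating the Fundamental Theorem yields a natural direct sum decomposition
\[
K_2(k[a^\pm,x^\pm]) \cong K_2(k) \oplus K_1(k) \oplus K_1(k) \oplus K_0(k) \smash{\rlap{,}}
\]
where, by the formula for the splitting of the connecting homomorphism in Theorem~\ref{thm:Fundamental_Theorem_K_Theory}, the three summands outside $K_2(k)$ are the images of the Loday products
\[
u \mapsto \{u,a\},\quad u \mapsto \{u,x\} \quad (\text{for } u\in K_1(k)=k^\times)
\]
and
\[
n \mapsto n\cdot\{a,x\}\quad (\text{for } n\in K_0(k)=\mathbb{Z}) \smash{\rlap{.}}
\]
In particular, the three summands $K_1(k)\oplus K_1(k)\oplus K_0(k)$ consist entirely of Steinberg symbols (respectively integer multiples thereof).

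It remains to treat the summand $K_2(k)$, which is also generated by Steinberg symbols by Matsumoto's theorem (see for example \cite[Theorem~III.6.1]{WEIBEL_KBOOK}). Combining the above, every element of $K_2(k[a^\pm,x^\pm])$ is a sum of Steinberg symbols, which establishes the first claim. The only ``obstacle,'' if there is one, is bookkeeping: one must verify that the splittings produced by iterating the Fundamental Theorem really do identify the $K_1(k)$- and $K_0(k)$-summands with the subgroups generated by the symbols $\{u,a\}$, $\{u,x\}$, and $\{a,x\}$; this is exactly the content of the formula $\partial\{t,u\}=u$ recorded in Theorem~\ref{thm:Fundamental_Theorem_K_Theory}.
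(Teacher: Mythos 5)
Your proof is correct and follows essentially the same route as the paper: reduce the second claim to the first via the surjectivity-on-symbols fact, then iterate the Fundamental Theorem to reduce the first claim to Matsumoto's theorem for $K_2(k)$. The only differences are cosmetic. You unfold the decomposition into a flat four-term direct sum $K_2(k)\oplus K_1(k)\oplus K_1(k)\oplus K_0(k)$ with explicit symbol generators $\{u,a\}$, $\{u,x\}$, $\{a,x\}$, while the paper nests the two applications of the Fundamental Theorem ($K_2(k[a^\pm,x^\pm])\cong K_2(k[a^\pm])\oplus K_1(k[a^\pm])$, then splits each factor); both give the same conclusion. You cite Theorem~\ref{thm:long_exact_sequences_K2Sp} for the surjectivity of $K_2\mathrm{Sp}\to K_2$ on Steinberg symbols, whereas the paper cites the underlying Lemma~\ref{lemma:surjectivity_on_symbols_K2Sp_to_K2} directly; the theorem's ``in particular'' clause does contain the needed fact, so this is a valid but slightly less direct reference.
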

 
 \begin{proof}
 The second claim follows from the first and Lemma~\ref{lemma:surjectivity_on_symbols_K2Sp_to_K2}. The first claim is a consequence of the fundamental theorem of $K$-theory. Namely, we have 
 \[
 K_2(k[a^{\pm},x^{\pm}]) \cong K_2(k[a^{\pm}])\oplus K_1 (k[a^{\pm}]) \smash{\rlap{,}}
 \]
 where the second factor is given by the inclusion
 \[
 \{x,-\} \colon K_1(k[a^{\pm}]) \cong (k[a^{\pm}])^{\times} \rightarrow K_2(k[a^{\pm},x^{\pm}]) \smash{\rlap{.}}
 \]
 It only remains to check that $K_2(k[a^{\pm}])$ is generated by Steinberg symbols. A second application of the fundamental theorem shows that $K_2(k[a^{\pm}]) \cong K_2(k) \oplus K_1(k)$, where the second factor is the image of $\{a,-\} \colon k^{\times} \rightarrow K_2(k[a^{\pm}])$. From Matsumoto's theorem we know that $K_2(k)$ is generated by Steinberg symbols (see \cite[Theorem~III.6.1]{WEIBEL_KBOOK}).
 \end{proof}
 
 To show that
 \[
 K_2 \mathrm{Sp}\bigl( k \llbracket a \rrbracket [x,y] \slash (xy-a^{\ell}) \bigr) \rightarrow  K_2 \bigl( k \llbracket a \rrbracket [x,y] \slash (xy-a^{\ell}) \bigr) 
 \]
 is surjective, we repeatedly use the following fact.
 
 \begin{lemma}\label{lemma:generated_by_symbols_radical_ideal}
 Let $R$ be a commutative ring and let $I \subseteq R$ be an ideal which is contained in the Jacobson radical of $R$. If $K_2(R \slash I)$ is generated by Steinberg and Dennis--Stein symbols, then so is $K_2(R)$.
 \end{lemma}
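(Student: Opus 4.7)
The plan is to reduce to a relative statement about the kernel of $K_2(R) \rightarrow K_2(R \slash I)$, using the fact that symbols lift well along quotients by ideals contained in the Jacobson radical.

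The first step is to verify that Steinberg and Dennis--Stein symbols lift from $R \slash I$ to $R$. Since $I \subseteq \mathrm{Jac}(R)$, for any unit $\bar{u} \in (R \slash I)^{\times}$ with inverse $\bar{v}$, any lifts $u, v \in R$ satisfy $uv \in 1 + I \subseteq R^{\times}$, so $u$ is itself a unit of $R$. Thus every Steinberg symbol $\{\bar{u}, \bar{v}\} \in K_2(R \slash I)$ is the image of some $\{u, v\} \in K_2(R)$. Similarly, if $\bar{a}, \bar{b} \in R \slash I$ satisfy $1 - \bar{a}\bar{b} \in (R \slash I)^{\times}$, then for any lifts $a, b \in R$ the element $1 - ab$ reduces to a unit modulo $I$, hence is itself a unit, so $\langle a, b \rangle \in K_2(R)$ is well-defined and lifts $\langle \bar{a}, \bar{b} \rangle$.

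The second step is to exploit the hypothesis on $R \slash I$. Given $x \in K_2(R)$, its image in $K_2(R \slash I)$ can by assumption be written as a finite sum of Steinberg and Dennis--Stein symbols. Lifting each of these symbols as in the first step yields an element $y \in K_2(R)$ that is a sum of Steinberg and Dennis--Stein symbols and such that $x - y$ lies in the kernel $N \defl \ker\bigl(K_2(R) \rightarrow K_2(R \slash I)\bigr)$. In particular, $K_2(R) \rightarrow K_2(R \slash I)$ is surjective, and the problem reduces to showing that $N$ itself is generated by Dennis--Stein symbols.

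The main obstacle is this last point: controlling the kernel $N$. Here I would invoke the relative $K_2$ machinery (due to Maazen--Stienstra/van der Kallen): for any ideal $I$ contained in the Jacobson radical of $R$, the image in $K_2(R)$ of the relative group $K_2(R,I)$ is generated by Dennis--Stein symbols $\langle a, b \rangle$ with $a \in I$ (and $b \in R$ arbitrary, noting that $1 - ab \in 1+I \subseteq R^{\times}$ is automatic). This is the standard generation-by-symbols statement in the Jacobson-radical case, and once granted, combining with the previous step writes $x = y + (x-y)$ as a sum of Steinberg and Dennis--Stein symbols, completing the proof.
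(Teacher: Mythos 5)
Your proposal is correct and follows essentially the same route as the paper: lift symbols along $R \to R/I$ using that units in $R/I$ lift to units in $R$ (since $I \subseteq \mathrm{Jac}(R)$), reduce to showing that the kernel $N = \ker\bigl(K_2(R) \to K_2(R/I)\bigr)$ is generated by Dennis--Stein symbols, and invoke the Maazen--Stienstra/van der Kallen theorem together with the exactness of $K_2(R,I) \to K_2(R) \to K_2(R/I)$. The paper cites these facts as \cite[Theorem~III.5.7.1]{WEIBEL_KBOOK} and \cite[Theorem~III.5.11.1(b)]{WEIBEL_KBOOK}, which is the same content packaged in Weibel's book rather than the original sources.
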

 
 \begin{proof}
 First note that $K_2(R) \rightarrow K_2(R \slash I)$ is surjective on symbols. For example, if $\{\bar{r},\bar{s}\} \in K_2(R \slash I)$ where $\bar{s}, \bar{t} \in (R \slash I)^{\times}$, we can pick lifts $r$ of $\bar{r}$ and $s$ of $\bar{s}$ in $R$. Since $I$ is contained in the Jacobson radical, we have $r,s \in R^{\times}$, so $\{r,s\} \in K_2(R)$ gives the desired preimage of $\{\bar{r},\bar{s}\} \in K_2(R \slash I)$.
 
 The claim for Dennis--Stein symbols $\langle \bar{x},\bar{y} \rangle$ follows similarly from the observation that arbitrary lifts $x$ of $\bar{x}$ and $y$ of $\bar{y}$ satisfy $1-xy \in R^{\times}$ (since $1-\bar{x}\bar{y} \in (R \slash I)^{\times}$ by assumption). Thus $\langle x, y \rangle$ is defined and gives the desired preimage of $\langle \bar{x}, \bar{y} \rangle$.
 
 The claim now follows from the exactness of
 \[
 K_2(R,I) \rightarrow K_2(R) \rightarrow K_2(R \slash I)
 \]
 (see \cite[Theorem~III.5.7.1]{WEIBEL_KBOOK}) and the fact that $K_2(R,I)$ is generated by Dennis--Stein symbols (see \cite[Theorem~III.5.11.1(b)]{WEIBEL_KBOOK}).
 \end{proof}
 
 \begin{lemma}\label{lemma:surjectivity_of_first_half_of_varphi_1}
 For each field $k$ and each $\ell \geq 2$, the homomorphism
 \[
 K_2 \mathrm{Sp}\bigl( k \llbracket a \rrbracket [x,y] \slash (xy-a^{\ell}) \bigr) \rightarrow  K_2 \bigl( k \llbracket a \rrbracket [x,y] \slash (xy-a^{\ell}) \bigr) 
 \]
 is surjective.
 \end{lemma}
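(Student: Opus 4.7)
The plan is to apply Lemma~\ref{lemma:surjectivity_on_symbols_K2Sp_to_K2} and thereby reduce the claim to showing that $K_2(B)$ is generated by Steinberg and Dennis--Stein symbols, where $B = k \llbracket a \rrbracket [x,y]/(xy-a^\ell)$. The main tool will be the analytic patching diagram
\[
\xymatrix{B \ar[r] \ar[d] & B_a \ar[d] \\ B^h \ar[r] & B^h[a^{-1}]}
\]
in which $B^h \defl B^h_{(a)}$ is the henselization of $(B, aB)$; that $B \to B^h$ is an analytic isomorphism along $a$ follows from Corollary~\ref{cor:good_lifts_henselization} (or directly, since henselization is a filtered colimit of \'etale maps trivial modulo $a$). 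Applying the ladder from Theorem~\ref{thm:long_exact_sequences_K2Sp} to this patching and running a diagram chase will ultimately yield surjectivity of $K_2\mathrm{Sp}(B) \to K_2(B)$ from the corresponding statements on the simpler rings in the patching.

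To feed into the ladder, I verify generation by symbols at each corner. For $B_a \cong k(\!(a)\!)[x,x^{-1}]$, the Fundamental Theorem of algebraic $K$-theory splits $K_2(B_a) \cong K_2(k(\!(a)\!)) \oplus K_1(k(\!(a)\!))$, with the second summand generated by the Steinberg symbols $\{x,-\}$ and the first by Matsumoto's theorem; so $K_2(B_a)$ is generated by symbols. For $B^h$, the henselian pair property gives $aB^h \subseteq \mathrm{Jac}(B^h)$ and $B^h/aB^h \cong B/aB \cong k[x,y]/(xy)$; hence by Lemma~\ref{lemma:generated_by_symbols_radical_ideal} it suffices to establish generation by symbols for $K_2(k[x,y]/(xy))$. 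I would do this by exploiting the Milnor square $k[x,y]/(xy) = k[x] \times_k k[y]$, whose splitting by the inclusion $k \hookrightarrow k[x,y]/(xy)$ together with homotopy invariance $K_\ast(k[x]) \cong K_\ast(k)$ and Matsumoto's theorem forces $K_2(k[x,y]/(xy))$ to be generated by Steinberg and Dennis--Stein symbols. A similar analysis, combined with the description of $B^h[a^{-1}]$ as a filtered colimit of basic Nisnevich \'etale localizations of $B_a$ (as in Proposition~\ref{prop:henselization_using_basic_Nisnevich}) together with the above result for $K_2(B_a)$, allows one to treat $K_2(B^h[a^{-1}])$ as well.

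Having established symbol generation at the three corners $B^h$, $B_a$, and $B^h[a^{-1}]$, Lemma~\ref{lemma:surjectivity_on_symbols_K2Sp_to_K2} gives surjectivity of the corresponding vertical maps in the ladder of Theorem~\ref{thm:long_exact_sequences_K2Sp}. To finish, I would invoke Lemma~\ref{lemma:K1Sp_of_power_series_overring_vanishes} (which gives $K_1\mathrm{Sp}(B) = 0$) to control the relevant boundary term, and run a diagram chase: given $\sigma \in K_2(B)$, lift its image in $K_2(B^h) \oplus K_2(B_a)$ to $K_2\mathrm{Sp}(B^h) \oplus K_2\mathrm{Sp}(B_a)$, and transport the discrepancy around the ladder using top-row exactness. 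The hard part of the plan, and the place I expect the main obstacle, is controlling the boundary $K_3(B^h[a^{-1}]) \to K_2(B)$ in the bottom Mayer--Vietoris sequence: one must verify that its image sits inside the symbol subgroup of $K_2(B)$. My approach to this is to give an explicit residue-type formula for the boundary in the spirit of Proposition~\ref{prop:connecting_homomorphism_K_theory_vanishes} (and of the Fundamental Theorem splitting already used for $B_a$), so that the boundary of a Steinberg symbol in $K_3(B^h[a^{-1}])$ involving $a$ can be identified with an explicit symbol in $K_2(B)$ built out of the corresponding data.
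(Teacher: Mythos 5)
Your overall strategy (use Lemma~\ref{lemma:surjectivity_on_symbols_K2Sp_to_K2}, an analytic patching diagram, and the ladder from Theorem~\ref{thm:long_exact_sequences_K2Sp}) is the right one, but your choice of patching diagram creates the obstacle that you yourself flag, and that obstacle is genuine rather than merely technical. By patching $B$ against its henselization $B^h_{(a)}$ you put $B$ in the top-left corner, so $K_2\mathrm{Sp}(B)$ sits at the far left of the four-term segment. The four lemma (epi form) then forces you to control $K_3\mathrm{Sp}\bigl(B^h[a^{-1}]\bigr) \rightarrow K_3\bigl(B^h[a^{-1}]\bigr)$, or equivalently the image of the boundary $K_3\bigl(B^h[a^{-1}]\bigr) \rightarrow K_2(B)$. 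Neither a ``residue-type formula'' nor the vanishing of $K_1\mathrm{Sp}(B)$ helps here: $K_1\mathrm{Sp}(B)=0$ kills the boundary two degrees to the right, not the one you need, and a residue formula in the spirit of Lemma~\ref{lemma:formula_for_connecting_homomorphism} is known for boundaries into $K_1$, but in degree $3$ the source group $K_3$ of a non-regular ring need not be generated by Steinberg or Dennis--Stein symbols, so a description of $\partial$ on symbols would not determine the image. Note also that even with symbol generation at $B^h$, $B_a$, and $B^h[a^{-1}]$, it does not follow that $K_2(B)$ is generated by symbols: the quotient $K_2(B)/\mathrm{im}(\partial)$ injects into $K_2(B^h) \oplus K_2(B_a)$, but preimages of symbols under this injection need not be symbols.

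The paper sidesteps the $K_3 \rightarrow K_2$ boundary entirely by a different choice of patching. With $A \defl k \llbracket a \rrbracket$, it uses the inclusion $A[y] \hookrightarrow B$ which by Lemma~\ref{lemma:analytic_iso_arising_from_overring} is an analytic isomorphism along $S \defl 1 + yA[y]$; this puts $A[y]$ (a ring whose $K_1$, $K_1\mathrm{Sp}$, and $K_2$ are well understood via $j$-invariance and Lemma~\ref{lemma:generated_by_symbols_radical_ideal}) in the top-left corner and $B$ in the second entry of the ladder. The four lemma then needs only that $K_2\mathrm{Sp}(A[y]) \rightarrow K_2(A[y])$ is epi, that $K_1\mathrm{Sp}(A[y]) \rightarrow K_1(A[y])$ is mono, and that $K_2\mathrm{Sp}(B_S) \rightarrow K_2(B_S)$ is epi; no $K_3$ appears. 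The last of these is reduced further by iterating with the diagram of Proposition~\ref{prop:composite_patching_for_overrings} (putting $A[x]$ in the top-left corner), and eventually, using the henselian pair $\bigl(A,(a^\ell)\bigr)$ and Proposition~\ref{prop:overrings_in_henselian_case} to factor through $B_{1+a^\ell B}$, one reduces to symbol generation of $K_2\bigl(k[x,y]/(xy)\bigr)$, which you correctly handle. The key fact your plan is missing is that one should exploit the overring presentation $A[y] \subseteq B$ rather than the henselization of $B$, precisely because it keeps $B$ out of the first slot of every exact sequence used.
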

 
 \begin{proof}
 We use the notation $A \defl k \llbracket a \rrbracket$ and $B \defl A[x,y] \slash (xy-a^{\ell})$. Recall that the inclusion $A[y] \rightarrow B$ is an analytic isomorphism along $S \defl 1+yA[y] \subseteq A[y]$ (see Lemma~\ref{lemma:analytic_iso_arising_from_overring}). From Theorem~\ref{thm:long_exact_sequences_K2Sp} we thus get a commutative diagram
 \[
 \xymatrix{K_2 \mathrm{Sp}(A[y]) \ar[r] \ar[d] & K_2 \mathrm{Sp} (B) \oplus K_2 \mathrm{Sp} (A[y]_S) \ar[r] \ar[d] & K_2 \mathrm{Sp} (B_S) \ar[r] \ar[d] & K_1 \mathrm{Sp} (A[y]) \ar[d] \\ 
 K_2 (A[y]) \ar[r] & K_2(B) \oplus K_2(A[y]_S) \ar[r]  & K_2(B_S) \ar[r] & K_1(A[y])}
 \]
 with exact rows. By Lemma~\ref{lemma:generated_by_symbols_radical_ideal}, $K_2(A[y]) \cong K_2(A)$ is generated by Steinberg and Dennis--Stein symbols, so the leftmost vertical homomorphism is an epimorphism (see Lemma~\ref{lemma:surjectivity_on_symbols_K2Sp_to_K2}). On the other hand, from \cite[Corollary~1.4]{GRUNEWALD_MENNICKE_VASERSTEIN}  and $j$-injectivity of $K_1 \mathrm{Sp}$ we know that $K_1 \mathrm{Sp}(A[y]) \cong K_1 \mathrm{Sp}(A) \cong 0$, so the rightmost vertical map is a monomorphism. To establish the surjectivity of $K_2 \mathrm{Sp} (B) \rightarrow K_2(B)$, it suffices by the four lemma to check that
 \[
 K_2 \mathrm{Sp} (B_{1+yA[y]}) \rightarrow K_2(B_{1+yA[y]})
 \]
 is surjective.
 
 Since $a^{\ell}$ lies in the Jacobson radical of $A=k\llbracket a \rrbracket$, Proposition~\ref{prop:composite_patching_for_overrings} implies that the composite
 \[
 A[x] \rightarrow B \rightarrow B_{1+yA[y]}
 \]
 is an analytic isomorphism along $1=xA[x]$, so
 \[
 \xymatrix{A[x] \ar[r] \ar[d] & A[x]_{1+xA[x]} \ar[dd] \\ B \ar[d] \\ B_{1+yA[y]} \ar[r] & B_{1+yA[y],1+xA[x]}}
 \]
 is an analytic patching diagram. Applying the same reasoning as above to the diagram resulting from Theorem~\ref{thm:long_exact_sequences_K2Sp} we find that it suffices to show that the homomorphism
 \[
 K_2 \mathrm{Sp} (B_{1+yA[y],1+xA[x]}) \rightarrow K_2(B_{1+yA[y],1+xA[x]})
 \]
 is surjective.
 
 From Lemma~\ref{lemma:surjectivity_on_symbols_K2Sp_to_K2} we know that it suffices to show that the target is generated by Steinberg and Dennis--Stein symbols. To see this, we first note that $K_1(A[x]) \cong (A[x])^{\times}$ since $SK_1(A[x]) \cong SK_1(A) \cong 0$ by $j$-invariance of $SK_1$. It follows that the two homomorphisms
 \[
 K_1(A[x]) \rightarrow K_1(A[x]_{1+xA[x]}) \quad \text{and} \quad K_1(A[x]) \rightarrow K_1(B_{1+yA[y]})
 \]
 are jointly monomorphic (since analytic patching diagrams are pullback squares). From the exact sequence in $K$-theory induced by the above patching diagram it follows that
 \[
 K_2(B_{1+yA[y]}) \oplus K_2(A[x]_{1+xA[x]}) \rightarrow K_2(B_{1+yA[y],1+xA[x]})
 \]
 is surjective. From Lemma~\ref{lemma:generated_by_symbols_radical_ideal} we conclude that $K_2(A[x]_{1+xA[x]})$ is generated by symbols, so it only remains to check that the image of
 \[
 K_2(B_{1+yA[y]}) \rightarrow K_2(B_{1+yA[y],1+xA[x]})
 \]
 is generated by symbols.
 
 The same reasoning as above, applied to the analytic patching diagram
 \[
 \xymatrix{A[y] \ar[r] \ar[d] & A[y]_{1+yA[y]} \ar[d] \\ B \ar[r] & B_{1+yA[y]}}
 \]
 shows that
 \[
 K_2(B) \oplus K_2(A[y]_{1+yA[y]}) \rightarrow K_2(B_{1+yA[y]})
 \]
 is surjective. This reduces the problem to checking that the image of
 \[
  K_2(B) \rightarrow K_2(B_{1+yA[y],1+xA[x]})
 \]
 is generated by symbols.
 
 Since $A=k\llbracket a \rrbracket$ is $a^{\ell}$-adically complete, the pair $\bigl(A,(a^{\ell})\bigr)$ is henselian. From Proposition~\ref{prop:overrings_in_henselian_case} it follows that $B \rightarrow B_{1+yA[y],1+xA[x]}$ factors through the localization $B \rightarrow B_{1+a^{\ell}B}$, so it suffices to show that $K_2(B_{1+a^{\ell}B})$ is generated by symbols.
 
 By construction, $a^{\ell}$ (and therefore $a$) lies in the Jacobson radical of $B_{1+a^{\ell} B}$. Thus Lemma~\ref{lemma:generated_by_symbols_radical_ideal} reduces the problem to checking that
 \[
 K_2(B \slash a) \cong K_2\bigl(k[x,y] \slash (xy)\bigr)
 \]
 is generated by Steinberg and Dennis--Stein symbols.
 
 By \cite[Proposition~2.4]{DENNIS_KRUSEMEYER}, we have $K_2\bigl(k[x,y] \slash (xy) \bigr) \cong K_2(k) \oplus L_1 \oplus L_2 \oplus K$, where $K$ is a subgroup generated by Dennis--Stein symbols (see \cite[Corollary~2.7]{DENNIS_KRUSEMEYER}) and $L_i$ is in this case isomorphic to the kernel of $K_2(k[x]) \rightarrow K_2(k)$. Thus $L_i \cong 0$ since $k$ is a regular ring. The claim follows from the fact that $K_2(k)$ is generated by Steinberg symbols by Matsumoto's theorem (see \cite[Theorem~III.6.1]{WEIBEL_KBOOK}).
 \end{proof}
 
 With this final ingredient in place, we can prove the following theorem of the introduction.
 
 \begin{cit}[Theorem~\ref{thm:injectivity_for_perfect_field}]
 If $k$ is a perfect field of characteristic $2$ and $\ell \geq 2$, then
 \[
 K_1 \mathrm{Sp}\bigl(k[a,x,y] \slash (xy-a^{\ell}) \bigr) \rightarrow  K_1 \bigl(k[a,x,y] \slash (xy-a^{\ell}) \bigr)
 \]
 is injective
 \end{cit}
 
 \begin{proof}[Proof of Theorem~\ref{thm:injectivity_for_perfect_field}]
 From Corollary~\ref{cor:K1Sp_trivial_for_Laurent_rings} and Lemma~\ref{lemma:K1Sp_of_power_series_overring_vanishes} we know that the map $\varphi_4$ in Diagram~\eqref{eqn:four_lemma} is injective. Lemmas~\ref{lemma:surjectivity_of_second_half_of_varphi_1} and \ref{lemma:surjectivity_of_first_half_of_varphi_1} together imply that the homomorphism $\varphi_1$ of Diagram~\eqref{eqn:four_lemma} is surjective. By the four lemma, applied to Diagram~\eqref{eqn:four_lemma}, it only remains to show that
 \[
 \varphi_2 \colon K_2 \mathrm{Sp} \bigl(k(\!(a)\!)[x,x^{-1}]\bigr) \rightarrow K_2\bigl(k(\!(a)\!)[x,x^{-1}]\bigr)
 \]
 is injective to establish the claim of the theorem.
 
 From \cite[Theorem~A]{MORITA_REHMANN} we know that the kernel of $\varphi_2$ is given by $I^3\bigl(k(\!(a)\!)\bigr) \oplus I^2\bigl(k(\!(a)\!)\bigr)$, where $I^n(F)$ denotes the $n$-th power of the fundamental ideal of the Witt ring $W_{\mathrm{sym}}(F)$ of symmetric bilinear forms over a field $F$.
 
 Let $F=k(\!(a)\!)$ and let $S \subseteq F$ denote the subfield of squares in $F$ (in other words, $S$ is the image of the Frobenius endomorphism). Since $k$ is perfect, the set $S$ consists of all those formal Laurent series $f=\sum_{i=-k}^{\infty} f_i a^i$ such that $f_i=0$ for all odd $i \in \mathbb{Z}$. A general formal Laurent series can thus be written as
 \[
 f=\textstyle \sum_{i\;\text{even}} f_i a^i+a(\textstyle \sum_{i\; \text{odd}} f_ia^{i-1}) \smash{\rlap{,}}
 \]
 which shows that $S \subseteq F$ is a field extension of degree $2$. It follows from \cite[Theorem~5]{MILNOR_WITT} that $I(F) \neq 0$ but $I^2(F)= 0$ (and thus also $I^3(F)=0$). This shows that $\varphi_2$ is indeed injective when the field $k$ is perfect.
 \end{proof}

\bibliographystyle{amsalpha}
\bibliography{henselian}

\end{document}